 \numberwithin{equation}{section}
 \theoremstyle{plain}
 \newtheorem{thm}{Theorem}[section]
 \newtheorem{prop}[thm]{Proposition}
 \newtheorem{lem}[thm]{Lemma}
 \newtheorem{cor}[thm]{Corollary}
 \theoremstyle{definition}
 \newtheorem{definition}[thm]{Definition}
 \theoremstyle{remark}
 \newtheorem{remark}[thm]{Remark}
 \let\pa=\partial
 \let\al=\alpha
 \let\b=\beta
 \let\d=\delta
 \let\g=\gamma
 \let\e=\varepsilon
 \let \kp = \kappa
 \let\lam=\lambda
 \let\s=\sigma
 \let\f=\frac
 \let \les = \lesssim
 \let\om=\omega
 \let \th = \theta
 \let \vth=\vartheta
 \let \vp = \varphi
 \let\G= \Gamma
\let\B = \Big
 \let\D=\Delta
 \let\Om=\Omega
 \let\td = \tilde
 \let\wt=\widetilde
 \let\wh=\widehat
 \let\teq \triangleq
 \let\pa=\partial
 \def\cA{{\mathcal A}}
 \def\cC{{\mathcal C}}
 \def\cH{{\mathcal H}}
 \def\cL{{\mathcal L}}
 \def\cP{{\mathcal P}}
 \def\cR{{\mathcal R}}
 \def\cT{{\mathcal T}}
 \def\cW{{\mathcal W}}
 \def\na{\nabla}
 \def\la{\langle}
 \def\ra{\rangle}
\def\lt{\left}
\def\rt{\right}
\def\one{\mathbf{1}}
 \newcommand{\beq}{\begin{equation}}
 \newcommand{\eeq}{\end{equation}}
  \newcommand{\bal}{\begin{aligned} }
  \newcommand{\eal}{\end{aligned}}
 \newcommand{\ben}{\begin{eqnarray}}
 \newcommand{\een}{\end{eqnarray}}
 \newcommand{\beno}{\begin{eqnarray*}}
 \newcommand{\eeno}{\end{eqnarray*}}
 \newcommand{\ee}{\mathbf{e}}
 \newcommand{\uu}{\mathbf{u}}
 \newcommand{\xx}{\mathbf{x}}
 \newcommand{\RR}{\mathbf{R}}
 \newcommand{\R}{\mathbb{R}}
  \newcommand{\BT}{\mathbb{T}}
  \newcommand{\BZ}{\mathbb{Z}}
 \newcommand{\supp}{\mathrm{supp}}
 \author{Jiajie Chen and Thomas Y. Hou}
 \address{Applied and Computational Mathematics, Caltech, Pasadena, CA 91125. Emails: jchen@caltech.edu, hou@cms.caltech.edu}
 \date{\today}
\title[Finite time blowup of 2D Boussinesq equations]{Finite time blowup of 2D Boussinesq 
and 3D Euler equations with $C^{1,\alpha}$ velocity and boundary}
\begin{document}
 \begin{abstract}
 Inspired by the numerical evidence of a potential 3D Euler singularity by Luo-Hou \cite{luo2013potentially-1,luo2013potentially-2} and the recent breakthrough by Elgindi \cite{elgindi2019finite} on the singularity formation of the 3D Euler equation without swirl with $C^{1,\alpha}$ initial data for the velocity, we prove the finite time singularity for the 2D Boussinesq and the 3D axisymmetric Euler equations in the presence of boundary with $C^{1,\alpha}$ initial data for the velocity (and density in the case of Boussinesq equations). Our finite time blowup solution for the 3D Euler equations and the singular solution considered in \cite{luo2013potentially-1,luo2013potentially-2} share many essential features, including the symmetry properties of the solution, the flow structure, and the sign of the solution in each quadrant, except that we use $C^{1,\alpha}$ initial data for the velocity field. We use a dynamic rescaling formulation and follow the general framework of analysis developed by Elgindi in \cite{elgindi2019finite}. We also use some strategy proposed in our recent joint work with Huang in \cite{chen2019finite} and adopt several methods of analysis in \cite{elgindi2019finite}
to establish the linear and nonlinear stability of an approximate self-similar profile. The nonlinear stability enables us to prove that the solution of the 3D Euler equations or the 2D Boussinesq equations with $C^{1,\alpha}$ initial data will develop a finite time singularity. Moreover, the velocity field has finite energy before the singularity time.

In the previous version of this paper, we proved the blowup results for the 3D axisymmetric Euler equations with initial data $(u_0^{\theta})^2,  u_0^r, u_0^z \in C^{1,\alpha}$ and $\omega_0^{\theta} \in C^{\alpha}$. Though the velocity $u^r, u^z$ in the axisymmetric setting is $C^{1,\alpha}$, our interpretation that the velocity is $C^{1,\alpha}$ is not correct since the velocity in 3D 
$u = u^r e_r + u^z e_z + u^{\theta} e_{\theta}$ also depends on $u^{\theta}$, which is not $C^{1,\alpha}$. This oversight can be fixed easily with a minor change in the construction of the approximate steady state and a minor modification to localize the approximate steady state. In particular, the proof of the blowup for the Boussinesq equations does not require this change and 
holds true both with or without this change, and this modification does not affect the nonlinear stability estimates of the 3D Euler equations. 
\end{abstract}

 \maketitle

\section{Introduction}
The three-dimensional (3D) incompressible Euler equations in fluid dynamics describe the motion of ideal incompressible flows. It has been used to model ocean currents, weather patterns, and other fluids related phenomena. Despite their wide range of applications, the question regarding the global regularity of the 3D Euler equations has remained open. The interested readers may consult the excellent surveys \cite{bt2007,constantin2007euler,gibbon2008three,hou2009blow,kiselev2018,
majda2002vorticity} and the references therein. 
The main difficulty associated with the regularity properties of the 3D Euler equations is due to the presence of vortex stretching, which is absent in the 2D Euler equations. To better illustrate this difficulty, we consider the so-called vorticity-stream function formulation:
\begin{equation}
  \omega_{t} + u \cdot \nabla \omega = \omega \cdot \nabla u,
  \label{eqn_eu_w}
\end{equation}
where $\omega = \nabla \times u$ is the \emph{vorticity vector} of the fluid, and $u$ is related to $\omega$ via the \emph{Biot-Savart law}. Under some decay conditions in the far field, one can show that $\omega$ satisfies the property 
\begin{equation}
  \|\omega\|_{L^{p}} \leq \|\nabla u\|_{L^{p}} \leq C_{p} \|\omega\|_{L^{p}},\qquad 1 < p < \infty.
  \label{eqn_du_Lp}
\end{equation}
Thus, the vortex stretching term $\omega \cdot \nabla u$ formally scales  like $\omega^{2}$. If such nonlinear alignment persists in time, the 3D Euler equations may develop a finite-time singularity. However, due to the nonlocal nature of the vortex stretching term, such nonlinear alignment may deplete itself dynamically (see e.g. \cite{hou2006dynamic}). Despite considerable efforts, whether the 3D Euler equations with smooth initial data of finite energy can develop a finite time singularity has been one of the most outstanding open questions in nonlinear partial differential equations. 

In \cite{luo2013potentially-1,luo2013potentially-2}, Luo and Hou presented some convincing numerical evidence that the 3D axisymmetric Euler equations with a solid boundary develop a potential finite time singularity for a class of smooth initial data with finite energy. 
The presence of the boundary and the odd-even symmetry of the solution along the axial direction play an important role in generating a stable and sustainable finite time singularity. 
The singularity scenario reported in\cite{luo2013potentially-1,luo2013potentially-2} has generated great interests and has inspired a number of subsequent developments, see e.g. \cite{kiselev2013small,chklsy2017,kryz2016} and the excellent survey article \cite{kiselev2018}.

Despite all the previous efforts, there is still lack of theoretical justification of the finite time singularity for the 3D axisymmetric Euler equations reported in \cite{luo2013potentially-1,luo2013potentially-2}. Very recently, Elgindi made a breakthrough on the 3D Euler equation singularity \cite{elgindi2019finite} by constructing the self-similar blowup solutions to the 3D axisymmetric Euler equations with $C^{1,\alpha}$ velocity and without swirl.

\subsection{Main results}\label{sec:thm}
 In this paper, inspired by the computation of Hou-Luo \cite{luo2013potentially-1,luo2013potentially-2} and Elgindi's work \cite{elgindi2019finite}, we study the singularity formation of 3D axisymmetric Euler equations and the 2D Boussinesq equations with boundary.
Since the singularity of the 3D axisymmetric Euler equations reported in \cite{luo2013potentially-1,luo2013potentially-2} occurs at the boundary, away from the symmetry axis, 
it is well known that the 3D axisymmetric Euler equations are similar to the 2D Boussinesq equations \cite{majda2002vorticity}.
Thus, it makes sense to investigate the finite time singularity of the 2D Boussinesq equations.

The main results of this paper are summarized by the following two theorems. In our first main result, we prove finite time blowup of the Boussinesq equations with $C^{1,\al}$ initial data for the velocity field and the density.
\begin{thm}\label{thm:bous}
Let $\omega$ be the vorticity and $\theta$ be the density in the 2D Boussinesq equations described by \eqref{eq:bous1}-\eqref{eq:biot}.
There exists $\al_0 > 0$ such that for $0 < \al < \al_0$, the unique local solution of the 2D Boussinesq equations in the upper half plane develops a focusing asymptotically self-similar singularity in finite time for some initial data $\om \in C_c^{\al}(\R_+^2) , \th \in C_c^{1, \al}(\R_+^2)$. In particular, the velocity field is $C^{1, \al}$ with finite energy. Moreover, the self-similar profile $(\om_{\infty}, \th_{\infty})$ satisfies $\om_{\infty}, \na \th_{\infty} \in C^{\f{\al}{40}}$.


\end{thm}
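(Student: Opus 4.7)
The plan is to convert the question of finite-time singularity into a question about the long-time dynamics of a dynamically rescaled system. Writing the solution in self-similar variables
\[
 \om(x,t) = \f{1}{1-t/T}\, \wt\om(y, \tau), \qquad \th(x,t) = \wt\th(y,\tau), \qquad y = \f{x}{(1-t/T)^{c_l(\tau)}},
\]
with an appropriately chosen time reparametrization $\tau(t)$ and rescaling rate $c_l$, the 2D Boussinesq system becomes an autonomous-looking evolution for $(\wt\om,\wt\th)$ on $\R_+^2$. Finite-time blowup for the original equations is then equivalent to global existence of $(\wt\om,\wt\th)$ in the rescaled variables together with quantitative convergence to a nontrivial steady state $(\om_\infty,\th_\infty)$. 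So the strategy reduces to: (i) identify an approximate self-similar profile $(\bar\om,\bar\th)$; (ii) prove linear stability of the rescaled dynamics around it in a carefully chosen weighted norm; (iii) bootstrap to nonlinear stability via energy estimates.

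For step (i), I would follow Elgindi's leading-order ansatz as $\al \to 0$: rescale the spatial variable by a factor $\al$ so that the Biot-Savart law, which generically behaves like a singular integral of order zero, degenerates to a much simpler (essentially algebraic) fixed-point equation for the leading term. Solving that profile equation in closed form, one obtains an explicit $\bar\om$ of the form $\bar\om \sim f(\b)/(1+r^\al)^{\text{something}}$ in polar-like coordinates, together with a matching $\bar\th$ whose gradient captures the driving density; the rate $c_l$ is fixed by a normalization. Because we impose a no-flow boundary at $\{x_2=0\}$ and the Luo–Hou symmetry $\om$ odd, $\th$ even in $x_1$, the profile sits naturally in the first quadrant and the singularity forms at the origin.

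For step (ii), the heart of the argument, I linearize the rescaled Boussinesq system at $(\bar\om,\bar\th)$. The linearized operator has a transport part (with velocity coming from $\bar\om$ via Biot-Savart plus rescaling terms $(c_l y\cdot\na + 1)$), a vortex-stretching/buoyancy coupling $-\pa_{x_1}\wt\th \mapsto$ vorticity, and a nonlocal Biot-Savart piece acting on the perturbation. I would design a weighted $H^k$-type norm, with weights $\r(y) = |y|^{-\g}(1+|y|)^{\g'}$ or similar, chosen so that: the transport part gains positivity via the damping terms $(c_l y\cdot\na+1)$; the nonlocal Biot-Savart piece is controlled because it picks up a factor $\al$ from the leading-order analysis (this is where Elgindi's small-$\al$ miracle is used); and the buoyancy coupling $\pa_{x_1}\wt\th$ is absorbed by working with a single coercive energy $\cE = \|\wt\om\|_X^2 + \mu \|\na\wt\th\|_Y^2$ for a well-chosen weight $\mu$. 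The output should be a damping estimate $\tfrac{d}{d\tau}\cE \le -\eta\, \cE + (\text{errors of size $\al$})$.

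The main obstacle, and the step I would spend most effort on, is handling the nonlocal Biot-Savart operator on the weighted space in the presence of the boundary: its kernel is not well behaved near the corner of the quadrant, and one needs both sharp pointwise estimates (in the spirit of the explicit Key Lemma in Elgindi's work) and commutator estimates with the weight. Once linear damping is in hand, step (iii) is comparatively mechanical: the nonlinear remainder in the rescaled equation is quadratic in the perturbation, and the chosen weighted norm is a Banach algebra for $C^\al$-type regularity, so a standard continuity argument closes the bootstrap. Finally, the finite-time blowup statement for the original Boussinesq system follows by undoing the rescaling and integrating $c_l(\tau)$ in $\tau$; the asymptotic self-similarity and the claimed Hölder regularity $\om_\infty, \na\th_\infty \in C^{\al/40}$ come out of the elliptic regularity of the fixed-point equation for the profile, with the loss factor $1/40$ arising from the Schauder/interpolation estimates needed to convert weighted Sobolev control into pointwise Hölder control.
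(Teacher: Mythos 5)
Your overall architecture (dynamic rescaling, Elgindi-type small-$\al$ leading order profile, weighted linear stability, nonlinear bootstrap, undoing the rescaling) matches the paper's. But there are two places where your plan, as written, would not close.

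The first and most serious is your treatment of the nonlocal terms in the linear stability step. You assert that ``the nonlocal Biot-Savart piece is controlled because it picks up a factor $\al$ from the leading-order analysis,'' and you build your damping estimate on that. This is false for the terms that actually matter. After imposing the normalization conditions on $c_\om, c_l$, the linearized system \eqref{eq:lin21}--\eqref{eq:lin22} contains the vortex-stretching feedback $\f{2}{\pi\al}\td L_{12}(\Om)\bar\eta$ in the $\eta$ equation and the scaling terms $c_\om(\bar\Om - R\pa_R\bar\Om)$, $c_\om(2\bar\eta - R\pa_R\bar\eta)$, all of which are $O(1)$ as $\al\to 0$ (the $\al^{-1}$ exactly cancels the $O(\al)$ size of the profile). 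Meanwhile the coercivity constant available from the local transport--damping part is only about $1/4$ (Lemma \ref{lem:dp}), so a standard weighted energy estimate over-counts these couplings and the damping is lost. The paper's resolution is structural: it designs the weights $\vp_0,\psi_0$ in \eqref{wg:L2_R0} (several powers $R^{-k}$, a factor $\bar\eta^{-1}$ in $\psi_0$, a factor $\sin 2\b$ in $\vp_0$) so that the cross term $\la\Om,\eta\vp_0\ra$ and the nonlocal term combine into $\la \Om\sin 2\b+\lam\td L_{12}(\Om),\eta\rho_0\ra$, and then exploits the exact cancellation $\la \sin(2\b)\,\Om\,\td L_{12}(\Om),R^{-k}\ra=-\f{k-1}{2}\|\td L_{12}(\Om)R^{-k/2}\|_{L^2(R)}^2$ (Lemma \ref{lem:cancel}); separately it derives an ODE for $L_{12}(\Om)(0)$ that produces a damping term $-4L_{12}^2(\Om)(0)$ to absorb the $c_\om$ couplings. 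Without some version of these cancellations your step (ii) fails, and this is where the paper's real work lies.

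The second gap is the variable $\th_y$. You propose a single coercive energy for $\|\na\wt\th\|_Y$, but the approximate profile $\bar\th_y$ does not decay in the $x$ direction (Lemma \ref{lem:xi}), so the perturbation $\xi=\th_y$ cannot live in the same weighted Sobolev space as $\th_x$, and the weighted $H^3$ space one can afford for $\xi$ does not embed into $L^\infty$. The paper instead decouples $\xi$ from the leading-order system (using the anisotropy $\th_y\ll\th_x$, which is only available because of the boundary) and runs a separate weighted $C^1$ maximum-principle estimate for $\xi$, whose damping constant $-2-\f{3}{1+R}\le -2$ comes from the hyperbolic structure $\bar v_y>0$ of the flow. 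Relatedly, your rescaling ansatz omits the temporal amplitude factor for $\th$ (one needs $c_\th=c_l+2c_\om$ to balance $\uu\cdot\na\om$ against $\th_x$), and the exponent $\al/40$ comes from the explicit weight $(R\sin(2\b)^{\al})^{-1/40}$ in the $\cC^1$ norm and the embedding $\cC^1\hookrightarrow C^{\al/40}$, not from a Schauder estimate for the profile equation; these are minor but worth fixing.
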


By asymptotically self-similar, we mean that the solution in the dynamic rescaling equations (see Definition in Section \ref{sec:dsform}) converges to the self-similar profile in a suitable norm. We will specify the norm in the convergence in Section \ref{sec:converge}.


In our second result, we prove the finite time singularity formation for the 3D axisymmetric Euler equations with large swirl in a cylinder $D = \{ (r, z) : r \leq 1,  z \in \BT\}$ that is periodic in $z$ (axial direction) with period $2$, where $r$ is the radial variable and $\BT = \R / (2 \BZ)$.

\begin{thm}\label{thm:euler}
Consider the 3D axisymmetric Euler equations in the cylinder $r,z \in [0, 1] \times \BT$. Let $\om^{\th}$ be the angular vorticity and $u^{\th}$ be the angular velocity.
There exists $\al_0 > 0$ such that for $0 < \al < \al_0$, the unique local solution of the 3D axisymmetric Euler equations given by \eqref{eq:euler1}-\eqref{eq:euler21} develops a 
singularity in finite time for some initial data 
$\om^{\th} \in C^{\al}(D) , u^{\th} \in C^{1, \al}(D)$ supported away from the axis $r=0$ with $u^{\th} \geq 0$. Moreover, the velocity field in each period has finite energy, $u^{\th}$ is even in $z$, and $\om^{\th}$ is odd in $z$.
\end{thm}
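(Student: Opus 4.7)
The plan is to leverage the well-known structural similarity between the 3D axisymmetric Euler equations and the 2D Boussinesq system in a neighborhood of a point on the solid boundary away from the symmetry axis, and to reduce Theorem \ref{thm:euler} to Theorem \ref{thm:bous} by a perturbation argument. Writing the axisymmetric Euler equations for $(\om^\th/r, (u^\th)^2)$ in cylindrical coordinates, one finds that under the substitution $\wt\om = \om^\th/r$ and $\wt\th = (u^\th)^2$, the evolution equations take the form of the 2D Boussinesq system plus explicit correction terms carrying a factor of $1/r$. Since the initial data in Theorem \ref{thm:euler} is required to be supported away from $r=0$, these correction terms are bounded on the support, and one expects them to act as lower-order perturbations of the Boussinesq dynamics near the corner singularity $(r,z) = (1, z_\ast)$.

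First, I would change variables by setting $x = 1 - r$ and $y = z - z_\ast$, mapping the expected singularity site to the corner of the quarter plane $\{x \geq 0\}$, and rewrite the equations in the shifted coordinates. In these variables, the Biot--Savart operator agrees, to leading order near $x=0$, with the half-plane Biot--Savart operator used in the proof of Theorem \ref{thm:bous}, while the $1/r = 1/(1-x)$ prefactors and curvature corrections can be expanded in $x$ and treated as small. I would then set up the dynamic rescaling formulation analogous to that in Section \ref{sec:dsform}, so that the self-similar profile $(\om_\infty, \th_\infty)$ of the Boussinesq system becomes an approximate steady state of the rescaled axisymmetric Euler system, with an error that is explicitly controlled in terms of the support bounds on $r$ and the self-similar scaling factors $\lam(t)$.

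The core of the proof is then a linear and nonlinear stability argument for the rescaled axisymmetric Euler system around this approximate profile. The linear stability would be inherited from the linear stability analysis already carried out for Theorem \ref{thm:bous}, since the linearized Boussinesq operator is the dominant part of the linearized Euler operator in the chosen weighted $C^{1,\al}$-type norm. The additional terms from the $1/r$ correction and from the $u^\th$-transport equation would be absorbed using smallness coming from $\al \ll 1$ together with the a priori support bound $r \geq r_0 > 0$ ensured by finite propagation, which survives under the rescaling up to the singularity time. Once nonlinear stability is established, the same argument as in Theorem \ref{thm:bous} yields convergence of the rescaled solution to an asymptotically self-similar profile and a finite-time blowup, with the velocity field $C^{1,\al}$ and of finite energy on each period.

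The main obstacle will be controlling the non-scale-invariant corrections from the $1/r$ factors in the Biot--Savart kernel, in the transport terms, and in the vorticity stretching term $\pa_z((u^\th)^2)/r$. These terms break the exact self-similar structure that underlies the Boussinesq blowup, and one must show that, in the dynamic rescaling variables, they decay fast enough (or remain uniformly small in the chosen $C^{1,\al}$-weighted norms) to be closed as a perturbation. A secondary technical difficulty is to propagate the support condition $\supp \om^\th, \supp (u^\th)^2 \Subset \{r > 0\}$ globally under the rescaled flow, which requires combining the no-swirl-on-axis-type argument with a quantitative lower bound on $r$ along particle trajectories, uniform up to the blowup time.
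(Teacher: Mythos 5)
Your overall strategy is the one the paper follows: reduce the 3D axisymmetric equations near the boundary point $(r,z)=(1,0)$ to a perturbation of the 2D Boussinesq system, set up the dynamic rescaling formulation, and inherit the linear and nonlinear stability from Theorem \ref{thm:bous} while treating the $1/r$ corrections as small. However, there are several concrete gaps and one overclaim.

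First, the elliptic estimate is not merely a matter of the Biot--Savart operator ``agreeing to leading order'' with the half-plane one near the corner. The domain is a bounded cylinder (periodic in $z$), so the rescaled elliptic problem lives on a bounded, time-dependent domain and the whole-space elliptic machinery of Section \ref{sec:elli} does not apply directly. The paper's resolution is a two-step localization: first a pointwise bound on $\td\psi$ away from the support of $\om^{\th}$ obtained by a maximum-principle comparison with explicit whole-space solutions (Lemma \ref{lem:biot1}), which yields smallness of $\Psi$ in the annulus between the support of the solution and the cutoff scale (Lemma \ref{lem:far}); then multiplication by a cutoff $\chi_\lam$ with $\lam\sim C_l^{-\al}$ so that $\chi_\lam\Psi$ solves the whole-space equation \eqref{eq:elli} up to an error $Z_\chi$, which must itself be estimated (including its contribution $L_{12}(Z_\chi)(0)$ to the leading singular part of the stream function, which forces a modification of the normalization conditions \eqref{eq:normal20}--\eqref{eq:normal21}). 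None of this is visible in your outline, and without it the claim that the corrections are ``lower order'' cannot be closed in the weighted $\cH^3$ norms.

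Second, the support control does not come from ``finite propagation'' — the Euler velocity has no finite propagation speed. The mechanism is that the blowup is focusing: the rescaling parameter satisfies $c_l+\bar c_l>\f{3}{4\al}$, and the rescaled velocity grows sublinearly on the support (the decay estimate of Lemma \ref{lem:decay}). One then derives an ODE for $C_l^{\al}(\tau)R(\tau)$ along trajectories and integrates it to show $C_l(\tau)S(\tau)$ stays below the threshold $\al\,(2^{13})^{-1/\al-1}$ needed for the elliptic estimates, closing a bootstrap. You correctly identify this as a difficulty but do not supply the argument. Also, the transported scalar should be $(ru^{\th})^2$ rather than $(u^{\th})^2$; only the former satisfies a pure transport equation. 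Finally, your conclusion that the rescaled solution converges to an asymptotically self-similar profile overshoots what this method gives: the paper explicitly notes that, because the cylinder is not dilation invariant, asymptotic self-similarity is \emph{not} established for the 3D Euler case (and Theorem \ref{thm:euler} does not claim it); the proof only yields that the solution stays in a small neighborhood of the approximate profile up to the blowup time, which suffices for the BKM criterion.
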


Our analysis shows that the singular solution in Theorem \ref{thm:euler} in the dynamic rescaling formulation remains very close to an approximate blowup profile in some norm (see Section \ref{sec:euler}) for all time (or up to the blowup time in the original formulation). It is conceivable that it converges to a self-similar blowup profile of 2D Boussinesq at the blowup time so that the blowup solution is asymptotically self-similar. However, we cannot prove this result using the current analysis since the domain $D$ is not invariant under dilation. We leave it to our future work.

\begin{remark}
In \cite{chen2019finite2} and the previous arXiv version of this paper, we prove Theorem \ref{thm:euler} with initial data $\om_0^{\th}, (u_0^{\th})^2 \in C^{1, \al}$. In the axisymmetric Euler equations \eqref{eq:euler1}, $u^r, u^z$ play the essential role as velocity and are in $C^{1,\al}$, and $u^{\th}$ does not appear in the advection term.
Yet, our previous interpretation that the velocity is $C^{1,\al}$ is not correct since the velocity vector $\uu(x) = u^r(r, z) e_r + u^z(r, z) e_z + u^{\th}(r, z) e_{\th} $ in 3D is not $C^{1,\al}$. Note that the quantity used in our stability estimates is $(u^{\th})^2$, not $u^{\th}$. The quantity $A(r, z) = (u_0^{\th}(r,z))^2 \in C^{1,\al}$ has a vanishing rate $|z|^{1+\al}$ near $z=0$, and thus $u_0^{\th}(r, z) = A^{1/2}$ has a vanishing rate $|z|^{(1+\al)/2}$ only near $z=0$ and is not in $C^{1,\al}$. This oversight can be fixed easily by modifying the construction of the approximate steady state $\bar \th$ as follows 
\[
\bar \th = 1 + \int_0^x \bar \th_x(z, y) dz ,
\]
where $\bar \th_x = \bar \eta$ has an explicit formula \eqref{eq:profile}. In \cite{chen2019finite2} and the previous arXiv version, we use $\bar \th(0, y) = 0$ and do not have the factor $1$.  Note that in the whole analysis, we estimate $\na \bar \th$ and do not use the variable $ \bar \th$ directly.
The constant $1$ does not change $\na \bar \th$. 

We remark that the analysis of the Boussinesq equations and the proof of Theorem \ref{thm:bous} do not require this change and hold true both with or without this change. Moreover, this modification does not affect the nonlinear stability estimates of the 3D Euler equations. It only leads to a slightly modified perturbation to localize the approximate steady state in \eqref{eq:profile2}, which remains small in the energy norm. See Lemma \ref{lem:small}.  With this modification, since $\bar \th \in C^{1,\al}$ is even and $\bar \th \geq 1$, we get $\bar \th^{1/2} \in C^{1,\al}$. The quantity $A(r, z)$ defined above relates directly to $\bar \th$ and becomes $ 1 + C(r)|z|^{1+\al}$ near $z= 0$, and thus $A^{1/2} \in C^{1,\al}$, which  implies $u_0^{\th} = A^{1/2} \in C^{1,\al}$. See Section \ref{subsec:non_blowup} for the discussion. Thus, we can prove the blowup results in Theorem \ref{thm:euler} with initial data $\om_0^{\th} \in C^{1,\al}, u_0^{\th} \in C^{1,\al}$. In particular, we have $\uu_0 \in C^{1,\al}$. 
\end{remark}

\subsection{Main ingredients in our analysis}\label{sec:ideas}

Our analysis follows the general framework developed by Elgindi in \cite{elgindi2019finite}. We use the Boussinesq equations to illustrate the main ideas in our analysis. 

As in our previous work \cite{chen2019finite}, we reformulate the equations using an equivalent dynamic rescaling formulation (see e.g. \cite{mclaughlin1986focusing,landman1988rate}). We follow \cite{elgindi2019finite} to derive the leading order system. In the derivation, we have used the argument in \cite{elgindi2019finite} to obtain the leading order approximation of the stream function for small $\alpha$. Moreover, as observed by Elgindi and Jeong in \cite{Elg17} (see also \cite{elgindi2019finite}), the advection terms are relatively small compared with the nonlinear vortex stretching term when we work with $C^\alpha$ solution with small $\al$ for vorticity or $\nabla \theta$, which vanishes weakly near the origin, e.g. $|x|^{\al}$. In the 2D Boussinesq equations \eqref{eq:bous1}-\eqref{eq:bous2}, the vortex stretching term for the $\omega$ equation is given by $\theta_x$. Within the above $C^{\al}$ class of solution, the transport term $ \uu \cdot \na \om $ may not be smaller than $\th_x$. For example, one can choose $\om, \th$ so that $\uu \cdot \na \om = O(1)$ and $\th_x = O(1)$. We further look for solutions of the 2D Boussinesq equation \eqref{eq:bous1}-\eqref{eq:bous2} by letting $ \omega = \alpha \tilde{\omega}, \; \theta = \alpha \tilde{\theta}$ with $\tilde{\omega} = O(1)$ and $\tilde{\theta} = O(1)$ as $\alpha \rightarrow 0$. 
Formally, the nonlinear transport term $\uu \cdot \na \om$ becomes relatively small compared with $\th_x$ due to the weakening effect of advection for $C^{\al}$ data and the weak nonlinear effect due to the fact that $\om = O(\al)$ and $\th_x = O(\al)$ for small $\al$ at a given time. Thus, we can ignore the contributions from the advection terms for small $\alpha$ when we work with this class of $\omega$ and $\theta$. See more discussion in Section \ref{sec:decoup}. In addition, inspired by our own computation of the Hou-Luo singularity scenario \cite{luo2013potentially-1,luo2013potentially-2}, we look for $\th$ that is anisotropic in the sense that $\th_y$ is small compared with $\theta_x$. We will justify that this property is preserved dynamically for our singular solution. As a result, we can decouple the $\theta_y$ equation from the leading order equations for $\omega$ and $\theta_x$. This gives rise to a leading order coupled system of Riccati type for $\omega$ and $\theta_x$, which is similar to the scalar leading order equation obtained in \cite{elgindi2019finite}.
Inspired by the solution structure of the leading order system in \cite{elgindi2019finite}, we are able to find a class of closed form solutions of this leading order system.

The most essential part of our analysis is to establish linear stability of the approximate steady state using the dynamic rescaling equations. As in \cite{elgindi2019finite} and our previous work with Huang \cite{chen2019finite}, we design some singular weights to extract the damping effect from the linearized operator around the approximate steady state. 
In order for the perturbation from the approximate steady state to be well defined in the weighted norm with a more singular weight, we impose some vanishing conditions on the perturbation at the origin by choosing some normalization conditions. This leads to some nonlocal terms related to the scaling parameter $c_\omega$ in the linearized equations, which are not present in \cite{elgindi2019finite}. 

Compared with the scalar linearized equation considered in \cite{elgindi2019finite},
the linearized equations for the 2D Boussinesq equations lead to a more complicated coupled system and we need to deal with a few more nonlocal terms that are of $O(1)$ as $\al \rightarrow 0$. Thus we cannot apply the coercivity estimate of the linearized operator in \cite{elgindi2019finite}, which is one of the key steps in constructing the self-similar solution in \cite{elgindi2019finite}. 
One of the main difficulties in our linear stability analysis is to control the nonlocal terms. If we use a standard energy estimate to handle these nonlocal terms, we will over-estimate their contributions to the linearized equations and would not be able to obtain the desired linear stability result. Since the damping term has a relatively small coefficient, we need to exploit the coupling structure in the system and take into account the cancellation among different nonlocal interaction terms in order to obtain linear stability. For this purpose, we design our singular weights that are adapted to the approximate self-similar profile and contain different powers of $R^{-k}$ to account the interaction in the near field, the intermediate field and the far field. 
To control the nonlocal scaling parameter $c_\omega$, we will derive a separate ODE for $c_\omega$, which captures the damping effect of $c_{\om}$.

We have used the elliptic estimate and several nonlinear estimates from \cite{elgindi2019finite} in our nonlinear stability analysis. The presence of swirl (the angular velocity $u^\theta$) or density ($\theta$) introduces additional technical difficulties. Since the approximate steady state for $\na \th$ does not decay in certain direction, we need to design different weighted Sobolev spaces carefully for different derivatives and further develop several nonlinear estimates. To obtain the $L^{\infty}$ estimate of a directional derivative of $\th$, which is necessary to close the nonlinear stability analysis, we make use of the hyperbolic flow structure. Once we obtain nonlinear stability, as in \cite{chen2019finite}, we establish finite time blowup from a class of compactly supported initial data $\omega_0$ and $\theta_0$ with finite energy by truncating the approximate steady state and using a rescaling argument.
We further establish convergence of the solution of the dynamic rescaling equations to the self-similar profile using a time-differentiation argument. This argument has also been used in our recent joint work with Huang in \cite{chen2019finite} and developed independently in \cite{elgindi2019finite}.

\vspace{-0.1in}
\subsection{ From the 2D Boussinesq to the 3D Euler equations}\label{sec:idea_euler}

For the 3D Euler equations, we consider the domain within one period, i.e.
$D_1 =\{ (r,z) : r \in [0,1], |z| \leq 1 \} $. 
We will construct a singular solution that is supported near $r=1, z=0$ up to blowup time and blows up at $r=1, z=0$. Since the support is away from the symmetry axis, we show that the 3D Euler equations are essentially the same as the 2D Boussinesq equations up to some lower order terms. This connection is well known; see e.g. \cite{majda2002vorticity}. Then  we generalize the proof of Theorem \ref{thm:bous} to prove Theorem \ref{thm:euler}.
 To justify this connection rigorously, we need two steps. The first step is to establish the elliptic estimates in the new domain. The second step is to control the support of the solution and show that it remains close to $r=1, z=0$ up to the blowup time.


\subsubsection{Control of the support }\label{sec:idea_supp}

The reason that the support of the singular solution remains close to $(r, z)=(1,0)$ is due to the
following properties of the singular solution. Firstly, the singular solution is focusing, which is characterized by the rescaling parameters $c_l(\tau) > \f{1}{2\al}$ for all $\tau > 0$. See the definition of $c_l$ in Section \ref{sec:dsform}. Secondly, the velocity in the dynamic rescaling formulation has sublinear growth in the support of the solution. These properties hold for the singular solution of the 2D Boussinesq equations. We prove that they remain true for the 3D Euler in Section \ref{sec:euler}. Using these properties, we derive an ODE to control the size of the support and show that it remains small up to the blowup time. See more discussion in Section \ref{sec:grow_supp}. Similar ideas and estimates to control the support have been used in \cite{chen2019finite} to generalize the singularity formation of the De Gregorio type model from the real line to a circle.

\subsubsection{The elliptic estimates}\label{sec:idea_biot}

The elliptic equation for the stream function $\td \psi(r, z)$ in $D_1$ reads 
\beq\label{eq:idea_euler2}
\cL \td \psi \teq -(\pa_{rr} + \f{1}{r} \pa_{r} +\pa_{zz}) \td{\psi} + \f{1}{r^2} \td{\psi} = \om^{\th},
\eeq
where $\om^{\th}$ is the angular vorticity. We impose the periodic boundary condition in $z$ and a no-flow boundary condition on $r=1: \td \psi(1, z) = 0$. See \cite{luo2013potentially-2,majda2002vorticity}. Since the solution is supported near $r=1, z=0$, we will only use $\td \psi(r,z)$ for $(r,z)$ near $(1,0)$ in our analysis. In this case, $r^{-1} \approx 1$ and the term $-\f{1}{r} \pa_r \td \psi + \f{1}{r^2} \td \psi$ in $\cL \td \psi$ is of lower order compared with $\partial_{rr} \tilde{\psi} + \partial_{zz} \tilde{\psi}$. In the dynamic rescaling equations, we obtain a small factor $C_l(\tau)$ for the term $-\f{1}{r} \pa_r \td \psi + \f{1}{r^2} \td \psi$ and treat it as a perturbation in $\cL \td \psi$. Moreover, if we relabel the variables $(r,z)$ as $(y, x)$ in $\R^2$, we formally have $\cL \td \psi \approx -\D_{2D} \td \psi$. 
In Section \ref{sec:3Delli}, we will justify this connection rigorously and then generalize the elliptic estimates that we obtain for the 2D Boussinesq to the 3D Euler equations.

\subsection{Connections to the Hou-Luo scenario}

Many settings of our problem are similar to those considered in \cite{luo2013potentially-1,luo2013potentially-2}. See more discussions after Lemma \ref{lem:selfsim}. The driving mechanism for the finite time singularity that we consider in this paper is essentially the same as that for the 3D axisymmetric Euler equations with solid boundary considered in \cite{luo2013potentially-1,luo2013potentially-2}. In both cases, the swirl (the angular velocity $u^\theta$ ) and the boundary play an essential role in generating a sustainable finite time singularity. It is the strong compression of the angular velocity $u^\theta$ toward the symmetry plane $z=0$ along the axial ($z$) direction on the boundary $r=1$ that creates a large gradient in $u^\theta$. Then the nonlinear forcing term $\partial_z(u^\theta)^2$ induces a rapid growth in the angular vorticity $\omega^\theta$, ultimately leading to a finite time blowup. 
Moreover, the singularities 
that we consider occur at the solid boundary, which are the same as the one reported in \cite{luo2013potentially-1,luo2013potentially-2}. 

We would like to emphasize that the presence of boundary plays a crucial role in the singularity formation even with $C^{1,\alpha}$ initial data for the velocity and $\theta$. If we remove the boundary, a promising potential blowup scenario for the 2D Boussinesq equation is to have a hyperbolic flow structure near the origin with 4-fold symmetry for $\theta$, i.e. $\th$ is odd in $y$ and even in $x$. Similar scenario has been used in \cite{zlatovs2015exponential}. Since $\theta (x,y)$ is odd with respect to $y$ and $\theta \in C^{1,\al}$, a typical $\theta$ is of the form: $\theta(x,y) \approx c_1 \alpha x^{1+\alpha} y + l.o.t., c_1 \neq 0$ near the origin. From our derivation and analysis of the leading order system, it is the nonlinear coupling between $\omega$ and $\theta_x$ that generates the blow-up mechanism. However, without the boundary, $\theta_x \approx c_1 \alpha (1+\alpha) x^\alpha y + l.o.t.$ and it does not vanish to the order $O(|y|^{\kp})$ near $y=0$ with a small exponent $\kp>0$. The advection of $\theta_x$ along the $y$ direction is not small compared with the vortex stretching term $-u_x \th_x$ in the $\th_x$ equation \eqref{eq:bous21}. Thus, we can no longer neglect the contribution from the $y$ advection term and we cannot derive our leading order system in this case. In fact, the transport of $\theta_x$ along the $y$ direction provides a strong destabilizing effect to the singularity formation and would likely destroy the self-similar focusing blowup mechanism \cite{hou2008dynamic,lei2009stabilizing}. 
 
If we approximate the velocity field $(u,v)$ by $(xu_x(0,0,t), yv_y(0,0,t))$ (note that $u_x(0,0,t)+v_y(0,0,t)=0$) as was done in a toy model introduced in \cite{elgindi2019finite}, we have the following result. For any $\om_0, \na \th_0 \in C_c^{\al}(\R^2)$, which is in the local well-posedness class for the 2D Boussinesq equations \cite{chae_kim_nam_1999}, under the 4-fold symmetry assumption, the solution of the toy model exists globally. The key point is that due to the odd symmetry of $\theta_0$ with respect to $y$ and the assumption that $\th_0 \in C^{1,\al}$, $\th_0$ must vanish linearly in $y$, i.e. $|\th(x, y)| \les |y|$. The proof follows an estimate similar to that presented in \cite{elgindi2019finite} and we defer it to Appendix \ref{app:toy}.

In the presence of the boundary ($y=0$), $\th$ can be nonzero on $y=0$, which removes the above constraint $|\th(x, y)| \les |y|$. Then we can further weaken the transport terms in the 2D Boussinesq as discussed in Section \ref{sec:ideas}. Although the leading order system for the 2D Boussinesq equations and the 3D Euler equations with $C^{1,\alpha}$ initial velocity and the boundary looks qualitatively similar to that for the 3D Euler equations without swirl and without boundary obtained in \cite{elgindi2019finite}, the physical driving mechanisms of the finite time singularity behind these two blowup scenarios are quite different. In our case, the swirl and the boundary play a crucial role. 
Our numerical study suggests that even for smooth initial data, $\theta_x$ is an order of magnitude larger than $\theta_y$ and the effect of advection is relatively weak compared with the vortex stretching term. More importantly, our computation reveals that the real parts of the eigenvalues of the discretized linear operator are all negative and bounded away from zero by a finite spectral gap. See also Section 3.4 in \cite{liu2017spatial} for an illustration of the eigenvalue distribution of the discretized linearized operator. This is a strong evidence that the linearized operator should be stable even for smooth initial data. The essential step in proving this rigorously is the linear stability analysis, which requires us to estimate the Biot-Savart law without the availability of the leading order structure for $C^{1,\al}$ velocity and control a few more nonlocal terms that we can neglect using the $C^{1,\alpha}$ initial data. In some sense, our blowup analysis for $C^{1,\alpha}$ initial data captures certain essential features of the Hou-Luo scenario \cite{luo2013potentially-1,luo2013potentially-2} and some essential difficulties in the analysis of such scenario.

\subsection{Review of other related works}

In the recent works \cite{elgindi2017finite,elgindi2018finite}, Elgindi and Jeong proved finite time singularity formation for the 2D Boussinesq and 3D axisymmetric equations in a physical domain with a corner and $\mathring{C}^{0, \al}$ data. 
The domain we study in this paper does not have a corner. In the case of the 3D Euler equations, our physical domain includes the symmetry axis. 
In comparison, the domain studied in \cite{elgindi2018finite} does not include the symmetry axis.

In \cite{HOANG20187328,KISELEV201834}, the authors studied a modified 2D Boussinesq equations with $\th_x$ in \eqref{eq:bous1} replaced by $\th / x$ and using a simplified Biot-Savart law. 
In these works, the simplified Biot-Savart law has a positive kernel and the authors have been able to prove finite time blowup for smooth initial data using a functional argument. 

After we completed our work, we learned from Dr. Elgindi that the stability of the self-similar blowup solutions in \cite{elgindi2019finite} and the construction of finite-energy $C^{1,\alpha}$ solutions that become singular in finite time have been established recently in \cite{elgindi2019stability}.

\vspace{0.1in}
\paragraph{\bf{Organization of the paper}}
In Sections \ref{sec:derive}-\ref{sec:2Ddyn}, we provide some basic set-up for our analysis, including the derivation of the leading order system, the dynamic rescaling formulation, the reformulation using the polar coordinates $(R,\beta)$, and the construction of the approximate self-similar solution. Section \ref{sec:lin} is devoted to the linear stability analysis of the leading order system. In Section \ref{sec:H2}, we perform higher order estimates of the leading order system as part of the nonlinear stability analysis.
Sections \ref{sec:elli} and \ref{sec:non} are devoted to the nonlinear stability analysis of the original system. In Section \ref{sec:euler}, we extend our analysis for the 2D Boussinesq equations to the 3D axisymmetric Euler equations. Some concluding remarks are provided in Section \ref{sec:remark} and some technical estimates are deferred to the Appendix.

\vspace{0.1in}
\paragraph{\bf{Notations}}
We use $\la \cdot, \cdot \ra, || \cdot||_{L^2}$ to denote the inner product in $(R,\b)$ 
and its $L^2$ norm 
\beq\label{eq:inner_L2}
\la f, g \ra  = \int_0^{\infty} \int_0^{\pi/2} f(R,\b) g(R, \b) d R d \b, \quad 
|| f||_{L^2} = \sqrt{\la f, f \ra}.
\eeq
We also simplify $|| \cdot ||_{L^2}$ as $|| \cdot ||_2$.
We remark that we use $d R d \b$ in the definition of the inner product rather than $R d R d \b$.


We use the notation $A \les B$ if there is some absolute constant $C>0$ with $A \leq CB$, and denote $A \asymp B$ if $A \les B$ and $B\les A$. The notation $\bar{\cdot}$ is reserved for the approximate steady states, e.g. $\bar{\Om}$ denotes the approximate steady state for $\Om$. We will use $C, C_1, C_2$ for some absolute constant, which may vary from line to line. We use $K_1, K_2,..$ and $\mu_1, \mu_2,...$ to denote some absolute constant which does not vary.

\section{Derivation of the leading order system}\label{sec:derive}

In this section, we will derive the leading order system that will be used for our analysis later in the paper. We first recall that the 2D Boussinesq equations on the upper half space are given by the following system:
\begin{align}
\om_t +  \uu \cdot \na \om  &= \th_{x},  \label{eq:bous1}\\
\th_t + \uu \cdot  \na \th & =  0 , \label{eq:bous2} 
\end{align}
where the velocity field $\uu = (u , v)^T : \R_+^2 \times [0, T) \to \R^2_+$ is determined via the Biot-Savart law
\beq\label{eq:biot}
 - \D \psi = \om , \quad  u =  - \psi_y , \quad v  = \psi_x,
\eeq
with no flow boundary condition 
\[
\psi(x, 0 ) = 0   \quad x \in \RR
\]
and $\psi$ is the stream function. The reader should not confuse the vector field $\uu$ with its first  component $u$.

The 2D Boussinesq equations have the following scaling-invariant property. If $(\om, \th)$ is a solution pair to \eqref{eq:bous1}-\eqref{eq:biot}, then 
\beq\label{eq:scal}
\om_{\lam, \tau}(x, t) = \f{1}{\tau} \om\lt( \f{x}{\lam},  \f{t}{\tau} \rt) , 
\quad \th_{\lam, \tau}(x , t)= \f{\lam}{\tau^2} \th \lt( \f{x}{\lam}, \f{t}{\tau} \rt)
\eeq
is also a solution pair to \eqref{eq:bous1}-\eqref{eq:biot} for any $\lam, \tau > 0$.

Next, we follow the ideas in Section \ref{sec:ideas} to derive the leading order system for the solutions $\om, \na \th \in C^\alpha$ with small $\alpha$. 



\subsection{The setup}
We look for a solution of \eqref{eq:bous1}-\eqref{eq:biot} with the following symmetry 
\[
\om(x, y) = - \om(x, -y),  \quad \th(x, y) = \th( - x,  y) 
\]
for all $x, y \geq 0 $. Accordingly, the stream function $\psi$ \eqref{eq:biot} is odd with respect to $x$
\[
 \quad \psi(x, y) = -\psi(-x, y)  .
\]
It is easy to see that the equations \eqref{eq:bous1}-\eqref{eq:biot} preserve these symmetries during time evolution. With these symmetries, it suffices to solve \eqref{eq:bous1}-\eqref{eq:biot} on $(x,y) \in [0, \infty) \times [0, \infty) $ with the following boundary conditions 
\[
\psi(x, 0) = \psi(0, y) = 0
\]
for the elliptic equation \eqref{eq:biot}. 

Taking $x, y$ derivative on \eqref{eq:bous2}, respectively,  we obtain 
\begin{align}
 \om_t +   \uu \cdot \na \om &=  \th_x   \label{eq:bous20}, \\ 
\th_{xt} +  \uu \cdot \na \th_x  &= -u_x \th_x - v_x  \th_y  \label{eq:bous21} ,\\
\th_{yt} +  \uu \cdot \na \th_y & = -u_y \th_x - v_y  \th_y  \label{eq:bous22} .
\end{align}
Under the odd symmetry assumption, we have $u(0, y) =0$. If the initial data $\th(0, y) =0$, this property is preserved. Therefore, we can recover $\th$ from $\th_x$ by integration. We will perform a-prior estimate of the above system, which is formally a closed system for $(\om, \th_x, \th_y)$.

\subsection{Reformulation using polar coordinates}

Next, we reformulate \eqref{eq:bous20}-\eqref{eq:bous22} using the polar coordinates introduced by Elgindi in \cite{elgindi2019finite}. We assume that $\al < 1/10$. 
We introduce 
\[r = \sqrt{x^2 + y^2}, \quad \b = \arctan(y / x), \quad R = r^{\al},
\]
Notice that $r \pa_r = \al R \pa_R$. 
We denote 
\beq\label{eq:nota_om}
\Om(R, \b, t) = \om(x, y, t), \quad \Psi = \f{1}{r^2} \psi, \quad \eta( R, \b, t) = (\th_x) ( x, y, t), \quad \xi( R, \b, t) = (\th_y)(x, y, t).
\eeq

We have 
\beq\label{eq:simp1}
\bal
\pa_x &  = \cos (\b) \pa_{r}  -\f{\sin(\b)}{r} \pa_{\b} =  \f{\cos(\b)}{r} \al R  \pa_R  -\f{\sin(\b)}{r} \pa_{\b} , \\ 
 \pa_y  & =  \sin(\b) \pa_{r}  + \f{\cos (\b)}{ r} \pa_{\b} =   \f{\sin(\b)}{r} \al R\pa_R + \f{\cos (\b)}{ r} \pa_{\b} .  \\
 \eal 
 \eeq
 Then using \eqref{eq:biot}, we derive 
\beq\label{eq:simp2}
\bal
u &=  - (r^2 \Psi)_y = - 2 r \sin \b \Psi - \al r R \sin \b \pa_R\Psi  -  r\cos \b  \pa_{\b}\Psi , \\
v  & = (r^2 \Psi)_x = 2 r \cos \b \Psi + \al r R \cos \b \pa_R \Psi - r \sin \b \pa_{\b} \Psi . \\
\eal
\eeq
Using the new variables $R, \b$, we can reformulate the Biot-Savart law \eqref{eq:biot} as 
\beq\label{eq:biot2}
- \al^2 R^2 \pa_{RR} \Psi - \al(4 +\al) R\pa_R \Psi - \pa_{\b \b} \Psi - 4 \Psi = \Om
\eeq
with boundary condition 
\[
\Psi(R,0) = \Psi(R, \f{\pi}{2}) = 0.
\]
For the transport term in \eqref{eq:bous20}-\eqref{eq:bous22}, we use \eqref{eq:simp1} to derive
\beq\label{eq:trans}
u \pa_x + v \pa_y \to - (\al R \pa_{\b} \Psi) \pa_R + ( 2\Psi +  \al R \pa_R \Psi) \pa_{\b}.
\eeq

Recall the notations $\Om, \eta, \xi$ in \eqref{eq:nota_om} for $\om, \th_x, \th_y$ in the $(R, \b)$ coordinates. Using \eqref{eq:trans}, we can rewrite \eqref{eq:bous20}-\eqref{eq:bous22} in $(R, \b)$ coordinates as follows 
\begin{align}
 \Om_t + \B(  - (\al R \pa_{\b} \Psi) \pa_R + ( 2\Psi +  \al R \pa_R \Psi) \pa_{\b} \B) \Om &= \eta,  \label{eq:bous_sim11}  \\
\eta_t + \B(  - (\al R \pa_{\b} \Psi) \pa_R + ( 2\Psi +  \al R \pa_R \Psi) \pa_{\b} \B)  \eta &= -u_x \eta - v_x  \xi , \label{eq:bous_sim12} \\
\xi_t + \B(  - (\al R \pa_{\b} \Psi) \pa_R + ( 2\Psi +  \al R \pa_R \Psi) \pa_{\b} \B)  \xi & = -u_y \eta - v_y  \xi  \label{eq:bous_sim13}.
\end{align}
The formulas of $\na u$ in $(R,\b)$ coordinates are rather lengthy and presented in \eqref{eq:simp5}.

\subsection{ Leading order approximations  of the Biot-Savart law and the velocity}
Next, we use an important result of Elgindi in \cite{elgindi2019finite} to obtain a leading order approximation of the modified stream function. Using this approximation, we can simplify the transport terms and $\na u$, and further derive the the leading order system of \eqref{eq:bous_sim11}-\eqref{eq:bous_sim13}.

Following \cite{elgindi2019finite}, we decompose the modified stream function $\Psi$ as follows
\beq\label{eq:biot3}
\bal
\Psi &= \f{1}{ \pi \al} \sin(2\b) L_{12} (\Omega) + \textrm{ lower order terms}, \\
L_{12}(\Om) & =  \int_R^{\infty} \int_0^{\pi/2} \f{ \sin (2\b) \Om(s, \b) }{s} ds d \b.
\eal
\eeq
For $\om \in C^{\al}$ with sufficiently small $\al > 0$, the leading order term in $\Psi$ is given by the first term on the right hand side. The lower order terms (l.o.t.) are relatively small compared to the first term and we will control them later using the elliptic estimates. We will perform the $L^2$ estimate for the solution of \eqref{eq:biot2} and 
one can see that the a-priori estimate blows up as $\al \to 0$. For $\al = 0$, \eqref{eq:biot2} becomes 
\[
L_0(\Psi) = - \pa_{\b \b} \Psi - 4 \Psi ,
\]
with boundary conditions $\Psi(R, 0) = \Psi(R, \pi/ 2) =0$, which is self-adjoint and has kernel $\sin(2\b)$. In this case, to solve $L_0(\Psi) = \Om$, a necessary and sufficient condition is that $\Om$ is orthogonal to $\sin 2\b$. Imposing this constraint 
when we perform the elliptic estimate leads to the leading order term in $\Psi$ \eqref{eq:biot3}.

Following the same procedure as in \cite{elgindi2019finite}, we drop the $O(\al)$ terms in \eqref{eq:simp1}, \eqref{eq:simp2} and the lower order terms in \eqref{eq:biot3} to extract the leading order term of the velocity $u, v$ 
\beq\label{eq:simp3}
\bal
u&  = -\f{2 r \cos \b }{\pi \al} L_{12}(\Om) + l.o.t., \quad  v  = \f{2r \sin \b}{\pi \al} L_{12}(\Om) + l.o.t., \\
  u_x  & = -v_y   = -\f{2}{\pi \al} L_{12} (\Om) + l.o.t., \quad u_{y}  =  l.o.t. , \quad v_x =l.o.t. .
\eal
\eeq
The complete calculation and the formulas of the lower order terms are given in \eqref{eq:simp5}-\eqref{eq:simp512}.

Similarly, the leading order term in the transport terms \eqref{eq:trans} is 
\beq\label{eq:trans2}
- (\al R \pa_{\b} \Psi) \pa_R + ( 2\Psi +  \al R \pa_R \Psi) \pa_{\b} 
= - \f{2}{\pi} \cos(2\b) L_{12}(\Om) R\pa_R + \f{2}{ \pi \al} \sin(2\b) L_{12}(\Om) \pa_{\b} + l.o.t. .
\eeq

Later on, we will prove that the self-similar blowup is non-linearly stable and we will control the above lower order terms using the elliptic estimates. These terms will be treated as small perturbations and are harmless to the self-similar blowup.

\subsection{Decoupling and simplifying the system}\label{sec:decoup}
We will look for solution $\th$ of \eqref{eq:bous20}-\eqref{eq:bous22} (or equivalently \eqref{eq:bous_sim11}-\eqref{eq:bous_sim13}) such that $\th_x \in C^{\al}$, $\th_x$ is odd, and $\th_y$ is relatively small compared to $\th_x$, i.e. $\th$ is {\it not} isotropic. The anisotropic property of $\th$ will enable us to further simplify \eqref{eq:bous_sim11}-\eqref{eq:bous_sim13}.
The reason that we have this property is due to the following key observation.
For the purpose of illustration, we construct a function $\theta$ that has the same qualitative feature as our solution $\th$. We first construct  $\theta_x$ of the form: $\th_x = \f{x^{\al}}{ 1 + (x^2 + y^2)^{\al/2} }$ for $x , y \geq 0$. Then for $x, y $ close to $0$, we have 
\beq\label{eq:key_ob2_example}
\th \approx \f{1}{1+\al} \cdot \f{x^{1+\al}}{ 1 + (x^2 + y^2)^{\al/2}}, \quad 
|\th_y | \approx  \B| \f{\al}{1+\al}  \cdot \f{xy}{x^2+ y^2}  \cdot \f{x^{\al}(x^2 + y^2)^{\al/2} }{ ( 1 + (x^2 + y^2)^{\al/2})^2}  \B| \les \al \th_x.
\eeq
Compared to $\th_x$, $\th_y$ is relatively small. Equivalently, $\xi$ is small relative to $\eta$.
Moreover, $\xi$ is weakly coupled with $\Om, \eta$ in \eqref{eq:bous_sim11}-\eqref{eq:bous_sim12} since $v_x = l.o.t.$ according to \eqref{eq:simp3}. Hence, we can decouple $\xi$ from the $\eta$ equation in \eqref{eq:bous_sim12} as follows
\[
\eta_t + \B(  - (\al R \pa_{\b} \Psi) \pa_R + ( 2\Psi +  \al R \pa_R \Psi) \pa_{\b} \B)  \eta = -u_x \eta + l.o.t. ,\\
\]
These key observations motivate us to focus on the system \eqref{eq:bous_sim11}-\eqref{eq:bous_sim12} about $\Om, \eta$.

Using the calculations of $\na u$ \eqref{eq:simp3}, the transport terms \eqref{eq:trans2} and treating $\xi \  (\th_y)$ as a lower order term,
we can simplify \eqref{eq:bous_sim11}-\eqref{eq:bous_sim13} as follows
\begin{align}
\Om_t   - \f{2}{\pi} \cos(2\b) L_{12}(\Om) R\pa_R  \Om+ \f{2}{ \pi \al} \sin(2\b) L_{12}(\Om) \pa_{\b} \Om
&= \eta  + l.o.t.,  \label{eq:sys0} \\
\eta_{t}  - \f{2}{\pi} \cos(2\b) L_{12}(\Om) R\pa_R  \eta+ \f{2}{ \pi \al} \sin(2\b) L_{12}(\Om) \pa_{\b} \eta &=  \f{2}{\pi \al} L_{12}(\Om) \eta + l.o.t., \label{eq:sys1}
\end{align}
where the equations are evaluated at $(R, \b)$ with $R = (x^2 + y^2)^{\al / 2}, \b = \arctan( y/x)$. Notice that in \eqref{eq:sys1}, the first transport term looks much smaller than the other transport term and the nonlinear term which contains a $1/\al$ factor.
Thus we can ignore it in our leading order approximation. For the angular transport term, we use an argument introduced in \cite{elgindi2019finite} and look for approximate solutions $(\Om, \eta)$ of the form
\beq\label{eq:form_al}
\Om(R, \b, t) = \al  \G(\b) \Om_*(R, t), \quad   \eta(R, \b, t) =\al \G(\b) \eta_*(R, t), \quad \G(\b) =
(\cos(\b))^{\al}.
\eeq
We have added the factor $\al$ in the above form, which is slightly different from \cite{elgindi2019finite}. For $\b \in [0, \pi /2]$, we gain a small factor $\al$ from the angular derivative:
\[
| \sin(2\b) \pa_{\b} \G(\b) |   =  |2 \al \sin^2(\b) (\cos(\b))^{\al} | \leq  2 \al \G(\b).
\]
Hence, the angular transport term in \eqref{eq:sys1} becomes smaller compared to the nonlinear term. 

Using \eqref{eq:form_al} and the above estimate, formally, we obtain that the transport terms in \eqref{eq:sys0} is of order $\al^2$ and $\eta$ in \eqref{eq:sys0} is of order $\al$. Therefore, we drop the transport terms in \eqref{eq:sys0}. This additional consideration is not required in \cite{elgindi2019finite} for 3D asymmetric Euler without swirl.

We remark that in our dynamic rescaling formulation, $\eta$ is comparable to the nonlinear term $\al^{-1} L_{12}(\Om) \eta$. Therefore, we drop the transport terms and the lower order terms in \eqref{eq:sys0},\eqref{eq:sys1} to derive a leading order system for $(\Om, \eta)$
\beq\label{eq:sys2}
\bal
&\Om_t  = \eta    , \quad \eta_{t}  =  \f{2}{\pi \al} L_{12}(\Om) \eta , \quad L_{12}(\Om) =  \int_R^{\infty} \int_0^{\pi/2} \f{\Om( s ,\b) \sin(2\b) }{s} ds d \b .
\eal
\eeq
It is not difficult to see that if the initial data $\Om, \eta$ are non-negative and are odd with respect to $x$, the solutions preserve these properties dynamically. In the first equation, $\Om$ tends to align with $\eta$ during the evolution. Then the nonlinear term in the second equation is of order $\eta^2$, which is the driving force of finite time singularity of the leading order  system.

\section{Self-similar solution of the leading order system}\label{sec:los}
The leading order system \eqref{eq:sys2} is crucial in our analysis and it captures the leading behavior of the 
blowup solution of the Boussinesq equations \eqref{eq:bous1}-\eqref{eq:biot}.
In this section, we construct the self-similar solution of the leading order system \eqref{eq:sys2} for $(\Om, \eta)$. Notice that $L_{12}(\Om)$ does not depend on the angular component $\b$. Inspired by the solution structure of the leading order system in \cite{elgindi2019finite}, we look for a self-similar solution in the form 
\[
\Om(R, \b, t) = (T-t)^{c_{\om}} \Om_*\lt( \f{R}{ (T-t)^{ \al \cdot c_l  } } \rt) \G(\b), \quad \eta(R, \b, t) = 
 (T-t)^{c_{\th} - c_l} \eta_*\lt( \f{R}{ (T-t)^{ \al \cdot c_l}} \rt) \G(\b) ,
\]
where $c_{\om}, c_l, c_{\th}$ are the scaling parameters. The reason that we use the scaling factor $(T-t)^{\al \cdot c_l}$ in the space variable $R$ is that $R = r^{\al}$ and 
$
\f{R}{ (T-t)^{\al \cdot c_l}} =  \lt( \f{ r}{ (T-t)^{c_l}} \rt)^{\al},$
where $r = \sqrt{x^2 + y^2}$. Factor $(T-t)^{c_l}$ corresponds to the scaling of the original variables $x, y$ and $(T-t)^{c_{\th}}$ is the scaling of $\th$ in \eqref{eq:bous20}-\eqref{eq:bous22}. See \eqref{eq:scal} for the scaling invariance of the Boussinesq equations.

Plugging the self-similar solutions ansatz into \eqref{eq:sys2}, we obtain
\beq\label{eq:selfsim1}
\bal
 & - (T-t)^{c_{\om} - 1}c_{\om} \Om_*(z) \G(\b)
 + (T-t)^{c_{\om} - 1} \al c_l z \pa_z \Om_*(z) \G(\b) =  (T-t)^{c_{\th} - c_l} \eta_*(z) \G(\b) , \\
 & - (T-t)^{c_{\th} - c_l - 1} (c_{\th} - c_l) \eta_*(z) \G(\b) +(T-t)^{c_{\th }- c_l - 1} \al c_l z \pa_z \eta_*(z) \G(\b) \\
 =& (T-t)^{c_{\th} - c_{l} + c_{\om}}\eta_*(z) \G(\b) \f{2}{\pi \al}  \int_z^{\infty} \f{\Om_*(s)}{s} ds\cdot \int_0^{\pi/2} \G(\b)\sin(2\b) d\b   ,
\eal
\eeq
where $z = R \cdot (T-t)^{-\al c_l} \geq 0$. From the above equations, we obtain that the scaling parameters $(c_{\om}, c_l, c_{\th})$ satisfies 
\[
c_{\om} - 1 = c_{\th} - c_l , \quad  c_{\th} - c_l -1 = c_{\om} + c_{\th} - c_l, 
\]
which implies 
\[
c_{\om} = -1,  \quad c_{\th} = c_l + 2.
\]
Denote 
\[
c = \f{2}{\pi} \int_0^{\pi} \G(\b) \sin(2 \b) d\b.
\]
Plugging the relations among the scaling parameters into \eqref{eq:selfsim1} and factorizing the temporal variable, we derive
\beq\label{eq:selfsim2}
\bal
 \al c_l z  \pa_z \Om_*  \G(\b)&=   - \Om_* \G(\b)+ \eta_* \G(\b), \\
 \al c_l z \pa_z \eta_* \G(\b) & = -2 \eta_*  \G(\b)+  \f{c}{ \al} \eta_* \G(\b)\int_z^{\infty} \f{\Om_*(s)}{s} ds .
 \eal
\eeq
We can factorize the angular part $\G(\b)$ to further simplify the above equations. Surprisingly, the above equations have explicit solutions of the form 
\[ 
\Om_*(z) = \f{az}{(b+z)^2} ,\quad  c_l = \f{1}{\al}  
\]
(recall that $z \geq 0$). We determine $\eta_*$ from the first equation in \eqref{eq:selfsim2}
\[
\eta_*(z) =  \al c_l z  \pa_z \Om_*    + \Om_*  = z \pa_z \Om_* + \Om_* =  \f{2ab z}{(b+z)^3}.
\]
Then $(\eta_*, \Om_*)$ solves \eqref{eq:selfsim2} exactly if and only if 
\[
 z \pa_z \eta_* + 2 \eta_* -  \f{c}{ \al} \eta_*\int_z^{\infty} \f{\Om_*(s)}{s} ds  =0
\]
which is equivalent to 
\[
  0 = z \lt( -\f{6ab z}{ (b+z)^4} + \f{2ab}{(b+z)^3} \rt) + \f{4ab z}{ (b+z)^3} - \f{c}{\al} \f{2abz}{ (b+z)^3} \f{a}{b+z} =  - \f{  2ab (  -3\al b + ac )z}{ \al (b+z)^4}.
\]
Hence, we obtain 
\[
a = \f{3 \al b }{c}.
\]
Using the above formula, we can derive the solutions $(\Om_*, \eta_*)$ of \eqref{eq:sys2}. We remark that there is a free parameter $b$ in the solutions $(\Om_*, \eta_*)$. After we impose a normalization condition, e.g. the derivative of $\Om_*$ at $z=0$, we can determine $b$. For simplicity, we choose $b=1$ and then $a$ becomes $a = 3 \al / c$. Consequently, we obtain the following result.

\begin{lem}\label{lem:selfsim}
The leading order system \eqref{eq:sys2} admits a family of self-similar solutions
\[
\Om(R, \b, t) = \f{ \al}{c} \f{1}{T-t}  \G(\b)  \Om_*\lt(\f{ R}{ T - t }  \rt), \quad 
\eta(R, \b , t) = \f{ \al}{c} \f{1}{ (T-t)^2}  \G(\b)  \eta_*\lt(\f{ R}{ T - t }  \rt) , 
\]
for some $T > 0$, where 
\[
\Om_*(z) = \f{3z}{ (1+z)^2}, \quad \eta_* = \f{ 6z }{ (1+z)^3 } , \quad c  = \f{2}{\pi} \int_0^{\pi / 2} 
\G(\b) \sin(2\b) d\b  \neq 0.
\]
\end{lem}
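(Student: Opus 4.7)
The plan is to verify the lemma by a direct ansatz-and-match strategy, following the derivation sketched above the statement. First I would plug the self-similar ansatz
\[
\Om(R,\b,t) = (T-t)^{c_\om}\,\Om_*\!\lt(\tfrac{R}{(T-t)^{\al c_l}}\rt)\G(\b),
\qquad
\eta(R,\b,t) = (T-t)^{c_\th - c_l}\,\eta_*\!\lt(\tfrac{R}{(T-t)^{\al c_l}}\rt)\G(\b)
\]
into the leading order system \eqref{eq:sys2}. The reason this works with a pure $\G(\b)$ angular factor is that the only angular dependence entering the nonlinear term comes through $L_{12}(\Om)$, and since $\Om$ factors as $\G(\b)\cdot(\text{radial})$, the integral against $\sin(2\b)$ just produces the constant $c = \tfrac{2}{\pi}\int_0^{\pi/2}\G(\b)\sin(2\b)\,d\b$. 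Matching the powers of $(T-t)$ on each side forces $c_\om = -1$ and $c_\th = c_l + 2$; matching the radial dependence then yields the ODE system \eqref{eq:selfsim2} for $(\Om_*, \eta_*)$ after dividing by $\G(\b)$.

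Next I would solve the reduced system. The first equation in \eqref{eq:selfsim2} expresses $\eta_*$ as a first-order linear combination of $\Om_*$ and $z\pa_z\Om_*$, so $\eta_*$ is completely determined once $\Om_*$ and $c_l$ are chosen. Guided by the rational ansatz used by Elgindi, I try
\[
\Om_*(z) = \f{a z}{(b+z)^2}, \qquad c_l = \f{1}{\al}.
\]
This choice is natural because $\int_z^\infty \Om_*(s)/s\,ds$ reduces to the elementary primitive $a/(b+z)$, which keeps the second equation in \eqref{eq:selfsim2} algebraic in $z$. With $c_l = 1/\al$, the first equation gives $\eta_* = z\pa_z\Om_* + \Om_* = \tfrac{2ab z}{(b+z)^3}$.

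Then I would substitute these explicit forms into the second equation in \eqref{eq:selfsim2} and simplify. A direct computation reduces it to a single rational identity of the form
\[
-\f{2ab\,( -3\al b + a c)\,z}{\al (b+z)^4} = 0,
\]
which holds identically in $z$ iff $a = 3\al b/c$. Normalizing by taking $b=1$ (any other choice of $b$ just rescales $R$, reflecting the remaining scale invariance of the leading order system) yields $a = 3\al/c$, and hence the claimed profiles
$\Om_*(z) = 3z/(1+z)^2$ and $\eta_*(z) = 6z/(1+z)^3$. The time-dependent prefactors in the statement are obtained from the scaling exponents $c_\om = -1$ and $c_\th - c_l = 1$.

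The only genuinely delicate point is checking $c \neq 0$: for $\b \in (0, \pi/2)$ we have $\G(\b) = (\cos\b)^\al > 0$ and $\sin(2\b) > 0$, so the integrand is strictly positive, giving $c > 0$. Beyond that, the remaining obstacles are purely computational: ensuring the matching of time exponents is self-consistent (it is, because both the balance $c_\om - 1 = c_\th - c_l$ from the $\Om$-equation and $c_\th - c_l - 1 = c_\om + c_\th - c_l$ from the $\eta$-equation are satisfied by the unique choice $c_\om = -1$, $c_\th = c_l + 2$), and verifying the algebraic identity for $a$ and $b$. No hard estimates are required.
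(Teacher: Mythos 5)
Your proposal is correct and follows essentially the same route as the paper: plug in the self-similar ansatz, match powers of $T-t$ to get $c_{\om}=-1$, reduce to the ODE system \eqref{eq:selfsim2}, use the rational ansatz $\Om_*=az/(b+z)^2$ with $c_l=1/\al$, and solve the resulting algebraic identity for $a=3\al b/c$, normalizing $b=1$. The only (harmless) slip is the exponent bookkeeping at the end: the balance $c_{\om}-1=c_{\th}-c_l$ with $c_{\om}=-1$ gives $c_{\th}-c_l=-2$, which is what produces the $(T-t)^{-2}$ prefactor for $\eta$, not $c_{\th}-c_l=1$.
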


We will choose $\G(\b) = (\cos(\b))^{\al}$ in the later discussion.

\vspace{0.1in}
\paragraph{\bf{Properties of $\th_x, \om$}}
The self-similar profile $(\Om, \eta)$ of the leading order system \eqref{eq:sys2} in Lemma \ref{lem:selfsim} is indeed anisotropic in $x, y$ direction. Moreover, $\th_x$ and  $\om$ are positive in the first quadrant. For $\G(\b) = 
(\cos(\b))^{\al}$, the self-similar profile of $\th_x$ in the first quadrant is 
\[
\th_x = C\al \G(\b) \f{ R}{(1+R)^3} = C \al  \f{ |x|^{\al } }{  (1 + (x^2 + y^2)^{\al / 2})^3}  ,
\]
for some constant $C$. If $x^2 + y^2$ is small, the formal argument \eqref{eq:key_ob2_example} shows that $\th_y$ is relatively small compared to $\th_x$. We will estimate it precisely in Lemma \ref{lem:xi} in the Appendix. 

\vspace{0.1in}
\paragraph{\bf{Hyperbolic flow field}} 
The leading order of the flow structure corresponding to the self-similar solution of the leading order system can be obtained using 
\eqref{eq:simp3}
\[
\bal
L_{12}(\Om)(R, \b, t) &=\f{\pi \al}{2} \f{1}{T-t} \f{ 3   }{ 1 + R / (T-t)}   = \f{\pi \al}{2} \f{3}{ (T-t) +R},  \\
u(x , y, t) &=   - \f{3 r \cos \b}{ (T-t) + R} + l.o.t. , \quad v(x, y, t) = \f{3 r \sin(\b)}{ (T-t) + R} + l.o.t.. \\
\eal
\]
In the first quadrant, the flow is clockwise since $u < 0, v > 0$. Moreover, the odd symmetry of $\om$ implies that the flow is hyperbolic near the origin. These properties of the solutions are similar to those considered in \cite{luo2013potentially-1,luo2013potentially-2}.

\section{The dynamic rescaling formulation and the approximate steady state}\label{sec:2Ddyn}
In this section, we reformulate the problem using the dynamic rescaling equation and construct an approximate steady state based on the self-similar solution of the leading order system.

\subsection{Dynamic rescaling formulation}\label{sec:dsform}
Let $ \om(x, t), \th(x,t) , \uu(x, t)$ be the solutions of \eqref{eq:bous1}-\eqref{eq:biot}.
Then it is easy to show that 
\beq\label{eq:rescal1}
\bal
  \td{\om}(x, \tau) &= C_{\om}(\tau) \om(   C_l(\tau) x,  t(\tau) ), \quad   \td{\th}(x , \tau) = C_{\th}(\tau)
  \th( C_l(\tau) x, t(\tau)),  \\
    \td{\uu}(x, \tau) &= C_{\om}(\tau)  C_l(\tau)^{-1} \uu(C_l(\tau) x, t(\tau)) , 
\eal
\eeq
are the solutions to the dynamic rescaling equations
 \beq\label{eq:bousdy1}
\bal
\td{\om}_{\tau}(x, \tau) + ( c_l(\tau) \xx + \td{\uu} ) \cdot \na \td{\om}  &=   c_{\om}(\tau) \td{\om} + \td{\th}_x , \qquad 
\td{\th}_{\tau}(x , \tau )+ ( c_l(\tau) \xx + \td{\uu} ) \cdot \na \td{\th}  = 0,
\eal
\eeq
where $\uu = (u, v)^T = \na^{\perp} (-\D)^{-1} \td{\om}$, $\xx = (x, y)^T$, 
\beq\label{eq:rescal2}
\bal
  C_{\om}(\tau) = \exp\lt( \int_0^{\tau} c_{\om} (s)  d \tau\rt), \ C_l(\tau) = \exp\lt( \int_0^{\tau} -c_l(s) ds    \rt) , \  C_{\th}  =  \exp\lt( \int_0^{\tau} c_{\th} (s)  d \tau\rt),
\eal
\eeq
$  t(\tau) = \int_0^{\tau} C_{\om}(\tau) d\tau $ and  the rescaling parameter $c_l(\tau), c_{\th}(\tau), c_{\om}(\tau)$ satisfies 
\beq\label{eq:rescal3}
c_{\th}(\tau) = c_l(\tau ) + 2 c_{\om}(\tau).
\eeq

Let us explain the above relation. Using this relationship and \eqref{eq:rescal1}, we have $ \td {\uu} \cdot \na \td{\om} = C_{\om}(\tau)^2 \uu \cdot \na \om$ and $ \td \th_x = C_{\th}(\tau) C_l(\tau) \th_x$. To obtain \eqref{eq:bousdy1} from \eqref{eq:bous1}-\eqref{eq:biot}, we require that the scaling factors of $\td {\uu} \cdot \na \td{\om} $ and $ \td \th_x$ are the same, which implies $C_{\om}(\tau)^2 = C_{\th}(\tau) C_l(\tau)$. Using this relationship and \eqref{eq:rescal2}, we obtain \eqref{eq:rescal3}.

Recall that the Boussinesq equations have scaling-invariant property \eqref{eq:scal} with two parameters. We have the freedom to choose the time-dependent scaling parameters $c_l(\tau)$ and $c_{\om}(\tau)$ according to some normalization conditions. After we determine the normalization conditions for $c_l(\tau)$ and $c_{\om}(\tau)$, the dynamic rescaling equation is completely determined and the solution of the dynamic rescaling equation is equivalent to that of the original equation using the scaling relationship described in \eqref{eq:rescal1}-\eqref{eq:rescal2}, as long as $c_l(\tau)$ and $c_{\om}(\tau)$ remain finite.

We remark that the dynamic rescaling formulation was introduced in \cite{mclaughlin1986focusing,  landman1988rate} to study the self-similar blowup of the nonlinear Schr\"odinger equations. This formulation is also called the modulation technique in the literature and has been developed by Merle, Raphael, Martel, Zaag and others.
It has been a very effective tool to analyze the formation of singularities for many problems like the nonlinear Schr\"odinger equation \cite{kenig2006global,merle2005blow}, the nonlinear wave equation \cite{merle2015stability}, the nonlinear heat equation \cite{merle1997stability}, the generalized KdV equation \cite{martel2014blow}, and other dispersive problems. Recently, this method has been applied to study singularity formation in the De Gregorio model and the generalized Constantin-Lax-Majda model for the 3D Euler equations from smooth initial data \cite{chen2019singularity,chen2019finite,chen2020slightly}. It has also been applied to prove singularity formation in other equations in fluid dynamics, see e.g.  \cite{elgindi2019finite,collot2018singularity}.

If there exists $C>0$ such that for any $\tau > 0$, $c_{\om}(\tau) \leq -C <0$ and the solution $\td{\om}$ is nontrivial, e.g. $ || \td{\om}(\tau, \cdot) ||_{L^{\infty}} \geq c >0$ for all $\tau >0$, we then have 
\[
\bal
C_{\om}(\tau) &\leq e^{-C\tau}, \ t(\infty) \leq \int_0^{\infty}  e^{-C \tau } d \tau =C^{-1} <+ \infty \; ,
\eal
\]
and that $| \om(   C_l(\tau) x,  t(\tau) ) | = C_{\om}(\tau)^{-1}  |\td{\om}(x, \tau) | 
\geq e^{C\tau} |\td{\om}(x, \tau) | $  blows up at finite time $T = t(\infty)$.
If $(\td{\om}(\tau), \td{\th}(\tau),  c_{l}(\tau), c_{\om}(\tau), c_{\th}(\tau))$ converges to a steady state $(\om_{\infty}, \th_{\infty}, c_{l, \infty}, c_{\om,\infty}, c_{\th,\infty})$ of \eqref{eq:bousdy1} as $\tau \to \infty$, one can verify that 
\[
\om(x, t) = \f{1}{1 - t} \om_{\infty}\lt( \f{x}{ (1 - t)^{-c_{l, \infty} / c_{\om, \infty} }  }  \rt)
, \quad \th(x, t) = \f{1}{ (1-t)^{ c_{\th,\infty}/ c_{\om,\infty} }}\th_{\infty}\lt( \f{x}{ (1 - t)^{-c_{l, \infty} / c_{\om, \infty} }  }  \rt)
\]
is a self-similar solution of \eqref{eq:bous1}-\eqref{eq:biot}. 
To simplify our presentation, we still use $t$ to denote the rescaled time in the rest of the paper and drop $\td{\cdot}$ in \eqref{eq:bousdy1}.

\subsection{Reformulation using the $(R,\b)$ coordinates}
Taking $x, y$ derivative on the $\th$ equation in \eqref{eq:bousdy1}, we obtain a system similar to \eqref{eq:bous20}-\eqref{eq:bous22}.
\beq\label{eq:bousdy20}
\bal
 \om_t +  (c_l \xx +  \uu) \cdot \na \th_x &= c_{\om} \om +  \th_x , \\
\th_{xt} + (c_l \xx +  \uu) \cdot \na \th_x  &= ( c_{\th} - c_l -u_x) \th_x - v_x  \th_y ,\\
\th_{yt} + ( c_l \xx+  \uu )\cdot \na \th_y & =  (c_{\th} - c_l - v_y)  \th_y -u_y \th_x,
\eal
\eeq
where we have dropped $\td{\cdot}$ to simplify the notations. 
We make a change of variable $R = r^{\al},  \b  = \arctan(y/x)$ and introduce
\[
\Om(R, \b, t) = \om(x, y, t), \quad \eta(R, \b, t) = (\th_x)(x, y, t), \quad \xi(R, \b, t) = (\th_y)(x, y, t)
\]
in \eqref{eq:bousdy20} as we did in Section \ref{sec:derive}. Notice that the stretching term and the damping term satisfy
\[
c_l \xx \cdot \na \om(x, y, t) =  c_l r \pa_r \om(r, \b, t) = \al c_l R \pa_R \Om(R, \b, t), \quad c_{\om} \om(x, y, t) = c_{\om} \Om(R, \b, t),
\]
and similar relations hold for $\th_x, \th_y$. The reformulated system \eqref{eq:bousdy20}
under $(R, \b)$ coordinates reads
\beq\label{eq:bousdy2}
\bal
\Om_t  + \al c_l R \pa_R \Om + (\uu \cdot \na)  \Om  &=  c_{\om} \Om + \eta \\
\eta_t + \al c_l R \pa_R \eta + (\uu \cdot \na)  \eta & = (2 c_{\om} -  u_x) \eta - v_x \xi \\
\xi_t + \al c_l R \pa_R \xi + (\uu \cdot \na)  \xi & = (2 c_{\om}- v_y) \xi -  u_y \eta ,\\
\eal
\eeq
with the Biot-Savart law in the $(R,\b)$ coordinates \eqref{eq:simp2} and \eqref{eq:biot2}, where we have used $c_{\th} - c_l = 2 c_{\om}$ \eqref{eq:rescal3}. For now, we do not expand $u \cdot \na$ using \eqref{eq:trans} and $u_x, u_y, v_x, v_y$ due to their complicated expressions. 
Using the same argument as that in Section \ref{sec:decoup}, the leading terms in \eqref{eq:bousdy2} are given by 
\beq\label{eq:bous3}
\bal
\Om_t  + \al c_l R \pa_R \Om  &=  c_{\om} \Om + \eta  + l.o.t. ,\\
\eta_t + \al c_l R \pa_R \eta & = (2 c_{\om}  + \f{2}{\pi \al} L_{12}(\Om) ) \eta  + l.o.t. , \\
\xi_t + \al c_l R \pa_R \xi & = (2 c_{\om} - \f{2}{\pi \al} L_{12} (\Om)) \xi + l.o.t.,
\eal
\eeq
where we have dropped the transport terms and simplified $u_x, u_y, v_x, v_y, u/x, v / y$ using \eqref{eq:simp3}. 
We remark that the first two equations in \eqref{eq:bous3} are exactly the dynamic rescaling formulation of the leading order system \eqref{eq:sys2}. 

\subsection{Constructing an approximate steady state}
Notice that the system \eqref{eq:bous3} captures the leading order terms in the system \eqref{eq:bousdy2} and that the self-similar profile of \eqref{eq:sys2} corresponds to the steady state of the first two equations in \eqref{eq:bous3} after neglecting the lower order terms. It motivates us to use the self-similar solutions of \eqref{eq:sys2} in Lemma \ref{lem:selfsim} as the building block to construct the approximate steady state of \eqref{eq:bousdy2}. Firstly, we construct 
\beq\label{eq:profile}
\bal
\bar{\Om}(R, \b)& =  \f{\al}{c} \G(\b) \f{ 3R }{ (1 + R)^2}, \quad \bar{\eta}(R, \b) =  \f{\al}{c} \G(\b) \f{ 6 R }{ (1 + R)^3} , \quad \bar{c}_l  = \f{1}{\al} + 3, \quad \bar{c}_{\om} = -1 ,\\
\G(\b) &= (\cos(\b))^{\al}, \qquad  c =  \f{2}{\pi} \int_0^{\pi/2} \G(\b) \sin(2\b) d\b.
\eal
\eeq
Notice that $(\bar{\Om}, \bar{\eta})$ is a solution of \eqref{eq:selfsim2} with $c_l =\f{1}{\al}$. We modify $\bar{c}_l$ so that the approximate error vanishes quadratically near $R=0$, which will be discussed later. The corresponding $\bar{\th}$ can be obtained by integrating $\bar{\th}_x$ with condition $\bar{\th}(0, y)  = 0$, which is discussed in Appendix \ref{sec:xi}, and $\bar{u}, \bar{v}$ are obtained from the Biot-Savart law \eqref{eq:simp2}, \eqref{eq:biot2}. We can derive the leading order terms using \eqref{eq:biot3} and \eqref{eq:simp3}
\beq\label{eq:simp4}
\bal
&L_{12}(\bar{\Om}) = \int_R^{\infty} \int_0^{\pi/2} \f{ \bar{\Om}(s, \b) \sin(2\b)}{s} ds 
= \f{\pi}{2}  \f{3 \al }{1 + R} ,  \quad  \bar{\Psi}  =  \f{\sin(2\b)}{2} \f{3}{1 +R} + l.o.t. , \\
&  \bar{u}_x  = -\bar{v}_y = - \f{2}{\pi \al} L_{12}(\Om) + l.o.t.  = \f{3}{1+R} + l.o.t. , \quad \bar{u}_y , \  \bar{v}_x = l.o.t. .
\eal
\eeq

We will explain later why we choose the above $\G(\b)$. Lemma \ref{lem:one} in the Appendix shows that $\G(\b)$ is essentially equal to the constant $1$ in some weighted norm.
 


We define the error of the approximate steady state below
\beq\label{eq:error}
\bal
\bar{F}_{\om} &\teq \bar{c}_{\om} \bar{\Om} + \bar{\eta} - \al \bar{c}_l R \pa_R\bar{ \Om} - (\bar{\uu} \cdot \na)  \bar{\Om} ,\\
\bar{F}_{\eta} &\teq  (2 \bar{c}_{\om} -  \bar{u}_x) \bar{\eta} - \bar{v}_x \bar{\xi} 
-\al \bar{c}_l R \pa_R\bar{\eta} - (\bar{\uu} \cdot \na)  \bar{\eta} ,\\
\bar{F}_{\xi} & \teq (2 \bar{c}_{\om} -  \bar{v}_y) \bar{\xi} - \bar{u}_y \bar{\eta} 
-\al \bar{c}_l R \pa_R\bar{\xi} - (\bar{\uu} \cdot \na)  \bar{\xi} .\\
\eal
\eeq

The criteria to choose $\G$ in \eqref{eq:profile} is that $F_{\om}, F_{\eta}, F_{\xi}$ vanish quadratically near $R= 0$ since we will perform energy estimates with a singular weight in the later sections. Using the formula \eqref{eq:trans} for $\bar{u} \cdot \na $ and \eqref{eq:profile}, one can obtain the following expansion of $\bar{F}_{\om}$ near $R = 0$
\[
\bar{F}_{\om} = - 3\al R \pa_R \bar{\Om} - (\bar{u}\cdot \na) \bar{\Om} =\f{9\al R}{c}(  \al \G \cos(2\b) - \sin(2\b) \pa_{\b} \G - \al \G  )  + O(R^2) ,
\]
where we have used the explicit formula \eqref{eq:profile} in the first equality and the factor $3$ comes from $\bar{c}_l = \f{1}{\al} + 3$ in \eqref{eq:profile}. In order for $\bar{F}_{\om}$ to vanish quadratically near $R=0$, we have no choice but to set the coefficient in the $O(R)$ term
to be zero, which gives
\[
 \al \G \cos(2\b) - \sin(2\b) \pa_{\b} \G  - \al \G = 0.
\]
To solve the above first order ODE for $\Gamma$,  we choose the boundary condition $\Gamma (\pi/2)=0$ and requires $\Gamma(\beta) >0$ for $\beta \in (0,\pi/2]$. The solution of this ODE is exactly given by the formula of $\Gamma(\beta)$ in \eqref{eq:profile}. As we can see, such choice of $\Gamma$ is unique and is a consequence of the condition that $\bar{F}_\omega =O(R^2)$ near $R=0$. This condition plays an essential role in our stability analysis for the approximate self-similar profile.
With this $\G(\b)$, we also have $\bar{F}_{\eta} , \bar{F}_{\xi}= O(R^2)$ near $R =0$. 
 We justify these rigorously in Section \ref{sec:non}.


\subsection{Normalization conditions}
For initial data $\bar{\Om} + \Om, \bar{\eta} + \eta, \bar{\xi} + \xi$ of \eqref{eq:bousdy2}, we treat $\Om, \eta, \xi$ as perturbation and choose time-dependent scaling parameters $c_l + \bar{c}_l, c_{\om} + \bar{c}_{\om}$ as follows 
\beq\label{eq:normal}
c_{\om}(t) = -\f{2 }{\pi \al} L_{12}(\Om(t))(0), \quad c_l(t) =  - \f{1-\al}{\al} \f{2}{\pi \al}L_{12}(\Om(t))(0)= \f{1-\al}{\al} c_{\om}(t).
\eeq
Here, $c_l(t), c_{\om}(t)$ are treated as the perturbation of the scaling parameters $\bar{c}_l, \bar{c}_{\om}$. Suppose that $F_{\Om}(t), F_{\eta}(t), F_{\xi}(t)$ are the time-dependent update in \eqref{eq:bousdy2}
,i.e.
\[
F_{\Om}(t) = (c_{\om} + \bar{c}_{\om}) (\Om+ \bar{\Om}) + (\eta + \bar{\eta}) -
 \al ( c_l + \bar{c}_l) R \pa_R (\Om+\bar{\Om}) + ( ( \uu + \bar{\uu}) \cdot \na) ( \Om   + \bar{\Om}), 
\]
and so on. The reason we choose \eqref{eq:normal} is that we want $F_{\Om}(t), F_{\eta}(t), F_{\xi}(t)$ vanishes quadratically near $R=0$ for any perturbation $\Om(t), \eta(t), \xi(t)$ that vanishes quadratically near $R=0$, so that we can choose a singular weight to analyze the stability of the approximate steady state. Similar consideration has been used in our previous work with D. Huang on the asymptotically self-similar blowup of the Hou-Luo model from smooth initial data.
We will provide rigorous estimates for these terms in Section \ref{sec:non}.

\section{Linear stability}\label{sec:lin}
We present our linear stability analysis in this section.
In Section \ref{sec:L2_sys}, we linearize the dynamic rescaling formulation in the $(R, \b)$ coordinates \eqref{eq:bousdy2} around the approximate steady state $(\bar{\Om}, \bar{\eta}, \bar{\xi}, \bar{c}_l, \bar{c}_{\om})$. In Section \ref{sec:L2_outline}, we outline the steps in the linear stability analysis. In the rest of the Section, we establish the linear stability of the leading terms in the linearized system. Throughout this section, we use $\Om, \eta, \xi, c_l, c_{\om}$ to denote the perturbations around the approximate profile \eqref{eq:profile} and assume that $\Om  \in L^2(\vp), \eta \in L^2(\vp), \xi \in L^2(\psi)$ for some singular weights $\vp, \psi$ to be determined later.

\subsection{Linearized system}\label{sec:L2_sys}
We linearize \eqref{eq:bousdy2} around $(\bar{\Om}, \bar{\eta}, \bar{\xi}, \bar{c}_l, \bar{c}_{\om})$ 
\eqref{eq:profile} and derive the equations for the perturbation $\Om, \eta,\xi$ as follows 
\beq\label{eq:lin}
\bal
\Om_t + (1 +3\al) R \pa_R \Om + (\bar{\uu} \cdot \na )  \Om &= - \Om +  \eta  + 
c_{\om} ( \bar{\Om} - R \pa_R \bar{\Om}) + ( \al c_{\om} R\pa_R - ( \uu \cdot  \na ) ) \bar{\Om}+ \bar{F}_{\Om} + N_{\om} , \\
\eta_t + (1+3\al) R \pa_R \eta + (\bar{\uu} \cdot \na ) \eta &= (-2  - \bar{u}_x) \eta 
 - u_x  \bar{\eta}+ c_{\om} ( 2\bar{\eta} - R \pa_R \bar{\eta})   \\
&+( \al c_{\om} R\pa_R - ( \uu \cdot  \na ) )\bar{\eta} -  v_x \bar{\xi} -  \bar{v}_x \xi+ \bar{F}_{\eta} + N_{\eta} , \\
\xi_t +  (1+3\al) R \pa_R \xi + (\bar{\uu} \cdot \na)  \xi & = (- 2 -  \bar{v}_y ) \xi  - v_y  \bar{\xi} + c_{\om} ( 2 \bar{\xi} - R\pa_R \bar{\xi})\\
 & +( \al c_{\om} R\pa_R - ( \uu \cdot  \na ) ) \bar{\xi} - u_y \bar{\eta} - \bar{u}_y \eta + \bar{F}_{\xi} + N_{\xi} ,\\
\eal
\eeq
where we have used $\bar{c}_l = 1/\al + 3, \bar{c}_{\om} = -1$ \eqref{eq:profile}, $\al c_l(t) = c_{\om}(t) - \al c_{\om}(t)$ \eqref{eq:normal} and $- \al c_l R \pa_R \bar{g}  = - c_{\om} R\pa_R \bar{g}
+ \al c_{\om} R\pa_R \bar{g}$ for $g = \bar{\Om}, \bar{\eta}, \bar{\xi}$. The error $\bar{F}_{\Om}, \bar{F}_{\eta}, \bar{F}_{\xi}$ are defined in \eqref{eq:error} 
and the nonlinear terms are defined below 
\beq\label{eq:non}
\bal
N_{\Om} & = c_{\om} \Om + \eta   -  \al c_l R \pa_R \Om -  (\uu \cdot \na)  \Om , \\
N_{\eta} & =  (2 c_{\om} -  u_x) \eta - v_x \xi - \al c_l R \pa_R \eta - (\uu \cdot \na)  \eta , \\
N_{\xi} & = (2 c_{\om} -  v_y ) \xi -  u_y \eta -  \al c_l R \pa_R \xi  - (\uu \cdot \na)  \xi .\\
\eal
\eeq
We focus on the linearized equation of \eqref{eq:lin}. From \eqref{eq:trans2} and \eqref{eq:simp4}, we have
\beq\label{eq:trans3}
3 \al R\pa_R + \bar{u} \cdot  \na 
= 2 \bar{\Psi} \pa_{\b}  + \lt\{ -\al R \pa_{\b} \bar{\Psi} \pa_{R }
 + \al R \pa_R \bar{\Psi} \pa_{\b} \rt\}
 = \f{3 \sin (2 \b)}{1 + R} \pa_{\b} + l.o.t. . 
\eeq
We will justify the above decomposition using integration by parts to avoid loss of derivatives. We will also show that 
\beq\label{eq:trans32}
(\al c_{\om} R \pa_R - (\uu \cdot  \na)  ) \bar{\Om}, \quad (\al c_{\om} R \pa_R - (\uu \cdot  \na)  )\bar{\eta}, \quad (\al c_{\om} R \pa_R - (\uu \cdot  \na)  )\bar{\xi} 
\eeq
in \eqref{eq:lin} are lower order terms. Moreover, we will justify that $\bar{\xi}$ 
is small and is of order $\al^2$ in Lemma \ref{lem:xi} so that we can treat $v_x \bar{\xi}$ as a lower order term in the $\eta$ equation. 

Using \eqref{eq:simp3}, \eqref{eq:simp4}, \eqref{eq:trans3}, \eqref{eq:trans32} and then collecting the lower order terms with a small factor $\al$, the error terms $\bar{F}$ and the nonlinear terms $N$ in the remaining term $\cR$, we derive the leading order terms in the linearized equations 
\begin{align}
\Om_t +  R \pa_R \Om +\f{3 \sin (2 \b)}{1 + R} \pa_{\b} \Om &= - \Om +  \eta + c_{\om} ( \bar{\Om} - R \pa_R \bar{\Om}) + \cR_{\Om} , \label{eq:lin21}\\ 
\eta_t +  R \pa_R \eta + \f{3 \sin (2 \b)}{1 + R} \pa_{\b} \eta &= (-2  + \f{3}{1+R} ) \eta +  \f{2}{\pi \al} L_{12}(\Om)  \bar{\eta}+ c_{\om} ( 2\bar{\eta} - R \pa_R \bar{\eta}) + \cR_{\eta} ,\label{eq:lin22}\\
\xi_t + R \pa_R \xi + \f{3 \sin (2 \b)}{1 + R} \pa_{\b}  \xi &= (-2  - \f{3}{1+R}) \xi - \f{2}{\pi\al} L_{12}(\Om)  \bar{\xi} + c_{\om} ( 2\bar{\xi} - R \pa_R \bar{\xi}) + \cR_{\xi} \label{eq:lin23} ,
\end{align}
where the full expansion of $\cR$ is given in \eqref{eq:lin3} and their estimates are deferred to Section \ref{sec:non}. In the following subsections, we establish the linear stability for \eqref{eq:lin21}-\eqref{eq:lin23}. 
The contribution of $\cR$ is small.
Using this property, we can further establish the nonlinear stability of the approximate profile \eqref{eq:profile} using a bootstrap argument.

We introduce the following notation
\beq\label{eq:nota_ux}
\td{L}_{12}(\Om)(R) \teq L_{12}(\Om)(R) - L_{12}(\Om)(0)
= -\int_0^R \int_0^{\pi/2} \f{ \Om(s, \b) \sin(2\b) }{ s} d\b dx.
\eeq
According to the normalization condition of $c_{\om}$ \eqref{eq:normal}, we can simplify 
\beq\label{eq:nota_ux2}
c_{\om} + \f{2}{\pi \al} L_{12}(\Om)(R)  = \f{2}{\pi\al} \td{L}_{12} (\Om)(R) .
\eeq

\begin{definition}\label{def:op} We define the differential operators 
\[
D_R = R \pa_R , \quad D_{\b} = \sin(2\b) \pa_{\b}
\]
and the linear operators $\cL_i$
\beq\label{eq:op1}
\bal
\cL_1(\Om, \eta) &\teq - D_R \Om - \f{3}{1 + R} D_{\b} \Om -   \Om +  \eta + c_{\om} ( \bar{\Om} - D_R \bar{\Om}) ,\\
\cL_2(\Om, \eta ) & \teq  - D_R \eta - \f{3}{1 + R} D_{\b} \eta + (-2  + \f{3}{1+R} ) \eta + \f{2}{\pi \al } \td{L}_{12}(\Om)  \bar{\eta}+ c_{\om} ( \bar{\eta} - D_R \bar{\eta})  ,\\
\cL_3(\Om, \xi) & \teq  - D_R \xi - \f{3  }{1 + R} D_{\b} \xi+ (-2  - \f{3}{1+R}) \xi 
- \f{2}{\pi \al} \td{L}_{12}(\Om) \bar{\xi} +c_{\om} ( 3\bar{\xi} - D_R \bar{\xi}  ),
 \eal
\eeq
where $\td{L}_{12}(\Om) $ is defined in \eqref{eq:nota_ux} and $\bar{\Om}, \bar{\eta}$ are defined in \eqref{eq:profile}. 
Define the local part of $\cL_i$ by eliminating $c_{\om}, \td{L}_{12}(\Om)$
\beq\label{eq:op2}
\bal
\cL_{10}(\Om, \eta) &\teq  - D_R \Om - \f{3 }{1 + R} D_{\b} \Om -   \Om +  \eta, \quad \cL_{20}(\eta) \teq - D_R \eta - \f{3}{1 + R} D_{\b} \eta + (-2  + \f{3}{1+R} ) 
\eta , \\
\cL_{30}(\xi) & \teq  - D_R \xi - \f{3}{1 + R} D_{\b} \xi+ (-2  - \f{3}{1+R}) \xi .
\eal
\eeq
\end{definition}
With the above notations, \eqref{eq:lin21}-\eqref{eq:lin23} can be reformulated as 
\beq\label{eq:equiv}
\Om_t  = \cL_1(\Om, \eta) + \cR_{\Om} , \quad  \eta_t = \cL_2(\Om, \eta) + \cR_{\eta} , \quad 
\xi_t = \cL_3(\xi) + \cR_{\xi},
\eeq
where we have used the following identities to rewrite the $L_{12}(\Om),c_{\om}$ terms
in \eqref{eq:lin22}-\eqref{eq:lin23}
\[
\bal
 \f{2 L_{12}(\Om)  }{\pi \al}  \bar{\eta} +  c_{\om} ( 2 \bar{\eta} - D_R \bar{\eta})
 &=  \f{2 \td{L}_{12}(\Om) }{\pi \al} \bar{\eta} +   c_{\om} (  \bar{\eta} - D_R \bar{\eta}),  \\
  - \f{2 L_{12}(\Om)  }{\pi \al}  \bar{\xi} +  c_{\om} ( 2\bar{\xi} -D_R \bar{\xi} )
 &=  -\f{2 \td{L}_{12}(\Om) }{\pi \al} \bar{\eta}  + c_{\om} ( 3\bar{\xi} -D_R \bar{\xi} ) .
 \eal
\]

\subsubsection{Key observations }\label{sec:keyob}
There are several key observations that play a crucial role in our analysis. Firstly, the leading order terms in the $\Om, \eta$ equations
\eqref{eq:lin21}-\eqref{eq:lin22} do not couple the $\xi$ term, which is consistent with our derivation for the leading order system \eqref{eq:sys2}. 

Secondly, in the $\xi$ equation, the coupling between $\Om$ and $\xi$ through the nonlocal term $L_{12}(\Om)$ and $c_{\om}$ \eqref{eq:normal} is weak 
due to the fact that $\bar{\xi}$ is much smaller than $\bar{\Om}, \bar{\eta}$. After removing these nonlocal terms,
\eqref{eq:lin23} only involves local terms about $\xi$. By choosing a suitable singular weight, we will show that $\xi$ is linearly stable up to the weak nonlocal term. 

Thirdly, all the nonlocal terms in \eqref{eq:lin21}-\eqref{eq:lin22}, e.g. $c_{\om}, L_{12}(\Om)$, have coefficients with small angular derivative. For example, using \eqref{eq:profile}, we have
\beq\label{eq:nonlocal1}
c_{\om} (\bar{\Om} - R \pa_R \bar{\Om})  =c_{\om} \cdot \f{\al}{c} \G(\b) \f{6R^2}{ (1+R)^3}.
\eeq
We can apply the weighted angular derivative to gain a small factor $\al$
\[
| \sin(2\b) \pa_{\b} \G(\b)| = |2  \al \sin^2(\b) \G(\b)| \leq 2 \al \G(\b).
\]
A similar observation and estimate have been obtained in \cite{elgindi2019finite} for a different $\G$.

\subsubsection{The angular transport term}\label{sec:angle}
To understand the effect of the angular transport term in \eqref{eq:lin21}-\eqref{eq:lin23}, we choose a weight $\vp(R,\b) = A(R) (\sin(\b) )^{-\g_1} (\cos(\b))^{-\g_2}$ and then perform the $L^2$ estimate and use integration by parts to obtain 
\[
 \f{1}{2} \f{d}{dt} \la \Om^2 , \vp \ra = - \B\la \f{3 \sin (2 \b)}{1 + R} \pa_{\b} \Om , \Om \vp(R, \b)  \B\ra  + \textrm{ other terms (o.t.) }  
 = \B\la \f{ 3  (\sin(2\b) \vp)_{\b}}{2(1+R) \vp} , \Om^2 \vp \B\ra + o.t..
\]
It is not difficult to show that 
\[
\f{3  (\sin(2\b) \vp)_{\b}}{2(1+R) \vp} \B|_{R= 0} = 3(1 - \g_1) \cos^2(\b) - 3(1-\g_2) \sin^2(\b).
\]
Suppose that $\g_1, \g_2 \leq 1$. If $\b$ is small, the angular transport term contributes a growing factor $3(1 - \g_1) > 0$ to the energy norm.

To establish the linear stability, it is natural to first establish the (weighted) $L^2$ estimate of \eqref{eq:lin21}-\eqref{eq:lin23}. However, the above argument shows that for small $\b > 0$ the angular transport term destabilizes the profile of the singularity using the singular weights $A(R) (\sin(\b))^{-\g_1} (\cos(\b))^{-\g_2}$ with $\g_1 \leq 1$. A possible approach to address this issue in the estimate is to choose $\g_1$ close to or larger than $1$, i.e. a very singular weight in the $\b$ direction is desired. 
In \cite{elgindi2019finite}, $\g_1$ is chosen to be close to $1$ so that such growing factor is minimized. 
For \eqref{eq:lin21}-\eqref{eq:lin23}, due to the presence of the nonlocal term, e.g. $c_{\om} (\bar{\Om} - R \pa_R \bar{\Om})$, which only vanishes of order $\sin(2\b)^{\al /2}$ near $\b = 0, \pi/2$, if we use a very singular weight for the angular component $\b$, such nonlocal term will be very difficult to control. 

To handle the angular transport term in the $L^2$ estimate, we observe that $ \sin(2\b) \pa_{\b}\bar{\Om}$ is small since $\bar{\Om}$ varies slowly in $\b$. We expect that a similar smallness result holds for the perturbation term $ \sin(2\b) \pa_{\b}\Om$ and we will justify it in Section \ref{sec:stab_angle}. 
This observation motivates us \textit{not} to perform integration by parts for the angular transport term in the weighted $L^2$ estimate.

\subsection{Outline of the linear stability analysis}\label{sec:L2_outline}

We decompose the linear stability analysis of \eqref{eq:lin21}-\eqref{eq:lin23}, or equivalently \eqref{eq:equiv} into several steps. Based on the first observation in Section \ref{sec:keyob}, we separate the estimates of the system of $\Om, \eta$ \eqref{eq:lin21}-\eqref{eq:lin22} and the equation of $\xi$ \eqref{eq:lin23}.

In Section \ref{sec:L2_local}, we estimate the local part of the linearized operators $\cL_i$ \eqref{eq:op1}, i.e. $\cL_{i0}$ \eqref{eq:op2}.  The argument is mainly based on integration by parts. 

Instead of first performing the weighted $L^2$ estimate of the system, we perform the weighted 
$L^2$ estimate of the angular derivative in Section \ref{sec:stab_angle}. The motivation is that using the third observation in Section \ref{sec:keyob}, we gain a small factor $\al^{1/2}$ for the nonlocal terms in the equations of $D_{\b} \Om, D_{\b} \eta$. Therefore, we can treat the nonlocal terms as small perturbations and use the estimates of $\cL_{i0}$ in Section \ref{sec:L2_local} to establish the estimates of $D_{\b} \Om, D_{\b} \eta$. See also the motivation in Section \ref{sec:angle}. Once we obtain the estimates of $D_{\b} \Om, D_{\b} \eta$, we can treat the angular transport terms in the weighted $L^2$ estimates of the equations of $\Om, \eta$ \eqref{eq:lin21}-\eqref{eq:lin22} as perturbations. This overcomes the difficulty discussed in Section \ref{sec:angle}. 

In Section \ref{sec:L2_idea}, we use two models to illustrate the cancellations in \eqref{eq:lin21},\eqref{eq:lin22}, which are crucial for the estimates of $\td L_{12}(\Om), c_{\om}$. This motivates several technical estimates in Section \ref{sec:L2_less}.

In Section \ref{sec:L2_less}, we establish the weighted $L^2$ estimates of $\Om, \eta$ with less singular weights, and obtain the damping terms for $c_{\om}, \td{L}_{12}(\Om)$. We design the less singular weights carefully to fully exploit the cancellations discussed in Section \ref{sec:L2_idea}. 
This is the most difficult part in the whole analysis. 

After we obtain the damping terms for $c_{\om}, \td{L}_{12}(\Om)$, we can treat the nonlocal terms in \eqref{eq:lin21}-\eqref{eq:lin22} as perturbations. Using the estimates of the local operators $\cL_{i0}$ in Section \ref{sec:L2_local}, we further establish weighted $L^2$ estimates of $\Om, \eta$ with more singular weights that are introduced in \cite{elgindi2019finite} in Section \ref{sec:L2_more}. This enables us to apply several key estimates in \cite{elgindi2019finite} in our nonlinear estimates and simplify the whole estimates.

From the second observation in Section \ref{sec:keyob}, we treat the nonlocal terms in the $\xi$ equation \eqref{eq:lin23} as small perturbations. We estimate $D_{\b}\xi,\xi$ in Section \ref{sec:L2_xi} using the estimate of $\cL_{30}$ in Section \ref{sec:L2_local}.

\subsection{Estimates of $\cL_{10}, \cL_{20}, \cL_{30}$}\label{sec:L2_local}
We first introduce several singular weights that will be used throughout the paper.

\begin{definition}\label{def:wg}
Define $\vp_i, \psi_i$ by 
\beq\label{wg}
\bal
\vp_1 & \teq \f{(1+R)^4}{R^4}  \sin(2\b)^{ - \s}, \quad 
\vp_2 \teq \f{(1+R)^4}{R^4}  \sin(2\b)^{ - \g} , \\
\psi_1  & \teq \f{(1+R)^4}{R^4} (  \sin(\b)\cos(\b) )^{-\s},  \quad
\psi_2    \teq \f{(1+R)^4}{R^4}  \sin(\b)^{-\s}  \cos(\b)^{-\g} , \\
\eal
\eeq
where $\s = \f{99}{100},  \g = 1 + \f{\al}{10}$.
\end{definition}

The weights $\vp_1, \vp_2$ have been introduced in \cite{elgindi2019finite} for stability analysis. 

The weights $\vp_1$ and $\psi_1$ are essentially the same. We introduce $\psi_1$ for consistency and the following reasons. Firstly, we will apply the weights $\vp_i$ to $\Om, \eta$ and the weights $\psi_i$ to $\xi$. In particular, we will construct weighted $H^3$ norm $\cH^3(\vp)$ for $\Om,\eta$ and $\cH^3(\psi)$ for $\xi$ in \eqref{norm:H22}. Secondly, $\vp_1$ and $\vp_2$ have similar forms, and $\psi_1$ and $\psi_2$ also have similar forms.
It is easy to see that $\vp_1  \les \vp_2, \psi_1 \les \psi_2$. We choose $\psi_2$ less singular than $\vp_2$ for $\b$ close to $0$ since $\bar{\xi}$ does not decay in $R$ when $R\sin(\b)^{\al}$ is fixed and $\b$ is small. See Lemma \ref{lem:xi} regarding the estimate of $\bar{\xi}$.

Recall $\cL_{10}, \cL_{20}, \cL_{30}$ \eqref{eq:op2} in Definition \ref{def:op}. The following Lemmas will be used repeatedly. 
\begin{lem}\label{lem:dp}
For some $\d ,\d_1, \d_2> 0$, consider the weights 
\beq\label{wg:build}
\bal
\vp(R, \b) &=  \f{(1+R)^4}{R^4} (\sin(2\b))^{-\d} , \  \psi(R,\b) =  \f{(1+R)^4}{R^4} ( \sin(\b))^{-\d_1} (\cos(\b))^{-\d_2} .
\eal
\eeq
Assume $ \vp^{1/2} \Om, \vp^{1/2} \eta  \in L^2$. We have 
\beq\label{eq:dp1}
\bal
\la \cL_{10}(\Om, \eta) , \Om \vp \ra  +  \la \cL_{20}(\eta) , \eta \vp \ra 
&\leq ( -\f{1}{4} + 3|1- \d |  ) (  || \Om \vp^{1/2}||_2^2 + || \eta \vp^{1/2}||_2^2).
\eal
\eeq
Assume that $ \psi^{1/2} \xi \in L^2$. Denote $a \vee b \teq \max(a,b)$. Then it holds true that
\beq\label{eq:dp2}
\la \cL_{30}(\xi) , \xi \psi \ra   \leq  \B(-\f{1}{2} + 3 ( |1- \d_1 |\vee |1-\d_2| ) \B) 
|| \xi \psi^{1/2}||_2^2 .
\eeq

\end{lem}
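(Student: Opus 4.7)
\textbf{Proof proposal for Lemma \ref{lem:dp}.}

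My plan is to handle both estimates by the same three-step recipe: (a) move the transport derivatives onto the weight via integration by parts, (b) compute the logarithmic derivatives of $\vp$ and $\psi$ explicitly using the product structure of the weights, and (c) combine with the undifferentiated local terms and balance the cross coupling $\la \eta,\Om\vp\ra$ by a weighted Young inequality.

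For step (a)/(b) on the radial side, both weights share the radial factor $(1+R)^4/R^4$. A direct logarithmic differentiation gives $R\pa_R\vp/\vp = R\pa_R\psi/\psi = -4/(1+R)$, hence
\[
\la -D_R f, f\vp\ra = \tfrac{1}{2}\la f^2, \pa_R(R\vp)\ra = \tfrac{1}{2}\B\la f^2, \vp\cdot \tfrac{R-3}{1+R}\B\ra,
\]
and the same identity with $\psi$ in place of $\vp$. The boundary terms at $R=0$ (where perturbations vanish quadratically, so $R\vp f^2 = O(R)$) and at $R=\infty$ (where $\vp \to 1$ and $f\in L^2(\vp)$) vanish, as do the $\b$-boundary terms because $(\sin 2\b)^{1-\delta}$, $(\sin\b)^{1-\delta_1}$, $(\cos\b)^{1-\delta_2}$ all vanish on the relevant boundaries provided $\delta,\delta_i<1$ (or the boundary quantity we pick up is bounded and integrated against a vanishing factor). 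For the angular transport I compute
\[
\f{\pa_\b(\sin(2\b)\vp)}{\vp} = 2(1-\delta)\cos(2\b),\qquad
\f{\pa_\b(\sin(2\b)\psi)}{\psi} = 2\bigl[(1-\delta_1)\cos^2\b - (1-\delta_2)\sin^2\b\bigr],
\]
so $\la -\tfrac{3}{1+R}D_\b f, f\vp\ra = \la f^2, 3(1-\delta)\cos(2\b)\vp/(1+R)\ra$ (and an analogous identity for $\psi$, whose angular factor is bounded in absolute value by $|1-\delta_1|\vee|1-\delta_2|$).

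For the first bound, adding the radial and angular pieces for both $\Om$ and $\eta$, then appending the local contributions $-\|\Om\vp^{1/2}\|_2^2 + \la \eta,\Om\vp\ra$ from $\cL_{10}$ and $-2\|\eta\vp^{1/2}\|_2^2 + 3\la \eta^2,\vp/(1+R)\ra$ from $\cL_{20}$, I recombine to obtain
\[
\la \cL_{10}(\Om,\eta),\Om\vp\ra + \la \cL_{20}(\eta),\eta\vp\ra
= \B\la \Om^2\vp, \tfrac{R-3 + 6(1-\delta)\cos 2\b}{2(1+R)} - 1\B\ra + \la \eta, \Om\vp\ra + \B\la \eta^2\vp, \tfrac{R+3 + 6(1-\delta)\cos 2\b}{2(1+R)} - 2\B\ra.
\]
The $\Om^2\vp$ coefficient is increasing in $R$, with limit $\tfrac12$; the $\eta^2\vp$ coefficient is decreasing in $R$, with maximum $\tfrac32 + 3|1-\delta|$ at $R=0$. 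Bounding pointwise and applying Young's inequality $|\la \eta,\Om\vp\ra|\leq \tfrac12\|\Om\vp^{1/2}\|_2^2 + \tfrac12\|\eta\vp^{1/2}\|_2^2$ with the parameter chosen so that the negative contributions from the local $-\Om^2\vp$ and $-2\eta^2\vp$ terms are split $1/4:1/4$ between the two norms, yields the target form $(-\tfrac14 + 3|1-\delta|)(\|\Om\vp^{1/2}\|_2^2 + \|\eta\vp^{1/2}\|_2^2)$.

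For the second bound, the local term in $\cL_{30}$ is $(-2-\tfrac{3}{1+R})\xi$, which upon pairing with $\xi\psi$ gives $-2\|\xi\psi^{1/2}\|_2^2 - 3\la \xi^2,\psi/(1+R)\ra$. Adding the transport contributions, the $-3\la \xi^2,\psi/(1+R)\ra$ combines with the $R-3$ piece from the radial IBP to kill all of the $1/(1+R)$ contributions that had to be bounded crudely in the $\cL_{20}$ case; the only positive remainder is the angular one, controlled by $3(|1-\delta_1|\vee|1-\delta_2|)$. Since there is no cross coupling (no $\eta$ term appears in $\cL_{30}$), no Young's inequality is needed, and the constant $-\tfrac12$ in the lemma emerges directly from $-2+\tfrac12$ (limit of radial coefficient) $+1$ (loss in bounding $\la \xi^2,\psi/(1+R)\ra\leq \|\xi\psi^{1/2}\|_2^2$ where that term survives with coefficient depending on $\delta_1,\delta_2$).

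The main technical obstacle is the careful bookkeeping of where each factor $1/(1+R)$ lands: one must neither discard the negative $-2\la f^2,\vp/(1+R)\ra$ contribution from the radial IBP too early (it is needed for the $\cL_{30}$ estimate to cancel against $-3\la \xi^2,\psi/(1+R)\ra$), nor bound the angular integrand by its sup norm before combining with the local damping (it is needed in $\cL_{20}$ to absorb the $+3\la \eta^2,\vp/(1+R)\ra$ term). Choosing the Young's inequality parameter for the cross term to achieve the symmetric splitting $-\tfrac14$ on each component is the final balancing act.
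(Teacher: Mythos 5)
Your setup is correct and matches the paper's: the integration by parts, the logarithmic derivatives $\f{(R\vp)_R}{2\vp}=\f{R-3}{2(1+R)}$ and $\f{(\sin(2\b)\vp)_\b}{\vp}=2(1-\d)\cos(2\b)$ (and the analogue for $\psi$), and the resulting coefficients in front of $\Om^2\vp$ and $\eta^2\vp$ are all right. The second estimate \eqref{eq:dp2} is also fine: there is no cross term, and the exact coefficient $-\f{3}{2}-\f{5}{1+R}+3(|1-\d_1|\vee|1-\d_2|)$ is below the claimed $-\f12+3(\cdots)$ without any further work (your ``$+1$ loss'' is unnecessary, but harmless).

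The gap is in how you close the first estimate. After the pointwise bounds, the $\Om^2\vp$ coefficient is $-\f12-\f{2}{1+R}+3|1-\d|\le -\f12+3|1-\d|$ and the $\eta^2\vp$ coefficient is $-\f12-\f{R}{1+R}+3|1-\d|\le -\f12+3|1-\d|$; if you then apply the symmetric Young inequality $|\la\eta,\Om\vp\ra|\le\f12\|\Om\vp^{1/2}\|_2^2+\f12\|\eta\vp^{1/2}\|_2^2$ as you write, you end up with $(0+3|1-\d|)(\cdots)$, not $(-\f14+3|1-\d|)(\cdots)$. Moreover no constant-parameter Young inequality can repair this: you would need $|\Om\eta|\le a\Om^2+b\eta^2$ with $a,b\le\f14$ (to leave $-\f14$ on each side after the worst-case coefficient $-\f12$), but that forces $4ab\le\f14<1$, so the inequality is false. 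The point you observe but then discard --- that the two damping coefficients are monotone in opposite directions, so where one is weak the other is strong --- is exactly what must be exploited. The paper does this with an $R$-dependent splitting: $\la\Om,\eta\vp\ra\le\la(\f14+\f{2}{1+R})\Om^2,\vp\ra+\la(\f14+\f{R}{1+R})\eta^2,\vp\ra$, which is legitimate pointwise because $4(\f14+\f{2}{1+R})(\f14+\f{R}{1+R})>\f{2}{1+R}+\f{R}{1+R}\ge 1$, and which absorbs precisely the $-\f{2}{1+R}$ and $-\f{R}{1+R}$ surpluses, leaving $-\f14+3|1-\d|$ on each component. You should replace the ``bound pointwise, then split $\f12{:}\f12$'' step with this variable-coefficient Cauchy--Schwarz.
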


We will apply Lemma \ref{lem:dp} to the singular weights in Definition \ref{def:wg}, i.e. $\vp = \vp_1$ or $\vp_2$ and $\psi = \psi_1$ or $\psi_2$. Hence, the exponents we will use are $\d = \s =\f{99}{100}$ or $\d = \g = 1 + \f{\al}{10}$, $\d_1 = \s$, $\d_2 = \s$ or $\d_2 = \g$. Since these exponents are very close to $1$, 
we have the order $|1-\d| \approx 0, |1-\d_1| \vee |1-\d_2| \approx 0$. The reader can regard the terms $|1-\d|, |1-\d_1| \vee |1-\d_2| \approx 0$.

\begin{remark}
The constant $-\f{1}{4}$ in \eqref{eq:dp1} can be improved to $- \f{1}{2}+ \e$ for any $\e> 0$ by  considering $ \lam_{\e} \la \cL_{10}(\Om, \eta) , \Om \vp \ra  +  \la \cL_{20}(\eta) , \eta \vp \ra $ for some $\lam_{\e} > 0$, and $ -\f{1}{2}$ in \eqref{eq:dp2} can be improved to $-\f{3}{2}$. Yet, we do not need these sharper estimates.
\end{remark}


\begin{proof}[Proof of Lemma \ref{lem:dp}]
By definition of $\vp, \psi$, we have
\beq\label{eq:dp_comp}
\bal
\f{  (3\sin(2\b) \vp)_{\b} }{2  (1+R)  \vp}  & = \f{3}{2(1+R)}  \f{( \sin(2\b)^{1-\d})_{\b}}{ \sin(2\b)^{-\d} } = \f{3 \cos (2\b) \cdot (1-\d)}{1+R}  \leq  3|1 - \d|, \\
\f{  (3\sin(2\b) \psi)_{\b} }{2  (1+R)  \psi}  & = \f{3}{(1+R)}  
 \f{ (  \sin(\b)^{1-\d_1} \cos(\b)^{1-\d_2} )_{\b} }{ \sin(\b)^{-\d_1} \cos(\b)^{-\d_2}}   \\
&=\f{3}{1+R} ( (1-\d_1) \cos^2(\b) - (1-\d_2) \sin^2(\b) ) \leq 3\max( |1-\d_1|, |1-\d_2| ),\\
\f{ (R \vp)_{R}}{ 2 \vp} &=\f{ (R \psi)_{R}} { 2 \psi} =\lt( \f{(1+R)^4}{R^3}  \rt)_R \f{ R^4 }{2 (1+R)^4}   
= \f{2R}{1 + R} - \f{3}{2} = \f{1}{2}  - \f{2 }{1+R} .  \\
\eal
\eeq

Using integration by parts for the transport terms in $\cL_{10}$ \eqref{eq:op2}, we yield 
\[
\la -D_R \Om, \Om \vp \ra 
= \B\la - R \vp, \f{1}{2} \pa_R \Om^2 \B\ra
=  \B\la \f{1}{2}(R\vp)_R, \Om^2 \B\ra 
= \B\la \f{ (R\vp)_R}{2 \vp }, \Om^2 \vp \B\ra.
\]
Similar calculation applies to $-\f{3}{1+R} D_{\b} \Om $ in $\cL_{10}$. Using  the above calculations, we get 
\[
\bal
&\la \cL_{10}(\Om, \eta), \Om \vp \ra  = \B\la \f{ (R \vp)_{R}}{ 2 \vp}  + \f{  (3\sin(2\b) \vp)_{\b} }{2  (1+R)  \vp} ,   \Om^2 \vp \B\ra   - \la \Om,  \Om \vp \ra + \la \Om, \eta \vp \ra \\
\leq &  \B\la   \f{1}{2}  - \f{2}{1+R} + 3 |1 -\d| -1, \  \Om^2 \vp \B\ra  + \la \Om, \eta \vp \ra
= \B\la  - \f{1}{2}  - \f{2}{1+R} + 3 |1 -\d| , \  \Om^2 \vp \B\ra  + \la \Om, \eta \vp \ra.
\eal
\]
Similarly, using integration by parts for the transport terms in $\cL_{20}$ \eqref{eq:op2} and 
\eqref{eq:dp_comp}, we get 
\beq\label{eq:dp_comp2}
\bal
&\la \cL_{20}( \eta), \eta \vp \ra 
= \B\la \f{ (R \vp)_{R}}{ 2 \vp}  + \f{  (3\sin(2\b) \vp)_{\b} }{2  (1+R)  \vp}, \eta^2 \vp \B\ra  + \B\la (-2 + \f{3}{1+R}) , \eta^2 \vp \B\ra \\
 \leq & \B\la   \f{ 2R}{1+R} - \f{3}{2}   + 3 |1 - \d| + (-2 + \f{3}{1+R}), \eta^2 \vp \B\ra 
 = \B\la - \f{1}{2} - \f{R}{1+R}  + 3 |1 - \d| , \eta^2 \vp \B\ra.
\eal
\eeq
We estimate the interaction term between $\Om, \eta$. Note that 
\[
4 ( \f{1}{4} + \f{2}{1+R} ) ( \f{1}{4} + \f{ R }{ 1+R})  > \f{2}{1+R} + \f{R}{1+R} \geq 1.
\]
Using the Cauchy-Schwarz inequality implies
\[
\la  \Om, \eta \vp  \ra  
\leq  \B\la  \f{1}{4} + \f{2}{1+R} ,   \Om^2    \vp  \B\ra
+ \B\la  \f{1}{4} + \f{R}{1+R}  ,  \eta^2  \vp \B\ra .
\]
Combining the above estimates, we prove 
\[
\bal
\la \cL_{10}(\Om, \eta), \Om \vp \ra &+\la \cL_{20}(\Om, \eta), \eta \vp \ra  
\leq  \B\la  - \f{1}{2}  - \f{2}{1+R} + 3 |1 -\d| , \  \Om^2 \vp \B\ra + 
\B\la - \f{1}{2} - \f{R}{1+R}+ 3 |1 - \d| , \eta^2 \vp \B\ra \\
&+  \B\la  \f{1}{4} + \f{2}{1+R}  , \Om^2    \vp   \B\ra +  \B\la   \f{1}{4} + \f{R}{1+R}   , \eta^2  \psi \B\ra
\leq \lt( -\f{1}{4} + 3|1- \d | \rt)  (  || \Om \vp^{1/2}||_2^2 + || \eta \vp^{1/2} ||_2^2).
\eal
\]
Recall $\cL_{30}$ in Definition \ref{def:op}. For \eqref{eq:dp2}, we use the computations \eqref{eq:dp_comp}-\eqref{eq:dp_comp2} to obtain 
\[
\bal
&\la \cL_{30}( \xi), \xi \psi \ra 
= \B\la \f{ (R \psi)_{R}}{ 2 \psi}  + \f{  (3\sin(2\b) \psi)_{\b} }{2  (1+R)  \psi}, \xi^2 \psi \B\ra  + \B\la (-2 - \f{3}{1+R}) , \xi^2 \psi \B\ra \\
 \leq & \B\la   \f{ 2R}{1+R} - \f{3}{2}   + 3 ( |1- \d_1 | \vee|1-\d_2|) + (-2 - \f{3}{1+R}), \xi^2 \psi \B\ra 
 \leq \B( - \f{1}{2} + 3 ( |1- \d_1 | \vee |1-\d_2|)   \B)  || \xi \psi^{1/2}||_2^2 .
 \eal
\]
\end{proof}

\subsection{Weighted $L^2$ estimate of the angular derivative $D_{\b}\Om, D_{\b}\eta$}\label{sec:stab_angle}


\begin{definition}\label{def:H1_b1}
Define an energy $E(\b,1) \geq 0 $ and a remaining term $\cR(\b,1)$ by
\beq\label{eg:b1}
E(\b,1) (\Om, \eta)  \teq  \lt(  || D_{\b} \Om \vp_2^{1/2} ||_2^2   + ||D_{\b} \eta \vp_2^{1/2}||_2^2 \rt)^{1/2} ,  \
\cR(\b, 1) \teq \la D_{\b} \cR_{\Om}, D_{\b}\Om\vp_2 \ra +  \la D_{\b} \cR_{\eta}, D_{\b}\eta \vp_2 \ra .
\eeq
\end{definition}

To simplify the notations, we drop $\Om, \eta$ in $E(\b, 1)$. 
The main result in this subsection is the following. This proposition enables us to treat the angular transport terms in \eqref{eq:lin21}-\eqref{eq:lin23} as perturbations. A similar estimate has been established in \cite{elgindi2019finite}.   
\begin{prop}\label{prop:b1}
Assume that $ \vp_2^{1/2}  D_{\b} \Om, \ \vp_2^{1/2} D_{\b}\eta \in L^2$. We have 
\beq\label{eq:H1_b5}
\bal
&\la D_{\b} \cL_1 (\Om, \eta), ( D_{\b} \Om ) \vp_2 \ra + \la  D_{\b} \cL_2( \Om , \eta)  , ( D_{\b}\eta) \vp_2 \ra \\
\leq & - ( \f{1}{5}  -\al) (E(\b,1) )^2 + 
C\al (L^{2}_{12}(\Om)(0) + || \td{L}_{12}(\Om) R^{-1}||^2_{L^2(R)} ),
\eal
\eeq
where $\cL_1, \cL_2$ are defined in Definition \ref{def:op}.
\end{prop}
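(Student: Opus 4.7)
The plan is to apply $D_\b$ to the linearized equations for $\Om$ and $\eta$, recognize that the leading local structure reduces to an application of Lemma~\ref{lem:dp} for $D_\b \Om, D_\b \eta$, and then absorb the remaining nonlocal contributions using the fact that every such term acquires a free factor of $\al$ from the identity $D_\b \G(\b) = -2\al \sin^2\b\, \G(\b)$ noted in Section~\ref{sec:keyob}.

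First, since $D_R$ and $D_\b$ commute (their coefficients $R$ and $\sin(2\b)$ depend on disjoint variables), and since $c_{\om}$ and $\td L_{12}(\Om)$ are both independent of $\b$, a direct computation yields
\begin{align*}
D_\b \cL_1(\Om,\eta) &= \cL_{10}(D_\b \Om,\, D_\b \eta) + c_{\om}\, D_\b(\bar\Om - D_R \bar\Om), \\
D_\b \cL_2(\Om,\eta) &= \cL_{20}(D_\b \eta) + \f{2}{\pi\al}\, \td L_{12}(\Om)\, D_\b \bar\eta + c_{\om}\, D_\b(\bar\eta - D_R \bar\eta).
\end{align*}
Pairing with $D_\b \Om\, \vp_2$ and $D_\b \eta\, \vp_2$ and invoking Lemma~\ref{lem:dp} with $\d = \g = 1 + \al/10$ (so that $|1-\d| = \al/10$) bounds the local pieces by $(-\f14 + \f{3\al}{10})(E(\b,1))^2$.

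Next, for the three nonlocal remainders I would use the explicit forms in \eqref{eq:profile}: each of $\bar\Om - D_R \bar\Om$, $\bar\eta - D_R \bar\eta$, and $\bar\eta$ has the form $\f{\al}{c}\G(\b)\,f(R)$ with $f$ bounded and $f(R) = O(R)$ near $R = 0$. Applying $D_\b$ contributes the free $\al$ from $D_\b \G$; the normalization $\al c_{\om} = -\f{2}{\pi} L_{12}(\Om)(0)$ swallows the $c_{\om}$ factors, while the $\al^{-1}$ in the $\td L_{12}$ term is compensated by $D_\b \bar\eta$ carrying $\al^2$. A Cauchy--Schwarz split against $\vp_2$ then gives
\[
|\text{nonlocal terms}| \leq \epsilon\, (E(\b,1))^2 + C_\epsilon\, \al^2 \lt( L_{12}^2(\Om)(0) + \| \td L_{12}(\Om) R^{-1} \|_{L^2(R)}^2 \rt),
\]
where the $R$-integrals are finite: $\int (1+R)^{-k}dR < \infty$ for the two $c_{\om}$ terms, and for the $\td L_{12}$ term the combination $|D_\b \bar\eta|^2 \vp_2$ produces the weight $R^{-2}(1+R)^{-2}$, which integrates against $\td L_{12}^2(\Om)(R)$ to yield the claimed $R^{-1}$-norm.

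The main obstacle is the delicate boundedness of the angular integral $\int_0^{\pi/2} \sin^4\b\, \G^2(\b)\, \sin(2\b)^{-\g}\,d\b$ appearing in every nonlocal bound. Near $\b=0$ the $\sin^4\b$ factor trivially dominates $\sin(2\b)^{-\g} \sim \b^{-\g}$; near $\b = \pi/2$ the integrand behaves like $(\f{\pi}{2}-\b)^{2\al-\g} = (\f{\pi}{2}-\b)^{-1 + 19\al/10}$, integrable \emph{only} because $\g = 1+\al/10$ lies strictly below $1 + 2\al$. This explains the precise choice of exponent in Definition~\ref{def:wg} and the compatibility with $\G(\b) = (\cos\b)^\al$. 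Combining both bounds, choosing $\epsilon = \f{1}{40}$ so that $-\f14 + 2\epsilon = -\f15$, and using $\al^2 \leq \al$ for small $\al$ yields the stated estimate, with the leftover $O(\al)(E(\b,1))^2$ absorbed in the $-\al (E(\b,1))^2$ slack on the right.
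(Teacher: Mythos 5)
Your decomposition, the application of Lemma~\ref{lem:dp} with $\d=\g$, and the Cauchy--Schwarz treatment of the three nonlocal remainders are exactly the paper's proof (compare \eqref{eq:H1_b1}--\eqref{eq:H1_b3}), so the architecture is sound. There is, however, one quantitative misstatement in your accounting of the angular integral. You assert that $\int_0^{\pi/2}\sin^4\b\,\G^2(\b)\sin(2\b)^{-\g}\,d\b$ is bounded by an absolute constant; it is not. Near $\b=\pi/2$ the integrand behaves like $(\pi/2-\b)^{2\al-\g}=(\pi/2-\b)^{-1+19\al/10}$, so the integral is finite but of size $\asymp\al^{-1}$, not $O(1)$. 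This is precisely why Lemma~\ref{lem:bar} records $\la (D_\b(\bar g-D_R\bar g))^2,\vp_2\ra\les\al^3$ rather than $\al^4$: the pointwise gain $\al^2$ from the profile times $\al^2$ from $(D_\b\G)^2$ is partially eaten by the $\al^{-1}$ from the $\b$-integral. Consequently the weighted $L^2$ norms of the nonlocal terms carry $\al^{1/2}$ (as in \eqref{eq:H1_b23} and \eqref{eq:H1_b3}), not $\al$, and after Young's inequality the coefficient of $L_{12}^2(\Om)(0)+\|\td L_{12}(\Om)R^{-1}\|_{L^2(R)}^2$ is $C\al$, not $C\al^2$. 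Since the proposition only claims $C\al$, your conclusion survives, but the intermediate bound $C_\e\al^2$ is not justified, and the role of the exponent $\g=1+\al/10$ is not merely to make the integral converge but to keep its blow-up at the rate $\al^{-1}$, which the extra powers of $\al$ in the profile exactly absorb.
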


We will use the following basic property of $D_{\b} = \sin(2\b) \pa_{\b}, \ \G(\b) = \cos(\b)^{\al}$ repeatedly 
\beq\label{eq:Dg}
D_{\b} \G(\b)= -2\al \sin^2(\b)\cos^{\al}(\b) =- 2\al \sin^2(\b) \G(\b), \quad
|D_{\b}\G(\b)| \leq  2\al  \sin(\b)\G(\b).
\eeq

\begin{proof}
Notice that the angular transport term in \eqref{eq:lin21}-\eqref{eq:lin22} can be written as $\f{3}{1+R} D_{\b}$ and that $D_{\b}$ commutes with the derivatives in \eqref{eq:lin21}-\eqref{eq:lin22} and $\cL_{10}, \cL_{20}$ \eqref{eq:op2}. We have 
 \beq\label{eq:H1_b1}
\bal
D_{\b} \cL_1(\Om, \eta) &=D_{\b}( \cL_{10}( \Om, \eta) + c_{\om}  D_{\b} (\bar{\Om} - R \pa_R \bar{\Om}) )= \cL_{10} (D_{\b} \Om, D_{\b}\eta) + c_{\om}  D_{\b} (\bar{\Om} - R \pa_R \bar{\Om}) ,\\
D_{\b} \cL_2(\Om, \eta)  &=D_{\b}( \cL_{20}( \Om, \eta) + \f{2}{\pi \al} \td{L}_{12}(\Om)  \bar{\eta}
+ c_{\om}   (\bar{\Om} - R \pa_R \bar{\Om})    ) \\
&= \cL_{10} (D_{\b} \Om, D_{\b}\eta) + \f{2}{\pi \al} \td{L}_{12}(\Om) D_{\b} \bar{\eta}  +c_{\om}  D_{\b} (\bar{\eta} - R \pa_R \bar{\eta}) ,
\\
\eal
\eeq
where we have used \eqref{eq:nota_ux2}. Applying Lemma \ref{lem:dp} with $\vp = \vp_2$ and $\d = \g = 1 + \f{\al}{10}$, we derive 
\beq\label{eq:H1_b2}
\bal
&\la \cL_{10} (D_{\b} \Om, D_{\b}\eta) , (  D_{\b} \Om ) \vp_2 \ra
+ \la \cL_{20} (D_{\b} \Om, D_{\b}\eta) , (  D_{\b} \eta ) \vp_2 \ra \\
\leq&  (- \f{1}{4} + 3 | 1 -\g | )  \B(   ||  D_{\b} \Om \vp_2^{1/2}||_2^2 
+   || D_{\b} \eta   \vp_2^{1/2}||_2^2   \B) 
\leq (- \f{1}{4} + \al )  \B(   ||  D_{\b} \Om \vp_2^{1/2}||_2^2 
+   || D_{\b} \eta   \vp_2^{1/2}||_2^2   \B).
\eal
\eeq

Recall $c_{\om} = -\f{2}{\pi\al} L_{12}(\Om)(0)$. Using \eqref{eq:bar_ing} in Lemma \ref{lem:bar} and the Cauchy-Schwarz inequality, we obtain 
\beq\label{eq:H1_b23}
\bal
&|\la c_{\om}  D_{\b} (\bar{\Om} - R \pa_R \bar{\Om}) ,  (D_{\b}\Om) \vp_2 \ra|
+| \la c_{\om}   D_{\b} (\bar{\eta} - R \pa_R \bar{\eta}) ,  ( D_{\b}\eta) \vp_2 \ra| \\
\les & \al^{1/2} |L_{12}(\Om)(0)|  (  || D_{\b} \Om \vp_2^{1/2} ||_2^2 + || D_{\b}\eta \vp_2^{1/2}||_2^2 )^{1/2}.
\eal
\eeq

Recall the notation $\td{L}_{12}(\Om)$ \eqref{eq:nota_ux}. Applying Lemma \ref{lem:ux} and \eqref{eq:bar_ux} in Lemma \ref{lem:bar}, we derive 
\[
\B|\B| \f{2}{\pi \al} \td{L}_{12}(\Om) D_{\b}\bar{\eta}  \vp_2^{1/2} \B|\B|_2
\les \al || \td{L}_{12}(\Om) R^{-1} ||^2_{L^2(R)} .
\]
Therefore, using the Cauchy-Schwarz inequality, we yield 
\beq\label{eq:H1_b3}
\bal
\la  \f{2}{\pi \al} \td{L}_{12}(\Om)  D_{\b}\bar{\eta},  D_{\b} (\eta) \vp_2 \ra
& \les \al^{1/2} || \td{L}_{12}(\Om) R^{-1} ||_{L^2(R)}  
|| D_{\b} \eta \vp_2^{1/2}||_2.
\eal
\eeq

Combining \eqref{eq:H1_b2}, \eqref{eq:H1_b23}, \eqref{eq:H1_b3} and adding the inner product about two terms in \eqref{eq:H1_b1}, we prove 
\[
\bal
&\la D_{\b} \cL_1(\Om, \eta) , (D_{\b}\Om) \vp_2 \ra + \la D_{\b} \cL_2(\Om, \eta) , (D_{\b} \eta) \vp_2 \ra
\leq -  ( \f{1}{4}  - \al) (  || D_{\b} \Om \vp_2^{1/2} ||_2^2   + ||D_{\b} \eta \vp_2^{1/2}||_2^2)\\
  &+ C \al^{1/2} | L_{12}(\Om)(0)| 
\B(  || D_{\b} \Om \vp_2^{1/2} ||_2^2   + ||D_{\b} \eta \vp_2^{1/2}||_2^2 \B)^{1/2} + C \al^{1/2} \B| \B| \td{L}_{12}(\Om) R^{-1} \B| \B|_{L^2(R)}  || D_{\b} \eta \vp_2^{1/2}||_2,
  \eal
\]
where $C$ is some absolute constant. Using the notation $E(\b,1)$ \eqref{eg:b1}, the Cauchy-Schwarz inequality concludes the proof of Proposition \ref{prop:b1} (notice that $-1/4 < -1/5$).
\end{proof}

\subsection{Ideas in the estimates of the nonlocal terms}\label{sec:L2_idea}

Recall $c_{\om}, \td L_{12}(\Om)$ from \eqref{eq:normal}, \eqref{eq:nota_ux}
\beq\label{eq:idea_sys2}
c_{\om} = -\f{2}{\pi \al} L_{12}(\Om)(0)
= -\f{2}{\pi \al} \int_0^{\infty}\int_0^{\pi/2} \f{ \Om \sin(2\b)}{R} dR d\b, \ \td L_{12}(\Om)(R) = - \int_0^{R} \int_0^{\pi /2}\f{ \Om \sin(2\b)}{R} dR d \b.
\eeq

The most difficult part in the linear stability analysis of \eqref{eq:equiv},\eqref{eq:op1} (or equivalently \eqref{eq:lin21}-\eqref{eq:lin23}) lies in the nonlocal terms $\td L_{12}(\Om), c_{\om}$. 
Note that the constant in the coercivity estimates of the local part of the linear operators $\cL_i$, i.e. $\cL_{i0}$, is small. For example, this constant is about $-\f{1}{4}$ in Lemma \ref{lem:dp}. We cannot estimate the nonlocal terms in some weighted Sobolev norm and treat them as 
small perturbations since these nonlocal terms are $O(1)$ for small $\alpha$. It is crucial for us to exploit the cancellation among various terms so that we can obtain sharp estimates of these nonlocal terms. 

We use two models to study  $\td L_{12}(\Om)$ and the $c_{\om}$ term. Similar models have been used in our previous work with D. Huang on the asymptotically self-similar blowup  of the Hou-Luo model. 

\subsubsection{Model 1 for nonlocal interaction}\label{sec:model1}

We consider the following coupled system 
\beq\label{eq:model1}
\pa_t \Om = \eta,  \quad \eta_t = \f{2}{\pi \al} \td L_{12}(\Om)  \bar{\eta}
\eeq
to study the cancellation between the nonlocal term $\f{2}{\pi \al} \td L_{12}(\Om)  \bar{\eta}$ in the $\eta$ equation and $\eta$ in the $\Om$ equation in \eqref{eq:equiv}. The above model is derived by dropping other terms in \eqref{eq:equiv}. The profile $\bar \eta$ satisfies $\bar \eta(0, \b) = 0$ and $\bar \eta > 0$ for $R >0$.

The motivation to exploit nonlocal cancellation is inspired by our previous joint works with Huang  on the De Gregorio model \cite{chen2019finite} and the Hou-Luo model for smooth initial data. In these works, the nonlocal cancellations between $Hf$ and $f$, where $H$ is the Hilbert transform, play an important role.

From Lemma \ref{lem:cancel}, we have a similar cancellation between $\td L_{12}(\Om)$ and $\Om$. Roughly speaking, $\td L_{12}(\Om)$ behaves like $- \Om$.  We perform $L^2(\rho_1)$ estimate on $\Om$ and $L^2(\rho_2)$ estimate on $\eta$ for some singular weights $\rho_1, \rho_2$ to be determined and combine both estimates 
\beq\label{eq:est_coupled}
\f{1}{2} \f{d}{dt} ( \la \Om, \Om \rho_1  \ra  + \la \eta, \eta \rho_2 \ra  )
=  \la \Om , \eta \rho_1 \ra + \la \f{2}{\pi \al} \td L_{12}(\Om)  \bar{\eta}, \eta \rho_2 \ra \teq I. 
\eeq
Formally, $I$ is the sum of the projections of $\eta$ onto two opposite directions.
To exploit this cancellation using Lemma \ref{lem:cancel}, we choose $\rho_1 = \sin (2\b) \rho_0, \rho_2 = \lam \f{\al \pi}{ 2\bar \eta} \rho_0$ with some $\lam > 0$ and singular weight $\rho_0$, such as $\rho_0 = R^{-3}, R^{-2}$, to obtain 
\[
I = \la \Om \sin(2\b), \eta \rho_0 \ra 
+ \la \lam \td L_{12}(\Om), \eta \rho_0 \ra =  \la \Om \sin(2\b ) +  \lam \td L_{12}(\Om), \eta \rho_0 \ra.
\]
For $k \in [ \f{3}{2}, 4]$, applying Young's inequality
$ab \leq s a^2 + \f{1}{4 s} b^2$ for some $s>0$, we yield 
\[
I \leq s || ( \Om \sin(2\b ) +  \lam \td L_{12}(\Om) ) R^{-k/2}||_2^2 + (4 s)^{-1}|| \eta \rho_0 R^{k/2}||_2^2 \teq A + B.
\]

If $k-1 > \f{\pi}{2} \lam$, using Lemma \ref{lem:cancel}, we obtain 
\[
A = s || \Om   \sin(2\b)^{1/2} R^{-k/2}||_2^2  - s ( (k-1) \lam -  \f{\pi}{2} \lam^2 ) \B| \B| \td{L}_{12}(\Om) R^{-k/2} \B| \B|^2_{L^2{(R)}} \leq  s || \Om   \sin(2\b)^{1/2} R^{-k/2}||_2^2.
\]

We remark that even estimating the first term in $I$, which is $\la \Om \sin(2\b), \eta \rho_0 \ra $ and does not involve the nonlocal term, we get an upper bound $s || \Om   \sin(2\b)^{1/2} R^{-k/2}||_2^2 + B$. The above calculation shows that by designing the weights $\rho_1, \rho_2$ carefully, we can exploit the nonlocal cancellation and obtain an even better estimate. Moreover, we gain a damping term for $\td L_{12}(\Om)$ from $A$.

We will use similar ideas to estimate the $\td L_{12}(\Om)$ term in the linearized equation \eqref{eq:lin21}-\eqref{eq:lin23}.

\subsubsection{Model 2 for the $c_{\om}$ term}\label{sec:model2}

We consider the following coupled system 
\beq\label{eq:model2}
\quad \pa_t \Om = \eta + c_{\om} \bar g, \quad \pa_t \eta = c_{\om} \bar f, 
\eeq
where $\bar f(0 , \b) = 0, \bar g(0, \b) = 0$,  $\bar f, \bar g >0$ for $R>0$ with $ \bar f R^{-1}, \bar g R^{-1} \in L^1$. Note that the profiles $\bar \eta - R \pa_R \bar \eta, \bar \Om - R \pa_R \bar \Om$ satisfy similar properties. This system models the $c_{\om}$ terms in the $\Om, \eta$ equations in \eqref{eq:equiv} by dropping other terms. 

Denote $W = \sin(2\b) R^{-1}$. Recall $c_{\om}$ in \eqref{eq:idea_sys2}. We have 
\[
c_{\om} = -\f{2}{\pi \al} \la \Om, \sin(2\b) R^{-1} \ra =  -\f{2}{\pi \al} \la \Om, W \ra.
\]

Denote $ B = \f{2}{\pi \al} \la \bar g , W\ra$. By definition, $B > 0$. We derive an ODE for $c_{\om}$ using the $\Om$ equation
\[
  \pa_t \la \Om, W\ra = c_{\om} \la \bar g, W \ra + \la \eta , W \ra
= - \f{2}{\pi \al} \la \Om, W \ra \la \bar g, W \ra  + \la \eta  , W \ra
=- B \la \Om, W \ra + \la \eta  , W \ra.
\]

Multiplying both sides by $(\f{2}{\pi\al})^2 \la \Om, W \ra = -\f{2}{\pi \al}c_{\om} $, we get 
\beq\label{eq:model2_2}
 \f{1}{2} \f{d}{dt} c_{\om}^2 =- B c_{\om}^2 - \f{2}{\pi \al} c_{\om}\la \eta  , W \ra \teq I_1 + I_2.
\eeq

We see that the $c_{\om} \bar g$ term in the $\Om $ equation in \eqref{eq:model2} provides a damping term for $c_{\om}$ in this ODE. In the $L^2(\rho_2)$ estimates of $\eta$ in \eqref{eq:model2}, we have
\[
 \pa_t \la \eta, \eta \rho_2 \ra =  c_{\om} \la \eta , \bar f \rho_2  \ra \teq I_3.
\]

Since $\bar f \rho_2, W >0$, we can exploit the cancellation between the integral $I_2$ in \eqref{eq:model2_2} and $I_3$. By combining the estimates of both terms, we can obtain better estimates of $I_2$ and $I_3$.

In the estimates of  \eqref{eq:lin21}-\eqref{eq:lin23}, we will derive a similar ODE for $c_{\om}$, which provides a damping term for $c_{\om}^2$. This damping term is crucial for us to control the nonlocal $c_{\om}$ terms in \eqref{eq:lin21}-\eqref{eq:lin23}. There is a coupling term $-c_{\om}\la \eta, W \ra$ in this ODE similar to $I_2$ in \eqref{eq:model2_2}. 
Using an idea similar to the one stated above, we will combine the estimates of such term and the $c_{\om}$ term in the $\eta$ equation in \eqref{eq:lin22}.

\subsection{Weighted $L^2$ estimate of $\Om, \eta$ with a less singular weight}\label{sec:L2_less}

In this subsection, we prove Proposition \ref{prop:L2less} to be introduced on the weighted $L^2$ estimate of $\Om, \eta$ with less singular weights.

The proof consists of several steps and we sketch it below. Firstly, we introduce the weights in our weighted estimates and motivate the choices of these weights. In Section \ref{subsec:L2_dp}, we estimate the local part of $\cL_1, \cL_2$ using mainly integration by parts argument, which is similar to that in Section \ref{sec:L2_local}. In Section \ref{subsec:L2_uw}, 
we use some ideas and estimates similar to those in Model 1 to estimate the interaction among $\Om, \eta$ and $\td L_{12}(\Om)$. In Section \ref{subsec:L2_cw1}, we use a direct calculation to estimate the $c_{\om}$ term in the $\Om$ equation in \eqref{eq:equiv}. Due to the special form of the weight $\vp_0$ in \eqref{wg:L2_R0}, the main term in this estimate is a damping term for $L_{12}^2(\Om)(0)$. In Section \ref{subsec:L2_cw2}, we use some ideas and estimates similar to those in Model 2 in Section \ref{sec:model2} to estimate the $c_{\om}$ term in the $\eta$ equation. In Section \ref{subsec:L2_angle}, we estimate the angular transport term in the $\Om, \eta$ equations in \eqref{eq:equiv} and treat it as perturbations. In Sections \ref{subsec:L2_sum}, \ref{subsec:L2_done}, we summarize these estimates, and establish some inequalities to conclude the proof of Proposition \ref{prop:L2less}.

Since the amount of damping in the energy estimate is small, we cannot overestimate several terms and need to track the coefficients in the estimates. Thus the estimates involve several explicit calculations, which will be presented in Appendix \ref{app:comp}. These calculations,\eqref{eq:cancel_coe} and \eqref{eq:cw_count} can also be verified with the aid of {\it Mathematica}. 
\footnote{ The Mathematica code for these calculations can be found via the link 
\url{https://www.dropbox.com/s/y6vfhxi3pa8okvr/Calpha_calculations.nb?dl=0}.} In view of Lemma \ref{lem:one}, in the following estimates, the reader can regard $\G(\b) \approx 1, c \approx \f{2}{\pi}$.

\begin{definition}\label{def:L2} To exploit the cancellation of the system, we define the following weights 
\beq\label{wg:L2_R0}
\bal
 \psi_0 & \teq \f{9}{8} \f{\al}{c \bar{\eta}}\lt(R^{-3} + \f{3}{2} \f{1+R}{R^2}    \rt) =\f{3}{16}\lt( \f{(1+R)^3}{R^4} +\f{3}{2} \f{(1+R)^4}{R^3}  \rt) \G(\b)^{-1} , \\ 
\vp_0  &\teq \f{(1+R)^3}{R^3} \sin(2\b), \quad \rho \teq  R^{-3 } + R^{-2 },
\eal
\eeq
where $\bar{\eta},  \ \G(\b) = \cos^{\al}(\b) $ are given in \eqref{eq:profile}.
\end{definition}
Compared to $\vp_2$ in \eqref{wg}, the above weights are less singular in the $R, \b$ components.

\subsubsection{The forms of the singular weights}\label{sec:wg_form}
There are several considerations to choose  the above weights $\psi_0, \vp_0$. Firstly, to obtain the damping terms in the energy estimate similar to that in Lemma \ref{lem:dp}, the weights in the $R$ direction can be a linear combinations of $R^{-k}$ with various $k$ \cite{chen2019finite,elgindi2019finite}. See also Lemma \ref{lem:dp}.  For $R$ near $0$, we need the weight to be singular, e.g. $R^{-k_1}$ for a large $k_1$. For very large $R$, we need the weight with slow decay, e.g. $R^{-k_2}$ with small $k_2$. However, using only these two powers $R^{-k_1}$ and $R^{-k_2}$ are not sufficient. Suppose that we use a weight $ \vp_0 = R^{-k_2} + c R^{-k_1}$ with well chosen $k_1, k_2, c$. Applying a calculation similar to that in \eqref{eq:dp_comp2} in Lemma \ref{lem:dp} to $\la L_{20} \eta, \eta \vp_0 \ra$, we can obtain $\la D, \eta^2 \vp_0\ra$ for some coefficient $D(R,\b)$. However, $D$ may not be negative in the whole domain as the one that we obtain in \eqref{eq:dp_comp2} or $| D(R,\b) |$ with $R= O(1)$ may become much smaller than $|D(0,\b)|$ and $|D(\infty, \b)|$. In either case, we cannot establish linear stability since the nonlocal terms are not small. Therefore, we need to add several powers $R^{-k}$ in $\vp_0, \psi_0$. The first formula of $\psi_0$ in \eqref{wg:L2_R0} is more important than the second, and it contains three different powers. 

Secondly, we add $\bar \eta$ in the denominator in $\psi_0$ to cancel the variable coefficient in our energy estimates, and design $\vp_0$ with the factor $\sin(2\b)$. These forms are similar to that in Model 1 in Section \ref{sec:model1}, where we choose $\rho_1 = \sin (2\b) \rho_0, \rho_2 =  \f{c}{\bar \eta} \rho_0$ for some weight $\rho_0$. These special forms are important and enable us to combine the estimates among $L_{12}(\Om), \Om$ and $\eta$.
This is the most important motivation in designing $\psi_0, \vp_0$ in \eqref{wg:L2_R0}. See Model 1 in Section \ref{sec:model1} and estimate \eqref{eq:L2_30}. 


Thirdly, we choose $\vp_0$ with the factor $\f{(1+R)^3}{R^3}$ to  derive a damping term for $L_{12}(\Om)(0)$ from the nonlocal term $c_{\om} ( \bar \Om - D_R \bar \Om)$. See \eqref{eq:L2_41}.



The main result in this section is the following
\begin{prop}\label{prop:L2less}
Define an energy $E(R,0)$ and a remaining term $\cR(R,0)$
\beq\label{eg:R0}
\bal
E(R,0)  &= ( || \Om \vp_0^{1/2}||_2^2 + || \eta \psi_0^{1/2}||_2^2  + \mu_0  L^2_{12}(\Om)(0 ) )^{1/2},
 \\ 
\cR(R, 0) &= \la \cR_{\Om}, \Om \vp_0 \ra + \la \cR_{\eta} ,\eta \psi_0 \ra +  \mu_0 L_{12}(\Om)(0) \la \cR_{\Om}, \sin(2\b) R^{-1} \ra,  \quad  \mu_0   = \f{81}{4\pi c}.
\eal
\eeq
Assume that $\Om, \eta$ satisfies that $E(R,0), E(\b) < + \infty$. For some absolute constant $\mu_1$, we have
\[
\bal
&\f{1}{2} \f{d}{dt} ( ( E(R,0)^2 + \mu_1 E(\b,1)^2 ) )
\leq - ( \f{1}{9} - C\al ) ( ( E(R,0)^2 + \mu_1 E(\b,1)^2 ) )  \\
&-  (4- C\al) L^2_{12}(\Om)(0) - ( \f{1}{4} -C\al) \B| \B| \td{L}^2_{12} \rho^{1/2} \B| \B|^2_{L^2(R)} +  \cR(R, 0) +\mu_1 \cR(\b, 1),
\eal
\]
where the energy $E(\b, 1)$ and the remaining term $\cR(\b, 1)$ are defined in \eqref{eg:b1}.
\end{prop}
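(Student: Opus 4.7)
The strategy is to compute $\frac{1}{2}\frac{d}{dt}E(R,0)^2$ by taking the inner products of the equations $\Omega_t=\cL_1(\Omega,\eta)+\cR_\Omega$ and $\eta_t=\cL_2(\Omega,\eta)+\cR_\eta$ against $\Omega\varphi_0$ and $\eta\psi_0$ respectively, and to separately derive an ODE for $L_{12}(\Omega)(0)^2$. I would first isolate the local parts $\cL_{10},\cL_{20}$ and apply the integration-by-parts computation in Lemma~\ref{lem:dp} with the weights $\varphi_0,\psi_0$ of \eqref{wg:L2_R0}. Because these weights involve several powers $R^{-k}$ (to give a negative coefficient $D(R,\beta)$ on the full range $R\in(0,\infty)$) and an extra factor $\sin(2\beta)$ or $\Gamma(\beta)^{-1}$, the computations replace the pointwise $|1-\delta|$ of Lemma~\ref{lem:dp} by an $O(\alpha)$ error and yield damping terms of the form $-\kappa(\|\Omega\varphi_0^{1/2}\|_2^2+\|\eta\psi_0^{1/2}\|_2^2)$ for an explicit $\kappa$.

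Next I would handle the genuinely nonlocal interaction between $\eta$ in the $\Omega$-equation and $\frac{2}{\pi\alpha}\tilde L_{12}(\Omega)\bar\eta$ in the $\eta$-equation following Model~1 (Section~\ref{sec:model1}). The precise reason for the denominator $\bar\eta$ inside $\psi_0$ is that the sum of the two cross terms reduces to $\langle \Omega\sin(2\beta)+\lambda\tilde L_{12}(\Omega),\eta\rho_0\rangle$ for a favorable constant $\lambda$, after which Young's inequality and the key cancellation Lemma~\ref{lem:cancel} produce, on top of the small absorbed piece into $\|\Omega\varphi_0^{1/2}\|_2^2+\|\eta\psi_0^{1/2}\|_2^2$, a genuine damping term $-(\frac{1}{4}-C\alpha)\|\tilde L_{12}(\Omega)\rho^{1/2}\|_{L^2(R)}^2$. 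This is the step where the precise combination of powers in $\psi_0$ and $\varphi_0$ is used in full, and where one cannot afford to lose constants.

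For the $c_\omega$ contributions I would follow the $\Omega$-equation term $c_\omega(\bar\Omega-D_R\bar\Omega)$ tested against $\Omega\varphi_0$ directly; because $\varphi_0\sim\frac{(1+R)^3}{R^3}\sin(2\beta)$ and $\bar\Omega-D_R\bar\Omega=\frac{\alpha}{c}\Gamma(\beta)\frac{6R^2}{(1+R)^3}$, this inner product almost exactly reconstructs $-\frac{\pi\alpha}{2}c_\omega=L_{12}(\Omega)(0)$, which after pairing with $c_\omega$ gives the pure damping $-(4-C\alpha)L_{12}(\Omega)(0)^2$ that appears in the conclusion. For the more delicate $c_\omega(\bar\eta-D_R\bar\eta)$ term in $\cL_2$, I would follow Model~2 (Section~\ref{sec:model2}): derive a scalar ODE for $c_\omega^2$ by testing the $\Omega$-equation against $\sin(2\beta)R^{-1}$ (the kernel defining $c_\omega$), multiply by $-\frac{2}{\pi\alpha}c_\omega$, and combine with the test against $\eta\psi_0$ so that the bad coupling term $-\frac{2}{\pi\alpha}c_\omega\langle\eta,\sin(2\beta)R^{-1}\rangle$ cancels up to lower order with $\langle c_\omega\bar f\psi_0,\eta\rangle$. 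This is exactly the reason for the coefficient $\mu_0=\frac{81}{4\pi c}$: it is chosen so that the two cross terms cancel identically at leading order in $\alpha$.

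Finally, the angular transport $\frac{3\sin(2\beta)}{1+R}\partial_\beta$ in $\cL_1,\cL_2$ must be treated without integration by parts (see Section~\ref{sec:angle}) and absorbed into the estimate already obtained for $E(\beta,1)$ in Proposition~\ref{prop:b1}; this forces the combination $E(R,0)^2+\mu_1 E(\beta,1)^2$ with $\mu_1$ chosen large enough to dominate $\|\frac{3}{1+R}D_\beta(\Omega,\eta)\|$ by Cauchy--Schwarz with a small constant, and small enough not to destroy the damping from Proposition~\ref{prop:b1}. Summing the five pieces and carefully collecting coefficients then produces the stated inequality with remainder $\cR(R,0)+\mu_1\cR(\beta,1)$. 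The main obstacle is purely quantitative: the damping constant $-\frac{1}{9}+C\alpha$ in the final bound is small, so each of the $O(\alpha)$ estimates (in particular the $c_\omega$--$\eta$ cancellation and the Model~1 cancellation) must be done with explicit numerical coefficients rather than $\lesssim$, and the choices of exponents in $\varphi_0,\psi_0$ and of $\mu_0,\mu_1$ are essentially forced. I would defer the explicit arithmetic verifying \eqref{eq:cancel_coe} and the $c_\omega$-counting identity to an appendix, as the authors do.
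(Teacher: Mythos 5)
Your proposal follows essentially the same route as the paper: integration by parts with the multi-power weights $\vp_0,\psi_0$ for the local damping, the Model-1 combination $\Om\sin(2\b)+\lam\td L_{12}(\Om)$ with Lemma~\ref{lem:cancel} for the $\td L_{12}$ damping, the direct reconstruction of $L_{12}(\Om)(0)$ from $c_{\om}\la\bar\Om-D_R\bar\Om,\Om\vp_0\ra$, the Model-2 ODE for $L_{12}(\Om)(0)^2$ weighted by $\mu_0$, and absorption of the angular transport into $\mu_1 E(\b,1)^2$. Two small bookkeeping imprecisions: the $c_\omega$ term in the $\Om$-equation actually yields $-(6-C\al)L_{12}^2(\Om)(0)$, part of which is spent damping the $\mu_0 L_{12}^2$ piece of $E(R,0)^2$, and the two Model-2 cross terms do not cancel identically but combine into $\la\eta,R^{-1}(-1+2\sin 2\b)\ra$, whose reduced size must then be verified against the available damping via the arithmetic of \eqref{eq:cw_count} — which you correctly defer to the appendix.
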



Recall $\cL_{1}, \cL_2$ in Definition \ref{def:op}. A direct calculation with weights $\vp_0, \psi_0$ implies
\beq\label{eq:L2_1}
\bal
\la \cL_1(\Om, \eta), \Om \vp_0 \ra
&= -\la  R \pa_R \Om, \Om \vp_0 \ra  - \la \Om, \Om \vp_0 \ra+ \la \eta, \Om \vp_0 \ra
+ c_{\om} \la \bar{\Om} - R\pa_R \bar{\Om} , \Om \vp_0\ra  - \B\la  \f{3}{1+R} D_{\b} \Om, \Om \vp_0 \B\ra , \\
\la \cL_2(\Om, \eta), \eta \psi_0 \ra
& = -\la  R \pa_R \eta, \eta \psi_0 \ra  +  \B\la  (- 2 + \f{3}{1+R} ) \eta , \eta  \psi_0 \B\ra + \B\la  \f{2}{\pi \al} \td{L}_{12}(\Om) \bar{\eta}, \eta \psi_0 \B\ra \\
& +c_{\om} \la \bar{\eta} - R\pa_R \bar{\eta} , \eta \psi_0\ra   - \B\la  \f{3}{1+R} D_{\b} \eta, \eta \psi_0 \B\ra
 ,\\
\eal
\eeq
where we have used the notation $D_{\b} = \sin(2\b) \pa_{\b}$ to simplify the formula. We treat the sum of the first two terms on the right hand side as the damping terms.


\subsubsection{The damping terms}\label{subsec:L2_dp}
We first handle the first two terms on the right hand side of the $\cL_1$ equation in \eqref{eq:L2_1}. Using integration by parts for $\pa_R$, we derive
\[
\bal
&-\la  R \pa_R \Om, \Om \vp_0 \ra  - \la \Om, \Om \vp_0 \ra
= - \la R \vp_0 , \f{1}{2} \pa_R \Om^2 \ra- \la \Om, \Om \vp_0 \ra
=  \B\la \f{1}{2}(R\vp_0)_R  - \vp_0 , \Om^2 \B\ra,  \\
 &-\la  R \pa_R \eta, \eta \psi_0 \ra + \B\la  (- 2 + \f{3}{1+R} ) \eta, \eta  \psi_0 \B\ra 
 = \B\la \f{1}{2} (R\psi_0 )_R + (- 2 + \f{3}{1+R}) \psi_0, \eta^2 \B\ra.
\eal
\]
Using the formulas of $\psi_0, \vp_0$ \eqref{wg:L2_R0}, we compute the coefficients in the inner products in Appendix \ref{comp:L2_dp} and obtain 
\beq\label{eq:L2_2}
\bal
&-\la  R \pa_R \Om, \Om \vp_0 \ra  - \la \Om, \Om \vp_0 \ra= -  \B\la   (  2R^{-3} + \f{9}{2}  R^{-2}  + 3 R^{-1} + \f{1}{2} ) \sin(2\b) , \Om^2  \B\ra , \\
 &-\la  R \pa_R \eta, \eta \psi_0 \ra + \B\la ( - 2 + \f{3}{1+R}) \eta, \eta  \psi_0 \B\ra 
 =  - \B\la  \f{3(1+R)^2}{ 32R^4} ( 1 + 4 R + 3 R^2 + 3 R^3) \G(\b)^{-1} , \eta^2   \B\ra .
 \eal
 \eeq

\subsubsection{Estimate of interaction between $\Om$ and $\eta$}\label{subsec:L2_uw}
We use ideas in Model 1 in Section \ref{sec:model1} to combine the estimates of $\la \Om, \eta \psi \ra$ and $\la \f{2}{\pi \al} \td{L}_{12}(\Om) \bar{\eta}, \eta \psi_0 \ra$.
Using \eqref{eq:profile} and \eqref{wg:L2_R0}, we can compute
\[
\bal
I \teq \B\la \f{2}{\pi \al} \td{L}_{12}(\Om) \bar{\eta}, \eta \psi_0 \B\ra
 &=  \B\la \f{9}{ 4 \pi c}  \td{L}_{12}(\Om) , \eta \B(  \f{1}{R^3} + \f{3}{2} \f{1+R}{R^2} \B)\B\ra ,\\
 II \teq \la \Om, \eta \vp_0 \ra & = \B\la \Om \sin(2\b) , \eta \B( \f{1}{R^3} + 3 \f{1+R}{R^2} + 1 \B)  \B\ra ,
 \eal
\]
where $c$ is defined in \eqref{eq:profile} and satisfies $c  = \f{2}{\pi} + O(\al)$ (see Lemma \ref{lem:one}). We design $\psi_0$ \eqref{wg:L2_R0} so that the denominator in $\psi_0$ and the coefficient $\bar \eta$ in $I$ cancel.

Applying the Cauchy-Schwarz inequality, we yield 
\beq\label{eq:L2_30}
\bal
 I + II &= \B\la \Om \sin(2\b) + \f{9}{4\pi c}  \td{L}_{12}(\Om) , \eta R^{-3} \B\ra + \B\la \Om \sin(2\b) +  \f{9}{8\pi c} \td{L}_{12}(\Om) , 3 \eta  \f{1+R}{R^2} \B\ra  + \la \Om \sin(2\b) , \eta  \ra   \\
 &\leq  \f{4}{3} \B\la  (\Om \sin(2\b) + \f{9}{ 4 \pi c}  \td{L}_{12}(\Om) )^2 , R^{-3}  \B \ra
+ \f{1}{ 4 \cdot 4/3  } \la \eta^2 , R^{-3}  \ra  \\
&+ 6  \B\la ( \Om \sin(2\b) +  \f{9}{8\pi c} \td{L}_{12}(\Om)   )^2, R^{-2} \B\ra  + \f{3^2} {4 \cdot 6}  \B\la \eta^2 , \f{ (1 + R)^2}{R^{2 }}  \B\ra \\
&+  \f{1}{3} \B\la \Om^2, \f{1 + R}{R} \sin(2\b)^2 \B\ra
+ \f{3}{4}\B\la \eta^2, \f{R}{1+R}  \B\ra = \sum_{i=1}^6 J_i.
\eal
\eeq

We design the special forms $\psi_0, \vp_0$ in \eqref{wg:L2_R0} to obtain the good form $  \Om \sin(2\b) + C\td L_{12}(\Om) $ for some $C >0$ in $I + II$. Next, we exploit the cancellation between $\Om$ and $\td L_{12}(\Om)$ using Lemma \ref{lem:cancel}. We apply Lemma \ref{lem:cancel} with $k=2, 3$ to simplify $J_1, J_3$ defined above:
\[
\bal
J_1 + J_3 
=& \B\la   \lt( \f{4}{3} R^{-3} + 6 R^{-2 } \rt)\sin(2\b)^2  , \Om^2  \B\ra   - \f{4}{3} \lt( 2 \cdot \f{9}{4\pi c}   - \f{\pi}{2} \f{9^2}{ (4\pi c)^2}  \rt) ||  \td{L}_{12}(\Om) R^{-3/2 }||^2_{L^2(R)} \\
&- 6 (  \f{9}{8 \pi c}  - \f{\pi}{2} \f{9^2}{ (8 \pi c)^2}  ) \ || \td{L}_{12}(\Om), R^{-1 } ||^2_{L^2(R)} \teq M_1 + M_2 +M_3.
\eal
\]
We further simplify $M_2, M_3$ defined above. Using Lemma \ref{lem:one}, we have $| \pi c -2 | \les \al$ and
\beq\label{eq:cancel_coe}
\bal
- \f{4}{3} \cdot \f{9}{4\pi c} ( 2  - \f{\pi}{2} \f{9}{4\pi c} )
&\leq  - \f{4}{3}\cdot \f{9}{8} ( 2 - \f{\pi}{2} \cdot\f{9}{8}) + C \al   < - \f{1}{4}  + C\al ,\\
  - 6 \cdot \f{9}{8 \pi c} ( 1 - \f{\pi}{2} \f{9}{8 \pi c})
 &\leq  - 6 \cdot \f{9}{16} (1 - \f{\pi}{2} \cdot \f{9}{16}) + C \al < - \f{1}{4} + C \al ,
\eal
\eeq
for some absolute constant $C$.  It follows that
\[
M_2 + M_3 \leq  (-\f{1}{4} + C\al ) ( ||  \td{L}_{12}(\Om), R^{-3/2 }||^2_{L^2(R)} + 
|| \td{L}_{12}(\Om), R^{-1 } ||^2_{L^2(R)}) = (-\f{1}{4} + C\al )  || \td{L}_{12}(\Om)\rho^{1/2}||^2_{L^2},
\]
where we have used the notation $\rho$ defined in \eqref{wg:L2_R0}. Therefore, we yield the damping for $\td{L}_{12}(\Om)$.

\begin{remark}
The above computations of $J_1, J_2, J_3, J_4$ are exactly the same as those in Model 1 in Section \ref{sec:model1}. We choose the constants in the weights \eqref{wg:L2_R0} carefully so that when we apply Lemma \ref{lem:cancel}, the constant $ - ( (k-1) \lam - \f{\pi}{2}\lam^2) $ in \eqref{eq:cancel} is negative, i.e. \eqref{eq:cancel_coe}.

\end{remark}

Using \eqref{eq:L2_30}, the above estimate of $M_2 + M_3$ in $J_1 +J_3$ and $\sin(2\b)^2 \leq \sin(2\b)$, we prove
\beq\label{eq:L2_3}
\bal
&\B\la \f{2}{\pi \al} \td{L}_{12}(\Om) \bar{\eta}, \eta \psi_0 \B\ra + \la \Om, \eta \vp_0 \ra
= I + II \leq  \B\la \B( \f{4}{3} R^{-3}  + 6 R^{-2} + \f{1+R}{3 R} \B) \sin(2\b),   \Om^2 \B\ra   \\
+ & \B\la \f{3}{16} R^{-3}  + \f{3}{8}\f{ (1 + R)^2}{R^2} + \f{3}{4}\f{R}{1+R} , \eta^2   \B\ra 
  - (\f{1}{4} - C\al ) || \td{L}_{12}(\Om) \rho^{1/2} ||^2_{L^2(R)} .\\
\eal
\eeq

\subsubsection{Estimate of the projection $c_{\om}$ in the $\Om$ equation}\label{subsec:L2_cw1}
We estimate the terms involving $c_{\om}$ in \eqref{eq:L2_1} in this subsection. Notice that $c_{\om}$ defined in \eqref{eq:normal} is the projection of $\Om$ onto some function. Using \eqref{eq:profile} and \eqref{wg:L2_R0}, we can calculate 
\[
\bal
&\la \bar{\Om} - R\pa_{R} \bar{\Om} , \Om \vp_0 \ra 
=   \B\la \f{\al}{c} \G(\b) \f{6R^2}{ (1+R)^3}, \Om  \f{(1+R)^3}{R^3} \sin(2\b) \B\ra 
= \f{ 6\al}{c} \B\la \f{ \sin(2\b) \G(\b)}{R} , \Om  \B\ra.
\eal
 \]
We show that the above projection is almost equal to $L_{12}(\Om)(0)$. Notice that 
 \[
 \bal
& \f{1}{ c} \B\la \f{ \sin(2\b) \G(\b)  }{R} , \Om \B\ra
 - \f{\pi}{2} \B\la \f{  \sin(2\b) }{R} , \Om \B\ra  
 = \f{1}{c} \B\la \f{  \sin(2\b) (\G(\b) -1)}{R},  \ \Om \B\ra 
 +(\f{1}{c} - \f{\pi}{2}) \la \f{  \sin(2\b) }{R}, \Om \ra   \teq I + II.
 \eal
 \]
 Using Lemma \ref{lem:one}, \eqref{wg:L2_R0} and the Cauchy-Schwarz inequality, we have 
 \[
 \bal
|I| &\les \al\la \f{1}{R} \sin(2\b)^{1/2},  | \Om|  \ra  
 \les 
  \al  \B|\B| \Om \f{(1+R)^{3/2}}{R^{3/2}} \sin(2\b)^{1/2}\B|\B|_2 \B| \B| \f{1}{R} \cdot \f{R^{3/2}}{(1+R)^{3/2}}\B|\B|_2\les \al || \Om \vp_0^{1/2}||_2  , \\
   |II| & \les \al \la \f{ 1}{R} \sin(2\b), |\Om| \ra
 \les \al \B| \B| \Om  \f{ (1+R)^{3/2}}{R^{3/2}} \sin(2\b) \B|\B|_2
 \B|\B| \f{1}{R} \cdot \f{ R^{3/2}}{(1+R)^{3/2}} \sin(2\b) \B|\B|_2 \les \al || \Om \vp_0^{1/2}||_2 .\\
 \eal
 \]
 It follows that
 \[
\B| \f{1}{\al}\la \bar{\Om} - R\pa_{R} \bar{\Om} , \Om \vp_0 \ra 
- 6  \cdot \f{\pi}{2} \B\la \f{\sin(2\b)  }{R} ,\Om \B\ra \B| 
\leq 6 | I + II | \les \al || \Om \vp_0^{1/2}||_2 .
 \]
 Recall the definition of $c_{\om}$ in \eqref{eq:normal}. Using the above estimate and then the formula of $L_{12}(\Om)(0)$ \eqref{eq:biot3}, we have 
 \beq\label{eq:L2_41}
 \bal
 &c_{\om} \la \bar{\Om} - R\pa_{R} \bar{\Om} , \Om \vp_0 \ra 
= -\f{2}{\pi } L_{12}(\Om)(0)  \cdot \f{1}{\al}\la \bar{\Om} - R\pa_{R} \bar{\Om} , \Om \vp_0 \ra \\
\leq & -\f{2}{\pi } L_{12}(\Om)(0) \cdot 6 \cdot \f{\pi}{2} \B\la \f{\sin(2\b)  }{R} ,\Om \B\ra
+ C \al | L_{12}(\Om)(0)| \cdot || \Om \vp_0^{1/2}||_2   \\
=& - 6 ( L_{12}(\Om)(0))^2 + C \al | L_{12}(\Om)(0)| \cdot || \Om \vp_0^{1/2} ||_2  \\
\leq& - (6-C\al) ( L_{12}(\Om)(0))^2 + C \al || \Om \vp_0^{1/2} ||_2 .
\eal
 \eeq

By choosing $\vp_0$ in \eqref{wg:L2_R0} carefully, we obtain a damping term for $L_{12}(\Om)(0)^2$ from $c_{\om} ( \bar{\Om} - R\pa_{R} \bar{\Om}) $. This is one of the motivations to choose the special form of $\vp_0$. 

\subsubsection{Estimate of the projection $c_{\om}$ in the $\eta$ equation}\label{subsec:L2_cw2}
We use some ideas and estimates similar to those in Model 2 in Section \ref{sec:model2} to estimate the $c_{\om}$ term in the $\eta$ equation \eqref{eq:L2_1}.

Using $c_{\om} = -\f{2}{\pi \al} L_{12}(\Om)(0)$ \eqref{eq:normal} and expanding the coefficient $(\bar{\eta} - R\pa_R \bar{\eta}) \psi_0$ using the formulas \eqref{eq:profile} and \eqref{wg:L2_R0}, which is presented in Appendix \ref{comp:L2_cw1}, we derive
\beq\label{eq:L2_42}
c_{\om} \la \bar{\eta} - R\pa_R \bar{\eta} , \eta \psi_0\ra
=-\f{27}{ 4\pi c} L_{12}(\Om) (0) \B\la \eta, \f{1}{(1+R)R^2} \B\ra  
 - \f{81}{ 8 \pi c} L_{12}(\Om) (0) \B\la \eta, \f{1}{R} \B\ra  \teq A_1 + A_2 . 
\eeq

\vspace{0.1in}
\paragraph{\bf{An ODE for $L_{12}(\Om)(0)$}}
Using the $\Om$ equation in \eqref{eq:equiv} and derivation similar to that in Model 2 in Section \ref{sec:model2}, we derive the following ODE for $L_{12}(\Om)(0)$ in Appendix \ref{comp:L2_cw2}
\beq\label{eq:L2_44}
\bal
\f{1}{2}\f{d}{dt} 
\f{81 }{ 4 \pi c}  L^2_{12} (\Om)(0) &= \f{81 }{ 4 \pi c}  \B( 
- 4 L^2_{12} (\Om)(0) + L_{12}(\Om)(0) \B\la \eta,  \f{\sin(2\b)}{R} \B\ra   \\
&   -  L_{12}(\Om)(0)\B\la \f{3 \sin(2\b)}{(1+R)R} , D_{\b} \Om \B\ra 
+ L_{12}(\Om)(0) \B\la \cR_{\Om}, \f{\sin(2\b)}{R} \B\ra   \B) .
\eal
\eeq

Note that we have multiplied both sides of the ODE for $L^2_{12}(\Om)(0)$ by the constant $\f{81 }{ 4 \pi c}$ and will include $\f{81 }{ 4 \pi c}  L^2_{12} (\Om)(0)$ in the energy $E(R,0)$ \eqref{eg:R0}. The first term on the right hand side provides damping for $L^2_{12}(\Om)(0)$, which is similar to that in \eqref{eq:model2_2}. It enables us to control the term $A_1, A_2$ in \eqref{eq:L2_42}. Based on the idea in Model 2 in Section \ref{sec:model2} and the fact that the integrands in $A_2$ and $L_{12}(\Om)(0) \B\la \eta,  \f{\sin(2\b)}{R} \B\ra$ in \eqref{eq:L2_44} have different signs, we combine the estimate of $A_2$ in \eqref{eq:L2_42} and the $\eta$ term in \eqref{eq:L2_44} as follows to exploit cancellation 
\beq\label{eq:L2_51}
A_3 \teq A_2 + \f{81  }{ 4 \pi c}  L_{12}(\Om)(0) \B\la \eta,  \f{\sin(2\b)}{R} \B\ra
= \f{81}{8\pi c}   L_{12}(\Om)(0) \B\la \eta, \f{1}{R} (-1 + 2 \sin(2\b)) \B\ra .
\eeq

Next, we estimate $A_1$ in \eqref{eq:L2_42} and $A_3$ by treating them as perturbation. Applying the Cauchy-Schwarz inequality yields
\beq\label{eq:L2_520}
\bal
  A_3 & \leq \f{ 81 }{8 \pi c}  | L_{12}(\Om)(0)| \cdot  \B| \B| \eta \f{ (1+R)^2}{ R^{3/2}}  \B|\B|_2 \B| \B| \f{R^{3/2}}{(1+R)^2} \f{1}{R}(1 - 2  \sin(2\b))\B|\B|_2 ,\\
A_1 &\leq \f{27}{ 4\pi c} | L_{12}(\Om) (0)| \cdot \B|\B| \eta \f{(1+R)^{3/2}}{R^2}\B|\B|_2
\B| \B| \f{R^2}{(1+R)^{3/2}} \cdot \f{1}{(1+R)R^2}\B|\B|_2.  \\
\eal
\eeq
The integrals on $R, \b$ in \eqref{eq:L2_520} equal to $ \sqrt{ \f{1}{6}(\f{3 \pi}{2} -4 ) },\sqrt{ \f{\pi}{8}  }  $, respectively, which are computed in Appendix \ref{comp:L2_cw3}. Then we reduce \eqref{eq:L2_520} to 
\beq\label{eq:L2_521}
A_3 \leq b_1 | L_{12}(\Om)(0)| \B| \B| \eta \f{ (1+R)^2}{ R^{3/2}}  \B|\B|_2, 
 \quad A_1 \leq  b_2 | L_{12}(\Om) (0)|  \B|\B| \eta \f{(1+R)^{3/2}}{R^2}\B|\B|_2 .
\eeq
where $b_1, b_2$ are given by 
\[
b_1 \teq \f{81}{8 \pi c} \sqrt{ \f{1}{6}(\f{3 \pi}{2} -4 )} ,\quad  b_2 \teq \f{27}{ 4\pi c} \sqrt{ \f{\pi}{8}  }.
\]


Using the Young's inequality $ab \leq s a^2 + \f{1}{4 s}b^2$ for any $s > 0$, we get 
\beq\label{eq:L2_52}
\bal
A_1 + A_3  
&\leq  \f{1}{32}  \B| \B| \eta \f{ (1+R)^2}{ R^{3/2}}  \B|\B|_2^2
+ \f{9}{128}\B|\B| \eta \f{(1+R)^{3/2}}{R^2}\B|\B|_2^2
+ L^2_{12}(\Om)(0) \B(  \f{b^2_1}{4 \times 1/32} + \f{b_2^2}{4 \times 9/128} \B) . \\
\eal
\eeq

Using Lemma \ref{lem:one} for the estimate of $c$ and a direct calculation yield 
\beq\label{eq:cw_count}
\bal
&\f{b_1^2}{1/8} + \f{b_2^2}{9/32} - \f{81  }{4\pi c} \cdot 4 
= 8 \lt( \f{81}{8\pi c} \rt)^2 \f{1}{6} ( \f{ 3\pi}{2} -4 )
+ \f{32}{9}\lt( \f{ 27}{ 4\pi c} \rt)^2 \f{\pi}{8} - \f{81}{\pi c}   \\
\leq& \f{4}{3} \lt(\f{81}{16} \rt)^2 (\f{3 \pi }{2} - 4) 
+ \f{4\pi}{9} \lt( \f{27}{8} \rt)^2 - \f{81}{2}+ C \al <    C \al.
\eal
\eeq

Combining the identities \eqref{eq:L2_42}, \eqref{eq:L2_51}, the damping term of $L^2_{12}(\Om)(0)$ in \eqref{eq:L2_44} and the estimate \eqref{eq:L2_52}, we prove
\beq\label{eq:L2_53}
\bal
& c_{\om} \la \bar{\eta} - R\pa_R \bar{\eta} , \eta \psi_0\ra+\f{81 }{ 4 \pi c}  L_{12}(\Om)(0) \B\la \eta,  \f{\sin(2\b)}{R} \B\ra  - \f{81 }{4\pi c} \cdot 4 L^2_{12}(\Om)(0) \\
 = &
 A_1 + A_2 + \f{81 }{ 4 \pi c}  L_{12}(\Om)(0) \B\la \eta,  \f{\sin(2\b)}{R} \B\ra    - \f{81 }{4\pi c} \cdot 4 L^2_{12}(\Om)(0) 
 = A_1 + A_3 - \f{81 }{4\pi c} \cdot 4 L^2_{12}(\Om)(0) \\
= & \la  \eta^2 , \f{1}{32}\f{(1+R)^4}{R^{3 }} + \f{9}{128} \f{(1+R)^3}{R^{4 }}  \ra 
+ L^2_{12}(\Om)(0) \lt( \f{b_1^2}{1/8} + \f{b_2^2}{9/32} - \f{81 }{\pi c}  \rt)   \\
 \leq &  \f{3}{16} \B\la \eta^2  , \f{1}{6}\f{(1+R)^4}{R^3} + \f{3}{8} \f{(1+R)^3}{R^4} \B\ra
+C \al  L^2_{12}(\Om)(0) ,
\eal
\eeq
where we have used \eqref{eq:cw_count} to derive the last inequality.

\subsubsection{Estimate of the angular transport term}\label{subsec:L2_angle}
From the definition of the weights \eqref{wg}, \eqref{wg:L2_R0}, we have 
\[
\vp_0 \les \vp_2, \quad (1+R)^{-1}\psi_0 \les \psi_2, \quad  
\B| \B| \f{3 \sin(2\b)}{(1+R)R} \vp_2^{-1/2}\B|\B|_2 \les 1 .
\]
Therefore, we can estimate the angular transport terms in \eqref{eq:L2_1}, \eqref{eq:L2_44} as follows 
\[
\bal
&- \la  \f{3D_{\b} \Om}{1+R} , \Om \vp_0 \ra 
\les  || D_{\b} \Om  \vp_2^{1/2}||_2 || \Om \vp_0^{1/2} ||_2,  \quad - \la  \f{3 D_{\b} \eta}{1+R} , \eta \psi_0 \ra  \les  || D_{\b} \eta  \psi_2^{1/2}||_2 || \eta \psi_0^{1/2} ||_2, \\
 &- \f{81 }{ 4 \pi c}  L_{12}(\Om)(0)\B\la \f{3 \sin(2\b)}{(1+R)R} , D_{\b} \Om \B\ra 
 \les  | L_{12}(\Om)(0)| \B|\B| D_{\b} \Om \vp_2^{1/2}||_2,
 \eal
\]
where we have used $c^{-1} \les 1$ (see Lemma \ref{lem:one}). Using the energy notations $E(\b, 1)$ \eqref{eg:b1} and $E(R, 0)$ \eqref{eg:R0}, we further derive
\beq\label{eq:L2_6}
\bal
 &-\la  \f{3D_{\b} \Om}{1+R} , \Om \vp_0 \ra - \la  \f{3 D_{\b} \eta}{1+R} , \eta \psi_0 \ra 
 - \f{81 }{ 4 \pi c}  L_{12}(\Om)(0)\B\la \f{3 \sin(2\b)}{(1+R)R} , D_{\b} \Om \B\ra  
 \leq  K_1 E(R, 0) E(\b, 1) ,  
\eal
\eeq
for some absolute constant $K_1$. We remark that the absolute constants $K_1, K_2,..$ do not change from line to line.

\subsubsection{Completing the estimates with a less singular weight}\label{subsec:L2_sum}
Combining the estimates \eqref{eq:L2_1}-\eqref{eq:L2_3}, \eqref{eq:L2_41}, \eqref{eq:L2_44}, \eqref{eq:L2_53}, \eqref{eq:L2_6} and using the notations $E(R,0), \cR(R,0)$ \eqref{eg:R0}, we obtain
\beq\label{eq:L2_71}
\bal
\f{1}{2} \f{d}{dt} E(R,0)^2 &= \f{1}{2} \f{d}{dt} 
\lt(  
|| \Om \vp_0^{1/2}||_2^2 + || \eta \psi_0^{1/2}||_2^2
 +\f{81 }{ 4 \pi c}  L^2_{12} (\Om)(0) \rt) \\
 &\leq  \la \Om^2 , \sin(2\b) D(\Om) \ra 
 + \la \eta^2 , D(\eta) \ra  + (-\f{1}{4} + C\al ) || \td{L}_{12}(\Om) \rho^{1/2} ||^2_{L^2(R)} \\
 &\quad + L^2_{12}(\Om)(0) \lt( -6 + C \al \rt) +  C \al \la \Om^2, \vp_0 \ra + K_1 E(R, 0) E(\b, 1)  + \cR(R,0) ,
\eal
\eeq
where $D(\Om), D(\eta)$ are given by 
\beq\label{eq:L2_712}
\bal
D(\Om) & \teq  -2R^{-3} - \f{9}{2}  R^{-2}  - 3 R^{-1} - \f{1}{2} +  \f{4}{3} R^{-3}  + 6 R^{-2} + \f{1+R}{ 3 R} ,  \\
D(\eta)  &  \teq
- \f{3(1+R)^2}{ 32 R^4} ( 1 + 4 R + 3 R^2 + 3 R^3) \G(\b)^{-1} \\
&\quad +\lt( \f{3}{16} R^{-3}  + \f{3}{8}\f{ (1 + R)^2}{R^2} +  \f{3 R}{4(1+R)} \rt) 
  + \f{3}{16} \B( \f{1}{6}\f{(1+R)^4}{R^3} + \f{3}{8} \f{(1+R)^3}{R^4} \B).
\eal
\eeq

Recall the weights $\vp_0,\psi_0$ in \eqref{wg:L2_R0}. In Appendix \ref{comp:L2_sum}, we estimate $D(\Om), D(\eta)$ and prove 
\beq\label{eq:L2_72p}
\sin(2\b) D(\Om) \leq -\f{1}{6} \vp_0, \quad D(\eta ) \leq -\f{1}{8} \psi_0,
\eeq
which only involves elementary estimates.


For $L^2_{12}(\Om)(0)$ in \eqref{eq:L2_71}, we use Lemma \ref{lem:one} about $c$ ($c\pi = 2 + O(\al)$) to get
\[
-6 + C\al \leq -\f{1}{8}\times \f{81}{8} - 4 + C\al \leq -\f{1}{8}\times \f{81}{4\pi c} - 4 + C\al, 
\]
which implies
\beq\label{eq:L2_74}
(-6 + C\al ) L^2_{12}(\Om)(0) \leq - \f{1}{8} \cdot \f{81}{ 4\pi c} L^2_{12}(\Om)(0)
 - (4 - C\al) L^2_{12}(\Om)(0) ,
\eeq
where $C$ is some absolute constant and may vary from line to line. Observe that 
\beq\label{eq:L2_75}
K_1 E(R, 0) E(\b, 1) \leq \f{1}{100} E(R,0)^2 +  100 K^2_1 E^2(\b,1). 
\eeq

Recall $E(R,0)$ in \eqref{eg:R0}. Finally, substituting the estimates \eqref{eq:L2_72p}-\eqref{eq:L2_75} in \eqref{eq:L2_71}, we prove 
\beq\label{eq:L2_8}
\bal
&\f{1}{2} \f{d}{dt} E(R,0)^2 
\leq  -(\f{1}{6} - C\al) || \Om \vp_0^{1/2}||_2^2 - \f{1}{8} || \eta \psi_0^{1/2}||_2^2- \f{1}{8} \cdot \f{81}{ 4\pi c} L^2_{12}(\Om)(0)   - (4 - C\al) L^2_{12}(\Om)(0)  \\
& \qquad  \qquad \qquad - (\f{1}{4} - C\al )|| \td{L}_{12}(\Om) \rho^{1/2} ||_{L^2(R)}   + \f{1}{100} E(R,0)^2 +  100 K^2_1 E^2(\b,1) + \cR(R, 0)\\
 \leq & (-\f{1}{9} +C \al) E^2(R,0)
 - (4 - C\al) L^2_{12}(\Om)(0)    - (\f{1}{4} - C\al )|| \td{L}_{12}(\Om) \rho^{1/2} ||_{L^2(R)}   +  100 K^2_1 E^2(\b,1)+ \cR(R, 0),\\
\eal
\eeq
where we have used $- \f{1}{6} + C\al + \f{1}{100}  , - \f{1}{8} + \f{1}{100} < -\f{1}{9} + C\al$ to derive the last inequality. 


\subsubsection{Linear stability with a less singular weight}\label{subsec:L2_done}
Using the reformulation \eqref{eq:equiv}, and the notations $E(\b, 1)$ and $\cR(\b, 1)$ defined in \eqref{eg:b1}, we have
\beq\label{eq:equiv_l2less}
\f{1}{2} \f{d}{dt} (E(\b,1) )^2 
 = \la D_{\b} \cL_1 (\Om, \eta), ( D_{\b} \Om ) \vp_2 \ra + \la  D_{\b} \cL_2( \Om , \eta)  , ( D_{\b}\eta) \vp_2 \ra  + \cR(\b, 1) .
\eeq
Now we combine \eqref{eq:H1_b5} and \eqref{eq:L2_8} to establish the linear stability of \eqref{eq:lin21}-\eqref{eq:lin22} with the less singular weight \eqref{wg:L2_R0}. 
Firstly, we choose an absolute constant $\mu_1$ such that 
\[
100 K_1^2 <  \f{1}{20} \mu_1 ,
\]
where the absolute constant $K_1$ is determined in \eqref{eq:L2_6}. From \eqref{wg:L2_R0}, we have $R^{-2} \leq \rho$. Hence,
\[
|| \td{L}_{12}(\Om) R^{-1} ||^2_{L^2(R)} \leq || \td{L}_{12}(\Om) \rho^{1/2} ||^2_{L^2(R)}.
\]
Combining Proposition \ref{prop:b1}, \eqref{eq:L2_8}, the formulation \eqref{eq:equiv_l2less}, and the above estimates, we establish the estimate for $E(R,0)^2 + \mu_1 E(\b,1)^2$ 
\beq\label{eq:L2_9}
\bal
&\f{1}{2} \f{d}{dt} ( ( E(R,0)^2 + \mu_1 E(\b,1)^2 ) )
\leq  - ( \f{1}{9} - C\al ) ( ( E(R,0)^2 + \mu_1 E(\b,1)^2 ) )  \\
&-  (4-C\al) L^2_{12}(\Om)(0) - (\f{1}{4} - C\al ) || \td{L}^2_{12} \rho^{1/2} ||^2_{L^2(R)}
+ \cR(R, 0) + \mu_1 \cR(\b, 1)
.
\eal
\eeq
The proof of Proposition \ref{prop:L2less} is now complete.

\subsection{Weighted $L^2$ estimate of $\Om, \eta$ with a more singular weight}\label{sec:L2_more}
With the linear stability \eqref{eq:L2_9} with a less singular weight, we can proceed to perform the weighted $L^2$ estimate with a more singular weight.

\begin{definition}\label{def:L2_m} Define an energy $E(R, 1)$ and a remaining term $\cR(R, 1)$ by 
\beq\label{eg:R1}
E(R, 1) \teq  \lt(  || \Om \vp_1^{1/2}||_2^2 + || \eta \vp_1^{1/2}||_2^2 \rt)^{1/2}, \quad 
\cR(R, 1) \teq \la \cR_{\Om}, \Om \vp_1 \ra + \la \cR_{\eta}, \eta \vp_1 \ra , 
\eeq
where $\vp_1, \psi_1$ are given in Definition \ref{def:wg}.
\end{definition}

The main result in this Section is the following.
\begin{prop}\label{prop:L2}
Assume that $\Om \vp_1^{1/2} , \ \eta \vp_1^{1/2} \in L^2$. We have that 
\[
\bal
&\la  \cL_1 (\Om, \eta), \Om \vp_1 \ra + \la   \cL_2( \Om , \eta)  , \eta \vp_1 \ra 
\leq  - \f{1}{6} (E(R,1) )^2  +  K_3  \lt( L^2_{12}(\Om)(0) +  \B| \B| \td{L}_{12}(\Om) R^{-1} \B| \B|^2_{L^2(R)} \rt) ,
\eal
\]
where $\cL_1, \cL_2$ are defined in Definition \ref{def:op}, $K_3 >0$ is some fixed absolute constant.
\end{prop}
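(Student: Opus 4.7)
\textbf{Proof proposal for Proposition \ref{prop:L2}.} The plan is to split each linear operator into its local part plus the nonlocal contributions, apply Lemma \ref{lem:dp} with the weight $\vp_1$ to extract a damping term strictly stronger than $-\tfrac{1}{6}$, and then absorb each of the three nonlocal perturbations into the damping via Cauchy--Schwarz and Young's inequality, paying a bounded constant multiple of $L_{12}^2(\Om)(0)$ or $\|\td L_{12}(\Om)R^{-1}\|_{L^2(R)}^2$. Unlike Proposition \ref{prop:L2less}, we do not need to exploit the $\Om$--$\td L_{12}(\Om)$ cancellation here: the nonlocal terms are already controlled by the quantities allowed on the right-hand side, so brute Cauchy--Schwarz suffices, provided the relevant weighted norms of $\bar\Om-D_R\bar\Om$, $\bar\eta-D_R\bar\eta$ and $\bar\eta/\al$ are finite.

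For the local part, we write $\cL_1(\Om,\eta)=\cL_{10}(\Om,\eta)+c_\om(\bar\Om-D_R\bar\Om)$ and $\cL_2(\Om,\eta)=\cL_{20}(\eta)+\tfrac{2}{\pi\al}\td L_{12}(\Om)\bar\eta+c_\om(\bar\eta-D_R\bar\eta)$. Applying Lemma \ref{lem:dp} with $\vp=\vp_1$ and $\d=\s=\tfrac{99}{100}$ gives
\[
\la \cL_{10}(\Om,\eta),\Om\vp_1\ra+\la \cL_{20}(\eta),\eta\vp_1\ra\le \Bigl(-\tfrac{1}{4}+\tfrac{3}{100}\Bigr)E(R,1)^2=-\tfrac{22}{100}E(R,1)^2,
\]
which leaves a budget of roughly $\tfrac{22}{100}-\tfrac{1}{6}\approx \tfrac{1}{19}$ for perturbations.

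For the two $c_\om$ contributions, a direct computation using \eqref{eq:profile} yields $\bar\Om-D_R\bar\Om=\tfrac{6\al}{c}\G(\b)\tfrac{R^2}{(1+R)^3}$ and $\bar\eta-D_R\bar\eta=\tfrac{18\al}{c}\G(\b)\tfrac{R^2}{(1+R)^4}$, so that
\[
\|(\bar\Om-D_R\bar\Om)\vp_1^{1/2}\|_2^2\lesssim \al^2\int_0^\infty\!\!\int_0^{\pi/2}\!\!\!\G(\b)^2\sin(2\b)^{-\s}(1+R)^{-2}\,dRd\b\lesssim \al^2,
\]
and the analogous bound holds for $\bar\eta-D_R\bar\eta$. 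Combined with $|c_\om|\lesssim \al^{-1}|L_{12}(\Om)(0)|$ and Cauchy--Schwarz, this gives
\[
|\la c_\om(\bar\Om-D_R\bar\Om),\Om\vp_1\ra|+|\la c_\om(\bar\eta-D_R\bar\eta),\eta\vp_1\ra|\le \e E(R,1)^2+C_\e L_{12}^2(\Om)(0).
\]
For the $\td L_{12}$ term, the weighted norm $\|(\bar\eta/\al)\vp_1^{1/2}\|_2$ diverges at $R=0$, so direct Cauchy--Schwarz fails. The fix is to use that $\td L_{12}(\Om)(0)=0$ and write $\tfrac{2}{\pi\al}\td L_{12}(\Om)\bar\eta=\tfrac{2}{\pi}\cdot\tfrac{\td L_{12}(\Om)}{R}\cdot\tfrac{R\bar\eta}{\al}$. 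A direct check shows $(R\bar\eta/\al)^2\vp_1=\tfrac{36\G^2}{c^2(1+R)^2}\sin(2\b)^{-\s}$, whose integral in $R$ is finite. Cauchy--Schwarz then gives
\[
\Bigl|\Bigl\la \tfrac{2}{\pi\al}\td L_{12}(\Om)\bar\eta,\,\eta\vp_1\Bigr\ra\Bigr|\lesssim \|\td L_{12}(\Om)R^{-1}\|_{L^2(R)}\,\|\eta\vp_1^{1/2}\|_2\le \e\|\eta\vp_1^{1/2}\|_2^2+C_\e\|\td L_{12}(\Om)R^{-1}\|_{L^2(R)}^2.
\]
Summing the three perturbation bounds with $\e$ chosen so that $3\e<\tfrac{22}{100}-\tfrac{1}{6}$, and setting $K_3$ to be the maximum of the resulting constants, yields the claimed estimate. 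The main obstacle is this last step, namely recognizing that the non-integrability of $\bar\eta\vp_1^{1/2}$ near $R=0$ must be compensated by the vanishing of $\td L_{12}(\Om)$ at the origin; the remaining computations are routine verifications of weighted integrals against explicit profiles.
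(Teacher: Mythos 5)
Your proposal is correct and follows essentially the same route as the paper: the same decomposition into $\cL_{i0}$ plus nonlocal terms, Lemma \ref{lem:dp} with $\vp=\vp_1$ for the damping, and Cauchy--Schwarz/Young for the $c_\om$ and $\td L_{12}$ perturbations. The only cosmetic difference is that you verify the weighted-norm bounds on $\bar\Om-D_R\bar\Om$, $\bar\eta-D_R\bar\eta$ and $R\bar\eta/\al$ by direct computation, whereas the paper cites Lemma \ref{lem:bar} (estimates \eqref{eq:bar_ing}, \eqref{eq:bar_ux}) and Lemma \ref{lem:ux}, which encapsulate exactly the $\td L_{12}(\Om)/R$ factorization you correctly identify as the key step.
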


\begin{proof}[Proof of Proposition \ref{prop:L2}]
A direct calculation yields 
\beq\label{eq:L2_m1}
\bal
\la \cL_1(\Om, \eta), \Om \vp_1 \ra  & = \la \cL_{10}(\Om, \eta), \Om \vp_1 \ra + c_{\om} \la  \bar{\Om} - R \pa_R \bar{\Om}, \Om \vp_1 \ra , \\
\la \cL_2(\Om, \eta) , \eta \vp_1 \ra & = \la \cL_{20}(\eta), \eta \vp_1 \ra
+\f{2}{\pi \al} \la  \td{L}_{12}(\Om)  \bar{\eta}, \eta \vp_1 \ra + c_{\om}  \la  \bar{\eta} - R \pa_R \bar{\eta} ,\eta \vp_1 \ra .
\eal
\eeq
Applying Lemma \ref{lem:dp} with $\vp = \vp_1 $ and $\d = \s = \f{99}{100}$, we yield 
\[
\la \cL_{10}(\Om, \eta), \Om \vp_1 \ra  + 
\la \cL_{20}( \eta), \eta \vp_1 \ra \leq (-\f{1}{4} + 3 |1 -\s |) (|| \Om \vp_1^{1/2}||_2^2 + || \eta \vp_1^{1/2}||_2^2) < -\f{1}{5}( || \Om \vp_1^{1/2}||_2^2 + || \eta \vp_1^{1/2}||_2^2  ).
\]
Recall $c_{\om} = -\f{2}{\pi \al} L_{12}(\Om)(0)$ \eqref{eq:normal}. Using \eqref{eq:bar_ing} in Lemma \ref{lem:bar} and the Cauchy-Schwarz inequality, we obtain 
\[
| c_{\om} \la (\bar{\Om} - R \pa_R \bar{\Om} ) , \Om  \vp_1 \ra|
+ |c_{\om} \la (\bar{\eta} - R \pa_R \bar{\eta} ) , \eta \vp_1 \ra| \les  |L_{12}(\Om)(0)  | 
( || \Om \vp_1^{1/2}||_2^2 + || \eta \vp_1^{1/2}||_2^2)^{1/2}.
\]

For $\td{L}_{12}(\Om)$ in \eqref{eq:L2_m1}, using the Cauchy-Schwarz inequality, we derive 
\[
\B\la  \f{2}{\pi \al}\td{L}_{12}(\Om)  \bar{\eta},  \eta  \vp_1 \B\ra
\les \al^{-1} || \td{L}_{12}(\Om) \bar{\eta} \vp_1^{1/2}||_2 || \eta \vp_1^{1/2}||_2
  \les \B| \B| \td{L}_{12}(\Om) R^{-1} \B| \B|_{L^2(R)} || \eta \vp_1^{1/2}||_2,
\]
where we have applied Lemma \ref{lem:ux} and \eqref{eq:bar_ux} in Lemma \ref{lem:bar} in the second inequality.

Using the Cauchy-Schwarz inequality and the energy notation $E(R,1)$ \eqref{eg:R1}, we complete the proof of Proposition \ref{prop:L2}.
\end{proof}

\subsection{Weighted $L^2$ estimate of $D_{\b} \xi$ and $\xi$}\label{sec:L2_xi}
The estimates of $\xi$ are simpler since the main terms in the equation of $\xi$ \eqref{eq:lin23} do not couple with $\Om, \eta$ directly. We use the weights $\psi_1, \psi_2$ in Definition \ref{def:wg}.

\begin{prop}\label{prop:rho12}
Suppose that $\psi_1^{1/2} \xi, \ \psi_2^{1/2} D_{\b}\xi \in L^2$. We have 
\begin{align}
\la \cL_{3}(\Om, \xi), \xi \psi_1 \ra
 &\leq  (-\f{1}{3} + C\al )  || \xi \psi_1^{1/2}||_2^2
 +C \al \lt( L^2_{12}(\Om )(0)  + || \td{L}_{12} (\Om) R^{-1} ||^2_{L^2(R)}  \rt)
\label{eq:L2_xi2} ,\\
\la D_{\b} \cL_3(\Om, \xi), (D_{\b}\xi )\psi_2 \ra
&\leq  (-\f{1}{3} + C \al) || D_{\b}\xi \psi_2^{1/2}||_2^2
+C\al \lt( L^2_{12}(\Om )(0)   + || \td{L}_{12} (\Om) R^{-1} ||^2_{L^2(R)} \rt).
\label{eq:L2_xi3} 
\end{align}
\end{prop}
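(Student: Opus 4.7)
The plan is to decompose $\cL_3(\Om,\xi) = \cL_{30}(\xi) + \mathcal{N}_1 + \mathcal{N}_2$, where $\mathcal{N}_1 \teq -\f{2}{\pi\al}\td L_{12}(\Om)\bar\xi$ and $\mathcal{N}_2 \teq c_\om(3\bar\xi - D_R \bar\xi)$ are the two nonlocal perturbations carrying the weak coupling identified in Section \ref{sec:keyob}. For the local piece I invoke Lemma \ref{lem:dp}, estimate \eqref{eq:dp2}, directly. With $\psi = \psi_1$ the weight exponents are $\d_1 = \d_2 = \s = 99/100$, so $|1-\d_1|\vee|1-\d_2| = 1/100$ and the lemma yields $\la \cL_{30}(\xi),\xi\psi_1\ra \leq (-\tfrac{1}{2}+\tfrac{3}{100})\|\xi\psi_1^{1/2}\|_2^2$, comfortably below $-\tfrac{1}{3}\|\xi\psi_1^{1/2}\|_2^2$. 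With $\psi=\psi_2$ the exponents are $(\s,\g)$ with $\g = 1 + \al/10$, so the same bound holds up to an additional $O(\al)$ error.

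For the nonlocal perturbations I would use Lemma \ref{lem:xi} from the Appendix, which (in line with the formal computation \eqref{eq:key_ob2_example}) shows that $\bar\xi$ is smaller than $\bar\Om,\bar\eta$ by a factor of $\al$. This extra $\al$ is precisely what compensates the $1/\al$ prefactors inside $\tfrac{2}{\pi\al}$ and $c_\om = -\tfrac{2}{\pi\al}L_{12}(\Om)(0)$. Combining this with Lemma \ref{lem:ux} to dominate $\td L_{12}(\Om)\bar\eta$-type weights by $\|\td L_{12}(\Om) R^{-1}\|_{L^2(R)}$, and with Lemma \ref{lem:bar} to handle the profiles $\bar\xi, D_R\bar\xi$, Cauchy--Schwarz gives
\[
|\la \mathcal{N}_1,\xi\psi_1\ra| \les \al\,\|\td L_{12}(\Om) R^{-1}\|_{L^2(R)}\,\|\xi\psi_1^{1/2}\|_2,\quad
|\la \mathcal{N}_2,\xi\psi_1\ra| \les \al\,|L_{12}(\Om)(0)|\,\|\xi\psi_1^{1/2}\|_2.
\]
Young's inequality then absorbs a term of the form $C\al\|\xi\psi_1^{1/2}\|_2^2$ into the $-\tfrac{1}{3}$ damping and leaves the desired error $C\al(L^2_{12}(\Om)(0) + \|\td L_{12}(\Om) R^{-1}\|^2_{L^2(R)})$, proving \eqref{eq:L2_xi2}.

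For \eqref{eq:L2_xi3} the crucial observation is that $D_\b$ commutes with $D_R$ and with any $R$-dependent coefficient, so $D_\b \cL_{30}(\xi) = \cL_{30}(D_\b\xi)$. Applying Lemma \ref{lem:dp} with $\psi = \psi_2$ to $\cL_{30}(D_\b\xi)$ reproduces the same local damping on $D_\b\xi$. In the nonlocal terms, since $\td L_{12}(\Om)$ and $c_\om$ are $\b$-independent, $D_\b$ falls entirely on $\bar\xi$ and $3\bar\xi - D_R\bar\xi$. By \eqref{eq:Dg} an application of $D_\b$ on $\G(\b)^{\pm 1}$ produces an extra small factor $\al$, so the nonlocal contributions are again at worst of order $\al(L^2_{12}(\Om)(0)+\|\td L_{12}(\Om) R^{-1}\|^2_{L^2(R)})$. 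The same Young-inequality absorption then gives \eqref{eq:L2_xi3}.

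The main obstacle lies in the weighted-norm control of $\bar\xi$, $D_R\bar\xi$, $D_\b\bar\xi$ (and the combined $D_\b D_R\bar\xi$) against the singular weights $\psi_1,\psi_2$: unlike $\bar\Om,\bar\eta$, the profile $\bar\xi$ does not decay at infinity uniformly in direction, which is precisely the reason $\psi_2$ was chosen less singular in $\cos\b$ than $\vp_2$ in Definition \ref{def:wg}. Verifying that the relevant weighted integrals are $O(\al^2)$ reduces entirely to the pointwise estimates on $\bar\xi$ supplied by Lemma \ref{lem:xi}; once these are in hand, the rest of the proof is the bookkeeping sketched above.
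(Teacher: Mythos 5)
Your proposal is correct and follows essentially the same route as the paper: the same splitting of $\cL_3$ into $\cL_{30}(\xi)$ plus the two weak nonlocal couplings, Lemma \ref{lem:dp} for the local damping, and Lemma \ref{lem:ux} together with \eqref{eq:xi_ux}, \eqref{eq:xi_cw} in Lemma \ref{lem:xi} (the $O(\al^2)$ weighted smallness of $\bar\xi$) to reduce the nonlocal terms to $C\al(L_{12}^2(\Om)(0)+\|\td L_{12}(\Om)R^{-1}\|_{L^2(R)}^2)$ via Cauchy--Schwarz. Two cosmetic points: the $c_\om$ profile terms are controlled by Lemma \ref{lem:xi} rather than Lemma \ref{lem:bar}, and the claimed extra factor of $\al$ from $D_\b$ acting on $\bar\xi$ via \eqref{eq:Dg} is neither provided by Lemma \ref{lem:xi} (which only gives $|D_\b\bar\xi|\les-\bar\xi$) nor needed, since \eqref{eq:L2_xi3} requires only the same $C\al$ bound as \eqref{eq:L2_xi2}.
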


\begin{proof}[Proof of Proposition \ref{prop:rho12}]
Since $D_{\b}$ commutes with $\cL_3$ (see Definition \ref{def:op}) and $\td{L}_{12}(R)$ does not depend on $\b$, a direct calculation implies 
\beq\label{eq:L2_xi4}
\bal
\la \cL_3(\Om, \xi) ,\xi \psi_1 \ra &= \la \cL_{30}(\xi), \xi \psi_1 \ra  - \f{2}{\pi \al} \la \td{L}_{12}(\Om)\bar{\xi} , \xi \psi_1 \ra +c_{\om} \la  3\bar{\xi} -D_R \bar{\xi}  ,\xi \psi_1 \ra \\
\la D_{\b} \cL_3(\Om, \xi) , (D_{\b}\xi) \psi_2 \ra
&= \la \cL_{30}(D_{\b}\xi), (D_{\b}\xi) \psi_2 \ra  - \f{2}{\pi \al} \la \td{L}_{12}(\Om) D_{\b}\bar{\xi}  , (D_{\b}\xi) \psi_2 \ra  +c_{\om} \la D_{\b} ( 3\bar{\xi} - D_R \bar{\xi} ) ,\xi \psi_2 \ra.
\eal
\eeq
Applying \eqref{eq:dp2} in Lemma \ref{lem:dp} with $\psi = \psi_1$ (a constant multiple of $\psi$ does not change the estimate in \eqref{eq:dp2}) and with $\psi  = \psi_2$ (see Definition \ref{def:wg}), respectively, we derive 
\beq\label{eq:L2_xi40}
\bal
 \la \cL_{30}(\xi), \xi \psi_1 \ra &\leq (-\f{1}{2} + 3|1 - \s|) || \xi \psi_1^{1/2}||_2^2
 < -\f{3}{8} || \xi \psi_1^{1/2}||_2^2, \\
\la \cL_{30}(D_{\b}\xi), (D_{\b}\xi) \psi_2 \ra  & 
\leq (-\f{1}{2} + 3(|1 -\g| \vee |1-\s|) ) || D_{\b}\xi \psi_2^{1/2}||_2^2 \leq (-\f{3}{8} + \al) || D_{\b}\xi \psi_2^{1/2}||_2^2 ,
 \eal
\eeq
where $\g = 1 + \f{\al}{10}, \s = \f{99}{100}$. Using the Cauchy-Schwarz inequality, we yield 
\beq\label{eq:L2_xi41}
\bal
&\B|-\f{2}{\pi \al} \la  \td{L}_{12}(\Om)  \bar{\xi} , \xi \psi_1 \ra \B|
\les \al^{-1}  || \td{L}_{12}(\Om) \bar \xi \psi_1^{1/2}||_2 || \xi \psi_1^{1/2} ||_2
 \les  \al  || \td{L}_{12}(\Om) R^{-1} ||_2  || \xi \psi_1^{1/2} ||_2,
\eal
\eeq
where we have applied Lemma \ref{lem:ux} and \eqref{eq:xi_ux} in Lemma \ref{lem:xi} to derive the second inequality.

Using the Cauchy-Schwarz inequality, \eqref{eq:normal} and Lemma \ref{lem:xi}, we obtain 
\beq\label{eq:L2_xi42}
\bal
c_{\om} \la 3 \bar{\xi} - D_R \bar{\xi} , \xi \psi_1 \ra 
&\les \al^{-1} |L_{12}(\Om)(0)| \cdot ||  (3\bar{\xi} - D_R \bar{\xi} ) \psi_1^{1/2} ||_2
 || \xi \psi_1^{1/2} ||_2\\
 &\les \al  | L_{12}(\Om)(0)| \cdot || \xi \psi_1^{1/2} ||_2.
\eal
\eeq
Plugging \eqref{eq:L2_xi40}-\eqref{eq:L2_xi42} in \eqref{eq:L2_xi4} and using the Cauchy-Schwarz inequality
, we prove \eqref{eq:L2_xi2}.

The proof of \eqref{eq:L2_xi3} is completely similar. We apply estimates similar to those in \eqref{eq:L2_xi41}-\eqref{eq:L2_xi42} and Lemmas \ref{lem:ux}, \ref{lem:xi} to control the $c_{\om}$ and $\td{L}_{12}(\Om)$ terms. Combining these estimates, using the second inequality in \eqref{eq:L2_xi40} and then the Cauchy-Schwarz inequality prove \eqref{eq:L2_xi3}. 
\end{proof}

\subsubsection{The weighted $L^2$ energy}
Using the reformulation \eqref{eq:equiv}, we have 
\[
\bal
&\f{1}{2} \f{d}{dt} ( || \Om \vp_1^{1/2}||_2^2  + || \eta \vp_1^{1/2}||_2^2 )
= \la  \cL_1 (\Om, \eta), \Om \vp_1 \ra + \la   \cL_2( \Om , \eta)  , \eta \vp_1 \ra 
+ \la  \cR_{\Om}, \Om \vp_1 \ra + \la \cR_{\eta}, \eta \vp_1 \ra,  \\
&\f{1}{2} \f{d}{dt} || \xi \psi_1^{1/2} ||_2^2 = \la \cL_{3}(\xi), \xi \psi_1 \ra + 	\la \cR_{\xi}, \xi \psi_1 \ra
, \quad 
\f{1}{2} \f{d}{dt} || D_{\b}\xi  \psi_2^{1/2}||_2^2   = \la D_{\b} \cL_3(\xi), (D_{\b}\xi )\psi_2 \ra + \la D_{\b} \cR_{\xi}, D_{\b} \xi \psi_2 \ra.
\eal
\]
Recall the energy $E(R, 1)$ and the remaining term $\cR(R, 1)$ in Definition \ref{def:L2_m}. 
\[
E(R, 1)  =  ( || \Om \vp_1^{1/2}||_2^2  + || \eta \vp_1^{1/2}||_2^2  )^{1/2}, \quad \cR(R, 1) =  \la \cR_{\Om}, \Om \vp_1 \ra + \la \cR_{\eta}, \eta \vp_1 \ra .
\]
Combining the above reformulation, Propositions \ref{prop:L2less}, \ref{prop:L2}, \ref{prop:rho12}
and $R^{-2} \leq \rho$ \eqref{wg:L2_R0}, we know that there is some absolute constant $\mu_2$, which is small enough, e.g.
$\mu_2 K_3 < \f{1}{100}$, such that the following estimate holds 
\beq\label{eq:L2}
\bal
&\f{1}{2} \f{d}{dt} \lt(   E(R,0)^2 + \mu_1 E(\b,1)^2 + \mu_2 E ( R, 1)^2 
+  || \xi \psi_1^{1/2}||_2^2 +  || D_{\b}\xi  \psi_2^{1/2}||_2^2  \rt)  \\
\leq &  - ( \f{1}{9} - C\al) \lt(   E(R,0)^2 + \mu_1 E(\b,1)^2 + \mu_2 E ( R, 1)^2  
+ || \xi \psi_1^{1/2}||_2^2  +  || D_{\b}\xi \psi_2^{1/2}||_2^2 \rt) \\
&-  ( 3 - C\al)  L^2_{12}(\Om)(0) -(\f{1}{5} - C\al ) \B| \B| \td{L}^2_{12} \rho^{1/2} \B| \B|^2_{L^2(R)}  + \cR_0(\Om, \eta, \xi),
\eal
\eeq
where $\cR_0$ is defined below. We define the following weighted $L^2$ energy and the remaining term $\cR_0$ \footnote{In fact, $E_0$ contains a $L^2$ norm of the angular derivative $D_{\b}\Om, D_{\b} \eta, D_{\b} \xi$.}
\beq\label{eg:L2}
\bal
E_0(\Om, \eta, \xi) &\teq \lt(   E(R,0)^2 + \mu_1 E(\b,1)^2 + \mu_2 E ( R, 1)^2 
+  || \xi \psi_1^{1/2} ||_2^2 + ||  D_{\b}\xi  \psi_2^{1/2}||_2^2 \rt)^{1/2} ,\\
\cR_0(\Om, \eta, \xi) & \teq  \cR(R,0)  + \mu_1 \cR(\b,1) + \mu_2 \cR( R, 1) 
+  \la \cR_{\xi}, \xi   \psi_1 \ra +  \la D_{\b} \cR_{\xi}, ( D_{\b}  \xi) \psi_2 \ra ,
\eal
\eeq
where $(E(R,0), \cR(R,0) ), (E(\b,1) , \cR(\b, 1)), ( E(R,1), \cR(R, 1))$ are defined in \eqref{eg:R0}, \eqref{eg:b1} and \eqref{eg:R1}, respectively, and $\mu_i$ are some fixed absolute constants. 

We do not need the extra damping for $ \td{L}_{12}(\Om) \rho^{1/2}$ and $L_{12}(\Om)(0)$ in \eqref{eq:L2} due to Lemma \ref{lem:l12} and the fact that $E_0$ is stronger than $|| \Om \f{(1+R)^2 }{R^2}||_{L^2}$. Using \eqref{eq:l12}, we know that $C \al  || \td{L}_{12}(\Om) \rho^{1/2} ||^2_{L^2(R)}$, $C \al| L_{12}(\Om)(0) |^2$ can be bounded by $C \al E_0^2$. Hence, using the notation $E_0, \cR_0$, we derive the following result from \eqref{eq:L2}.
\begin{cor}\label{cor:lin}
Let $E_0(\Om, \eta, \xi),\cR_0(\Om, \eta,\xi)$ be the energy and the remaining term defined in \eqref{eg:L2}. Under the assumptions of Propositions \ref{prop:b1}, \ref{prop:L2} and \ref{prop:rho12}, we have 
\[
\f{1}{2}\f{d}{dt} E_0^2 \leq - ( \f{1}{9} - C\al) E_0^2   + \cR_0
.
\]

\end{cor}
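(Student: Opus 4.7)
The plan is to assemble the four weighted estimates already established (Propositions \ref{prop:b1}, \ref{prop:L2}, \ref{prop:rho12} and Proposition \ref{prop:L2less}) into a single inequality for the composite energy $E_0$, then absorb all residual nonlocal contributions using Lemma \ref{lem:l12}. First I would write down the four parallel energy identities obtained from \eqref{eq:equiv}: $\frac12 \frac{d}{dt}$ of $E(R,0)^2$, of $E(\b,1)^2$, of $E(R,1)^2$, and of $\|\xi \psi_1^{1/2}\|_2^2 + \|D_\b \xi \psi_2^{1/2}\|_2^2$, each of which decomposes into an inner product with the linear operator plus a remainder contribution already collected in $\cR(R,0), \cR(\b,1), \cR(R,1)$ and the $\cR_\xi$-terms that define $\cR_0$ in \eqref{eg:L2}.

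Next I would form the weighted combination
\[
\frac12\frac{d}{dt}\Bigl(E(R,0)^2 + \mu_1 E(\b,1)^2 + \mu_2 E(R,1)^2 + \|\xi \psi_1^{1/2}\|_2^2 + \|D_\b\xi \psi_2^{1/2}\|_2^2\Bigr)
\]
and use Proposition \ref{prop:L2less} for the first group, Proposition \ref{prop:b1} for the second, Proposition \ref{prop:L2} for the third, and Proposition \ref{prop:rho12} for the $\xi$-pieces. The coupling constant $\mu_1$ is chosen (as already fixed in Section \ref{subsec:L2_done}, with $100 K_1^2 < \mu_1/20$) so that the bad $E(\b,1) E(R,0)$ cross term in Proposition \ref{prop:L2less} is absorbed by the damping supplied by Proposition \ref{prop:b1}. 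Similarly, I pick $\mu_2$ small enough that $\mu_2 K_3 < \tfrac{1}{100}$, so that the $L_{12}^2(\Om)(0) + \|\td L_{12}(\Om) R^{-1}\|_{L^2(R)}^2$ errors produced by Proposition \ref{prop:L2} are dominated by the strong damping $-(4-C\al) L_{12}^2(\Om)(0) - (\tfrac14 - C\al)\|\td L_{12}(\Om)\rho^{1/2}\|_{L^2(R)}^2$ coming out of Proposition \ref{prop:L2less}. The analogous $O(\al)$ contributions from Proposition \ref{prop:rho12} in the $\xi$ estimates are trivially absorbed because they carry a prefactor $\al$. This produces precisely estimate \eqref{eq:L2}.

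The main obstacle, and the final step, is to remove the two surviving nonlocal damping terms $-(3-C\al)L_{12}^2(\Om)(0)$ and $-(\tfrac15 - C\al)\|\td L_{12}(\Om)\rho^{1/2}\|_{L^2(R)}^2$ in \eqref{eq:L2}, so that the right-hand side is expressed solely in terms of $E_0^2$ and $\cR_0$. For this I invoke Lemma \ref{lem:l12}, whose content is that both $|L_{12}(\Om)(0)|$ and $\|\td L_{12}(\Om)\rho^{1/2}\|_{L^2(R)}$ are controlled by $\|\Om (1+R)^2/R^2\|_{L^2}$, which in turn is controlled by $E_0$. Since these controls enter with an $O(\al)$ coefficient only when we need to \emph{add} $L_{12}^2$ terms back to obtain a clean $E_0^2$ bound, and since the coefficients $3$ and $\tfrac15$ are nonnegative, I can simply drop them (they are already in the favorable direction) and combine the leading damping $-(\tfrac19 - C\al)$ with the $O(\al)$ penalty to preserve the constant $-(\tfrac19 - C\al)$ up to enlarging $C$. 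Collecting terms and using the definitions \eqref{eg:L2} of $E_0$ and $\cR_0$ yields
\[
\frac12 \frac{d}{dt} E_0^2 \le -\Bigl(\frac19 - C\al\Bigr) E_0^2 + \cR_0,
\]
which is the claimed inequality.
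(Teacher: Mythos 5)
Your proposal is correct and follows essentially the same route as the paper: combine Propositions \ref{prop:L2less}, \ref{prop:b1}, \ref{prop:L2}, \ref{prop:rho12} with the weights $\mu_1,\mu_2$ (chosen exactly as in the paper, $100K_1^2<\mu_1/20$ and $\mu_2 K_3<1/100$) to obtain \eqref{eq:L2}, then discard the remaining favorable damping terms for $L_{12}(\Om)(0)$ and $\td L_{12}(\Om)$ and control the residual $O(\al)$ nonlocal contributions by $C\al E_0^2$ via Lemma \ref{lem:l12} together with the fact that $E_0$ dominates $\|\Om(1+R)^2/R^2\|_{L^2}$. No gaps.
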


\section{Higher order estimates and the energy functional}\label{sec:H2}
In this section, based on the weighted $L^2$ estimates established in Corollary \ref{cor:lin}, we proceed to perform the higher order estimates in the spirit of Propositions \ref{prop:L2}, \ref{prop:rho12} so that we can complete the nonlinear analysis. 
In subsection \ref{sec:nonH1}, we perform the weighted $H^1$ estimates of $\cL_i$ and illustrate how to apply several lemmas to control different terms in $D_R \cL_i$. In subsection \ref{sec:nonH2}, \ref{sec:H3}, we use a similar argument to establish weighted $H^2$ and $H^3$ estimates.  
In these estimates, we treat the nonlocal terms as perturbations and apply Lemma \ref{lem:dp} recursively.

Since $\bar{\xi}(x, y)$ does not decay in the $x$ direction when $y$ is fixed (see the estimates of $\bar{\xi}$ in Lemma \ref{lem:xi}), we cannot obtain the decay estimate for its perturbation $\xi$.
Hence, in order to obtain the $L^{\infty}$ control of $\xi$ and its derivatives, which will be used later to estimate the nonlinear terms, we cannot apply a $H^k \hookrightarrow L^{\infty}$ type Sobolev embedding. We perform the $L^{\infty}$ estimates of $\xi$ and its derivative directly in Section \ref{sec:inf}. This difficulty is not present in \cite{elgindi2019finite} by removing the swirl. The coefficient of the damping term in \eqref{eq:lin23} is given by $I_1 = -2 - \f{3}{1+R} \leq -2$. This simple inequality is actually related to the flow structure. In fact, $I_1$ is the leading order term of $-2 - \bar v_y $ (see \eqref{eq:lin} and \eqref{eq:simp4}), and the positive sign of $\bar v_y$ is related to the hyperbolic flow structure $\bar  u < 0, \bar v > 0$ and $\bar v(x, 0) = 0$. See more discussions after Lemma \ref{lem:selfsim}. 
The fact that $I_1$ is bounded uniformly away from $0$ enables us to establish the $L^{\infty}$ estimate of $\xi$.

\subsection{Weighted $H^1$ estimates} \label{sec:nonH1}
We remark that the weighted $H^1$ estimate with angular derivatives is already established in Section \ref{sec:stab_angle} about $D_{\b} \Om, D_{\b}\eta$ and Section \ref{sec:L2_xi} about $D_{\b}\xi$. 
Recall the weighted differential operator $D_R  = R \pa_R$ in Definition \ref{def:op}. We define an energy and a remaining term 
\beq\label{eg:R2}
\bal
E(R, 2)(\Om, \eta, \xi) & \teq \lt( || D_R \Om  \vp_1^{1/2}||_2^2 + || D_R \eta \vp_1^{1/2}||_2   
+ || D_R\xi  \psi_1^{1/2}||_2^2 \rt)^{1/2}, \\
\cR(R, 2)(\Om, \eta, \xi) & \teq \la D_R \cR_{\Om}, D_R \Om \vp_1 \ra 
+ \la D_R \cR_{\eta}, D_R \eta \vp_1 \ra + \la D_R \cR_{\xi}, D_R \xi \psi_1 \ra,
\eal
\eeq
where $\vp_1, \psi_1$ are defined in \eqref{wg}.

\begin{prop}\label{prop:H1}
Under the assumption of Corollary \ref{cor:lin} and that $\vp^{1/2}_1D_R \Om, 
\vp^{1/2}_1 D_R \eta , \psi^{1/2}_1 D_R \xi \in L^2$, we have 
\[
\la D_R \cL_1(\Om, \eta),( D_R \Om) \vp_1 \ra + \la D_R \cL_2(\Om, \eta), (D_R \eta) \vp_1 \ra 
+ \la D_R \cL_3(\xi),  (D_R \xi) \psi_1 \ra
\leq -\f{1}{6} E^2(R,2)+ K_4  E^2_0,
\]
where $K_4$ is some fixed absolute constant and $E_0, E(R,2)$ are defined in \eqref{eg:L2} and \eqref{eg:R2}.
\end{prop}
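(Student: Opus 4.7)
The approach mirrors Propositions \ref{prop:L2} and \ref{prop:rho12}: we rewrite $D_R\cL_i$ as $\cL_{i0}$ acting on the $D_R$-differentiated perturbation plus commutator and profile-dependent remainders, extract the principal damping via Lemma \ref{lem:dp}, and absorb the remainders by Cauchy--Schwarz. Since $D_R=R\pa_R$ commutes with itself and with the $R$-independent quantity $c_{\om}$, and satisfies $[D_R,\tfrac{3}{1+R}D_{\b}]=-\tfrac{3R}{(1+R)^2}D_{\b}$ together with $D_R\tfrac{3}{1+R}=-\tfrac{3R}{(1+R)^2}$, a direct calculation gives
\begin{align*}
D_R\cL_1(\Om,\eta)&=\cL_{10}(D_R\Om,D_R\eta)+\tfrac{3R}{(1+R)^2}D_{\b}\Om+c_{\om}\,D_R(\bar\Om-D_R\bar\Om),\\
D_R\cL_2(\Om,\eta)&=\cL_{20}(D_R\eta)+\tfrac{3R}{(1+R)^2}(D_{\b}\eta-\eta)+c_{\om}\,D_R(\bar\eta-D_R\bar\eta)\\
&\quad+\tfrac{2}{\pi\al}\bigl[(D_R\td{L}_{12}(\Om))\bar\eta+\td{L}_{12}(\Om)\,D_R\bar\eta\bigr],\\
D_R\cL_3(\xi)&=\cL_{30}(D_R\xi)+\tfrac{3R}{(1+R)^2}(D_{\b}\xi+\xi)+c_{\om}\,D_R(3\bar\xi-D_R\bar\xi)\\
&\quad-\tfrac{2}{\pi\al}\bigl[(D_R\td{L}_{12}(\Om))\bar\xi+\td{L}_{12}(\Om)\,D_R\bar\xi\bigr].
\end{align*}

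Applying Lemma \ref{lem:dp} with $\vp=\vp_1$, $\d=\s=\tfrac{99}{100}$ to the pair $(\cL_{10}(D_R\Om,D_R\eta),\cL_{20}(D_R\eta))$, and with $\psi=\psi_1$, $\d_1=\d_2=\s$ to $\cL_{30}(D_R\xi)$, bounds the principal contribution by $(-\tfrac{1}{4}+3|1-\s|)(\|D_R\Om\,\vp_1^{1/2}\|_2^2+\|D_R\eta\,\vp_1^{1/2}\|_2^2)+(-\tfrac{1}{2}+3|1-\s|)\|D_R\xi\,\psi_1^{1/2}\|_2^2\leq -\tfrac{1}{5}E(R,2)^2$ for $\al$ small. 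The commutator pieces $\tfrac{3R}{(1+R)^2}\{D_{\b}\Om,D_{\b}\eta,D_{\b}\xi,\eta,\xi\}$ carry a uniformly bounded prefactor; since $\vp_1\leq\vp_2$ and $\psi_1\leq\psi_2$ (Definition \ref{def:wg}), Cauchy--Schwarz bounds each of their inner products against $(D_R\Om)\vp_1$, $(D_R\eta)\vp_1$, $(D_R\xi)\psi_1$ by a constant multiple of $E_0\cdot E(R,2)$, hence by $\e\,E(R,2)^2+C_{\e}E_0^2$ for any $\e>0$.

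For the nonlocal remainders, $|c_{\om}|=\tfrac{2}{\pi\al}|L_{12}(\Om)(0)|$ together with Lemma \ref{lem:l12} gives $\al|c_{\om}|+\|\td{L}_{12}(\Om)R^{-1}\|_{L^2(R)}\les E_0$, while Lemmas \ref{lem:bar} and \ref{lem:xi} ensure that the profile factors $D_R\bar\eta$, $D_R\bar\xi$, $D_R(\bar\Om-D_R\bar\Om)$ and their analogues all carry an explicit $\al$ with adequate $R$-decay, so the $1/\al$ prefactor in front of $\td{L}_{12}$ is absorbed and Cauchy--Schwarz once more produces an $\e\,E(R,2)^2+C_{\e}E_0^2$ bound. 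The only qualitatively new contribution is $(D_R\td{L}_{12}(\Om))\bar\eta$ (and its $\bar\xi$-analogue), for which the identity $D_R\td{L}_{12}(\Om)(R)=-\int_0^{\pi/2}\Om(R,\b)\sin(2\b)\,d\b$ combined with Cauchy--Schwarz in $\b$ reduces the estimate to a weighted $L^2(dR)$ norm of an angular average of $\Om$, controlled by $\|\Om\vp_1^{1/2}\|_2\leq E_0$ once the $\al$ in $\bar\eta$ absorbs the $1/\al$. Summing these bounds and choosing $\e$ small enough that the net coefficient of $E(R,2)^2$ is at most $-\tfrac{1}{6}$ yields the proposition with some absolute $K_4$.

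The chief obstacle is the weight bookkeeping: we must verify uniformly in $\al$ that the angular weight $\sin(2\b)^{-\s}$ in $\vp_1$ (respectively $\sin(\b)^{-\s}\cos(\b)^{-\s}$ in $\psi_1$) is dominated by the more singular $\b$-weight $\g=1+\al/10$ appearing in the $D_{\b}$-components of $E_0$, and that the $R$-decay of the approximate profiles is adequate for every profile-dependent remainder after Cauchy--Schwarz. Once the commutator identities of the first display are in hand, these verifications are routine, but they are the only place where the precise structure of the weights $\vp_1,\vp_2,\psi_1,\psi_2$ of Definition \ref{def:wg} really enters the argument.
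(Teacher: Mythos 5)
Your proposal is correct and follows essentially the same route as the paper: the same commutator decomposition of $D_R\cL_i$ into $\cL_{i0}$ applied to the differentiated perturbation plus lower-order remainders, Lemma \ref{lem:dp} with $(\vp_1,\psi_1)$ for the damping, and Lemmas \ref{lem:bar}, \ref{lem:xi}, \ref{lem:ux} and \ref{lem:l12} to bound every remainder by $E_0$ before Cauchy--Schwarz. (One negligible point: the damping constant $-\tfrac14+3|1-\s|$ with $\s=\tfrac{99}{100}$ is already below $-\tfrac15$ for all $\al$, so no smallness of $\al$ is needed in that step.)
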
	

\begin{proof}
Since $D_R$ commutes with $D_R, D_{\b}$ in $\cL_{i}, \cL_{i0}$ (see Definition \ref{def:op}), we have
\[
\bal
D_R \cL_1(\Om, \eta) 
&=\cL_{10}(D_R \Om, D_R \eta)  - D_R \f{3}{1+R} \cdot  D_{\b} \Om  + c_{\om} D_R( \bar{\Om} - R \pa_R \bar{\Om} ) = \cL_{10}(D_R \Om, D_R \eta) + \sum_{i=1}^2 {I_i} ,\\
D_R \cL_2(\Om, \eta) 
&= \cL_{20}(D_R \eta)- D_R \f{3}{1+R} \cdot  D_{\b} \eta + D_R(-2 + \f{3}{1+R}) \cdot \eta +\f{2}{\pi \al} \td{L}_{12}(\Om) \cdot D_R \bar{\eta} \\
&+\f{2}{\pi \al} D_R \td{L}_{12}(\Om) \cdot \bar{\eta}+c_{\om} D_R( \bar{\eta} - R \pa_R \bar{\eta} )  
= \cL_{20}( D_R \eta)+ \sum_{i=1}^5 II_i,  \\
D_R \cL_3(\Om,\xi) &= 
\cL_{30}(D_R \xi)- D_R \f{3}{1+R} \cdot D_{\b}\xi + D_R ( -2 - \f{3}{1+R}) \cdot \xi   
- \f{2}{\pi \al} \td{L}_{12}(\Om) \cdot D_R \bar{\xi} \\
 & - \f{2}{\pi \al} D_R \td{L}_{12}(\Om) \cdot  \bar{\xi} 
+c_{\om} D_R( 3\bar{\xi} - R \pa_R \bar{\xi} ) 
= \cL_{30}(D_R \xi) + \sum_{i=1}^5 III_i .
\eal
\]

Applying \eqref{eq:dp1} with $\vp = \vp_1$ (see \eqref{wg}), and 
\eqref{eq:dp2} with $ \psi = \psi_1$ (see \eqref{wg}) in Lemma \ref{lem:dp},
and $3|1-\s| < \f{1}{30}$, we yield 
\[
\bal
 \la \cL_{10}(D_R \Om, D_R \eta) , (D_R \Om) \vp_1 \ra
 + \la \cL_{20}(D_R \eta) , (D_R \eta) \vp_1 \ra
&\leq  -\f{1}{5} \B( || D_R \Om \vp_1^{1/2} ||_2^2 +  || D_R \eta   \vp_1^{1/2}||_2^2 \B) \\ 
\la \cL_{20} (D_R \xi) , (D_R \xi) \psi_1 \ra & \leq -\f{3}{8} || D_R\xi  \psi_1^{1/2}||_2^2  .\\
\eal
\]

Notice that $\vp_2$, $\psi_2$ \eqref{wg} satisfy $\vp_1 \leq \vp_2, \  \psi_1 \leq \psi_2.$ For the terms not involving $\td{L}_{12}(\Om), c_{\om}$, we use $E_0$ defined in \eqref{eg:L2} to control the weighted $L^2$ norm of $ D_{\b}\Om, D_{\b} \eta$. It is easy to see that 
\[
\bal
|| I_1 \vp_1^{1/2} ||_{L^2} &\les || D_{\b} \Om \vp_2^{1/2}||_{L^2} \les E_0 ,  \quad
|| II_1 \vp_1^{1/2} ||_{L^2} \les || D_{\b} \eta \vp_2^{1/2}||_{L^2} \les E_0 , \\
  || II_2 \vp_1^{1/2} ||_{L^2} & \les || \eta \vp_1^{1/2} ||_{L^2} \les E_0 , \qquad
|| III_1 \psi_1^{1/2} ||_{L^2}  \les  || D_{\b} \xi \psi_2^{1/2} ||_{L^2} \les E_0 ,\\ 
|| III_2 \psi_1^{1/2} ||_{L^2}  &\les  ||  \xi \psi_1^{1/2} ||_{L^2} \les E_0 .
\eal
\]
Recall $c_{\om} = -\f{2}{\pi \al} L_{12}(\Om)(0)$. Applying \eqref{eq:bar_ing} in Lemma \ref{eq:bar} 
to $I_2, II_5$ and \eqref{eq:xi_cw} in Lemma \ref{lem:xi} to $III_5$, we obtain 
\[
\bal
 || I_2 \vp_1^{1/2} ||_{L^2} &  \les |L_{12}(\Om)(0)| \les E_0 ,
 \quad  || II_5 \vp_1^{1/2} ||_{L^2}  \les | L_{12}(\Om)(0)|  \les E_0, \\
 || III_5 \psi_1^{1/2}||_{L^2} &\les \al |L_{12}(\Om)(0)| \les \al E_0.
 \eal
\]
Finally, for the $\td{L}_{12}(\Om)$ terms, we apply Lemma \ref{lem:ux}. 
To apply Lemma \ref{lem:ux}, we need the $L^{\infty}$ norm of some angular integrals, whose estimates are given in \eqref{eq:bar_ux} in Lemma \ref{lem:bar} about $\bar{\Om},\bar{\eta}$ and \eqref{eq:xi_ux} in Lemma \ref{lem:xi} about $\bar{\xi}$. Using these estimates, we obtain 
\[
\bal
|| II_3 \vp_1^{1/2} ||_{L^2} & \les || \td{L}_{12}(\Om) R^{-1}||_{L^2(R)} \les E_0, \qquad
||II_4 \vp_1^{1/2} ||_{L^2} \les || R^{-1} \Om||_{L^2} \les E_0,  \\
|| III_3 \psi_1^{1/2} ||_{L^2} & \les \al   || \td{L}_{12}(\Om) R^{-1} ||_{L^2(R)} \les \al  E_0, \quad  || III_4 \psi_1^{1/2} ||_{L^2} \les \al || R^{-1} \Om||_{L^2} \les \al E_0 . \\
\eal
\]

The result now follows using the Cauchy-Schwarz inequality (notice that $ - \f{1}{5}< -\f{1}{6}, \ \al < 1$)
and applying the energy notation \eqref{eg:R2}.
\end{proof}

Using the reformulation \eqref{eq:equiv}, we have
\[
\bal
&\f{1}{2} \f{d}{dt} E^2(R,2)  = \f{1}{2}\f{d}{dt} \lt( || D_R \Om \vp_1^{1/2}||_2^2  + || D_R \eta \vp_1^{1/2}||_2^2 + || D_R\xi  \psi_1^{1/2}||_2^2  \rt)^{1/2} \\
= &\la D_R \cL_1(\Om, \eta),( D_R \Om) \vp_1 \ra + \la D_R \cL_2(\Om, \eta), (D_R \eta) \vp_1 \ra 
+ \la D_R \cL_3(\xi),  (D_R \xi) \psi_1 \ra + \cR(R, 2).
\eal
\]
Therefore, it is not difficult to combine the above reformulation, Corollary \ref{cor:lin} and Proposition \ref{prop:H1} to prove the following results.
\begin{cor}\label{cor:H1}
Suppose that $\Om, \eta, \xi$ satisfy that $E_0(\Om, \eta, \xi), E(R,2)(\Om, \eta, \xi) < +\infty$, where $E_0, E(R,2)$ are defined in \eqref{eg:L2} and \eqref{eg:R2}, respectively. Then there exists an  absolute constant $\mu_3$, such that, the following statement holds true. The weighted $H^1$ energy $E_1$ and its associated remaining term $\cR_1$ defined by 
\beq\label{eg:H1}
E_1(\Om,\eta,\xi) \teq \lt( E_0^2(\Om,\eta,\xi) + \mu_3 E^2(R,2)(\Om, \eta,\xi) \rt)^{1/2}, \quad
\cR_1(\Om, \eta, \xi) \teq \cR_0 + \mu_3 \cR(R, 2),
\eeq 
where $\cR_0, \cR(R, 2)$ are defined in \eqref{eg:L2} and \eqref{eg:R2}, satisfy
\[
\f{1}{2} \f{d}{dt} 
 E_1^2
\leq ( -\f{1}{10} + C \al) E_1^2  + \cR_1.
\]
\end{cor}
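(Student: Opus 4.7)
The plan is to form a weighted linear combination $E_1^2 = E_0^2 + \mu_3 E(R,2)^2$ and choose $\mu_3 > 0$ small enough that the damping from Corollary \ref{cor:lin} dominates the cross term $K_4 E_0^2$ produced by Proposition \ref{prop:H1}. This is a standard bootstrap from the base $L^2$ estimate to the $H^1$ level, with the coupling constant tuned so that neither component is overestimated.

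First, since $D_R$ commutes with $\pa_t$, I would differentiate $E(R,2)^2$ in time and use the reformulation \eqref{eq:equiv} to replace $\pa_t \Om, \pa_t \eta, \pa_t \xi$ by $\cL_1(\Om,\eta)+\cR_\Om$, $\cL_2(\Om,\eta)+\cR_\eta$, $\cL_3(\xi)+\cR_\xi$. Applying $D_R$ and pairing against the appropriate weights yields
\begin{equation*}
\f{1}{2}\f{d}{dt} E(R,2)^2 = \la D_R \cL_1,D_R\Om \vp_1\ra + \la D_R \cL_2, D_R\eta \vp_1\ra + \la D_R \cL_3, D_R \xi \psi_1\ra + \cR(R,2).
\end{equation*}
Proposition \ref{prop:H1} bounds the right-hand side by $-\f{1}{6} E(R,2)^2 + K_4 E_0^2 + \cR(R,2)$.

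Next, I would add $\mu_3$ times this inequality to the one provided by Corollary \ref{cor:lin}, obtaining
\begin{equation*}
\f{1}{2}\f{d}{dt} E_1^2 \leq -\B(\f{1}{9}-C\al-\mu_3 K_4\B) E_0^2 - \f{\mu_3}{6} E(R,2)^2 + \cR_0 + \mu_3 \cR(R,2).
\end{equation*}
I would then fix $\mu_3$ as an absolute constant small enough that $\mu_3 K_4 \leq \f{1}{90}$, so that $\f{1}{9}-\mu_3 K_4 \geq \f{1}{10}$. Since also $\f{1}{6} \geq \f{1}{10}$, both coefficients on the right are at most $-(\f{1}{10}-C\al)$ times the corresponding squared energy, giving $\f{1}{2}\f{d}{dt} E_1^2 \leq (-\f{1}{10} + C\al) E_1^2 + \cR_1$ with $\cR_1 = \cR_0 + \mu_3 \cR(R,2)$, as claimed.

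There is no substantive obstacle here: the only thing to verify is the consistency of the constant choice, which is why the explicit constants $K_4$ in Proposition \ref{prop:H1} and $\f{1}{9}$ in Corollary \ref{cor:lin} were tracked carefully. The proof is essentially a one-line linear combination, and the same pattern (absorb higher-order cross term by picking the coupling constant small, lose a small fraction of the base damping in the process) will be used again for the $H^2$ and $H^3$ estimates in Sections \ref{sec:nonH2} and \ref{sec:H3}.
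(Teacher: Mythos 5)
Your proposal is correct and is exactly the paper's argument: the paper likewise differentiates $E(R,2)^2$ via the reformulation \eqref{eq:equiv}, invokes Proposition \ref{prop:H1} together with Corollary \ref{cor:lin}, and fixes $\mu_3$ small so that $\mu_3 K_4$ eats only the gap between $\f{1}{9}$ and $\f{1}{10}$. Nothing is missing.
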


\subsection{ Weighted $H^2$ estimates} \label{sec:nonH2}

We now proceed to perform the weighted $H^2$ estimates. Throughout this subsection, we assume that the following quantities are in $L^2$,
\[
\bal
 &\vp_2^{1/2} D^2_{\b}\Om ,\   \vp_2^{1/2} D_{\b} D_R \Om,  \  \vp_1^{1/2} D^2_R \Om , \quad 
 \vp^{1/2}_2 D^2_{\b}\eta,  \   \vp^{1/2}_2 D_R D_{\b}\eta,  
  \  \vp^{1/2}_1 D^2_{R}\eta,  \\
  &   \psi^{1/2}_2 D^2_{\b} \xi ,\  \psi^{1/2}_2 D_R D_{\b} \xi , 
  \  \psi_1^{1/2} D^2_{R} \xi .
 \eal
\]

We will use weights $\vp_1, \psi_1$ for $D_R^2$ derivative, $\vp_2, \psi_2$ for $D^2_{\b}$ and $D_R D_{\b}$ derivatives in the weighted $H^2$ norm to be constructed.
Recall $\cL_i, \cL_{i0}, D_{\b}$ in Definition \ref{def:op}. We perform the estimate of the second derivatives in the order of $D^2_{\b}, D_{\b} D_{R}, D_R^2$. The motivation to first estimate the angular derivative terms is the same as that in Sections \ref{sec:angle} and \ref{sec:stab_angle}. This order of energy estimates has been used in \cite{elgindi2019finite}. In these estimates, we treat the nonlocal terms as perturbations and apply Lemma \ref{lem:dp} recursively.

Notice that $D_{\b}$ commutes with $\cL_{i0},i=1,2,3$. For the $\cL_{i0}$ part in $\cL_i$, applying Lemma \ref{lem:dp} with $\vp = \vp_2, \d = \g = 1+ \al /10$, $\psi = \psi_2$ (see Definition \ref{def:wg} for $\vp_i,\psi_i$), we obtain 
\[
\bal
&\la D^2_{\b} \cL_{10}(\Om, \eta) , (D^2_{\b}\Om ) \vp_2 \ra 
+ \la D^2_{\b} \cL_{20}(\Om, \eta) , (D^2_{\b}\eta ) \vp_2 \ra  
+ \la D^2_{\b} \cL_{30}(\Om, \xi),  (D^2_{\b}\xi ) \psi_2 \ra   \\
\leq &    ( - \f{1}{4} + \al) \lt( || D^2_{\b}\Om  \vp_2^{1/2}||_2^2  
+ || D^2_{\b}\eta \vp_2^{1/2}||_2^2   \rt) + (-\f{3}{8} + \al) ||  D^2_{\b}\xi \psi_2^{1/2}||_2^2  .
\eal
\]
For the $D_R D_{\b}$ derivative, since $D_R$ does not commute with $\cL_{i0}$, we have
\beq\label{eq:H2_Rb}
(D_R D_{\b})\cL_{g0}(\Om, \eta, \xi)- \cL_{g0} (D_R D_{\b} \Om, 
D_R D_{\b} \eta, D_R D_{\b} \xi) = - D_R\f{3}{1+R} D^2_{\b} g + M,
\eeq
where $g = \Om, \eta, \xi$, we have used the notation $\cL_{\Om 0}= \cL_{10}, \cL_{\eta 0} = \cL_{20}, \cL_{\xi 0} = \cL_{30}$, and $M$ denotes some terms that are of lower order than the second derivatives of $\Om, \eta, \xi$ and have coefficients bounded by some absolute constant. For example, $M$ contains the term $D_R (-2 + \f{3}{1+R}) D_{\b} \eta$ in $D_R D_{\b} ( (-2 + \f{3}{1+R}) \eta )$. The term $M$ may vary from line to line but its weighted $L^2$ norm can be easily bounded by the weighted $H^1$ energy $E_1$ \eqref{eg:H1}. Applyiny Lemma \ref{lem:dp} with $\vp = \vp_2$ to $\cL_{g0} (D_R D_{\b} \Om, D_R D_{\b} \eta), g= \Om, \eta $, and with $\psi = \psi_2$ to $\cL_{\xi0}(D_R D_{\b} \xi) $, and then using the Cauchy-Schwarz inequality to control the right hand side of \eqref{eq:H2_Rb}, we yield 
\[
\bal
&\la D_R D_{\b}\cL_{10} ( \Om,  \eta) , (D_R D_{\b}\Om) \vp_2 \ra
+ \la  D_R D_{\b} \cL_{20} ( \eta)  , (D_R D_{\b} \eta) \vp_2 \ra  
+\la D_R D_{\b} \cL_{30}( \xi), ( D_R D_{\b} \xi) \psi_2 \ra \\
\leq &  (-\f{1}{5} + \al) \B( || D_R D_{\b}\Om  \vp_2^{1/2}||_2^2    + 
|| D_R D_{\b} \eta  \vp_2^{1/2}||_2^2  +  || D_R D_{\b} \xi  \psi_2^{1/2}||_2^2  \B)\\
&+ C \lt(  E^2_1 + || D^2_{\b}\Om  \vp_2^{1/2}||_2^2  
+ || D^2_{\b}\eta   \vp_2^{1/2}||_2^2  + || D^2_{\b}\xi \psi_2^{1/2}||_2^2   \rt) ,
\eal
\]
where the constant $-\f{1}{5}$ is a result from first applying Lemma \ref{lem:dp} to $(\Om, \eta)$ and $\xi$, which gives two damping factors $-\f{1}{4} + \al, -\f{3}{8} +\al$, and then using the Cauchy-Schwarz inequality ($-\f{3}{8}, -\f{1}{4} < -\f{1}{5}$). 

Similarly, for the $D_R^2$ derivative, we have for $g = \Om, \eta, \xi$
\[
(D^2_R )\cL_{g0}(\Om, \eta, \xi)- \cL_{g0} (D^2_R  \Om, 
D^2_R  \eta, D^2_R  \xi) = -2  D_R\f{3}{1+R} D_R D_{\b} g + M.
\]
Applyiny Lemma \ref{lem:dp} with $\vp = \vp_1$ to $\cL_{g0} (D^2_R  \Om, D^2_R  \eta) , g = \Om, \eta$, and $\psi = \psi_1$ to $\cL_{\xi0}(D_R^2 \xi)$ will give two damping factor $-\f{1}{4} + \f{3}{100}, -\f{3}{8}+ \al$. We then use the Cauchy-Schwarz inequality to yield
\[
\bal
&\la D^2_R \cL_{10} ( \Om,  \eta) , (D^2_R \Om) \vp_1 \ra
+ \la  D^2_R  \cL_{20} ( \eta)  , (D^2_R \eta) \vp_1 \ra  
+\la D^2_R  \cL_{30}( \xi), ( D^2_R  \xi) \psi_1 \ra \\
\leq &  (-\f{1}{5} +\al) \B( ||  D^2_R \Om \vp_1^{1/2}||_2^2    + 
||  D^2_R  \eta   \vp_1^{1/2}||_2^2  + ||  D^2_R  \xi   \psi_1^{1/2}||_2^2  \B)\\
&+ C \lt(  E^2_1 + || D_R D_{\b}\Om  \vp_2^{1/2}||_2^2 
+  || D_R D_{\b}\eta   \vp_2^{1/2}||_2^2  + || D_R D_{\b}\xi \psi_2^{1/2}||_2^2   \rt) ,
\eal
\]
where we have used $ \vp_1\les \vp_2, \psi_1  \les \psi_2$ to obtain $\la (D_RD_{\b} \Om)^2, \vp_1 \ra \les \la (D_RD_{\b} \Om)^2, \vp_2 \ra$ and other similar terms. We have also used $-\f{1}{4} + \f{3}{100}, -\f{3}{8} < -\f{1}{5}$ when we have applied the Cauchy-Schwarz inequality.

Notice that the remaining terms in $\cL_i$ except for $\cL_{i0}$ are the $\td{L}_{12}(\Om)$ terms and $c_{\om}$ terms. For the $\td{L}_{12}(\Om)$ terms, we use Lemma \ref{lem:ux} and then \eqref{eq:bar_ux} in Lemma \ref{lem:bar} about $\bar{\Om}, \bar{\eta}$ and \eqref{eq:xi_ux} in Lemma \ref{lem:xi} about $\bar{\xi}$ to estimate the $L^{\infty}$ norm of some angular integrals. For the $c_{\om}$ terms, we use the estimates in \eqref{eq:bar_ing} in Lemma \ref{lem:bar} about $\bar{\Om},\bar{\eta}$ and \eqref{eq:xi_cw} in Lemma \ref{lem:xi} about $\bar{\xi}$. We remark that from Proposition \ref{lem:ux}, the norm of $R^{-1} D_R^2 \td{L}_{12}(\Om)$ can be bounded by the norm of $R^{-1} D_R \Om$, which can be further bounded by $E_1$. 

Combining these estimates and using the Cauchy-Schwarz inequality, we obtain 
\beq\label{eq:H2_1}
\bal
&\la D^2_{\b} \cL_{1}(\Om, \eta) , (D^2_{\b}\Om ) \vp_2 \ra 
+ \la D^2_{\b} \cL_{2}(\Om, \eta) , (D^2_{\b}\eta ) \vp_2 \ra  
+ \la D^2_{\b} \cL_{3}(\Om, \xi),  (D^2_{\b}\xi ) \psi_2 \ra  
 \\
\leq & ( - \f{1}{6} + \al) \lt( ||  D^2_{\b}\Om  \vp_2^{1/2} ||_2^2  + || D^2_{\b}\eta \vp_2^{1/2}||_2^2  + || D^2_{\b}\xi \psi_2^{1/2}||_2^2  \rt) + C\al  E_1^2 ,\\
\eal
\eeq
\[
\bal
&\la D_R D_{\b} \cL_1(\Om, \eta) , (D_R D_{\b} \Om ) \vp_2 \ra
+\la D_R D_{\b} \cL_2 (\Om, \eta) ,  (D_R D_{\b} \eta ) \vp_2 \ra
+ \la D_R D_{\b} \cL_3(\Om, \xi) , (D_R D_{\b} \xi ) \psi_2 \ra
 \\
\leq & ( -\f{1}{6} + \al) \lt( || D_R D_{\b} \Om  \vp_2^{1/2}||_2^2  + || D_R D_{\b} \eta \vp_2^{1/2}||_2^2 + || D_R D_{\b} \xi  \psi_2^{1/2} ||_2^2
\rt)  \\
&  + K_5 \lt(  || D^2_{\b}\Om  \vp_2^{1/2}||_2^2 + || D^2_{\b}\eta \vp_2^{1/2}||_2^2  +  || D^2_{\b}\xi \psi_2^{1/2} ||_2^2   + E^2_1 \rt) , \\
&\la D^2_R  \cL_1(\Om, \eta) , (D_R^2 \Om ) \vp_1 \ra
+\la D^2_R  \cL_2 (\Om, \eta) ,  (D^2_R  \eta ) \vp_1 \ra  
+ \la  \cL_{30}( D^2_R  \xi), ( D^2_R  \xi) \psi_1 \ra \\
\leq &  (-\f{1}{ 6} + \al)  \lt( || D^2_R  \Om  \vp_1^{1/2}||_2^2 + || D^2_R  \eta \vp_1^{1/2} ||_2^2 + || D^2_R  \xi   \psi_1^{1/2} ||_2^2  \rt)   \\
  &+ K_5 \lt( 
 ||  D_R D_{\b}\Om  \vp_2^{1/2} ||_2^2  + || D_R D_{\b}\eta \vp_2^{1/2}||_2^2  
+ ||  D_R D_{\b} \xi  \psi_2^{1/2} ||_2^2
   + E^2_1 \rt) ,
   \eal
\]
where $K_5$ is some fixed absolute constant and we have used $-\f{1}{4}, -\f{1}{5} < -\f{1}{6}$ when we applied the Cauchy-Schwarz inequality to control the inner product between the $\td{L}_{12}(\Om), c_{\om}$ terms and the second derivative terms.

Combining Corollary \ref{cor:H1} with estimates \eqref{eq:H2_1}, we know that there exist some absolute constants $\mu_{2,k}$ that can be determined in the order of $k = 0, 1,2 $, such that the weighted $H^2$ energy functional $E_2$ 
and its associated remaining term $\cR_2$ defined below satisfy the estimates stated in Corollary \ref{cor:H2}.
\beq\label{eg:H2}
\bal
E^2_2(\Om, \eta, \xi)
&\teq E^2_1 +
 \sum_{0\leq k \leq 2}  \mu_{2, k}  \lt( 
 ||  D_R^{k} D^{2-k}_{\b} \Om  \vp_i^{1/2}||_2^2  
 +  || D_R^k D^{2-k}_{\b} \eta  \vp_i^{1/2}||_2^2  
 +  || D_R^k D^{2-k}_{\b} \xi  \psi_i^{1/2}||_2^2   \rt), \\
\cR_2(\Om, \eta,\xi)
&\teq \cR_1 +
 \sum_{ 0 \leq k \leq 2}  \mu_{2, k}  \lt( 
\la  D_R^{k} D^{2-k}_{\b} \cR_{\Om},  ( D_R^{k} D^{2-k}_{\b} \Om ) \vp_i \ra   
 +   \la  D_R^k D^{2-k}_{\b}  \cR_{\eta} , ( D_R^k D^{2-k}_{\b} \eta ) \vp_i \ra  \rt. \\
 & \lt. +  \la   D_R^k D^{2-k}_{\b} \cR_{\xi}, ( D_R^k D^{2-k}_{\b} \xi )\psi_i \ra  \rt), \\
\eal
\eeq
where  $E_1, \cR_1$ are defined in \eqref{eg:H1} and $(\vp_i, \psi_i) = (\vp_2, \psi_2)$ for $k=0,1$ and $(\vp_1, \psi_1)$ otherwise. 
\begin{cor}\label{cor:H2}
Suppose that $E_2(\Om, \eta, \xi) < +\infty$. Then the energy $E_2$ satisfies 
\[
\f{1}{2}\f{d}{dt} E_2^2(\Om, \eta,\xi) 
\leq  (- \f{1}{11} + C \al ) E_2^2  + \cR_2.
\] 
\end{cor}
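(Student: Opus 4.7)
The plan is to mirror exactly the derivation of Corollary \ref{cor:H1} from Corollary \ref{cor:lin} and Proposition \ref{prop:H1}, now using Corollary \ref{cor:H1} itself as the base estimate and the three second-derivative bounds in \eqref{eq:H2_1} as the new ingredients. Starting from the reformulation \eqref{eq:equiv}, for each pair $(k,g)$ with $k\in\{0,1,2\}$ and $g\in\{\Om,\eta,\xi\}$, the time derivative of $\|D_R^k D_\b^{2-k} g\,\omega_g^{1/2}\|_2^2$ equals $2\la D_R^k D_\b^{2-k}\cL_g(\Om,\eta,\xi),(D_R^k D_\b^{2-k} g)\omega_g\ra + 2\la D_R^k D_\b^{2-k}\cR_g,(D_R^k D_\b^{2-k} g)\omega_g\ra$, with the prescribed weight $\omega_g\in\{\vp_1,\vp_2,\psi_1,\psi_2\}$ dictated by \eqref{eg:H2}. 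The three inequalities in \eqref{eq:H2_1} then supply, at each level $k$, damping of strength $\f{1}{6}-\al$ on the corresponding second-derivative norms together with contamination by strictly lower-order quantities.

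Write $S_k$ for the sum of the three squared second-derivative norms at level $k$ that appear in the definition of $E_2^2$. The structure of \eqref{eq:H2_1} is $\f{1}{2}\f{d}{dt}S_k \leq (-\f{1}{6}+\al)S_k + \Lambda_k + (\text{remainder})$, where $\Lambda_0 = C\al\,E_1^2$, $\Lambda_1 = K_5(S_0+E_1^2)$ and $\Lambda_2 = K_5(S_1+E_1^2)$. The plan is to multiply the level-$k$ estimate by an absolute constant $\mu_{2,k}$, add these to the inequality of Corollary \ref{cor:H1}, and fix the three constants hierarchically in the order $k=0,1,2$. First I would choose $\mu_{2,0}$ small enough that $\mu_{2,0} C\al$ eats at most a tiny fraction of the $\f{1}{10}-C\al$ damping afforded to $E_1^2$. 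Then I would take $\mu_{2,1}$ so small that $\mu_{2,1}K_5 \leq \f{1}{2}\mu_{2,0}(\f{1}{6}-C\al)$, which halves the $\mu_{2,1}K_5\,S_0$ contamination against the $\mu_{2,0}$ damping of $S_0$. Finally I would take $\mu_{2,2}$ so small that $\mu_{2,2}K_5 \leq \f{1}{2}\mu_{2,1}(\f{1}{6}-C\al)$, absorbing the $\mu_{2,2}K_5\,S_1$ contamination analogously. The aggregated $E_1^2$ contamination $(\mu_{2,0}C\al+(\mu_{2,1}+\mu_{2,2})K_5)E_1^2$ is then controllably small and is swallowed by the residual portion of the $\f{1}{10}-C\al$ damping. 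The net damping coefficient in front of $E_2^2$ comes out to at least $-\f{1}{11}+C\al$, the small loss from $\f{1}{10}$ to $\f{1}{11}$ reflecting precisely these absorption margins.

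The remainder $\cR_2$ as defined in \eqref{eg:H2} is, by construction, the sum of $\cR_1$ and the $\mu_{2,k}$-weighted inner products of $D_R^k D_\b^{2-k}\cR_g$ with $(D_R^k D_\b^{2-k} g)\omega_g$, so no additional manipulation is required on the remainder side. The only point that might look delicate is the simultaneous consistency of the hierarchy $\mu_{2,2}\ll\mu_{2,1}\ll\mu_{2,0}$ with the bound on the $E_1^2$ contamination, and this is what I would view as the main, though mild, obstacle; however, since $\mu_{2,k}$ and $K_5$ are all absolute constants independent of $\al$, this reduces to a finite, triangular linear-algebra problem that admits a manifestly valid solution (for example, fix $\mu_{2,0}$ and then take each successive ratio below $(100 K_5)^{-1}$). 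In particular, no new analytical ingredient is required beyond the bookkeeping already used in the passage from Corollary \ref{cor:lin} to Corollary \ref{cor:H1}.
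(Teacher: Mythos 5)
Your proposal is correct and follows essentially the same route as the paper, which likewise deduces Corollary \ref{cor:H2} by adding the three estimates of \eqref{eq:H2_1} to Corollary \ref{cor:H1} with absolute constants $\mu_{2,k}$ chosen in the order $k=0,1,2$ so that each level's lower-order contamination is absorbed by the damping already secured at the previous level. One small numerical caveat: literally halving the damping at each stage only yields the constant $-\f{1}{12}+C\al$ rather than $-\f{1}{11}+C\al$, but your alternative explicit choice of successive ratios below $(100 K_5)^{-1}$ (together with $\mu_{2,0}$ not too large so that $(\mu_{2,1}+\mu_{2,2})K_5$ stays below the margin $\f{1}{10}-\f{1}{11}$) does give the stated constant.
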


\subsection{Weighted $H^3$ estimates}\label{sec:H3}

Recall the weights $\vp_i, \psi_i$ in Definition \ref{def:wg}. For $D_R^3 \Om, D_R^3 \eta$, we use weight $\vp_1$; for other third derivatives $D_R^i D^j_{\b}\Om, D_R^i D^j_{\b}\eta$, we use weight $\vp_2$. For $D_R^3 \xi$, we use weight $\psi_1$; for other third derivatives $D_R^i D^j_{\b} \xi$, we use weight $\psi_3$. The reason we perform weighted $H^3$ is to establish Proposition \ref{prop:tran3}.

In the same spirit of the weighted $H^2$ energy functional $E_2$ and Corollary \ref{cor:H2}, 
we can show that there exist some absolute constants $\mu_{3, k}$, which can be determined in the order $k=0, 1,2,3$, such that the weighted $H^3$ energy functional $E_3 \geq 0$ and its associated remaining term $\cR_3$ defined below satisfy the estimates stated in Corollary \ref{cor:H3}.
\beq\label{eg:H3}
\bal
E_3^2(\Om, \eta, \xi) &\teq E_2^2 + \sum_{ 0\leq k \leq 3}  \mu_{3, k}  \lt( 
 || D_R^{k} D^{3-k}_{\b} \Om  \vp_i^{1/2}||_2^2  
 +   || D_R^k D^{3-k}_{\b} \eta  \vp_i^{1/2}||_2^2  
 +  || D_R^k D^{3-k}_{\b} \xi  \psi_i^{1/2}||_2^2   \rt), \\
 \cR_3(\Om, \eta, \xi) & \teq \cR_2 +
 \sum_{ 0 \leq k \leq 3}  \mu_{3, k}  \lt( 
\la  D_R^{k} D^{3-k}_{\b} \cR_{\Om},  ( D_R^{k} D^{3-k}_{\b} \Om ) \vp_i \ra   
 +   \la  D_R^k D^{3-k}_{\b}  \cR_{\eta} , ( D_R^k D^{3-k}_{\b} \eta ) \vp_i \ra  \rt. \\
 & \lt. +  \la   D_R^k D^{3-k}_{\b} \cR_{\xi}, ( D_R^k D^{3-k}_{\b} \xi )\psi_i \ra  \rt), \\
 \eal
 \eeq
where $E_2, \cR_2$ are defined in \eqref{eg:H2}, $(\vp_i, \psi_i) = (\vp_3, \psi_3)$ for $k=0,1,2$ and $(\vp_1, \psi_1)$ otherwise. 
\begin{cor}\label{cor:H3}
Suppose that $E_3(\Om, \eta, \xi) < +\infty$. Then the energy $E_3$ satisfies 
\[
\f{1}{2}\f{d}{dt} E_3^2(\Om, \eta,\xi) 
\leq  (- \f{1}{12} + C \al ) E_3^2  + \cR_3.
\] 
\end{cor}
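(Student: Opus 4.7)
The plan is to mirror the inductive scheme already carried out for $E_1$ and $E_2$: we apply $D_R^{k} D_\beta^{3-k}$ to each equation $\Om_t = \cL_1(\Om,\eta)+\cR_\Om$, $\eta_t=\cL_2(\Om,\eta)+\cR_\eta$, $\xi_t=\cL_3(\Om,\xi)+\cR_\xi$, pair with the appropriate weight ($\vp_1,\psi_1$ when $k=3$ and $\vp_2,\psi_2$ otherwise, matching the definition of $E_3$ in \eqref{eg:H3}), and then sum over $k=0,1,2,3$ with constants $\mu_{3,k}$ chosen successively. Since $D_\beta$ commutes with every operator appearing in $\cL_{i0}$, while $D_R$ generates commutators only of the form $-j D_R(\tfrac{3}{1+R}) D_R^{k-1} D_\beta^{3-k+1}$ and lower order terms of the type denoted $M$ in \eqref{eq:H2_Rb}, the structure of the proof is exactly the one used in Section \ref{sec:nonH2}.

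For the purely local parts, I would apply Lemma \ref{lem:dp} with $\vp=\vp_2$ or $\psi=\psi_2$ (giving damping constants $-1/4+\al$, $-3/8+\al$) for $k=0,1,2$, and with $\vp=\vp_1,\psi=\psi_1$ (giving $-1/4+3|1-\s|$, $-3/8+\al$) for $k=3$. The nonlocal pieces are controlled exactly as before: the $\td L_{12}(\Om)$ contributions are estimated by Lemma \ref{lem:ux} together with the $L^\infty$ angular bounds on $\bar\Om,\bar\eta$ (Lemma \ref{lem:bar}, \eqref{eq:bar_ux}) and $\bar\xi$ (Lemma \ref{lem:xi}, \eqref{eq:xi_ux}); this reduces them to $\|\td L_{12}(\Om)R^{-1}\|_{L^2(R)}\lesssim E_0\le E_2$, hence to $E_1$. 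The $c_\om$ terms, which are essentially $L_{12}(\Om)(0)$ times a profile derivative, are bounded via \eqref{eq:bar_ing} and \eqref{eq:xi_cw}, again by $E_0$ and thus by $E_2$. All such contributions therefore absorb into $C E_2^2$ upon Cauchy--Schwarz.

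The commutator terms from the transport $\frac{3}{1+R} D_\beta$ (and from the multiplier $-2+\frac{3}{1+R}$ in $\cL_2$ and $-2-\frac{3}{1+R}$ in $\cL_3$) hit a derivative of order $3-1=2$, so their weighted $L^2$ norm is controlled by $E_2$. This is the reason the constants $\mu_{3,k}$ have to be chosen small in the order $k=0,1,2,3$: the top-order damping $-\frac{1}{4}+O(\al)$ at each level has a fixed absolute part, so after picking $\mu_{3,0}$ first to absorb its commutator into $E_2$, we pick $\mu_{3,1}$ small enough that the commutator into the $k=0$ energy is controlled by $\mu_{3,0}\cdot(\text{constant})$, and so on. This is exactly the procedure used just after \eqref{eq:H2_1}, and it yields an effective damping strictly better than $-\frac{1}{12}+C\al$ for each third-order term (the loss from $-\frac{1}{11}$ in $E_2$ to $-\frac{1}{12}$ accounts for one more Cauchy--Schwarz absorption, paralleling the passage from Corollary \ref{cor:H1} to Corollary \ref{cor:H2}).

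Combining the above derivative estimates with Corollary \ref{cor:H2} for the $E_2$ part of $E_3^2$ and collecting the remainder inner products into $\cR_3$ as defined in \eqref{eg:H3} yields the claimed inequality. The main technical obstacle is not a single hard estimate but rather the bookkeeping of constants: one must check that at each of the four derivative levels the absolute damping constant dominates both the commutator terms fed in from one level lower and the $O(1)$-in-$\al$ nonlocal couplings through $L_{12}(\Om)(0)$ and $\td L_{12}(\Om)$, so that the $\mu_{3,k}$ can be chosen in sequence as in Section \ref{sec:nonH2}. Since each such coupling has already been shown to be controlled by the lower-order energies $E_0,E_1,E_2$, this is a direct repetition of the $H^2$ argument, and no new cancellation is needed.
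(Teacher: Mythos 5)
Your proposal follows essentially the paper's own argument, which for this corollary is simply the assertion that the $H^2$ scheme of Section \ref{sec:nonH2} repeats at third order with the constants $\mu_{3,k}$ chosen successively in the order $k=0,1,2,3$; your choice of weights, the recursive use of Lemma \ref{lem:dp}, and the treatment of the nonlocal $\td L_{12}(\Om)$ and $c_\om$ terms via Lemmas \ref{lem:ux}, \ref{lem:bar}, \ref{lem:xi} all match the intended proof. One small imprecision: the commutator of $D_R^k D_\beta^{3-k}$ with the transport term $\frac{3}{1+R}D_\beta$ produces a \emph{third}-order term of the form $D_R^{j}(\frac{3}{1+R})\,D_R^{k-j}D_\beta^{4-k}g$ (one fewer $D_R$, one more $D_\beta$), not a second-order term controlled by $E_2$ --- but your subsequent description of absorbing it into the previously estimated level $k-1$ through the sequential choice of the $\mu_{3,k}$ is exactly the correct mechanism (cf.\ the displays following \eqref{eq:H2_Rb}), so the argument stands as written.
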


\subsection{$\cC^1$ estimates}\label{sec:inf}

We introduce the following weights for the weighted $C^1$ estimates
\beq\label{wg:c1}
\phi_1 = \f{1+R}{R}, \quad \phi_2 = 1 + (R \sin(2\b)^{\al})^{- \f{1}{40}},
\eeq
and the following $\cC^1$ norm
\beq\label{norm:c1}
\bal
||  f ||_{\cC^1} &\teq || f ||_{\infty} + || \phi_1 D_R f ||_{\infty}  + || \phi_2 D_{\b}f ||_{\infty}  \\
&= || f ||_{\infty} + || \f{1+R}{R} D_R f ||_{\infty}  + || ( 1 + (R \sin(2\b)^{\al})^{-\f{1}{40}} ) D_{\b}f ||_{\infty} .
\eal
\eeq

To close the nonlinear estimates, we need to control the $L^{\infty}$ norm of $\Om, \eta, \xi$ and their derivatives. For $\Om, \eta$, the weighted $H^3$ estimates that we have obtained guarantee that $\Om, \eta \in \cC^1$, which will be established precisely in later sections. For $\xi$, however, since the weight $\psi_2$ (see Definition \ref{def:wg}) is less singular in $\b$ for $\b$ close to $0$, the weighted $H^3$ space associated to $\xi$ is not embedded continuously into $\cC^1$. Alternatively, we perform $\cC^{1}$ estimates on $\xi$ directly.
This difficulty is absent in \cite{elgindi2019finite} by removing the swirl.

Firstly, the transport term in the $\xi$ equation in \eqref{eq:lin}, including the nonlinear part in $N_{\xi}$, is given by 
\beq\label{eq:cA}
\cA (\xi )\teq (1+ 3\al) D_R \xi + \al c_l D_R \xi + (\bar{\uu} \cdot \na) \xi + (\uu \cdot \na) \xi.
\eeq

The main damping term in the $\xi$ equation is $(-2 - \bar{v}_y) \xi$. \eqref{eq:simp4} shows that $-\bar{v}_y = -\f{3}{1+R} + l.o.t.$. Therefore, we consider 
\beq\label{eq:Xi1}
\bal
(-2 - \bar{v}_y  )\xi
&= ( -2 - \f{3}{1+R}  )\xi + \Xi_1 , \quad \Xi_1  \teq (\f{3}{1+R} - \bar{v}_y) \xi.
\eal
\eeq
We further introduce $\Xi_2$ to denote the lower order terms in the $\xi$ equation \eqref{eq:lin}
\beq\label{eq:Xi2}
\Xi_2 =  - v_y \bar{\xi} + c_{\om} ( 2 \bar{\xi} - R\pa_R \bar{\xi})
  +( \al c_{\om} R\pa_R - ( \uu \cdot  \na ) ) \bar{\xi} -  (u_y \bar{\eta} +\bar{u}_y \eta ).
\eeq
Then the $\xi$ equation in \eqref{eq:lin} can be simplified as 
\beq\label{eq:xi0_2}
\pa_{t} \xi + A(\xi) = (-2 - \f{3}{1+R}) \xi + \Xi_1 + \Xi_2 +  \bar{F}_{\xi} + N_o,
\eeq
where we have moved part of the nonlinear term $N_{\xi}$ defined in \eqref{eq:non} to the transport term $A(\xi)$ and $N_o$ is given by 
\beq\label{eq:non_xio}
 N_o = (2c_{\om} - v_y)\xi   -  u_y \eta .
\eeq

Notice that $ - \f{3}{1+R} \leq 0$. Multiplying $\xi$ on both sides and then performing $L^{\infty}$ estimate yield 
\beq\label{eq:xi_inf1}
\f{1}{2}\f{d}{dt} || \xi||^2_{\infty}  \leq -2 || \xi||^2_{\infty} 
+ || \xi||( || \Xi_1||_{L^{\infty}} + ||\Xi_2 ||_{\infty} + ||\bar{F}_{\xi}||_{\infty} + || N_{\xi}||_{\infty} ),
\eeq
where the transport term $A(\xi)$ vanishes.

Before we perform weighted $C^1$ estimates, 
we rewrite $\cA(\xi)$ defined in \eqref{eq:cA} as follows 
\beq\label{eq:cA2}
\cA(\xi) =( (1+ 3\al + \al c_l) D_R \xi +  \f{3 }{ 1+R} D_{\b} \xi )
+ (  ( (\uu + \bar{\uu}) \cdot \na - \f{3}{1+R} D_{\b} )  \xi  ) \teq \cA_1(\xi) + \cA_2(\xi).
\eeq

Recall the weights $\phi_1, \phi_2$ in \eqref{wg:c1}. Observe that $D_{\b}$ commutes with $\cA_1$ and $D_R$ commutes with $D_R, D_{\b}$. Denote by $[P, Q]$ the commutator $P Q - Q P$. A direct calculation shows that
\beq\label{eq:xi_com}
\bal
\phi_1 D_R \cA \xi - \cA( \phi_1 D_R \xi ) & =  
\phi_1 D_R \f{3}{1+R} \cdot D_{\b} \xi -  (1+ 3\al + \al c_l) D_R \phi_1 \cdot D_R \xi 
+ [ \phi_1 D_R ,\cA_2] \xi, \\
& = - \f{3}{1+R} D_{\b} \xi + (1+ 3\al + \al c_l) \f{1}{1 +R} \phi_1 D_R \xi + [ \phi_1 D_R ,\cA_2] \xi , \\
 \phi_ 2D_{\b} \cA \xi  - \cA ( \phi_2 D_{\b} \xi ) & = - \cA_1 (\phi_2 -1 ) \cdot D_{\b} \xi + [  \phi_ 2 D_{\b}, \cA_2] \xi ,
\eal
\eeq
where we have used $\cA_1 (1) = 0$ in the last equality. 
Hence, using \eqref{eq:xi0_2} and the above calculation, we obtain the equation of $\phi_1 D_R \xi$
\[
\bal
\pa_t ( \phi_1 D_R \xi) + \cA(\phi_1 D_R \xi )
&=  \f{3}{1+R} D_{\b} \xi - (1 + 3 \al + \al c_l)  \f{1}{1 + R} \phi_1 D_R \xi  - [\phi_1 D_R, \cA_1 \xi] \\
&+ \phi_1 D_R ( (-2 - \f{3}{1+R}) \xi )  +  \phi_1 D_R ( \Xi_1 + \Xi_2 + \bar{F}_{\xi} + N_o) .
\eal
\]
We remark that $- (1 + 3\al) \f{1}{1+R} \phi_1 D_R \xi$ is a damping term, though we will not use it. Performing $L^{\infty}$ estimate for $\phi_1 D_{\b} \xi$, we obtain the following estimate, which is similar to \eqref{eq:xi_inf1} 
\beq\label{eq:xi_inf2R}
\bal
& \f{1}{2} \f{d}{dt}  || \phi_1 D_R \xi ||^2_{\infty} 
\leq -(2 - |\al c_l| )  || \phi_1 D_R \xi ||^2_{\infty}  + 3 || \phi_1 D_R \xi ||_{\infty} || \xi ||_{\infty }  \\
& + || \phi_1 D_R \xi ||_{ L^{\infty} }
( 3 || D_{\b} \xi ||_{\infty} +|| [\phi_1 D_R, \cA_2] \xi ||_{\infty} +
 || \phi_1 D_R ( \Xi_1 + \Xi_2 + \bar{F}_{\xi} + N_o  ) ||_{L^{\infty}} ),
\eal
\eeq
where we have used $|\f{3}{1+R}| \leq 3$ and 
\[
\phi_1 D_R \xi \cdot \phi_1 D_R (- 2 - \f{3}{1+R}) \xi 
= \phi_1 D_R \xi  \cdot ((- 2 - \f{3}{1+R}) \phi_1 D_R \xi
+ \phi_1 \f{3R\xi }{ (1+R)^2} )
\leq -2 ( \phi_1 D_R \xi)^2 + 3  || \phi_1 D_R \xi ||_{\infty} ||\xi||_{\infty}.
\]

Similarly, using \eqref{eq:xi0_2}, \eqref{eq:xi_com} and then performing $L^{\infty}$ estimate on $\phi_2 D_{\b} \xi$, we obtain 
\beq\label{eq:xi_inf2b}
\bal
 \f{1}{2} \f{d}{dt}  || \phi_2 D_{\b} \xi||^2_{\infty} 
&\leq -2  || \phi_2 D_{\b}\xi||^2_{\infty}  + || \phi_2 D_{\b} \xi||_{\infty} 
|| \cA_1(\phi_2 - 1) \cdot D_{\b}\xi ||_{L^{\infty}}  \\
&+ 
 || \phi_2 D_{\b} \xi||_{\infty}  ( || [ \phi_2 D_{\b}, \cA_2] \xi ||_{\infty} +
||\phi_2 D_{\b} (\Xi_1 + \Xi_2 + \bar{F}_{\xi}  + N_o ) ||_{L^{\infty}} )  ,\\
\eal
\eeq
where we have used 
\[
\phi_2 D_{\b} \xi  \cdot \phi_2 D_{\b} (- 2 - \f{3}{1+R}) \xi 
\leq -2 ( \phi_2 D_{\b} \xi )^2 .
\]


We defer the estimates of the remaining terms in \eqref{eq:xi_inf1},\eqref{eq:xi_inf2R},\eqref{eq:xi_inf2b} which are small, to Section \ref{sec:non}.

\subsection{The energy functional and the $\cH^m$ norm}\label{sec:energy}

Using all the energy notations \eqref{eg:b1}, \eqref{eg:R0}, \eqref{eg:R1}, \eqref{eg:L2}, \eqref{eg:R2},\eqref{eg:H1}, \eqref{eg:H2} and \eqref{eg:H3}, we obtain the full expression of $E_3$ \eqref{eg:H3}
\beq\label{eg:H3_full}
\bal
E_3^2
&=    || \Om  \vp_0^{1/2}||_2^2  + || \eta  \psi_0^{1/2}||_2^2  + \f{81}{  4\pi c}  L^2_{12}(\Om)(0) + \mu_1 \lt( ||  D_{\b} \Om \vp_2^{1/2}||_2^2 + || D_{\b} \eta \psi_2^{1/2}||_2^2 \rt)  +  || D_{\b}\xi \psi_2^{1/2}||_2^2 
\\
&+\mu_2\lt( ||  \Om  \vp_1^{1/2}||_2^2  + || \eta  \psi_1^{1/2}||_2^2  \rt) 
+   || \xi \psi_1^{1/2}||_2^2   +
\mu_3 \lt( ||  D_R \Om  \vp_1^{1/2} ||_2^2  + ||  D_R \eta  \psi_1^{1/2} ||_2^2  
+ ||  D_R\xi  \psi_1^{1/2}||_2^2  \rt) \\
&+ \sum_{l=2,3} \ \sum_{ 0\leq k \leq l}  \mu_{l, k}  \lt( 
\la  ||  D_R^{k} D^{l-k}_{\b} \Om  \vp_i^{1/2}||_2^2    
 +   ||  D_R^k D^{l-k}_{\b} \eta  \vp_i^{1/2} ||_2^2 
 +  || D_R^k D^{l-k}_{\b} \xi  \psi_i^{1/2} ||_2^2 \rt) , 
\eal
\eeq
where $(\vp_i, \psi_i) = (\vp_1, \psi_1)$ for $k=l$, $(\vp_i, \psi_i) = (\vp_2, \psi_2)$ for $k\neq l$ and $l=2, 3$.

Recall $\vp_i, \psi_i$ in Definition \ref{def:wg}. We define the $\cH^m(\rho)$ norm with $m \geq 0$ as follows 
\beq\label{norm:H22}
|| f ||_{\cH^m(\rho)}  \teq \sum_{ 0\leq k \leq m}   || D^k_R f \rho_1^{1/2} ||_{L^2} 
+ \sum_{   \ i+ j \leq m - 1
} || D^{i}_R D^{j+1}_{\b}  f \rho_2^{1/2}||_{L^2} .
\eeq
The $\cH^0( \vp)$ norm is the same as $L^2(\vp_1)$ norm. For the $\cH^3(\vp)$ norm, we use \eqref{norm:H22} with $\rho_i = \vp_i$; for the $\cH^3(\psi)$ norm, we use \eqref{norm:H22} with $\rho_i = \psi_i, i=1,2$.
We simplify $\cH^3(\vp)$ as $\cH^3$. 
We apply the $\cH^3$ norm for $\Om, \eta$ and the $\cH^3(\psi)$ norm for $\xi$. We use the $\cH^m$ norm to establish the elliptic estimate in the next Section. We will only use the $\cH^2, \cH^2(\psi)$ and $\cH^3, \cH^3(\psi)$ norms. Remark that the $\cH^m$ norm is different from the canonical Sobolev $H^m$ norm.

From the Definition \ref{def:wg} of $\vp_i, \psi_i$, we have a simple relationship between $\cH^m$ and $\cH^{m}(\psi)$.
\begin{lem}\label{lem:H2H2}
For $  \f{\g - \s}{2} \leq \lam \leq \f{1}{2}$ and $m \leq 3$, we have 
\beq\label{eq:H2H2}
|| f||_{\cH^m(\psi)}  \les  || f||_{\cH^m} , \quad  || \sin(\b)^{\lam} f ||_{\cH^m}  \les || f||_{\cH^m(\psi)} .
\eeq
\end{lem}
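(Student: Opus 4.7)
\medskip

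\noindent\textbf{Proof plan for Lemma \ref{lem:H2H2}.} Both inequalities will follow from pointwise comparison of the weights on $(0,\pi/2)$, combined with a Leibniz expansion for the $D_\beta$ derivatives. The starting observation is that $\varphi_1$ and $\psi_1$ coincide up to the constant $2^\sigma$ since $\sin(2\beta) = 2\sin\beta\cos\beta$, so the $D_R^k$-parts of the two $\mathcal{H}^m$ norms use essentially the same weighted $L^2$. For the first inequality $\|f\|_{\mathcal{H}^m(\psi)} \lesssim \|f\|_{\mathcal{H}^m}$, it therefore suffices to show $\psi_2 \lesssim \varphi_2$: near $\beta = 0$ one has $\psi_2 \asymp R^{-4}(1{+}R)^4\sin(\beta)^{-\sigma}$ versus $\varphi_2 \asymp R^{-4}(1{+}R)^4\sin(\beta)^{-\gamma}$, and since $\sigma < \gamma$ and $\sin\beta \in [0,1]$, $\sin(\beta)^{-\sigma}\le \sin(\beta)^{-\gamma}$; near $\beta = \pi/2$ both sides behave like $\cos(\beta)^{-\gamma}$.

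For the second inequality, note that $\sin(\beta)^\lambda$ is independent of $R$ and so commutes with $D_R$, and $D_\beta = \sin(2\beta)\,\partial_\beta$ is a first-order derivation. Iterating Leibniz gives
\[
D_R^i D_\beta^{j+1}(\sin(\beta)^\lambda f) = \sum_{a=0}^{j+1} \binom{j+1}{a} (D_\beta^a \sin(\beta)^\lambda)\,(D_R^i D_\beta^{j+1-a} f).
\]
The key technical ingredient is the pointwise bound $|D_\beta^a \sin(\beta)^\lambda| \le C_a\, \sin(\beta)^\lambda$ for $0 \le a \le m$, with the sharper bound $|D_\beta^a \sin(\beta)^\lambda| \le C_a\, \cos^2(\beta) \sin(\beta)^\lambda$ for $a \ge 1$. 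Setting $\phi_a := D_\beta^a \sin(\beta)^\lambda / \sin(\beta)^\lambda$, a direct computation gives $\phi_1 = 2\lambda \cos^2\beta$ and the recursion $\phi_{a+1} = D_\beta \phi_a + \phi_1\phi_a$. This recursion keeps each $\phi_a$ a polynomial in $\sin(2\beta), \cos(2\beta)$ (hence bounded on $[0,\pi/2]$), and since $\phi_1$ vanishes quadratically at $\beta = \pi/2$ and $D_\beta \cos^2\beta = -\sin^2(2\beta)$ also vanishes like $\cos^2\beta$ there, the factor $\cos^2(\beta)$ is preserved inductively for $a \ge 1$.

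With these estimates in place, the proof reduces to two weight comparisons, matching each Leibniz term to the appropriate piece of $\|\cdot\|_{\mathcal{H}^m(\psi)}$. For a generic term with $0 \le a \le j$, the factor $D_R^i D_\beta^{j+1-a} f$ carries at least one $D_\beta$, so it pairs with $\psi_2$; the required bound $\sin(\beta)^{2\lambda} \varphi_2 \lesssim \psi_2$ reduces to $\sin(\beta)^{2\lambda - \gamma + \sigma} \lesssim 1$ near $\beta = 0$, which holds \emph{precisely} under the hypothesis $2\lambda \ge \gamma - \sigma$, and to $O(1)$ behavior near $\beta = \pi/2$ where both weights scale like $\cos(\beta)^{-\gamma}$. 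The delicate case---and the main obstacle---is the boundary term $a = j+1$, where $D_R^i f$ has no angular derivative and must therefore pair with $\psi_1$. Here the naive bound $\sin(\beta)^{2\lambda}\varphi_2 / \psi_1 \asymp \cos(\beta)^{-(\gamma - \sigma)}$ \emph{diverges} near $\beta = \pi/2$, and it is exactly the refined estimate $|D_\beta^{j+1}\sin(\beta)^\lambda|^2 \lesssim \cos^4(\beta)\sin(\beta)^{2\lambda}$ that saves this term: $\cos^4(\beta)\sin(\beta)^{2\lambda}\varphi_2 / \psi_1 \asymp \cos(\beta)^{4 - \gamma + \sigma}$ is bounded for small $\alpha$, while near $\beta = 0$ the same hypothesis $2\lambda \ge \gamma - \sigma$ keeps the ratio bounded. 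Summing the Leibniz expansion against these weight estimates, and using the already-discussed $D_R^k$ case (where $\sin(\beta)^{2\lambda} \varphi_1 \lesssim \psi_1$ trivially), concludes the proof.
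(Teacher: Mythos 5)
Your proposal is correct and follows essentially the same route as the paper's (very terse) proof: the pointwise weight comparisons $\psi_i \lesssim \varphi_i$ and $\sin(\beta)^{2\lambda}\varphi_i \lesssim \psi_i$, a Leibniz expansion of the norm, and the key gain $|D_\beta^a \sin(\beta)^\lambda| \lesssim \cos^2(\beta)\sin(\beta)^\lambda$ for $a\ge 1$ to handle the term where all angular derivatives fall on $\sin(\beta)^\lambda$ and must be paired with $\psi_1$. Your write-up simply makes explicit the recursion and the two boundary regimes ($\beta\to 0$ vs.\ $\beta\to\pi/2$) that the paper leaves to the reader.
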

The proof follows from several simple inequalities $ \psi_i \les \vp_i,  \sin(\b)^{\lam} \vp_i \les \psi_i$,  
$ D^i_{\b} \sin(\b)^{\lam} \cdot \vp_2 = 2 \lam \cos^2(\b)  \sin(\b)^{\lam} \vp_2 \les  \psi_1  $ for $i  \leq 3$, and expanding the norm.

We also define the corresponding inner products on $\cH^3$ and $\cH^3(\psi)$, which are equivalent to $\cH^3, \cH^3(\psi)$
\beq\label{eq:inner}
\bal
\la f, g \ra_{\cH^3} & \teq \mu_1 \la D_{\b} f , D_{\b} g \vp_2 \ra
+ \mu_2 \la  f ,   g \vp_1 \ra  + \mu_3 \la D_R f , D_R g \vp_1 \ra \\
&+ \sum_{ k=2,3} \mu_{k, k}   || D^k_R f \vp_1^{1/2} ||_{L^2}  
+ \sum_{  j \geq 1, \  2 \leq  i+ j \leq 3}  \mu_{ i + j, i} \la  D^i_R D^j_{\b}  f , D^i_R D^j_{\b}  g \vp_2\ra , \\
\la f, g \ra_{\cH^3(\psi)} & \teq   \la D_{\b} f , D_{\b} g \psi_2 \ra
+  \la  f ,   g \psi_1 \ra  + \mu_3 \la D_R f , D_R g \psi_1 \ra  \\
&+ \sum_{ k=2,3} \mu_{k, k}   || D^k_R f \psi_1^{1/2} ||_{L^2}  + \sum_{  j \geq 1, \  2 \leq i+ j \leq 3}  \mu_{ i + j, i} \la  D^i_R D^j_{\b}  f , D^i_R D^j_{\b}  g \psi_2\ra.\\
\eal
\eeq

Clearly, using these notations and \eqref{eg:L2}, \eqref{eg:R2}, \eqref{eg:H1}, \eqref{eg:H2}, \eqref{eg:H3}, we have  
\beq\label{eq:E3}
\bal
E_3^2 &= \f{81}{4 \pi c} L^2_{12}(\Om)(0) + \la \Om^2, \vp_0 \ra + \la \eta^2, \psi_0 \ra
+ \la \Om, \Om \ra_{\cH^3} + \la \eta,  \eta \ra_{\cH^3}+ \la \xi, \xi \ra_{\cH^3(\psi)}  ,\\
\cR_3 & = \la \cR_{\Om}, \Om \vp_0 \ra + \la \cR_{\eta} ,\eta \psi_0 \ra +  \f{81}{4\pi c} L_{12}(\Om)(0) \la \cR_{\Om}, \sin(2\b) R^{-1} \ra + 
 \la \cR_{\Om}, \Om \ra_{\cH^3} + 
 \la \cR_{\eta}, \eta \ra_{\cH^3} 
 + \la \cR_{\xi}, \xi \ra_{\cH^3(\psi)}.
 \eal
\eeq

We also have the following simple inequality 
\beq\label{eq:E2H2}
 || \Om||^2_{\cH^3}  + || \eta||^2_{\cH^3}   + || \xi||^2_{\cH^3(\psi)} \les  E_3^2(\Om, \eta, \xi).
\eeq

\section{Elliptic Regularity Estimates and Estimate of nonlinear terms}\label{sec:elli}

In this section, we first follow the argument in \cite{elgindi2019finite} to establish the $\cH^3$ estimates for the elliptic operator and justify that the leading order term of the (modified) stream function can be written as \eqref{eq:biot3} in subsection \ref{sec:elliH3}. Then to simplify our nonlinear estimates, we will generalize several estimates derived in \cite{elgindi2019finite} in subsection \ref{sec:nonterm}. 

The fact that $\bar{\xi}$ (see Lemma \ref{lem:xi}) and $\xi$ do not decay in certain direction makes the estimates of nonlinear terms complicated since we cannot apply the same weighted Sobolev norm to $\Om, \eta, \xi$. More precisely, the $\cH^k(\psi)$ norm for $\xi$ is weaker than the $\cH^k$ norm for $\Om, \eta$ (see \eqref{eq:H2H2}). To compensate this, we use a combination of $\cC^1$ norm and $\cH^k(\psi)$ norm for $\xi$. We will establish several estimates for $\xi$ in subsection \ref{sec:nonterm}. Moreover, estimating the $\cH^k$ norm of $v_x \xi$ in the $\eta$ equation \eqref{eq:lin} will be more difficult since $\xi$ is in a weaker Sobolev space. In subsection \ref{sec:new}, we will estimate the nonlinear term $v_x \xi$ in the $\eta$ equation \eqref{eq:lin}. We will also perform a new estimate of the transport term with weighted $H^3$ data.


Recall that the Biot-Savart law in $\R^2_+$ is given by \eqref{eq:biot},
which can be reformulated using the polar coordinates as 
\[
 - \pa_{rr} \psi - \f{1}{r} \pa_r  \psi - \f{1}{r^2} \pa_{\b \b } \psi   = \om ,
\]
where $r  = \sqrt{x^2 + y^2}, \b = \arctan(y / x)$. We introduce $R = r^{\al}$ and  $\Psi(R,\b) = \f{1}{r^2} \psi(r, \b), \Om(R, \b) = \om(r, \b)$. It is easy to verify that the above elliptic equation is equivalent to 
\beq\label{eq:elli}
\cL_{\al}(\Psi) \teq - \al^2 R^2 \pa_{RR} \Psi- \al(4+\al) R \pa_R \Psi - \pa_{\b \b} \Psi - 4 \Psi = \Om.
\eeq
The boundary condition of $\Psi$ is given by 
\beq\label{eq:ellibc}
\Psi(R,0) = \Psi(R, \pi/2) =0, \quad \lim_{R\to \infty} \Psi(R, \b) =  0.
\eeq

\subsection{$\cH^3$ estimates}\label{sec:elliH3}
Recall that the $\cH^m, m \geq 0$ norm defined in Section \ref{sec:energy} is given by
\beq\label{norm:H2}
\bal
|| f||_{\cH^m} &\teq \sum_{ 0\leq k \leq m}  || D^k_R f  \f{(1+R)^2 }{R^2 \sin(2\b)^{\s/2}} ||_{L^2} 
+ \sum_{   i+ j \leq m-1
} || D^i_R D^{j+1}_{\b}   f \f{(1+R)^2} { R^2 \sin(2\b)^{\g/2}  } ||_{L^2} , \\
\eal
\eeq
where $\s = 99/100, \g = 1 + \al / 10$ and we have used the definition of $\vp_i$ in Definition \ref{def:wg}. The $\cH^0$ norm is the same as $L^2(\vp_1)$ norm.

\begin{prop}\label{prop:elli}
Assume that $0 < \al \leq \f{1}{4}, 1 < \g \leq \f{5}{4}$, and $\Om$ satisfies $||\Om||_{\cH^3} < +\infty$ with 
\beq\label{eq:orth}
\int_0^{\pi/2} \Om(R, \b) \sin(2\b) d \b =0
\eeq
for every $R$. The solution of \eqref{eq:elli} satisfies
\[
 \al^2 || R^2 \pa_{RR} \Psi||_{\cH^3} +\al || R \pa_{R \b} \Psi||_{\cH^3} +|| \pa_{\b \b} \Psi ||_{\cH^3} \leq C || \Om ||_{\cH^3}
\]
for some absolute constant $C$ independent of $\al$ and $\g$.
\end{prop}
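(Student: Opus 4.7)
The plan is to adapt Elgindi's elliptic argument in \cite{elgindi2019finite}, exploiting the fact that as $\al \to 0$ the operator degenerates to $L_0 := -\pa_{\b\b} - 4$, whose kernel (on $(0,\pi/2)$ with Dirichlet BC) is spanned by $\sin(2\b)$. The orthogonality condition \eqref{eq:orth} is precisely the solvability condition for this limiting problem, and everything downstream relies on exploiting it pointwise in $R$. The first step is to record this algebraically: integrating \eqref{eq:elli} against $\sin(2\b)$ over $\b$ and integrating the $-\pa_{\b\b}\Psi$ term by parts twice (using the Dirichlet condition and the identity $(-\pa_{\b\b} - 4)\sin(2\b) = 0$) shows that $A(R) := \int_0^{\pi/2} \Psi(R,\b)\sin(2\b)\,d\b$ satisfies $-\al^2 R^2 A'' - \al(4+\al) R A' = 0$. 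Together with the decay condition \eqref{eq:ellibc}, this forces $A \equiv 0$, so $\Psi$ inherits the orthogonality $\int_0^{\pi/2}\Psi\sin(2\b)\,d\b \equiv 0$.

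With this in hand, I would carry out a weighted $L^2$ estimate as the base case. The angular Poincar\'e-type inequality
$$\int_0^{\pi/2} |\pa_\b \Psi|^2\, d\b \geq 4 \int_0^{\pi/2} |\Psi|^2\, d\b$$
for $\Psi$ with Dirichlet BC and orthogonal to $\sin(2\b)$ gives coercivity of the $\cL_0$ part: $\langle L_0 \Psi, \Psi\rangle \geq c \int |\pa_\b\Psi|^2$ on the orthogonal complement. Multiplying \eqref{eq:elli} by $\Psi$ against a weight of the form $(1+R)^4/R^4 \cdot \sin(2\b)^{-\s}$ and integrating by parts in $R$ for the $-\al^2 R^2 \pa_{RR}\Psi$ and $-\al(4+\al) R\pa_R\Psi$ pieces produces a nonnegative contribution $\al^2 \|R\pa_R\Psi\cdot \text{wt}\|_{L^2}^2$ modulo boundary terms controlled by the weight profile (this is exactly the mechanism Elgindi uses). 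Combining with the angular coercivity yields the base estimate
$$\al^2\|R^2\pa_{RR}\Psi\|_{\cH^0} + \al\|R\pa_{R\b}\Psi\|_{\cH^0} + \|\pa_{\b\b}\Psi\|_{\cH^0} \les \|\Om\|_{\cH^0}.$$

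The higher-order $\cH^3$ estimate is obtained by applying the vector fields $D_R = R\pa_R$ and $D_\b = \sin(2\b)\pa_\b$ to \eqref{eq:elli}, iterating up to third order, and running the same weighted energy estimate on each derivative. Commuting $D_R$ and $D_\b$ past the operator $\cL_\al$ produces lower-order terms whose coefficients are smooth bounded functions of $\b$ (from $D_\b$ hitting $\sin(2\b)$ via $D_\b \sin(2\b) = 2\sin(2\b)\cos(2\b)$) or are absorbed into the $R$-weight (from $[D_R, R^2\pa_{RR}]$, $[D_R, R\pa_R]$, which reproduce the same form of operator); crucially, the orthogonality condition is preserved by $D_R$ since $D_R$ acts only in $R$, and approximate orthogonality after $D_\b$ is not needed because at that level $\pa_{\b\b}\Psi$ is controlled directly by the equation. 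The derivative in $\b$ requires integrating against the slightly more singular weight $\sin(2\b)^{-\g}$ with $\g = 1 + \al/10$; the mismatch between $\s$ and $\g$ is designed so the boundary contributions at $\b = 0, \pi/2$ remain integrable.

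The main obstacle is ensuring uniformity in $\al$ as $\al \to 0$: a naive estimate would produce inverse powers of $\al$ (consistent with the $1/\al$ appearing in the leading-order expansion \eqref{eq:biot3}). The resolution, as in \cite{elgindi2019finite}, is bookkeeping: the $\al^2$ and $\al$ factors in the claimed bound on $R^2\pa_{RR}\Psi$ and $R\pa_{R\b}\Psi$ exactly match the powers of $\al$ on these operators in $\cL_\al$, so after isolating $-\pa_{\b\b}\Psi - 4\Psi = \Om + \al^2 R^2\pa_{RR}\Psi + \al(4+\al)R\pa_R\Psi$ and using the orthogonality to invert $L_0$, the $\al$-dependence cancels cleanly. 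A secondary technical point is the handling of the weight singularity at $\b = 0$ and $\b = \pi/2$ in the iterated estimates: because the weight exponents $\s, \g$ are strictly less than the exponents that would arise naturally from successive $D_\b$ applications, the resulting weighted integrals converge, and the constant $C$ in the conclusion can be made independent of $\al$ in the stated range $0 < \al \leq 1/4$.
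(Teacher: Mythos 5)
Your proposal follows essentially the same route as the paper: reduce the orthogonality of $\Om$ to the identity $\Psi_*\equiv 0$ via the explicit Euler ODE in $R$, use the resulting vanishing of the first angular mode to get coercivity of $-\pa_{\b\b}-4$, and then run weighted energy estimates with $D_R$, $D_\b$ commuted through the equation, exactly as in Elgindi's argument to which the paper also defers. One small correction: the Poincar\'e inequality you display, $\int_0^{\pi/2}|\pa_\b\Psi|^2\,d\b \geq 4\int_0^{\pi/2}|\Psi|^2\,d\b$, holds for all Dirichlet functions and only gives $\la L_0\Psi,\Psi\ra\geq 0$; the coercivity $\la L_0\Psi,\Psi\ra\geq \tfrac34\|\pa_\b\Psi\|^2$ you then invoke requires the improved constant $16$ coming from $\Psi_1\equiv 0$ (since the lowest surviving mode is $n=2$), which is precisely the inequality \eqref{eq:elli1} the paper records.
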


\begin{remark}
We need the orthogonality assumption \eqref{eq:orth} since $\sin(2\b)$ is in the null space of the self-adjoint operator $L_0(\Psi) = -\pa_{\b \b} \Psi - 4 \Psi$ with boundary condition $\Psi(0) = \Psi(\pi/2) =0$, which is the limiting operator in \eqref{eq:elli} as $\al \to 0$. 
\end{remark}


We only outline some key steps in the proof. Since the $\cH^2$ norm is the same as that in \cite{elgindi2019finite} and the $\cH^2$ estimates can be easily extended to the $\cH^3$ estimates, 
the complete proof follows from the same argument in \cite{elgindi2019finite}. Here, the proof is even simpler since there is no first order angular derivative term in \eqref{eq:elli}, i.e. $\pa_{\b} (\tan(\b) \Psi)$, which is one of the major difficulties in obtaining the elliptic estimate in \cite{elgindi2019finite}.

\vspace{0.1in}
\paragraph{\bf{The Orthogonality condition}} 
Define
\[
\Psi_*(R) = \int_0^{\pi/2} \Psi(R, \b) \sin(2\b) d \b.
\]
Using \eqref{eq:elli} and \eqref{eq:orth}, we derive
\[
\al^2 R^2 \pa_{RR} \Psi_* +  \al(\al + 4) R\pa_R \Psi_* = 0,
\]
which is an Euler equation and has an explicit solution 
\[
\Psi_*(R) = c_1 + c_2 R^{1- \f{4+\al}{\al}}.
\]
Recall the boundary condition in \eqref{eq:ellibc} for $\Psi$. We have $\Psi_* \to 0$ as $R \to \infty$. Since $R^2 \Psi$ vanishes at $R = 0$, we derive $c_1 = c_2 =0$ and $\Psi_* \equiv 0$.

Recall the boundary condition \eqref{eq:ellibc}. We can expand $\Psi(R, \b)$ in a series \\
$\Psi(R, \b) = \sum_{n \geq 1 } \Psi_n(R) \sin(2 n \b).$
Due to the orthogonality condition $\Psi_*(R) \equiv 0$, we have $\Psi_1(R) \equiv 0$. Therefore, we have 
\beq\label{eq:elli1}
|| \pa_{\b}\Psi(R, \b) g(R) ||_2^2 - 16 || \Psi(R, \b) g(R) ||_2^2
= \sum_{n \geq 2} (4n^2  - 16) || \Psi_n(R) g(R) ||^2_{L^2(R)}  \geq 0
\eeq
for some weight $g(R)$ such that $|| \pa_{\b}\Psi(R, \b) g(R) ||_2$ is finite. In particular, we have 
\[
|| \pa_{\b}\Psi(R, \b) g(R) ||^2_2 - 4 || \Psi(R, \b) g(R) ||_2^2 \geq \f{3}{4}|| \pa_{\b}\Psi(R, \b) g(R) ||^2_2.
\]
In \cite{elgindi2019finite}, the corresponding orthogonality condition is $\int_0^{\pi/2} \Om(R,\b) \cos^2(\b) \sin(\b) d\b =0$ for every $R$, which implies $|| \Psi_1(R) g(R)||^2_{L^2(R)} \leq \sum_{n \geq 2} || \Psi_n(R) g(R) ||^2_{L^2(R)}.$
Based on this, the positivity of the operator $\cL_{\al}$ \eqref{eq:elli} in the $L^2$ sense is established. Here, we simply have $\Psi_1(R) \equiv 0$.

Based on the above estimates, the proof of Proposition \ref{prop:elli} follows from the same argument as that in \cite{elgindi2019finite}.

\vspace{0.1in}
\paragraph{\bf{The singular term}}
In general the vorticity $\Om$ does not satisfy the assumption \eqref{eq:orth} in Proposition \ref{prop:elli}. 
Suppose that $\Psi$ is the solution of \eqref{eq:elli}. Consider $\td{\Psi} = \Psi + G \sin(2\b)$. The goal is to construct $G$ so that $\cL_{\al}(\td{\Psi})$ satisfies \eqref{eq:orth}, i.e.
$\int_0^{\pi/2} \cL_{\al}(\td{\Psi}) \sin(2\b) d \b =0$. Recall the notation $L_{12}(\Om)$ in \eqref{eq:biot3}. Following the argument in \cite{elgindi2019finite}, in Appendix \ref{app:singular}, we derive 
\beq\label{eq:G}
G = - \f{1}{\pi \al} L_{12}(\Om)(R) + \bar{G},
\quad 
\bar  G \teq   - \f{1}{\al \pi} R^{ -\f{4}{\al}} \int_0^{R} \int_0^{\pi/2} \Om(s, \b) \sin(2\b) s^{\f{4}{\al} - 1} ds.
\eeq

Although there is a large factor $1/\al$ in $\bar{G}$, it can be proved that $|| \bar{ G}||_{\cH^3}$ can be bounded by $C || \Om||_{\cH^3}$ using a Hardy-type inequality. We refer the reader to \cite{elgindi2019finite} and \cite{Elg17} for more details.

Using Proposition \ref{prop:elli} and an argument similar to that in \cite{elgindi2019finite}, we have the following result, which is similar to Theorem 2 in \cite{elgindi2019finite}.
\begin{prop}\label{prop:key}
Assume that $\al \leq \f{1}{4}$ and $\Om \in \cH^3$. Let $\Psi$ be the 
solution to \eqref{eq:elli} with boundary condition \eqref{eq:ellibc}. Then we have
\[
\bal
&\al^2 || R^2 \pa_{RR} \Psi ||_{\cH^3} + 
\al || R \pa_{R \b} \Psi ||_{\cH^3}  +|| \pa_{\b\b} (\Psi - \f{1}{\al \pi} \sin(2\b) L_{12}(\Om)) ||_{\cH^3} \leq C || \Om ||_{\cH^3}
\eal
\]
for some absolute constant $C$ independent of $\al, \g$ in the definition of $\cH^3$ \eqref{norm:H2}.
\end{prop}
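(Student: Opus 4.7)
The plan is to follow the decomposition introduced just before the statement: write $\tilde{\Psi} = \Psi + G \sin(2\b)$ with $G = -\frac{1}{\pi \al} L_{12}(\Om)(R) + \bar G$ as in \eqref{eq:G}, where $\bar G$ is chosen precisely so that $\cL_\al(\tilde{\Psi})$ satisfies the orthogonality condition \eqref{eq:orth}. The point is that $\tilde{\Psi}$ falls into the regime of Proposition \ref{prop:elli}, whereas the subtracted piece $\frac{1}{\pi\al}\sin(2\b)L_{12}(\Om)$ is fully explicit and, crucially, is exactly the term we are asked to remove from $\Psi$ in the angular second derivative estimate.

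First I would verify that $\tilde{\Psi}$ obeys the boundary conditions \eqref{eq:ellibc}. The angular boundary conditions hold because $\sin(2\b)$ vanishes at $\b=0,\pi/2$, and the decay at $R=\infty$ follows from the decay of $L_{12}(\Om)(R)$ and of $\bar G$, since both are built from $\Om \in \cH^3$. Next I would compute $\cL_\al(\tilde\Psi) = \Om + \cL_\al(G \sin(2\b))$. Using that $-\pa_{\b\b}\sin(2\b) - 4\sin(2\b) = 0$, the angular part of $\cL_\al(G\sin(2\b))$ vanishes, so the remainder is $\sin(2\b)$ times an $R$-only expression in $G$, and by construction the coefficient is arranged so that $\int_0^{\pi/2} \cL_\al(\tilde\Psi)\sin(2\b)d\b = 0$. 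With the orthogonality in hand, Proposition \ref{prop:elli} yields
\[
\al^2 \|R^2 \pa_{RR}\tilde\Psi\|_{\cH^3} + \al \|R \pa_{R\b}\tilde\Psi\|_{\cH^3} + \|\pa_{\b\b}\tilde\Psi\|_{\cH^3} \les \|\cL_\al(\tilde\Psi)\|_{\cH^3}.
\]
The right-hand side is then bounded by $\|\Om\|_{\cH^3}$ plus a contribution from the explicit $R$-expressions in $G$, and reducing those contributions to $\|\Om\|_{\cH^3}$ is the one real technical matter.

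Rewriting the quantity of interest gives
\[
\Psi - \frac{1}{\pi \al}\sin(2\b)L_{12}(\Om) = \tilde\Psi - \bar G \sin(2\b),
\]
so that after taking the relevant derivatives and invoking the bound on $\tilde\Psi$, the proposition reduces to controlling $\bar G \sin(2\b)$ and the derivatives $\al^2 R^2 \pa_{RR}$, $\al R\pa_{R\b}$, $\pa_{\b\b}$ applied both to $\bar G\sin(2\b)$ and to the leading singular term $\frac{1}{\pi\al}\sin(2\b)L_{12}(\Om)$ in the first two (unsubtracted) slots. For the leading singular term the $1/\al$ factor is tamed by the prefactors $\al^2$ or $\al$, using $R\pa_R L_{12}(\Om) = -\int_0^{\pi/2}\Om(R,\b)\sin(2\b)d\b$ and $R^2\pa_{RR}L_{12}(\Om) = -R\pa_R \int_0^{\pi/2}\Om(R,\b)\sin(2\b)d\b$ together with the $\cH^3$-norm definition, which contains weighted $D_R$ and $D_\b$ derivatives of $\Om$.

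The main obstacle will be the $\cH^3$-bound on $\bar G$, where the explicit $1/\al$ factor in \eqref{eq:G} is paired with the Hardy-type integral $R^{-4/\al}\int_0^R (\,\cdot\,) s^{4/\al-1}ds$. I would handle this by deriving the ODE $\al R\pa_R \bar G = -4\bar G - \frac{1}{\pi}\int_0^{\pi/2}\Om(R,\b)\sin(2\b)d\b$ from the definition of $\bar G$; this identity eliminates one factor of $1/\al$ at each iteration and converts $R\pa_R$ derivatives of $\bar G$ into averages of $\Om$ plus $\bar G$ itself. Thus the problem reduces to the base $\cH^3$ bound on $\bar G$, which follows from a Hardy inequality in weighted $L^2$ adapted to the measure $R^{-4}(\sin 2\b)^{-\g}\,dR\,d\b$: since $4/\al$ is large, the kernel $R^{-4/\al}\int_0^R s^{4/\al - 1}(\cdot)\,ds$ is bounded on such weighted spaces with operator norm $O(\al)$, cancelling the prefactor $1/\al$. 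Once this Hardy estimate is established, assembling the pieces with the triangle inequality yields the claimed bound, and the absolute constant $C$ depends only on fixed weight exponents $\s,\g$ and not on $\al$.
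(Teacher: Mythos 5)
Your proposal is correct and follows essentially the same route as the paper: decompose $\td\Psi = \Psi + G\sin(2\b)$ with $G$ from \eqref{eq:G} so that the orthogonality condition \eqref{eq:orth} holds, apply Proposition \ref{prop:elli} to $\td\Psi$, and control $\bar G$ (with its $1/\al$ prefactor) by a Hardy-type inequality whose operator norm is $O(\al)$, exactly as the paper indicates (deferring details to Elgindi's work). Your additional observation that the identity $\al R\pa_R\bar G = -4\bar G - \frac{1}{\pi}\int_0^{\pi/2}\Om\sin(2\b)\,d\b$ reduces higher $D_R$-derivatives of $\bar G$ to the base weighted $L^2$ Hardy bound is a correct and clean way to organize the $\cH^3$ estimate.
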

\begin{remark}
The $\cH^3$ norm of $\al D_R \pa_{\b}\Psi$ is not included in Theorem 2 in \cite{elgindi2019finite}. Yet, 
the estimate of such term can be derived easily from Proposition \ref{prop:elli} and the estimate of $G$ defined in \eqref{eq:G}.
\end{remark}

\subsection{Estimates of nonlinear terms}\label{sec:nonterm}
In this subsection, we generalize several estimates of nonlinear terms derived in \cite{elgindi2019finite}, which will be used in our nonlinear stability estimate in the next section. 

We define the $\cW^{l, \infty}$ norm:
\beq\label{norm:W}
|| f ||_{\cW^{l,\infty}} \teq \sum_{0 \leq k+ j \leq l ,  j \neq 0} \B| \B| 
\sin(2\b)^{- \f{\al}{5}}D_R^k  \f{\lt(  \sin(2\b)  \pa_{\b}  \rt)^j}{\f{\al}{10} + \sin(2\b)}   f   \B|\B|_{L^{\infty}}  + \sum_{0 \leq k  \leq l } \B| \B| D_R^k f \B|\B|_{L^{\infty}}.
\eeq

A similar $\cW^{l,\infty}$ has been used in \cite{elgindi2019finite} and
our $\cW^{l,\infty}$ norm is slightly different from that in \cite{elgindi2019finite}. We replace the operator $(R+1)^k \pa^k_R $ by $D_R^k = (R\pa_R)^k$. The reason for doing this is that the stronger weight $(R+1)^k$ is not necessary in the derivation of the product rule in \cite{elgindi2019finite} related to $\cW^{l, \infty}$, and that the differential operator $D_R$ commutes with $\cL_{\al}$ in the elliptic equation \eqref{eq:elli}, while $\pa_R$ does not. Therefore, the higher order elliptic estimates related to $\pa_R$ can depend on the value of $\al$. We will only use these estimates when $\al$ is very small. 


\vspace{0.1in}
\paragraph{\bf{Functions in $\cW^{7,\infty}$}}
From Proposition \ref{prop:gam} in the Appendix, we know that $\G(\b), \bar{\Om}, \bar{\eta} \in \cW^{7,\infty}$. 
\begin{remark}
We do not apply the $\cW^{l,\infty}$ norm to $\bar{\xi}, \ \xi$. 
\end{remark}

Recall the $\cC^1$ norm in \eqref{norm:c1}. For the $\cC^1$ and $\cW^{1,\infty}$ norms, we have a simple result.
\begin{prop}\label{prop:c1}
For any $f, g \in \cC^1$ and $ \f{1+R}{R}p \in \cW^{1,\infty}$,
we have 
\[
|| f g ||_{\cC^1} \leq || f||_{\cC^1} || g ||_{\cC^1}, \quad || p||_{\cC^1} \les 
|| \f{1+R}{R}  p ||_{\cW^{1,\infty}}.
\]
\end{prop}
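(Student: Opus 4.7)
The proposition is a pair of product-rule-type inequalities between the $\cC^1$ norm \eqref{norm:c1} and the $\cW^{1,\infty}$ norm \eqref{norm:W}, and the plan is to prove each one by elementary manipulations of the defining weights $\phi_1(R)=\tfrac{1+R}{R}$ and $\phi_2(R,\b)=1+(R\sin(2\b)^{\al})^{-1/40}$.

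For the first inequality $\|fg\|_{\cC^1}\le\|f\|_{\cC^1}\|g\|_{\cC^1}$, the plan is to apply the product rule to the two first-order operators $D_R=R\pa_R$ and $D_\b=\sin(2\b)\pa_\b$, which both satisfy the Leibniz property because they are multiplication-operator composed with a single derivative. Writing
\[
\phi_1 D_R(fg)=g\,\phi_1 D_Rf+f\,\phi_1 D_Rg,\qquad \phi_2 D_\b(fg)=g\,\phi_2 D_\b f+f\,\phi_2 D_\b g,
\]
and combining with $\|fg\|_\infty\le\|f\|_\infty\|g\|_\infty$, the bound follows from the triangle inequality by setting $A=\|f\|_\infty$, $B=\|\phi_1 D_Rf\|_\infty+\|\phi_2D_\b f\|_\infty$ and similarly $C,D$ for $g$, so that the right-hand side after expansion is $AC+CB+AD\le(A+B)(C+D)$. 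This step is routine.

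For the second inequality, the plan is to substitute $q=\tfrac{1+R}{R}p$, so that $p=\tfrac{R}{1+R}q$, and estimate each of the three terms in $\|p\|_{\cC^1}$ separately. Since $\tfrac{R}{1+R}\le1$, we immediately get $\|p\|_\infty\le\|q\|_\infty$. For the $D_R$ term, a direct computation gives $\tfrac{1+R}{R}D_Rp=\tfrac{1}{1+R}q+D_Rq$, hence $\|\phi_1 D_Rp\|_\infty\le\|q\|_\infty+\|D_Rq\|_\infty$. Both quantities are controlled by $\|q\|_{\cW^{1,\infty}}$ via the $k=0,1,\ j=0$ part of the definition.

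The main — and only genuinely nontrivial — obstacle is the $D_\b$ term, where I must show that
\[
\phi_2(R,\b)\cdot\tfrac{R}{1+R}\ \lesssim\ \sin(2\b)^{-\al/5}\cdot\tfrac{1}{\al/10+\sin(2\b)},
\]
since $D_\b p=\tfrac{R}{1+R}D_\b q$. Because the right-hand side is bounded below by a positive constant times $\sin(2\b)^{-\al/5}$, it suffices to show $\tfrac{R}{1+R}(1+(R\sin(2\b)^{\al})^{-1/40})\lesssim\sin(2\b)^{-\al/40}\le\sin(2\b)^{-\al/5}$. I will split into $R\ge1$, where $\tfrac{R}{1+R}\le1$ and $(R\sin(2\b)^{\al})^{-1/40}\le\sin(2\b)^{-\al/40}$, and $R<1$, where $\tfrac{R}{1+R}(R\sin(2\b)^{\al})^{-1/40}=\tfrac{R^{39/40}}{1+R}\sin(2\b)^{-\al/40}\le\sin(2\b)^{-\al/40}$; in both regimes the bound $\phi_2\tfrac{R}{1+R}\le 2\sin(2\b)^{-\al/40}$ follows. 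Combining this with the absorption of the constant $\al/10+1$ into the implicit constant completes the estimate of $\|\phi_2 D_\b p\|_\infty$ by $\|q\|_{\cW^{1,\infty}}$, and summing the three bounds yields $\|p\|_{\cC^1}\lesssim\|\tfrac{1+R}{R}p\|_{\cW^{1,\infty}}$.
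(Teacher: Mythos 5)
Your proof is correct. The paper states Proposition \ref{prop:c1} without proof, treating it as routine, and your argument is exactly the natural one: the Leibniz rule for $D_R$ and $D_{\b}$ gives the submultiplicativity $\|fg\|_{\cC^1}\le\|f\|_{\cC^1}\|g\|_{\cC^1}$ (your grouping $AC+CB+AD\le(A+B)(C+D)$ is fine), and for the second inequality the substitution $p=\tfrac{R}{1+R}q$ together with the identity $\phi_1 D_Rp=\tfrac{1}{1+R}q+D_Rq$ and the pointwise bound $\tfrac{R}{1+R}\phi_2\le 2\sin(2\b)^{-\al/40}\le 2\sin(2\b)^{-\al/5}\lesssim \sin(2\b)^{-\al/5}(\al/10+\sin(2\b))^{-1}$ handles all three terms of the $\cC^1$ norm. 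No gaps.
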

The $\cW^{4,\infty}$ version of the following result has been established in \cite{elgindi2019finite}, whose generalization to $\cW^{l, \infty}$ is straightforward.
\begin{prop}\label{prop:alg}
Assume that $f, g \in \cW^{l,\infty}$. Then we have
\[
|| f g ||_{\cW^{l,\infty}} \les_l || f  ||_{\cW^{l,\infty}}  ||  g ||_{\cW^{l, \infty}} .
\]
\end{prop}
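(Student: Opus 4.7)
The plan is to reduce to the Leibniz rule after observing that $D_R = R\pa_R$ commutes with multiplication by any function of $\b$ alone, so the norm is equivalent to
\[
 \|f\|_{\cW^{l,\infty}} \asymp \sum_{k \leq l} \|D_R^k f\|_\infty + \sum_{\substack{k+j \leq l \\ j \geq 1}} \Big\|\sin(2\b)^{-\al/5}\, \frac{D_R^k D_\b^j f}{\f{\al}{10} + \sin(2\b)}\Big\|_\infty,
\]
where $D_\b = \sin(2\b)\pa_\b$. Both $D_R$ and $D_\b$ are first-order derivations, and they commute with each other, so the usual bivariate Leibniz rule applies:
\[
 D_R^k D_\b^j (fg) = \sum_{\substack{i_1+i_2 = k\\ j_1 + j_2 = j}} \binom{k}{i_1}\binom{j}{j_1}\, (D_R^{i_1} D_\b^{j_1} f)(D_R^{i_2} D_\b^{j_2} g).
\]

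First I would dispose of the pure radial part ($j = 0$): a direct Leibniz expansion of $D_R^k(fg)$ with each factor controlled by the second sum of the norm gives $\|D_R^k(fg)\|_\infty \leq C_l\|f\|_{\cW^{l,\infty}}\|g\|_{\cW^{l,\infty}}$. For the weighted terms ($j \geq 1$), I will write $\mu = \al/10$ and $w(\b) = \mu + \sin(2\b)$, and use the two pointwise bounds that follow immediately from the definition of the norm:
\[
 |D_R^{i} f| \leq \|f\|_{\cW^{l,\infty}}, \qquad |D_R^i D_\b^s f| \leq \sin(2\b)^{\al/5}\, w(\b)\,\|f\|_{\cW^{l,\infty}} \quad (s \geq 1),
\]
and analogously for $g$.

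I would then split each Leibniz term according to which of $j_1, j_2$ is zero. If $j_1 = 0$ (so $j_2 = j \geq 1$), the contribution is bounded pointwise by
\[
 \frac{\sin(2\b)^{-\al/5}}{w(\b)} \cdot \|f\|_{\cW^{l,\infty}} \cdot \sin(2\b)^{\al/5} w(\b)\,\|g\|_{\cW^{l,\infty}} = \|f\|_{\cW^{l,\infty}}\|g\|_{\cW^{l,\infty}},
\]
and the case $j_2 = 0$, $j_1 \geq 1$ is symmetric. If both $j_1, j_2 \geq 1$, multiplying the two pointwise bounds produces
\[
 \frac{\sin(2\b)^{-\al/5}}{w(\b)}\, \sin(2\b)^{2\al/5} w(\b)^2\, \|f\|_{\cW^{l,\infty}}\|g\|_{\cW^{l,\infty}} = \sin(2\b)^{\al/5} w(\b)\,\|f\|_{\cW^{l,\infty}}\|g\|_{\cW^{l,\infty}},
\]
which is $\leq 2\|f\|_{\cW^{l,\infty}}\|g\|_{\cW^{l,\infty}}$ since $w(\b) \leq 1 + \mu \leq 2$ on $\b \in [0,\pi/2]$ for $\al \leq 1$. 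Summing over the finitely many $l$-dependent multi-indices yields the claimed estimate.

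The estimate is essentially bookkeeping; there is no genuine obstacle, only the need to distribute the single factor $\sin(2\b)^{-\al/5}/w(\b)$ between the two sub-factors, which is handled by the trivial inequality $\sin(2\b)^{\al/5} w(\b) \lesssim 1$ and the key sub-multiplicative structure that $w(\b)^{-1}$ appears exactly once on the left and twice on the right after using the norms on $f$ and $g$. This is exactly the $\cW^{4,\infty}$ argument of \cite{elgindi2019finite} applied a few more times, which explains why no new idea is required for higher $l$.
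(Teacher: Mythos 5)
Your proof is correct and is essentially the standard argument the paper is invoking: the paper gives no proof, stating only that the generalization of the $\cW^{4,\infty}$ product rule from \cite{elgindi2019finite} is straightforward, and your bivariate Leibniz expansion with the case split on which angular-derivative count vanishes, together with the observation that $\sin(2\b)^{\al/5}(\f{\al}{10}+\sin(2\b))\les 1$ absorbs the extra weight when both factors carry a $D_\b$, is exactly that argument. The only detail worth having made explicit — that $D_R$ commutes with the $\b$-dependent weight so the norm can be rewritten with the weight outermost — you did address, so nothing is missing.
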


Recall from \eqref{eq:simp4} that $L_{12}(\bar{\Om})= \f{3\pi \al}{2} \f{1}{1+R} $. We define $\bar{\Psi}$ by 
\[
\cL_{\al} ( \bar{\Psi} ) = - \al^2 R^2 \pa_{RR} \bar{\Psi} - \al( 4 + \al ) R \pa_R \bar{\Psi} - \pa_{\b \b} \bar{\Psi} - 4 \Psi = \bar{\Om},
\]
where $\cL_{\al}$ is the operator in \eqref{eq:elli}. We have the following estimates.

\begin{prop}\label{prop:psi}
For $\al \leq \f{1}{4}$, we have 
\[
\bal
|| \f{1+R}{ R} \pa_{\b \b} ( \bar{\Psi}  -  \f{\sin(2\b)}{\pi \al}  L_{12}(\bar{\Om})) ||_{\cW^{7,\infty}} \les \al , \qquad || L_{12}(\bar{\Om})||_{\cW^{7,\infty}} \les \al , \\
\al || \f{1+R}{R}  D_R^2 \bar{\Psi} ||_{\cW^{5,\infty}} +  \al||  \f{1+R}{R} \pa_{\b} D_R \bar{\Psi} ||_{\cW^{5,\infty}}  + || \f{1+R}{ R} \pa_{\b \b} ( \bar{\Psi}  -  \f{\sin(2\b)}{\pi \al}  L_{12}(\bar{\Om})) ||_{\cW^{5,\infty}} \les  \al  \notag. 
\eal
\]
\end{prop}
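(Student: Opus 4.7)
The bound on $L_{12}(\bar{\Om})$ is direct: by \eqref{eq:simp4} one has the explicit identity $L_{12}(\bar{\Om})(R) = \f{3\pi\al}{2(1+R)}$, which is $\b$-independent and of size $O(\al)$. Since $D_R^k\f{1}{1+R}$ is bounded by an absolute constant for every $k\le 7$, only the radial part of the norm \eqref{norm:W} contributes, and $||L_{12}(\bar{\Om})||_{\cW^{7,\infty}}\les\al$ follows. This also motivates the splitting
\[
\bar{\Psi} = \Psi_0 + \bar{\Psi}_1, \qquad \Psi_0 \teq \f{\sin(2\b)}{\pi\al}L_{12}(\bar{\Om}) = \f{3\sin(2\b)}{2(1+R)},
\]
where $\Psi_0$ absorbs the singular $1/\al$ factor in the $n=1$ angular mode and $\bar{\Psi}_1$ is the correction to be controlled.

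Because $\sin(2\b)$ lies in the kernel of $-\pa_{\b\b}-4$, the angular part of $\cL_\al\Psi_0$ vanishes, so the equation for $\bar{\Psi}_1$ reduces to
\[
\cL_\al(\bar{\Psi}_1) = \bar{\Om} + \al^2 R^2\pa_{RR}\Psi_0 + \al(4+\al)R\pa_R\Psi_0 \teq F.
\]
Using the explicit formulas for $\bar{\Om}$ and $\Psi_0$, one sees that $F$ is a finite sum of products of rational functions of $R$ (each of size $O(\al)$) with $\G(\b)$ or $\sin(2\b)$, hence $F$ is $O(\al)$ in $\cW^{l,\infty}$. A direct computation using \eqref{eq:profile} reveals the additional cancellation
\[
\f{4}{\pi}\int_0^{\pi/2} F(R,\b)\sin(2\b)\,d\b \;=\; \f{3\al^2 R(R-1)}{2(1+R)^3},
\]
so the $n=1$ Fourier coefficient of $F$ on $[0,\pi/2]$ is in fact $O(\al^2)$ rather than merely $O(\al)$; this extra smallness is what allows the degenerate mode of $\cL_\al$ to be handled below.

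I then follow the Fourier-mode strategy of \cite{elgindi2019finite}: write $\bar{\Psi}_1 = \sum_{n\ge 1}P_n(R)\sin(2n\b)$, so that each coefficient satisfies the radial ODE
\[
-\al^2 R^2 P_n'' - \al(4+\al)R P_n' + 4(n^2-1)P_n = F_n(R).
\]
For $n\ge 2$ the zeroth-order coefficient $4(n^2-1)\ge 12$ is non-degenerate uniformly in $\al$, and the resulting inverse admits weighted $L^\infty$ Green's function estimates; combined with $F_n=O(\al)$ (from the structure of $F$) and sufficient decay in $n$ inherited from $\G\in\cW^{7,\infty}$ (Proposition \ref{prop:gam}) via Proposition \ref{prop:alg}, the sum over $n\ge 2$ is bounded in $\cW^{l,\infty}$. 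For $n=1$ the ODE degenerates to the Euler equation $\al^2 R^2 P_1'' + \al(4+\al)R P_1' = -F_1(R)$, solvable in closed form: although this inversion loses a factor $\al^{-1}$, the $O(\al^2)$ smallness of $F_1$ still yields $P_1$, $\al R\pa_R P_1$, and $\al R^2\pa_{RR}P_1$ of size $O(\al)$ in $\cW^{l,\infty}$. Recombining the modes, applying $\pa_{\b\b}$ (which acts as $-4n^2$ on mode $n$, controlled against enough radial derivatives), and multiplying by $\f{1+R}{R}$ gives the first claim; the bounds on $\al D_R^2\bar{\Psi}$ and $\al\pa_\b D_R\bar{\Psi}$ in $\cW^{5,\infty}$ are read off the same per-mode ODE structure.

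The main obstacle is precisely the degenerate $n=1$ mode, whose naive inversion loses a full power of $\al$ that is incompatible with the claimed $O(\al)$ bound. This is circumvented by the explicit $O(\al^2)$ cancellation between the $\sin(2\b)$-projection of $\bar{\Om}$ and that of $\cL_\al\Psi_0$, verified above by direct computation. The radial weight $\f{1+R}{R}$ is compatible with the natural vanishing $\bar{\Psi}_1=O(R)$ at the origin (inherited from $\bar{\Om}=O(R)$) and the decay $\bar{\Psi}=O(R^{-1})$ at infinity, so the weight does not introduce any boundary growth in $\cW^{l,\infty}$.
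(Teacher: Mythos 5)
Your overall architecture is sound and is essentially the route the paper takes (the paper simply defers the hard estimate to the argument in \cite{elgindi2019finite}); in fact you supply more detail than the paper does. Your computation of the second bound from the explicit identity $L_{12}(\bar{\Om}) = \f{3\pi\al}{2(1+R)}$ matches the paper's proof, and your identification and verification of the key cancellation in the degenerate mode is correct: the $\sin(2\b)$-projection of $F = \bar{\Om} + \al^2 R^2\pa_{RR}\Psi_0 + \al(4+\al)R\pa_R\Psi_0$ is indeed $\f{3\pi\al^2 R(R-1)}{8(1+R)^3}$ (so $\f{4}{\pi}$ times it is $\f{3\al^2 R(R-1)}{2(1+R)^3}$), and this $O(\al^2)$ smallness is exactly what compensates the $\al^{-1}$ loss in inverting the Euler-type operator on the $n=1$ mode. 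The remark that $\bar{F}_1$ vanishes linearly at $R=0$, so that the weight $\f{1+R}{R}$ is harmless, is also correct.

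There is, however, a genuine gap in your treatment of the modes $n\ge 2$ and the recombination step. You propose to sum the series $\sum_{n\ge 2} P_n(R)\sin(2n\b)$, apply $\pa_{\b\b}$ (picking up $-4n^2$), and control the result in $\cW^{7,\infty}$ using ``decay in $n$ inherited from $\G\in\cW^{7,\infty}$.'' This decay is not available: $\G(\b)=\cos(\b)^{\al}$ is only $C^{\al}$ at $\b=\pi/2$, so the Fourier coefficients of $F_n$ decay only like $n^{-1-\al}$, the inverse of the non-degenerate operator gains only $n^{-2}$, and hence $\sum_n n^{2+j}|P_n|$ diverges already for $j\ge 1$, let alone for the seven weighted angular derivatives required by \eqref{norm:W}. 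Membership of $\G$ in $\cW^{7,\infty}$ gives bounds on the \emph{degenerate} derivatives $(\sin(2\b)\pa_{\b})^j\G$, which do not translate into polynomial decay of ordinary Fourier coefficients. The correct mechanism (and the one used in \cite{elgindi2019finite}) is to avoid summing the series for $\pa_{\b\b}\bar{\Psi}_1$ altogether: once weighted bounds on $\bar{\Psi}_1$, $\al R\pa_R\bar{\Psi}_1$ and $\al^2R^2\pa_{RR}\bar{\Psi}_1$ are in hand, one reads $\pa_{\b\b}\bar{\Psi}_1 = -F - 4\bar{\Psi}_1 - \al^2R^2\pa_{RR}\bar{\Psi}_1 - \al(4+\al)R\pa_R\bar{\Psi}_1$ directly from the equation and bootstraps, so that the $\cW^{7,\infty}$ angular regularity of $\pa_{\b\b}\bar{\Psi}_1$ is inherited from $F$ (controlled via Proposition \ref{prop:gam} and Proposition \ref{prop:alg}) rather than from mode-by-mode summation. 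Your per-mode ODE analysis is still the right tool for producing the a priori weighted bounds on $\bar{\Psi}_1$ and its radial derivatives, but the passage to $\pa_{\b\b}$ in $\cW^{7,\infty}$ must go through the equation.
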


 \begin{proof}
The proof of the first inequality follows from the same argument in \cite{elgindi2019finite}.
Here, the proof is even simpler since there is no first order angular derivative term in \eqref{eq:elli}, i.e. $\pa_{\b} (\tan(\b) \Psi)$ in \cite{elgindi2019finite}. 

Using the formula \eqref{eq:simp4}, we know $L_{12}(\bar{\Om}) =\f{3\pi \al}{2(1+R)},  \f{\sin(2\b)}{\pi\al} L_{12}(\bar{\Om}) = \f{3  \sin(2\b) }{2 (1+R)}$. Since $L_{12}(\bar{\Om})$ does not depend on $\b$, the second inequality follows from a direct calculation. 

For $ 0 \leq i , j \leq 7$, we have
\[
 \al \B| \f{1+R}{R}  D^{i+1}_R \pa_{\b}^j \f{3 \sin(2\b)}{2 (1+R)} \B|
 +  \al  \B| \f{1+R}{R} D^{i+2}_R \pa_{\b}^j \f{3 \sin(2\b)}{2 (1+R)} \B| \les \al.
\]
Using $\f{\al}{5}+ \sin(2\b) \geq \sin(2\b)^{1- \f{\al}{5}} $, the definition of $\cW^{5,\infty}$ in \eqref{norm:W} and the first inequality, 
we complete the proof.
\end{proof}

\subsubsection{Some embedding Lemmas}

A similar version of the following estimate has been established in \cite{elgindi2019finite}. We remark that we have modified the weight for the $R$ variable in the $\cW^{l,\infty}$ norm. 
\begin{prop}\label{prop:W2}
Assume that $ \f{(1+R)^3}{R^2} f \in \cW^{3,\infty}$, then we have $f \in \cH^3$ and 
\[
|| f ||_{\cH^3} \les || \f{(1+R)^3}{R^2} f ||_{\cW^{3,\infty}}.
\]
\end{prop}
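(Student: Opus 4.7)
The plan is to write $g \teq \frac{(1+R)^3}{R^2} f$, so that $f = w(R) g$ with $w(R) \teq \frac{R^2}{(1+R)^3}$, and reduce the $\cH^3$ norm of $f$ to weighted integrals of derivatives of $g$ controlled pointwise by $\|g\|_{\cW^{3,\infty}}$. Since $w$ depends only on $R$, it commutes with the angular derivative $D_\beta$, and since $D_R = R\pa_R$ is a first-order derivation, the Leibniz rule gives
\[
D_R^k D_\b^j f = \sum_{m=0}^k \binom{k}{m} (D_R^{k-m} w)(R) \cdot D_R^m D_\b^j g.
\]
A direct calculation shows that for every $p \geq 0$, the multiplier satisfies $|D_R^p w(R)| \les w(R) = \frac{R^2}{(1+R)^3}$ uniformly, because $D_R^p w$ remains a rational expression with the same asymptotic behavior $\sim R^2$ as $R \to 0$ and $\sim R^{-1}$ as $R \to \infty$.

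Now I would bound each of the two families of terms in the definition \eqref{norm:H2} of $\|f\|_{\cH^3}$ separately. For the pure radial pieces ($j = 0$, $k \leq 3$), the $\cW^{3,\infty}$ norm gives $\|D_R^m g\|_{L^\infty} \leq \|g\|_{\cW^{3,\infty}}$, so the estimate above yields $|D_R^k f| \les w(R) \|g\|_{\cW^{3,\infty}}$ and hence
\[
\int_0^\infty \!\!\int_0^{\pi/2} |D_R^k f|^2 \frac{(1+R)^4}{R^4 \sin(2\b)^{\s}}\, dR\, d\b
\les \|g\|_{\cW^{3,\infty}}^2 \int_0^\infty \frac{dR}{(1+R)^2} \int_0^{\pi/2} \frac{d\b}{\sin(2\b)^{\s}},
\]
which is finite since $\s = 99/100 < 1$. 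For the mixed terms ($j \geq 1$, $i+j \leq 3$), the angular part of the $\cW^{3,\infty}$ norm gives the pointwise bound $|D_R^m D_\b^j g| \leq \|g\|_{\cW^{3,\infty}} \sin(2\b)^{\al/5}(\al/10 + \sin(2\b))$, and combining with the Leibniz expansion yields
\[
|D_R^k D_\b^j f|^2 \cdot \frac{(1+R)^4}{R^4 \sin(2\b)^{\g}} \les \|g\|_{\cW^{3,\infty}}^2 \frac{1}{(1+R)^2} \sin(2\b)^{2\al/5 - \g}\bigl(\al/10 + \sin(2\b)\bigr)^2.
\]
Using $(\al/10 + \sin(2\b))^2 \les (\al/10)^2 + \sin(2\b)^2$ and $\g = 1 + \al/10$, the angular integrand splits into $\sin(2\b)^{3\al/10 - 1}$ (times $(\al/10)^2$) and $\sin(2\b)^{3\al/10 + 1}$. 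Both exponents exceed $-1$, so both integrals are finite; the first integral is $O(1/\al)$ but it carries the prefactor $(\al/10)^2 = O(\al^2)$, producing an overall bound $O(\al)$. Summing over the finitely many multi-indices $(k,j)$ gives $\|f\|_{\cH^3} \les \|g\|_{\cW^{3,\infty}}$.

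No serious obstacle is anticipated: this is essentially a bookkeeping argument that matches the $\cW^{3,\infty}$ gain of $\sin(2\b)^{\al/5}(\al/10 + \sin(2\b))$ in the angular direction against the mild singularity $\sin(2\b)^{-\g/2}$ with $\g = 1 + \al/10$ in the $\cH^3$ weight, and matches the decay $w(R) \sim R^{-1}$ at infinity against the factor $(1+R)^2/R^2$ in the weight to produce the integrable radial density $(1+R)^{-2}$. The only delicate point is verifying that the small power $\al > 0$ in both $\sin(2\b)^{\al/5}$ and in $\g - 1 = \al/10$ is just enough to render the angular integral finite; this is precisely the reason the $\cW^{l,\infty}$ norm is defined with the $\sin(2\b)^{-\al/5}$ and $(\al/10 + \sin(2\b))^{-1}$ factors in the first place.
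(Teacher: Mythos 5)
Your proof is correct and rests on the same mechanism as the paper's: a Leibniz expansion in $D_R$ combined with the observation that the weight $w(R)=R^2/(1+R)^3$ (equivalently its reciprocal) satisfies $|D_R^p w|\les w$, so that each term reduces to a pointwise bound controlled by the $\cW^{3,\infty}$ norm of $\frac{(1+R)^3}{R^2}f$. The only difference is organizational — you expand $f=wg$ directly while the paper writes $\frac{(1+R)^3}{R^2}D_R^3 f$ as $D_R^3\bigl(\frac{(1+R)^3}{R^2}f\bigr)$ minus lower-order corrections and recurses — and you spell out the angular integrability (matching $\sin(2\b)^{\al/5}(\al/10+\sin(2\b))$ against $\sin(2\b)^{-\g/2}$) that the paper leaves to "estimated similarly."
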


\begin{proof}
Recall the definition of $\vp_i$ in \eqref{wg} and $\cH^3$ in \eqref{norm:H2}, respectively. The main term to consider in $|| f||_{\cH^3}$ is $||D_R^3 f \vp_1^{1/2}||_{L^2}$. Observe that
\[
|| D_R^3 f  \f{(1+R)^2}{R^2} \sin(2\b)^{-\s/2} ||^2_{L^2}
\les || \f{(1+R)^3}{R^2}  D_R^3 f ||_{\infty} .
\]
It suffices to bound the last term by $|| \f{(1+R)^3}{R^2 } f||_{\cW^{3,\infty}}$. We have 
\[
\bal
\f{(1+R)^3}{R^2} D_R^3 f &=   D_R^3 ( \f{(1+R)^3}{R^2}   f) 
- 3 D^2_R \f{(1+R)^3}{R^2} D_R f -3 D_R \f{(1+R)^3}{R^2} D^2_R f 
- D_R^3( \f{(1+R)^3}{R^2} )  f \\
&  = I_1 + I_2 + I_3 + I_4.
\eal
\]
Notice that $ | D_R^k \f{(1+R)^3}{R^2}| \les  \f{(1+R)^3}{R^2}$ for $k=1,2,3$. 
Then by the definition of $\cW^{3,\infty}$, $I_1$ and $I_4$ can be bounded by $|| \f{(1+R)^3}{R^2 } f||_{\cW^{3,\infty}}$. For $I_2, I_3$, we have
\[
|I_2 |\les |\f{(1+R)^3}{R^2} D_R f| , \quad  |I_3| \les  |\f{(1+R)^3}{R^2} D_R^2 f |,
\]
which contains lower order derivatives of $f$ (compared to $D_R^3f$). The same argument implies that $|I_2|, |I_3| $ can be further bounded by $|| \f{(1+R)^3}{R^2 } f||_{\cW^{3,\infty}}$.
Other terms in $\cH^3$ norm can be estimated similarly. 
\end{proof}

We have the following decay estimate.
\begin{lem}\label{lem:xi_decay}
Suppose that $\xi \in \cH^2(\psi)$, we have 
\[
|| R^{1/2} \sin(2\b)^{1/4} \xi||_{L^{\infty}} \les || \xi||_{\cH^2(\psi)}.
\]
\end{lem}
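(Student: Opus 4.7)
\medskip
\noindent\textbf{Proof plan for Lemma~\ref{lem:xi_decay}.}

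The natural approach is a weighted two-dimensional anisotropic Sobolev embedding into $L^\infty$. The point is that the operators $D_R$ and $D_\b$ appearing in the norm $\|\cdot\|_{\cH^2(\psi)}$ become \emph{constant-coefficient} partial derivatives after a logarithmic change of variables, while the prefactor $R^{1/2}\sin(2\b)^{1/4}$ is exactly chosen so that the Jacobian of this change of variables absorbs it against the singular weights $\psi_1,\psi_2$.

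First, I would introduce the variables $s=\log R$ and $t=\tfrac12\log\tan\b$, which map the quarter-plane $R>0$, $\b\in(0,\pi/2)$ onto all of $\R^2$. A direct computation gives
\[
 D_R=\pa_s,\qquad D_\b=\sin(2\b)\pa_\b=\pa_t,\qquad dR\,d\b = R\sin(2\b)\,ds\,dt.
\]
Setting $G(s,t)\teq R^{1/2}\sin(2\b)^{1/4}\xi(R,\b)$, one checks from $\pa_s R^{1/2}=\tfrac12 R^{1/2}$ and $\pa_t\sin(2\b)^{1/4}=\tfrac12\cos(2\b)\sin(2\b)^{1/4}$ that $\pa_s G$, $\pa_t G$ and $\pa_s\pa_t G$ each equal $R^{1/2}\sin(2\b)^{1/4}$ times a linear combination, with uniformly bounded coefficients, of $\xi$, $D_R\xi$, $D_\b\xi$ and $D_RD_\b\xi$.

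Next, I would invoke the standard anisotropic Sobolev embedding in $\R^2$,
\[
 \|G\|_{L^\infty(\R^2)}^2 \les \|G\|_{L^2}^2+\|\pa_s G\|_{L^2}^2+\|\pa_t G\|_{L^2}^2+\|\pa_s\pa_t G\|_{L^2}^2,
\]
which follows from the pointwise identity $G^2(s_0,t_0)=\int_{-\infty}^{s_0}\!\!\int_{-\infty}^{t_0}\pa_s\pa_t(G^2)\,ds\,dt$ and Cauchy--Schwarz, together with a density argument in the space of functions whose trace at the corners of $\R^2$ vanishes (the weight $R^{1/2}\sin(2\b)^{1/4}$ kills $G$ at $R=0$ and at $\b=0,\pi/2$, and the finite energy $\|\xi\|_{\cH^2(\psi)}$ handles $R=\infty$ by approximation with compactly supported truncations).

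Converting the $L^2(ds\,dt)$ norms back via the Jacobian turns each term on the right into $\int\!\int(\cdots)^2\sin(2\b)^{-1/2}\,dR\,d\b$, where $(\cdots)$ is one of $\xi,D_R\xi,D_\b\xi,D_RD_\b\xi$ (possibly with bounded multiplicative factors of $\cos(2\b)$). It remains to dominate the bare weight $\sin(2\b)^{-1/2}$ by $\psi_1$ and $\psi_2$ from Definition~\ref{def:wg}. Since $\s=\tfrac{99}{100}\ge\tfrac12$ and $\g=1+\tfrac{\al}{10}\ge\tfrac12$, and since $(1+R)^4/R^4\ge1$, one has
\[
 \sin(2\b)^{-1/2}\;\les\;(\sin\b\cos\b)^{-\s}\;\les\;\psi_1,\qquad
 \sin(2\b)^{-1/2}\;\les\;\sin(\b)^{-\s}\cos(\b)^{-\g}\;\les\;\psi_2,
\]
so the four pieces on the right-hand side of the Sobolev inequality are collectively controlled by $\|\xi\|_{\cH^2(\psi)}^2$, which finishes the proof.

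The main (minor) obstacle is the density/decay step needed to legitimise the pointwise Sobolev identity on the unbounded strip $\R^2$; I would handle it by first establishing the inequality for $\xi$ smooth and compactly supported in $\{R>0,\,\b\in(0,\pi/2)\}$ and then extending by approximation in $\cH^2(\psi)$. Everything else is a bookkeeping exercise comparing the weights $\sin(2\b)^{-1/2}$ with $\psi_1,\psi_2$.
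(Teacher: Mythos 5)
Your proposal is correct and is essentially the paper's own argument: the paper bounds $\|\sin(2\b)^{1/2}R\xi^2\|_{L^\infty}$ by $\|\pa_R\pa_\b(\sin(2\b)^{1/2}R\xi^2)\|_{L^1}$, expands the mixed derivative, and applies Cauchy--Schwarz against $\psi_1,\psi_2$ — which is exactly your two-variable fundamental-theorem-of-calculus identity for $G^2$ together with the weight comparisons $\sin(2\b)^{-1/2}\les\psi_1,\psi_2$. Your logarithmic change of variables $(s,t)=(\log R,\tfrac12\log\tan\b)$ is a cosmetic repackaging of the same computation and changes nothing of substance.
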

The above estimate also holds for $\xi \in \cH^2$ since $\cH^2$ is stronger than $\cH^2(\psi)$ (see Lemma \ref{lem:H2H2}). 

\begin{proof}
Using a direct calculation yields
\[
\bal
&||  \sin(2\b)^{1/2} R \xi^2||_{L^{\infty}}
\les ||  \pa_R \pa_{\b} ( \sin(2\b)^{1/2} R\xi^2 )  ||_{L^1}
= || \pa_{\b} (\sin(2\b)^{1/2} (\xi^2 + 2\xi D_R \xi ) ) ||_{L^1} \\
 \les & || \sin(2\b)^{-1/2} (\xi^2 + 2\xi D_R \xi ) ||_{L^1}
+ || \sin(2\b)^{1/2} ( 2\xi \pa_{\b} \xi +2 \pa_{\b}\xi D_R \xi + 2\xi \pa_{\b}D_R\xi) ||_{L^1}.
\eal
\]
Recall the definition of $\cH^2(\psi)$ \eqref{norm:H22} and the weights in Definition \ref{def:wg}.
Using the Cauchy-Schwarz inequality concludes the proof.
\end{proof}


\begin{lem}\label{lem:inf}
We have 
\[
\bal
|| f ||_{L^{\infty}} &\les  \al^{-1/2} || f||_{\cH^2},  \\
  || f||_{\cC^1}  &= || f ||_{L^{\infty} }  + ||\f{1+R}{R} D_R f ||_{L^{\infty}} + || ( 1 + ( R \sin(2\b)^{\al})^{- \f{1}{40}} )D_{\b} f ||_{L^{\infty}} \les \al^{-1/2} || f||_{\cH^3},
 \eal
\]
provided that the right hand side is bounded.
\end{lem}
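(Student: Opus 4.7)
My plan is to deduce both inequalities from a single weighted two-dimensional Sobolev embedding, applied to $f$ itself for the first bound and to $D_R f$, $D_\b f$ for the $\cC^1$ bound. The $\al^{-1/2}$ loss will be traced to the integration of the borderline-singular angular weight $\sin(2\b)^{-\g}$ with $\g = 1+\al/10$, which is just barely non-integrable at $\b=0$ and $\b=\pi/2$.

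\textbf{The $L^\infty$ estimate.} Fix $(R_0,\b_0)\in(0,\infty)\times(0,\pi/2)$ and choose a smooth product cutoff $\chi(R)\zeta(\b)$ equal to $1$ near $(R_0,\b_0)$ and vanishing at $R=0,\infty$ and $\b=0,\pi/2$. Starting from the identity
\[
f^2(R_0,\b_0) = \int_0^\infty\int_0^{\pi/2}\pa_R\pa_\b\bigl(f^2(R,\b)\chi(R)\zeta(\b)\bigr)\,dR\,d\b,
\]
expanding the mixed derivative by the product rule, and rewriting $\pa_R = R^{-1}D_R$ and $\pa_\b = \sin(2\b)^{-1}D_\b$, the right-hand side becomes a sum of integrals of products of at most two factors $D_R^i D_\b^j f$ with $i+j\le 2$, multiplied by negative powers of $R$ and $\sin(2\b)$ and derivatives of the cutoff. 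Attaching the $\cH^2$-weight $\f{(1+R)^2}{R^2}\sin(2\b)^{-\g/2}$ to each factor and applying Cauchy--Schwarz then gives
\[
|f(R_0,\b_0)|^2 \les \|f\|_{\cH^2}^2 \cdot \int\int \f{R^2 \sin(2\b)^{\g}}{(1+R)^4}\,\f{|\pa(\chi\zeta)|^2}{R^{i+k}\sin(2\b)^{j+\ell}}\,dR\,d\b.
\]
The radial integral is bounded uniformly in $R_0$; the worst angular contribution ($j+\ell=2$) reduces to $\int_0^{\pi/2}\sin(2\b)^{\g-2}\,d\b = \int_0^{\pi/2}\sin(2\b)^{-1+\al/10}\,d\b = O(\al^{-1})$, once $\zeta$ is chosen with support localized at the scale $\sin(2\b_0)$ adapted to the boundary singularity. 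Taking a square root yields the factor $\al^{-1/2}$.

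\textbf{The $\cC^1$ estimate.} The $\|f\|_\infty$ term follows from the first inequality since $\cH^3\hookrightarrow\cH^2$. For $\|\phi_1 D_R f\|_\infty$ with $\phi_1=(1+R)/R$, apply the first inequality to $g:=D_R f$, checking $\|g\|_{\cH^2}\les \|f\|_{\cH^3}$ from the definitions; the factor $\phi_1$ is absorbed because the $\cH^2$-weight already contains $\f{(1+R)^2}{R^2}=\phi_1^2$, so pulling one $\phi_1$ out of the $L^\infty$-norm costs nothing. For $\|\phi_2 D_\b f\|_\infty$ with $\phi_2 = 1+(R\sin(2\b)^\al)^{-1/40}$, split into the region $\{R\sin(2\b)^\al\ge 1\}$, where $\phi_2\le 2$ and one applies the first inequality to $D_\b f$ directly, and the degenerate region $\{R\sin(2\b)^\al<1\}$. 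In the degenerate region, use $D_\b = \sin(2\b)\pa_\b$ so that $\phi_2 D_\b f = \sin(2\b)^{1-\al/40} R^{-1/40}\cdot(\text{bounded quantity}\times \pa_\b f)$, and interpolate between the $L^\infty$-bounds on $D_\b f$ and $D_\b^2 f$ (each obtained from the first inequality, whose $\cH^2$-norms are controlled by $\|f\|_{\cH^3}$). The small exponent $1/40$ is what makes this interpolation succeed.

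\textbf{Main obstacle.} The principal technical point is the weight bookkeeping in the $L^\infty$ step: one must verify that the singular angular integral produces $O(\al^{-1})$ and no worse, which forces a careful choice of the cutoff $\zeta$ at the scale $\sin(2\b_0)$ near the boundary and a careful identification of which cutoff-derivative term is dominant. A secondary subtlety is the anisotropic degenerate weight $\phi_2$ in the $\cC^1$ bound, whose treatment in the region $R\sin(2\b)^\al<1$ relies on the interpolation sketched above and on the smallness of the exponent $1/40$ relative to $\al$.
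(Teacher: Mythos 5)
Your overall strategy—representing the quantity via its mixed derivative $\pa_R\pa_\b$, converting to $D_R,D_\b$, and applying Cauchy–Schwarz against the $\cH^k$ weights so that the $\al^{-1/2}$ emerges from the borderline angular integral $\int_0^{\pi/2}\sin(2\b)^{\g-2}d\b=O(\al^{-1})$—is exactly the mechanism the paper uses (the first inequality is simply cited from Elgindi; the second is reduced to the argument of Lemma \ref{lem:xi_decay} plus the two displayed $L^2$ bounds on the residual weights). However, the execution of the $\cC^1$ part has two genuine gaps. First, for $\|\phi_1 D_R f\|_\infty$: the claim that "pulling one $\phi_1$ out of the $L^\infty$-norm costs nothing" is false, since $\phi_1=\f{1+R}{R}$ is unbounded as $R\to 0$; and the alternative of applying the embedding to $h=\phi_1 D_R f$ also fails, because $\|\phi_1 D_R f\|_{\cH^2}$ involves $\|D_R f\,\f{(1+R)^3}{R^3}\sin(2\b)^{-\s/2}\|_{L^2}$ (the extra $R^{-1}$ upgrades the radial weight from $R^{-2}$ to $R^{-3}$), which is \emph{not} controlled by $\|f\|_{\cH^3}$. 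The correct route is to run the FTC$+$Cauchy–Schwarz computation with $\phi_1$ built into the function being estimated: the extra factor $\f{1+R}{R}\cdot\f{1}{R}$ then lands in the $L^2$ \emph{residual} rather than in an $L^\infty$ weight comparison, producing precisely $\|\f{1}{1+R}\sin(2\b)^{\g/2-1}\|_{L^2}\les\al^{-1/2}$, which is the paper's first displayed norm. The distinction matters: the residual only needs to be square-integrable, which tolerates the extra $R^{-1}$, whereas a pointwise absorption does not.

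Second, for $\|\phi_2 D_\b f\|_\infty$ in the degenerate region $R\sin(2\b)^\al<1$: the proposed interpolation between $\|D_\b f\|_\infty$ and $\|D_\b^2 f\|_\infty$ cannot succeed. Boundedness of $D_\b f$ and $D_\b^2 f$ gives $|\pa_\b D_\b f|\les \sin(2\b)^{-1}$, from which one cannot deduce any positive-power vanishing $|D_\b f|\les R^{1/40}\sin(2\b)^{\al/40}$ (a nonzero constant satisfies both hypotheses), and it is exactly such vanishing in both $R$ and $\b$ that is needed to beat the factor $(R\sin(2\b)^\al)^{-1/40}$. Moreover, $\|D_\b^2 f\|_{\cH^2}$ requires four angular derivatives of $f$ and is therefore not controlled by $\|f\|_{\cH^3}$, so even the input to your interpolation is unavailable. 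Here too the fix is the paper's: estimate $\phi_2 D_\b f$ wholesale by $\|\pa_R\pa_\b(\phi_2 D_\b f)\|_{L^1}$, noting that $D_R$ and $D_\b$ of $(R\sin(2\b)^\al)^{-1/40}$ are comparable to the weight itself, and pair each third derivative of $f$ with its $\cH^3$ weight; the residual is then $\f{R^{1-1/40}}{(1+R)^2}\sin(2\b)^{\g/2-1-\al/40}$, whose $L^2$ norm is $O(\al^{-1/2})$ because the angular exponent $\g-2-\al/20=-1+\al/20$ is still (barely) integrable. The smallness of $1/40$ enters only to keep the radial exponent square-integrable, not through any $L^\infty$ interpolation.
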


The first inequality has been established in \cite{elgindi2019finite}.
Recall the definition of $\cH^3$ and its associated weights in \eqref{norm:H2}. The proof of the $\cC^1$ estimates follows from the argument in the proof of Lemma \ref{lem:xi_decay}, the Cauchy-Schwarz inequality and 
 \[
  || \f{1}{1+R} \sin(2\b)^{ \g / 2 - 1}  ||_{L^2} \les \al^{-1/2},  \quad  
    || \f{ R^{1- \f{\al}{40}}}{ (1+R)^2} \sin(2\b)^{ \g / 2 - 1 - \f{\al}{40}}  ||_{L^2} \les \al^{-1/2}.
 \]

\subsubsection{The product rules}\label{sec:prod}
 In this subsection, we generalize the estimates of nonlinear terms and the transport terms derived in \cite{elgindi2019finite} to the $\cH^3(\psi)$ norm.


Denote the sum space $ X \teq \cH^3 \oplus \cW^{5,\infty}$ with sum norm 
\beq\label{norm:X}
|| f||_{X} \teq \inf \{ || g||_{\cH^3} + || h||_{\cW^{5,\infty}} :  f = g + h  \}.
\eeq


We use the following product rules to estimate the nonlinear terms. 
\begin{prop}\label{prop:prod1}
For all $ f \in X,  g \in \cH^3,  \xi \in \cH^3(\psi) \cap \cC^1$, we have 
\beq\label{eq:prod1}
\bal
|| f g  ||_{\cH^3} & \les \al^{-1/2}  || f||_{X} || g  ||_{\cH^3} ,  \\
|| f \xi||_{\cH^3(\psi)} & \les  \al^{-1/2}  || f ||_{X} ( \al^{1/2} ||\xi||_{\cC^1} + || \xi||_{\cH^3(\psi)}) . 
\eal
\eeq
\end{prop}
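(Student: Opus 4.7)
The plan is to decompose $f = f_1 + f_2$ with $f_1 \in \cH^3$ and $f_2 \in \cW^{5,\infty}$ chosen so that $\|f_1\|_{\cH^3} + \|f_2\|_{\cW^{5,\infty}} \le 2\|f\|_X$, and then prove each of the resulting bilinear estimates separately. In every case the scheme is the same: expand $D_R^i D_\beta^j (f\cdot h)$ for $0\le i+j\le 3$ by the Leibniz rule, split the sum according to which factor carries the bulk of the derivatives, and then put the factor with fewer derivatives in $L^\infty$ while keeping the other in the weighted $L^2$ native to $\cH^3$ (resp.\ $\cH^3(\psi)$). Throughout, I will use the weight comparisons $\psi_i \lesssim \vp_i$ from Definition \ref{def:wg} and Lemma \ref{lem:H2H2}, together with the $L^\infty$ embedding $\|g\|_{L^\infty}\lesssim \alpha^{-1/2}\|g\|_{\cH^2}$ from Lemma \ref{lem:inf}, which is precisely where the $\alpha^{-1/2}$ factor on the right-hand sides comes from.

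For the first inequality $\|fg\|_{\cH^3}\lesssim \alpha^{-1/2}\|f\|_X\|g\|_{\cH^3}$, the $\cW^{5,\infty}\times\cH^3$ contribution is direct: since every $D_R^i (\sin(2\beta)\pa_\beta)^j f_2$ with $i+j\le 5$ is bounded in $L^\infty$ with a weight that dominates the angular weights appearing in $\vp_1,\vp_2$, the Leibniz expansion yields $\|f_2 g\|_{\cH^3}\lesssim \|f_2\|_{\cW^{5,\infty}}\|g\|_{\cH^3}$ without the $\alpha^{-1/2}$. For the $\cH^3\times\cH^3$ piece, every Leibniz term contains a factor carrying at most one derivative, which I bound in $L^\infty$ by Lemma \ref{lem:inf} at the cost of $\alpha^{-1/2}$; the other factor is kept in the $L^2(\vp_i)$ norm, yielding $\|f_1 g\|_{\cH^3}\lesssim \alpha^{-1/2}\|f_1\|_{\cH^3}\|g\|_{\cH^3}$ after summing.

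For the second inequality $\|f\xi\|_{\cH^3(\psi)}\lesssim \alpha^{-1/2}\|f\|_X(\alpha^{1/2}\|\xi\|_{\cC^1}+\|\xi\|_{\cH^3(\psi)})$, the Leibniz expansion must be split by the derivative count on $\xi$. In the terms where $\xi$ carries $\ge 2$ derivatives, $f$ carries $\le 1$ derivative, which I bound in $L^\infty$ — either as $\|f_2\|_{\cW^{5,\infty}}$ directly or as $\alpha^{-1/2}\|f_1\|_{\cH^3}$ via Lemma \ref{lem:inf} — and pair with the corresponding $D^{\ge 2}\xi$ measured in $L^2(\psi_i)$, controlled by $\|\xi\|_{\cH^3(\psi)}$; this produces the $\alpha^{-1/2}\|f\|_X\|\xi\|_{\cH^3(\psi)}$ piece. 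In the terms where $\xi$ carries $\le 1$ derivative, I bound $\xi$, $D_R\xi$, $D_\beta\xi$ in $L^\infty$ by $\|\xi\|_{\cC^1}$ and put the high-order derivative of $f$ in weighted $L^2$; the fact that $\psi_i\lesssim \vp_i$ lets me absorb the weight of $\cH^3(\psi)$ on the left-hand side into the $\cH^3$-norm of $f_1$, and for the $\cW^{5,\infty}$ piece the pure $D_R^k f_2$ bounds (which have no angular weight) absorb the $\psi_i$ weight integrated against $\cC^1$-bounded functions. No extra $\alpha^{-1/2}$ is needed here, giving $\|f\|_X\|\xi\|_{\cC^1}=\alpha^{-1/2}\cdot\alpha^{1/2}\|f\|_X\|\xi\|_{\cC^1}$.

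The main obstacle, and the reason the statement contains the asymmetric combination $\alpha^{1/2}\|\xi\|_{\cC^1}+\|\xi\|_{\cH^3(\psi)}$ rather than simply $\|\xi\|_{\cH^3(\psi)}$, is precisely the weight mismatch between $\cH^3$ and $\cH^3(\psi)$: the embedding $\cH^2(\psi)\hookrightarrow L^\infty$ fails at the rate needed, because $\psi_2$ is strictly less singular than $\vp_2$ near $\beta=0$, so one cannot bound $\|\xi\|_\infty$ purely by $\|\xi\|_{\cH^3(\psi)}$ with an $\alpha^{-1/2}$ loss and close the estimate. Forcing $\|\xi\|_{\cC^1}$ into the bound for the low-derivative-on-$\xi$ terms bypasses this obstruction. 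The bookkeeping requires verifying, term by term in the Leibniz expansion with $i+j=3$, that the angular weight $\sin(2\beta)^{-\alpha/5}/(\alpha/10+\sin(2\beta))$ built into $\cW^{5,\infty}$ for mixed $\pa_\beta$-derivatives is compatible with the $\psi_i$ factor on the left, which follows from elementary pointwise comparisons of $\sin(2\beta)$ factors together with $\alpha\le \alpha_0$ small.
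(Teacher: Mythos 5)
Your overall scheme (decompose $f$ into an $\cH^3$ part and a $\cW^{5,\infty}$ part, expand by Leibniz, put the factor with fewer derivatives in $L^\infty$) matches the paper's strategy for most terms, and your treatment of the first inequality is fine (the paper simply cites \cite{elgindi2019finite} for it). However, there is a genuine gap in your proof of the second inequality, at exactly the point the paper singles out as the hard one.

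The problem is the mixed third-derivative term in which the angular derivative falls on $f$ while $\xi$ receives two radial derivatives: in the expansion of $D_R^2 D_{\b}(f\xi)$, which the $\cH^3(\psi)$ norm measures against the weight $\psi_2$, one gets the term $D_{\b}f\cdot D_R^2\xi$. Your recipe for the case ``$\xi$ carries $\ge 2$ derivatives'' is to put the low-derivative factor of $f$ in $L^\infty$ and keep $D^{\ge 2}\xi$ ``measured in $L^2(\psi_i)$, controlled by $\|\xi\|_{\cH^3(\psi)}$.'' For this term that claim is false: by \eqref{norm:H22} the $\cH^3(\psi)$ norm controls pure radial derivatives $D_R^k\xi$ only with the weight $\psi_1$, and $\psi_2/\psi_1=\cos(\b)^{-(\g-\s)}$ is unbounded as $\b\to\pi/2$ (since $\g-\s=\tfrac{1}{100}+\tfrac{\al}{10}>0$), so $\|D_R^2\xi\,\psi_2^{1/2}\|_{L^2}$ is not bounded by $\|\xi\|_{\cH^3(\psi)}$ — this is precisely the paper's remark that ``the previous $L^2\times L^\infty$ estimate fails since $D_R^2\xi\,\psi_2^{1/2}$ is not in $L^2$.'' The alternative of putting $D_R^2\xi$ in $L^\infty$ is unavailable because $\xi$ is only assumed to lie in $\cC^1$. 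Your closing paragraph does acknowledge a weight mismatch, but only as a failure of the embedding $\cH^2(\psi)\hookrightarrow L^\infty$, which you bypass by inserting $\|\xi\|_{\cC^1}$ into the low-derivative-on-$\xi$ terms; that does nothing for this term, where $\xi$ carries two derivatives.

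The paper closes this gap with a device absent from your proposal: a mixed-norm $L^2(R,L^\infty(\b))\times L^\infty(R,L^2(\b))$ splitting. Writing $\psi_2=WP$ and $\vp_2=WQ$ with $P\les\sin(\b)^{\lam}Q$, $\lam=\g-\s$, one bounds $\|D_R^2\xi\,D_{\b}f\,(WP)^{1/2}\|_2^2$ by $\|A(R)^2B(R)^2W\|_{L^1(R)}$ with $A(R)=\|\sin(\b)^{\lam/2}D_R^2\xi(R,\cdot)\|_{L^\infty(\b)}$ and $B(R)=\|D_{\b}f\,Q^{1/2}(R,\cdot)\|_{L^2(\b)}$; a Poincar\'e inequality in $\b$ gives $\|A^2W\|_{L^1(R)}\les\al^{-1}\|\xi\|^2_{\cH^3(\psi)}$ (this uses the $D_{\b}D_R^2\xi$ component of the norm, which \emph{does} carry the weight $\psi_2$), and a one-dimensional Sobolev argument in $R$ gives $\|B^2\|_{L^{\infty}(R)}\les\|f\|^2_{\cH^3}$. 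Without this step, or an equivalent substitute, your argument does not close.
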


The first inequality has been proved in \cite{elgindi2019finite}. We will focus on the product rule with $\cH^3(\psi)$ norm.

\begin{proof}
If $f \in \cW^{5,\infty}$, applying the same argument in \cite{elgindi2019finite} yields 
\[
|| f \xi||_{\cH^3(\psi)} \les \al^{-1/2} || f ||_{\cW^{5,\infty}} ||  \xi||_{\cH^3(\psi)}.
\]

Now, we assume $f \in \cH^3$. We consider the third derivative $D^3 = D_R^i D^j_{\b}$ terms since other are terms are easier. If $(D^3, \psi_i ) = (D_R^3, \psi_1 ), ( D^3_{\b}, \psi_2)$, we use a $L^2 \times L^{\infty}$ interpolation
\[
\bal
||  D^3(f \xi)   \psi_i^{1/2}||_2^2  
&\les \sum_{k =0,  1} ||  D^k f  D^{3-k} \xi  \psi_i^{1/2}||_2^2 + \sum_{ k = 2,3} ||  D^k f    D^{3-k} \xi   \psi_i^{1/2}||_2^2  \\
& \les || f ||_{\cC^1} || \xi||_{\cH^3(\psi)}
+ ||f ||_{\cH^3(\psi)} || \xi||_{\cC^1}
\les \al^{-1/2} || f||_{\cH^3} || \xi||_{\cH^3(\psi)}+ ||f ||_{\cH^3} || \xi||_{\cC^1},
\eal
\]
where we have applied Lemma \ref{lem:inf} to $||f||_{\cC^1}$ and Lemma \ref{lem:H2H2} to obtain the last inequality.

If $D^3 = D_R^2 D_{\b}$ or $D_{\b}^2 D_R$, the corresponding singular weight in the $\cH^3(\psi)$ norm is $\psi_2$. 
We consider the term $D_R^2 \xi  D_{\b} f \psi_2^{1/2}$ in the $L^2$ estimate of $D^3 (f \xi) \psi_2^{1/2}$, which is a typical and the most difficult term.
The previous $L^2 \times L^{\infty}$ estimate fails since $D_R^2 \xi \psi_2^{1/2}$ is not in $L^2(R, \b)$. 
Recall the Definition \ref{def:wg} of $\psi_2, \vp_2$. Denote 
\beq\label{eq:PQS}
  W = \f{(1+R)^4}{R^4},  \quad  P = \sin(\b)^{-\s} \cos(\b)^{-\g} , \quad  
Q = \sin(2\b)^{-\g} ,\  S = \sin(2\b)^{-\s}, \   \lam = \g - \s .
\eeq
 Clearly, we have $\vp_2 = WQ, \psi_2 = W  P, \ \psi_1 \asymp W S$  and $P \les \sin(\b)^{\lam} Q$. We use a $L^2(R, L^{\infty}(\b)) \times L^{\infty}( R, L^2(\b) )$ estimate \footnote{The
$L^2(R, L^{\infty}(\b)) \times L^{\infty}( R, L^2(\b) )$ estimate of the mixed derivatives term in the $\cH^2$ norm is due to Dongyi Wei. We are grateful to him for telling us this estimate. We apply this idea to derive the estimates in the $\cH^3(\psi)$ norm.
 }
\beq\label{eq:drdb}
\bal
|| D_R^2 \xi  D_{\b} f  (W P)^{1/2}||_2^2 
& \leq \B|\B|  || \sin(\b)^{\lam / 2} D^2_R \xi (R, \cdot)||^2_{L^{\infty}(\b)}|| D_{\b} f Q^{1/2}(R, \cdot) ||^2_{L^2(\b)} W \B|\B|_{L^1(R)} \\ 
& \teq || A(R)^2   B(R)^2    W ||_{L^1(R)} .
\eal
\eeq
We further estimate the integrands $A(R), B(R)$. Using the Poincare inequality, we have
\[
A(R) \les 
|| \pa_{\b} (  \sin(\b)^{\lam / 2} D^2_R \xi (R, \cdot ) ) ||_{L^1(\b)} + || \sin(\b)^{\lam / 2} D^2_R \xi (R, \cdot)||_{L^2(\b)} 
\teq A_1(R) + A_2(R) .
\]
Using the Cauchy-Schwarz inequality, we can bound the first term as follows 
\[
\bal
A_1(R) &  \les || \sin(\b)^{\lam / 2-1}  D^2_R \xi (R, \cdot )  ||_{L^1(\b)}
+ ||  \sin(\b)^{\lam / 2}  \sin(2\b)^{-1} D_{\b} D^2_R \xi (R, \cdot )  ||_{L^1(\b)} \\
& 
\les || S^{1/2} D_R^2 \xi(R, \cdot) ||_{L^2(\b)}
|| S^{-1/2} \sin(\b)^{\lam/2 -1} ||_{L^2} \\
& \quad + || P^{1/2} D_{\b} D_R^2 \xi(R, \cdot) ||_{L^2(\b)}
|| P^{-1/2} \sin(\b)^{\lam/2}  \sin(2\b)^{-1}||_{L^2} .
\eal
\]
Recall $P,S, \lam$ defined in \eqref{eq:PQS} and $\g = 1 + \f{\al}{10}$. A simple calculation yields  
\[
\bal
|| S^{-1/2} \sin(\b)^{\lam/2-1} ||_{L^2} 
&\les || \sin(\b)^{\g / 2- 1} ||_{L^2(\b)} \les \al^{-1/2},  \\ 
|| P^{-1/2} \sin(\b)^{\lam/2}  \sin(2\b)^{-1}||_{L^2} 
&\les || \sin(\b)^{\g / 2-1} \cos(\b)^{\g/2-1} ||_{L^2(\b)}
\les \al^{-1/2}. 
\eal
\]
Combining the above estimates, we derive
\[
A \les A_1(R) + A_2(R)
\les \al^{-1/2} (
|| S^{1/2} D_R^2 \xi(R, \cdot) ||_{L^2(\b)} +
|| P^{1/2} D_{\b} D_R^2 \xi(R, \cdot) ||_{L^2(\b)} )
+ ||  D^2_R \xi (R, \cdot)||_{L^2(\b)} .
\]
Recall $W S \les \psi_1, WP \les \psi_2$. Consequently, we have 
\[
|| A^2(R)  W ||_{L^1(R)} \les  \al^{-1} || \xi||^2_{\cH^3(\psi)}.
\]

Recall $B(R)$ in \eqref{eq:drdb}. Since $ D_{\b} f Q^{1/2} W^{1/2} , D_R D_{\b} f Q^{1/2} W^{1/2} \in L^2$, we have 
$\lim \inf_{R\to 0} B(R) = 0$ and yield
\[
|| B^2||_{L^{\infty}(R)}
\leq   || \pa_R B^2 ||_{L^1(R)}
\les || \pa_R D_{\b} f Q^{1/2} ||_{L^2} || D_{\b} f Q^{1/2} ||_{L^2}
\les || f ||^2_{\cH^3},
\]
where we have used $\pa_R = R^{-1} D_R, R^{-1} \les W^{1/2}$ and $W Q = \vp_2$ to obtain the last inequality. Plugging the estimates of $A$ and $B$ in \eqref{eq:drdb}, we yield the desired estimate on $|| D_R^2 \xi D_{\b} f
\psi_2^{1/2}||_{L^2}$.
\end{proof}


We generalize the $\cH^2$ estimate of transport term derived in the earlier arXiv version of \cite{elgindi2019finite} as follows.

\begin{prop}\label{prop:tran1}
Assume that $u, \pa_{\b}u, D_R u \in \cH^3$ and $\Om \in \cH^3, \xi \in \cH^3(\psi) \cap \cC^1$ we have 
\[
\bal
| \la \Om , u D_R \Om \ra_{\cH^3} |& \les \al^{-\f{1}{2}} \lt( ||u||_{\cH^3} + || \pa_{\b} u ||_{\cH^3} 
+ || D_R u ||_{\cH^3}   \rt) || \Om ||^2_{\cH^3} , \\
| \la \xi , u D_R \xi\ra_{\cH^3(\psi)} | & \les \al^{-\f{1}{2}} \lt( ||u||_{\cH^3} + || \pa_{\b} u ||_{\cH^3} + || D_R u ||_{\cH^3}   \rt) ( || \xi ||_{\cH^3(\psi)} + \al^{1/2} ||\xi||_{\cC^1}  )^2.
\eal
\]
Moreover, for all $u, D_R u \in X = \cH^3 \oplus \cW^{5,\infty}$ and $\Om \in \cH^3, \xi \in \cH^3(\psi)
\cap \cC^1$, we have 
\[
\bal
| \la \Om , u D_{\b} \Om \ra_{\cH^3} | &\les \al^{-1/2} \lt( || u||_X + || D_R u ||_X ) \rt)  
|| \Om ||^2_{\cH^3} ,\\
| \la \xi , u D_{\b} \xi \ra_{\cH^3(\psi)} | & \les \al^{-1/2} \lt( || u||_X +
  || D_R u ||_X) \rt)  ( || \xi ||_{\cH^3(\psi)} + \al^{1/2} ||\xi||_{\cC^1} )^2.
\eal
\]
\end{prop}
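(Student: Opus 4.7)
The plan is to expand $\langle \Omega, u D_R \Omega\rangle_{\mathcal{H}^3}$ via the definition \eqref{eq:inner} into a finite sum of weighted pairings $c_\alpha \langle D^\alpha \Omega,\, D^\alpha(u D_R \Omega) \, w_\alpha\rangle$, indexed by multi-indices $\alpha = (i,j)$ with $i+j \leq 3$ and weight $w_\alpha \in \{\varphi_1, \varphi_2\}$ (depending on whether $\alpha$ is purely radial or mixed). For each such pairing I will apply the Leibniz rule to $D^\alpha(u D_R \Omega)$. Using that $D_R$ and $D_\beta$ commute as operators on functions of $(R,\beta)$, this yields a principal contribution $u \cdot D_R(D^\alpha \Omega)$ together with a finite collection of commutator contributions $D^\gamma u \cdot D^{\alpha-\gamma} D_R \Omega$ with $1 \leq |\gamma| \leq |\alpha|$.

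The principal contribution has $|\alpha|+1$ derivatives on $\Omega$, which exceeds the $\mathcal{H}^3$ budget when $|\alpha| = 3$, so it must be reduced by integration by parts in $R$. Writing $D_R = R\partial_R$ and discarding the boundary terms, I obtain
\[
\bigl\langle D^\alpha \Omega,\, u \cdot D_R(D^\alpha \Omega)\, w_\alpha\bigr\rangle = -\frac{1}{2}\bigl\langle (D^\alpha \Omega)^2,\, (u \mathcal{A}_\alpha + D_R u)\, w_\alpha\bigr\rangle,
\]
where $\mathcal{A}_\alpha(R) = (R w_\alpha)_R/w_\alpha$ is a uniformly bounded function (this is the same computation that underlies \eqref{eq:dp_comp}). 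The right side is dominated by $(\|u\|_{L^\infty} + \|D_R u\|_{L^\infty})\|\Omega\|^2_{\mathcal{H}^3}$, and Lemma \ref{lem:inf} gives $\|u\|_{L^\infty} + \|D_R u\|_{L^\infty} \lesssim \alpha^{-1/2}(\|u\|_{\mathcal{H}^3} + \|D_R u\|_{\mathcal{H}^3})$, producing the advertised $\alpha^{-1/2}$ loss.

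Each commutator term involves at most three derivatives of $\Omega$ and is estimated by Cauchy--Schwarz combined with an $L^2 \times L^\infty$ split: depending on which of the two factors carries the larger derivative count, I place $D^\gamma u$ or $D^{\alpha-\gamma}D_R\Omega$ in $L^\infty$ via Lemma \ref{lem:inf} and the other in the appropriate weighted $L^2$. The three hypotheses $\|u\|_{\mathcal{H}^3}$, $\|\partial_\beta u\|_{\mathcal{H}^3}$, and $\|D_R u\|_{\mathcal{H}^3}$ are jointly needed because higher angular commutators $D_\beta^k u$ with $k \geq 2$ carry additional $\sin(2\beta)$ factors that the $\mathcal{H}^3$ norm of $u$ alone does not see; the separate norm $\|\partial_\beta u\|_{\mathcal{H}^3}$ supplies the missing unweighted angular control, while $\|D_R u\|_{\mathcal{H}^3}$ supplies the missing fourth radial derivative. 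The overall bookkeeping parallels the proof of Proposition \ref{prop:prod1}.

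The $\mathcal{H}^3(\psi)$ variant for $\xi$ follows by the identical scheme with $\varphi_i$ replaced by $\psi_i$; invoking the second inequality of \eqref{eq:prod1} rather than the first accounts for the auxiliary $\alpha^{1/2}\|\xi\|_{\mathcal{C}^1}$ term, which compensates for the weaker $\beta$-singularity of $\psi_2$ compared to $\varphi_2$. The $D_\beta$-transport versions are proved by integrating by parts in $\beta$ instead, yielding a boundary factor $\partial_\beta(\sin(2\beta)\, u\, w_\alpha)/w_\alpha = u\,\mathcal{B}_\alpha(\beta) + D_\beta u$; here the angular term $D_\beta u$ is absorbed by the $\mathcal{W}^{5,\infty}$ component of the $X$ norm through the $(\tfrac{\alpha}{10} + \sin(2\beta))^{-1}$ weight in \eqref{norm:W}, which explains why the weaker hypothesis $u, D_R u \in X$ (without a separate $\partial_\beta u$ assumption) suffices. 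The principal technical obstacle is the mixed-derivative commutator in the $\mathcal{H}^3(\psi)$ setting, typified by $\langle D_R^2 D_\beta \xi,\, D_R u \cdot D_R D_\beta^2 \xi \cdot \psi_2\rangle$: here neither a straight $L^2\times L^\infty$ split nor a direct product-rule estimate suffices, and one must reuse the $L^2(R; L^\infty(\beta))\times L^\infty(R; L^2(\beta))$ interpolation displayed in \eqref{eq:drdb}, which is precisely where the extra $\|\xi\|_{\mathcal{C}^1}$ control is consumed.
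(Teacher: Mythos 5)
Your proposal is correct and follows essentially the same route as the paper, which itself only sketches this proof by deferring to the product-rule argument of Proposition \ref{prop:prod1} (in particular the $L^{2}(R,L^{\infty}(\b))\times L^{\infty}(R,L^{2}(\b))$ split for the mixed-derivative terms) and to the transport estimates in the earlier arXiv version of \cite{elgindi2019finite}: integration by parts on the top-order term, $L^{\infty}$ control of one factor via Lemma \ref{lem:inf} (source of the $\al^{-1/2}$), and Leibniz commutators handled by weighted $L^{2}\times L^{\infty}$ splits. Your accounting of why the extra hypotheses $\pa_{\b}u, D_R u\in\cH^3$ (respectively $D_R u\in X$) enter is consistent with the intended argument, so no gap.
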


The proof follows from the argument in the proof of Proposition \ref{prop:prod1} and that in the earlier arXiv version of \cite{elgindi2019finite}. Here, the proof is easier since the data is more regular (than $\cH^2$), i.e.$\cH^3$ or $\cH^3(\psi)$, and then the estimate of several nonlinear terms can be done by applying $L^{\infty}$ estimate on one term. To estimate the mixed derivative terms, e.g. $ \la D^2_R D_{\b} \xi, D^2_R D_{\b} (uD_{\b} \xi) \psi_2 \ra $, we apply the $L^2(R , L^{\infty}(\b)) \times  L^{\infty}(R, L^{2}(\b))$ argument similar to that in the proof of Proposition \ref{prop:prod1}.

The following result is a simple $\cH^3,\cH^3(\psi)$ generalization of another transport estimate in the earlier arXiv version of \cite{elgindi2019finite}.
\begin{prop}\label{prop:tran2}
Let $\cH^3(\rho)$ be either $\cH^3$ or $\cH^3(\psi)$.  For all $g \in \cH^3(\rho)$, $u$ with $ || D_R^i u ||_{L^{\infty}} < \infty $ for $i \leq 3$ and $|| D_R^i D^j_{\b} \pa_{\b} u ||_{L^{\infty}} < \infty$ for $ i+ j \leq 2$, we have 
\[
\bal
| \la g , u D_R g \ra_{\cH^3(\rho)} | & \les \al^{-1/2} ( 
\sum_{ 0 \leq i \leq 3} || D_R^i u ||_{L^{\infty}} + \sum_{i + j \leq 2}|| D_R^i D^j_{\b} \pa_{\b} u ||_{L^{\infty}}  ) || g ||^2_{\cH^3(\rho)} ,\\
\eal
\]
\end{prop}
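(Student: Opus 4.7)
\medskip

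\noindent\textbf{Proof proposal.} The plan is to expand the bilinear form $\langle g, uD_R g\rangle_{\cH^3(\rho)}$ using the definition \eqref{eq:inner}, and then for each constituent inner product apply the Leibniz rule
\[
D_R^i D_\beta^j (u D_R g) = \sum_{0\le k\le i,\,0\le l\le j} \binom{i}{k}\binom{j}{l}\, D_R^{i-k} D_\beta^{j-l} u \cdot D_R^{k+1} D_\beta^{l} g,
\]
valid because $D_R$ and $D_\beta$ commute (they differentiate in different variables). Each inner product then splits into a \emph{diagonal} piece, where all derivatives fall on $g$ and the pairing is $\langle f, u D_R f\,\rho\rangle$ with $f=D_R^i D_\beta^j g$, plus \emph{off-diagonal} commutator pieces where at least one derivative hits $u$.

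For the diagonal piece, integration by parts in $R$ yields
\[
\langle f, u D_R f\,\rho\rangle = -\tfrac{1}{2}\int\!\!\int \big(u\rho + R\,u\,\pa_R\rho + D_R u\cdot \rho\big)\, f^{2}\, dR\, d\beta.
\]
Using the explicit form of the weights $\vp_1,\vp_2,\psi_1,\psi_2$ one verifies, as in \eqref{eq:dp_comp}, that $|(R\rho)_R/\rho|$ is uniformly bounded. Hence the diagonal contribution is controlled by $\big(\|u\|_{L^\infty}+\|D_R u\|_{L^\infty}\big)\|f\,\rho^{1/2}\|_{L^2}^{2} \lesssim \big(\|u\|_{L^\infty}+\|D_R u\|_{L^\infty}\big)\|g\|_{\cH^3(\rho)}^{2}$.

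The off-diagonal pieces are estimated by Cauchy--Schwarz. A key observation is that the total derivative order $k+1+l$ satisfies $k+1+l\le i+j\le 3$ whenever $(k,l)\ne(i,j)$, so $D_R^{k+1}D_\beta^{l} g$ is always controlled by the $\cH^3(\rho)$ norm of $g$ in an appropriate weighted $L^2$. When $l=j$ (pure $R$-commutator) we bound $\|D_R^{i-k}u\|_{L^\infty}$ directly from the hypothesis. When $l<j$ we use the structural identity
\[
D_\beta^{j-l} u \;=\; \sin(2\beta)\, h_{i-k,\,j-l}(R,\beta),
\]
where $h_{i-k,j-l}$ is a polynomial in $\sin(2\beta),\cos(2\beta)$ with coefficients controlled by $\|D_R^{i-k}D_\beta^{b}\pa_\beta u\|_{L^\infty}$ for $b\le j-l-1$. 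The extra factor $\sin(2\beta)$ compensates the singularity difference between $\vp_2$ (respectively $\psi_2$) and $\vp_1$ (respectively $\psi_1$), so that the remaining weighted $L^2$ norm of $D_R^{k+1}D_\beta^{l}g$ is absorbed into $\|g\|_{\cH^3(\rho)}$.

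The main obstacle is not any individual estimate but the bookkeeping: one must go through each triple $(i,j)\in\{(0,0),(0,1),(1,0),\dots,(3,0)\}$ appearing in \eqref{eq:inner} and verify, for every Leibniz term, that the angular weight of $(i,j)$ and of $(k+1,l)$ are matched (either equal, or bridged by the $\sin(2\beta)$ factor above). The $\alpha^{-1/2}$ loss arises precisely where this weight bridging forces an angular integral of the form $\int\sin(2\beta)^{-\alpha/10}\,d\beta$, exactly as in Proposition \ref{prop:prod1}; the mixed-derivative terms requiring the $L^2(R,L^\infty(\beta))\times L^\infty(R,L^2(\beta))$ device from the proof of Proposition \ref{prop:prod1} are absent here because all derivatives of $u$ appear in $L^\infty$, which simplifies the argument. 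Once the case analysis is laid out, each resulting estimate reduces to Cauchy--Schwarz plus the elementary weight bound $|(R\rho)_R/\rho|\lesssim 1$, and summing over all terms completes the proof.
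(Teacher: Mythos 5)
Your proposal is correct and follows essentially the same route as the paper, whose proof of this proposition is the one-line remark that it "follows simply from applying $L^{\infty}$ estimate on the $u$ term and integration by parts": you expand via Leibniz, handle the top-order diagonal term by integration by parts in $R$ using the boundedness of $(R\rho)_R/\rho$, and put all derivatives of $u$ in $L^\infty$, with the factor $\sin(2\beta)$ extracted from $D_\beta^{j-l}u$ bridging the mismatch between $\vp_2$ (resp.\ $\psi_2$) and $\vp_1$ (resp.\ $\psi_1$) when angular derivatives land on $u$ but none remain on $g$. The only cosmetic quibble is that the $\al^{-1/2}$ loss need not actually be incurred here (it simply makes the stated bound weaker), so attributing it to an angular integral is unnecessary, but this does not affect correctness.
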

The proof follows simply from applying $L^{\infty}$ estimate on the $u$ term and integration by parts.

\subsection{A new estimate of the transport term and the estimate of $v_x \xi$}\label{sec:new}
In this subsection, we establish a new estimate of the transport term which is necessary to close the nonlinear estimate and estimate $||v_x \xi||_{\cH^3}$ which is not covered by Proposition \ref{prop:prod1}.

\begin{prop}\label{prop:tran3}
 Let $\Psi$ be a solution of \eqref{eq:elli}. Suppose that $g, \Om \in \cH^3, \xi \in \cH^3(\psi) \cap \cC^1$. We have 
\[
\bal
| \la g ,  \f{1}{\sin(2\b)} D_R \Psi D_{\b} g \ra_{\cH^3} | &\les \al^{-3/2} || \Om ||_{\cH^3}   || g ||^2_{\cH^3} ,\\
| \la \xi , \f{1}{\sin(2\b)} D_R \Psi D_{\b}  \xi \ra_{\cH^3(\psi)} | & \les \al^{-3/2} 
|| \Om ||_{\cH^3}  ( || \xi ||_{\cH^3(\psi)} + \al^{1/2} || \xi||_{\cC^1} )^2.
\eal
\]
\end{prop}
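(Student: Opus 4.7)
The plan is to reduce this estimate to the transport estimates of Propositions \ref{prop:tran1} and \ref{prop:tran2} by splitting $\Psi$ into its leading-order part and a smoother remainder, using the elliptic regularity of Proposition \ref{prop:key}. Specifically, set $\Psi_L \teq \f{\sin(2\b)}{\al\pi}L_{12}(\Om)$ and $\wt\Psi \teq \Psi - \Psi_L$, so that
\begin{equation*}
\f{D_R\Psi}{\sin(2\b)} = \f{D_R L_{12}(\Om)}{\al\pi} + \f{D_R\wt\Psi}{\sin(2\b)}.
\end{equation*}
Both $\Psi$ and $\Psi_L$ vanish at $\b = 0, \pi/2$, hence so does $\wt\Psi$ and any $D_R^k\wt\Psi$. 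This splitting is the same one that appears implicitly in Propositions \ref{prop:key} and \ref{prop:psi}, and the claimed $\al^{-3/2}$ bound is saturated by the leading term.

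First I would estimate the contribution from $u \teq \f{D_R L_{12}(\Om)}{\al\pi}$, which depends only on $R$. Since $\pa_\b u \equiv 0$, a $D_\b$-transport analogue of Proposition \ref{prop:tran2}---obtained by integrating by parts in $\b$ at each derivative level of the $\cH^3$ inner product---reduces the estimate to the $L^\infty$ bounds $\|D_R^i u\|_{L^\infty}$ for $i \leq 3$. These are controlled by $\al^{-1}\|\Om\|_{\cH^3}$ via Lemma \ref{lem:ux} applied to the identity $D_R L_{12}(\Om) = -\int_0^{\pi/2}\Om\sin(2\b)\,d\b$ and its $D_R$-differentiated versions. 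Combined with the $\al^{-1/2}$ loss from integrating the $\sin(2\b)^{-\s}, \sin(2\b)^{-\g}$ weights in $\b$, this yields the bound $\les \al^{-3/2}\|\Om\|_{\cH^3}\|g\|_{\cH^3}^2$ (resp. $\|g\|_{\cH^3(\psi)}^2$ for the $\xi$-estimate).

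For the remainder $w \teq \f{D_R\wt\Psi}{\sin(2\b)}$, Proposition \ref{prop:tran1} (the $D_\b$ version) closes the estimate once I show $\|w\|_X + \|D_R w\|_X \les \al^{-1}\|\Om\|_{\cH^3}$. The key observation is that $D_R^k\wt\Psi$ vanishes at $\b = 0, \pi/2$ for every $k$, so a Hardy-type inequality in $\b$ bounds $\|D_R^k\wt\Psi/\sin(2\b)\|_{\cH^3}$ by $\|\pa_\b D_R^k\wt\Psi\|_{\cH^3}$. For $k=1$, $\pa_\b D_R\wt\Psi = R\pa_{R\b}\Psi - \f{2\cos(2\b)}{\al\pi}D_R L_{12}(\Om)$, whose $\cH^3$ norm is $\les \al^{-1}\|\Om\|_{\cH^3}$ by the elliptic estimate $\al\|R\pa_{R\b}\Psi\|_{\cH^3} \les \|\Om\|_{\cH^3}$ of Proposition \ref{prop:key} together with Lemma \ref{lem:ux}; the $k=2$ case is analogous, invoking $\al^2\|R^2\pa_{RR}\Psi\|_{\cH^3} \les \|\Om\|_{\cH^3}$. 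Applying Proposition \ref{prop:tran1} then gives the remainder bound $\les \al^{-1/2}\cdot\al^{-1}\|\Om\|_{\cH^3}\|g\|_{\cH^3}^2 = \al^{-3/2}\|\Om\|_{\cH^3}\|g\|_{\cH^3}^2$. The $\cH^3(\psi)$ statement for $\xi$ follows identically using the $\xi$-versions of Propositions \ref{prop:tran1}, \ref{prop:tran2} and the embedding $\cH^3 \hookrightarrow \cH^3(\psi)$ from Lemma \ref{lem:H2H2}.

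The main obstacle will be making the Hardy-type divisibility $\|D_R^k\wt\Psi/\sin(2\b)\|_X \les \|\pa_\b D_R^k\wt\Psi\|_X$ precise in the anisotropic norm on $X = \cH^3 \oplus \cW^{5,\infty}$: one must simultaneously accommodate the $R$-weight $\f{(1+R)^4}{R^4}$ and the singular $\b$-weights $\sin(2\b)^{-\s}$, $\sin(2\b)^{-\g}$ with $\s, \g$ near $1$, tracking mixed $R$-$\b$ derivatives and ensuring the boundary vanishing of $\wt\Psi$ (inherited from the elliptic equation \eqref{eq:elli} and boundary conditions \eqref{eq:ellibc}) is strong enough to absorb the extra $\sin(2\b)^{-1}$. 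Carrying this out slice-wise in $R$ and combining with Proposition \ref{prop:prod1} for the product rule should suffice, but the bookkeeping is the genuinely delicate step.
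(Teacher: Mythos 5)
Your splitting of $\Psi$ into the leading part $\Psi_L=\f{\sin(2\b)}{\al\pi}L_{12}(\Om)$ and a remainder, and your treatment of the $R$-only piece $\f{1}{\al\pi}D_RL_{12}(\Om)$ by an $L^\infty$/integration-by-parts argument in the spirit of Proposition \ref{prop:tran2}, are both sound. The gap is in the remainder: you propose to apply Proposition \ref{prop:tran1} to $w=\f{D_R\wt\Psi}{\sin(2\b)}$, which requires the bound $\|D_Rw\|_X\les\al^{-1}\|\Om\|_{\cH^3}$, and this bound is not available. This is exactly the obstruction the paper flags in the sentence immediately preceding its proof: with $u=\f{D_R\Psi}{\sin(2\b)}$, the quantity $\|D_Ru\|_{\cH^3}$ cannot be controlled by $\|\Om\|_{\cH^3}$. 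Subtracting $\Psi_L$ does not remove this obstruction, because it is a derivative count, not an $\al$-singularity: $\|D_Rw\|_{\cH^3}$ contains the pure radial term $\|D_R^3(D_R^2\wt\Psi/\sin(2\b))\,\vp_1^{1/2}\|_{L^2}=\|D_R^5\wt\Psi/\sin(2\b)\,\vp_1^{1/2}\|_{L^2}$, and your Hardy inequality converts this into $\|\pa_\b D_R^5\wt\Psi\,\vp_1^{1/2}\|_{L^2}$ --- six derivatives of $\Psi$. Proposition \ref{prop:key} controls at most five (two from the elliptic operator, three from the $\cH^3$ norm): e.g.\ $\al\|R\pa_{R\b}\Psi\|_{\cH^3}$ controls $\pa_\b D_R^4\Psi$ but not $\pa_\b D_R^5\Psi$. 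Your assertion that ``the $k=2$ case is analogous'' hides this: $\|\pa_\b D_R^2\wt\Psi\|_{\cH^3}$ is the $\cH^3$ norm of $D_R(R\pa_{R\b}\Psi)$ plus corrections, which is one full derivative beyond $\|R\pa_{R\b}\Psi\|_{\cH^3}$.

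The paper's proof avoids the black-box use of Proposition \ref{prop:tran1} precisely for this reason. It keeps $u=\f{D_R\Psi}{\sin(2\b)}$ whole, integrates by parts in the top-order terms $\la D^3 g,\,u\,D_\b D^3 g\ra$ so that no derivative lands on $u$ there, and for the commutator terms where up to three derivatives fall on $u$ (the worst being $\la D_R^2D_\b\xi,\, D_R^2u\cdot D_\b^2\xi\,\psi_2\ra$) it uses the anisotropic H\"older splitting $L^2(R,L^\infty(\b))\times L^\infty(R,L^2(\b))$ from the proof of Proposition \ref{prop:prod1}. This way only $\|u\|_{\cH^3}\les\al^{-1}\|\Om\|_{\cH^3}$ is ever invoked, which stays within the five-derivative budget of Proposition \ref{prop:key}. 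To repair your argument you would need to do the same: replace the appeal to Proposition \ref{prop:tran1} for $w$ by a direct estimate of the $\cH^3$ pairing in which the highest-order derivatives are moved off $w$ by integration by parts.
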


If one apply Proposition \ref{prop:tran1} with $ u = \f{D_R \Psi}{\sin(2\b)} $, $|| D_R u ||_{\cH^3}$ in the upper bound cannot be bounded by $|| \Om||_{\cH^3}$.

\begin{proof}
Denote $u = \f{D_R \Psi}{\sin(2\b)} $. The estimate of the transport term is similar to that in Proposition \ref{prop:prod1} except that we need to perform integration by parts for the terms $\la D^3 g, u D^3 D_{\b} \vp \ra$ in the estimate. We focus on a typical and difficult term $\la D_R^2 D_{\b} \xi, D^2_R u D_{\b}^2 \xi \psi_2 \ra$ to see why we can improve the estimate in Proposition \ref{prop:tran1}. Other terms can be estimated similarly. 

For this term, it suffices to estimate the $L^2$ norm of $D_R^2 u D^2_{\b} \xi \psi_2^{1/2}$. Recall $\psi_2 = W P$ with $W, P$ defined in \eqref{eq:PQS}. We have
\[
|| D_R^2 u D^2_{\b} \xi \psi_2^{1/2}||_2 \leq   || D_R^2 u W^{1/2} ||_{L^2(R, L^{\infty}(\b))}
|| D^2_{\b} \xi P^{1/2}||_{L^{\infty}(R, L^2(\b))} \teq A \cdot B.
\]
The term $A$ can be bounded by $C \al^{-1/2} || u ||_{\cH^3}$, which is further bounded by $C \al^{-3/2} || \Om||_{\cH^3}$ using Proposition \ref{prop:key}. The term $B$ is bounded by $C || \xi ||_{\cH^3(\psi)}$. It is similar to the argument in the proof of Proposition \ref{prop:prod1} and we omit the detail.
\end{proof}


Finally, we estimate the nonlinear term $v_x \xi$ in the $\eta$ equation \eqref{eq:lin}. 

\begin{prop}\label{prop:prod3}
Let $\Psi, \bar{\Psi}$ be a solution of \eqref{eq:elli} with source term $\Om, \bar{\Om}$, respectively, and $V_1(\Psi)$ be the operator which is related to $v_x$ and is to be defined in \eqref{eq:simp5}. Assume that $\xi \in \cH^3(\psi) \cap \cC^1, \Om \in \cH^3$. We have 
\beq\label{eq:prod3}
\bal
|| V_1(\Psi) \xi||_{\cH^3}  &\les  \al^{-1/2}  || \Om||_{\cH^3} ( 
\al^{1/2} || \xi||_{\cC^1} + || \xi||_{\cH^3(\psi)}),  \\
\quad || V_1(\bar{\Psi}) \xi ||_{\cH^3}  &\les \al^{1/2}  || \xi||_{\cH^3(\psi)}.
\eal
\eeq
\end{prop}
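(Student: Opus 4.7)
The approach exploits the fact that $V_1(\Psi)$ represents $v_x$, which is $l.o.t.$ in the leading-order expansion \eqref{eq:simp3}. This structural smallness, together with explicit $\sin\b,\cos\b$ factors inherited from the polar-coordinate change of variables \eqref{eq:simp1}--\eqref{eq:simp2}, is what makes the stronger $\cH^3$ norm (rather than $\cH^3(\psi)$) available on the product. First I would expand $V_1(\Psi)$ explicitly: since $v=2r\cos\b\cdot\Psi+\al rR\cos\b\,\pa_R\Psi-r\sin\b\,\pa_\b\Psi$ and $\pa_x=\tfrac{\al R\cos\b}{r}\pa_R-\tfrac{\sin\b}{r}\pa_\b$, one obtains $V_1(\Psi)$ as a combination of $\Psi$, $\al D_R\Psi$, $\pa_{\b\b}\Psi$, $\al D_R\pa_\b\Psi$, and $\al^2 D_R^2\Psi$, each multiplied by an explicit trigonometric polynomial. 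The cancellation that accounts for the vanishing of the leading-order $v_x$ in \eqref{eq:simp3} means every term in $V_1(\Psi)$ carries either an explicit $\al$ or an explicit $\sin(2\b)$ factor.

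For the first estimate, I split $\Psi=\Psi_0+\Psi_1$ with $\Psi_0=\tfrac{\sin(2\b)}{\pi\al}L_{12}(\Om)$ and invoke Proposition \ref{prop:key} to control $\al^2\|R^2\pa_{RR}\Psi\|_{\cH^3}$, $\al\|R\pa_{R\b}\Psi\|_{\cH^3}$, and $\|\pa_{\b\b}\Psi_1\|_{\cH^3}$ by $\|\Om\|_{\cH^3}$. Writing $V_1(\Psi)\xi$ term by term, I factor $\sin(\b)^{\lam}$ with $\lam\geq\tfrac{\g-\s}{2}$ out of the trigonometric coefficient of each summand and transfer it onto $\xi$ via Lemma \ref{lem:H2H2}, converting $\|\xi\|_{\cH^3(\psi)}$ bounds into $\cH^3$ bounds. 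Then the product rule Proposition \ref{prop:prod1} applies with $f$ being the remaining factor of $V_1(\Psi)$, now controlled in $X=\cH^3\oplus\cW^{5,\infty}$ by $\al^{-1/2}\|\Om\|_{\cH^3}$. The $\al^{1/2}\|\xi\|_{\cC^1}$ contribution arises precisely from the mixed-derivative terms in the $\cH^3$ estimate, where the $L^2(R,L^\infty(\b))\times L^\infty(R,L^2(\b))$ splitting used in the proof of Proposition \ref{prop:prod1} must be replicated.

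For the second estimate, I apply Proposition \ref{prop:psi}, which provides the leading-order decomposition of $\bar\Psi$ with the residual controlled in $\cW^{5,\infty}$. Since $L_{12}(\bar\Om)=\tfrac{3\pi\al}{2(1+R)}$ depends only on $R$, each term of $V_1(\bar\Psi)$ is directly of order $\al$ in $\cW^{5,\infty}$: the $\pa_{\b\b}$ derivative hitting $\tfrac{\sin(2\b)}{\pi\al}L_{12}(\bar\Om)$ yields an $O(\al)$ contribution, the subtracted remainder is $O(\al)$ by Proposition \ref{prop:psi}, and the $\al D_R\pa_\b$ and $\al^2 D_R^2$ pieces gain factors of $\al$ directly. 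The extra gain from $\cW^{5,\infty}\hookrightarrow X$, combined with the $\sin(\b)^{\lam}$ weight conversion for $\xi$, produces the $\al^{1/2}\|\xi\|_{\cH^3(\psi)}$ bound (the $\cC^1$ contribution is absorbed into this term at the $\al^{1/2}$ level).

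The main obstacle will be the careful bookkeeping of explicit $\sin\b,\cos\b$ factors in $V_1(\Psi)$ to verify that the $\cH^3(\psi)\to\cH^3$ weight conversion goes through uniformly across all summands, particularly in the mixed-derivative components of the $\cH^3$ norm where the weight $\vp_2$ differs from the fully isotropic $\sin(2\b)^{-\g}$ structure. A secondary technical point is that the $\al^{-1/2}$ loss in the elliptic estimate combines with an $\al^{-1}$ loss from the $1/\al$ in $\Psi_0$, so the extra $\al$ smallness coming from the trigonometric structure of $V_1$ is essential to end up with $\al^{-1/2}\|\Om\|_{\cH^3}$ rather than $\al^{-3/2}\|\Om\|_{\cH^3}$.
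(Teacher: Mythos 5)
Your overall strategy coincides with the paper's: exploit the cancellation of the singular $\f{1}{\al}L_{12}(\Om)\sin(2\b)$ part inside $V_1$, transfer an angular vanishing factor $\sin(\b)^{\lam}$ onto $\xi$ via Lemma \ref{lem:H2H2} so that $\|\xi\|_{\cH^3(\psi)}$ can be upgraded to an effective $\cH^3$ bound, and then invoke the product rules together with Propositions \ref{prop:key} and \ref{prop:psi}. However, there is a genuine gap in the mechanism you give for the angular vanishing. You claim that "every term in $V_1(\Psi)$ carries either an explicit $\al$ or an explicit $\sin(2\b)$ factor" and that the $\sin(\b)^{\lam}$ can be "factored out of the trigonometric coefficient of each summand." Looking at \eqref{eq:simp512}, this is false for the summands $2\Psi_*$, $\al(1+2\cos^2\b)D_R\Psi$ and $\al^2\cos^2(\b)D_R^2\Psi$: their coefficients do not vanish at $\b=0$, and an explicit $\al$ alone does not let you move $\sin(\b)^{\lam}$ onto $\xi$. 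Without that transfer, the product of such a term with $\xi\in\cH^3(\psi)$ can only be estimated in the weaker $\cH^3(\psi)$ norm by Proposition \ref{prop:prod1}, not in $\cH^3$ as required. The vanishing actually comes from the boundary condition $\Psi(R,0)=\Psi(R,\pi/2)=0$ (hence $\Psi_*(R,0)=D_R^k\Psi(R,0)=0$), which is what the paper uses to assert $\sin(\b)^{-1/2}A(\Psi)\in\cH^3$ and $\sin(\b)^{-1/2}V_1(\bar\Psi)\in\cW^{5,\infty}$. This is a fixable but essential point: the source of the gain is the elliptic boundary condition, not the change of variables.

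Second, the term $\al^2\cos^2(\b)D_R^2\Psi$ cannot be handled by the weight-transfer route at all, because Proposition \ref{prop:key} controls $\al^2 D_R^2\Psi$ in $\cH^3$ but not $\sin(\b)^{-1/2}\al^2D_R^2\Psi$ in $\cH^3$. You flag the mixed-derivative components of the $\cH^3$ norm as "the main obstacle" but do not resolve it, and this is exactly where the naive argument breaks. The paper isolates $g\teq\al^2D_R^2\Psi$ and treats the mixed derivatives $D_R^iD_\b^j(g\xi)$ with $j\geq 1$ by rewriting $D_\b(g\xi)=\sin(2\b)^{3/4}\pa_\b g\,(\sin(2\b)^{1/4}\xi)+\sin(2\b)^{1/4}(\sin(2\b)^{-1/2}g)\sin(2\b)^{1/4}D_\b\xi$, using $\sin(2\b)^{1/4}\vp_2\les\vp_1,\psi_1$ and the weighted-$L^2$ version of Proposition \ref{prop:key} for $\sin(2\b)^{-1/2}g$ and $\sin(2\b)^{1/2}\pa_\b g$; the $D_R^3$ component (where $\vp_1\asymp\psi_1$) is where the $\al^{1/2}\|\xi\|_{\cC^1}$ contribution actually enters, via the $\cH^3(\psi)$ product rule, not from the mixed derivatives as you state. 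As written, your proposal would not close for this term.
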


The difficulty lies in that $\cH^3(\psi)$ is weaker than $\cH^3$ (see Lemma \ref{lem:H2H2}). We can not apply Proposition \ref{prop:prod1} directly to estimate $v_x \xi$. We need to use a key fact that $v_x$ vanishes on $\b = 0$.

\begin{proof}
We use the formula of $V_1(\Psi)$ \eqref{eq:simp512} to be derived 
\[
\bal
V_1(\Psi)  &= \al (1 + 2\cos^2 \b) D_R \Psi - \al D_R D_{\b}\Psi -  D_{\b} \Psi_* + 2 \Psi_*  + \sin^2 (\b) \pa^2_{\b} \Psi_*  + \al^2 \cos^2(\b) D_R^2 \Psi \\
&\teq A(\Psi)  + \al^2 \cos^2(\b) D_R^2 \Psi.
\eal
\]
where $\Psi_* = \Psi - \f{\sin(2\b)}{\pi \al} L_{12}(\Om)$. We first consider the second inequality in \eqref{eq:prod3}. Notice that $V_1(\bar{\Psi})$ vanishes on $\b=0$. More precisely, Proposition \ref{prop:psi} implies $\sin(\b)^{-1/2} V_1(\bar{\Psi}) \in \cW^{5,\infty}$. Applying the product rule in $\cH^3$ norm in Proposition \ref{prop:prod1}, Lemma \ref{lem:H2H2} and then Proposition \ref{prop:psi}, we yield 
\[
|| V_1(\bar{\Psi}) \xi||_{\cH^3} 
\les \al^{-1/2}  || \sin(\b)^{-1/2} V_1(\bar{\Psi})||_{\cW^{5,\infty}}  || \sin(\b)^{1/2} \xi ||_{\cH^3}  \les \al^{1/2} || \xi||_{\cH^3(\psi)} .
\]

Next, we consider the first inequality in \eqref{eq:prod3}. From Proposition \ref{prop:key}, we know that $\sin(\b)^{-1/2} A(\Psi) \in \cH^3$. Applying Propositions \ref{prop:prod1}, \ref{prop:key} and Lemma \ref{lem:H2H2}, we derive
\[
|| A(\Psi) \xi||_{\cH^3}
\les \al^{-1/2} || A(\Psi) \sin(\b)^{-1/2} ||_{\cH^3} || \xi \sin(\b)^{1/2} ||_{\cH^3(\psi)}
\les \al^{-1/2} || \Om||_{\cH^3} || \xi||_{\cH^3(\psi)}.
\]

Finally, we focus on the term $g \teq \al^2 D_R^2 \Psi$ in $V_1(\Psi)$. We consider the third derivative terms $D^3( D_R^2 \Psi \cdot \xi)$ with $D^3 = D_R^i D_{\b}^j, i+ j = 3$ in the $\cH^3$ estimate since other terms are easier. If $D^3 = D_R^3$, we need to estimate the $L^2$ norm of $D_R^3( g \xi ) \vp_1^{1/2}$. Since $\vp_1 \asymp \psi_1$, the estimate follows from the argument in the proof of Proposition \ref{prop:prod1} and we obtain 
\[
||  D_R^3( \al^2 D_R^2 \Psi \xi)  \vp_1^{1/2}||_2^2 
\les \al^{3/2} || \Om||_{\cH^3} ( || \xi||_{\cH^3(\psi)} + \al^{1/2} || \xi||_{\cC^1}) .
\] 

Otherwise, we need to estimate the $L^2$ norm of $D^2 D_{\b}( g \cdot \xi) \vp_2^{1/2}$ with $D^2 = D_R^i D_{\b}^j, i+ j =2$ (note that $D_{\b}$ commutes with $D_R$). We rewrite $D_{\b}(g\xi)$ as follows 
\[
D_{\b}( g\xi ) = \pa_{\b} g(\sin(2\b) \xi) + g D_{\b}\xi
= \sin(2\b)^{3/4} \pa_{\b} g (\sin(2\b)^{1/4} \xi) + \sin(2\b)^{1/4}(\sin(2\b)^{-1/2}g )   \sin(2\b)^{1/4} D_{\b}\xi.
\]
Notice that $\sin(2\b)^{1/4} \vp_2 \les \vp_1 ,\psi_1$. Using the idea in the discussion of Lemma \ref{lem:H2H2} and expanding the $\cH^2$ norm, one can verify easily that 
\[
|| D^2 ( D_{\b} (g \xi ) )  \vp_2^{1/2} ||_{L^2} 
\les || \sin(2\b)^{1/2} \pa_{\b}g \cdot \sin(2\b)^{1/4} \xi ||_{\cH^2} + || \sin(2\b)^{-1/2} g \sin(2\b)^{1/4} D_{\b}\xi ||_{\cH^2}.
\]
Applying the $\cH^2$ version of the product rule in Proposition \ref{prop:prod1} (it is given in \cite{elgindi2019finite}), Proposition \ref{prop:key} to $g = \al^2 D_R^2\Psi$, and Lemma \ref{lem:H2H2}, we obtain 
\[
\bal
|| D^2 ( D_{\b} (g \xi ) )  \vp_2^{1/2} ||_{L^2} 
&\les \al^{-1/2}
 || \sin(2\b)^{1/2} \pa_{\b} g ||_{\cH^2} || \sin(2\b)^{1/4}\xi||_{\cH^2} \\
 &+ \al^{-1/2}
 || \sin(2\b)^{-1/2}  g ||_{\cH^2} || \sin(2\b)^{1/4} D_{\b}\xi||_{\cH^2} 
 \les \al^{3/2} || \Om||_{\cH^3} || \xi||_{\cH^3(\psi)}.
 \eal
\]
Combining the estimates of $A(\Psi)$ and $\al^2 D_R^2\Psi$ completes the proof.
\end{proof}

\section{Nonlinear stability}\label{sec:non}
In this section, we complete the estimates of the remaining terms $\cR_3$ in Corollary \ref{cor:H3} and in \eqref{eq:xi_inf1},\eqref{eq:xi_inf2R},\eqref{eq:xi_inf2b}. We will prove the following for the energy $E_3$ in \eqref{eg:H3} and $E(\xi,\infty)$
\begin{align}
&\f{1}{2}\f{d}{dt}  E_3^2  \leq - \f{1}{12}    E_3^2  + C \al^{1/2} (E_3^2 + \al || \xi||^2_{\cC^1}) + C \al^{-3/2} ( E_3 + \al^{1/2} || \xi ||_{\cC^1} )^3 + C \al^2 E_3 , 
\label{eq:boot1} \\
&\f{1}{2} \f{d}{dt} E(\xi, \infty)^2
\leq   - E(\xi, \infty)^2 + 
C||\xi||_{\cC^1} ( \al^{1/2} E_3 + \al ||\xi||_{\cC^1} )  \label{eq:boot2}  \\ 
& \qquad \qquad + C || \xi||_{\cC^1}( \al^{-1} E^2_3 + \al^{-1} E_3 || \xi||_{\cC^1} )  + C\al^2 E(\xi, \infty) , \notag
\end{align}
for any initial perturbation $\Om, \eta, \xi$ with $E_3(\Om, \eta, \xi) < +\infty$ and $E(\xi, \infty) < +\infty$, where
\beq\label{eg:xi_inf}
E(\xi, \infty) \teq  ( || \xi ||^2_{\infty}  + || \phi_2 D_{\b}\xi||^2_{\infty} + 
\mu_4 ||  \phi_1 D_R \xi ||_{\infty}^2  )^{1/2}
\eeq
for some absolute constants $\mu_4$. $E(\xi,\infty)$ is equivalent to $||\xi||_{\cC^1}$ \eqref{norm:c1} once we determine the absolute constants $\mu_4$. 


The major step is the linear stability that gives the damping term $(- \f{1}{12} + C\al ) E_2^3$ and $ (-1 + C\al) E(\xi, \infty)^2$. We have already established the linear stability in Corollary \ref{cor:H3} and estimates \eqref{eq:xi_inf1}, \eqref{eq:xi_inf2R}, \eqref{eq:xi_inf2b}. The remaining terms $\cR_3$ in Corollary \ref{cor:H3} and in \eqref{eq:xi_inf1}, \eqref{eq:xi_inf2R}, \eqref{eq:xi_inf2b} contribute other terms in \eqref{eq:boot1}-\eqref{eq:boot2}. We will further construct an energy $E^2(\Om, \eta, \xi ) \teq \al  E(\xi, \infty)^2+ E_3^2(\Om, \eta, \xi)$ and these remaining terms are relatively small at the threshold $E =O(\al^2)$. Then we can close the nonlinear estimate.

We will first derive several formulas for later use in subsection \ref{sec:formu}. Then we estimate the remaining terms mentioned above. In subsection \ref{sec:trans} and \ref{sec:nonf}, we will apply the product rules obtained in subsection \ref{sec:nonterm} to estimate the transport terms and nonlinear terms and then complete the estimate \eqref{eq:boot1}. We will derive the $\cC^1$ estimate \eqref{eq:boot2} in subsection \ref{sec:C1} and prove finite time blowup in subsection \ref{sec:blowup}. We remark that estimates similar to the $\cC^1$ estimates \eqref{eq:boot2} are not required in \cite{elgindi2019finite} since there is no swirl.

 \vspace{0.1in}
 \paragraph{\bf{Notations}}
 Throughout this section, $\chi$ is the radial cutoff function in Lemma \ref{lem:l12}. We use $\Psi_*, \bar{\Psi}_*$ to denote the lower order terms in $\Psi, \bar{\Psi}$, i.e. 
\beq\label{eq:nota_psi}
\Psi_* \teq \Psi - \f{\sin(2\b)}{\pi \al}L_{12}(\Om), \quad \bar{\Psi}_* \teq \bar{\Psi} - \f{\sin(2\b)}{\pi \al}L_{12}(\bar{\Om}).
\eeq
$\Psi_*$ and $\Psi$ enjoys the elliptic estimate in Proposition \ref{prop:elli} and $\bar{\Psi},\bar{\Psi}_*$ satisfy Proposition \ref{prop:psi}. 

\subsection{Formulas of the velocity and related terms}\label{sec:formu}
In this subsection, we derive the formulas of the velocity in terms of the stream function in the $(R,\b)$ coordinates to be used later and then collect the remaining terms to be estimated in the nonlinear stability analysis.

Denote
\beq\label{eq:simp50}
u \teq U(\Psi),  \ v  \teq V(\Psi), \ u_x  \teq U_1(\Psi), \  u_y \teq U_2(\Psi), \  v_x \teq V_1(\Psi), \  v_y \teq V_2(\Psi) . 
\eeq
The formula of $U, V$ in terms of $\Psi$ are given in \eqref{eq:simp2}. We also collect them below.
Using \eqref{eq:simp1}-\eqref{eq:simp2}, $D_R = R \pa_R, r \pa_r = \al D_R $ and the incompressible condition $u_x + v_y =0$
, we compute 
\beq\label{eq:simp5}
\bal
U(\Psi) & = - 2 r \sin \b \Psi - \al r  \sin \b D_R \Psi  -  r\cos \b  \pa_{\b}\Psi ,\quad V(\Psi)= 2 r \cos \b \Psi + \al r  \cos \b D_R \Psi - r \sin \b \pa_{\b} \Psi , \\
U_1(\Psi) & = -\f{1}{2} \al^2  \sin (2\b ) D_R^2 \Psi - \f{\al }{2} \sin (2 \b)  D_R \Psi - \cos (2 \b) \pa_{\b} \Psi - \al \cos (2\b)  \pa_{\b} D_R \Psi + \f{\sin(2\b)}{2} \pa^2_{\b} \Psi ,\\
U_2(\Psi)  &=  \al (-1 - 2\sin^2 \b) D_R \Psi - \al D_R D_{\b}\Psi - D_{\b} \Psi - 2 \Psi - \al^2 \sin^2(\b) D_R^2 \Psi - \cos^2 (\b) \pa^2_{\b} \Psi , \\
V_1(\Psi) & =  \al (1 + 2\cos^2 \b) D_R \Psi - \al D_R D_{\b}\Psi -  D_{\b} \Psi + 2 \Psi + \al^2 \cos^2(\b) D_R^2 \Psi + \sin^2 (\b) \pa^2_{\b} \Psi , \\
V_2(\Psi) &= - U_1(\Psi). 
\eal
\eeq
Recall  $\Psi =\f{\sin(2\b)}{\pi \al} L_{12}(\Om) + \Psi_*$. 
For the terms not involving the $R$-derivative, e.g. $\Psi,\ \pa_{\b}\Psi$, we compute the contributions from the leading order part of $\Psi$, i.e. $\f{ \sin(2\b)}{\pi \al} L_{12}(\Om)$, and $\Psi_*$ separately,
\beq\label{eq:simp51}
\bal
U(\Psi) &= -\f{2r\cos(\b)}{\pi \al} L_{12}(\Om) - 2 r \sin(\b) \Psi_*  - \al r  \sin \b D_R \Psi  -  r\cos \b  \pa_{\b}\Psi_* \teq  -\f{2r\cos(\b)}{\pi \al} L_{12}(\Om) + U(\Psi, \Psi_*),\\
V(\Psi) &= \f{2r \sin(\b)}{\pi \al} L_{12}(\Om) + 2 r \cos \b \Psi_* + \al r  \cos \b D_R \Psi - r \sin \b \pa_{\b} \Psi_* \teq \f{2r \sin(\b)}{\pi \al} L_{12}(\Om) + V(\Psi, \Psi_*) ,\\ 
U_1(\Psi) &= -\f{2}{\pi \al} L_{12}(\Om) -\f{\al^2}{2}   \sin (2\b ) D_R^2 \Psi - \f{\al }{2} \sin (2 \b)  D_R \Psi - \cos (2 \b) \pa_{\b} \Psi_* - \al \cos (2\b)  \pa_{\b} D_R \Psi \\
&\quad + \f{\sin(2\b)}{2} \pa^2_{\b} \Psi_*   \teq  -\f{2}{\pi \al} L_{12}(\Om) + U_1(\Psi, \Psi_*), \quad V_2(\Psi) = -U_1(\Psi) =  \f{2}{\pi \al} L_{12}(\Om)  - U_1(\Psi, \Psi_*) .
\eal
\eeq
The first term in the formulas of $U, V, U_1, V_2$ is the leading order term. Observe that 
\[
-D_{\b} \sin(2\b) - 2 \sin(2\b) -\cos^2(\b) \pa^2_{\b} \sin(2\b) = 0,  \quad 
-D_{\b} \sin(2\b) + 2 \sin(2\b) + \sin^2(\b)  \pa^2_{\b} \sin(2\b)  = 0.
\]
For the terms not involving the $R$-derivative in $U_2(\Psi), V_1(\Psi)$ \eqref{eq:simp5}, the contributions from $\sin(2\b) L_{12}(\Om)$ cancel each other. Hence, we have 
\beq\label{eq:simp512}
\bal
U_2(\Psi)  &=  \al (-1 - 2\sin^2 \b) D_R \Psi - \al D_R D_{\b}\Psi - D_{\b} \Psi_* - 2 \Psi_* - \al^2 \sin^2(\b) D_R^2 \Psi - \cos^2 (\b) \pa^2_{\b} \Psi_* ,\\ 
V_1(\Psi) & = \al (1 + 2\cos^2 \b) D_R \Psi - \al D_R D_{\b}\Psi -  D_{\b} \Psi_* + 2 \Psi_* + \al^2 \cos^2(\b) D_R^2 \Psi + \sin^2 (\b) \pa^2_{\b} \Psi_*  .
\eal
\eeq
We decompose $U,V$ in \eqref{eq:simp51}-\eqref{eq:simp512} so that we can apply the elliptic estimate in Propositions \ref{prop:key}, \ref{prop:psi} to $U(\Psi, \Psi_*), V(\Psi, \Psi_*)$, $U_1(\Psi, \Psi_*), V_2(\Psi, \Psi_*), U_2(\Psi), V_1(\Psi)$.

Recall the formula of $\uu \cdot \na $ in \eqref{eq:trans}
\[
u \cdot \na  = - (\al R \pa_{\b} \Psi) \pa_R + ( 2\Psi +  \al R \pa_R \Psi) \pa_{\b}.
\]
Since $\Psi =\f{\sin(2\b)}{\pi \al} L_{12}(\Om) + \Psi_*, D_{\b} = \sin(2\b) \pa_{\b}$, we have
\beq\label{eq:trans41}
\bal
u \cdot \na &= 	
 (-\f{ 2\cos(2\b)}{\pi } L_{12}(\Om)  
- \al \pa_{\b} \Psi_* ) D_R + ( \f{2 }{\pi \al} L_{12}(\Om) + \f{ 2 \Psi_* + \al D_R \Psi }{\sin(2\b)}  ) D_{\b} \teq \f{2}{\pi \al} L_{12}(\Om)  D_{\b} + \cT(\Om) , \\
\cT(\Om) &\teq -\f{ 2\cos(2\b)}{\pi } L_{12}(\Om)  D_R 
- \al \pa_{\b} \Psi_* D_R + \f{  2\Psi_* + \al D_R \Psi}{\sin(2\b)}  D_{\b} .
\eal
\eeq
Using \eqref{eq:simp4}, we have $\f{2}{\pi \al} L_{12}(\bar{\Om})= \f{3}{1+R}$ and
\[
\bar{\uu} \cdot \na = \f{3}{1+R} D_{\b} + \cT(\bar{\Om}) .\]

Recall the formulations \eqref{eq:lin21}-\eqref{eq:lin23} and their equivalence \eqref{eq:equiv}.
We use the notations \eqref{eq:simp50} to rewrite $u_x,u_y$ and so on, and the above computations to expand the remaining terms $\cR$ in \eqref{eq:lin21}-\eqref{eq:lin23}. $\cR$ consists of three parts: the lower order terms in the linearized equation (denote as $P$), the error term $\bar{F}$ \eqref{eq:error} and the nonlinear term $N$ \eqref{eq:non}. The formula of $P$ is given below
\beq\label{eq:lin3}
\bal
\cR_{\Om} = & P_{\Om} + \bar{F}_{\Om} + N_{\Om} , \quad
\cR_{\eta} =  P_{\eta} + \bar{F}_{\eta} + N_{\eta} ,  \quad 
\cR_{\xi}  =  P_{\xi} + \bar{F}_{\xi} + N_{\xi} , \\
 \quad P_{\Om}  = & ( -3 \al D_R- \cT(\bar{\Om}) )\Om  + ( \al c_{\om} D_R- (\uu \cdot \na) ) \bar{\Om},  \\
P_{\eta}  =& ( -3 \al D_R - \cT(\bar{\Om}) )\eta   + ( \al c_{\om} D_R- (\uu \cdot \na) ) \bar{\eta}
- ( U_1(\bar{\Psi}) + \f{3}{1+R}) \eta   - ( U_1(\Psi) + \f{2}{\pi \al} L_{12}(\Om)) \bar{\eta}\\
&- ( V_1(\bar{\Psi}) \xi  + V_1(\Psi) \bar{\xi} ) ,\\
P_{\xi} = & ( -3 \al D_R- \cT(\bar{\Om}) ) \xi    + ( \al c_{\om} D_R- (\uu \cdot \na) ) \bar{\xi}
+ (- V_2(\bar{\Psi}) +\f{3}{1+R} ) \xi  + ( - V_2(\Psi)+  \f{2}{\pi \al} L_{12}(\Om) ) \bar{\xi} \\&-  ( U_2(\Psi) \bar{\eta}  + U_2(\bar{\Psi}) \eta).
\eal
\eeq
We remark that $P$ is the difference between the linear part of \eqref{eq:lin} and \eqref{eq:lin21}-\eqref{eq:lin23}.

Recall $\bar{c}_{\om} = -1, \bar{c}_l = \f{1}{\al} + 3$ and $\bar{\Om}, \bar{\eta}$ in \eqref{eq:profile}. Notice that  $ c_l = \f{1}{\al}, \Om_* =\f{3\al}{c} \f{ R}{(1+R)^2}, \eta_* = \f{6\al}{c} \f{R}{(1+R)^3}, \G = \cos(\b)^{\al}$ is a solution of \eqref{eq:selfsim2} and $\bar{\Om}, \bar{\eta}$ satisfy
$
\bar{\Om} = \Om_* \G(\b),  \ \bar{\eta} = \eta_* \G(\b) , \\  \f{c}{\al} \int_R^{\infty} \f{\Om_*}{s} ds  =  \f{3}{1+R}.$
Hence, we have 
\[
D_R \bar{\Om} = \bar{c}_{\om} \bar{\Om} + \bar{\eta}, \quad 
D_R \bar{\eta} = 2\bar{c}_{\om}  \bar{\Om} + \f{3}{1+R} \bar{\eta}.
\]

Hence, we can simplify $\bar{F}_{\Om}, \bar{F}_{\eta}$ in \eqref{eq:error} as 
\beq\label{eq:error2}
\bar{F}_{\Om} = (-3\al D_R - \bar{\uu} \cdot \na) \bar{\Om}, \quad 
\bar{F}_{\eta} = (-\f{3}{1+R} - U_1(\bar{\Psi})) \bar{\eta} - V_1(\bar{\Psi}) \bar{\xi} 
+  (-3\al D_R - \bar{\uu} \cdot \na) \bar{\eta} ,
\eeq
where we have used the notations in \eqref{eq:simp50} for $\bar{u}_x, \bar{u}_y, \bar{v}_x, \bar{v}_y$.

Recall the definition of the $\cH^3, \cH^3(\psi)$ inner product in \eqref{eq:inner} and the remaining terms 
$\cR_3$ in \eqref{eg:H3},\eqref{eq:E3}. See also the full expression of the weighted $H^3$ energy $E_3$ \eqref{eg:H3_full} related to $\cR_3$  Clearly, we have 
\beq\label{eq:remR3}
\cR_3  = \la \cR_{\Om}, \Om \vp_0 \ra + \la \cR_{\eta} ,\eta \psi_0 \ra +  \f{81}{4\pi c} L_{12}(\Om)(0) \la \cR_{\Om}, \sin(2\b) R^{-1} \ra + 
 \la \cR_{\Om}, \Om \ra_{\cH^3} + 
 \la \cR_{\eta}, \eta \ra_{\cH^3} 
 + \la \cR_{\xi}, \xi \ra_{\cH^3(\psi)}.
\eeq
We remark that $\la \cdot, \cdot \ra$ in the first three terms is the $L^2$ inner product defined in \eqref{eq:inner_L2}. We assume that $\Om, \eta \in \cH^3, \Om \in L^2(\vp) , \eta \in L^2(\psi)$, $\xi \in \cH^3(\psi), \xi \in \cC^1$. We will choose initial perturbations $\Om, \eta, \xi$ in these classes. In subsection \ref{sec:trans}, we estimate the transport terms in the last three terms in $\cR_3$. In subsection \ref{sec:nonf}, we estimate the nonlinear terms in the last three terms in $\cR_3$. In subsection \ref{sec:remR3}, we estimate the first three terms in $\cR_3$.

\subsection{Analysis of the transport terms in $P, N, F$} \label{sec:trans} \quad

In this subsection, we estimate the transport terms in $P$, $N$ and $F$ in $\cH^3$ or $\cH^3(\psi)$ norm. Our main tools in this and the next few subsections are the product rules, the elliptic estimates obtained in Section \ref{sec:elli} and Lemma \ref{lem:l12} on $L_{12}(\Om)$. The reader should pay attention to the subtle cancellation near $R=0$ in the estimates in subsections \ref{sec:tran2}, \ref{sec:tran3}.

\subsubsection{Transport terms I : $( -3\al D_R-\cT(\bar{\Om})) g$ in $P$} \label{sec:tran1}
We estimate 
\[
I_1 = | \la  (-3\al D_R -\cT(\bar{\Om}))  \Om, \Om \ra_{\cH^3}  |, \ 
I_2 = | \la ( -3\al D_R -\cT(\bar{\Om}) ) \eta, \eta \ra_{\cH^3}  |, \ 
I_3 = | \la  ( -3\al D_R -\cT(\bar{\Om}) ) \xi, \xi \ra_{\cH^3(\psi)}  | .
\]
Recall $\cT(\bar{\Om})$ in \eqref{eq:trans41}
\[
3\al D_R+ \cT(\bar{\Om}) = 3\al D_R -\f{ 2\cos(2\b)}{\pi } L_{12}(\bar{\Om})  D_R 
- \al \pa_{\b} \bar{\Psi}_* D_R + \f{1}{\sin(2\b)} ( 2 \bar{\Psi}_* + \al D_R \bar{\Psi} ) D_{\b} .
\]
Applying Proposition \ref{prop:psi} to estimate the above coefficients, then Proposition \ref{prop:tran1} to the $D_{\b}$ transport terms and Proposition \ref{prop:tran2} to the $D_R$ transport terms yield
\[
I_1 \les \al^{1/2} || \Om||^2_{\cH^3}, \quad I_2 \les \al^{1/2} || \eta||^2_{\cH^3},  \quad I_3  \les \al^{1/2} || \xi||^2_{\cH^3(\psi)}.
\]

\subsubsection{Transport term $II$ : $-  \al c_l R \pa_R g -  (\uu \cdot \na) g$ in $N$ \eqref{eq:non}}
We are going to estimate
\[
| \la  (-\al c_l D_R - (\uu \cdot \na))  \Om, \Om \ra_{\cH^2}  |, \quad
| \la ( -\al c_l D_R -(\uu \cdot \na) ) \eta, \eta \ra_{\cH^2}  |,  \quad
| \la  ( -\al c_l D_R -(\uu \cdot \na) ) \xi, \xi \ra_{\cH^2(\psi)}  | .
\]
Recall $\al c_l =  - \f{2 (1 - \al) }{\pi \al} L_{12}(\Om)(0)
$ in \eqref{eq:normal} and the computation about $\uu \cdot \na$ in \eqref{eq:trans41}
\[
\bal
(-\al c_l D_R - (\uu \cdot \na))
&=  (  \f{2 (1-\al) }{\pi \al}  L_{12}(\Om)(0) 
+ \f{ 2\cos(2\b)}{\pi } L_{12}(\Om)   +  \al \pa_{\b} \Psi_* ) D_R \\
 &- ( \f{2}{\pi \al} L_{12}(\Om) + \f{2 \Psi_*  }{\sin(2\b)} 
+ \f{ \al D_R \Psi}{\sin(2\b)} ) D_{\b}  .
 \eal
\]
For the first two $D_R$ transport terms, we apply Proposition \ref{prop:tran2} and Lemma \ref{lem:l12} to estimate $||D_R^k L_{12}(\Om)||_{L^{\infty}}$ for $k \leq 3$. For the third, fourth 
($(\f{2}{\pi \al}L_{12}(\Om))D_{\b}$) and fifth ($\f{2\Psi_*}{\sin(2\b)})D_{\b}$) transport terms, 
 we apply Proposition \ref{prop:tran1}, Proposition \ref{prop:key} to $\pa_{\b} \Psi_*, \f{\Psi_*}{\sin(2\b)}$ and \eqref{eq:l12X} in Lemma \ref{lem:l12} to $L_{12}(\Om)$. For the last transport term, we use Proposition \ref{prop:tran3}. Hence, we derive 
\[
| I_1 | \les \al^{-3/2} || \Om||^3_{\cH^3} ,
\quad | I_2| \les \al^{-3/2} || \Om||_{\cH^3} || \eta||^2_{\cH^3},
\quad |I_3| \les \al^{-3/2} || \Om||_{\cH^3} ( || \xi||_{\cH^3(\psi)} + \al^{1/2} || \xi||_{\cC^1} )^2.
\]

The largest term is $\f{2}{\pi \al}L_{12}(\Om) D_{\b}$, which leads to $\al^{-3/2}$ in the upper bound.

\subsubsection{Transport term III : $( \al c_{\om} D_R-(\uu \cdot \na) ) \bar{g}$ in $P$}\label{sec:tran2}  Next, we estimate 
\[
||  \al c_{\om} D_R - (u \cdot \na)) \bar{\Om}||_{\cH^3} , \quad || \al c_{\om} D_R- (u \cdot \na) ) \bar{\eta} ||_{\cH^3}  , \quad
|| \al c_{\om} D_R- (u \cdot \na) )\bar{\xi} ||_{\cH^3(\psi)} .
\]
Recall that $\cH^3$ contains a singular weight $\f{(1+R)^4}{R^4}$. 
We use the explicit form $\G(\b) = \cos(\b)^{\al}$ and a careful calculation to cancel the singular weight $R^{-4}$ near $R=0$.Using the formula for $c_{\om}$ in \eqref{eq:normal} and the computation in \eqref{eq:trans41}, we have 
\beq\label{eq:tran_R21}
\bal
 (\al c_{\om} D_R - (u \cdot \na) )g 
 &= \lt( -\f{2}{\pi} L_{12}(\Om)(0)  D_R 
 + \f{2\cos(2\b)} {\pi} L_{12}(\Om) D_R  - \f{2}{\pi \al}L_{12}(\Om) D_{\b} \rt) g \\
&+ ( \al \pa_{\b} \Psi_* D_R - (\sin(2\b))^{-1} (2 \Psi_* + \al D_R\Psi) D_{\b} ) g \teq I(g) + II(g).
\eal
\eeq
Denote $Q = L_{12}(\Om)- \chi L_{12}(\Om)(0)$. We use $ L_{12}(\Om) = Q +\chi L_{12}(\Om)(0)$ to rewrite $I(g)$
 \beq\label{eq:tran_R22}
I = \f{2 }{\pi } L_{12}(\Om)(0)   (- D_R g + \cos(2\b) \chi D_R g - \f{1}{\al} \chi D_{\b} g )   + 
 \f{2}{\pi} Q (\cos(2\b) D_R g - \f{1}{\al} D_{\b} g )  \teq I_1 + I_2. 
\eeq

Using \eqref{eq:Dg} and the formula of $g = \bar{\Om}, \bar{\eta}$ in \eqref{eq:profile}, we have
\[
 D_{\b}\G  = - 2\al \sin^2(\b) \G , \quad  D_{\b} g  =  - 2 \al \sin^2(\b) g.
\]

It follows that
\beq\label{eq:tran_R23}
I_1 =  \f{2}{\pi}   L_{12}(\Om)(0) ( - D_R g + \cos(2\b)\chi D_R g + 2\sin^2(\b)\chi g   )
=  \f{2}{\pi}   L_{12}(\Om)(0) ( -(1-\chi) D_R g + 2\sin^2(\b) \chi(- D_R g + g)  ).
\eeq
Since the smooth cutoff function $\chi$ satisfies $1-\chi(R) =0$ for $R \leq 1$. $I_1$ vanishes quadratically near $R=0$. For $(g, \cH^3(\rho)) = (\bar{\Om}, \cH^3), ( \bar{\eta}, \cH^3)$ or $(\bar{\xi}, \cH^3(\psi))$, applying Lemma \ref{lem:bar} to $g = \bar{\Om} ,\bar{\eta}$, \eqref{eq:xi_cw} in Lemma \ref{lem:xi} to $g = \bar{\xi}$ and using a direct calculation yield
\[
||I_1(g)||_{\cH^3(\rho)} \les |L_{12}(\Om)(0) | ( || (1 -\chi) g||_{\cH^3(\rho)} + || D_R g - g||_{\cH^3(\rho)} ) \les  \al | L_{12}(\Om)(0)| \les \al || \Om||_{\cH^3},
\]
where we have used \eqref{eq:l12} in Lemma \ref{lem:l12} in the last inequality. 

Recall $Q = L_{12}(\Om) - \chi L_{12}(\Om)(0)$ and $I_2, II(g)$ in \eqref{eq:tran_R21}, \eqref{eq:tran_R22}. For $g = \bar{\Om}, \bar{\eta}$, applying the product estimate in Proposition \ref{prop:prod1}, we get
\[
\bal
|| I_2(g ||_{\cH^3}
&\les \al^{-1/2} || Q||_{\cH^3} ( || D_Rg||_{\cW^{5,\infty}}
+ \al^{-1} || D_{\b} g||_{\cW^{5,\infty}} ) \les \al^{1/2} || \Om||_{\cH^3} ,\\ 
 ||II(g)||_{\cH^3} & \les  \al^{-1/2} || \Om||_{\cH^3} ( \al ||D_R g||_{\cW^{5,\infty}} + || D_{\b } g||_{\cW^{5,\infty}} ) \les \al^{3/2} || \Om||_{\cH^3},
\eal
\]
where we have applied Proposition \ref{prop:key} to $\Psi$, Lemma \ref{lem:l12} to $Q$ and Proposition \ref{prop:gam} to $g = \bar{\Om}, \bar{\eta}$. For $g = \bar{\xi}$, applying Proposition \ref{prop:prod1} yields
\footnote{The estimate of $I_2(\bar{\xi}),II(\bar{\xi})$ can be improved to $\al^{3/2} || \Om||_{\cH^3}$ but we do not need this extra smallness here.}
\[
\bal
|| I_2(\bar{\xi}) ||_{\cH^3(\psi)}
&\les \al^{-1/2} || Q||_{\cH^3} ( \al^{1/2} || D_R \bar{\xi}||_{\cC^1} + ||D_R\bar{\xi}||_{\cH^3(\psi)}  + \al^{1/2} || D_{\b} \bar{\xi} ||_{\cC^1}
+ || D_{\b} \bar{\xi}||_{\cH^3(\psi)} ) \les \al^{1/2} || \Om||_{\cH^3}
 ,\\ 
 ||II(\bar{\xi})||_{\cH^3(\psi)} & \les  \al^{-1/2} || \Om||_{\cH^3} ( \al^{3/2} ||D_R \bar{\xi}||_{\cW^{\cC^1}} + \al || D_R \bar{\xi}||_{\cH^3(\psi)} +
\al^{1/2} || D_{\b } \bar{\xi}||_{\cC^1} + || D_{\b} \bar{\xi} ||_{\cH^3(\psi)}  )  \les \al^{1/2} || \Om||_{\cH^3},
 \eal
\]
where we have used Lemma \ref{lem:xi} to estimate the norm of $\bar{\xi}$. Hence, we prove 
\[
||  \al c_{\om} D_R - (u \cdot \na)) \bar{\Om}||_{\cH^3} + || \al c_{\om} D_R- (u \cdot \na) ) \bar{\eta} ||_{\cH^3}  + 
|| \al c_{\om} D_R- (u \cdot \na) )\bar{\xi} ||_{\cH^3(\psi)} 
\les \al^{1/2} || \Om ||_{\cH^3}.
\]

\subsubsection{Transport term $IV$: $ (-3\al D_R - \bar{\uu} \cdot \na) g $ in $\bar{F}_{\Om}, \bar{F}_{\eta}, \bar{F}_{\xi}$}\label{sec:tran3} We will prove for $(g, \cH^3(\rho)) = 
(\bar{\Om},\cH^3) ,  (\bar{\eta}, \cH^3)  , (\bar{\xi},\cH^3(\psi))$
\beq\label{eq:error_tran}
|| (-3\al D_R - \bar{\uu} \cdot \na) g ||_{\cH^3(\rho)} \les \al^2 .
\eeq

From \eqref{eq:simp4}, we have $\f{2}{\pi} L_{12}(\overline{\Om})(0) = 3\al$. Hence, we can apply the decomposition in \eqref{eq:tran_R21}-\eqref{eq:tran_R22} to $(-3\al D_R - \bar{\uu} \cdot \na) g$ to get
\beq\label{eq:tran_R24}
\bal
 &(-3\al D_R - \bar{\uu} \cdot \na)  g = I_1(g) + I_2(g) + II(g),    \quad
II(g) = ( \al \pa_{\b} \bar{\Psi}_* D_R - (\sin(2\b))^{-1} (2 \bar{\Psi}_* + \al D_R \bar{\Psi})D_{\b} ) g  \\
 & 
 I_1(g) = \f{2 }{\pi } L_{12}(\bar{\Om})(0)   (- D_R g + \cos(2\b) \chi D_R g - \f{1}{\al} \chi D_{\b} g ), 
  \quad I_2(g) = \f{2}{\pi} \bar{Q} (\cos(2\b) D_R g - \f{1}{\al} D_{\b} g ) ,
 \eal
\eeq
where $\bar{Q} = L_{12}(\bar{\Om}) -\chi L_{12}(\bar{\Om})(0)$. Notice that the computation \eqref{eq:tran_R23} still holds for $g = \bar{\Om}, \bar{\eta}$
\[
I_1(g)  = \f{2}{\pi}   L_{12}(\bar{\Om})(0) ( -(1-\chi) D_R g + 2\sin^2(\b) \chi(- D_R g + g) .
\]
Recall $L_{12}(\bar{\Om}) = \f{3\al\pi}{2(1+R)}$. Notice that $(1-\chi) D_Rg , D_R g - g, Q D_R g, Q D_{\b} g$
 vanish quadratically near $R=0$.  Applying Lemma \ref{lem:bar} to $g = \bar{\Om}, \bar{\eta} $ and using a direct calculation yield
\[
|| I_1(g)||_{\cH^3} \les \al | L_{12}(\bar{\Om})(0)|  \les \al^2,
\quad || I_2(g)||_{\cH^3} \les \al^2.
\]
Since $\bar{\xi}$ already vanishes quadratically near $R=0$, using Lemma \ref{lem:xi} for $\bar{\xi}$ and a direct calculation give
\[
|| I_1(\bar{\xi})||_{\cH^3(\psi)} \les \al | L_{12}(\bar{\Om})(0)|  \les \al^2,
\quad || I_2(\bar{\xi})||_{\cH^3(\psi)} \les \al^2.
\]

For $II(g)$ with $g= \bar{\Om}, \bar{\eta}$, we apply Propositions \ref{prop:W2}, \ref{prop:alg} and the triangle inequality to yield
\[
\bal
||II(g)||_{\cH^3} &\les || \f{(1+R)^3}{R^2} II(g)||_{\cW^{3,\infty}}
\les  || \f{1+R}{R}\al \pa_{\b} \bar{\Psi}_*||_{\cW^{5,\infty} } || \f{(1+R)^2}{R} D_R g ||_{\cW^{3,\infty}}\\
 &+ || \f{1+R}{R} (\sin(2\b))^{-1} (2 \bar{\Psi}_* + \al D_R \bar{\Psi})  ||_{\cW^{5,\infty} }|| \f{(1+R)^2}{R} D_{\b} g||_{\cW^{3,\infty}}  \les \al^2,
\eal
\]
where we have applied 
Proposition \ref{prop:psi} to $\bar{\Psi}, \bar{\Psi}_*$ and Proposition \ref{prop:gam} to $g = \bar{\Om}, \bar{\eta}$. 

For $II(\bar{\xi})$, we use Propositions \ref{prop:prod1}, \ref{prop:psi} and Lemma \ref{lem:xi} to get
\[
\bal
 ||II(\bar{\xi})||_{\cH^3(\psi)} & \les
 \al^{-1/2}  || \pa_{\b} \bar{\Psi}_* ||_{\cW^{5,\infty}} 
( \al^{3/2} ||D_R \bar{\xi}||_{\cW^{\cC^1}} + \al || D_R \bar{\xi}||_{\cH^3(\psi)} ) \\
&+ \al^{-1/2} ||  (\sin(2\b))^{-1} (2 \bar{\Psi}_* + \al D_R \bar{\Psi}) ||_{\cW^{5,\infty}} 
(\al^{1/2} || D_{\b } \bar{\xi}||_{\cC^1} + || D_{\b} \bar{\xi} ||_{\cH^3(\psi)}  )
\les \al^{5/2}.
\eal
\]

\subsection{Nonlinear forcing terms in $P, N , F$}\label{sec:nonf}
The estimates in this subsection are obtained by applying the product estimates in subsection \ref{sec:nonterm} directly. The reader should pay attention to the cancellation near $R=0$ in the estimates in subsection \ref{sec:nonf2}.
\subsubsection{ Nonlinear forcing term in $P_{\eta}, P_{\xi}$} We are going to estimate
\[
\bal
I_1&= || - ( U_1(\bar{\Psi}) + \f{3}{1+R}) \eta  
 - ( U_1(\Psi) + \f{2}{\pi \al} L_{12}(\Om)) \bar{\eta} ||_{\cH^3}, \quad I_2 = ||  V_1(\bar{\Psi}) \xi  + V_1(\Psi) \bar{\xi}  ||_{\cH^3} , \\
 II_1 &= || (- V_2(\bar{\Psi}) +\f{3}{1+R} ) \xi  + ( - V_2(\Psi)+  \f{2}{\pi \al} L_{12}(\Om) ) \bar{\xi} ||_{\cH^3(\psi)} , \quad
 II_2 =  ||  U_2(\Psi) \bar{\eta}  + U_2(\bar{\Psi}) \eta ||_{\cH^3(\psi)} .
 \eal
\]

From \eqref{eq:simp4}, $\f{2}{\pi \al} L_{12}(\bar{\Om}) = \f{3}{1+R}$. Recall the formula of $U_i, V_j$ in \eqref{eq:simp51}-\eqref{eq:simp512}. Applying Propositions \ref{prop:psi}, \ref{prop:key}, we obtain
\beq\label{eq:small_uv}
\bal
&|| U_1(\bar{\Psi}) + \f{2}{\pi \al} L_{12}(\bar{\Om}) ||_{\cW^{5,\infty}}
= || -V_2(\bar{\Psi}) + \f{2}{\pi \al} L_{12}(\bar{\Om}) ||_{\cW^{5,\infty}} \les \al,
\quad || U_2(\bar{\Psi}) ||_{\cW^{5,\infty}} \les \al , \\
&|| U_1(\Psi) + \f{2}{\pi \al} L_{12}( \Om) ||_{\cH^3}
= || -V_2( \Psi) + \f{2}{\pi \al} L_{12}( \Om) ||_{\cW^{5,\infty}} \les || \Om||_{\cH^3}
\quad || U_2(\Psi) ||_{ \cH^3} \les  || \Om||_{\cH^3} . \\
\eal
\eeq

Applying Proposition \ref{prop:prod1}, Lemma \ref{prop:gam} to $\bar{\eta}$ and Lemma \ref{lem:xi} to $\bar{\xi}$, we yield
\[
\bal
I_1 & \les \al^{1/2} || \eta||_{\cH^3} + \al^{-1/2} || \Om||_{\cH^3} ||  \bar{\eta}||_{\cW^{5,\infty}} \les  \al^{1/2} (  || \eta||_{\cH^3} + || \Om||_{\cH^3} ), \\ 
 II_1 
&\les \al^{1/2} ( \al^{1/2} || \xi||_{\cC^1} + || \xi||_{\cH^3(\psi)})+ \al^{-1/2} || \Om||_{\cH^3} ( \al^{1/2} || \bar{\xi}||_{\cC^1} + || \bar{\xi}||_{\cH^3(\psi)}) \\
& \les  \al^{1/2} ( \al^{1/2} || \xi||_{\cC^1} + || \xi||_{\cH^3(\psi)})+ \al^{3/2} || \Om||_{\cH^3} ,
\eal
\]
where we have used Lemma \ref{lem:xi} in the last inequality. Using Lemma \ref{lem:H2H2} and Proposition \ref{prop:prod1}, we derive
\[
II_2 \les ||  U_2(\Psi) \bar{\eta}  + U_2(\bar{\Psi}) \eta ||_{\cH^3}
\les \al^{-1/2} (  || \Om||_{\cH^3} || \bar{\eta}||_{\cW^{5,\infty}} + || U_2(\bar{\Psi})||_{\cW^{5,\infty}}  || \eta||_{\cH^3})
\les \al^{1/2} ( || \Om||_{\cH^3}  + || \eta||_{\cH^3} ).
\]
For $I_2$, we use Proposition \ref{prop:prod3} and Lemma \ref{lem:xi} to obtain 
\[
I_2 \les \al^{1/2} || \xi||_{\cH^3(\psi)} + 
\al^{-1/2} ||\Om||_{\cH^3} ( \al^{1/2} || \bar{\xi}||_{\cC^1} + || \bar{\xi}||_{\cH^3(\psi)})
\les  \al^{1/2} || \xi||_{\cH^3(\psi)}  + \al^{3/2} ||\Om||_{\cH^3}.
\]

\subsubsection{Nonlinear forcing term in $N$ \eqref{eq:non}: $c_{\om} \Om, \ (2 c_{\om} -  U_1(\Psi)) \eta - V_1(\Psi) \xi, (2 c_{\om} -  V_2(\Psi) ) \xi -  U_2(\Psi) \eta $} \label{sec:nonf2} \quad

Recall the formula of $U_1, V_2$ in \eqref{eq:simp51}. We use the following decomposition 
\[
-V_2(\Psi) = U_1(\Psi)= ( U_1(\Psi) + \f{2}{\pi \al} L_{12}(\Om) ) - \f{2}{\pi \al} L_{12}(\Om)
 = I + II.
\]

Applying Proposition \ref{prop:key} to $I$ and Lemma \ref{lem:l12} to $II$, we obtain 
\beq\label{eq:U1X}
|| V_2(\Psi)  ||_X = || U_1(\Psi) ||_X 
\les || I||_{\cH^3} + \al^{-1} || L_{12}(\Om)||_X \les \al^{-1} || \Om||_{\cH^3} .
\eeq

Applying Propositions \ref{prop:prod1}, \ref{prop:key}, we get
\[
|| U_1(\Psi) \eta ||_{\cH^3} \les \al^{-3/2} || \Om||_{\cH^3} || \eta||_{\cH^3}
,\quad
|| (V_2(\Psi)  \xi  ||_{\cH^3(\psi)} \les 
\al^{-3/2} || \Om||_{\cH^3} ( ||  \xi||_{\cH^3(\psi)} +\al^{1/2} || \xi||_{\cC^1} ).
\]

Applying Proposition \ref{prop:prod3} to $V_1 \xi$, Proposition \ref{prop:prod1} and Lemma \ref{lem:H2H2} to $U_2 \eta$ yields 
\[
\bal
|| - V_1(\Psi) \xi ||_{\cH^3} &\les
\al^{-1/2} || \Om ||_{\cH^3} ( || \xi||_{\cH^3(\psi)} + \al^{1/2} || \xi||_{\cC^1} ), \\
  || - U_2(\Psi) \eta ||_{\cH^3(\psi)} &\les 
 ||  U_2(\Psi) \eta ||_{\cH^3} \les \al^{-1/2} || \Om||_{\cH^3} || \eta||_{\cH^3}.
 \eal
\] 
Finally, from \eqref{eq:normal}, \eqref{eq:l12}, the scalar $c_{\om}$ satisfies 
$| c_{\om} | \les \al^{-1} || \Om||_{\cH^3}$. Hence, we obtain 
\[
|| c_{\om} \Om||_{\cH^3} \les \al^{-1} || \Om||_{\cH^3}^2,  \ 
|| c_{\om}  \eta||_{\cH^3} \les \al^{-1} || \Om||_{\cH^3} || \eta||_{\cH^3},  \ 
 || c_{\om} \xi||_{\cH^3(\psi)} \les \al^{-1} || \Om||_{\cH^3} || \xi||_{\cH^3(\psi)} .
\]

\subsubsection{Nonlinear forcing terms in $F$} Recall that we have estimated the transport term 
$(-3 \al D_R - \bar{\uu} \na ) g$ in $F_{\Om}, F_{\eta}, F_{\xi}$ in \eqref{eq:error_tran}. The remaining terms in $\bar{F}_{\eta}$ and $\bar{F}_{\xi}$ (see \eqref{eq:error}, \eqref{eq:error2}) are 
\beq\label{eq:error_rem}
I = (-\f{3}{1+R} - U_1(\bar{\Psi})) \bar{\eta} - V_1(\bar{\Psi}) \bar{\xi} 
, \quad 
II = (2 \bar{c}_{\om} - V_2(\bar{\Psi})) \bar{\xi} - U_2(\bar{\Psi}) \bar{\eta} 
- D_R\bar{\xi}  ,\\
\eeq
where we have used $ -\al \bar{c}_l D_R   = - D_R -3\al D_R$ since $\bar{c}_l = \f{1}{\al} +3$ \eqref{eq:profile}. From \eqref{eq:simp4}, we have $\f{2}{\pi \al}L_{12}(\bar{\Om}) = \f{3}{1+R}$. Using $U_i, V_j$ in \eqref{eq:simp51}-\eqref{eq:simp512}, $\bar{\eta}$ \eqref{eq:profile} and Proposition \ref{prop:psi}, we have 
\[
\bal
 || \f{1+R}{R} (U_1(\bar{\Psi}) + \f{3}{1+R}) ||_{\cW^{5,\infty}}
\les \al , \quad
|| \f{1+R}{R} U_2(\bar{\Psi} ) ||_{\cW^{5,\infty}} \les \al , \quad 
 || \f{(1+R)^2}{R} \bar{\eta}||_{\cW^{5,\infty}} \les \al .\\
\eal
\]
Applying the embedding in Proposition \ref{prop:W2} and then the algebra property of $\cW^{3,\infty}$ in Proposition \ref{prop:alg} to $\bar{\eta}$ and the above estimates, we get  
\[
|| (-\f{3}{1+R} - U_1(\bar{\Psi})) \bar{\eta}||_{\cH^3} \les \al^2,  \quad 
|| U_2(\bar{\Psi}) \bar{\eta}||_{\cH^3(\psi)} \les  || U_2(\bar{\Psi}) \bar{\eta}||_{\cH^3} \les \al^2 ,
\] 
where we have used \eqref{eq:H2H2} in the second inequality. 
Applying the product estimates in Propositions \ref{prop:prod1}, \ref{prop:prod3}, Proposition \ref{prop:psi} to $V_2(\bar{\Psi})$ and Lemma \ref{lem:xi} to $\bar{\xi}$, we yield 
\[
\bal
|| ( V_2(\bar{\Psi} )-\f{3}{1+R}) \bar{\xi}||_{\cH^3(\psi)} & \les 
\al^{-1/2} \cdot \al ( \al^{1/2} || \bar{\xi}||_{\cC^1} + || \bar{\xi}||_{\cH^3(\psi)} )  
\les \al^{5/2} , \\
|| V_1(\bar{\Psi}) \bar{\xi}||_{\cH^3}  &\les \al^{1/2}  || \bar{\xi} ||_{\cH^3(\psi)}
\les \al^{5/2}.
\eal
\]


For the remaining part in $II$, we simply use $\bar{c}_{\om} = -1$ and Lemma \ref{lem:xi} to get
\[
|| 2 \bar{c}_{\om} \bar{\xi}-D_R\bar{\xi}||_{\cH^3(\psi)}   + || \f{3}{1+R} \bar{\xi}||_{\cH^3(\psi)}  \les \al^2.
\]
Therefore, combining the formula of $\bar{F}$ in \eqref{eq:error}, \eqref{eq:error2}, the estimate
\eqref{eq:error_tran} and the above estimates of $I,II$, we prove
\beq\label{eq:error3}
|| \bar{F}_{\Om}||_{\cH^3} \les \al^2, \quad || \bar{F}_{\eta}||_{\cH^3} \les \al^2, \quad || \bar{F}_{\xi}||_{\cH^3(\psi)} \les \al^2.
\eeq

\subsection{Analysis of the remaining terms in $\cR_3$}\label{sec:remR3}
It remains to estimate 
\beq\label{eq:remR31}
\la \cR_{\Om}, \Om \vp_0 \ra , \quad \la \cR_{\eta} ,\eta \psi_0 \ra , \quad \f{81}{4\pi c} L_{12}(\Om)(0) \la \cR_{\Om}, \sin(2\b) R^{-1} \ra ,
\eeq
in $\cR_3$ \eqref{eq:remR3}. Recall the definition of $\vp_0, \psi_0$ in Definition \ref{wg:L2_R0} and $\vp_1$ in Definition \ref{def:wg}. Note that $\psi_0(R,\b)$ grows linearly for large $R$. Clearly, we have 
 \[
\vp_0 \les \vp_1, \quad \psi_0 = \f{9}{32} R \G(\b)^{-1}  +\f{3}{16}\lt( \f{(1+R)^3}{R^4} +\f{3}{2} \f{(1+R)^4}{R^3} - \f{3}{2} R \rt) \G(\b)^{-1} \teq \psi_{0,1} + \psi_{0,2}.
 \]
Since the weights $\vp_0, \psi_{0,2}, R^{-1} \sin(2\b)$ are much weaker than the weights $\vp_1$, the estimates of 
\[
\la \cR_{\Om}, \Om \vp_0 \ra , \quad \la \cR_{\eta} ,\eta \psi_{0,2} \ra  ,\quad \f{81}{4\pi c} L_{12}(\Om)(0) \la \cR_{\Om}, \sin(2\b) R^{-1} \ra 
\]
follows from the same argument as that in the last two sections and a similar bound can be derived. It remains to estimate $\la R_{\eta} ,\eta R \G(\b)^{-1} \ra $. Compared to $\vp_1$, $R \G(\b)^{-1}$ is much less singular in $R$ and $\b$. We focus on how to control the growing factor $R$. We use the decay estimate of $\bar{\eta}$ in Lemma \ref{lem:bar} and $\bar{\xi}$ in Lemma \ref{lem:xi}. In particular, for $i+j \leq 7$ we have
\beq\label{eq:decayxi}
| D_R^i D_{\b}^j \bar{\eta} | \les \al (1+R)^{-2}, \quad |D_R^i D_{\b}^j \bar{\xi}| \les |\bar{\xi}| \les \al^2 (1+R)^{-2} \sin(\b)^{-2\al}  .
\eeq

Recall the decomposition of $\cR_{\eta}$ in \eqref{eq:lin3} and the error $\bar{F}_{\eta}$ defined in \eqref{eq:error2}. We use argument similar to that in the last subsection to estimate 
$|| \bar{F}_{\eta} ( R \G(\b)^{-1} )^{1/2}||_2 $. A typical term in $\bar{F}_{\eta}$ can be estimated as follows 
\[
\int_0^{\infty}\int_0^{\pi/2} V_1(\bar{\Psi})^2 \bar{\xi}^2  R \G(\b)^{-1} d R d \b
\les \al^2  \int_0^{\infty}\int_0^{\pi/2}  \al^4 (1+R)^{-4} \sin(\b)^{-4\al}  R \G(\b)^{-1} dR d\b \les \al^6 \les \al^4,
\]
where we have applied Proposition \ref{prop:psi} to estimate $V_1(\bar{\Psi})$ and used $\al < \f{1}{8}$ (we will choose $\al$ sufficiently small). Similarly, we have
$ ||  \bar{F}_{\eta} ( R \G(\b)^{-1})^{1/2} ||_2^2  \les \al^4$. Hence, using the Cauchy-Schwarz inequality, we get
	\[
| \la \bar{F}_{\eta}, \eta R \G(\b)^{-1} \ra|
\les || \bar{F}_{\eta}  ( R\G(\b)^{-1})^{1/2}||_2  ||  \eta  ( R \G(\b)^{-1})^{1/2} ||_2
\les \al^2  || \eta  \psi_0^{1/2} ||_2  \les \al^2 E_3,
	\]
	where we have used \eqref{eq:E3} to derive the last inequality.

Recall $P_{\eta}$ in \eqref{eq:lin3}, $N_{\eta}$ in \eqref{eq:non} and the formula of $\uu \cdot \na$ in \eqref{eq:trans41}. We use integration by parts and then a $L^{\infty}$ estimate to estimate the transport terms in $P_{\eta}, N_{\eta}$. A typical term in these transport terms can be estimated as follows
\[
\bal
| \la \f{2}{\pi \al} L_{12}(\Om) D_{\b} \eta, \eta R \G^{-1} \ra|
&= | \la \f{2}{\pi \al} L_{12}(\Om) \pa_{\b} (\sin(2\b) \G^{-1} ) , \eta^2 R \ra |
\les \al^{-1} || L_{12}(\Om) ||_{\infty} ||  \eta ( R \G^{-1} )^{1/2}||_2^2  \\
& \les \al^{-1} || \Om \vp_1^{1/2} ||_{L^2} ||  \eta \psi_0^{1/2}||_2^2 \les \al^{-1} E_3^3 ,
\eal
\]
where we have used $\G(\b) = \cos(\b)^{\al}, |\sin(2\b)\pa_{\b} \G(\b)^{-1}| \les \G(\b)^{-1} $
in the first inequality, Lemma \ref{lem:l12} in the second inequality and \eqref{eq:E3} in the last inequality.

For the nonlinear terms related to $\eta$, i.e. $( 2 c_{\om} -U_1(\Psi)) \eta$ in $N_{\eta}$ \eqref{eq:non} and $-(U_1(\Psi) + \f{3}{1+R}) \eta$ in $P_{\eta}$ \eqref{eq:lin3}, we also apply a $L^{\infty}$ estimate. For example, we have
\[
| \la (2c_{\om} - U_1(\Psi)) \eta, \eta R \G(\b)^{-1} \ra |
\les || 2c_{\om} -U_1(\Psi) ||_{L^{\infty}} ||  \eta \psi_0^{1/2}||_2^2
\les \al^{-1} || \Om||_{\cH^3} ||\eta \psi_0^{1/2}||_2^2  \les \al^{-1} E_3^3,
\]
where we have used \eqref{eq:U1X} and $|c_{\om}|  = \f{2}{\pi \al} |L_{12}(\Om)(0)| \les \al^{-1} || \Om||_{\cH^3}$ (see Lemma \ref{lem:l12}) in the last inequality.

For the terms related to $\bar{\eta}, \bar{\xi}$ in $P_{\eta}$ \eqref{eq:lin3}, i.e. $
( U_1(\Psi) + \f{2}{\pi \al} L_{12}(\Om)) \bar{\eta}, \ V_1(\Psi) \bar{\xi} )$, they can be estimated easily by using the fast decay of $\bar{\xi}, \bar{\eta}$ \eqref{eq:decayxi}.

Finally, for the terms related to $\xi$, i.e. $V_1(\Psi) \xi$ in $N_{\eta}$ \eqref{eq:non} and $V_1(\bar{\Psi}) \xi$ in \eqref{eq:lin3}, we get
\[
\bal
&|  \la V_1(\bar{\Psi}) \xi, \eta R \G^{-1} \ra|
+ |  \la V_1(\Psi) \xi, \eta R \G^{-1} \ra| \\
\les & || \eta, R^{1/2} \G^{-1/2} ||_{L^2}
|| \xi R^{1/2} \sin(2\b)^{1/4} ||_{L^{\infty}}
( || V_1(\bar{\Psi}) \sin(2\b)^{-1/4} \G^{-1/2} ||_{L^2}
+ || V_1(\Psi) \sin(2\b)^{-1/4} \G^{-1/2} ||_{L^2}  ) \\
 \les & || \eta \psi_0^{1/2} ||_{L^2}  || \xi||_{\cH^3(\psi)}
( || V_1(\bar{\Psi}) \sin(2\b)^{-\s/ 2} ||_{L^2}
+ || V_1(\Psi) \sin(2\b)^{-\s/ 2} ||_{L^2}  )  \\
\les & E_3^2
( || \bar{\Om} \sin(2\b)^{-\s/2} ||_{L^2}+ ||\Om  \sin(2\b)^{-\s/2}||_{L^2}  ) 
\les E_3^2 (\al + E_3),
\eal
\]
where we have applied Lemma \ref{lem:xi_decay} in the second inequality, the weighted $L^2$ (with weight $\sin(2\b)^{-\s}, \s = \f{99}{100}$) version of Proposition \ref{prop:key} in the third inequality and a direct computation using \eqref{eq:profile} in the last inequality. 

Combining the estimates of $\bar{F}_{\eta}, P_{\eta}, N_{\eta}$, we have 
\[
| \la \cR_{\eta}, \eta R \G^{-1} \ra | \les \al^{-3/2} E_3^3 + \al^{1/2} E_3^2 + \al^2 E_3.
\]




\subsubsection{Completing the $\cH^3$ and $\cH^3(\psi)$ estimates}
From \eqref{eq:E2H2}, we can use $E_3$ to bound $|| \Om||_{\cH^3}$ , $||\eta||_{\cH^3},||\xi||_{\cH^3(\psi)}$. Combining the estimates 
in the last few subsections, we prove 
\[
| \la \cR_{\Om} , \Om \ra_{\cH^3} | \ , 
| \la \cR_{\eta} , \eta \ra_{\cH^3} | \ , | \la \cR_{\xi} , \xi \ra_{\cH^3(\psi)} | 
\les \al^{1/2} ( E_3^2 + \al || \xi||^2_{\cC^1}) + \al^{-3/2} ( E_3 + \al^{1/2}||\xi||_{\cC^1})^3 + \al^2 E_3 ,
\]
where $E_3$ is defined in \eqref{eg:H3}. Combining Corollary \ref{cor:H3} and the above estimates, we prove \eqref{eq:boot1}.

\subsection{Remaining terms in the $\cC^1$ estimate of $\xi$} \label{sec:C1} \quad

Recall that we perform $L^{\infty}$ estimates of $\xi$ and its derivatives in subsection \ref{sec:inf}. In this subsection, we complete the estimate of the remaining terms in these estimates and derive \eqref{eq:boot2}. We group together the remaining terms in \eqref{eq:xi_inf1}, \eqref{eq:xi_inf2R}, \eqref{eq:xi_inf2b}, which remain to be estimated.
They can be bounded by 
\[
\bal
 &|| \xi||_{\cC^1} ( || \Xi_1 ||_{\cC^1}+ || \Xi_2 ||_{\cC^1} + ||  \bar{F}_{\xi}||_{ \cC^1}  + || N_o||_{\cC^1} ),   \quad  ||  \xi||_{\cC^1}  || [ \phi_1 D_R, \cA_2] \xi ||_{\infty} , \\
  &  || \xi||_{\cC^1}    || [ \phi_2 D_{\b}, \cA_2] \xi ||_{\infty} ,  \quad
|\al c_l|  || \phi_1 D_R \xi ||^2_{L^{\infty}}, 
\quad || \phi_2 D_{\b} \xi||_{\infty} || \cA_1(\phi_2 - 1) \cdot D_{\b}\xi ||_{L^{\infty}}.
\eal
\]

\subsubsection{Analysis of  $\Xi_1, \Xi_2, N_o$}\label{sec:Xi1}
 Recall $\Xi_1, \Xi_2, N_o$ in \eqref{eq:Xi1}, \eqref{eq:Xi2},\eqref{eq:non_xio}
\[
\bal
&\Xi_1 = (\f{3}{1+R} - V_2(\bar{\Psi})) \xi  , \quad
\Xi_2 =  - V_2(\Psi) \bar{\xi} + c_{\om} ( 2 \bar{\xi} - R\pa_R \bar{\xi})  +( \al c_{\om} R\pa_R - ( \uu \cdot  \na ) ) \bar{\xi} -  (U_2(\Psi) \bar{\eta} + U_2(\bar{\Psi}) \eta ), \\
 & N_o = (2c_{\om} - V_2(\Psi))\xi  - U_2(\Psi) \eta,
 \eal 
\]
where we have used $V_2(\Psi) = v_y, U_2(\Psi) =u_y $ \eqref{eq:simp50}. Recall  
  \eqref{eq:simp4}, \eqref{eq:normal}, \eqref{eq:nota_psi}. We have
\[\f{2}{\pi \al} L_{12}(\bar{\Om}) = \f{3}{1+R}, \
c_{\om} = -\f{2}{\pi\al} L_{12}(\Om)(0), \  \Psi_* = \Psi - \f{\sin(2\b)}{\pi \al} L_{12}(\Om).
\] 
 Then we obtain $V_2(\bar{\Psi}) - \f{3}{1+R} = -U_1 ( \bar{\Psi}, \bar{\Psi}_*)$ (see \eqref{eq:simp51}) .

For the transport term $( \al c_{\om} D_R - (\uu \cdot \na) ) \bar{\xi}$, we use the decomposition \eqref{eq:tran_R21}-\eqref{eq:tran_R22} with $g= \bar{\xi}$. Then each term in $\Xi_1, \Xi_2, N_o$
depends only on $ L_{12}(\Om), \Psi, \eta, \xi$ and their approximate steady state, e.g. $V_2(\bar{\Psi})$. To estimate the $\cC^1$ norm of the product in $\Xi_1, \Xi_2, N_o$, using Proposition \ref{prop:c1}, we only need to estimate the $\cC^1$ norm of each single term. 


For the terms depending on $\Psi,\Psi_*$, e.g. $V_2(\Psi) - \f{2}{\pi \al} L_{12}(\Om)$ (see \eqref{eq:simp51}-\eqref{eq:simp512}), we apply Proposition \ref{prop:key}  and Lemma \ref{lem:inf} to obtain the $\cC^1$ estimate. For the terms depending on $\bar{\Psi}, \bar{\Psi}_*$, we apply Propositions \ref{prop:psi} and \ref{prop:c1} to estimate the $\cC^1$ norm. 

For the terms depending on $L_{12}(\Om)$, we use \eqref{eq:l12X} in Lemma \ref{lem:l12} to estimate the $\cC^1$ norm. 

The slightly difficult term is $ V_2(\Psi)$. Using the formula of $V_2(\Psi)$ in \eqref{eq:simp51}, \eqref{eq:simp512}, Propositions \ref{prop:key}, and Lemmas \ref{lem:inf}, \ref{lem:l12}, we get
\beq\label{eq:xi_c11}
\bal
&||V_2(\Psi)||_{\cC^1}  
\les ||V_2(\Psi) - \f{2}{\pi \al} L_{12}(\Om)||_{\cC^1} + \f{2}{\pi \al} || L_{12}(\Om)||_{\cC^1} \les  (\al^{-1/2} + \al^{-1}) ||\Om||_{\cH^3} \les \al^{-1} ||\Om||_{\cH^3}.
\eal
\eeq

Using \eqref{eq:xi0}-\eqref{eq:xi} in Lemma \ref{lem:xi} and Lemma \ref{lem:bar}, we have $|| \bar{\xi}||_{\cC^1}  + || D_R \bar{\xi}||_{\cC^1} \les \al^2, || \bar{\eta}||_{\cC^1} \les \al $. From \eqref{eq:simp4}, we know $|| L_{12}(\bar{\Om})||_{\cC^1} \les \al$. Therefore, we get
\[
|| \Xi_1||_{\cC^1} \les  \al || \xi||_{\cC^1} , \quad 
|| \Xi_2||_{\cC^1} \les \al^{1/2} || \Om||_{\cH^3} + \al^{1/2} || \eta||_{\cH^3}, \quad
|| N_o ||_{\cC^1} \les  \al^{-1} || \xi||_{\cC^1} || \Om||_{\cH^3} 
+ \al^{-1} || \Om||_{\cH^3} || \eta||_{\cH^3}.
\]
The largest term in $\Xi_2$ is given by $ (U_2(\Psi) \bar{\eta} + U_2(\bar{\Psi})\eta)$, which leads to the above upper bound.


\subsubsection{Analysis of $\bar{F}_{\xi}$} Recall $\bar{F}_{\xi}$ and $\bar{\uu} \cdot \na$ defined in \eqref{eq:error} and \eqref{eq:trans41}	
\[
\bal
& \bar{F}_{\xi}  = (2 \bar{c}_{\om} -  V_2(\bar{\Psi})) \bar{\xi} -  U_2(\bar{\Psi}) \bar{\eta} 
-  \al \bar{c}_l R \pa_R\bar{\xi} - (\bar{\uu} \cdot \na)  \bar{\xi} ,\\
 & \bar{\uu} \cdot \na \bar{\xi} =   (-\f{ 2\cos(2\b)}{\pi } L_{12}(\Om)   - \al \pa_{\b} \Psi_* ) D_R\bar{\xi} + ( \f{2 }{\pi \al} L_{12}(\Om) + \f{ 2 \Psi_* + \al D_R \Psi }{\sin(2\b)}  ) D_{\b}  \bar{\xi}.
\eal
\]
For  $\bar{\xi}$ terms, we use $ || D_R^i D^j_{\b}\bar{\xi}||_{\cC^1} \les \al^2, i + j \leq 2$  
from \eqref{eq:xi0}-\eqref{eq:xi} in Lemma \ref{lem:xi}. For other terms, we use $|| \bar{\eta}||_{\cC^1} \les \al $ from Lemma \ref{lem:bar} and apply the strategy in the last subsection to estimate the $\cC^1$ norm
. We get
\[
||  \bar{F}_{\xi} ||_{\cC^1} \les \al^2.
\]

\subsubsection{$|| [ \phi_2 D_{\b}, \cA_2] \xi ||_{\infty} $, $  || [ \phi_1 D_R, \cA_2] \xi ||_{\infty}$} 
Recall $\cA_2$ defined in \eqref{eq:cA2}. Using \eqref{eq:trans41}, we have
\[
\bal
&\cA_2 (\xi) =\f{2}{\pi \al} L_{12}(\Om) D_{\b} \xi + (  \cT(\bar{\Om}) +\cT( \Om) )\xi
= \f{2}{\pi \al} L_{12}(\Om) D_{\b} \xi
 - \f{2}{\pi} \cos(2\b) (  L_{12}(\Om) +L_{12}(\bar{\Om}) ) D_R \xi\\
 &- \al (\pa_{\b} \Psi_* + \pa_{\b}\bar{\Psi}_*)D_R \xi
 +  \f{  2\Psi_* + \al D_R \Psi + 2 \bar{\Psi}_* + \al D_R \bar{\Psi}  }{\sin(2\b)}   D_{\b} \xi
  \teq ( H_1 D_{\b}  + H_2 D_R + H_3 D_{R}+ H_4 D_{\b})\xi.
\eal
\]

Recall $\phi_1, \phi_2$ defined in \eqref{wg:c1}. For $D = D_R, D_{\b}$ and $\phi = \phi_1, \phi_2$, a direct computation yields
\beq\label{eq:cong}
| \phi^{-1}  D \phi |\les 1.
\eeq
Let $H \td{D}$ be a term in the above formula of $\cA_2$ 
and $(D, \phi) =(D_R,\phi_1)$ or $(D_{\b}, \phi_2)$. Using \eqref{eq:cong} and 
the $\cC^1$ norm defined in \eqref{norm:c1} to control the $L^{\infty}$ norm of  $\phi D H, \phi D \xi,  \td{D} \xi, H$, we obtain 
\[
\bal
| [ \phi D, H \td{D}] \xi |
&= | \phi D H \cdot \td{D} \xi - H \td{D} \phi \cdot D \xi |
\leq  || H||_{\cC^1} || \xi ||_{\cC^1}+ || H||_{L^{\infty}} || \phi^{-1} \td{D} \phi ||_{L^{\infty} } || \phi D \xi ||_{L^{\infty}}  \les
|| H ||_{\cC^1} || \xi ||_{\cC^1}.
\eal
\]

Applying the strategy in Section \ref{sec:Xi1} to estimate the $\cC^1$ norm of $\Psi, \bar{\Psi}, L_{12}(\Om)$ terms, we get
\[
|| H_1||_{\cC^1} \les  \al^{-1} || \Om||_{\cH^3}, \quad || H_2||_{\cC^1} \les  || \Om||_{\cH^3} + \al,  \quad || H_3||_{\cC^1} \les \al^{1/2} || \Om||_{\cH^3} + \al^2, \quad 
|| H_4 ||_{\cC^1} \les \al^{-1/2} || \Om||_{\cH^3} +  \al.
\]
The largest term is $\al^{-1} L_{12}(\Om)$ in $H_1$, which is estimated by \eqref{eq:l12X} in Lemma \ref{lem:l12} and using $D_{\b} L_{12}(\Om) = 0$.

Combining the above estimates, we conclude that
\[
|| [D_R, \cA_2] \xi  ||_{\infty} , \ || [D_{\b}, \cA_2] \xi  ||_{\infty}
\les || \xi||_{\cC^1}  ( \al^{-1} || \Om||_{\cH^3} + \al  ).
\]

\subsubsection{Analysis of $|\al c_l| ,  || \cA_1(\phi_2 - 1) \cdot D_{\b}\xi ||_{L^{\infty}}$}
Using \eqref{eq:normal} and \eqref{eq:l12} in Lemma \ref{lem:l12}, we obtain 
\[
|\al c_l| \leq C \al^{-1} | L_{12}(\Om)(0)| \leq C\al^{-1} || \Om||_{\cH^3}.
\]
Using the formulas of $\phi_2, \cA_1$ in \eqref{wg:c1}, \eqref{eq:cA2}, we get 
\[
\bal
&| \phi_2^{-1} \cA_1 (\phi_2 - 1) |
= | \phi_2^{-1} ( (1+ 3\al + \al c_l) D_R +  \f{3 }{ 1+R} D_{\b}  ) ( R \sin(2\b)^{\al})^{-1/40} |\\
\leq & \phi_2^{-1} ( \f{1}{40} (1+ 3\al + \al c_l)  
+ C \al  )  (R \sin(2\b)^{\al})^{-1/40} 
\leq \f{1}{40} (1 + 3 \al + C \al^{-1} || \Om||_{\cH^3}) + C\al,
\eal
\]
where we have used $D_R (R \sin(2\b)^{\al})^{-1/40} = -\f{1}{40}(R \sin(2\b)^{\al})^{-1/40}, 
| D_{\b} (R \sin(2\b)^{\al})^{-1/40}| \les \al | (R \sin(2\b)^{\al})^{-1/40} |$ in the first inequality. Therefore, we get 
\[
 || \cA_1(\phi_2 - 1) \cdot D_{\b}\xi ||_{L^{\infty}}
 \leq (\f{1}{40} + C \al + C \al^{-1} || \Om||_{\cH^3} ) || \phi_2 D_{\b} \xi ||_{L^{\infty}}.
\]

\subsubsection{Completing the $\cC^1$ estimates} 
From \eqref{eq:E2H2}, we can use $E_3$ to further bound $|| \Om||_{\cH^3},||\eta||_{\cH^3},||\xi||_{\cH^3(\psi)}$. Plugging all the above estimates of the remaining terms in \eqref{eq:xi_inf1}, \eqref{eq:xi_inf2R}, \eqref{eq:xi_inf2b}, we prove
\[
\bal
\f{1}{2} \f{d}{dt} || \xi||^2_{\infty}
&\leq -2 || \xi||^2_{\infty} + C || \xi||_{\cC^1}( \al^{1/2} E_3 + \al || \xi||_{\cC^1}
+ \al^{-1} E^2_3 + \al^{-1} E_3 || \xi||_{\cC^1} ) + C\al^2 || \xi||_{\infty} ,\\
\f{1}{2} \f{d}{dt} || \phi_2 D_{\b} \xi||^2_{\infty}
&\leq -(2 - \f{1}{40}) || \phi_2 D_{\b}\xi||^2_{\infty}   \\
&+ C || \xi||_{\cC^1}( \al^{1/2} E_3 + \al || \xi||_{\cC^1}
+ \al^{-1} E^2_3 + \al^{-1} E_3 || \xi||_{\cC^1} ) + C\al^2 || \phi_2 D_{\b}\xi||_{\infty} ,\\ 
\f{1}{2} \f{d}{dt} || \phi_1 D_R \xi||^2_{\infty}
&\leq -2 || \phi_1 D_R \xi||^2_{\infty}  +  3 ||\phi_1 D_R \xi||_{\infty}( || \phi_2 D_{\b} \xi||_{\infty} + ||\xi||_{\infty} ) \\
&+C || \xi||_{\cC^1}( \al^{1/2} E_3 + \al || \xi||_{\cC^1}
+ \al^{-1} E^2_3 + \al^{-1} E_3 || \xi||_{\cC^1} ) + C\al^2 || \phi_1 D_{R}\xi||_{\infty} .\\
\eal
\]
Hence, for some absolute constant $\mu_4$, e.g. $\mu_4 = \f{1}{10}$, the energy defined in \eqref{eg:xi_inf} satisfies \eqref{eq:boot2}.

\subsection{Finite time blowup with finite energy velocity field}\label{sec:blowup}

\subsubsection{The bootstrap argument}
Now, we construct the energy 
\beq\label{eg}
E( \Om, \eta, \xi) =( E_3(\Om, \eta, \xi)^2 + \al E(\xi, \infty)^2)^{1/2}.
\eeq

Adding the estimates  \eqref{eq:boot1} and $ \al \times$\eqref{eq:boot2}, we have
\beq\label{eq:boot4}
\f{1}{2} \f{d}{dt} E^2(\Om, \eta,\xi)
\leq  -\f{1}{12} E^2 + K \al^{1/2} E^2 + K \al^{-3/2} E^3 + K \al^2 E ,
\eeq
for some universal constant $K$, where we have used the fact that $E(\xi, \infty)$ is equivalent to $|| \xi||_{\cC^1}$ since $\mu_4$ is an absolute constant.
We know that there exists a small absolute constant $\al_1 < \f{1}{1000}$ and $K_*$, such that, for any $\al < \al_1$ and $E = K_* \al^2$, we have 
\beq\label{eq:reuse}
 -\f{1}{12} E^2 + K \al^{1/2} E^2 + K \al^{-3/2} E^3 + K \al^2 E  < 0.
\eeq
If $E(\Om(\cdot, 0), \eta(\cdot ,0), \xi(\cdot, 0)) < K_* \al^2$, we have 
\beq\label{eq:boot42}
E(\Om(t), \eta(t), \xi(t) )  < K_* \al^2 ,
\eeq
for all time $t > 0$, where we have used the time-dependent normalization condition \eqref{eq:normal} for $c_{\om}(t), c_l(t)$. Applying Lemma \ref{lem:l12} to $L_{12}(\Om)(0)$ and Lemma \ref{lem:inf} to $\Om, \eta$, we derive
\[
\bal
& | c_{\om}(t)| = \f{2}{\pi \al} |L_{12}(\Om)(0)| < C \al^{-1} || \Om||_{\cH^3} \leq C \al^{-1} E \leq K_9 \al, \quad
 | c_l(t)  | = |\f{1-\al}{\al} \f{2}{\pi\al} L_{12}(\Om)(0)| < C \al^{-2} E \leq  K_9, \\
 &|| \Om ||_{L^{\infty}} + || \eta ||_{L^{\infty}}  
 < C E \leq C \al^2 \leq K_9 \al \min( ||\bar{\Om}||_{L^{\infty}}, || \bar{\eta}||_{L^{\infty}}), 
\quad || \xi||_{L^{\infty}} < C \al^{-1/2} E  \leq K_9 \al^{3/2} ,
 \eal
\]
where we have used $|| \bar{\Om} ||_{L^{\infty}}, || \bar{\eta} ||_{L^{\infty}} \geq C \al$ according to \eqref{eq:profile} and Lemma \ref{lem:one} in the last inequality, and $K_9 > 0$ is some absolute constant.
We further take 
\beq\label{eq:al0}
\al_0 = \min(\al_1, \   \f{3\pi}{4K_*}, \  \f{K_*^2}{4 K^2_{10}} , \ \f{1}{ 16 (K_9 + 1)^4}),
\eeq
where $K_{10}$ is the constant defined in Lemma \ref{lem:small}. For $\al < \al_0$, using $\bar{c}_{\om} = -1, \bar{c}_l = \f{1}{\al} + 3$ and the formula of $\bar{\Om}, \bar{\eta}$ in \eqref{eq:profile}, we further yield
 \beq\label{eq:boot43}
 \bal
& - \f{3}{2} < c_{\om}  + \bar{c}_{\om} < -\f{1}{2}, \quad c_l + \bar{c}_l > \f{1}{2\al} +3, \\
&|| \Om + \bar{\Om} ||_{L^{\infty}} \asymp  || \bar{\Om}||_{L^{\infty}} \asymp \al, \quad 
|| \eta + \bar{\eta} ||_{L^{\infty}} \asymp  || \bar{\eta}||_{L^{\infty}} \asymp \al,  \quad  || \xi ||_{L^{\infty}} \leq \f{1}{2} \al^{3/2}. 
\eal
\eeq

\subsubsection{Finite time blowup}\label{subsec:blowup}

For H\"older initial data, the local well-posedness 
of the solutions follows from the argument in \cite{chae_kim_nam_1999} for the 2D Boussinesq equations. 
The Beale-Kato-Majda type blowup criterion still applies to the Boussinesq equations in the specified domain. The time integral of $|| \nabla \th ||_{L^{\infty}}$ controls the breakdown of the solutions in the 2D Boussinesq equations \cite{chae_kim_nam_1999}. We will control this quantity and show that there exists $T_0$ such that $\int_0^{T_0}  || \nabla \th(\cdot, s)||_{\infty} ds  = \infty$ 
in the 2D Boussinesq equations. The solutions remain 
in the same regularity class as that of the initial data before the blowup time. In particular, the velocity field is in $C^{1,\alpha}$ before the blowup time.

Let $\chi(\cdot) : [0, \infty) \to [0, 1]$ be a smooth cutoff function, such that $\chi(R) = 1$ for $R \leq 1$ and $\chi(R) = 0$ for $R \geq 2$. We choose perturbation $\Om   =  ( \chi( R  / \lam) - 1) \bar{\Om}, \th(R, \b) = ( \chi( R  / \lam)  - 1)  \bar{\th}$ and $\eta = \th_x$, $\xi = \th_y$ can be obtained accordingly, where $\bar{\th}(x, y)$ is recovered from $\bar{\th}_x$ by integration \eqref{eq:th}. Obviously, $\Om, \eta, \xi \equiv 0$ for $R \leq \lam$. Using Lemma \ref{lem:small} for $\Om, \eta, \xi$ and $\al < \al_0$ (see \eqref{eq:al0}),  we obtain that these initial perturbations satisfy $E(\Om(0), \eta(0),\xi(0)) < 2 K_{10}\al^{5/2} \leq K_* \al^2$
for sufficiently large $\lam$. We remark that the initial perturbation is of size $C \al^{5/2}$ even for extremely large $\lam$ because $\bar{\xi}$ does not decay in the $\cC^1$ norm for large $R$. It is important to add a small weight $\al$ in $E(\xi, \infty)$ when we define the final energy in \eqref{eg}.

In particular, the initial data $ \bar{\Om} + \Om = \chi(R/\lam) \bar{\Om}$ (recall $\Om(R, \b) = \om(x, y)$), $\bar{\th} + \th
= \chi(R/\lam) \bar{\th}$ have compact support and thus we have finite energy $|| u + \bar{u}||_{L^2} < +\infty, || \th + \bar{\th}||_{L^2} < +\infty$. $c_{\om}(t), c_{l}(t)$ are determined by \eqref{eq:normal}.

Denote by $\om_{phy}, \th_{phy}$ the corresponding solutions in the original Boussinesq equation \eqref{eq:bous1}-\eqref{eq:bous2}, which are related to the rescaled variables $\om, \th$ via the rescaling formula \eqref{eq:rescal1}, \eqref{eq:rescal2}
\beq\label{eq:rescal_BKM}
\bal
& \om_{phy}(   x, t(\tau) ) = C_{\om}(\tau)^{-1} (\om + \bar{\om})(  C_l(\tau)^{-1} x, \tau), \quad \th_{phy}(   x, t(\tau) ) = C_{\th}(\tau)^{-1}  (\th+ \bar{\th})( C_l(\tau)^{-1} x, \tau), \\
& C_{\om}(\tau) = \exp\lt( \int_0^{\tau} c_{\om}(s) + \bar{c}_{\om}  ds \rt), \quad 
C_l(\tau) = \exp\lt( - \int_0^{\tau} c_{l}(s) + \bar{c}_{l}  ds \rt), \quad 
t(\tau) = \int_0^{\tau} C_{\om}(\tau) d \tau .
\eal
\eeq

We remark that the scaling parameters in \eqref{eq:rescal2} become $(c_{\om} + \bar{c}_{\om}, c_{l} + \bar{c}_l)$. Denote 
\[
M(\tau) \teq \int_0^{ t(\tau)} || \na  \th_{phy} (s) ||_{L^{\infty}} ds.
\]


Using a change of variable $s = t(p)$ and $\pa_x (\th + \bar{\th})  = (\eta + \bar{\eta}), \pa_y(\th + \bar{\th}) = (\xi + \bar{\xi})$, we obtain 
\[
M(\tau)  = \int_0^{ \tau} || \na  \th_{phy} ( t(p)) ||_{L^{\infty}}  C_{\om}(p) d p
= \int_0^{\tau} C_{\om}(p)^{-1}( || (\eta + \bar{\eta})(p) ||_{L^{\infty}} + || (\xi + \bar{\xi})(p) ||_{L^{\infty}} ) dp, 
\]
where we have used the formula \eqref{eq:rescal_BKM} and $C_{\th}^{-1}(p) C_l^{-1}(p) = C_{\om}(p)^{-2}$ according to \eqref{eq:rescal2},\eqref{eq:rescal3} in the second equality. Using the bootstrap estimates \eqref{eq:boot43} and Lemma \ref{lem:xi} about $\bar{\xi}$, we obtain 
\[
M(\tau) \asymp  \al \int_0^{\tau} C_{\om}(p)^{-1} dp.
\]

Using \eqref{eq:boot43} and \eqref{eq:rescal_BKM}, we have $e^{-3p /2  } < C_{\om}(p) < e^{-p/2}$.
Therefore, we obtain $M(\tau)  < +\infty  \quad \forall \tau < + \infty$ and
\[
\int_0^\infty M(\tau) d \tau  \geq C \al \int_0^\infty \int_0^{\tau} e^{p/2} dp d \tau = \infty , \quad t(\infty) \leq \int_0^{\infty} e^{- p/2} dp < + \infty.
\]

Denote $T^* = t(\infty)$.
Applying the BKM type blowup criterion in \cite{chae_kim_nam_1999}, we obtain that the solutions remain in the same regularity class as that of
the initial data before $T^* $ and develop a finite time singularity at $T^*$. Similarly, by rescaling the time variable, we prove that $|| \om_{phy}||_{L^{\infty}}$ and $ || \na \th_{phy}||_{L^{\infty}}$ blowup at $T^*$.


\begin{remark}
The crucial nonlinear estimate \eqref{eq:boot4} and {\it a priori} estimate \eqref{eq:boot42}, i.e. the bootstrap estimate for small perturbation, offer strong control on the perturbation and the exact solution before the blowup time. In particular, it allows us to truncate the far field of the approximate steady state, which leads to a small perturbation only, to obtain initial data with finite energy.   
\end{remark}

\subsubsection{Convergence to the self-similar solution}\label{sec:converge}
Taking the time derivative of \eqref{eq:lin}, using the {\it a priori} estimate \eqref{eq:boot42} for the small perturbation and analysis similar to that in the previous Section, we can further perform $\cH^2$ estimates on $\Om_t, \eta_t$, $\cH^2(\psi)$ and $L^{\infty}$ estimates on $\xi_t$.
In particular, following the argument in our previous joint work with Huang \cite{chen2019finite}, 
we can further obtain that there exists an exact self-similar solution $\Om_{\infty}, \eta_{\infty} \in \cH^3, \xi_{\infty} \in \cH^3(\psi) \cap L^{\infty}$, such that the solution of the dynamic rescaling equation with initial data constructed in Subsection \ref{subsec:blowup} converges to $(\Om_{\infty}, \eta_{\infty}, \xi_{\infty})$ exponentially fast. The convergence is in the $\cH^2$ norm for the variables $\Om, \eta$ and both $\cH^2(\psi)$ and $L^{\infty}$ norm for the variable $\xi$.

Using the {\it a-priori} estimate \eqref{eq:boot42} and Lemma \ref{lem:xi}, we have $ || \bar{\xi} + \xi(t) ||_{\cC^1} \leq C\al^{3/2} $ for all time in the dynamic rescaling equation. Using Lemma \ref{lem:h_embed}, we know that the space $\cC^1$ (the weighted $C^1$ space) can be embedded continuously into the standard H\"older space $C^{ \al / 40}$.
 Therefore, the $\cC^1$ estimate of $\bar{\xi} + \xi$ implies that $\bar{\xi} + \xi(t) \in C^{ \alpha /40}$ with uniform H\"older norm. Since $\bar{\xi} + \xi(t)$ converges to $\xi_{\infty}$ in $L^{\infty}$, we have $\xi_{\infty} \in C^{ \alpha /40}$. Finally, using the same argument, the fact that $\Om_{\infty}, \eta_{\infty} \in \cH^3$ and the embedding $\cH^3 \hookrightarrow \cC^1$ in Lemma \ref{lem:inf}, we conclude $\Om_{\infty}, \eta_{\infty}, \xi_{\infty} \in C^{ \al / 40}$. 

Notice that $c_l + \bar{c}_l > \f{1}{2\al}$ from \eqref{eq:boot43}. Thus, the self-similar blowup is focusing. This completes the proof of Theorem \ref{thm:bous}.




\section{Finite time blowup of 3D axisymmetric Euler equations with solid boundary}\label{sec:euler}

In this section, we prove Theorem \ref{thm:euler}. We first review the setup of the problem. In Section \ref{sec:dyn}, we reformulate the 3D Euler equations and discuss the connection between the 3D Euler and 2D Boussinesq; see e.g. \cite{majda2002vorticity}.
In Section \ref{sec:3Delli}, we establish the elliptic estimates. In Section \ref{sec:3Dnon}, we will construct initial data and control the support of the solution under some bootstrap assumptions. With these estimates, the rest of the proof follows essentially the nonlinear stability analysis of the 2D Boussinesq equations and is sketched in the same subsection.

\vspace{0.1in}
\paragraph{\bf{Notations}} 
In this Section, we use $x_1, x_2,  x_3$ to denote the Cartesian coordinates in $\R^3$, and 
\beq\label{eq:cylinder}
r= \sqrt{x_1^2 + x_2^2}, \quad z = x_3, \quad \vartheta = \arctan(x_2 /x_1) 
\eeq
 to denote the cylindrical coordinates. The reader should not confuse $r$ with the radial variable in the 2D Boussinesq.

Let $\uu$ be the axi-symmetric velocity and $\mathbf{\om} = \na \times \uu$ be the vorticity vector. In the cylindrical coordinates, we have the following representation
\[
\uu(r, z) = u^r(r, z) \ee_r + u^{\th}(r, z ) \ee_{\th}  + u^z(r, z ) \ee_z, 
\quad 
\mathbf{\om} = \om^r(r, z) \ee_r + \om^{\th}(r, z ) \ee_{\th}  + \om^z(r, z ) \ee_z,
\]
where $\ee_r, \ee_{\th}$ and  $\ee_z$ are the standard orthonormal vectors defining the cylindrical coordinates, 
\[
\ee_r = ( \f{x_1}{r}, \f{x_2}{r}, 0 )^T , \quad \ee_{\th} = ( \f{x_2}{r},  - \f{x_1}{r}, 0  )^T, \quad \ee_z = (0, 0, 1)^T, 
\]
and $r = \sqrt{x_1^2 + x_2^2}$ and $z = x_3$. 

We study the 3D axisymmetric Euler equations in a cylinder $D = \{ (r,z) : r \in [0,1], z \in \BT \}, \BT = \R / ( 2 \BZ)$ that is periodic in $z$. The equations are given below:
\beq\label{eq:euler1}
\pa_t (ru^{\th}) + u^r (r u^{\th})_r + u^z (r u^{\th})_z = 0, \quad 
\pa_t \f{\om^{\th}}{r} + u^r ( \f{\om^{\th}}{r} )_r + u^z ( \f{\om^{\th}}{r})_z = \f{1}{r^4} \pa_z( (r u^{\th})^2 ).
\eeq
The radial and axial components of the velocity can be recovered from the Biot-Savart law
\beq\label{eq:euler2}
-(\pa_{rr} + \f{1}{r} \pa_{r} +\pa_{zz}) \td{\psi} + \f{1}{r^2} \td{\psi} = \om^{\th}, 
 \quad  u^r = -\td{\psi}_z, \quad u^z = \td{\psi}_r + \f{1}{r} \td{\psi}  
\eeq
with a no-flow boundary condition on the solid boundary $r = 1$
\beq\label{eq:euler21}
\td{\psi}(1, z ) = 0
\eeq
and a periodic boundary condition in $z$. 

We consider solution $\om^{\th}$ with odd symmetry in $z$, which is preserved by the equations dynamically. Then $\td \psi$ is also odd in $z$. Moreover, since $\td \psi$ is 2-periodic in $z$, we obtain 
\beq\label{eq:euler22}
\td \psi(r, 2k-1)  = 0 . \quad \textrm{for all \ } k \in \BZ
\eeq
This setup of the problem is essentially the same as that in \cite{luo2013potentially-1,luo2013potentially-2}. 

Equation \eqref{eq:euler2} is equivalent to $-\D( \td \psi \sin(\vth)) = \om \sin(\vth)$, where $\vth = \arctan(x_2 / x_1)$ 
and $\D$ is the Laplace operator in $\R^3$. We further assume that $\om^{\th} \in C^{\al}(D)$ with support away from $r=0$. It follows $\om^{\th} \sin(\vth) \in C^{\al}(D)$. Note that the cylinder $D_{k,l} \teq\{ (r,z) : r \in [0,1], 2k-1 \leq z \leq 2l-1 \} $ satisfies the exterior sphere condition. 
Under the boundary condition \eqref{eq:euler21}-\eqref{eq:euler22}, using Theorems 4.3, 4.6 in \cite{gilbarg2015elliptic} 
we obtain a unique solution $\td \psi \sin \vth \in C^{2,\al}(D_{k,l}) \cap C( \bar D_{k,l})$ for any $k <l, k, l \in \BZ$. This further implies the existence and the uniqueness of solution of \eqref{eq:euler2}-\eqref{eq:euler22}. 

Due to the periodicity in $z$ direction, it suffices to consider the equations in the first period $D_1 =\{ (r,z) : r \in [0,1], |z| \leq 1 \} $. 
We have the following pointwise estimate on $\td \psi$, which will be used to estimate $\td \psi$ away from the $\supp(\om^{\th})$ in Section \ref{sec:3Delli}.

\begin{lem}\label{lem:biot1}
Let $\td{\psi}$ be a solution of \eqref{eq:euler2}-\eqref{eq:euler21}, and $\om^{\th} \in C^{\al}(D_1)$ for some $\al>0$ be odd in $z$ with $\supp(\om^{\th}) \cap D_1\subset \{ (r ,z) : (r-1)^2 + z^2 < 1/4  \}$. 
For $  \f{1}{4} < r \leq 1, |z|\leq 1$, we have 
\[
| \td{\psi}(r, z) |  \les \int_{D_1} | \om^{\th}(r_1, z_1) | \B( 1 + |\log( (r-r_1)^2 + (z- z_1)^2) |  \B) r_1 d r_1 d z_1.
\] 
\end{lem}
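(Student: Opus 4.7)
The plan is to lift the elliptic equation \eqref{eq:euler2} to $\R^3$, use the Dirichlet Green's function of $-\Delta_3$ on a solid cylinder together with the maximum principle to reduce to the free-space kernel, and finally invoke the classical logarithmic bound on the resulting axisymmetric Biot-Savart integral. Setting $\Psi(x_1,x_2,x_3) = \td\psi(r,z)\sin\vartheta$ and $\Omega(x) = \om^{\th}(r,z)\sin\vartheta$, where $(r,\vartheta,z)$ are the cylindrical coordinates \eqref{eq:cylinder}, a direct computation shows that \eqref{eq:euler2} is equivalent to $-\Delta_3\Psi = \Omega$ on the solid cylinder $\cC_1 \teq \{x\in\R^3 : r\leq 1,\ |z|\leq 1\}$. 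The boundary condition $\Psi|_{r=1}=0$ comes from \eqref{eq:euler21}; the conditions $\Psi|_{z=\pm 1}=0$ follow from the oddness of $\om^{\th}$ in $z$ combined with the $2$-periodicity in $z$, which together force $\td\psi(r,\pm 1)=0$.

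Let $G_{\cC_1}(x,y)$ denote the Dirichlet Green's function of $-\Delta_3$ on $\cC_1$. The maximum principle applied to $(4\pi|x-y|)^{-1} - G_{\cC_1}(x,y)$, which is harmonic in $x\in\cC_1\setminus\{y\}$ and nonnegative on $\partial\cC_1$, yields $0\leq G_{\cC_1}(x,y)\leq (4\pi|x-y|)^{-1}$. Representing $\Psi$ through $G_{\cC_1}$ and evaluating at the point $x_0 = (0,r,z)$ (so that $\vartheta = \pi/2$ and the horizontal radius of $x_0$ equals $r$, hence $\Psi(x_0)=\td\psi(r,z)$), I would obtain
\[
|\td\psi(r,z)| = |\Psi(x_0)| \les \int_{D_1} r_1\,|\om^{\th}(r_1,z_1)|\, K(r,z;r_1,z_1)\, dr_1\, dz_1,
\]
with the axisymmetric kernel
\[
K(r,z;r_1,z_1) = \f{1}{4\pi}\int_0^{2\pi}\f{|\sin\vartheta_1|\, d\vartheta_1}{\sqrt{(r-r_1)^2 + (z-z_1)^2 + 2 r r_1 (1-\sin\vartheta_1)}}.
\]

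The remaining step is to verify $K \les 1 + |\log((r-r_1)^2+(z-z_1)^2)|$. Under the hypotheses $r>1/4$ and the support condition $(r_1-1)^2+z_1^2<1/4$ (which forces $r_1 > 1/2$), the product $r r_1$ is bounded above and below by positive constants. Setting $\rho^2 = (r-r_1)^2+(z-z_1)^2$ and substituting $\vartheta_1 = \pi/2 + 2u$ (so that $1-\sin\vartheta_1 = 2\sin^2 u$ near its zero at $\vartheta_1=\pi/2$), the integrand becomes $(\rho^2 + 4 r r_1 \sin^2 u)^{-1/2}$ up to a smooth angular weight. This is a complete elliptic integral of the first kind with modulus $k^2 = 4rr_1/(\rho^2+4rr_1)$, and the classical estimate $\mathbf{K}(k) \sim \log(4/\sqrt{1-k^2})$ as $k\to 1$ gives the desired logarithmic bound. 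The main obstacle is this final elliptic-integral calculation, but it is elementary; by contrast, the Green's function bound on $\cC_1$ is immediate from the maximum principle (the corner edges of $\cC_1$ and the coordinate singularity at $r=0$ play no role since the target satisfies $r>1/4$), so the entire proof reduces to an explicit analysis of the axisymmetric kernel $K$.
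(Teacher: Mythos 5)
Your proposal is correct in substance and arrives at exactly the same axisymmetric kernel and the same logarithmic estimate as the paper, but it gets there by a different mechanism. The paper deliberately avoids the Dirichlet Green's function of the cylinder: it constructs explicit free-space comparison solutions $\psi_{\pm}$ (the $\R^3$ Newtonian potentials of $\om_{\pm}\sin\vth$, where $\om_{\pm}$ are the positive and negative parts of the extended vorticity), checks $\psi_{\pm}\geq 0$ by a reflection $\vth\mapsto-\vth$ in the kernel, and then applies only the classical weak maximum principle on the half-cylinder $D_1^+=\{\sin\vth\geq 0\}$ to $(\td\psi-\psi_+)\sin\vth$ and $(\td\psi+\psi_-)\sin\vth$, concluding $|\td\psi|\leq\psi_++\psi_-$; the restriction to $D_1^+$ and the splitting into $\om_{\pm}$ are there precisely to keep the sign of $-\D(\cdot)$ definite. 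Your route replaces all of this with the single pointwise bound $0\leq G_{\cC_1}(x,y)\leq(4\pi|x-y|)^{-1}$, which handles the sign changes of $\Om=\om^{\th}\sin\vth$ automatically and is conceptually cleaner. The price is that you must invoke the existence of the Dirichlet Green's function and the validity of the representation formula $\Psi(x_0)=\int G_{\cC_1}(x_0,y)\Om(y)\,dy$ on a domain with edges at $r=1$, $z=\pm1$ — standard facts (the cylinder is convex, every boundary point is regular), but exactly the machinery the paper says it wants to sidestep ("the Green function would be complicated"), and justifying Green's identity up to the edges is a step you leave implicit. If you want to remove that soft spot without changing your idea, set $v=\Phi*|\Om|$ (free-space potential of $|\Om|$, which is still $C^{\al}$) and apply the weak maximum principle to $v\pm\Psi$, which is superharmonic in $\cC_1$ and nonnegative on $\pa\cC_1$; this yields $|\Psi|\leq v$ directly and collapses your argument into essentially the paper's. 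Your identification of the boundary conditions ($\td\psi(r,\pm1)=0$ from oddness plus $2$-periodicity, $\td\psi(1,z)=0$ from \eqref{eq:euler21}), the evaluation point $x_0=(0,r,z)$, and the final elliptic-integral asymptotics $\mathbf{K}(k)\sim\log(4/\sqrt{1-k^2})$ are all correct and equivalent to the paper's elementary bound obtained from $(z-z_1)^2+(r-r_1)^2+4rr_1\sin^2(\vth/2)\asymp(\rho+|\vth|)^2$.
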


If the domain of the equation \eqref{eq:euler2} is $\R^3$, the estimate is straightforward by using the Green function. For the domain we consider, the Green function would be complicated. The proof is based on comparing $\td{\psi}\sin(\vth)$ with the solutions of $- \D (\psi_{\pm} \sin(\vth)) = f_{\pm}(r,z) \sin(\vth)$ in $\R^3$, where $f_{\pm}$ are some functions related to $\om^{\th}$. 
We defer the proof to Appendix \ref{app:stream}.

If the initial data $u^{\th}$ of \eqref{eq:euler1}-\eqref{eq:euler21} is non-negative, 
$u^{\th}$ remains non-negative before the blowup, if it exists.
Then, $u^{\th}$ can be uniquely determined by $(u^{\th} )^2$.
We introduce the following variables 
\beq\label{eq:omth}
\td{\th} \teq (r u^{\th})^2,  \quad \td{\om} = \om^{\th} / r.
\eeq

We reformulate \eqref{eq:euler1}-\eqref{eq:euler21} as 
 \beq\label{eq:euler31}
\bal
\pa_t \td{\th} + u^r \td{\th}_{ r} + u^z \td{\th}_{z} &= 0,  \quad  \pa_t \td{\om} + u^r \td{\om}_r + u^z \td{\om}_{ z} = \f{1}{r^4} \td{\th}_z  ,  \\
-( \pa^2_{r} +\f{1}{r} \pa_r + \pa_z^2 -\f{1}{r^2} )  \td{\psi} & = r \td{\om} , \quad \td{\psi}(1, z) = 0 ,  \quad u^r = -  \td{\psi}_{ z} , \quad 
u^z = \f{1}{r} \td{\psi }+  \td{\psi}_{ r} .
\eal
\eeq

\subsection{Dynamic rescaling formulation}\label{sec:dyn}
We introduce new coordinates $(x, y)$ centered at $r = 1, z= 0$ and its related polar coordinates
\beq\label{eq:euler_polar}
x =  C_l(\tau)^{-1} z, \quad  y = (1-r) C_l(\tau)^{-1}, \quad  \rho = \sqrt{x^2 + y^2}, \quad \b = \arctan(y/x) , \quad R = \rho^{\al},
\eeq
where $C_l(\tau)$ is defined below \eqref{eq:rescal42}. The reader should not confuse $\rho$
with the notations for the weights, and the relation $R = \rho^{\al}$ with $R = r^{\al}$ in the 2D Boussinesq. By definition, we have 
\beq\label{eq:label_rs}
z = C_l(\tau)x, \quad r = 1 - C_l(\tau) y = 1 - C_l(\tau) \rho \sin(\b).
\eeq

We consider the following dynamic rescaling formulation centered at $r = 1, z= 0$
\beq\label{eq:rescal41}
\bal
\th(x, y, \tau) &= C_{\th}(\tau) \td{\th}( 1 - C_l(\tau) y,   C_l(\tau) x , t(\tau) ), \\
  \om(x, y, \tau) &=  C_{\om}(\tau) \td{\om}( 1 - C_l(\tau) y,  C_l(\tau) x  , t(\tau)) , \\
 \psi(x, y, \tau)  & =  C_{\om}(\tau) C_l(\tau)^{-2} \td{\psi} (1 - C_l(\tau) y,  C_l(\tau) x, t(\tau)),
\eal
\eeq
where $C_l(\tau), C_{\th}(\tau), C_{\om}(\tau), t(\tau)$ are given by $C_{\th}  = C^{-1}_l(0) C^2_{\om}(0) \exp\lt( \int_0^{\tau} c_{\th} (s)  d \tau\rt)$, 
\beq\label{eq:rescal42}
\bal
  C_{\om}(\tau) = C_{\om}(0) \exp \lt( \int_0^{\tau} c_{\om} (s)  d \tau \rt), \ C_l(\tau) =C_l(0) \exp\lt( \int_0^{\tau} -c_l(s) ds \rt) , \   t(\tau) = \int_0^{\tau} C_{\om}(\tau) d\tau ,
\eal
\eeq
and the rescaling parameter $c_l(\tau), c_{\th}(\tau), c_{\om}(\tau)$ satisfies $c_{\th}(\tau) = c_l(\tau ) + 2 c_{\om}(\tau)$. We remark that $C_{\th}(\tau)$ is determined by $C_l, C_{\om}$ via $C_{\th} = C^2_{\om} C_l^{-1}$. We have this relation due to the same reason as that of \eqref{eq:rescal3}. 
We choose $(r,z)=(1,0)$ as the center of the above transform since the singular solution is concentrated near this point.
We have $0\leq y \leq C_l^{-1}, |x| \leq C_l^{-1}$ since $r \in [0, 1], |z| \leq 1$. 
We have a minus sign for $\pa_y$
\[
\pa_y \th = -C_{\th} C_l(\tau) \td{\th}_r , \quad  \pa_y \om = - C_{\om} C_l(\tau) \td{\om}_r,  \quad \pa_y \psi = - C_{\om} C_l(\tau)^{-1} \td{\psi}_r.
\]

Let $(\td{\th}, \td{\om})$ be a solutions of \eqref{eq:euler31}. It is easy to show that $\om, \th$ satisfy
\[
\th_t +  c_l \xx \cdot \na \th + (-u^r) \th_y +  u^z  \th_x  = c_{\th} \th , \quad \om_t + c_l \xx \cdot \na \om  + (-u^r)  \om_y +  u^z \om_x = c_{\om } \om + \f{1}{r^4} \th_x .
\]
The Biot-Savart law in \eqref{eq:euler31} depends on the rescaling parameter $C_l, \tau$
\[
-(\pa_{xx}  + \pa_{yy}) \psi + \f{1}{r} C_l \pa_y \psi  + \f{1}{r^2 } C^2_l \psi = r \om ,
\quad   u^r(r, x) = -  \psi_x,  \quad u^z(r, x) =  \f{1}{r}  C_l(\tau)  \psi -  \psi_y,  
\]
where $r = 1 - C_l(\tau) y$ \eqref{eq:label_rs}. We introduce $u = u^z, v = - u^r$. Then, we can further simplify
\beq\label{eq:euler4}
\bal
& \th_t + (c_l \xx + \uu \cdot \na ) \th = c_{\th} \th , \quad  \om_t  + ( c_l \xx + \uu \cdot \na ) \om = \th_x + \f{1 - r^4}{r^4} \th_x, \\
& -(\pa_{xx}  + \pa_{yy}) \psi + \f{1}{r} C_l \pa_y \psi + \f{1}{r^2} C_l^2 \psi= r \om , \quad 
u(x, y) =    - \psi_y + \f{1}{r} C_l \psi, \quad v  = \psi_x ,
\eal
\eeq
with boundary condition $\psi(x, 0 ) \equiv 0$. 
If $C_l$ is extremely small, we expect that the above equations are essentially the same as the dynamic rescaling formulation \eqref{eq:bousdy1} of the Boussinesq equations. 
We look for solutions of \eqref{eq:euler4} with the following symmetry 
\[
\om(x, y)= -\om(-x, y), \quad \th(x, y) = \th(-x, y). 
\]
Obviously, the equations preserve these symmetries and thus it suffices to solve \eqref{eq:euler4} on $x, y \geq 0$ with boundary condition  $\psi(x, 0) = \psi(y, 0) = 0$ for the elliptic equation.

\subsection{The elliptic estimates} \label{sec:3Delli} 

In this Section, we use the ideas in Section \ref{sec:idea_biot} to estimate the time-dependent elliptic equation in \eqref{eq:euler4}. We first estimate $\psi$ away from $\supp(\om)$. 
In Section \ref{sec:local_outline}, we outline the estimates. In the remaining subsections, we  localize the elliptic equation and establish the $\cH^3$ elliptic estimates.

Under the polar coordinates \eqref{eq:euler_polar} $\rho  = \sqrt{x^2 + y^2}, \b = \arctan( y/x)$, we reformulate \eqref{eq:euler4} as
\beq\label{eq:nota_om2}
 - \pa_{ \rho \rho } \psi - \f{1}{\rho} \pa_{\rho}  \psi - \f{1}{\rho^2} \pa_{\b \b } \psi 
+ \f{C_l}{r} \sin(\b) \pa_{\rho}  \psi + \f{ C_l}{r} \f{\cos(\b)}{\rho} \pa_{\b} \psi  + \f{C_l^2}{r^2} \psi =r \om.
\eeq

Recall $R = \rho^{\al}$ from \eqref{eq:euler_polar}. Denote
\beq\label{eq:nota_om3}
\Psi(R,\b) = \f{1}{\rho^2} \psi(\rho, \b), \quad \Om(R, \b) = \om(\rho, \b), \quad \eta(R, \b) = (\th_x)(\rho, \b),
\quad \xi(R, \b) = (\th_y)(\rho, \b).
\eeq

Since we rescale the cylinder $D_1 = \{ (r, z) : r \leq 1, |z|\leq 1  \}$, the domain for $(x, y)$ is 
\beq\label{eq:rescale_D}
\td D_1 \teq \{ (x, y) :  |x| \leq C_l^{-1}, y \in [0, C_l^{-1}] \}.
\eeq

We focus on the sector $\rho \leq C_l^{-1}$, or equivalently $R \leq C_l^{-\al}$, and $ \b \in [0, \pi/2]$ due to the symmetry of the solutions.
Notice that $\rho \pa_{\rho} = \al R \pa_R= \al D_R$. It is easy to verify that \eqref{eq:nota_om2} is equivalent to 
\beq\label{eq:elli_euler}
\bal
&- \al^2 R^2 \pa_{RR} \Psi- \al(4+\al) R \pa_R \Psi - \pa_{\b \b} \Psi - 4 \Psi \\
&+ \f{ C_l \rho}{r}( \sin(\b) (2  + \al D_R) \Psi + \cos(\b)  \pa_{\b} \Psi ) 
+ \f{C_l^2 \rho^2}{r^2} \Psi = r \Om. 
\eal
\eeq
We keep the notation $\rho = R^{1/\al}, r = 1 - C_l \rho \sin(\b)$ to simplify the formulation. The boundary condition of $\Psi$ is given by (in the sector $R \leq C_l^{-\al}$)
\beq\label{eq:elli_euler2}
\Psi(R,0) = \Psi(R, \pi/2) =0. 
\eeq

\begin{definition}\label{def:supp}
We define the size of support of $(\th, \om)$ of \eqref{eq:euler4} 
\[
S(\tau) = \mathrm{ess}\inf  \{\rho :  \th(x, y,\tau) =0, \om(x, y, \tau ) = 0 \textrm{ for } x^2 + y^2 \geq \rho^2 \} .
\]
\end{definition}
Obviously, the support of $\Om, \eta$ defined in \eqref{eq:nota_om3} is $S(\tau)^{\al}$. After rescaling the spatial variable, the support of $(\td{\th}, \td{\om})$ of \eqref{eq:euler31} satisfies 
\[
\mathrm{supp}  \ \td{\th}(t(\tau)),  \ \mathrm{supp}  \ \td{\om}(t(\tau)) \subset \{ (r, z) :  ( (r-1)^2 + z^2)^{1/2} \leq  C_l(\tau) S(\tau)  \}.
\]
We will construct initial data of \eqref{eq:euler4} with compact support $S(0) < + \infty$ and use the idea described in Section \ref{sec:idea_supp} to prove that $C_l(\tau) S(\tau)$ remains sufficiently small for all $\tau >0$.

\begin{remark}\label{rem:order1}
There are several small parameters $\al, C_l(\tau), C_l(\tau) S(\tau)$ in the following estimates. We will choose $\al$ to be small. For most estimates, the constants are independent of $C_l(\tau)$. We will choose $C_l(0)$ to be much smaller than $\al$ at the final step. This allows us to prove that $C_l(\tau), C_l(\tau) S(\tau) ,  ( C_l(\tau) S(\tau))^{\al} $ are very small. One can regard $C_l(\tau) \approx 0$. Recall the relation \eqref{eq:label_rs} about $r$.
In the support of the solution, we have $r = 1 - C_l \rho \sin(\b) \approx 1$. We treat the error terms in these approximations as small perturbations.
\end{remark}

Recall the $L^2$ inner product defined in \eqref{eq:inner_L2}. Using the estimate in Lemma \ref{lem:biot1}, we obtain in the following Lemma that the $L^2$ norm of $\Psi$ away from the support of the solution is small. It will be used later to localize\eqref{eq:elli_euler}.

\begin{lem}\label{lem:far}
Suppose that the assumptions in Lemma \ref{lem:biot1} hold true. Let $S(\tau)$ be the support size of $\om(\tau), \th(\tau)$. 
Assume $C_l(\tau) S(\tau) < \f{1}{4}$. For any $M > (2 S(\tau) )^{\al}$, we have 
\[
|| \Psi  \one_{M \leq R \leq (2C_l)^{-\al}} ||_{L^2}  \les C(M) \cdot || \Om||_{L^2}, \quad
C(M) \teq  (1 + | \log( C_l M^{ 1 / \al}) | )  S M^{-1/\al}  || \Om||_{L^2}.
\]
\end{lem}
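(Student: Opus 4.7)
The plan is to rescale the cylindrical stream function estimate of Lemma~\ref{lem:biot1} back to the $(x,y)$ variables of \eqref{eq:euler4} and then to the $(R,\b)$ variables of \eqref{eq:nota_om3}. The hypothesis $C_lS<1/4$ ensures that the support of $\om^{\th}=r\td{\om}$ lies inside $\{(r_1,z_1):(r_1-1)^2+z_1^2<1/4\}$, so that Lemma~\ref{lem:biot1} applies, and also that any $(x,y)$ with $\rho\le(2C_l)^{-1}$ corresponds to $(r,z)$ with $r\ge 1/2$ and $|z|\le 1/2$, which is exactly the evaluation regime of Lemma~\ref{lem:biot1}.

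First, I would combine the change of variables $r_1=1-C_ly_1$, $z_1=C_lx_1$ (Jacobian $C_l^2$) with $\om^{\th}(r_1,z_1)=r_1\td{\om}=r_1C_{\om}^{-1}\om(x_1,y_1)$ and $\td{\psi}=C_{\om}^{-1}C_l^2\psi$. After the $C_{\om}$ and $C_l$ factors cancel, Lemma~\ref{lem:biot1} becomes
\[
|\psi(x,y)| \les \int_{\supp\om} r_1^2\,|\om(x_1,y_1)|\,\Bigl(1+\bigl|\log\bigl(C_l^2((x-x_1)^2+(y-y_1)^2)\bigr)\bigr|\Bigr)\,dx_1\,dy_1 .
\]
For $(x,y)$ with $\rho\in[M^{1/\al},(2C_l)^{-1}]$ and $(x_1,y_1)\in\supp\om$, the assumption $M>(2S)^{\al}$ gives $\rho\ge 2S\ge 2|(x_1,y_1)|$, hence $|(x,y)-(x_1,y_1)|\asymp\rho$. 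Therefore $C_l|(x,y)-(x_1,y_1)|\in[C_lM^{1/\al}/2,\,1]$, and the logarithmic factor is uniformly bounded by $1+|\log(C_lM^{1/\al})|$ on the whole annulus, independent of $(x_1,y_1)$.

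Next, I would apply Cauchy--Schwarz: since $r_1\le 1$ and the support has area $\les S^2$,
\[
\int_{\supp\om}|\om(x_1,y_1)|\,dx_1\,dy_1 \les S\,\|\om\|_{L^2(dx\,dy)} .
\]
Converting to the $(R,\b)$ variables, $dx\,dy=\al^{-1}R^{2/\al-1}dR\,d\b$, and since $\supp\om$ corresponds to $R\le S^{\al}$, the weight $R^{2/\al-1}$ is bounded by $S^{2-\al}$, giving $\|\om\|_{L^2(dx\,dy)}^2\le\al^{-1}S^{2-\al}\|\Om\|_{L^2}^2$. Combining these, $|\psi(x,y)|\les(1+|\log(C_lM^{1/\al})|)\al^{-1/2}S^{2-\al/2}\|\Om\|_{L^2}$, and then $|\Psi(R,\b)|=\rho^{-2}|\psi|\les R^{-2/\al}(\cdots)$. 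Integrating $R^{-4/\al}$ over $R\in[M,\infty)$ produces the factor $\tfrac{\al}{4-\al}M^{-(4-\al)/\al}$, so that
\[
\|\Psi\,\one_{M\le R\le(2C_l)^{-\al}}\|_{L^2} \les (1+|\log(C_lM^{1/\al})|)\,(SM^{-1/\al})^{2-\al/2}\,\|\Om\|_{L^2},
\]
and the elementary inequality $(SM^{-1/\al})^{2-\al/2}\le SM^{-1/\al}$ (valid since $SM^{-1/\al}<1/2$ and $\al<2$) yields the claim.

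The main technical obstacle is the interaction between the rescaling $C_l$ and the logarithmic kernel of the three-dimensional axisymmetric Biot--Savart law: after pulling back the $\log$ through \eqref{eq:rescal41}, two powers of $\log C_l$ appear which must be merged with the rescaled distance to form a single uniformly bounded quantity $|\log(C_l\rho)|$. This uniform bound relies crucially on $\rho$ staying in $[M^{1/\al},(2C_l)^{-1}]$, which is precisely the annulus of integration, and on the far-field hypothesis $M>(2S)^{\al}$ to force $|(x,y)-(x_1,y_1)|\asymp\rho$. The remaining bookkeeping---Cauchy--Schwarz, the change-of-variables weight $R^{2/\al-1}$, and the $R^{-4/\al}$ decay of $\Psi$---is essentially routine.
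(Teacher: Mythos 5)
Your argument is correct and follows essentially the same route as the paper's proof: pull back Lemma \ref{lem:biot1} through the rescaling \eqref{eq:rescal41}, use $M>(2S)^{\al}$ to force $|(x,y)-(x_1,y_1)|\asymp\rho$ so the logarithm is uniformly controlled by $1+|\log(C_lM^{1/\al})|$ on the annulus, bound $\int|\om|$ by Cauchy--Schwarz as $\al^{-1/2}S^{2-\al/2}\|\Om\|_{L^2}$, and integrate the resulting $R^{-4/\al}$ decay of $\Psi$ over $R\ge M$, closing with $(SM^{-1/\al})^{2-\al/2}\le SM^{-1/\al}$. The only differences are cosmetic (you perform the Cauchy--Schwarz in $(x,y)$ before converting to $(R,\b)$, whereas the paper does it directly in $(R,\b)$).
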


The proof follows from the estimate in Lemma \ref{lem:biot1}. We defer it to Appendix \ref{app:stream}. 
We will choose $M$ so that  $C(M)$ is small, e.g. $C(M) \les 1$ or $C(M) \les 3^{-1/\al}$.
If we use an estimate similar to Proposition \ref{prop:key} and then restrict it to $M \leq R \leq (2C_l)^{-\al}$, the constant in the upper bound is $\al^{-1}$, which is not sufficient for our purpose. 

\begin{remark}
We restrict the domain of the integral $D_I$ to $R \leq (2C_l)^{-\al}$, which is equivalent to $\rho\leq (2C_l)^{-1} $ due to \eqref{eq:euler_polar}, so that $D_I$ is in $\td D_1$ \eqref{eq:rescale_D}. We impose $R \geq M > (2S(\tau))^{\al}$ so that $D_I$ is away from the support of the solution. Since $S(\tau), C_l(\tau)$ are the variables defined in $(x,y)$ coordinates, when we pass to $(R,\b)$ coordinates, we have a $\al$ power for these variables, e.g. $( S(\tau) )^{\al}, ( C_l(\tau) )^{\al}$.
\end{remark}

\subsubsection{Outline of the estimates }\label{sec:local_outline}
In Section \ref{sec:local_derive}, we use \eqref{eq:elli_euler} to derive the elliptic equation 
\eqref{eq:elli2} for $\chi \Psi$ with some cutoff function $\chi$. The equation is similar to \eqref{eq:elli} in the 2D Boussinesq and has an extra error term $Z_{\chi}$.
We first establish the $L^2$ estimate of $\chi_1 \Psi$ in the same Section \ref{sec:local_derive}. To estimate the terms involving derivatives of $\chi$, e.g. $D_R^2 \chi \Psi$, we use Lemma \ref{lem:far}. The $L^2$ estimate enables us to estimate the error term $Z_{\chi}$. The advantage of localizing \eqref{eq:elli_euler} is that $\chi \Psi$ can be treated as a solution of the elliptic equation \eqref{eq:elli} in $\R_2^+$. Then, in Section \ref{sec:local_H3}, we apply the $\cH^k$ version of the key elliptic estimate in Proposition \ref{prop:key} recursively to $\chi_i \Psi$ with $\chi_i$ that has smaller support, and establish the higher order elliptic estimates.

\subsubsection{Localizing the elliptic equation}\label{sec:local_derive}
We will take advantage of the fact that $C_l(\tau) S(\tau)$ can be extremely small and localize the elliptic equation. Firstly, we assume that $C_l(\tau) S(\tau) < \f{1}{4}$. Recall the relation \eqref{eq:label_rs} about $r$. Then we have $r = 1 - C_l \rho \sin(\b) \geq \f{3}{4}, r^{-1} \les 1$.

Let $\chi_1(\cdot) : [0, \infty) \to [0, 1]$ be a smooth cutoff function, such that $\chi_1(R) = 1$ for $R \leq 1$, $\chi_1(R) = 0$ for $R \geq 2$ and $(D_R \chi_1)^2  \les \chi_1$. 
This assumption can be satisfied if $\chi_1 = \chi_0^2$ where $\chi_0$ is another smooth cutoff function. Denote $\chi_{\lam}(R) = \chi_1(R/ \lam)$. It is easy to verify that 
 \beq\label{eq:chi2}
(D_R \chi_{\lam})^2 = (R / \lam  \pa_R\chi_1(R /\lam) )^2 \les \chi_1(R /\lam) = \chi_{\lam}(R),\quad
|D_R^k \chi_{\lam}| \les \one_{\lam \leq R \leq 2 \lam} ,
 \eeq
for $k\leq 5$, where we have used the property $|D^2_R\chi_1| \les \chi_1$ in the first inequality. Denote 
\[
 \Psi_{\chi} = \Psi \chi_{ \lam } , \quad \Om_{\chi} = \Om  \chi_{\lam}.
\]
At this moment, we just simplify $\chi_{\lam}$ as $\chi$. Observe that $R^2 \pa_{RR} + R \pa_R = D_R^2$ and
\beq\label{eq:com_chi}
\bal
& r \Om_{\chi} = (1 - C_l \rho \sin(\b)) \Om_{\chi} = \Om_{\chi} -  C_l \rho \sin(\b) \Om_{\chi}, \quad  \al D_R (\chi \Psi) = \al D_R \chi \Psi +  \al \chi D_R \Psi,   \\
& \al^2 D_R^2( \chi \Psi ) =  \al^2 \chi D_R^2 \Psi + 2 \al D_R \chi \cdot \al D_R \Psi + \al^2 D_R^2 \chi \Psi . \\ 
\eal
\eeq
Multiplying $\chi$ on both sides of \eqref{eq:elli_euler}, and using \eqref{eq:com_chi} and 
a direct calculation yield 
\beq\label{eq:elli2}
\bal
&-\al^2 D_R^2 \Psi_{\chi} 
- 4 \al D_R \Psi_{\chi} - \pa_{\b \b} \Psi_{\chi} - 4 \Psi_{\chi}
= \Om_{\chi} + Z_{\chi}  ,  \quad Z_{\chi} = Z_1 + Z_2 + Z_3, 
\eal
\eeq
with boundary condition \eqref{eq:elli_euler2}, where 
$Z_1, Z_2$ and $Z_3$ are given below
\beq\label{eq:Zchi}
\bal
& Z_1 =  -\f{ C_l  \rho }{r}  (\sin(\b) (2 \Psi_{\chi} + \al D_R \Psi_{\chi}  ) + \cos(\b) \pa_{\b} \Psi_{\chi}) -\f{C_l^2 \rho^2}{r^2} \Psi_{\chi} ,\\
& Z_2 =\f{ C_l \sin(\b) \rho }{r} \al D_R \chi \Psi -( \al^2 D_R^2 \chi + 4\al D_R \chi )\Psi -2  \al^2  D_R \chi D_R \Psi, \quad  Z_3 = -C_l \rho \sin(\b) \Om_{\chi}.
\eal
\eeq

Recall that $R = \rho^{\al}, r = 1 - C_l y = 1 - C_l \rho \sin(\b)$ from \eqref{eq:euler_polar}, \eqref{eq:label_rs} and $L_{12}(f)(0)$ from \eqref{eq:biot3}. Next, we derive $L_{12}(Z_{\chi_{\lam}})(0)$. It will be used in Section \ref{sec:local_H3} when we apply Proposition \ref{prop:key}.


Firstly, for sufficiently smooth $\Om, \Psi$ with $\Om$ vanishing at least linear near $R=0$, we show that $L_{12}(Z_{\chi_{\lam}})(0)$ is independent of the cutoff radial $\lam$ for $\lam \geq ( S(\tau) )^{\al}$. From $\lam \geq ( S(\tau))^{\al}$, we have $\Om =\Om \cdot \chi_{\lam}= \Om_{\chi_{\lam}}$. For any $\e>0$, using integration by parts, we get
\[
\bal
\la   \pa_{\b \b} \Psi_{\chi} + 4 \Psi_{\chi}, \  \sin(2\b) R^{-1}  \one_{R \geq \e} \ra
&= \la -4 \Psi_{\chi} + 4 \Psi_{\chi},  \sin(2\b) R^{-1}  \one_{R \geq \e} \ra = 0,  \\
\la \al^2 D_R^2 \Psi_{\chi} + 4 \al D_R \Psi_{\chi} , \ \sin(2\b) R^{-1} \ra
&= \la \al^2 \pa_R (D_R \Psi_{\chi}) + 4 \al \pa_R \Psi_{\chi}, \sin(2\b)  \ra  \\
&= -4\al \int_0^{\pi/2} \Psi(0, \b) \sin(2\b) d \b.
\eal
\]
Note that $\Psi$ may not vanish at $R=0$. Since $\rho = R^{1/\al}$ vanishes at $R=0$, it is easy to see that $Z_{\chi}$ vanishes at $R=0$. 
Therefore, integrating both sides of \eqref{eq:elli2} with $\sin(2\b)R^{-1}\one_{R\geq \e}$, and then using the above computations and taking $\e \to 0$, for $\lam \geq ( S(\tau)^{\al} )$, we have 
\beq\label{eq:l12Z0}
L_{12}(Z_{\chi_{\lam}})(0) =  - L_{12}(\Om)(0)  + 4\al \int_0^{\pi/2} \Psi(0, \b) \sin(2\b) d \b.
\eeq

Next, we perform $L^2$ estimate for $\Psi_{\chi}$. It will be used later to estimate 
$Z_{\chi}$ in \eqref{eq:elli2}.

\begin{lem}\label{lem:in}
There exists $\al_2>0$ such that if $\al < \al_2, C_l S < 4^{-1/\al -1}$, for $\lam = \f{1}{4} C_l^{-\al} $, the solution of 
\eqref{eq:elli2} satisfies
\[
  \al^2|| D_R \Psi_{\chi_{\lam}}||^2_{L^2}  
+  \al || \Psi_{\chi_{\lam}} ||^2_{L^2} 
+ \al || \pa_{\b} \Psi_{\chi_{\lam}} ||^2_{L^2} \les \al^{-1}  || \Om||_{L^2}^2.
\]
\end{lem}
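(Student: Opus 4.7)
The plan is to test the localized elliptic equation \eqref{eq:elli2} against $\Psi_{\chi_\lambda}$ in the $L^2$ inner product \eqref{eq:inner_L2} and exploit coercivity of $\mathcal{L}_\alpha$ on the right part of the angular spectrum, while controlling the error $Z_{\chi_\lambda}$ using the fact that $C_l$ and $C_l S$ are extremely small and that $\supp(D_R \chi_\lambda)$ sits far from $\supp(\Omega)$. Concretely, I first rewrite the left-hand side as $-\alpha^2 D_R^2 \Psi_\chi - 4\alpha D_R \Psi_\chi - \partial_{\beta\beta}\Psi_\chi - 4\Psi_\chi$ (which is exactly $\mathcal{L}_\alpha \Psi_\chi$), and integrate by parts in $R$ (the boundary terms vanish since $\Psi_\chi$ is compactly supported in $R$) and in $\beta$ (using $\Psi_\chi(R,0) = \Psi_\chi(R,\pi/2)=0$). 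This produces
\[
\langle \mathcal{L}_\alpha \Psi_\chi, \Psi_\chi\rangle \;=\; \alpha^2\|D_R \Psi_\chi\|_2^2 + (2\alpha - \tfrac{\alpha^2}{2})\|\Psi_\chi\|_2^2 + \|\partial_\beta \Psi_\chi\|_2^2 - 4\|\Psi_\chi\|_2^2.
\]

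To make this coercive despite the bad $-4\|\Psi_\chi\|_2^2$ term, I decompose $\Psi_\chi = A(R)\sin(2\beta) + \Psi_\chi^\perp$ where $\Psi_\chi^\perp$ is $L^2$-orthogonal to $\sin(2\beta)$ in $\beta$ (this is the same Fourier expansion used after \eqref{eq:elli1}). The $\sin(2\beta)$ mode satisfies $\|\partial_\beta(A\sin(2\beta))\|_2^2 - 4\|A\sin(2\beta)\|_2^2 = 0$, while on the orthogonal complement the spectral gap gives $\|\partial_\beta \Psi_\chi^\perp\|_2^2 - 4\|\Psi_\chi^\perp\|_2^2 \geq 12\|\Psi_\chi^\perp\|_2^2$. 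This yields
\[
\langle \mathcal{L}_\alpha \Psi_\chi, \Psi_\chi\rangle \;\geq\; \alpha^2 \|D_R \Psi_\chi\|_2^2 + \alpha\|\Psi_\chi\|_2^2 + \|\Psi_\chi^\perp\|_2^2
\]
for $\alpha$ small. Since $\|\partial_\beta \Psi_\chi\|_2^2 \leq 4\|A\sin(2\beta)\|_2^2 + 16\|\Psi_\chi^\perp\|_2^2 \lesssim \|\Psi_\chi\|_2^2 + \|\Psi_\chi^\perp\|_2^2$, the desired $\alpha\|\partial_\beta \Psi_\chi\|_2^2$ control then follows automatically once the three terms on the right are bounded by $\alpha^{-1}\|\Omega\|_2^2$.

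For the right-hand side $\langle \Omega_\chi + Z_\chi, \Psi_\chi\rangle$, the main term is $\langle \Omega_\chi, \Psi_\chi\rangle \leq \tfrac{1}{2\alpha}\|\Omega\|_2^2 + \tfrac{\alpha}{2}\|\Psi_\chi\|_2^2$, and the second summand is absorbed into the coercivity. The $Z_1$ contribution carries a prefactor $C_l\rho/r \leq C_l \cdot (2\lambda)^{1/\alpha} = C_l \cdot (C_l/2)^{-1} \cdot (1/2)^{1/\alpha} \leq 2^{1-1/\alpha}$, which is exponentially small in $\alpha$, so it is absorbed. The term $Z_3 = -C_l\rho\sin\beta\,\Omega_\chi$ is bounded by $C_l S \cdot |\Omega|$ on the support of $\Omega$, and $C_l S < 4^{-1/\alpha-1}$ makes this negligible. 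The delicate term is $Z_2$, which is supported in the annulus $\lambda \leq R \leq 2\lambda$ (where $\chi$'s derivatives live); since $\lambda = \tfrac{1}{4}C_l^{-\alpha} \geq (2S)^\alpha$ by the hypothesis $C_l S < 4^{-1/\alpha-1}$, the function $\Psi$ in this annulus is controlled by Lemma \ref{lem:far} with $C(\lambda) \lesssim 1$, giving $\|\Psi\,\mathbf{1}_{\lambda\le R\le 2\lambda}\|_2 \lesssim \|\Omega\|_2$.

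The principal obstacle will be the cross term $\langle 2\alpha^2 D_R\chi\, D_R\Psi,\Psi_\chi\rangle$ in $Z_2$, since it involves $D_R\Psi$ rather than $\Psi$ in the annulus, and Lemma \ref{lem:far} directly controls only $\Psi$. My plan is to exploit the structural assumption $(D_R\chi_\lambda)^2 \lesssim \chi_\lambda$: write $D_R\chi\cdot \chi = \tfrac{1}{2}D_R(\chi^2)$, integrate by parts in $R$, and rearrange to trade one derivative onto $\Psi_\chi$, producing terms of the type $\alpha^2 \|D_R\Psi_\chi\|_2 \cdot \|\Psi\,\mathbf{1}_{\text{annulus}}\|_2$ plus genuine remainders controlled via Lemma \ref{lem:far}. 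Using Cauchy--Schwarz and $\alpha$-weighted Young inequalities, the resulting contribution is bounded by $\tfrac{\alpha^2}{2}\|D_R\Psi_\chi\|_2^2 + C\|\Omega\|_2^2 \leq \tfrac{\alpha^2}{2}\|D_R\Psi_\chi\|_2^2 + C\alpha^{-1}\|\Omega\|_2^2$, and can be absorbed into the coercive $\alpha^2\|D_R\Psi_\chi\|_2^2$ term. Combining everything and choosing $\alpha_2$ small enough so that all $O(\alpha)$ and $O(C_l)$ perturbations are absorbed yields the claimed estimate.
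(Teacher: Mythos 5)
Your overall architecture — the energy identity from testing \eqref{eq:elli2} against $\Psi_{\chi}$, treating $Z_1,Z_3$ as exponentially small in $1/\alpha$, integrating by parts to remove the derivative from $\Psi$ in the $2\alpha^2 D_R\chi\, D_R\Psi$ term of $Z_2$, and invoking Lemma \ref{lem:far} on the annulus $\lambda\le R\le 2\lambda$ — is exactly the paper's proof. But your coercivity step contains a genuine error. You claim
\[
\|\partial_\beta \Psi_\chi\|_2^2 \;\leq\; 4\|A\sin(2\beta)\|_2^2 + 16\|\Psi_\chi^\perp\|_2^2,
\]
and use this to conclude that control of $\alpha\|\partial_\beta\Psi_\chi\|_2^2$ "follows automatically" from the three coercive terms $\alpha^2\|D_R\Psi_\chi\|^2+\alpha\|\Psi_\chi\|^2+\|\Psi_\chi^\perp\|^2$. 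This is a reverse Poincar\'e inequality and it is false: expanding $\Psi_\chi^\perp=\sum_{n\ge 2}\Psi_n(R)\sin(2n\beta)$ gives $\|\partial_\beta\Psi_\chi^\perp\|_2^2=\sum_{n\ge 2}4n^2\|\Psi_n\sin(2n\beta)\|_2^2$, which is not bounded by any constant times $\|\Psi_\chi^\perp\|_2^2$ (already $n=3$ violates your constant $16$, and the ratio is unbounded as $n\to\infty$). As written, your lower bound for $\langle \mathcal{L}_\alpha\Psi_\chi,\Psi_\chi\rangle$ therefore does not produce the $\alpha\|\partial_\beta\Psi_\chi\|_2^2$ term in the statement of the lemma, and it also undercuts your absorption of the $\cos(\beta)\partial_\beta\Psi_\chi$ piece of $Z_1$, which needs $\|\partial_\beta\Psi_\chi\|_2\lesssim \alpha^{-1/2}I^{1/2}$.

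The repair is what the paper does: do not pass to the spectral gap on the orthogonal complement, but instead multiply the Poincar\'e inequality $\|\partial_\beta\Psi_\chi\|_2^2\ge 4\|\Psi_\chi\|_2^2$ by $1-\tfrac{\alpha}{4}$ and subtract, which yields
\[
\|\partial_\beta\Psi_\chi\|_2^2 - 4\|\Psi_\chi\|_2^2 \;\geq\; \tfrac{\alpha}{4}\|\partial_\beta\Psi_\chi\|_2^2 - \alpha\|\Psi_\chi\|_2^2,
\]
so that the quadratic form retains the full term $\tfrac{\alpha}{4}\|\partial_\beta\Psi_\chi\|_2^2$ alongside $\tfrac{\alpha}{2}\|\Psi_\chi\|_2^2$ and $\alpha^2\|D_R\Psi_\chi\|_2^2$. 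Two smaller imprecisions: the constant from Lemma \ref{lem:far} at $M=\lambda=\tfrac14 C_l^{-\alpha}$ is $O(\alpha^{-1})$ rather than $O(1)$ because $|\log(C_l\lambda^{1/\alpha})|=|\log 4^{-1/\alpha}|\asymp\alpha^{-1}$ (this still closes, since $\alpha\cdot\alpha^{-2}\|\Omega\|_2^2=\alpha^{-1}\|\Omega\|_2^2$ is exactly the allowed size); and the integration by parts on $-2\alpha^2\langle D_R\chi\, D_R\Psi,\Psi\chi\rangle$ in fact eliminates $D_R\Psi$ entirely, leaving only $\alpha^2\langle (D_R\chi)^2+\chi D_R^2\chi+\chi D_R\chi,\Psi^2\rangle$ supported in the annulus, so no $\|D_R\Psi_\chi\|_2$ cross term needs to be absorbed.
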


Firstly, we have $\lam = \f{1}{4} C_l^{-\al} >  S^{\al}$ and $\Om_{\chi} =\Om \chi_{\lam} = \Om$.
We impose $C_l S < 4^{-1/\al -1}$ so that $\lam > (2S)^{\al}$ and $C(M) \les C \al^{-1}$ in Lemma \ref{lem:far} with $M = \lam$. 
At this step, this bound is good enough for us to treat $Z_{2}$ in \eqref{eq:elli2}-\eqref{eq:Zchi} as perturbation. In the following estimates, we treat the small factor $C_l$ in $Z_{\chi}$ approximately equal to zero and $r\approx 1$. See also Remark \ref{rem:order1}. 

\begin{proof} We simplify $\chi_{\lam}$ as $\chi$. Multiplying \eqref{eq:elli2} by $\Psi_{\chi}$ and using integration by parts, we get 
\beq\label{eq:elli22}
\bal
I & \teq \al^2 || R\pa_R \Psi_{\chi}||^2_{L^2} 
+ \f{4\al - \al^2}{2} || \Psi_{\chi}||^2_{L^2} + || \pa_{\b} \Psi_{\chi}||^2_{L^2} - 4 || \Psi_{\chi}||^2_{L^2} \\
&=  \la \Om, \Psi_{\chi} \ra  +  \la Z_1  +Z_3, \Psi_{\chi} \ra + \la Z_2 , \Psi_{\chi} \ra . \\
\eal
\eeq
Using the Fourier series expansion with basis $\{ \sin(2 n \b) \}_{n \geq 1}$, one can verify that
\[
|| \pa_{\b} \Psi_{\chi}||^2_{L^2} \geq 4 ||\Psi_{\chi}||_{L^2},
\]
which is sharp with equality when $\Psi_{\chi} = \sin(2\b)$. Therefore, multiplying the above inequality by $1 -\f{\al}{4}$ and then applying it to the left hand side of \eqref{eq:elli22} yields
\[
I \geq \al^2 || D_R \Psi_{\chi}||^2_{L^2} + \f{2\al -\al^2}{2} || \Psi_{\chi}||^2_{L^2}  
+ \f{\al}{4} || \pa_{\b} \Psi_{\chi}||^2_{L^2} \geq 
\al^2 || D_R \Psi_{\chi}||^2_{L^2} + \f{\al }{2} || \Psi_{\chi}||^2_{L^2}  
+ \f{\al}{4} || \pa_{\b} \Psi_{\chi}||^2_{L^2},
\]
where we have used $\al \leq 1$.

Within the support of $\chi= \chi_{\lam}$, we have $ R \leq 2 \lam$.  By assumption, we have $\lam = \f{1}{4} C_l^{-\al} > 4^{\al} S^{\al}$. It follows that
\beq\label{eq:Cllam0}
 C_l \rho  \one_{R \leq 2\lam} = C_l R^{ \f{1}{\al}}  \one_{R \leq 2\lam}\leq  C_l (2\lam)^{\f{1}{\al}} = 2^{-\f{1}{\al}} \les \al^2, \quad |\log (C_l \lam^{\f{1}{\al}} )|\les \al^{-1}, \quad  2 S \leq \lam^{1/\al}.
 \eeq

The $Z_1, Z_3$ terms \eqref{eq:Zchi} contain the small factor $C_l \rho$.
Since $r^{-1} \les 1$, we get
\[
\bal
&|| Z_1 ||_{L^2} \les \al^2 (  || \Psi_{\chi}||_{L^2} 
+ ||\al D_R \Psi_{\chi}||_{L^2} +|| \pa_{\b} \Psi_{\chi}||_{L^2} )
\les  \al^2 \al^{-1/2}  I^{1/2} \les \al^{3/2} I^{1/2} , \\
& || Z_3||_{L^2} \les \al^2 || \Om||_{L^2} \les || \Om||_{L^2}.
\eal
\]

We perform integration by parts for the last term $-2\al^2 D_R \chi D_R \Psi$ in $Z_2$ \eqref{eq:Zchi}
\[
-2\al^2 \la D_R \chi D_R \Psi, \Psi \chi \ra 
= - \al^2 \la R \chi D_R \chi, \pa_R \Psi^2 \ra
=  \al^2 \la (R \chi D_R \chi)_R, \Psi^2 \ra 
= \al^2 \la  (D_R \chi)^2  + \chi D^2_R \chi + \chi D_R \chi,  \Psi^2 \ra .
\]

Using the above identity, \eqref{eq:chi2} for $|D_R^k \chi|$ and \eqref{eq:Cllam0}, we obtain 
\[
|\la Z_2 , \Psi_{\chi} \ra|
\les ( \al^2 + \al) || \Psi \one_{\lam \leq R \leq 2 \lam} ||^2_{L^2} \les \al || \Psi
\one_{\lam \leq R \leq 2 \lam} ||^2_{L^2}  \les  \al || \Psi
\one_{\lam \leq R \leq (2C_l)^{-\al}} ||^2_{L^2}  ,
\]
where we have used $2 \lam < (2C_l)^{-\al}$ in the last inequality.
Since $(2S)^{\al}\leq \lam$ and $ S \lam^{-1/\al} \les 1$ (see \eqref{eq:Cllam0}), we apply Lemma \ref{lem:far} with $M = \lam$ and \eqref{eq:Cllam0} to get 
\[
\bal
|\la Z_2 , \Psi_{\chi} \ra|
&\les \al (1 + |\log( C_l \lam^{1/ \al})|)^2 (S  \lam^{-1/\al})^2 || \Om||^2_{L^2}
\les \al^{-1}|| \Om||^2_{L^2}.
\eal
\]

Plugging the estimates of $Z_1, Z_2, Z_3$ and $|| \Psi_{\chi}||_{L^2}\les \al^{-1/2} I^{1/2}$ into \eqref{eq:elli22} and then using the Cauchy-Schwarz inequality, 
we prove 
\[
I \leq C \al^{-1/2} I^{1/2}   || \Om||_{L^2} + C\al \cdot I +C \al^{-1} || \Om||_{L^2}^2.
\]
Now we choose 
\beq\label{eq:al2}
\al_2 =  \min( (2C)^{-1}, 4^{-1}). 
\eeq
Then for $\al < \al_2$, we have $C\al < \f{1}{2}$.
Solving the above inequality yields $I \les \al^{-1} || \Om||^2_{L^2}$. 
\end{proof}

\subsubsection{Localized $\cH^3$ estimates}\label{sec:local_H3}

Notice that the elliptic equation \eqref{eq:elli2}  is localized to $ R \leq 2 \lam \leq \f{1}{2} C_l^{-\al}$, which is away from the boundary of the rescaled domain $\td D_1 = [0, C_l^{-1}]\times [-C_l^{-1}, C_l^{-1} ]$. Therefore, $\Psi_{\chi}$ can be treated as a solution of \eqref{eq:elli2} in the whole space $R \geq 0, \b \in [0, \pi/2]$ with source term $\Om_{\chi} + Z_{\chi}$. We can apply Proposition \ref{prop:key} to improve the elliptic estimate in Lemma \ref{lem:in}. In this estimate, we need to further estimate $\Om_{\chi} + Z_{\chi}$ and $L_{12}(\Om_{\chi} + Z_{\chi} )$.

The term in $Z_{\chi}$ \eqref{eq:elli2},\eqref{eq:Zchi} either has a small factor $C_l \rho \approx 0 $ (see Remark \ref{rem:order1}), or is localized to $\lam \leq R \leq 2\lam$ due to the factor $(D_R \chi)^k$, where $\lam$ is the parameter in the cutoff function $\chi(R /\lam)$. To show that the second type of term is small, we use Lemma \ref{lem:far} and interpolation. 
Using the smallness of these variables and Lemmas \ref{lem:far} and \ref{lem:in}, we can treat $Z_{\chi}$ as a small perturbation. Since $\rho = R^{1/\al}$ and $D_R \chi =0$ for $|R|\leq 1$,  the singular weight $W = \f{(1+R)^k}{R^k},k=1,2$ is treated approximately as $1$ in the following estimates of terms involving $\rho, D_R \chi$.

\begin{prop}\label{prop:elli_euler1}
Let $\Psi$ be the solution of \eqref{eq:elli_euler} 
and $W = \f{(1+R)^k}{R^k}$ for $k=1$ or $2$. 
If $\al < \al_2$ \eqref{eq:al2}, $ C_l S < \al \cdot 8^{-1/\al -1}$, for $\lam = \f{1}{8} C_l^{-\al} $, we have 
\[
\bal
&\al^2 || R^2 \pa_{RR} \Psi_{\chi_{\lam}} W||_{L^2} + 
\al || R \pa_{R \b} \Psi_{\chi_{\lam}} W ||_{L^2} \\
& +|| \pa_{\b\b} (\Psi_{\chi_{\lam}} - \f{\sin(2\b)  }{\al \pi} 
(L_{12}(\Om) + \chi_1 L_{12}(Z_{\chi_{\lam}})(0) )
)  W ||_{L^2} \les  || \Om W||_{L^2} , \\
\eal
\]
where $Z_{\chi}$ is defined in \eqref{eq:elli2},\eqref{eq:Zchi} and $\chi_1$ is the cutoff function. Moreover, for $\nu \geq ( S(\tau))^{\al}$, $L_{12}(Z_{\chi_{\nu}})(0)$ does not depend on $\nu$ and satisfies 
\beq\label{eq:elli_euler1_l12}
 | L_{12}(Z_{\chi_{\lam}})(0) |  =  | L_{12}(Z_{\chi_{\nu}})(0) |  \les 
( 4^{- \f{1}{\al}} \al^{-1} +\min(\al,  (8^{1/\al} C_l S)^{1/2} ) ) || \Om \f{1+R}{R}||_{L^2}.
\eeq
\end{prop}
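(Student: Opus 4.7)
\medskip

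\noindent\textbf{Proof proposal for Proposition \ref{prop:elli_euler1}.}
The strategy is to reduce to the model elliptic estimate Proposition \ref{prop:key} applied to the \emph{localized} equation \eqref{eq:elli2} and then treat every correction either as small by powers of $C_l\rho$, or as supported in the annulus $\{\lambda\le R\le 2\lambda\}$ where Lemma \ref{lem:far} forces $\Psi$ to be tiny. First, since $\Psi_{\chi_\lambda}$ is compactly supported in $R\le 2\lambda\ll C_l^{-\alpha}$, it sits inside the sector where Proposition \ref{prop:key} applies without any boundary interference, and $\Psi_{\chi_\lambda}$ may be regarded as a solution on $R\ge 0,\ \beta\in[0,\pi/2]$ of \eqref{eq:elli} with right-hand side $\Omega_{\chi_\lambda}+Z_{\chi_\lambda}$. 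The key observation is that the same argument yielding Proposition \ref{prop:key} applies verbatim in the simpler weighted $L^2$ norm with weight $W=(1+R)^k/R^k$, so I will quote/adapt it to get
\[
\alpha^2\|R^2\partial_{RR}\Psi_{\chi_\lambda}W\|_2+\alpha\|R\partial_{R\beta}\Psi_{\chi_\lambda}W\|_2+\bigl\|\partial_{\beta\beta}\bigl(\Psi_{\chi_\lambda}-\tfrac{\sin(2\beta)}{\alpha\pi}L_{12}(\Omega_{\chi_\lambda}+Z_{\chi_\lambda})\bigr)W\bigr\|_2\lesssim \|(\Omega_{\chi_\lambda}+Z_{\chi_\lambda})W\|_2.
\]

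\medskip

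Next I control $\|Z_{\chi_\lambda}W\|_{L^2}$. Since $\lambda=\tfrac18 C_l^{-\alpha}$ and $\chi_\lambda$ is supported in $R\le 2\lambda$, within this support $C_l\rho\le C_l(2\lambda)^{1/\alpha}=4^{-1/\alpha}$, which absorbs any $\Psi$-factor using Lemma \ref{lem:in}. This handles $Z_1$ and $Z_3$ in \eqref{eq:Zchi} (with $Z_3$ bounded directly by $C_l\rho\|\Omega W\|_2$). For $Z_2$, the derivatives of $\chi_\lambda$ localize to $\lambda\le R\le 2\lambda$, so I apply Lemma \ref{lem:far} with $M=\lambda$ (the hypothesis $C_lS<\alpha 8^{-1/\alpha-1}$ guarantees $\lambda>(2S)^\alpha$), paying a harmless weight since $W\lesssim 1$ on that annulus, to bound $\|Z_2 W\|_2$ by a power of $4^{-1/\alpha}\alpha^{-1}\|\Omega\|_2$. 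Since $\Omega_{\chi_\lambda}=\Omega$ by the support hypothesis on $\Omega$, this yields $\|(\Omega_{\chi_\lambda}+Z_{\chi_\lambda})W\|_2\lesssim\|\Omega W\|_2$.

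\medskip

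For the stream-function correction, I use the identity $L_{12}(\Omega_{\chi_\lambda}+Z_{\chi_\lambda})(R)=L_{12}(\Omega)(R)+L_{12}(Z_{\chi_\lambda})(R)$ and decompose
\[
L_{12}(Z_{\chi_\lambda})(R)-\chi_1(R)L_{12}(Z_{\chi_\lambda})(0)=\bigl(L_{12}(Z_{\chi_\lambda})(R)-L_{12}(Z_{\chi_\lambda})(0)\bigr)+\bigl(1-\chi_1(R)\bigr)L_{12}(Z_{\chi_\lambda})(0).
\]
The first piece vanishes at $R=0$ and, differentiated twice in $\beta$, is controlled by $\|Z_{\chi_\lambda}W\|_2$ via a direct Hardy-type estimate (the same one used in Proposition \ref{prop:key}); the second piece is supported in $R\ge 1$ where $W$ is bounded, so it is controlled by $|L_{12}(Z_{\chi_\lambda})(0)|$ which will itself be small. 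For the bound \eqref{eq:elli_euler1_l12}, I invoke the identity \eqref{eq:l12Z0}, which already gives $\nu$-independence for $\nu\ge S^\alpha$ and writes $L_{12}(Z_{\chi_\lambda})(0)$ as $-L_{12}(\Omega)(0)+4\alpha\!\int_0^{\pi/2}\Psi(0,\beta)\sin(2\beta)\,d\beta$. The first term is $\lesssim\|\Omega\tfrac{1+R}{R}\|_2$; for the second, I exploit the freedom in $\nu$: pick a \emph{different} cutoff radius $\nu=(2S)^\alpha$ so $\Psi(0,\beta)=\Psi_{\chi_\nu}(0,\beta)$ can be estimated from Lemma \ref{lem:in} applied at scale $\nu$, combined with a one-dimensional Sobolev trace to evaluate at $R=0$; this produces the two alternatives $4^{-1/\alpha}\alpha^{-1}$ (from the $C_l\rho$ factor) and $(8^{1/\alpha}C_lS)^{1/2}$ (from the small size of $\nu$), whose minimum appears in \eqref{eq:elli_euler1_l12}.

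\medskip

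The main obstacle I foresee is the third step: obtaining the sharp bound \eqref{eq:elli_euler1_l12} on $L_{12}(Z_{\chi_\lambda})(0)$ with the $\min(\alpha,(8^{1/\alpha}C_lS)^{1/2})$ gain. The delicate point is that the naive $L^2$ bound from Lemma \ref{lem:in} gives only $\alpha^{-1/2}\|\Omega\|_2$ for $\|\Psi_{\chi_\lambda}\|_2$, so evaluating $\Psi$ at $R=0$ requires carefully exploiting either (i) the freedom to shrink the cutoff radius $\nu$ down to the support of $\Omega$, so that the $L^2$-size of $\Psi_{\chi_\nu}$ scales with $\nu$, or (ii) the additional smallness $C_l\rho\lesssim 4^{-1/\alpha}$ inside $Z_1+Z_3$. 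Carrying out (i) rigorously requires me to rerun Lemma \ref{lem:in} at the scale $\nu=(2S)^\alpha$ and verify that the hypothesis $C_lS<4^{-1/\alpha-1}$ still leaves $C_l\nu^{1/\alpha}$ small enough; the bookkeeping of constants across the two scales $\lambda$ and $\nu$ is the one place where one must be careful.
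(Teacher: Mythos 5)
Your first two steps (localizing to \eqref{eq:elli2}, applying the weighted version of Proposition \ref{prop:key} with source $\Omega_{\chi_\lambda}+Z_{\chi_\lambda}$, killing $Z_1,Z_3$ by the factor $C_l\rho\lesssim 4^{-1/\alpha}$ together with Lemma \ref{lem:in}, and using Lemma \ref{lem:far} on the annulus $\{\lambda\le R\le 2\lambda\}$ for $Z_2$) match the paper's Step~1 and give the first displayed estimate. The genuine gap is in your treatment of \eqref{eq:elli_euler1_l12}. You propose to read the bound off the identity \eqref{eq:l12Z0}, $L_{12}(Z_{\chi_\lambda})(0)=-L_{12}(\Omega)(0)+4\alpha\int_0^{\pi/2}\Psi(0,\beta)\sin(2\beta)\,d\beta$, and to estimate the two terms separately. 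This cannot work: each term is individually of size $O(1)\cdot\|\Omega\tfrac{1+R}{R}\|_{L^2}$ (indeed, since $\Psi\approx\tfrac{\sin(2\beta)}{\pi\alpha}L_{12}(\Omega+Z_\chi)$ near $R=0$, the second term is approximately $L_{12}(\Omega)(0)+L_{12}(Z_{\chi_\lambda})(0)$, so \eqref{eq:l12Z0} is a near-cancellation of two $O(1)$ quantities and estimating them by the triangle inequality returns a tautology). No trace estimate or rechoice of the cutoff radius $\nu$ recovers the prefactor $4^{-1/\alpha}\alpha^{-1}+\min(\alpha,(8^{1/\alpha}C_lS)^{1/2})$, which is the whole point of \eqref{eq:elli_euler1_l12} (it must later beat factors of $\alpha^{-k}$). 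The paper uses \eqref{eq:l12Z0} only for the $\nu$-independence claim; the quantitative bound comes from applying Lemma \ref{lem:l12} (i.e.\ $|L_{12}(f)(0)|\lesssim\|f\tfrac{1+R}{R}\|_{L^2}$) directly to $f=Z_{\chi_\lambda}$.

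That route in turn requires a second ingredient you are missing: the refined bound $\|Z_2W\|_{L^2}\lesssim\min(\alpha,(8^{1/\alpha}C_lS)^{1/2})\,\|\Omega W\|_{L^2}$ (the paper's Step~2, \eqref{eq:ref2}). Your first-pass estimate of $Z_2$ only yields something like $\alpha^{1/2}\|\Omega\|_{L^2}$, which neither supplies the $(8^{1/\alpha}C_lS)^{1/2}$ alternative in \eqref{eq:elli_euler1_l12} (essential later, since under the stronger hypothesis of Proposition \ref{prop:elli_euler} it becomes $\le 3^{-1/\alpha}$) nor suffices to control the $\alpha^{-1}L_{12}(Z_{\chi})$ correction inside the elliptic estimate. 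The refinement is obtained by isolating the worst term $J=\|\alpha^2 D_R\chi\, D_R\Psi\|_2^2$, integrating by parts to trade one $D_R\Psi$ for $\Psi\one_{\lambda\le R\le2\lambda}$ (which Lemma \ref{lem:far} makes of size $\alpha^{-1}8^{1/\alpha}C_lS\|\Omega\|_2$), reabsorbing the resulting $D_R^2\Psi$ via the already-established estimate, and solving the ensuing quadratic inequality $B^2\lesssim A(B+1)$ with $A=\alpha\,8^{1/\alpha}C_lS$. Without this interpolation step and without switching your Step~3 to a direct application of Lemma \ref{lem:l12} to $Z_{\chi_\lambda}$, the claimed bound \eqref{eq:elli_euler1_l12} is not reachable by your argument.
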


\begin{remark}
Let $\chi_{\lam_1}$ be the cutoff function in Lemma \ref{lem:in}. We choose $\lam = \f{1}{8} C_l^{-\al}$ so that $\chi_{\lam_1} \equiv 1$ in $\supp(\chi_{\lam})$. This allows us to apply Lemma \ref{lem:in} to estimate various terms in $\supp(\chi_{\lam})$. We use $L_{12}(Z_{\chi_{\lam}})(0)$ to correct $\Psi$ so that $\Psi_{\chi_{\lam}} - \f{\sin(2\b)}{\pi \al} ( L_{12}(\Om) - \chi_1 L_{12}(Z_{\chi_{\lam}})(0) )$ vanishes near $R =0$. Choosing small $C_l S, \al$ later, we use \eqref{eq:elli_euler1_l12} to show that $L_{12}(Z_{\chi_{\lam}})(0)$ is very small.

\end{remark}

\begin{proof}
{\bf{Step 1}.}
We apply the elliptic estimate in Proposition \ref{prop:key} in the weighted $L^2$ case, which can be proved using the same argument in \cite{elgindi2019finite}, to obtain 
\beq\label{eq:elli3}
\bal
I &\teq \al^2 || R^2 \pa_{RR} \Psi_{\chi_{\lam}} W||_{L^2} + 
\al || R \pa_{R \b} \Psi_{\chi_{\lam}} W ||_{L^2} \\
&\quad  +|| \pa_{\b\b} (\Psi_{\chi_{\lam}} - \f{\sin(2\b)  }{\al \pi} 
(L_{12}(\Om_{\chi} + Z_{\chi}) ) W ||_{L^2}  \les  || (\Om_{\chi}+Z_{\chi}) W||_{L^2}.
\eal
\eeq
Under the assumption $C_l S < \al 8^{-1/\al - 1}$, 
we have $ (2S)^{\al} < \f{1}{8} C_l^{-\al} = \lam$. Thus, $ \Om_{\chi} =  \chi_{\lam} \Om = \Om$. Recall $Z_{\chi} = Z_1 + Z_2 + Z_3$ in \eqref{eq:Zchi} and $\rho = R^{1/\al}$. Within the support of $\chi$, we have 
\beq\label{eq:F10}
C_l \rho W = C_l R^{1/\al -2} (1+R)^2 \les C_l (2\lam)^{1/\al} \leq 4^{- 1 /\al}.
\eeq
We can apply Lemma \ref{lem:in} to estimate the $L^2(W^2)$ norm of $Z_1$
\beq\label{eq:F1}
|| Z_1 W ||_{L^2} \les || C_l \rho W\chi||_{L^{\infty}} (|| \Psi_{\chi} ||_{L^2} 
+ \al || D_R \Psi_{\chi}||_{L^2} + ||\pa_{\b} \Psi_{\chi}||_{L^2} ) \les 4^{-1/\al} \al^{-1}|| \Om||_{L^2}.
\eeq
Estimate of $Z_3$ defined in \eqref{eq:Zchi} is trivial
\beq\label{eq:F3}
|| Z_3 W||_{L^2} \les 4^{-1/\al } || \Om||_{L^2}.
\eeq

Recall $Z_2$ defined in \eqref{eq:Zchi}.
Notice that the support of $Z_2$ lies in $\lam \leq R \leq 2\lam$ due to the $D_R \chi$ term. Within this annulus, we get $W \les 1$. Due to the smallness of $C_l \rho $ from \eqref{eq:F10}, we have 
\beq\label{eq:F20}
|| Z_2 W||_{L^2} \les \al || \Psi \one_{\lam \leq  R \leq 2 \lam}||_{L^2} +  \al^2 || D_R \chi D_R\Psi ||_{L^2} .
\eeq

Using $\lam = \f{1}{8}C_l^{-\al}$ and $C_l S < \alpha 8^{-1/\alpha-1}$, we obtain
\beq\label{eq:Cllam}
|\log(C_l \lam^{1/ \al})|  = |\log(8^{-1/\al})| \les \al^{-1} , \quad
S \lam^{-1/\al} =  8^{1/\al} C_l S < \al.
\eeq
Since $\lam \geq (2S(\tau))^{\al}$, applying Lemma \ref{lem:far} with $M = \lam$
and \eqref{eq:Cllam} to $C(M)$, we get 
\beq\label{eq:F21}
|| \Psi \one_{\lam \leq  R \leq 2 \lam}||_{L^2} \les \al^{-1}  8^{1/\al} C_l S  || \Om ||_{L^2} \les  || \Om ||_{L^2}. 
\eeq

Applying  Lemma \ref{lem:in} to $D_R \Psi_{\chi}$, and using \eqref{eq:F20}, \eqref{eq:F21}, we yield
\beq\label{eq:F2}
 || Z_2 W||_{L^2} \les \al || \Om||_{L^2} + \al^{1/2} || \Om||_{L^2} \les \al^{1/2} || \Om||_{L^2} .
\eeq


Plugging \eqref{eq:F1}-\eqref{eq:F2} into \eqref{eq:elli3} and using $4^{-1/ \al} \al^{-1} \les 1$, we prove 
\beq\label{eq:elli32}
 I   \les || \Om_{\chi} W ||_{L^2} \les || \Om W||_{L^2},
\eeq

\textbf{Step 2: Smallness of $Z_2$.} We use interpolation and the smallness of $|| \Psi \one_{\lam \leq  R \leq 2 \lam}||_{L^2}$ \eqref{eq:F21} to refine the estimate of $Z_2$ in \eqref{eq:F2}. 
The refinement is used to estimate the term $\al^{-1} L_{12}(Z_{\chi})$ in $I$ \eqref{eq:elli3}, and is important to prove \eqref{eq:elli_euler1_l12}. Using integration by parts, we obtain 
\beq\label{eq:intep_J0}
\bal
 J & \teq || \al^2 D_R \chi D_R \Psi ||_2^2 = \al^4  \la  R( D_R \chi)^2 D_R \Psi, \ \pa_R \Psi \ra  \\
&= -\al^4 \la  \pa_R (R  (D_R\chi)^2)  D_R \Psi ,\Psi \ra
 -\al^4 \la   R  (D_R\chi)^2 \pa_R D_R \Psi ,\Psi \ra . 
\eal
\eeq

Using \eqref{eq:chi2}, we get $|\pa_R (R  (D_R\chi)^2)| \les |D_R \chi| \one_{\lam \leq R \leq 2\lam}, (D_R\chi)^2 \leq \chi \one_{\lam \leq R \leq 2 \lam} $.
Using the Cauchy-Schwarz inequality, we yield 
\beq\label{eq:ref0}
J   \les  \al^2 ( \al^2  || D_R \chi D_R \Psi||_2
  +  \al^2  ||\chi D_R^2 \Psi ||_{L^2})
|| \Psi \one_{\lam \leq R \leq 2\lam} ||_{L^2} .
\eeq

We further estimate $\al^2  ||\chi D_R^2 \Psi ||_{L^2}$. Using $|D_R^2 \chi| \les \one_{\lam \leq R \leq 2\lam}$ and \eqref{eq:com_chi},  we obtain 
\[
\al^2 || \chi_{\lam} D_R^2\Psi ||_{L^2}
\les \al^2  || D_R^2  \Psi_{\chi}  ||_{L^2}
+ \al^2 || D_R \chi D_R \Psi ||_{L^2} + \al^2 || \Psi  \one_{\lam \leq R \leq 2\lam}||_{L^2}.
\]

By definition, we have $  \al^2 || D_R \chi D_R \Psi ||_{L^2}  = J^{1/2}$. 
Using  \eqref{eq:elli3},\eqref{eq:elli32},\eqref{eq:F21}, we obtain 
\beq\label{eq:ref1}
\al^2 || \chi_{\lam} D_R^2\Psi ||_{2} \les || \Om W||_2 + J^{1/2 } + \al^2 || \Om||_2
\les || \Om W||_2 + J^{1/2 } .
\eeq
Plugging $ \al^2 || D_R \chi D_R \Psi ||_{L^2}  = J^{1/2}$, the first inequality in \eqref{eq:F21} and the estimate  \eqref{eq:ref1} in \eqref{eq:ref0}, we establish 
\[
J \les \al^2( J^{1/2} + || \Om W||_2 + J^{1/2 }  )
 \al^{-1}8^{1/\al} C_l S || \Om ||_{L^2} 
\les \al ( J^{1/2} + || \Om W||_2  ) 8^{1/\al} C_l S || \Om W ||_{L^2} .
\]

The above inequality is a quadratic inequality on $B = J^{1/2} / || \Om W||_2$ : $B^2 \les A( B + 1) , A = \al 8^{1/\al} C_l S \leq \al$, which implies $ B \les A^{1/2} $. Thus, 
we prove 
\[
J^{1/2} \les  \al^{1/2}  (8^{1/\al} C_l S)^{1/2} || \Om W||_{L^2} .
\]



Combining the above estimate of $J$ and \eqref{eq:F20}-\eqref{eq:Cllam}, we yield 
\beq\label{eq:ref2}
\bal
|| Z_2 W||_{L^2} & \les (  8^{1/\al} C_l S + \al^{\f{1}{2}}  (8^{1/\al} C_l S )^{ 1/2}
 ) || \Om W||_{L^2}  \\
&\les  \min(\al,  (8^{1/\al} C_l S )^{ 1/2} )|| \Om W||_{L^2}.
\eal
\eeq

Using Lemma \ref{lem:l12}, \eqref{eq:F1}, \eqref{eq:F3} and \eqref{eq:ref2}, we establish
\beq\label{eq:l12_small}
\bal
&|| L_{12}(Z_{\chi_{\lam}}) - \chi_1 L_{12}(Z_{\chi_{\lam}})(0) W||_{L^2} 
\les || Z_{\chi} W||_{L^2}\les || (Z_1 + Z_2 + Z_3) W||_{L^2}\les \al || \Om W ||_{L^2} , \\
\eal
\eeq
where we have used $4^{-1/\al} \al^{-1}\les \al$. Combining \eqref{eq:elli3}, \eqref{eq:elli32} and \eqref{eq:l12_small}, we complete the proof of the first estimate. We remark that we only need the bound $|| Z_2 W ||_{L^2} \les \al || \Om W||_{L^2}$ from \eqref{eq:ref2} in this estimate.

{\bf{Step 3: Estimate of $L_{12}(Z_{\chi_{\lam}})(0)$.}}
Note that the previous estimates hold true for the weights $W = W_k \teq \f{(1+R)^k}{R^k}$ with $k=1$ or $2$. Recall $Z_{\chi_{\lam}} = Z_1 + Z_2 + Z_3$ \eqref{eq:elli2}. Using Lemma \ref{lem:l12}, \eqref{eq:F1}, \eqref{eq:F3} and \eqref{eq:ref2}, we prove
\[
 | L_{12}(Z_{\chi_{\lam}})(0) | \les  || Z_{\chi_{\lam}} W_1||_{L^2} \les || (Z_1 + Z_2 + Z_3) W||_{L^2}
\les ( 4^{- \f{1}{\al}} \al^{-1} + \min(\al,  (8^{1/\al} C_l S )^{ 1/2} ) )  || \Om W_1||_{L^2},
\]
which is \eqref{eq:elli_euler1_l12}. Using \eqref{eq:l12Z0}, we yield that $L_{12}(Z_{\chi_{\nu}})$ is independent of $\nu$ for $\nu \geq S(\tau)^{\al}$.
\end{proof}

\begin{prop}\label{prop:elli_euler}
Suppose that $\Psi$ is the solution of \eqref{eq:elli_euler} and $\Om \in \cH^3$. If $\al <\al_2$ \eqref{eq:al2}, $\lam = 2^{-13} C_l^{-\al}, \ C_l S <  \al \cdot (2^{13})^{-1/\al -1}$, then we have 
\[
\bal
&\al^2 || R^2 \pa_{RR} \Psi_{\chi_{\lam}} ||_{\cH^3} + 
\al || R \pa_{R \b} \Psi_{\chi_{\lam}}  ||_{\cH^3}  +|| \pa_{\b\b} ( \Psi_{\chi_{\lam}} - \f{\sin(2\b)  }{\al \pi} 
(L_{12}(\Om) + \chi_1 L_{12}(Z_{\chi_{\lam}})(0) )
)   ||_{\cH^3} \les  || \Om ||_{\cH^3} , \\
 &| L_{12}(Z_{\chi_{\lam}})(0) |  \les  3^{- \f{1}{\al}}  || \Om \f{1+R}{R}||_{L^2}.
\eal
\]
Moreover, $L_{12}(Z_{\chi_{\nu}})(0)$ does not depend on $\nu$ for $\nu \geq ( S(\tau) )^{\al}$.
\end{prop}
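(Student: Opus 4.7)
The plan is to mirror the proof of Proposition \ref{prop:elli_euler1}, but to apply the full $\cH^3$ elliptic estimate of Proposition \ref{prop:key} in place of the weighted $L^2$ version, and to exploit that the cutoff radius $\lambda = 2^{-13} C_l^{-\al}$ is much smaller than the $\lambda_1 = \tfrac{1}{8} C_l^{-\al}$ used in Lemma \ref{lem:in} and Proposition \ref{prop:elli_euler1}. First, I would apply Proposition \ref{prop:key} to the localized elliptic equation \eqref{eq:elli2} for $\Psi_{\chi_\lambda}$, which has the same form as \eqref{eq:elli} on the half-plane with source $\Om_{\chi_\lambda} + Z_{\chi_\lambda}$. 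This reduces the whole task to bounding $\|\Om_{\chi_\lambda} + Z_{\chi_\lambda}\|_{\cH^3}$. Since $(2S)^\al < \lambda$, $\Om_{\chi_\lambda} = \Om$ and contributes $\|\Om\|_{\cH^3}$ directly; the correction $\chi_1 L_{12}(Z_{\chi_\lambda})(0)\sin(2\b)/(\pi\al)$ is precisely the singular-term correction introduced in \eqref{eq:G} so that the resulting source has vanishing $\sin(2\b)$-mode at $R=0$, exactly as in Theorem / Proposition \ref{prop:key}.

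The main estimate to establish is $\|Z_{\chi_\lambda}\|_{\cH^3} \les \|\Om\|_{\cH^3}$. For the pieces $Z_1$ and $Z_3$ in \eqref{eq:Zchi}, the factor $C_l \rho/r$ is pointwise bounded on $\supp(\chi_\lambda)$ by $C_l(2\lambda)^{1/\al} = 2^{-12/\al}$, so these contributions reduce to the $\cH^3$ norm of $\Psi_{\chi_\lambda}$ and its first derivatives, times the tiny factor $2^{-12/\al}$. For $Z_2$, the presence of $D_R\chi_\lambda$ localizes everything to the annulus $\{\lambda \le R \le 2\lambda\}$, which lies far from $\supp(\Om)$; on this annulus Lemma \ref{lem:far} (used with $M = \lambda$) controls $\|\Psi\|_{L^2}$ by $C \cdot 8^{1/\al} C_l S \cdot \al^{-1}\|\Om\|_{L^2}$, which under the hypothesis $C_l S < \al\, 2^{-13(1+1/\al)}$ is tiny.

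To close the $\cH^3$ estimate of $Z_2$ I would run a short bootstrap across a tower of cutoffs $\chi_{\lambda}, \chi_{2\lambda}, \chi_{4\lambda}, \ldots, \chi_{2^k\lambda}$ (with $2^k \lambda \le \lambda_1$, which is why $\lambda$ is chosen so small). At each level one applies the $\cH^m$ version of the elliptic estimate — for $m=0,1,2,3$ in turn — with the outer cutoffs handling the higher derivatives that appear when one differentiates the $D_R\chi$ factors in $Z_2$. Lemma \ref{lem:in} gives the base case in $L^2$, and at each subsequent step the new annular terms are again controlled by Lemma \ref{lem:far}, which gains an arbitrarily large power of $C_l S$ once $k$ is fixed. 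Because the elliptic operator commutes with $D_R$ and $D_\b$ up to lower-order terms (the expansions \eqref{eq:H2_Rb} and the analogous one for $D_R$), this iteration only costs a universal constant at each level.

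Finally, for the bound on $L_{12}(Z_{\chi_\lambda})(0)$, I would invoke the $\lambda$-independence stated in the last sentence of Proposition \ref{prop:elli_euler1}: taking $\nu = \tfrac{1}{8} C_l^{-\al}$ gives
\[
|L_{12}(Z_{\chi_\lambda})(0)| = |L_{12}(Z_{\chi_\nu})(0)| \les \bigl( 4^{-1/\al}\al^{-1} + (8^{1/\al} C_l S)^{1/2}\bigr) \bigl\| \Om \tfrac{1+R}{R} \bigr\|_{L^2}.
\]
Under the hypothesis $C_l S < \al\, 2^{-13(1+1/\al)}$, the second summand is $\les \al^{1/2} 2^{-5/\al}$; and for $\al < \al_2$ sufficiently small the first summand $4^{-1/\al}\al^{-1} = (4/3)^{-1/\al}\al^{-1}\cdot 3^{-1/\al}$ is also $\le 3^{-1/\al}$. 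Both contributions are thus dominated by $3^{-1/\al}$, yielding the stated bound. The hardest ingredient is the bootstrap step for $Z_2$: one must organize the nested cutoffs and the associated commutator terms so that each round of the elliptic estimate only involves one more derivative on an annulus where the previous round already provides the needed control, and so that the smallness harvested from Lemma \ref{lem:far} is not lost to accumulating $\al^{-1}$ factors.
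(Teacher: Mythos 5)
Your proposal follows essentially the same route as the paper: localize the equation via \eqref{eq:elli2}, apply the $\cH^m$ versions of Proposition \ref{prop:key} to $\Psi_{\chi}$ with source $\Om+Z_{\chi}$, run an induction over derivative order with a tower of nested cutoffs (the paper's $\cP_0,\dots,\cP_{10}$ with $\chi^{(l)}=\chi_1(R/(2^{-3-l}C_l^{-\al}))$, arranged so $\chi^{(l)}\equiv 1$ on $\supp\chi^{(l+1)}\cup\supp\Om$), treat $Z_1,Z_3$ via the factor $C_l\rho\les 8^{-1/\al}$, and deduce the $L_{12}(Z_{\chi_\lam})(0)$ bound from the $\nu$-independence and arithmetic of Proposition \ref{prop:elli_euler1}. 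One point you should make explicit, since as written it could be read as asking Lemma \ref{lem:far} to do work it cannot: for the terms $\al\,D_R^j\chi\,D_R^m\Psi$ with $m\ge 1$ arising inside $\al^{-1}L_{12}(Z_2)$, Lemma \ref{lem:far} only controls $\Psi$ itself on the annulus, not its $D_R$-derivatives, and the naive size of $D_R^m\Psi$ is $\al^{-1}$ because of the singular part $\f{\sin(2\b)}{\pi\al}(L_{12}(\Om)+\chi_1 L_{12}(Z_\chi)(0))$. The paper's resolution (equation \eqref{eq:step32}) is that $D_R^j\chi$ is supported outside $\supp\Om\cup\supp\chi_1$, so $D_R^j\chi\,D_R^m\Psi_{\chi^{(m)},2}=0$ and one may replace $\Psi$ by $\Psi-\Psi_{\chi^{(m)},2}$, which the induction-hypothesis estimate $\cP_m$ bounds with no $\al^{-1}$ loss; your phrase ``the previous round already provides the needed control'' is consistent with this but should be spelled out as this subtraction.
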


The small factor $3^{-1/\al}$ will be used later to absorb $\al^{-k}$ for several $k \in Z_+$, i.e. $3^{-1/\al} \al^{-k} \les_k 1$. The estimate of $ L_{12}(Z_{\chi_{\lam}})(0) $ follows from \eqref{eq:elli_euler1_l12}. The proof of the first inequality follows from the idea discussed at the beginning of Section \ref{sec:local_H3} and estimates similar to the Step 1 in the proof of Proposition \ref{prop:elli_euler1}. Suppose that $ \Om$ has size $1$. Using the smallness of $C_l, C_l S$ (see Remark \ref{rem:order1}), Lemma \ref{lem:far} and Proposition \ref{prop:elli_euler1}, formally, we get that $C_l \rho \Om$ has size $0$, $C_l \rho  \cT \Psi_{\chi}$ has size $ \approx 0$ for $\cT =\pa_{\b}, D_R$ or $Id$, $ \al^2 D_R\chi D_R \Psi$ has size $\al^2 \al^{-1} = \al$ and $D_R^k \chi \Psi$ has size $\approx 0$. Hence, $Z_{\chi}, \f{1}{\al}  ( L_{12}(Z_{\chi}) - \chi_1 L_{12}(Z_{\chi})(0)) $ have size less than $\al, 1$, respectively, which enables us to treat them as perturbation. 
Moreover, the terms $Z_1, Z_2$ in $Z_{\chi}$ \eqref{eq:elli2} have  derivatives whose orders are lower than $D_R^2 \Psi_{\chi}, \pa_{\b}^2 \Psi_{\chi}$. The term $Z_3 = -C_l \rho \sin(\b) \Om$ in \eqref{eq:Zchi} does not involve $\Psi$ and its estimate is trivial. These allow us to use induction to establish higher order estimates.

Denote $ \Psi_{\chi_{\lam}, *} = \Psi_{\chi_{\lam}} - \f{\sin(2\b)  }{\al \pi} 
(L_{12}(\Om) + \chi_1 L_{12}(Z_{\chi_{\lam}})(0) )$. To simplify our discussions, we introduce some notations for different elliptic estimates. Recall $\cH^m$ defined in \eqref{norm:H2} and $\cH^0 = L^2(\vp_1)$.
For some weight $\wt W$, differential operator $\cT =D^j_{\b} D_R^i $ and constant $ \mu$, we denote by $\cP( \wt W, \cT , \mu,  \al, C_l, \Psi, \Om )$ the following elliptic estimate for the solution $\Psi$ of \eqref{eq:elli_euler2}
\beq\label{eq:elli_p}
\al^2 || \cT D_R^2 \Psi_{\chi_{\lam}} \wt W^{ \f{1}{2}}||_{2}
+ \al || \cT D_R \pa_{\b} \Psi_{\chi_{\lam}} \wt W^{\f{1}{2}}||_{2} 
+ || \cT \pa_{\b}^2 \Psi_{\chi_{\lam}, *} \wt W^{ \f{1}{2}} ||_{2}  \les  || D_R^i \Om ||_{\cH^j}  + || \Om||_{\cH^j},
\eeq
where $\lam = \mu C_l^{-\al}$ is the parameter for the cutoff function. We put $D_R^i \Om$ in the upper bound since $D_R$ commutes with the elliptic operator $\cL_{\al}$ \eqref{eq:elli}, which was observed in \cite{elgindi2019finite}. The upper bound controls the $D_{\b}^j$ derivatives of $D_R^i \Om$. We simplify $\cP( \wt W, \cT , \mu,\al, C_l,  \Psi, \Om  )$  as $\cP( \wt W, \cT , \mu)$.

Recall the weights $\vp_i$ \eqref{def:wg} and the $\cH^3$ norm \eqref{norm:H2}. Denote $\cP_0 = \cP( \f{(1+R)^4}{R^4}, Id, 2^{-3} )$, 
\beq\label{eq:elli_pj}
\bal
&\cP_{1+j} = \cP( \vp_1, (D_R)^j, 2^{-4-j]} ), \ 0 \leq j \leq 3, \quad \cP_{5 + j} =  \cP( \vp_2, (D_R)^jD_{\b}, 2^{-8-j } ),   0\leq j \leq 2,  \\
& \cP_{8 +j} = \cP( \vp_2, (D_R)^j D^2_{\b}, 2^{-11-j } ), \ j = 0, 1,  \quad \cP_{10 } = \cP( \vp_2, D^3_{\b}, 2^{-13 } ), 
\eal
\eeq

We establish $\cP_i$ in an increasing order by induction. In other words, we first establish \eqref{eq:elli_p} for $\cT$ being $D_R$ derivatives, then for $\cT$ including one $D_{\b}$ derivatives, and so on. Estimate $\cP_0$ is established in Proposition \ref{prop:elli_euler1}, and it serves as the base case. This order of estimates has been used in \cite{elgindi2019finite}. The support of the cutoff function in $\cP_l$ satisfies 
\beq\label{eq:elli_chi}
\chi^{(l)} \equiv 1 \mathrm{ \quad in \quad  } \supp( \chi^{(l+1)}) \cup \supp(\Om), \quad \chi^{(l)} \teq \chi_1(R / ( 2^{-3 -l} C_l^{-\al}  ) ) .
\eeq

Hence, to prove $\cP_n$, we can apply $\cP_l, l\leq n-1$. The $\cH^3$ elliptic estimate follows from all $\cP_i$.

\begin{proof}
We demonstrate the ideas in the induction by mainly proving the $L^2(\vp_1)$ elliptic estimate $\cP_1$.
To establish $\cP_n, n \geq 1$, in step I, we use the $\cP_n$ version of the elliptic estimates in Proposition \ref{prop:key} with source term $\Om + Z_{\chi}$, which can be proved by the argument in \cite{elgindi2019finite}. We simplify $\chi^{(n)}$ as $\chi$. In the case of $n=1$, using the $\cP_1$ elliptic estimates (or the $L^2(\vp_1)$ estimates), we obtain 
\beq\label{eq:step1}
\bal
&\al^2 || R^2 \pa_{RR} \Psi_{\chi_{\lam}} \vp_1^{1/2}||_{L^2} + 
\al || R \pa_{R \b} \Psi_{\chi_{\lam}} \vp_1^{1/2} ||_{L^2}  \\
& +|| \pa_{\b\b} (\Psi_{\chi_{\lam}} - (\pi \al)^{-1} \sin(2\b) 
(L_{12}(\Om_{\chi} + Z_{\chi}) ) \cdot \vp_1^{1/2} ||_{L^2}  
\les  || (\Om_{\chi}+Z_{\chi}) \vp_1^{1/2}||_{L^2}.
\eal
\eeq

In step II, we apply Lemma \ref{lem:l12} to the $L_{12}(\cdot)$ terms and the elliptic estimate we have obtained, i.e. $\cP_i, i \leq n-1$, to control the $Z_{\chi}$ terms. In the case of $n=1$, $\cP_i, i \leq n-1$ is  $\cP_0$, which has been established in Proposition \ref{prop:elli_euler1}. Our goal is to establish 
\begin{align}
|| Z_{\chi} \vp_1^{1/2} ||_{L^2} & \les || \Om \vp_1^{1/2} ||_{L^2 }, \label{eq:step21} \\
|| \pa_{\b \b} \B( \f{\sin(2\b)  }{\al \pi} 
(L_{12}( Z_{\chi}) - \chi_1 L_{12}(Z_{\chi})(0) ) \B) \vp_1^{1/2} ||_{L^2}  & \les || \Om \vp_1^{1/2} ||_{L^2}, \label{eq:step22}
\end{align}
in the case of $n=1$, and similar estimates in the case of $n>1$.

Recall $Z_{\chi} = Z_1 + Z_2 +Z_3$ and \eqref{eq:Zchi}. By triangle inequality, it suffices to establish the above estimates for $Z_i$ separately. Note that $Z_3$ in \eqref{eq:Zchi} does not involve $\Psi$ and contains the small factor $C_l \rho$ (see \eqref{eq:small_Cl} below). 
The above estimates (and similar estimates in the case of $n>1$) for $Z_3$ are straightforward by applying Lemma \ref{lem:l12} to estimate the $L_{12}(\cdot)$ term. The above estimates (and similar estimates appeared in the proof of $\cP_n, n > 1$) for $Z_1, Z_2$ are established by the following substeps.

Firstly, $Z_1, Z_2$ defined in \eqref{eq:Zchi} only contain the first order derivative $D_R ,\pa_{\b}$ of $\Psi_{\chi}$, which are lower order than the leading terms $D_R^2 \Psi_{\chi}, \pa_{\b}^2 \Psi_{\chi}$ in \eqref{eq:elli2}. Hence, we can apply the previous elliptic estimates, e.g. $\cP_0$ or Proposition \ref{prop:elli_euler1} for $n=1$, to estimate the norm of higher order derivatives of $Z_1, Z_2$ or the norm of $Z_1, Z_2$ with more singular weight. To estimate the $\Psi$ terms in $Z_1, Z_2$ that do not involve $D_R$ derivative, e.g. $\pa_{\b} \Psi_{\chi}, \Psi_{\chi}$, we decompose $\Psi_{\chi}$ into 
\beq\label{eq:elli_euler_step1}
 \Psi_{\chi,*} \teq \Psi_{\chi} -  \f{\sin(2\b)}{\pi\al} (L_{12}(\Om) + \chi_1 L_{12}(Z_{\chi})(0)),  \
\Psi_{\chi , 2} \teq \f{\sin(2\b)}{\pi\al} (L_{12}(\Om) + \chi_1 L_{12}(Z_{\chi})(0)) . 
\eeq
We apply the elliptic estimates to estimate $\Psi_{\chi,*}$, Lemma \ref{lem:l12} and Proposition \ref{prop:elli_euler1} for $L_{12}(Z_{\chi})(0)$ to estimate $\Psi_{\chi,2}$. Formally, compared to $\Om$, $\Psi_{\chi, *}$ and $\Psi_{\chi, 2}$ have size $1, \al^{-1}$, respectively.

Secondly, $Z_1$ and $Z_2$ contain small factors. For $Z_1$ \eqref{eq:Zchi}, since $\lam = C C_l^{-\al}$ for $C \in [2^{-13},2^{-4}]$, in $\supp(\chi)$, we get a small factor 
\beq\label{eq:small_Cl}
C_l \rho \chi \leq  \one_{R \leq 2\lam} C_l  \rho \leq C_l ( 2\lam)^{1/\al} \leq 8^{-1/\al} 
\les_k \al^{k}
\eeq
for any $k \in Z_+$.
For $Z_2$ defined in \eqref{eq:Zchi}, the first term in $Z_2$ also contains the small factor $C_l \rho$, the second and the fourth terms contains a small factor $\al^2$ and the third term contains $\al$. 
These small factors cancel the factor $\al^{-1}$ in $\Psi_{\chi, 2}$ in \eqref{eq:elli_euler_step1}. 
In the case of $n=1$, we estimate typical terms, $C_l \rho r^{-1}\pa_{\b} \Psi_{\chi}, \al D_R \chi \Psi$,  in $Z_{\chi}$ \eqref{eq:Zchi}. Denote 
\beq\label{eq:W}
W = \f{(1+R)^2}{R^2}.
\eeq

 Recall $\vp_1 = ( f(\b)  W)^2,  f(\b)= \sin(2\b)^{-\s/2} , \s = \f{99}{100}$ \eqref{def:wg}. 
 Using 
 \[||g(R,\b) f(\b)||_{L^2(\b)} \les || g(R, \b )||_{L^{\infty}(\b)} \les || \pa_{\b} g(R, \cdot ) ||_{L^2(\b)}, \quad   g = \pa_{\b}\Psi_{\chi}, \ \Psi,
 \]
 $\rho = R^{1/\al}$, $D_R \chi  = 0$ for $|R| \leq 1$, Proposition \ref{prop:elli_euler1} and Lemma \ref{lem:l12}, we get 
\[
\bal
&|| C_l \rho r^{-1} \pa_{\b} \Psi_{\chi} \vp_1^{1/2}||_2
\les \al^4 || \pa_{\b} \Psi_{\chi} f   ||_2
\les \al^4 || \pa_{\b\b} \Psi_{\chi}  ||_2 \les \al^3 || \Om W||_2 \les \al^3 || \Om \vp_1^{1/2}||_{L^2},
 \\
&|| \al D_R\chi \Psi \vp_1^{1/2}||_2
\les  \al || D_R \chi \Psi  f||_2
\les \al || D_R \chi   \pa_{\b} \Psi ||_2  \les \al \cdot \al^{-1} || \Om W||_2
\les || \Om \vp_1^{1/2}||_{L^2},
\eal
\]
where we have used \eqref{eq:elli_chi} with $l=0$ so that we can apply Proposition \ref{prop:elli_euler1} to estimate $D_R \chi   \pa_{\b} \Psi$.
Other terms in $Z_{\chi}$ can be estimated similarly. We prove \eqref{eq:step21}. Estimates similar to \eqref{eq:step21} in the case of $n>1$ are proved similarly.





Thirdly, we consider \eqref{eq:step22} and similar estimates appeared in the proof of $\cP_n$ with $n>1$, which are more difficult to prove since they contain $\al^{-1}$. 
Recall  $\vp_1, \vp_2$ in \eqref{wg:L2_R0} and $W$ in \eqref{eq:W}. Using Lemma \ref{lem:l12} and \eqref{eq:L12Hk} in its proof, for any $p, l\geq 0$ and $q = 1,2$, we obtain
\beq\label{eq:step3}
|| D_{\b}^{p} D_R^l  \pa_{\b}^2 \f{\sin(2\b)}{\pi \al} (L_{12}(Z_1 + Z_2) - \chi_1 L_{12}(Z_1 + Z_2)(0)) \vp_q^{1/2}||_2 
\les \al^{-1} \sum_{ i \leq \max (l-1, 0)} || D_R^i (Z_1 + Z_2) W||_2 .
\eeq
We need to further estimate the right hand side. The most difficult term in $ Z_1, Z_2$ \eqref{eq:elli2},\eqref{eq:Zchi} is $\al D_R \chi \Psi$, since other terms contain smaller factors $\al^2, C_l \rho$ and their weighted Sobolev norm can be bounded by $ \al ( || D_R^l \Om||_{\cH^p} + ||\Om||_{\cH^p})$ using the same argument as that in Step 2. Formally, $\al \Psi$ has size $1$ compared to $\Om$. Exploiting the factor $D_R \chi$, we show that $\al D_R \chi \Psi$ has size $\al$.

To estimate $\al D_R^i (D_R\chi \Psi)$, we have two types of terms $I_1 \teq \al D_R^{i+1} \chi \Psi$ and $I_{j, m} \teq \al D_R^j \chi D_R^m \Psi$ with $j, m \geq 1$ and $j+ m = i+1$. Note that $|\log( C_l \lam^{1/\al})| \les \al^{-1}, \ S \lam^{-1/ \al} \leq 2^{13 /\al} C_l S \les \al.$ Applying Lemma \ref{lem:far} with $M =\lam$ to 
$D_R^{i+1}\chi \Psi$, we get 
\beq\label{eq:step31}
||  \al  D_R^{i+1} \chi \Psi W ||_2 \les ||  \al  D_R^{i+1} \chi \Psi  ||_2 \les \al || \Om ||_2.
\eeq
When $i=0$, we do not have $I_{j,m}$. Recall $i\leq \max(l-1,0)$ in the summation in \eqref{eq:step3}. Thus, in the case of $n=1$, combining \eqref{eq:step3} and \eqref{eq:step31} implies \eqref{eq:step22}. The same argument applies to the case of $ l \leq 1$.

It remains to estimate $I_{j,m}$ with $j,m\geq 1$, in the case of $l \geq 2$. Recall $L_{12}(\cdot)$ from \eqref{eq:biot3}, $\Psi_{\chi, 2}$ from \eqref{eq:elli_euler_step1} and $\chi = \chi^{(n)}$. Since $\supp(D_R^j \chi ) = \{ \lam_n \leq R \leq 2\lam_n \}$ is away from $\supp(\Om)
\cup \supp(\chi_1) \subset \{ R \leq S^{\al} \}$ (see \eqref{eq:elli_chi}),  we get 
\[
D_R^j \chi  D_R^m \Psi_{\chi^{(m)}, 2} =  D_R^j \chi \cdot  \f{\sin(2\b)}{\pi \al}
\B(
 - \int_0^{\pi/2} D_R^{m-1} \Om(R, \b)  \sin(2\b) d \b 
+ D_R^m \chi_1 L_{12}( Z_{\chi^{(m)} })(0)\B)= 0. 
\] 
Thus, we can subtract the singular term from $D_R^j \chi D_R^m \Psi$ 
\beq\label{eq:step32}
D_R^j \chi D_R^m \Psi = D_R^j \chi D_R^m ( \Psi - \Psi_{\chi^{(m)}, 2}).
\eeq
Formally, compared to $\Om$, $D_R^j \chi D_R^m \Psi$ has size $1$. In the summation in \eqref{eq:step3}, we have $i \leq l-1$. Since $j + m = i+1$ and $j \geq 1$, we get $m = i+1 - j \leq l-1$. By definition \eqref{eq:elli_pj}, the weighted $D_{\b}^p D_R^l$ elliptic estimates appear in $\cP_n$, if $ n \geq l$  and $l\leq 3$. Thus, using the elliptic estimate $\cP_{m}$ ($m \leq l-1 \leq n-1)$ in the induction hypothesis, i.e. $\cT = D_R^m, \td W = \vp_1$ in \eqref{eq:elli_p}, we yield
\beq\label{eq:step33}
|| \al D_R^j \chi D_R^m  ( \Psi - \Psi_{\chi^{(m)}, 2}) W ||_2 
\les \al  ||  D_R^m  ( \Psi_{\chi^{ (m) }} - \Psi_{\chi^{(m)}, 2}) W ||_2 
\les \al ( || D_R^m \Om ||_{\cH^0} + || \Om ||_{\cH^0}),
\eeq
where $\cH^0 = L^2(\vp_1)$. Combining \eqref{eq:step3}-\eqref{eq:step33}, we establish the $\cP_n$ version of \eqref{eq:step22} for $Z_1+Z_2$.




Therefore, combining \eqref{eq:step1}-\eqref{eq:step22}, we obtain the $L^2(\vp_1)$ elliptic estimate, i.e. $\cP_1$.  Repeating this argument, we can obtain the $\cP_l, 2\leq l \leq 10$ and $\cH^3$ elliptic estimates.

Note that the assumption on $\lam, C_l, S$, i.e. $C_l S < \al \cdot (2^{13})^{-1/\al-1}$, implies $\lam \geq S^{\al}$ and 
\[ 
4^{-1/\al} \al^{-1} \les 3^{-1/\al}, \quad  (8^{1/\al} C_l S )^{ 1/2}
\leq ( (2^{10})^{-1/\al} )^{1/2} \leq 3^{-1/\al}.
\]
Since the estimate \eqref{eq:elli_euler1_l12} in Proposition \ref{prop:elli_euler1} does not depend on $\lam $ as long as $ \lambda \geq ( S(\tau) )^{\al}$, using \eqref{eq:elli_euler1_l12} and the above calculation, we establish the desired estimate on $L_{12}(Z_{\chi_{\lam}})(0)$.
\end{proof}

\begin{remark}
The term $D_R^j \chi D_R^m \Psi$ can also be estimated using an argument similar to that in the Step 2 of the proof of Proposition \ref{prop:elli_euler1}. We find the above approach simpler.
\end{remark}


Recall $\bar \Om$ in \eqref{eq:profile}. We have a result similar to Proposition \ref{prop:psi}. 
\begin{prop}\label{prop:psi2}
Let $\bar{\Psi}_0(t)$ be the solution of \eqref{eq:elli_euler} with source term $\bar{\Om}_0 = \bar{\Om} \chi(R / \nu)$. If $\al < \al_2$ \eqref{eq:al2}, $\lam = 2^{-13} C_l^{-\al} ,  \ 
C_l S < \al (2^{13})^{-1/\al - 1}, \ 2 \nu < \lam$, then we have 
\[
\bal
& \al || \f{1+R}{R} D_R^2 \bar{\Psi}_{0,\chi_{\lam}} ||_{\cW^{5,\infty}} + 
\al || \f{1+R}{R} R \pa_{R \b} \bar{\Psi}_{0, \chi_{\lam}}  ||_{\cW^{5,\infty}} \\
&  +||\f{1+R}{R} \pa_{\b\b} ( \bar{\Psi}_{0,\chi_{\lam}} - \f{\sin(2\b)  }{\al \pi} 
(L_{12}( \bar \Om_0) + \chi_1 L_{12}( \bar Z_{\chi_{\lam}})(0) )
)   ||_{\cW^{5,\infty}} \les  \al , \\
 &| L_{12}( \bar Z_{\chi_{\lam}})(0) |  \les  3^{- \f{1}{\al}}  ,
\eal
\]
where $\bar Z_{\chi_{\lam}}$ associated to $\bar \Psi_0$ is
defined in \eqref{eq:elli2},\eqref{eq:Zchi}.
Moreover, $L_{12}( \bar Z_{\chi_{\mu}})(0)$ does not depend on $\mu$ for $\mu \geq ( S(\tau) )^{\al}$ and enjoys the above estimate for $L_{12} ( \bar Z_{\chi_{\lam}})$. 
\end{prop}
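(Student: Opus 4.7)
The plan is to mirror the induction scheme of Proposition \ref{prop:elli_euler}, but carried out in the $L^{\infty}$-based norm $\cW^{5,\infty}$ rather than in $\cH^3$, with the overall prefactor $\alpha$ on the right hand side coming from the intrinsic smallness of $\bar\Om$. The required ingredients are: the two-dimensional $\cW^{7,\infty}$ elliptic estimate of Proposition \ref{prop:psi}, the algebra property of $\cW^{l,\infty}$ (Proposition \ref{prop:alg}), the bound $\|\bar\Om\|_{\cW^{7,\infty}}\les \alpha$ inherited from Proposition \ref{prop:gam} and \eqref{eq:profile}, and the smallness $C_l\rho\,\chi_{\lam}\les (2^{-13})^{1/\alpha}$ on $\supp\chi_{\lam}$ already exploited in the proof of Proposition \ref{prop:elli_euler1} (cf.\ \eqref{eq:small_Cl}). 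Throughout I write $\cL_\alpha$ for the 2D operator in \eqref{eq:elli} and note that the 3D elliptic operator in \eqref{eq:elli_euler} differs from $\cL_\alpha$ only by the lower-order terms multiplied by $C_l\rho/r$ and $(C_l\rho/r)^2$, and by the factor $r$ on the right hand side.

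First I would set up the localized equation. Multiplying \eqref{eq:elli_euler} with source $\bar\Om_0=\bar\Om\,\chi(R/\nu)$ by $\chi_\lam=\chi_1(R/\lam)$ with $\lam=2^{-13}C_l^{-\alpha}$ produces the analogue of \eqref{eq:elli2} with source $\bar\Om_0+\bar Z_{\chi_\lam}$; since $2\nu<\lam$, one has $\chi_\lam\equiv 1$ on $\supp\bar\Om_0$ so that $\bar\Om_0\chi_\lam=\bar\Om_0$, and Propositions \ref{prop:gam}/\ref{prop:alg} give $\|\bar\Om_0\|_{\cW^{5,\infty}}\les \alpha$. For the base step I would deduce an $L^\infty$ analog of Lemma \ref{lem:in} by combining Lemma \ref{lem:biot1} with the explicit bound $|\bar\Om|\les \alpha R/(1+R)^2$, which yields $\alpha\|\bar\Psi_{0,\chi_\lam}\|_{L^\infty}+\alpha\|D_R\bar\Psi_{0,\chi_\lam}\|_{L^\infty}+\alpha\|\pa_\beta\bar\Psi_{0,\chi_\lam}\|_{L^\infty}\les \alpha$, i.e.\ the expected $L^\infty$ input with the $\alpha$ gain over Lemma \ref{lem:in}. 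This plays the role of $\cP_0$ in the hierarchy \eqref{eq:elli_pj}.

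Next I would run the induction $\cP_0\to\cP_{10}$ of \eqref{eq:elli_pj} in $\cW^{5,\infty}$. At each stage, use the $L^\infty$ version of the key estimate of Proposition \ref{prop:key} (the estimate needed is exactly the $\cW^{l,\infty}$ elliptic estimate of \cite{elgindi2019finite}, already invoked for $\bar\Psi$ in Proposition \ref{prop:psi}), and control the source $\bar Z_{\chi_\lam}$ term by term as in Step 2 of the proof of Proposition \ref{prop:elli_euler1}: the contributions $Z_1$ and $Z_3$ carry the factor $C_l\rho$ and are therefore $O((2^{-13})^{k/\alpha})$; for the delicate term $\alpha D_R\chi\,\bar\Psi_0$ inside $Z_2$, split $\bar\Psi_0=\bar\Psi_{0,*}+\frac{\sin(2\beta)}{\pi\alpha}(L_{12}(\bar\Om_0)+\chi_1 L_{12}(\bar Z_{\chi_\lam})(0))$ and use the key cancellation $D_R\chi\cdot\tfrac{\sin(2\beta)}{\pi\alpha}(L_{12}(\bar\Om_0)+\chi_1 L_{12}(\bar Z_{\chi_\lam})(0))\equiv 0$ on $\supp(D_R\chi)$, which is disjoint from $\supp\bar\Om_0\cup\supp\chi_1$ (cf.\ \eqref{eq:step32}); the remaining piece $\alpha D_R\chi\,\bar\Psi_{0,*}$ is then estimated by the previous stage of the induction. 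Every contribution picks up either $C_l\rho$-smallness or an overall $\alpha$ from $\|\bar\Om_0\|_{\cW^{5,\infty}}\les \alpha$, producing the desired factor $\alpha$ on the right hand side of all three norms appearing in the statement.

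Finally, for the bound on $L_{12}(\bar Z_{\chi_\lam})(0)$ I would apply the identity \eqref{eq:l12Z0} with $\Psi$ replaced by $\bar\Psi_0$ and $\Om$ replaced by $\bar\Om_0$, which in particular forces $L_{12}(\bar Z_{\chi_\nu})(0)$ to be independent of $\nu$ for $\nu\geq S(\tau)^{\alpha}$, and then repeat the interpolation argument of Step 2 of the proof of Proposition \ref{prop:elli_euler1} now applied to $\bar\Om_0$ in place of $\Om$. Using $\|\bar\Om_0\tfrac{1+R}{R}\|_{L^2}\les \alpha$ from \eqref{eq:profile} and the hypothesis $C_lS<\alpha(2^{13})^{-1/\alpha-1}$, one gets $|L_{12}(\bar Z_{\chi_\lam})(0)|\les (4^{-1/\alpha}\alpha^{-1}+\min(\alpha,(8^{1/\alpha}C_lS)^{1/2}))\cdot\alpha\les 3^{-1/\alpha}$, as claimed. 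The main obstacle will be verifying that the $\cW^{5,\infty}$ analog of Proposition \ref{prop:key} is compatible with the subtraction of the nonstandard correction $\chi_1 L_{12}(\bar Z_{\chi_\lam})(0)$ (rather than just $L_{12}(\bar\Om_0)$), and that the commutator terms produced by $\chi_\lam$ at each induction step are absorbable using the localization and support-separation arguments sketched above.
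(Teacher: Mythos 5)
Your proposal is correct and follows essentially the same route as the paper, whose proof of this proposition is simply a pointer to the arguments of Propositions \ref{prop:psi}, \ref{prop:elli_euler1} and \ref{prop:elli_euler}: run the localized induction of the 3D elliptic estimates in the $\cW^{5,\infty}$ framework, exploit the smallness $\|\bar\Om_0\|_{\cW^{7,\infty}}\lesssim\al$ and $C_l\rho\,\chi_{\lam}\lesssim 2^{-13/\al}$, and read off the $L_{12}(\bar Z_{\chi_\lam})(0)$ bound from \eqref{eq:l12Z0} and \eqref{eq:elli_euler1_l12} applied with $\|\bar\Om_0\tfrac{1+R}{R}\|_{L^2}\lesssim\al$. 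The only caveat is that Lemma \ref{lem:biot1} controls $\Psi=\rho^{-2}\psi$ only away from $R=0$ (it is what underlies Lemma \ref{lem:far} on the annulus $\lam\le R\le 2\lam$), so the pointwise base estimate near the origin must come from the $\cW^{l,\infty}$ elliptic estimate itself rather than from the Green's-function bound, but this does not affect the overall argument.
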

\begin{remark}
Although $\bar{\Om}_0 = \bar{\Om} \chi_{\nu}$ is time-independent, the equation \eqref{eq:euler2} is not and $\bar{\Psi}_0(t)$ depends on how we rescale the space. The factor $2\nu$ is the support size of $\bar{\Om}_0$. We impose $\lam > 2 \nu$ so that $\chi_{\lam} = 1$ in the support of $\bar \Om_0$.
\end{remark}
The proof follows from the argument in the proof of Propositions \ref{prop:psi}, \ref{prop:elli_euler1} and \ref{prop:elli_euler}.

\subsection{Nonlinear stability}\label{sec:3Dnon}
We apply the nonlinear stability analysis of the 2D Boussinesq equations to prove Theorem \ref{thm:euler}. In Section \ref{subsec:non_boot}, we impose the bootstrap assumption on the support size. In Section \ref{subsec:non_appr}, we construct the approximate steady state and impose the normalization conditions, which are small perturbations to those in the 2D Boussinesq. In Section \ref{sec:lot}, we estimate the terms in the 3D Euler \eqref{eq:euler4} that are different from 
the 2D Boussinesq \eqref{eq:bousdy2}. These terms contain factors that are much smaller than $\al$ and we treat them as perturbations. In Section \ref{subsec:non_stab}, we generalize the nonlinear stability estimates in the 2D Boussinesq to the 3D Euler. In Section \ref{sec:grow_supp}, we use the ideas described in Section \ref{sec:idea_supp} to control the growth of the support. In Section \ref{subsec:non_blowup}, we prove finite time blowup.

\subsubsection{Bootstrap assumption on the support size}\label{subsec:non_boot}

Recall $\al_2$ defined in \eqref{eq:al2} in Lemma \ref{lem:in}. We first require $\al < \al_2$. We impose the first bootstrap assumption: for $t \geq 0$, we have 
\beq\label{eq:boot5}
\bal
C_l( t) \max( S(t ), S(0)) < \al \cdot (2^{13})^{- \f{1}{\al} - 1} \teq K(\al).  
\eal
\eeq
Under the above Bootstrap assumption, the support of $\om, \th$ in $D_1$ does not touch the symmetry axis and $z = \pm 1$, and the assumption in Proposition \ref{prop:elli_euler} is satisfied. We will choose $C_l(0)$ at the final step, which guarantees the smallness in \eqref{eq:boot5}.

\subsubsection{Approximate steady state and the normalization condition}\label{subsec:non_appr}
Since the rescaled domain $\td D_1$ \eqref{eq:rescale_D} is bounded, we construct approximate steady state with bounded support. We localize $\bar{\Om}, \bar{\th}$ defined in \eqref{eq:profile}  to construct the approximate steady state for \eqref{eq:euler4}
\beq\label{eq:profile2}
\bar{\Om}_0 \teq \chi_{\nu} \bar{\Om}, \quad \bar{\th}_0 \teq \chi_{\nu} \bar{\th}
= \chi_{\nu}  (1 + x J(\bar{\eta}) ),
\eeq
where $\chi_{\nu} = \chi_1( R / \nu)$ and we have applied the integral operator $J(f)$ in Lemma \ref{lem:small}. We can choose $\chi_1 = \td \chi_1^2$ for another smooth cutoff function $\td \chi_1$ such that $\chi_1^{1/2} = \td \chi_1$ is smooth. We use $\bar \om_0$ to denote $\bar \Om_0$ in the $(x,y)$ coordinates. Clearly, the support size of $\bar{\Om}_0, \bar{\th_0}$ is $2\nu$.
Using the computation in \eqref{eq:lemsm2}, we have 
\beq\label{eq:profile22}
\bal
&\bar{\eta}_{0} = \pa_x (\chi_{\nu} \bar{\th}) 
 =   \al \cos^2(\b) D_R \chi_{\nu} \cdot J(\bar{\eta}) +
\al \cos(\b) r^{-1}  D_R \chi_{\nu}  + 
  \chi_{\nu}  \bar{\eta},   \\
&\bar{\xi}_{0}(R, \b) = \pa_y (\chi_{\nu} \bar{\th}) 
 = \al \sin(\b) \cos(\b) D_R \chi_{\nu} \cdot J(\bar{\eta})  
+ \al \sin(\b) r^{-1} D_R \chi_{\nu}   + \chi_{\nu}  \bar{\xi}, 
 \eal
\eeq
Let $\bar{\Psi}_0(t)$ be the solution of \eqref{eq:elli2} with source term $\bar{\Om}_0$. 
Applying Lemma \ref{lem:small} and the analysis in its proof, we know that $\bar{\Om}_0, \bar{\eta}_0, \bar{\xi}_0$ enjoys the same estimates as that of $\bar{\Om}, \bar{\eta}, \bar{\xi}$ in Lemmas \ref{lem:bar} and \ref{lem:xi}.


We need to adjust the time-dependent normalization condition for $c_{\om}(t), c_l(t)$. 
Firstly, we choose the time-dependent cutoff radial $\lam(t) = 2^{-13} (C_l(t))^{-\al}$ according to Proposition \ref{prop:elli_euler}. 

Define $\bar{Z}_{\chi_{\lam(0)}}(t)$ according to \eqref{eq:Zchi}, or equivalently \eqref{eq:l12Z0},
with $\Psi = \bar{\Psi}_0(t), \Om = \bar{\Om}_0$ and $\chi = \chi_{\lam(0)}$. It does not depend on the cutoff radial as long as $\lam(0) \geq (2\nu)^{\al}$, where $2\nu$ is the size of support of $\bar{\Om}_0$.
We use the following conditions
\beq\label{eq:normal20}
\bal
\bar{c}_{\om}(t) &=  -1 - \f{2}{\pi \al} L_{12}( \bar{\Om}_0- \bar{\Om} +  \bar{Z}_{\chi_{\lam(0)}})(0) 
 \quad \bar{c}_l(t) = 
\f{1}{\al} + 3  -\f{1-\al}{\al}\f{2}{\pi \al} L_{12} (\bar{\Om}_0 - \bar{\Om} + \bar{Z}_{\chi_{\lam(0)}})(0) .\\
\eal
\eeq

We remark that $\bar{c}_{\om}(t), \bar{c}_{l}(t)$ is time-dependent. Without the $Z$ term, the above conditions for $\bar{c}_{\om}, \bar{c}_l$ are the same as 
that in \eqref{eq:profile} with a correction due to the difference between the profiles $(\bar{\Om}, \bar{\eta})$ in \eqref{eq:profile} and $\bar{\Om}_0, \bar{\eta}_0$ in \eqref{eq:profile2}-\eqref{eq:profile22}.
For this difference, we use \eqref{eq:normal} to correct $\bar{c}_{\om}, \bar{c}_l$. 

For any perturbation $\Om(t)$, we use the following conditions for $c_{\om}(t), c_l(t)$
\beq\label{eq:normal21}
 c_{\om }(t) =  -\f{2}{\pi \al} L_{12}(\Om(t) + Z_{\chi_{\lam(t)}}(t))(0)
 ,  \quad c_l(t) = \f{1-\al}{\al} c_{\om}(t).
\eeq
Without the $Z$ term, the above conditions for $c_{\om}(t), c_l(t)$ are the same as that in \eqref{eq:normal}. 

We add the $Z$ terms in \eqref{eq:normal20}, \eqref{eq:normal21} since the behavior of 
$\Psi$, which is the solution of \eqref{eq:elli_euler}, is characterized by $L_{12}(\Om + Z_{\chi})(0)$ for $R$ close to $0$ according to the elliptic estimate in Proposition \ref{prop:elli_euler}. 
For the 2D Boussinesq equation, we use $L_{12}(\Om)(0)$ to determine $c_{\om}, c_l$ since it also characterizes the behavior of $\Psi$ near $R=0$ according to Proposition \ref{prop:key}.


We choose the above conditions so that the error of the approximate steady state vanishes quadratically in $R$ near $R=0$ and that the update of $\Om(t), \eta(t) (\om, \th_x)$ in equation \eqref{eq:euler4} also vanishes quadratically in $R$ near $R=0$ if the initial perturbation $\Om(\cdot ,0), \eta(\cdot ,0)$ $(\th_x(0))$ vanishes quadratically. We also determine $\bar{c}_{\om}, \bar{c}_l$ in \eqref{eq:profile} and $c_{\om}, c_l$ in \eqref{eq:normal} based on this principle. 

\begin{remark}\label{rem:order2}
We will choose $\nu$ to be very large, relative to $\al^{-1}$. 
Therefore, we treat $\bar \Om_0 \approx \bar \Om, \bar \th_0 \approx \bar \th$. Due to the small factor $3^{-1/\al}$ in Propositions \ref{prop:elli_euler}, \ref{prop:psi2}, we treat $L_{12}(Z_{\chi})(0)$, $L_{12}( \bar Z_{\chi} )(0)\approx 0$. From Remark \ref{rem:order1} and the bootstrap assumption \eqref{eq:boot5}, we also have $C_l \approx 0, C_l S \approx 0, r \approx 1$. 
We treat the error terms in these approximations as perturbation.
\end{remark}

\subsubsection{Estimate of the lower order terms}\label{sec:lot}

The equations \eqref{eq:euler4} are slightly different from \eqref{eq:bousdy2} for the Boussinesq systems. We show how to estimate their differences. Suppose that $\om(t), \th(t)$ are the perturbations and the support size of $\bar{\om}_0 + \om(t), \bar{\th}_0 + \th(t)$ is $S(t)$.

Assume that the bootstrap assumption \eqref{eq:boot5} holds true. For the term $\f{1-r^4}{r^4} \th_x$, within the support of $\om, \th$, we have $ \rho \leq S(t), r = 1 - C_l \rho \sin(\b) \in [3/4, 1]$. We get
\beq\label{eq:small_s}
|\f{1-r^4}{r^4} | \les 1 - r  \leq C_l \rho  \leq C_l  S(t) < \al (2^{13})^{-1/\al-1},
\eeq
which is extremely small compared to $\al$. Since $\rho = R^{1/\al}$, the factor $C_l \rho , 1 - r^4$ vanish high order in $R$ near $R=0$. 
Hence, $\f{1-r^4}{r^4}\th_x$ is a smooth (near $R=0$) small error term.


For the term $\f{1}{r} C_l \psi$ in $u = -\psi_y + \f{1}{r} C_l \psi$ defined in \eqref{eq:euler4}. Under the $(R,\b)$ coordinates, it becomes $\f{C_l \rho}{r} (\rho \Psi(R, \b))$. Compared to $-\psi_y = -( \rho^2 \Psi)_y$ in \eqref{eq:simp2}, $\f{C_l \rho}{r} (\rho \Psi(R, \b))$ vanishes on $\b = 0, \pi/2$ and contains a small smooth factor $C_l \rho = C_l R^{1/ \al}$ within the support of $ \om, \th$. 

The last difference is the elliptic estimate between Propositions \ref{prop:key} and \ref{prop:elli_euler}. Notice that in \eqref{eq:euler4}, we only use $\Psi(R, \b)$ for $(R,\b)$ within the support of $\om, \th$. 
We have $\Psi_{\chi_{\lam(t)}}(R,\b) = \Psi(R,\b)$ for $\lam(t) = 2^{-13} C_l^{-\al}, R \leq S(t)$. Finally, $\chi_1 L_{12}(Z_{\chi_{\lam(t)}})(0)$ in Proposition \ref{prop:elli_euler} 
only affects the equation near $R=0$.
 Since $\f{1+R}{R}\bar{\Om} \in L^2$, using the estimate in Proposition \ref{prop:elli_euler}, we get
\beq\label{eq:boot70}
| L_{12}(\bar{Z}_{\chi_{\lam(0)}})(0) | = | L_{12}(\bar{Z}_{\chi_{\lam(t)}})(0)| \les \al 3^{-1/\al} , \quad
| L_{12}(Z_{\chi_{\lam(t)}}) (0)| \les  3^{-1/\al}  || \f{1+R}{R} \Om||_{L^2}, 
\eeq
where we have used $\lam(t) \geq (S(t))^{\al}$ due to \eqref{eq:boot5} to obtain the first identity, and used \eqref{eq:profile},\eqref{eq:profile2} and $|| \f{1+R}{R}\bar{\Om}_0||_{L^2}\les \al$ to obtain the first inequality. The terms in \eqref{eq:boot70} are treated as small terms with amplitude close to $0$.
 
Using the argument in Section \ref{sec:non}, we can estimate these lower order terms in $\cH^3, \cH^3(\psi)$ or $\cC^1$ norm accordingly and obtain a small constant in the estimate bounded by $C(1  + \al^{-\kp}) (3^{-1/\al} + C_l S)$, where $\kp , C>0$ are some absolute constant.

\subsubsection{Nonlinear stability}\label{subsec:non_stab}
Notice that the domain $\td D_1$ \eqref{eq:rescale_D} of the dynamic rescaling equation is bounded and is different from $\R_2^+$.
We cannot apply directly the estimates in Sections \ref{sec:lin}-\ref{sec:non}
because in these estimates, we linearize the equations around $\bar{\Om}, \bar{\eta}, \bar{\xi}$ which are defined globally.

We consider the system of $\th_x, \th_y, \om$ obtained from \eqref{eq:euler4} and then linearize it around the approximate steady state $\bar{\Om}_0, \bar{\eta}_0, \bar{\xi}_0, \bar{c}_{\om}, \bar{c}_l$ constructed in Section \ref{subsec:non_appr} to obtain a system similar to \eqref{eq:lin21}-\eqref{eq:lin23} for the perturbation $(\Om, \eta, \xi)$ with $\bar{\Om}, \bar{\eta}, \bar{\xi}, \f{3}{1+R} (= \f{2}{\pi \al} L_{12}(\bar{\Om}))$ replaced by $\bar{\Om}_0, \bar{\eta}_0 , \bar{\xi}_0, \f{2}{\pi\al} L_{12}(\bar{\Om}_0)$. We also put the lower order terms discussed in Section \ref{sec:lot} into the remaining terms $\cR_{\Om}, \cR_{\eta}, \cR_{\xi}$. 


According to Lemma \ref{lem:small}, we know that $\bar{\Om}_0, \bar{\eta}_0, \bar{\xi}_0$ converges to $\bar{\Om}, \bar{\eta}, \bar{\xi}$ in the $\cH^3, \cH^3(\psi)$ norm as $\nu \to \infty$ ($\nu$ is the cutoff radial in \eqref{eq:profile2}). 
Moreover, we can easily generalize the $\cH^3, \cH^3(\psi)$ convergence to the higher order convergence. We choose the same weights and the same energy norm as that in Section \ref{sec:lin}-\ref{sec:non}. Then for sufficient large $\nu$, due to these convergence results, under the bootstrap assumption \eqref{eq:boot5},  we can obtain the following $\cH^3, \cH^3(\psi)$ estimates similar to that in Corollary \ref{cor:H3}
\[
\f{1}{2}\f{d}{dt} E_3^2(\Om, \eta,\xi) 
\leq  (- \f{1}{13} + C \al ) E_3^2  + \cR_3,
\]
where $E_3, \cR_3$ are defined in \eqref{eg:H3}. We have a slightly weaker estimate ($\f{1}{13} < \f{1}{12}$) due to the small difference between $(\bar{\Om}_0, \bar{\eta}_0, \bar{\xi}_0)$ and $(\bar{\Om}, \bar{\eta}, \bar{\xi})$. 
\begin{remark}
The choice of $\nu$ is independent of $C_l(0)$. We will choose initial data with the size of the support $S(0) \geq 2\nu$.  Though $S(0) > \nu$ is large, we choose $C_l(0)$ small enough at the final step and verify \eqref{eq:boot5}.
\end{remark}

Recall the equation \eqref{eq:xi0_2} for the 2D Boussinesq equation in the $\cC^1$ estimate of $\xi$.
The damping part in \eqref{eq:xi0_2} is $(-2 - \f{3}{1+R}) \xi$. For the 3D Euler equation, it is replaced by $(-2 - \f{2}{\pi \al} L_{12}(\bar{\Om}_0) )\xi$. For sufficient large $\nu$, using the convergence results, we can obtain estimates similar to \eqref{eq:xi_inf1}, \eqref{eq:xi_inf2R},
\eqref{eq:xi_inf2b} with slightly larger constants, e.g. $-2 , 3$ are replaced by  $-2 + \f{1}{100}, 3 + \f{1}{100}$.

There exists a large absolute constant $\nu_0$, such that for $\nu  > \nu_0$, $\nu$ satisfies the above requirements, and we have
\beq\label{eq:boot60}
 |\f{2}{\pi \al} L_{12}(\bar{\Om} -\bar{\Om}_0 )(0) | \leq \f{1}{100}.
\eeq
To estimate the $\cH^3$ norm of $\cR_{\Om}, \cR_{\eta}$ and $\cH^3(\psi), \cC^1$ norms of $\cR_{\xi}$, we apply the estimates in Section \ref{sec:non} and the argument in Section \ref{sec:lot}. Therefore, for $\nu > \nu_0$, under the bootstrap assumption, we obtain the following nonlinear estimate for compactly supported perturbations $\Om(t), \eta(t), \xi(t)$ around $(\bar{\Om}_0, \bar{\eta}_0,\bar{\xi}_0)$, which is similar to \eqref{eq:boot4},
\beq\label{eq:boot6}
\bal
\f{1}{2} \f{d}{dt} E^2(\Om, \eta,\xi)
\leq  -\f{1}{13} E^2 + C( \al^{1/2} E^2 +  \al^{-3/2} E^3 +  \al^2 E )
+ C(\al, C_l(t), S(t)) (E^2 + E + E^3 ) , 
\eal
\eeq 
where the energy $E$ is defined in \eqref{eg}. The last term is from the estimates of the lower order terms discussed in Section \ref{sec:lot}, e.g. $\f{1-r^4}{r^4}\th_x, \f{1}{r}C_l \psi$, and $C(\al, C_l(t), S(t)) = C (1 + \al^{-\kp})(3^{-1/\al} + C_l(t) S(t) )$), for some universal constant $C$. Under the bootstrap assumption \ref{eq:boot5}, we further obtain
\beq\label{eq:boot62}
C(\al, C_l(t), S(t)) \les  (1 + \al^{-\kp}) 3^{-1/\al}  \les \al^3.
\eeq

Combining \eqref{eq:boot6}, \eqref{eq:boot62}, we obtain that there exist $\al_3$ with $0 < \al_3 <\al_2$ ($\al_2$ is the constant in \eqref{eq:al2} in Lemma \ref{lem:in}) and an absolute constant $\td{K}>0$, such that if $E(\Om(0), \eta(0), \xi(0)) < \td{K} \al^2$, under the bootstrap assumption \ref{eq:boot5}, we have 
\beq\label{eq:boot71}
E(\Om(t), \eta(t), \xi(t)) < \td{K} \al^2. 
\eeq

Recall $c_{\om}, c_l, \bar{c}_{\om}, \bar{c}_l$ defined in \eqref{eq:normal20}, \eqref{eq:normal21}. Using \eqref{eq:boot70}, \eqref{eq:boot60}, $|L_{12}(\Om)(0)| \les || \Om||_{\cH^3} \les E \les \al^2$, we obtain 
\[
| c_{\om} + \bar{c}_{\om} + 1 |<   \f{1}{100} +C 3^{-1/\al} + C \al , \quad 
c_l + \bar{c}_l > \f{1}{\al} + 3 - \f{1}{100 \al} - C 3^{-1/\al} \al^{-1} - C.
\]


We further choose $\al_4$ with $0 < \al_4 < \al_3$, such that for $\al < \al_4$, 
\beq\label{eq:boot72}
-\f{3}{2} <c_{\om} + \bar{c}_{\om} <  -\f{1}{2}, \quad c_l + \bar{c}_l > \f{3}{4\al}.
\eeq


\subsubsection{Growth of the support}\label{sec:grow_supp}
Recall Definition \ref{def:supp} of $S(\tau)$. Finally, we use the idea in Section \ref{sec:idea_supp} to estimate the growth of the support $S(\tau)$ of the solutions $\om + \bar{\om}_0, \th + \bar{\th}_0$. Denote 
\[
\wh{\uu}(t) = \uu(t) + \bar{\uu}(t), \  \wh{\Psi}(t) = \Psi(t) + \bar{\Psi}_0(t) , \  \wh{c}_{l}(t) = c_l(t) + \bar{c}_l .
\]

Applying \eqref{eq:simp1}-\eqref{eq:simp2} and \eqref{eq:trans} to $\wh{\Psi}$, 
we can rewrite the transport term $\wh{\uu} \cdot \na$ in \eqref{eq:euler4} as  
\[
\wh{\uu} \cdot \na = (-\pa_y \wh{\psi} + C_l r^{-1} \wh{\psi}) \pa_x  + \pa_x \wh{\psi} \pa_y 
= (\f{ \al C_l \rho \cos(\b)}{r} R \wh{\Psi}   - \al R \pa_{\b} \wh{\Psi} ) \pa_R + ( 2 \wh{\Psi} +  \al R \pa_R \wh{\Psi}
 - \f{C_l \rho \sin(\b)}{r} \wh{\Psi}) \pa_{\b},
\]
where $\rho = R^{1/\al}, r = 1 - C_l \rho \sin(\b)$. The above formula is different from \eqref{eq:trans} due to the extra term $C_l r^{-1} \wh{\psi} \pa_x$. Notice that $\wh{c}_{l} \xx \cdot \na$ becomes $\al \wh{c}_{l} R \pa_R$ under the $(R,\b)$ coordinates. For a point which is inside the support of $\om + \bar{\om}, \th + \bar{\th}$ and has coordinates $(R(t)), (\b(t))$, its trajectory under the flow $(\wh{c}_{l} \xx + \wh{\uu}) \cdot \na $ is governed by 
\beq\label{eq:lang}
\bal
 \f{d}{dt} R(t)  = (\al \wh{c}_{l} R(t) + \f{ \al C_l(t) \rho(t) \cos(\b(t)) }{r(t)} R(t) \wh{\Psi}(R(t), \b(t))   - \al R(t) \pa_{\b} \wh{\Psi}( R(t), \b(t)),
\eal
\eeq
where the relation between $\wh{c}_{l}(t), C_l(t)$ is given in \eqref{eq:rescal42}.

\begin{lem}\label{lem:decay}
Under the assumption of Propositions \ref{prop:elli_euler}, \ref{prop:psi2} and that $\Om \in \cH^3$, for $R \leq S(t)$, we have 
\[
 |(1+R^{1/3}) \wh{\Psi}(R, \b)   |  +  | (1+R^{1/3}) \pa_{\b} \wh{\Psi}(R, \b) | \les \al^{-1}  || \Om||_{\cH^2} + 1
 \les  \al^{-1} E( \Om(t), \eta(t), \xi(t)) + 1.
\]
\end{lem}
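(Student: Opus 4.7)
The plan is to extract the leading angular structure of $\wh{\Psi}$ from the elliptic estimates of Propositions~\ref{prop:elli_euler} and~\ref{prop:psi2}, and then estimate the principal $L_{12}$-piece by Cauchy--Schwarz and the corrector by Poincar\'e in $\b$. The bootstrap \eqref{eq:boot5} gives $S(t)^{\al}\leq \lam(t)/8$, so for $R\leq S(t)$ the cutoff $\chi_{\lam(t)}\equiv 1$ and $\wh{\Psi}=\wh{\Psi}_{\chi_{\lam(t)}}$ on the region of interest. Combining the two propositions, I would write
\[
\wh{\Psi}(R,\b) = \frac{\sin(2\b)}{\al\pi}\Bigl(L_{12}(\Om)(R)+L_{12}(\bar\Om_{0})(R)+\chi_1(R)\,\cL_0\Bigr) + \wh{R}(R,\b),
\]
with $\cL_0 := L_{12}(Z_{\chi_{\lam}})(0) + L_{12}(\bar Z_{\chi_{\lam}})(0)$ a constant of size $O(3^{-1/\al})$ by the smallness estimates in both propositions, and $\wh R := R_{\Psi}+R_{\bar\Psi_{0}}$ the elliptic remainder.

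For the leading piece, $L_{12}(\bar\Om_{0})/\al$ is bounded by $C(1+R)^{-1}$ via Proposition~\ref{prop:psi2} (or the identity $L_{12}(\bar\Om)=\tfrac{3\pi\al}{2(1+R)}$ together with the proximity of $\bar\Om_0$ to $\bar\Om$). For $L_{12}(\Om)(R)$, Cauchy--Schwarz against the $\cH^{0}=L^{2}(\vp_1)$ weight $\vp_1=(1+R)^{4}R^{-4}\sin(2\b)^{-\s}$ gives
\[
|L_{12}(\Om)(R)| \leq \|\Om\|_{\cH^{0}}\Bigl(\int_{R}^{\infty}\int_{0}^{\pi/2}\frac{s^{4}\sin(2\b)^{2+\s}}{(1+s)^{4}s^{2}}\,ds\,d\b\Bigr)^{1/2} \les \|\Om\|_{\cH^{2}}(1+R)^{-1/2}.
\]
Multiplying by $(1+R^{1/3})\sin(2\b)/(\al\pi)$, and analogously for $\pa_\b$ (which only replaces $\sin(2\b)$ by $2\cos(2\b)$), yields the bound $\al^{-1}\|\Om\|_{\cH^{2}}+1$ for the principal part, with the $\cL_0$ term absorbed into the ``$+1$'' thanks to $3^{-1/\al}$.

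For the corrector $\wh R$, both $\wh\Psi$ and the leading $L_{12}$-piece vanish at $\b=0,\pi/2$, so $\wh R$ inherits the Dirichlet conditions. Representing $\wh R$ and $\pa_\b \wh R$ via the Dirichlet Green's function on $(0,\pi/2)$ (whose kernel is bounded by an absolute constant) reduces the task to
\[
|\wh R(R,\b)|+|\pa_\b \wh R(R,\b)| \les \sup_{\b'\in(0,\pi/2)} |\pa_{\b\b}\wh R(R,\b')|.
\]
The $R_{\bar\Psi_{0}}$ contribution is of size $O(\al)$ by Proposition~\ref{prop:psi2} applied pointwise via the $\cW^{5,\infty}$ bound. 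For the $R_{\Psi}$ contribution, Lemma~\ref{lem:inf} applied to $\pa_{\b\b}R_\Psi\in\cH^{3}$ yields a uniform bound by $\al^{-1/2}\|\Om\|_{\cH^{3}}$, which is controlled by $\al^{-1}\|\Om\|_{\cH^{2}}+1$ via the energy comparison \eqref{eq:E2H2} combined with the a priori smallness $E\les \al^{2}$ from \eqref{eq:boot71}.

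The main obstacle is earning the $(1+R^{1/3})$-decay for the corrector at large $R$ inside the support, since the weights in $\cH^{3}$ and $\cW^{5,\infty}$ stay bounded at infinity and a priori give only a uniform $L^{\infty}$ bound. My plan to overcome this has two ingredients. Outside the support of $\Om$ and $\bar\Om_{0}$ (i.e.\ for $R\geq (2S(t))^{\al}$) the elliptic equation for $\wh R$ has zero source apart from the $Z_\chi$-terms, and the far-field estimate of Lemma~\ref{lem:far} supplies rapid decay of $\wh{\Psi}$. Inside the support one exploits the $D_R$ derivatives of $\pa_{\b\b}\wh R$ controlled in $\cH^{2}$: the fundamental theorem of calculus $\pa_{\b\b}\wh R(R,\b)=-\int_{R}^{\infty}\pa_s\pa_{\b\b}\wh R(s,\b)\,ds$ together with Cauchy--Schwarz in the measure $s^{-2}ds$ trades a half-derivative for an $(1+R)^{-1/2}$ decay factor; since the required exponent $1/3$ is strictly smaller than $1/2$, this provides the needed margin. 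Patching these two regimes with the bootstrap bound $C_l S\les K(\al)$ completes the pointwise estimate.
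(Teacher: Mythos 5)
Your proof follows essentially the same route as the paper's: restrict to the region where $\chi_{\lam(t)}\equiv 1$, use Propositions \ref{prop:elli_euler} and \ref{prop:psi2} to control $\pa_{\b\b}$ of the stream function minus its explicit $\f{\sin(2\b)}{\pi\al}L_{12}$ part, bound that explicit part directly (your Cauchy--Schwarz estimate $|L_{12}(\Om)(R)|\les (1+R)^{-1/2}\|\Om\|_{L^2(\vp_1)}$ is correct and gives the $(1+R^{1/3})$ prefactor with room to spare), and recover the pointwise bound with $R$-decay for the corrector by the derivative-for-decay mechanism of Lemma \ref{lem:xi_decay}. The one organizational difference is how the profile contribution is handled: the paper treats $\Om+\bar\Om_0$ together in a modified space $\td{\cH}^3$ with radial weight $\f{1+R}{R}$ (needed because $\bar\Om_0$ vanishes only linearly at $R=0$ and hence is not in $\cH^3$), and then runs the Lemma \ref{lem:xi_decay} argument once on $\pa_{\b\b}\wh\Psi$; you instead split off $\bar\Psi_0$ and invoke the $\cW^{5,\infty}$ bounds of Proposition \ref{prop:psi2}, which works equally well. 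Two minor points. First, your appeal to the bootstrap bound $E\les\al^2$ to absorb $\al^{-1/2}\|\Om\|_{\cH^3}$ adds a hypothesis the lemma does not state and is unnecessary: $\al^{-1/2}\|\Om\|_{\cH^3}\leq\al^{-1}\|\Om\|_{\cH^3}\les\al^{-1}E$ already yields the stated final bound in terms of $E$. Second, Lemma \ref{lem:far} only provides an $L^2$ bound on $\Psi$ in an annulus away from the support, not a pointwise bound on $\pa_\b\wh\Psi$, so it is not the right tool for the far region; but it is also not needed, since the fundamental-theorem-of-calculus argument in $R$ (supplemented by one extra $\b$-derivative, exactly as in the proof of Lemma \ref{lem:xi_decay}, so that the resulting bound is uniform in $\b$) delivers the decay on the whole cutoff region.
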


Recall the weights $\vp_i$ in Definition \ref{def:wg} for the $\cH^3$ norm \eqref{norm:H2}. Denote by $\td \cH^3$ the modified $\cH^3$ space with radial weight $\f{(1+R)^2}{R^2}$ in the $\cH^3$ space replaced by $\f{1+R}{R}$. The $\td \cH^3$ version of the elliptic estimates in Proposition \ref{prop:elli_euler} can be obtained by the same argument. 
Since $\bar{\Om}_0 + \Om $ is in $\td \cH^3$ space ($\bar{\Om}_0$ vanishes linearly near $R=0$), 
applying the $\td \cH^3$ elliptic estimate to $\wh{\Psi} - \wh{\Psi}_2$ and $L_{12}(Z_{\chi})(0)$, where $\wh \Psi_2 = \f{\sin(2\b)}{\pi \al} ( L_{12}(\Om) + \chi_1 L_{12}(Z_{\chi})(0))$, and Lemma \ref{lem:l12} to $\wh \Psi_2$, we obtain 
\[
 || \pa_{\b \b} \wh \Psi ||_{\td \cH^3} \les \al^{-1}  || \Om + \bar \Om_0 ||_{\td \cH^3}
 \les \al^{-1} || \Om ||_{\cH^3} + 1
 \les \al^{-1} E( \Om(t), \eta(t), \xi(t)) + 1.
\]
Applying the argument in the proof of Lemma \ref{lem:xi_decay}, we establish the decay estimate.


Now we assume that the initial data satisfies $E(\Om(0), \eta(0), \xi(0) ) < \td{K} \al^2$. Under the bootstrap assumption \eqref{eq:boot5}, we have {\it a priori} estimates \eqref{eq:boot71}, \eqref{eq:boot72}.

Plugging the bootstrap assumption \ref{eq:boot5}, \eqref{eq:boot71} and Lemma \ref{lem:decay} in \eqref{eq:lang}, 
 we derive
\[
\f{d}{dt} R(t)\leq  \al \wh{c}_l R(t) + C \al ( \al^{-1} E + 1) R(t)^{2/3}
\leq \al \wh{c}_l R(t) + C \al R(t)^{2/3},
\]
where we have used $C_l(t) \rho(t) \leq  C_l(t) S(t) \les 1 , r^{-1} \les 1$. From the formula of $C_l(t)$, we know $\f{d}{dt} C_l(t) = -\wh{c}_l(t) C_l(t)$. Multiplying $C^{\al}_l(t)$ on both sides,
 we get 
\[
\f{d}{dt} C_l^{\al} R(t) \leq C\al  C_l^{\al} R^{2/3}(t) =  C\al (C_l^{\al} R )^{2/3}  C_l(t)^{\al/3}.
\]
From the {\it a priori} estimate \eqref{eq:boot72} and the formula of $C_l$ in \eqref{eq:rescal42}, we know $C^{\al}_l(t) \leq C^{\al}_l(0) \exp(-\f{t}{2})$. Then solving this ODE, we yield 
\[
\bal
(C_l^{\al} R(t))^{1/3} \leq (C_l(0)^{\al} S(0)^{\al})^{1/3} + C \al \int_0^{\infty} C_l^{\al/3}(0) \exp(-\f{b}{6}) d b
\leq C_l(0)^{\al/3}( S(0)^{\al/3} + C\al ).
\eal
\]
Taking the supremum over $(R(t), \b(t))$ within the support of $\Om, \th$, we prove 
\beq\label{eq:supp}
C_l(t) S(t) \leq  C( \al, S(0)) C_l(0).
\eeq



\subsubsection{Finite time blowup}\label{subsec:non_blowup}
For fixed $\al < \al_4, \nu > \nu_0$, we choose zero initial perturbation $\Om(0) = 0, \eta(0)= 0,\xi(0) = 0$. Then the initial data is $(\bar{\Om}_0 ,\bar{\th}_0)$ defined in \eqref{eq:profile2} which has compact support with support size $S(0) = 2\nu$. 
We choose initial rescaling $C_l(0)$ such that  $ C( \al, S(0))   C_l(0) < K(\al) / 2$, where $K(\al)$ is defined in \eqref{eq:boot5}. Using the {\it a priori} estimates \eqref{eq:boot71}, \eqref{eq:boot72} and \eqref{eq:supp}, we know that the bootstrap assumption in \eqref{eq:boot5} can be continued. Thus these estimates hold true for all time.

Since $-\f{3}{2} < c_{\om} + \bar{c}_{\om} < -\f{1}{2}$ (\eqref{eq:boot72}) and the solutions $\om, \th$ are close to $\bar{\om}_0, \bar{\th}_0$ for all time in the dynamic rescaling equation, using the argument in Subsection \ref{sec:blowup} and the BKM blowup criterion in \cite{beale1984remarks}, 
we prove that the solutions remain in the same regularity class as that of the initial data before $T^* <+\infty$ and develop a finite time singularity at $T^*$,
where $T^* = t(\infty) = \int_0^{\infty}
C_{\om}(\tau) d\tau < +\infty$.

Since $\bar{\th}_0 + \th(t) \geq 0$ and the support of $\om, \th$ is away from the axis, we can recover $u^{\th} =  \td \th^{1 /2}/r ,\om^{\th}$ from $\th, \om$ via \eqref{eq:omth}, \eqref{eq:rescal41}. From \eqref{eq:profile2} and the discussion below \eqref{eq:profile2}, $\chi_{\nu}^{1/2}(R) = \td \chi_1(R / \nu)$ is smooth. Since $\bar \th(x, y) \geq 1$ \eqref{eq:th}, it is even in $x$, and $ \bar \th \in C^{1,\al}$, we get $\bar \th^{1/2} \in C^{1,\al}$. It follows that $\bar \th_{0}^{1/2}=\chi_{\nu}^{1/2} \bar \th^{1/2} 
= \td \chi_1(R / \nu) \bar \th^{1/2}  \in C^{1,\al}$. Using $u_0^{\th} =  \td \th_0^{1 /2}/r $, the relation \eqref{eq:rescal41}, and $r \geq \f{1}{2}$ in $\supp(u_0^{\th})$, we get  $u_0^{\th} \in C^{1,\al}$ and that $u_0^{\th}$ is even in $z$. Due to the regularity on  $u_0^{\th}, \om_0^{\th}$ and the fact that in $D_1$, they are supported near $(r, z) = (1,0)$, the solutions have finite energy in $D_1$. 


\section{Concluding Remarks}\label{sec:remark}

We have proved finite time blowup of the 2D Boussinesq and the 3D axisymmetric Euler equations with solid boundary and large swirl using $C^{\al}$ initial data with small $\al$ for $(\om, \na \th)$ in the case of the 2D Boussinesq equations and for $(\om^{\th}, \na (u^{\th})^2 )$ in the case of the 3D Euler equations, respectively. For the 2D Boussinesq, the singularity is asymptotically self-similar. In particular, we showed that the velocity field is in $C^{1,\alpha}$ and has finite energy.

The results presented in this paper can be generalized to prove finite time singularity of some related problems. First of all, the proof of Theorem \ref{thm:euler} with minor modifications on Lemma \ref{lem:biot1} implies similar results for the 3D Euler equations in a bounded domain $\td D$ with smooth boundary and the following properties:  $\td D$ is symmetric with respect to the plane $z=0$, and satisfies $ D \cap \{ (r,z) : |z| \leq \e\} =  \{ (r,z) : r\in [0, 1] , |z|\leq \e\} $ for some $\e>0$. Formally, $\td D$ is a cylinder near $z=0$. Secondly, almost the same analysis can be applied to prove finite time blowup of the 3D axisymmetric Euler equations in a domain outside the cylinder $\{ (r,z) : r \geq 1, z\in \R\}$. The proof is easier since the domain is away from the symmetry axis and the term $-\f{1}{r} \td \psi + \f{1}{r^2} \td \psi$ in $\cL \td \psi$ is of lower order ( $r^{-1}\leq 1$ ).

Thirdly, our method of analysis can be applied to prove the finite time blowup of the following modified 2D Boussinesq equation on the {\it whole} space for $C^{\al}$ initial data $\om, \f{\th}{x}$ with small $\al$:
\[
\om_t +  \uu \cdot \na \om  = \th / x,  \quad \th_t + \uu \cdot  \na \th  =  0 ,\quad u= \na^{\perp} (-\D)^{-1} \om.
\]
The above modified Boussinesq equations with a simplified Biot-Savart law have been studied in \cite{HOANG20187328}, \cite{KISELEV201834}. Note that the above equations are a closed system for $\om, \th/x$. We can derive the corresponding dynamic rescaling formulation for the above system and reformulate problem using the $(R,\b)$ variables. We consider the equations for the variable $\Om(R,\b) = \om(x, y), \eta(R, \b) = (\th / x)(x, y)$. The approximate steady state for $\bar{\Om}, \bar{\eta}$ is similar to \eqref{eq:profile} with $\cos(\b)^{\al}$ replaced by $(\sin(2\b))^{\al/2}$, which is $C^{\al/2}$ globally on $\R^2$. Moreover, the scaling parameters are $\bar{c}_l = \f{1}{\al}, \bar{c}_{\om} = -1$. The leading order part of the linearized operator of this system is exactly the same as that in \eqref{eq:lin21}-\eqref{eq:lin22}. The same analysis in Section \ref{sec:lin}-\ref{sec:non} applies to the above system and the proof is much easier since the $\th_y$ variable appeared in \eqref{eq:lin21}-\eqref{eq:lin23} is not present in this  system. 

We would like to point out that the results presented in this paper do not provide a full justification of the finite time singularity of the 3D axisymmetric Euler equations with solid boundary considered in \cite{luo2013potentially-1,luo2013potentially-2}. The method of analysis presented here relies heavily on the assumption that the initial velocity field is in $C^{1,\alpha}$ with a small $\alpha$. Under this assumption, several important nonlocal terms in the perturbation analysis can be made arbitrarily small by choosing a sufficiently small $\alpha$. For smooth initial data considered in \cite{luo2013potentially-1,luo2013potentially-2},  it is almost impossible to obtain an analytic expression of an approximate steady state with a small residual error for the dynamic rescaling equations. Even if we use a numerically constructed approximate steady state, there are several essential difficulties in proving nonlinear stability of this approximate steady state.
In particular, the most difficult part is to control the nonlocal terms in the linear stability analysis. The standard energy estimates are simply too crude to control the nonlocal terms. 

Recently, in collaborator with De Huang, we have been able to prove the finite time self-similar singularity of the HL model with $C_c^{\infty}$ initial data by using the method of analysis presented in \cite{chen2019finite} and a computer assisted analysis. We are now working to extend this computer assisted analysis to prove the finite time self-similar singularity of the 2D Boussinesq and 3D axisymmetric Euler equations in the presence of boundary with smooth initial data in the same setting as that considered in  \cite{luo2013potentially-1,luo2013potentially-2}. We will report these results in a forthcoming paper.

\vspace{0.2in}
{\bf Acknowledgments.} The research was in part supported by NSF Grants DMS-1613861, DMS-1907977 and DMS-1912654. We would like to thank De Huang for his stimulating discussion on and contribution to Lemma \ref{lem:biot1}. We are grateful to Dongyi Wei for telling us the estimate of the mixed derivative terms related to Proposition \ref{prop:prod1}. We would also like to thank Tarek Elgindi, Dongyi Wei and Zhifei Zhang for their valuable comments and suggestions on our earlier version of the manuscript. We are also grateful to the two referees for their constructive comments on the original manuscript, which improve the quality of our paper.


\appendix
\section{}

In Appendix \ref{app:gam}, we estimate $\G(\b)$ and the constant $c$ appeared in the approximate profile \eqref{eq:profile}. In Appendix \ref{app:comp}, we perform the derivations and establish several inequalities in the linear stability analysis in Section \ref{sec:L2_less}.
In Appendix \ref{app:singular}, we derive the singular term \eqref{eq:G} in the elliptic estimates.
In Appendix \ref{sec:l12}, we will establish several estimates of $L_{12}(\Om)$
that are used frequently in the nonlinear stability analysis. 
Notice that we only have the formula of $\bar{\eta} = \bar{\th}_x$ in \eqref{eq:profile}. We need to recover $\bar{\th}, \bar{\xi} =\bar{\th}_y$ from $\bar{\eta}$ via integration. Yet, we do not have a simple formula  to perform integration. Alternatively, we derive useful estimates for $\bar{\xi}$ in Appendix \ref{sec:xi}. Some estimates of $\bar{\Om}, \bar{\eta}$ are also obtained there. 
In Appendix \ref{app:lemma}, we show that the truncation of the approximate steady state would contribute only to a small perturbation under the norm we use, and we prove Lemma \ref{lem:biot1}.
In Appendix \ref{app:stream}, we prove Lemma \ref{lem:biot1}. In Appendix \ref{app:toy}, we study the toy model introduced in \cite{elgindi2019finite}.

\subsection{Estimates of $\G(\b)$ and the constant $c$}\label{app:gam}
\begin{lem}\label{lem:one}
For $x \in [0,1]$, the following estimate holds uniformly for $\lam \geq 1 / 10$,
\beq\label{eq:one}
(1-x^{\kappa}) x^{\lam}  \leq \f{\kappa}{\lam}.
\eeq
Consequently, for $\b \in [0, \pi /2]$, $ 2 \geq \lam \geq 1/10 $, we have 
\[
 | ( \G(\b) - 1) (\sin(2\b))^{\lam} | 
 \les  | (\cos^{\al}(\b) - 1) (\cos(\b))^{\lam} |  \les \al,
\]
and
\[
  \B|c -  \f{2}{\pi} \B| = \B|\f{2}{\pi} \int_0^{\pi/2} (\G(\b) - 1) \sin(2\b) d\b \B| 
\leq 2 \al.
\]
\end{lem}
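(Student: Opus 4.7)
The plan is to prove the three parts in sequence, with (A.1) the genuine content and the other two purely follow-up computations.

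For the first inequality, I would study $f(x)=(1-x^{\kappa})x^{\lambda}$ on $[0,1]$ by elementary calculus. Setting $f'(x)=x^{\lambda-1}[\lambda-(\lambda+\kappa)x^{\kappa}]=0$ gives the unique interior critical point $x_{*}^{\kappa}=\lambda/(\lambda+\kappa)$, and $f$ vanishes at both endpoints, so
\[
\sup_{x\in[0,1]} f(x)= f(x_{*}) = \frac{\kappa}{\lambda+\kappa}\Bigl(\frac{\lambda}{\lambda+\kappa}\Bigr)^{\lambda/\kappa}\le \frac{\kappa}{\lambda+\kappa}\le \frac{\kappa}{\lambda},
\]
where the second-to-last inequality uses $\lambda/(\lambda+\kappa)\le 1$. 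This is the required bound; note that the hypothesis $\lambda\ge 1/10$ is not actually needed for \eqref{eq:one} itself but will be used later to keep $1/\lambda$ bounded by an absolute constant.

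For the second chain of inequalities, I would first reduce to the simpler factor $\cos(\beta)^{\lambda}$. Since $\sin(2\beta)=2\sin\beta\cos\beta$ and $\sin\beta\in[0,1]$,
\[
|(\Gamma(\beta)-1)(\sin(2\beta))^{\lambda}|=2^{\lambda}|1-\cos^{\alpha}(\beta)|\,\sin^{\lambda}(\beta)\cos^{\lambda}(\beta)\le 4\,|(\cos^{\alpha}(\beta)-1)\cos^{\lambda}(\beta)|,
\]
using $\lambda\le 2$. For the remaining expression, apply \eqref{eq:one} with $x=\cos\beta\in[0,1]$ and $\kappa=\alpha$ to obtain $|1-\cos^{\alpha}(\beta)|\cos^{\lambda}(\beta)\le \alpha/\lambda\le 10\alpha$, which gives the desired $\lesssim \alpha$ bound uniformly for $\lambda\in[1/10,2]$.

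For the constant $c$, note that $\frac{2}{\pi}\int_{0}^{\pi/2}\sin(2\beta)\,d\beta=\frac{2}{\pi}$, so $c-\frac{2}{\pi}=\frac{2}{\pi}\int_{0}^{\pi/2}(\Gamma(\beta)-1)\sin(2\beta)\,d\beta$. At this point either the pointwise estimate from the previous paragraph (with $\lambda=1$) integrated against the bounded interval $[0,\pi/2]$ suffices, or, cleaner still, one can substitute $u=\cos\beta$ to get
\[
\int_{0}^{\pi/2}(1-\cos^{\alpha}\beta)\sin(2\beta)\,d\beta=2\int_{0}^{1}u(1-u^{\alpha})\,du=\frac{\alpha}{2+\alpha},
\]
leading to $|c-\frac{2}{\pi}|=\frac{2}{\pi}\cdot\frac{\alpha}{2+\alpha}\le\alpha/\pi\le 2\alpha$. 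There is no genuine obstacle in this lemma; the only mildly delicate step is identifying the critical point and verifying the maximum value bound in part (1), and everything else is a substitution.
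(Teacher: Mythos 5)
Your proof is correct. The only place where you diverge from the paper is the core inequality \eqref{eq:one}: the paper substitutes $t=x^{\kappa}$, reducing to $(1-t)t^{\lambda/\kappa}\le \kappa/\lambda$, and then bounds the product $\bigl(\tfrac{\lambda}{\kappa}(1-t)\bigr)t^{\lambda/\kappa}$ by Young's (AM--GM) inequality, whereas you locate the critical point of $f(x)=(1-x^{\kappa})x^{\lambda}$ directly. Both routes produce the identical sharp maximum $\tfrac{\kappa}{\lambda+\kappa}\bigl(\tfrac{\lambda}{\lambda+\kappa}\bigr)^{\lambda/\kappa}$, so the difference is cosmetic; your observation that $\lambda\ge 1/10$ is not needed for \eqref{eq:one} itself (only to make $1/\lambda$ an absolute constant downstream) is accurate. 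For the two consequences the paper simply writes that they ``follow directly,'' and your explicit verifications — the reduction $(\sin 2\beta)^{\lambda}\le 2^{\lambda}\cos^{\lambda}\beta$ followed by \eqref{eq:one} with $x=\cos\beta$, $\kappa=\alpha$, and the exact evaluation $\int_0^{\pi/2}(1-\cos^{\alpha}\beta)\sin(2\beta)\,d\beta=\alpha/(2+\alpha)$ — are correct and in fact give a slightly sharper constant for $|c-2/\pi|$ than the stated $2\alpha$.
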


\begin{proof}
Using change of a variable $t = x^{\kappa}$, it suffices to show that for $t \in [0, 1],
(1 - t ) t^{ \lam/ \kappa} \leq \f{\kappa}{\lam}.$ Notice that $\lam \geq 1/10$ and $t \leq 1$. Using Young's inequality, we derive 
\[
\bal
&(1 - t ) t^{ \lam/ \kp} = \f{\kp}{\lam} \cdot (  \f{ \lam}{\kp}(1 -t)  ) t^{\lam / \kp}
\leq \f{\kp}{\lam}  \lt(  \f{ \f{ \lam}{\kp}(1 -t) +  \f{\lam}{\kp} t  }{ 1 + \f{\lam}{\kp}} \rt)^{1 + \lam / \kp}
=  \f{\kp}{\lam} \lt( \f{ \lam}{\lam +\kp}  \rt)^{1 +\lam /\kp} \leq  \f{\kp}{\lam} ,
\eal
\]
which implies \eqref{eq:one}. The remaining inequalities in the Lemma follows directly from \eqref{eq:one}.
\end{proof}

\subsection{Computations in the linear stability analysis}\label{app:comp}
We perform the derivations and establish several inequalities in the linear stability analysis in Section \ref{sec:L2_less}.

The calculations and estimates presented below can also be verified using {\it Mathematica} 
\footnote{The Mathematica code for these calculations can be found via the link 
\url{https://www.dropbox.com/s/y6vfhxi3pa8okvr/Calpha_calculations.nb?dl=0}.}
since we have simple and explicit formulas.

\subsubsection{ Derivations of \eqref{eq:L2_2}}\label{comp:L2_dp}

Recall the formulas of $\psi_0, \vp_0$ in \eqref{wg:L2_R0}. A direct calculation yields 
\[
\bal
  & \f{1}{2}(R\vp_0)_R  - \vp_0
= \B(\f{1}{2}  \B(  R \cdot \f{ (1+R)^3}{R^3} \B)_R -  \f{(1+R)^3}{R^3} \B) \sin(2\b)\\
=&  \B(  \f{1}{2} \B( -2R^{-3} -  3 R^{-2} + 1  \B) - \f{(1+R)^3}{R^3} \B)\sin(2\b) 
   = - \B(   2R^{-3} + \f{9}{2}  R^{-2}  + 3 R^{-1} + \f{1}{2} \B) \sin(2\b).
\eal
\]

Denote $\psi_0 = A(R) \G(\b)^{-1}$. For the coefficient in the $\eta$ integral in \eqref{eq:L2_2}, we have
\[
\f{1}{2} (R\psi_0 )_R + (- 2 + \f{3}{1+R}) \psi_0
= \B(\f{1}{2}  (R A(R))_R
+ (- 2 + \f{3}{1+R}) A(R)  \B)\G(\b)^{-1} \teq ( I + II) \G(\b)^{-1}.
\]

Note that $A(R) =\f{3}{16}  \lt( \f{(1+R)^3}{R^4}  + \f{3}{2} \f{(1+R)^4}{R^3} \rt) $ \eqref{wg:L2_R0}. A direct calculation implies 
\[
\bal
 I & =\f{3}{32}\lt(  \f{(1+R)^3}{R^3} + \f{3}{2} \f{(1+R)^4}{R^2}  \rt)_R 
= \f{3}{32}\lt(3 \f{ (1+R)^2}{R^3} - 3\f{(1 + R)^3}{R^4} + 6 \f{(1+R)^3}{R^2} - 3	 \f{(1+R)^4}{R^3} \rt) \\
& = \f{3}{32}\lt( \f{(1+R)^2}{R^4} ( 3 R - 3(1+R) + 6(1+R)R^2 - 3(1+R)^2 R )  \rt) =  \f{3(1+R)^2}{32R^4} ( -3 - 3 R + 3R^3 )  , \\
II & =   \lt(-2 + \f{3}{1+R} \rt)  \f{3}{32} \lt( 2 \f{(1+R)^3}{R^4}  + 3 \f{(1+R)^4}{R^3}  \rt)  = \f{ 3 (1+R)^2}{ 32 R^4} (  -2 -2 R + 3 ) ( 2 + 3R(1+R)) , \\
I + II & =  \f{3 (1+R)^2}{32R^4} ( -3 - 3 R + 3R^3 + (1-2R)( 2 + 3R +3R^2) )
= \f{3(1+R)^2}{32R^4} ( -1 - 4 R -3R^2 -3 R^3 ).
\eal
\]
The above calculations imply \eqref{eq:L2_2}.

\subsubsection{ Derivations of \eqref{eq:L2_42} }\label{comp:L2_cw1}
From \eqref{eq:profile}, we know 
\[
\f{\bar{\eta} - R\pa_R \bar{\eta} }{\bar{\eta}} =  \f{(1+R)^3}{ 6R} \B( \f{6R}{ (1+R)^3} 
 - R\cdot \f{6}{(1+R)^3} + R \cdot \f{18 R}{ (1+R)^4}  \B) = \f{3R}{1+R}.
\]
Using the above identity, \eqref{wg:L2_R0} and $c_{\om} = -\f{2}{\pi\al} L_{12}(\Om)(0)$ \eqref{eq:normal}, we can compute 
\[
\bal
& (\bar{\eta} - R\pa_R \bar{\eta} )  \psi_0 c_{\om}
=  \f{\bar{\eta} - R\pa_R \bar{\eta} }{\bar{\eta}} \f{9}{8}\f{\al}{c} 
\B(  R^{-3} + \f{3}{2} \f{1+R}{R^2}    \B) c_{\om}
= \f{27\al}{8c}  \f{R}{1+R}  \B(  R^{-3} + \f{3}{2} \f{1+R}{R^2}    \B) c_{\om} \\
 =&\B( \f{27\al}{8c}  \f{1}{(1+R)R^2}  + \f{81 \al}{ 16 c} \f{1}{R} \B) \cdot \f{-2}{\pi\al} L_{12}(\Om)(0)
=  \B(-\f{27}{4\pi c} \f{1}{(1+R)R^2}  - \f{81}{8\pi c} \f{1}{R}\B) L_{12}(\Om)(0),
\eal
\]
which implies \eqref{eq:L2_42}.

\subsubsection{ Derivation of the ODE \eqref{eq:L2_44} for $L_{12}(\Om)(0)$}\label{comp:L2_cw2}

Multiplying $ \sin(2\b) / R$ on both sides of \eqref{eq:lin21} and then integrating \eqref{eq:lin21}, we derive 
\[
\bal
\f{d}{dt} L_{12} (\Om)(0)
=&  - \B\la R\pa_R \Om , \f{\sin(2\b)}{R} \B\ra  - L_{12}(\Om)(0) 
+ c_{\om} \B\la \bar{\Om} - R \pa_R \bar{\Om},  \f{\sin(2\b)}{R} \B\ra \\
 &+ \B\la \eta,  \f{\sin(2\b)}{R} \B\ra  - \B\la \f{3}{1+R} D_{\b} \Om,\f{\sin(2\b)}{R} \B\ra 
 + \B\la \cR_{\Om}, \f{\sin(2\b)}{R} \B\ra.
 \eal
\]
The first term vanishes by an integration by parts argument. Using \eqref{eq:profile} and \eqref{eq:normal},
we can compute the third term
\[
\bal
c_{\om}\B\la \bar{\Om} - R \pa_R \bar{\Om},  \f{\sin(2\b)}{R} \B\ra 
&= \f{\al}{c} c_{\om} \int_0^{\infty}  \int_0^{\pi/2}\G(\b) \f{6R^2}{(1+R)^3}\cdot \f{\sin(2\b)}{R} d \b d R
=\f{\pi \al}{2} c_{\om} \int_0^{\infty} \f{6R}{(1+R)^3} d R \\
 &= 3 \pi \al  c_{\om}\lt( - (1+R)^{-1} + \f{1}{2} (1+R)^{-2} \rt)\B|_0^{\infty} = \f{3\pi \al}{2} c_{\om}
 = -  3 L_{12}(\Om)(0) .
\eal
\]
It follows that
\[
\f{d}{dt} L_{12} (\Om)(0) = - 4 L_{12} (\Om)(0) +  \B\la \eta,  \f{\sin(2\b)}{R} \B\ra  - \B\la \f{3 \sin(2\b)}{(1+R)R} , D_{\b} \Om \B\ra  + \B\la \cR_{\Om}, \f{\sin(2\b)}{R} \B\ra.
\]
Multiplying $ \f{81}{ 4 \pi c}  L_{12}(\Om)(0)$ to the both sides, we derive \eqref{eq:L2_44}.

\subsubsection{ Computations of the integrals in \eqref{eq:L2_520} }\label{comp:L2_cw3}

A simple calculation implies that for any $k > 2$
\beq\label{eq:ing}
\int_0^{\infty}  (1+R)^{-k} d R = \f{1}{k-1}, \quad  \int_0^{\infty} \f{R} {(1+R)^{k}} d R 
= \int_0^{\infty} \f{1}{(1+R)^{k-1}} - \f{1}{(1+R)^k} d R= \f{1}{(k-1)(k-2)}.
\eeq

For the integral in $\b$, we get 
\[
\int_0^{\pi /2} (1 - 2 \sin(2\b))^2  d \b = \f{\pi}{2} - 4 \int_0^{\pi/2} \sin( 2\b ) d \b
+ 4 \int_0^{\pi/2}  ( \sin(2\b) )^2 d \b = \f{\pi}{2} - 4 +  4 \cdot \f{\pi}{4}  
= \f{3\pi}{2} - 4. \\
\]

Using  \eqref{eq:ing} with $k=4$ and the above calculation, we can compute
\[
\B| \B| \f{R^{3/2}}{(1+R)^2} \f{1}{R}(1 - 2  \sin(2\b))\B|\B|_2^2
= \int_0^{\infty} \f{R}{(1+R)^4} dR \cdot \int_0^{\pi /2} (1 - 2 \sin(2\b))^2  d \b 
= \f{1}{6}(\f{3\pi}{2} - 4  ).
\]

For $A_1$ in \eqref{eq:L2_42}, we apply the Cauchy-Schwarz inequality directly to yield 
\[
A_1 = -\f{27}{4\pi c}  L_{12}(\Om) (0)  \B\la  \eta , \f{1 }{(1+R)R^2} \B\ra  
\leq \f{27}{ 4\pi c} | L_{12}(\Om) (0)| \B\la \eta^2, \f{(1+R)^3}{R^{4}}  \B\ra^{1/2}
\B\la \f{R^{4}}{(1+R)^3} , \f{1}{(1+R)^2 R^4}  \B\ra^{1/2} . 
\]

Using \eqref{eq:ing}, we can calculate 
\[
\B| \B| \f{R^2}{(1+R)^{3/2}} \cdot \f{1}{(1+R)R^2}\B|\B|_2^2 
= \int_0^{\pi/2} 1 d\b \cdot \int_0^{\infty} (1+R)^{-5} dR = \f{\pi}{8}.
\]

\subsubsection{ Estimates of $D(\Om), D(\eta)$ and the proof of \eqref{eq:L2_72p}}\label{comp:L2_sum}
We introduce
\[
\bal
D_1(\eta) & \teq - \f{3(1+R)^2}{ 32 R^4} ( 1 + 4 R + 3 R^2 + 3 R^3) , \\
D_2(\eta) & \teq \lt( \f{3}{16} R^{-3}  + \f{3}{8}\f{ (1 + R)^2}{R^2} +  \f{3 R}{4(1+R)} \rt) + \f{3}{16} \lt( \f{1}{6}\f{(1+R)^4}{R^3} + \f{3}{8} \f{(1+R)^3}{R^4} \rt).
\eal
\]

Recall $D(\Om), D(\eta)$ in \eqref{eq:L2_712} and the weights $\vp_0,\psi_0$ defined in \eqref{wg:L2_R0}. By definition, $D(\eta) = D_1(\eta) \G(\b)^{-1}+ D_2(\eta)$. Thus, \eqref{eq:L2_72p} is equivalent to 
\beq\label{eq:L2_72}
\sin(2\b) D(\Om) \leq -\f{1}{6} \vp_0, \quad  D_1(\eta) \G(\b)^{-1}+ D_2(\eta)   \leq -\f{1}{8} \psi_0.
\eeq

To prove the first inequality, it suffices to prove
\[
D(\Om)  = - 2 R^{-3} - \f{9}{2} R^{-2} - 3 R^{-1} - \f{1}{2}
+   \f{4}{3} R^{-3}  + 6 R^{-2} + \f{1+R}{ 3 R}   \leq  - \f{(1+R)^3}{ 6 R^3},
\]
which is equivalent to proving
\[
 (-2 + \f{4}{3} + \f{1}{6}) R^{-3} + (-\f{9}{2} + 6+ \f{1}{2}) R^{-2} + (-3 + \f{1}{3} + \f{1}{2}) R^{-1}
+ (-\f{1}{2} + \f{1}{3} + \f{1}{6})  \leq 0.
\]
It is further equivalent to 
\[
-\f{1}{2} R^{-3} + 2 R^{-2} - \f{13}{6} R^{-1} \leq 0,
\]
which is valid since $2 \sqrt{ \f{1}{2} \times \f{13}{6}} > 2$. Hence, we prove the first inequality in \eqref{eq:L2_72}.

For the second inequality in \eqref{eq:L2_72}, firstly, we use $\G(\b) D_2(\eta) \leq D_2(\eta)$ ($\G(\b) = \cos^{\al}(\b)$ \eqref{eq:profile}) to obtain
\beq\label{eq:L2_721}
\bal
&D_3(\eta) \teq D_1 (\eta) + D_2(\eta) \G(\b)  \leq D_1(\eta) + D_2(\eta)  \\
=&  \f{3}{16} \lt\{ 
- \f{(1+R)^2}{ 2 R^4} ( 1 + 4 R + 3 R^2 + 3 R^3)
+  R^{-3}  + 2 \f{ (1 + R)^2}{R^2} +  \f{ 4 R}{1+R} 
+\f{1}{6}\f{(1+R)^4}{R^3} + \f{3}{8} \f{(1+R)^3}{R^4}
 \rt\}.
 \eal
\eeq

Recall the definition of $\psi_0$ in \eqref{wg:L2_R0}. 
Multiplying both sides of the second inequality in \eqref{eq:L2_72} by $\G(\b)$, we obtain that the inequality is equivalent to
\beq\label{eq:L2_722}
D_3(\eta) \leq \f{3}{16} \lt( - \f{1}{8} \f{(1+R)^3}{R^4} - \f{3}{16} \f{(1+R)^4}{R^3}  \rt).
\eeq
We split the negative term in the upper bound of $D_3(\eta)$ in \eqref{eq:L2_721} as follows 
\[
\bal
& -\f{(1+R)^2}{ 2 R^4} ( 1 + 4 R + 3 R^2 + 3 R^3)
= - \f{(1+R)^2}{2 R^4} \lt\{ (1 +R) + (3 R^2) + R(1+R)^2 + R( 2 -2 R + 2R^2) \rt\}\\
=& - \f{ (1+R)^3}{ 2 R^4} - \f{3}{2} \f{(1+R)^2}{R^2} - \f{(1+R)^4}{ 2 R^3} - \f{ (1+R)^2(1-R + R^2)}{R^3} .
\eal
\]
It follows that
\[
\bal
&D_3(\eta) \leq 
\f{3}{16}\lt\{
\f{ (1+R)^3}{  R^4} \lt(  -\f{1}{2} + \f{3}{8}\rt)
+ \f{(1+R)^4}{R^3} \lt( -\f{1}{2} + \f{1}{6}\rt)  + \f{1}{2} \f{(1+R)^2}{R^2}
- \f{ (1+R)^2(1-R + R^2)}{R^3} \rt. \\
 &\lt.+ \f{1}{R^3} + \f{4 R}{1+R}  \rt\} =\f{3}{16} \lt\{ -\f{1}{8} \f{(1+R)^3}{R^4} - \f{1}{3}  \f{(1+R)^4}{R^3}
 + \f{1}{2}\f{(1+R)^2}{R^2}- \f{(1+R)(1+R^3)}{R^3}  + \f{1}{R^3} + \f{4R}{1+R}  \rt\}.
\eal
\]
Observe that
\[
\bal
&-\f{1}{3}\f{(1+R)^4}{R^3} + \f{1}{2} \f{(1+R)^2}{R^2} 
= -\f{3}{16}\f{(1+R)^4}{R^3} +  \lt(- \f{7}{48} \f{(1+R)^4}{R^3} + \f{1}{2} \f{(1+R)^2}{R^2} \rt)
\leq -\f{3}{16}\f{(1+R)^4}{R^3} , \\
&- \f{(1+R)(1+R^3)}{ R^3}  + \f{1}{R^3} + \f{ 4 R}{1+R}  =  -\f{1}{R^2} -(1+R) + \f{4 R}{1+R}
= -\f{1}{R^2}-\f{ (R-1)^2}{ (1+R)}  \leq 0  ,
\eal
\]
where we have used $ \f{7}{48} \f{(1+R)^2}{R} \geq  \f{7}{48} \times 4  \geq 1/ 2$ to derive the first inequality. Therefore, we prove \eqref{eq:L2_722}, which further implies the second inequality in \eqref{eq:L2_72}.

\subsection{Derivation of the singular term \eqref{eq:G} in the elliptic estimates}\label{app:singular}

Suppose that $\Psi$ is the solution of \eqref{eq:elli}. Consider $\td{\Psi} = \Psi + G \sin(2\b)$. Notice that if $\al = 0$, $\sin(2\b)$ is the kernel of the operator $\cL_{\al}$ in \eqref{eq:elli}
(it is self-adjoint if $\al =0$). We have 
\[
\cL_{\al}(\td{\Psi}) = \Om + \cL_{\al}(G \sin(2\b)) 
= \Om - (\al^2 R^2 \pa_{RR} G  + \al (\al + 4) R \pa_R G ) \sin(2\b).
\]
We look for $G(R)$ that satisfies $G(R) \to 0$ as $R \to +\infty$ and $\cL_{\al}(\td{\Psi})$ is orthogonal to $\sin(2\b)$: 
\[
0 = \int_0^{\pi/2} \sin(2\b) ( \Om - (\al^2 R^2 \pa_{RR} G  + \al (\al + 4) R \pa_R G ) \sin(2\b)) d \b 
\]
for every $R$, which implies 
\beq\label{eq:elli_G}
 \al^2 R^2 \pa_{RR} G + \al(\al+4) R \pa_R G = \f{4}{\pi} \Om_*,
\eeq
where $\Om_*(R) = \int_0^{\pi/2} \Om(R, \b) \sin(2\b) d \b$ and we have used $\int_0^{\pi/2} \sin^2(2\b) d \b = \f{\pi}{4}$. The above ODE is first order with respect to $\pa_R G$ and can be solved explicitly. Multiplying the integrating factor $\f{1}{\al^2} R^{-2 + \f{4+\al}{\al}}$ to both sides and then integrating from $0$ to $R$ yield
\[
R^{\f{4+\al}{\al}} \pa_R G =  \f{4}{\al^2 \pi } \int_0^{R} \Om_*(t) t^{ \f{4}{\al} -1} d t.
\]
Imposing the vanishing condition $G(R) \to 0$ as $R \to +\infty$, we yield 
\[
G = -\f{4}{\al^2 \pi } \int_R^{\infty} s^{ - \f{4+\al}{\al}} \int_0^s \Om_*(t) t^{ \f{4}{\al} -1} d t ds.
\]
Using integration by parts, we further derive
\[
G = \f{1} {\al\pi} \int_R^{\infty} \pa_s ( s^{ -\f{4}{\al}}) \int_0^s \Om_*(t) t^{ \f{4}{\al} -1} d t ds
= -\f{1}{ \al \pi} \int_R^{\infty} \f{\Om_*(s)}{s}  ds  - \f{1}{\al \pi} R^{ -\f{4}{\al}} \int_0^{R} \Om_*(s) s^{\f{4}{\al} - 1} ds.
\]
Using the above formula and the notation $L_{12}(\Om)$ \eqref{eq:biot3}, we derive \eqref{eq:G}.

\subsection{Estimates of $L_{12}(\Om)$}\label{sec:l12}


Recall $\td{L}_{12}(\Om) = L_{12}(\Om) - L_{12}(\Om)(0)$.  We have the following important cancellation between $\td L_{12}(\Om)$ and $\Om$.

\begin{lem}\label{lem:cancel}
For $ k \in [3/2,4]$ and any $\lam > 0$, we have 
\beq\label{eq:cancel}
\bal
\la \sin(2\b) \Om \td{L}_{12}(\Om), R^{-k} \ra& =  - \f{k-1}{2}  \B| \B| \td{L}_{12}(\Om) R^{-k/2} \B| \B|^2_{L^2{(R)}} ,\\
\la (\sin(2\b) \Om + \lam \td{L}_{12}(\Om))^2 , R^{-k} \ra &=  \la R^{-k} (\sin(2\b) )^2,  \Om^2 \ra - ( (k-1) \lam -  \f{\pi}{2} \lam^2 ) \B| \B| \td{L}_{12}(\Om) R^{-k/2} \B| \B|^2_{L^2{(R)}} .
\eal
\eeq
\end{lem}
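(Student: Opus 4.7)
The key observation is that $\td L_{12}(\Om)$ depends only on $R$, so the angular integral of $\Om\sin(2\b)$ reduces to a derivative of $\td L_{12}$. Indeed, from the definition in \eqref{eq:nota_ux},
\[
\pa_R \td L_{12}(\Om)(R) = -\f{1}{R}\int_0^{\pi/2} \Om(R,\b)\sin(2\b)\, d\b,
\]
so $\int_0^{\pi/2} \Om(R,\b)\sin(2\b)\, d\b = -R\,\pa_R \td L_{12}(\Om)(R)$. This identity is the algebraic heart of both estimates.

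For the first identity, I would substitute the above into the $\b$-integral and obtain
\[
\la \sin(2\b)\,\Om\,\td L_{12}(\Om), R^{-k}\ra
= -\int_0^{\infty} \td L_{12}(\Om)\,\pa_R \td L_{12}(\Om)\, R^{1-k}\, dR
= -\f{1}{2}\int_0^{\infty} \pa_R\bigl(\td L_{12}(\Om)\bigr)^2\, R^{1-k}\, dR.
\]
A single integration by parts in $R$ yields $-\tfrac{k-1}{2}\|\td L_{12}(\Om)R^{-k/2}\|_{L^2(R)}^2$, as claimed. The only non-trivial point is that boundary terms vanish: since $\td L_{12}(\Om)(0)=0$ by definition, near the origin $\td L_{12}(\Om)(R) = O(R)$ (assuming $\Om$ is bounded enough near $R=0$), so $(\td L_{12})^2 R^{2-k} = O(R^{4-k})\to 0$; at infinity the assumption that $\|\td L_{12}(\Om)R^{-k/2}\|_{L^2(R)}<\infty$ (which must be implicit for the right side to make sense) together with the monotonicity of $|\td L_{12}|$ forces the boundary contribution at $R=\infty$ to vanish for $k$ in the stated range.

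For the second identity, I would simply expand the square,
\[
\la(\sin(2\b)\Om+\lam\td L_{12})^2, R^{-k}\ra
= \la\sin^2(2\b), \Om^2 R^{-k}\ra + 2\lam\la \sin(2\b)\Om\,\td L_{12}, R^{-k}\ra + \lam^2\la \td L_{12}^2, R^{-k}\ra,
\]
use the first identity on the cross term, and note that since $\td L_{12}$ is independent of $\b$, the last term equals $\lam^2\cdot\tfrac{\pi}{2}\|\td L_{12}R^{-k/2}\|_{L^2(R)}^2$. Combining the two gives exactly the coefficient $-((k-1)\lam - \tfrac{\pi}{2}\lam^2)$ in front of $\|\td L_{12}R^{-k/2}\|_{L^2(R)}^2$.

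The main (mild) obstacle is the boundary-term justification in the integration by parts, which is where the range $k\in[3/2,4]$ enters: $k>1$ is needed to produce the damping sign, while the upper bound on $k$ and implicit integrability hypotheses guarantee the endpoint contributions vanish. All remaining manipulations are purely algebraic and require no further input beyond the definition of $\td L_{12}$.
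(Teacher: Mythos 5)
Your proposal is correct and follows essentially the same route as the paper: the identity $\int_0^{\pi/2}\Om\sin(2\b)\,d\b = -R\,\pa_R\td L_{12}(\Om)$, a single integration by parts in $R$ for the first identity, and expansion of the square plus the factor $\pi/2$ from the trivial $\b$-integration for the second. Your additional discussion of the vanishing boundary terms is a point the paper leaves implicit, and is a reasonable (if informal) supplement.
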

\begin{proof}
From the definition of $\td{L}_{12}(\om)(R)$ in \eqref{eq:nota_ux}, we know that it does not depend on $\b$ and 
\[
\int_0^{\pi / 2} \Om(s, \b) \sin(2\b) d\b = - (\pa_{R} \td{L}_{12}(R) ) R.
\]
Using integration by parts, we obtain 
\[
\la \sin(2\b) \Om \td{L}_{12}(\Om), R^{-k} \ra = \int_0^{\infty}  (- (\pa_{R} \td{L}_{12}(R) ) R ) \td{L}_{12}(\Om) R^{-k} d R 
= - \f{k-1}{2} \int_0^{\infty} \td{L}_{12}(\Om)^2 R^{-k} d R,
\]
which is exactly the first identity in \eqref{eq:cancel}. The second identity in \eqref{eq:cancel} is a direct consequence of 
$\la \td{L}^2_{12}(\Om), R^{-k}\ra = \f{\pi}{2} || \td{L}_{12}(\Om) R^{-k/2} ||^2_{L^2{(R)}} $
and the first identity. 
\end{proof}

To estimate $\td{L}_{12}(\Om) g$ in $\cL_i$, we use the following simple Lemma.
\begin{lem}\label{lem:ux}
Let $g$ be some function depending on $\bar{\Om}, \bar{\eta}, \bar{\xi}$ and $\vp$ be some weights.
We have 
\beq\label{eq:ux}
\bal
\la \td{L}^2_{12} (\Om) g^2 , \vp \ra & \les ||R^{-1} \td{L}_{12} (\Om) ||^2_{L^2(R)} 
  \B| \B| \int_0^{\pi/2} R^2 g^2(R, \b)  \vp(R, \b) d \b  \B|\B|_{L^{\infty}(R)}  , \\
\la ( D^k_R \td{L}_{12} (\Om) )^2 g^2 , \vp \ra & \les ||  R^{-1} D^{k-1}_R\Om   ||^2_{L^2} 
  \B| \B| \int_0^{\pi/2} R^2 g^2(R, \b)  \vp(R, \b) d \b  \B|\B|_{L^{\infty}(R)}  , \\
\eal
\eeq
for $k\geq 1$, 
provided that the upper bound is well-defined, where $D_R  = R \pa_R$.
\end{lem}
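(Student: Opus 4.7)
The plan is to exploit two observations. First, $\td L_{12}(\Om)(R)$ is independent of $\b$ (it is obtained by integrating $\Om\sin(2\b)$ over $\b$ and then over the radial variable). Second, the right hand sides of \eqref{eq:ux} are precisely of the form (radial $L^2$-norm)$\times$(angular $L^\infty$ in $R$), so the natural tool is a Hölder-type splitting in which the $\b$-integral is taken in $L^\infty(R)$ and the remaining $R$-integral in $L^2$.

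For the first inequality I would simply write
\[
\la \td L_{12}^2(\Om)\,g^2,\vp\ra
= \int_0^\infty \td L_{12}^2(\Om)(R)\,\B(\int_0^{\pi/2}g^2\vp\,d\b\B)\,dR,
\]
insert the factor $1=R^{-2}\cdot R^2$, and apply Hölder in the $R$-variable to pull out the angular $L^\infty_R$-norm on the right. That gives the claimed bound, the pointwise-in-$\b$ character of $\td L_{12}$ being essential so that no interaction between the two factors is lost.

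For the second inequality I would first compute $D_R\td L_{12}(\Om)$ directly from the definition \eqref{eq:nota_ux}:
\[
D_R\td L_{12}(\Om)(R)=-\int_0^{\pi/2}\Om(R,\b)\sin(2\b)\,d\b,
\]
and then commute $D_R^{k-1}$ inside the angular integral to obtain
\[
D_R^k\td L_{12}(\Om)(R)=-\int_0^{\pi/2}D_R^{k-1}\Om(R,\b)\sin(2\b)\,d\b.
\]
A Cauchy–Schwarz inequality in $\b$ yields $(D_R^k\td L_{12}(\Om))^2\le \tfrac{\pi}{2}\int_0^{\pi/2}(D_R^{k-1}\Om)^2(R,\b)\,d\b$. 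Inserting this bound into the inner product and using the same $R^{-2}\cdot R^2$ splitting as in the first step recovers $\|R^{-1}D_R^{k-1}\Om\|_{L^2}^2$ paired with the angular $L^\infty_R$ norm of $R^2 g^2\vp$.

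There is no serious obstacle here, since the identity is purely algebraic/pointwise: the only ingredients are (i) $\b$-independence of $\td L_{12}(\Om)$, (ii) the differential identity for $D_R^k\td L_{12}$, (iii) Cauchy–Schwarz in $\b$, and (iv) Hölder in $R$. The only thing to be careful about is commuting $D_R^{k-1}$ past the $\b$-integral, which is justified because $D_R=R\partial_R$ acts only in $R$ while the integration is in $\b$.
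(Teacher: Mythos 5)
Your proposal is correct and follows essentially the same route as the paper's proof: both rest on the $\b$-independence of $\td L_{12}(\Om)$, the identity $D_R^k\td L_{12}(\Om)=-\int_0^{\pi/2}D_R^{k-1}\Om\sin(2\b)\,d\b$, Cauchy--Schwarz in $\b$, and the $R^{-2}\cdot R^2$ splitting giving an $L^1\times L^\infty$ bound in $R$. No gaps.
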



\begin{proof}
The first inequality follows directly from that $\td{L}_{12}(\Om)$ does not dependent on $\b$.
Recall the definition of $\td{L}_{12}(\Om)$ in \eqref{eq:nota_ux} and $D_R = R \pa_R$. Notice that for $k \geq 1$, we have
\[
\bal
D_R^k \td{L}_{12}(\Om) &= - \int_0^{\pi / 2} D^{k-1}_R\Om(R, \b) \sin(2\b) d\b.
\eal
\]
Using the Cauchy-Schwarz inequality, we prove
\[
\bal
&\la ( D^k_R \td{L}_{12} (\Om) )^2 g^2 , \vp \ra 
= \int_0^{\infty} \lt(   ( \int_0^{\pi / 2} D_R^{k-1}\Om(R, \b) \sin(2\b) d\b )^2  \int_0^{\pi /2} g^2 \vp d \b \rt) d R \\
\les &   
 \int_0^{\infty} ( \int_0^{\pi/2} (D_R^{k-1}\Om)^2 d \b ) ( \int_0^{\pi /2} g^2 \vp d \b ) d R
 \leq  ||   R^{-1} D_R^{k-1} \Om ||^2_{L^2} 
  \B| \B| \int_0^{\pi/2} R^2 g^2 \vp (R, \b) d \b  \B|\B|_{L^{\infty}(R)}.
 \eal
\]
\end{proof}


\begin{lem}\label{lem:l12}
Let $\chi(\cdot) : [0, \infty) \to [0, 1]$ be a smooth cutoff function, such that $\chi(R) = 1$ for $R \leq 1$ and $\chi(R) = 0$ for $R \geq 2$. For $k=1,2$, we have
\beq\label{eq:l12}
\bal
& 
 || L_{12}(\Om) ||_{L^{\infty}}\les || \f{1+R}{R} \Om||_{L^2}, \quad || \td{L}_{12}(\Om) (R^{-2} + R^{-3})^{1/2} ||^2_{L^2(R)}  \les || \Om \f{(1+R)^2}{R^2} ||^2_{L^2} ,
 \\
& || L_{12}(\Om)||_2 \les || \Om||_2, \quad 
|| \f{(1+R)^k}{R^k} ( L_{12}(\Om) - L_{12}(\Om)(0) \chi ) ||_{L^2(R)} \les || \f{(1+R)^k}{R^k} \Om||_{L^2}.
\eal
\eeq
provided that the right hand side is bounded. 
Moreover, if $\Om \in \cH^3$, then for $ 0 \leq k \leq 3, 0\leq l \leq 2$, we have 
\beq\label{eq:l12X}
\bal
& ||   L_{12}(\Om) - L_{12}(\Om)(0) \chi ||_{\cH^3} + ||  D_R( L_{12}(\Om) - L_{12}(\Om)(0) \chi) ||_{\cH^3} \les || \Om ||_{\cH^3},   \\
& 
|| D^k_R L_{12}(\Om) ||_{\infty} 
+ || D^k_R ( L_{12}(\Om) -\chi L_{12}(\Om)(0)) ||_{\infty} \les || \Om||_{\cH^3},  \\
&
|| (1 +R) \pa_R D^l_R L_{12}(\Om) ||_{\infty} 
+ || (1+R) \pa_R D^l_R ( L_{12}(\Om) -\chi L_{12}(\Om)(0)) ||_{\infty} \les || \Om||_{\cH^3},  \\
& || L_{12}(\Om) ||_{X} + || D_R L_{12}(\Om) ||_X \les  || \Om||_{\cH^3},
\eal
\eeq
where $X \teq \cH^3 \oplus \cW^{5,\infty}$ is defined in \eqref{norm:X}.
\end{lem}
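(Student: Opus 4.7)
The plan is to prove each of the six estimates in Lemma \ref{lem:l12} by a combination of Cauchy--Schwarz, Hardy-type inequalities in the radial variable, and the cancellation identity from Lemma \ref{lem:cancel}, exploiting throughout the crucial fact that $L_{12}(\Om)$ is independent of $\b$. I would proceed in the following order.

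\textbf{Step 1 (basic $L^\infty$ and $L^2$ bounds).} For the pointwise estimate $\|L_{12}(\Om)\|_\infty \les \|\tfrac{1+R}{R}\Om\|_{L^2}$, write
\[
L_{12}(\Om)(R) = \int_R^\infty\!\int_0^{\pi/2}\!\frac{\sin(2\b)}{s}\cdot\frac{s}{1+s}\cdot\frac{1+s}{s}\Om\,ds\,d\b,
\]
and apply Cauchy--Schwarz, noting that the kernel satisfies $\int_R^\infty\!\int_0^{\pi/2}\frac{\sin^2(2\b)}{(1+s)^2}ds\,d\b \les \frac{1}{1+R}$. For $\|L_{12}(\Om)\|_{L^2}\les\|\Om\|_{L^2}$, set $F(R) = L_{12}(\Om)(R)$ and $h(R)=\int_0^{\pi/2}\sin(2\b)\Om\,d\b$, so $F'(R)=-h(R)/R$ and $F\to 0$ as $R\to\infty$; integration by parts gives $\|F\|_{L^2(R)}^2 = -F(1)^2 + 2\int_1^\infty F\,h\,dR$ after localizing to the relevant region, from which Cauchy--Schwarz closes the estimate (and one controls $\|h\|_{L^2(R)}$ by $\|\Om\|_{L^2}$ trivially).

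\textbf{Step 2 (cancellation estimates for $\tilde L_{12}$).} Lemma \ref{lem:cancel} gives the identity $\la \sin(2\b)\Om\,\tilde L_{12}(\Om), R^{-k}\ra = -\tfrac{k-1}{2}\|\tilde L_{12}(\Om)R^{-k/2}\|_{L^2(R)}^2$ for $k\in[3/2,4]$. Applying Cauchy--Schwarz to the left-hand side and dividing yields $\|\tilde L_{12}(\Om)R^{-k/2}\|_{L^2(R)}\les \|R^{-k/2}\Om\|_{L^2}$ for $k=2,3$, which combined give the bound on $\|\tilde L_{12}(\Om)(R^{-2}+R^{-3})^{1/2}\|_{L^2(R)}$ since $R^{-3/2}(1+R)^{1/2}\les \frac{(1+R)^2}{R^2}$.

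\textbf{Step 3 (weighted $L^2$ with cutoff).} Decompose
\[
L_{12}(\Om) - \chi L_{12}(\Om)(0) = \chi\,\tilde L_{12}(\Om) + (1-\chi)L_{12}(\Om).
\]
On $\supp(\chi)\subset\{R\le 2\}$ the weight $\frac{(1+R)^k}{R^k}$ is comparable to $R^{-k}$, so Step 2 (with exponents $2k=2,4$) applies. On $\supp(1-\chi)\subset\{R\ge 1\}$ the weight is bounded, and the Hardy-type computation from Step 1 restricted to $R\ge 1$ gives $\|(1-\chi)L_{12}(\Om)\|_{L^2(R)}\les\|\Om\|_{L^2}\le\|\tfrac{(1+R)^k}{R^k}\Om\|_{L^2}$.

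\textbf{Step 4 ($\cH^3$, $\cC^0$-derivative and $X$ bounds).} Since $L_{12}(\Om)$ depends only on $R$, all $\pa_\b$ components in the $\cH^3$ norm vanish when applied to $L_{12}(\Om)$ (and to $\chi L_{12}(\Om)(0)$, as $\chi=\chi(R)$). For $k\ge 1$ direct computation yields
\[
D_R^k L_{12}(\Om)(R) = -\int_0^{\pi/2}\sin(2\b)\,D_R^{k-1}\Om(R,\b)\,d\b,
\]
and Cauchy--Schwarz in $\b$ (absorbing the $\sin(2\b)^{\s/2}$ weight into $\sin(2\b)\sin(2\b)^{\s/2}\in L^2(\b)$) transfers $\|D_R^{k-1}\Om\,\vp_1^{1/2}\|_{L^2}\les\|\Om\|_{\cH^3}$ to the desired weighted $L^2$ bound on $D_R^k L_{12}(\Om)$; the contribution from $D_R^k\chi\cdot L_{12}(\Om)(0)$ is supported in $[1,2]$ and controlled by $|L_{12}(\Om)(0)|\les\|\Om\|_{\cH^3}$ from Step 1. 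The $k=0$ case was already handled in Step 3. For the $L^\infty$ bounds on $D_R^k L_{12}(\Om)$ with $k\le 3$ and on $(1+R)\pa_R D_R^l L_{12}(\Om)$ with $l\le 2$, apply the one-dimensional Sobolev embedding $\|f\|_\infty^2\les\|f\|_{L^2(R)}\|f'\|_{L^2(R)}$ to the function-of-$R$-only quantities, using that $\pa_R D_R^k L_{12}(\Om) = R^{-1}D_R^{k+1}L_{12}(\Om)$ requires one more $D_R$-derivative of $\Om$ together with an extra $R^{-1}$, which is absorbed by the $\frac{(1+R)^2}{R^2}$ weight present in the $\cH^3$ norm (since $R^{-2}\le \frac{(1+R)^4}{R^4}$). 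The $X$-norm estimate follows from $\|\cdot\|_X\le\|\cdot\|_{\cH^3}$.

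The main obstacle is bookkeeping rather than any single conceptual difficulty: one must carefully track how the weights $\frac{(1+R)^k}{R^k}$, the angular weights $\sin(2\b)^{-\s/2}$, and the cutoff $\chi$ recombine at each step, and in particular verify that the $\al$-independent absolute constants drop out so that $L_{12}$ does not pay an $\al^{-1/2}$ price (unlike the general Sobolev embedding in Lemma \ref{lem:inf}). This is ultimately possible because $L_{12}(\Om)$ is a function of $R$ alone, so the angular integration gives finite universal constants rather than the $\al^{-1}$ one would expect for a generic $\cH^3$ function.
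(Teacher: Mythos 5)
Your Steps 1--3 are sound and essentially parallel the paper's argument: the paper proves the pointwise bound by the same Cauchy--Schwarz splitting, obtains the weighted bounds on $\td L_{12}(\Om)$ by a Hardy inequality (using $\td L_{12}(\Om)(0)=0$ and $R\pa_R\td L_{12}(\Om)=-\Om_*$) where you route through Lemma \ref{lem:cancel} plus Cauchy--Schwarz — the two are the same integration-by-parts mechanism — and uses exactly your decomposition $L_{12}(\Om)-\chi L_{12}(\Om)(0)=\chi\td L_{12}(\Om)+(1-\chi)L_{12}(\Om)$. For $\|L_{12}(\Om)\|_{L^2}\les\|\Om\|_2$ the paper uses a Schur/Minkowski argument exploiting the degree $-1$ homogeneity of the kernel $\one_{R\le S}/S$, whereas you integrate by parts against the weight $R$ and absorb; both close (your version needs the observation that the boundary term $-aF(a)^2$ has a sign and a limiting argument $a\to0$, which is fine). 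Your Step 4 treatment of the $\cH^3$ and $L^\infty$ bounds, including the Agmon inequality for $(1+R)\pa_R D_R^l L_{12}(\Om)$, matches the paper.

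The one genuine gap is the last sentence: "The $X$-norm estimate follows from $\|\cdot\|_X\le\|\cdot\|_{\cH^3}$." That inequality is true, but it does not give the claim, because $L_{12}(\Om)$ is \emph{not} in $\cH^3$ in general: $L_{12}(\Om)(R)\to L_{12}(\Om)(0)\ne 0$ as $R\to0$, while the weight $\vp_1\asymp R^{-4}\sin(2\b)^{-\s}$ is non-integrable against a nonzero constant near $R=0$, so $\|L_{12}(\Om)\vp_1^{1/2}\|_{L^2}=\infty$ whenever $L_{12}(\Om)(0)\ne0$. This is precisely why the lemma subtracts $\chi L_{12}(\Om)(0)$ everywhere else and why the sum space $X=\cH^3\oplus\cW^{5,\infty}$ appears at all. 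The correct argument — which you already have all the ingredients for — is to write $D_R^iL_{12}(\Om)=D_R^i\bigl(L_{12}(\Om)-\chi L_{12}(\Om)(0)\bigr)+D_R^i\bigl(\chi L_{12}(\Om)(0)\bigr)$, bound the first summand in $\cH^3$ by the first line of \eqref{eq:l12X}, and bound the second in $\cW^{5,\infty}$ by $|L_{12}(\Om)(0)|\cdot\|D_R^i\chi\|_{\cW^{5,\infty}}\les|L_{12}(\Om)(0)|\les\|\Om\|_{\cH^3}$, then take the infimum defining $\|\cdot\|_X$ over this particular decomposition.
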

\begin{remark}
We subtract $\chi L_{12}(\Om)(0)$ near $R=0$ since $L_{12}(\Om)$ does not vanishes at $R=0$.
\end{remark}

\begin{proof}
Recall $L_{12}(\Om)$ in \eqref{eq:biot3} and $\td{L}_{12}(\Om)$ in \eqref{eq:nota_ux}. Using the Cauchy-Schwarz and the Hardy inequality, we get
\beq\label{eq:l12_pf1}
\bal
|| L_{12}(\Om)||_{L^{\infty}}& \les \la |\Om|, \f{1}{R} \ra \les || \f{1+R}{R} \Om||_{L^2} || \f{1}{1+R}||_{L^2(R) }  \les || \f{1+R}{R} \Om||_{L^2}  ,  \\
 || \f{1}{R^l} \td{L}_{12}(\Om) ||_{L^2(R)}  &\les \int_0^{\infty} \f{1}{R^{2l}} \td{L}^2_{12}(\Om) d R
\les \int_0^{\infty} \f{1}{R^{2l-2}} (\pa_R \td{L}_{12}(\Om) )^2 d R \les \la \Om^2, R^{-2l} \ra,
\eal
\eeq
for $l = 1,\f{3}{2},2$, which implies the first two inequalities in \eqref{eq:l12}. For $k=1,2$, observe that 
\[
\bal
&|| \f{(1+R)^k}{R^k} ( L_{12}(\Om) - L_{12}(\Om)(0) \chi ) ||_{L^2(R)} 
\les || \f{(1+R)^k}{R^k} \td{L}_{12}(\Om)  \chi||_{L^2(R)}
+ || \f{(1+R)^k}{R^k} L_{12}(\Om) (1-  \chi )||_{L^2(R)}   \\
\les &  || \f{1}{R^k} \td{L}_{12}(\Om)  ||_{L^2(R)}  + ||  L_{12}(\Om) ||_{L^2(R)}
\les  || \Om \f{(1+R)^k}{R^k} ||_{L^2} +   ||  L_{12}(\Om) ||_{L^2(R)} ,
\eal
\]
where we have used  \eqref{eq:l12_pf1} in the last inequality. Denote $\Om_* = \int_0^{\pi/2} \Om d \b$. From \eqref{eq:biot3}, we know 
\[
L_{12}(\Om)(R) = \int_R^{\infty} \f{\Om_*(S)}{S}  d S = \int_0^{\infty} K(R, S) \Om_*(S) d S, 
\quad K(R, S) = \f{1}{S} \one_{R \leq S}.
\]
The $L^2$ boundedness of $L_{12}$ is standard. Notice that $K$ is homogeneous of degree $-1$, i.e.
$K(\lam R, \lam S) = \lam^{-1} K(R, S)$ for $\lam >0$. Using change of a variable $S = R z$
, we get
\[
L_{12}(\Om)(R) = \int_0^{\infty} \f{1}{R} K(1, z) \Om_*( R z) R dz =  \int_0^{\infty}  K(1, z) \Om_*( R z)  dz.
\]
Then, the Minkowski inequality implies 
\[
|| L_{12}(\Om)||_{L^2} \leq \int_0^{\infty} K(1, z) || \Om_*( R z)||_{L^2(R)}  dz
\les \int_0^{\infty} K(1, z) z^{-1/2} ||\Om||_{L^2}  dz = || \Om||_{L^2}\int_{z \geq 1} z^{-3/2} dz \les || \Om||_{L^2}.
\]
We complete the proof of \eqref{eq:l12}. Notice that $D_R L_{12}(\Om) = - \Om_* , || D_R^k \chi||_{L^2} \les 1$ for $1 \leq k\leq 4$ and $D_{\b} L_{12}(\Om) = 0, D_{\b} \chi = 0$. 
Using that $\sin(2\b)^{-\s}$ in the weight $\vp_1 = \sin(2\b)^{-\s} \f{(1+R)^4}{R^4}$ is integrable in the $\b$ direction and \eqref{eq:l12}, we yield
\beq\label{eq:L12Hk}
\bal
 &|| (L_{12}(\Om) - L_{12}(\Om)(0) \chi ) \vp_1^{1/2} ||_{L^2} 
 +  || D_R^k (L_{12}(\Om) - L_{12}(\Om)(0) \chi ) \vp_1^{1/2} ||_{L^2} \\
 \les &|| (L_{12}(\Om) - L_{12}(\Om)(0) \chi ) \f{(1+R)^2}{R^2} ||_{L^2} 
+|| D_R^k (L_{12}(\Om) - L_{12}(\Om)(0) \chi )\f{(1+R)^2}{R^2} ||_{L^2}  \\
\les & || \Om \f{(1+R)^2}{R^2}||_{L^2} 
+ || D^{k-1}_R \Om_* \f{(1+R)^2}{R^2} ||_{L^2} 
+ |L_{12}(\Om)(0)| || D_R^k \chi \f{(1+R)^2}{R^2}||_{L^2} \\
\les & || \Om \f{(1+R)^2}{R^2}||_{L^2} + || D_R^{k-1} \Om \f{(1+R)^2}{R^2}||_{L^2} \les || \Om||_{\cH^3},
\eal
\eeq
which implies the first estimate in \eqref{eq:l12X}. From the definition of $L_{12}(\Om)$ in \eqref{eq:biot3}, we have $D_R L_{12}(\Om)= L_{12}(D_R \Om)$. Notice that $| D_R^k \chi(R) | \les 1$. Using \eqref{eq:l12}, we prove for $ k \leq 3$
\[
|| D_R^k L_{12}(\Om)||_{L^{\infty}} +  | L_{12}(\Om)(0)| \cdot || D_R^k \chi||_{L^{\infty}} \les || \Om||_{\cH^3},
\]
which implies the second estimate in \eqref{eq:l12X}. Similarly, since $\pa_R D_R^l L_{12}(\Om) = \pa_R L_{12}(D_R^l \Om) =  - R^{-1} D_R^l \Om_*(R) $, where $\Om_*(R) = \int_0^{\pi/2} \Om(R, \b) d \b$, and that $l \leq 2$, we have
\[
|| \pa_R D_R^l L_{12}(\Om) ||_{L^{\infty}} =
||  R^{-1} D_R^l \Om_*||_{L^{\infty}(R)} \les || R^{-1}  D_R^l \Om_* ||^{1/2}_{L^2(R) }
|| \pa_R (R^{-1}  D_R^l \Om_* ) ||^{1/2}_{L^2(R) } 
\les || \Om||_{\cH^3},
\]
which along with the second estimate in \eqref{eq:l12X} and $| \pa_R D_R^l \chi L_{12}(\Om)(0) | \les | L_{12}(\Om)(0) | \les || \Om||_{\cH^3} $ completes the proof of the third estimate in \eqref{eq:l12X}.

Since $\chi L_{12}(\Om)(0)$ does not depend on $\b$, we apply the first two estimates in \eqref{eq:l12X} to yield 
\[
\bal
|| D_R^i L_{12}(\Om) ||_X
&\leq || D_R^i ( L_{12}(\Om) - \chi L_{12}(\Om)(0)) ||_{\cH^3} 
+  || D_R^i \chi L_{12}(\Om)(0) ||_{\cW^{5,\infty}} \\
& \les || \Om||_{\cH^3} + | L_{12}(\Om)(0) | \les || \Om||_{\cH^3}
\eal
\]
for $i= 0, 1$. We complete the proof of \eqref{eq:l12X}.
\end{proof}

\subsection{Estimate of the approximate self-similar solution}\label{sec:xi} In appendix \ref{sec:omth}, we estimate some norm of $\bar{\Om}, \bar{\eta}$ using the explicit formulas. 
For $\bar{\xi}$, it is given by an integration of $\bar{\eta}$ that does not have an explicit formula. We estimates $\bar{\xi}$, its derivatives and some norm in subsection \ref{sec:sbxi}.

\subsubsection{Estimate of $\bar{\Om}, \bar{\eta}$}\label{sec:omth}
Recall the formula of $\bar{\Om}, \bar{\eta}$ in \eqref{eq:profile}. A simple calculation yields 
\beq\label{eq:bar0}
\bar{\Om} = \f{\al}{c}  \f{3 R \G(\b) }{(1+R)^2},  \
\bar{\eta} = \f{\al}{c} \f{6 R \G(\b)  }{(1+R)^3},  \ 
\bar{\Om} - D_R \bar{\Om} = \f{\al}{c}  \f{6R^2 \G(\b) }{(1+R)^3}, \
\bar{\eta} - D_R \bar{\eta} = \f{\al}{c}  \f{18R^2 \G(\b) }{(1+R)^4} .
\eeq
Without specification, in later sections, we assume that $R \geq 0, \b \in [0,\pi/2]$. 
\begin{lem}\label{lem:bar}
The following results apply to any $ k \leq 3, 0 \leq i + j \leq 3, j \neq 1$.
(a) For $f = \bar{\Om}, \bar{\eta}, \bar{\Om} - D_R \bar{\Om}, \bar{\eta} - D_R \bar{\eta}$, we have
\beq\label{eq:bar}
| D_R^k f | \les f , \quad  |D_R^i D^j_{\b} f | \les \al \sin(\b) f.
\eeq
(b) Let $\vp_i$ be the weights defined in \eqref{wg}. For $g = \bar{\Om}, \bar{\eta}$, we have
\beq\label{eq:bar_ux}
\int_0^{\pi /2} R^2 (D_R^k g )^2 \vp_1 d \b \les \al^2,  \quad
\int_0^{\pi /2} R^2 (D_R^i D^j_{\b} g )^2 \vp_2  d \b \les \al^3, 
\eeq
uniformly in $R$ and 
\beq\label{eq:bar_ing}
\la  (D^k_R  ( g - D_R g   ) )^2 , \vp_1 \ra  \les \al^2, \quad
\la  (D^i_R D^j_{\b} ( g - D_R g   ) )^2 , \vp_2  \ra  \les \al^3.
\eeq
\end{lem}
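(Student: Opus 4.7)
Both parts rest on the explicit formulas in \eqref{eq:bar0} and on the key identity $D_\b \G = -2\al \sin^2(\b) \G$ from \eqref{eq:Dg}. All four functions $f$ in (a) have the form $f = (\al/c) h(R) \G(\b)$ with $h(R) = R^m(1+R)^{-n}$ for some integers $m \geq 1$, $n \geq 2$, so the radial derivative preserves this class up to bounded coefficients: a direct computation gives $D_R(R^m(1+R)^{-n}) = m R^m(1+R)^{-n} - n R^{m+1}(1+R)^{-n-1}$, and the second term is at most $n R^m(1+R)^{-n}$ for $R \geq 0$. Iterating yields $|D_R^k h| \les h$ and hence $|D_R^k f| \les f$, which is the first inequality in (a).

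For the angular derivatives in the second inequality (effectively $j \geq 2$), iterating $D_\b \G = -2\al \sin^2(\b) \G$ and applying the Leibniz rule shows inductively that $D_\b^j \G$ is pointwise bounded by $C \al \sin^2(\b) \G$: higher-order terms carry extra factors of $\al$ which absorb into the leading term since $\al \leq 1$, and all trigonometric polynomials in $\sin, \cos$ arising from repeated differentiation remain bounded. Combined with the radial step this gives $|D_R^i D_\b^j f| \les \al \sin^2(\b) f \les \al \sin(\b) f$, completing (a).

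Part (b) then reduces to bookkeeping with the explicit formulas. For the $\vp_1$ estimates, substituting the bound from (a) gives $R^2 (D_R^k g)^2 \vp_1 \les \al^2 (1+R)^{4-2n} R^{2m-2} \G^2(\b) \sin(2\b)^{-\s}$. The radial factor is uniformly bounded in $R$ for $g = \bar\Om$ and $\bar\eta$ (proving the first part of \eqref{eq:bar_ux}), and for $g - D_R g$ the exponent $4-2(n+1)$ makes the $R$-integral convergent (proving the first part of \eqref{eq:bar_ing}). The angular integral $\int_0^{\pi/2} \G^2(\b) \sin(2\b)^{-\s} d\b$ is finite since $\s = 99/100 < 1$ keeps both endpoints integrable, giving an overall $O(\al^2)$ bound.

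The main technical point, and the source of the stronger $\al^3$ bound in the $\vp_2$ estimates, is the angular integral. The pointwise bound from (a) supplies an extra factor $\al^2 \sin^2(\b)$, but $\vp_2$ carries $\sin(2\b)^{-\g}$ with $\g = 1 + \al/10 > 1$, so the remaining integral reduces to $\int_0^{\pi/2} \sin^{2-\g}(\b) \cos^{2\al-\g}(\b) d\b$. This is only borderline integrable at $\b = \pi/2$, where the exponent is $2\al - \g = -1 + 19\al/10$, and its size is $O(\al^{-1})$. The cancellation between the $\cos^{2\al}$ in $\G^2$ and $\cos^{-\g}$ in $\vp_2$ is precisely what makes the integral converge, and it consumes one factor of $\al$, yielding $\al^2 \cdot \al^2 \cdot \al^{-1} = \al^3$ as claimed. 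The delicate point is tracking this cancellation; without the $\sin^2(\b)$ (rather than $\sin(\b)$) produced by the angular derivative, the integral would diverge, which explains the structural role of the identity $D_\b \G = -2\al \sin^2(\b) \G$ in the overall strategy.
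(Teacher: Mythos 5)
Your proof is correct and follows essentially the same route as the paper's: the identity $D_\b \G = -2\al\sin^2(\b)\G$ together with the stability of $R^m(1+R)^{-n}$ under $D_R$ gives the pointwise bounds in (a), and the $\al^3$ in (b) comes, exactly as in the paper, from pairing the $\al^4$ carried by the squared profile and squared angular-derivative bound with the $O(\al^{-1})$ borderline integral $\int_0^{\pi/2}\sin(\b)^{2-\g}\cos(\b)^{2\al-\g}\,d\b$. One cosmetic remark on your closing sentence: the near-divergence at $\b=\pi/2$ is cured by the $\cos(\b)^{2\al}$ from $\G^2$ (that is where the factor $\al^{-1}$ arises), while the $\sin$-vanishing produced by $D_\b\G$ is what controls the $\b=0$ endpoint against $\sin(2\b)^{-\g}$ — and there any power $\sin(\b)^{c}$ with $2c>\g-1=\al/10$ would suffice, so the distinction between $\sin(\b)$ and $\sin^2(\b)$ is not the decisive point.
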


\begin{proof}
Recall $D_{\b}= \sin(2\b) \pa_{\b}, D_R = R \pa_R$. Using $\G(\b)=\cos(\b)^{\al}$, \eqref{eq:Dg} and a direct calculation gives 
\beq\label{eq:gamR}
| D^j_{\b} \G(\b) | \les \al \sin(\b) \G(\b), \quad  | D_R^i \f{R}{(1+R)^m} | \les \f{R}{ (1+R)^m},
\quad | D_R^i \f{R^2}{(1+R)^m} | \les \f{R^2}{ (1+R)^m}.
\eeq
for $ 1 \leq j \leq 5$, $ 0\leq i \leq 5$ and $m = 2,3, 4 $. Combining these estimates and the formulas in \eqref{eq:bar0} implies \eqref{eq:bar}. As a result, we have the following pointwise estimates for $g = \bar{\Om}$ or $\bar{\eta}$
\[
\bal
&|D_R^k  g| \les g \les  \al \G(\b) \f{R}{(1+R)^2}, \quad 
|D_R^i D^j_{\b}  g | \les \al\sin(\b) g \les  \al^2  \sin(\b)\G(\b) \f{R}{(1+R)^2},  \\
&|D_R^k (g - D_R g) | \les g - D_R g \les  \al \f{R^2 \G(\b) }{(1+R)^3} ,
|D_R^i D^j_{\b}  (g - D_R g) | \les \al \sin(\b) (g - D_R g) \les  \al^2 \sin(\b)\f{R^2 \G(\b) }{(1+R)^3}, 
\eal
\]
for $k \leq 3$, $i+j \leq 3, j \neq 0$, where we have used $ c \approx \f{2}{\pi}$ in Lemma \ref{lem:one}.
Recall $\vp_i$ in Definition \ref{def:wg}.
\[
\vp_1 \teq (1+R)^4 R^{-4} \sin(2\b)^{ - \s}, \quad 
\vp_2 \teq (1+R)^4R^{-4}  \sin(2\b)^{ - \g} .
\]
Notice that for $\s = \f{99}{100}, \g = 1 + \f{\al}{10}$, we have 
\[
\int_0^{\pi/2} \G(\b)^2 \sin(2\b)^{-\s} d \b \les 1, \quad
\int_0^{\pi/2} \al^2 \sin(\b)^2\G(\b)^2 \sin(2\b)^{-\g} d \b 
\les \al^2 \int_0^{\pi/2} \cos(\b)^{2\al -1 - \al/ 10} d \b  \les \al.
\]
Combining the pointwise estimates, the estimates of the angular integral and a simple calculation then gives \eqref{eq:bar_ux}, \eqref{eq:bar_ing}.
\end{proof}

Recall the $\cW^{l,\infty}$ norm in \eqref{norm:W}. We have 
\begin{prop}\label{prop:gam} It holds true that
$\G(\b) ,\bar{\Om}, \bar{\eta} \in \cW^{7, \infty}$ with
\[
\bal
&|| \G(\b)||_{\cW^{7,\infty}} \les 1, \quad
||\f{(1+R)^2}{R} \bar{\Om}||_{\cW^{7,\infty}}  + || \f{(1+R)^2}{R}\bar{\eta} ||_{\cW^{7,\infty}} \les \al ,  \\
&|| D_{\b} \bar{\Om}||_{\cW^{7,\infty}}  + || D_{\b}\bar{\eta} ||_{\cW^{7,\infty}} \les \al^2 .
\eal
\]
\end{prop}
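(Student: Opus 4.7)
I would prove Proposition \ref{prop:gam} by first establishing the bound on $\G$ and then deducing the bounds on $\bar\Om, \bar\eta$ and their $D_\b$-derivatives from the explicit product structure in \eqref{eq:profile}. Since $\G(\b)=\cos(\b)^{\al}$ is independent of $R$ while the $R$-factors of $\bar\Om,\bar\eta$ do not depend on $\b$, and since $D_R$ and $D_\b$ commute, all estimates reduce via the Leibniz rule to separate estimates on the angular and radial factors.

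For the estimate $\|\G\|_{\cW^{7,\infty}}\les 1$, all terms involving $D_R^k$ with $k\ge 1$ vanish, so only the terms with $k=0$ in \eqref{norm:W} matter. The bound $\|\G\|_{L^\infty}=1$ is immediate. For the mixed terms I claim that $|D_\b^j \G|\les \al\sin^2(\b)\,\G$ uniformly for $1\le j\le 7$. This follows by induction on $j$: the base case is precisely $D_\b\G = -2\al\sin^2(\b)\G$ from \eqref{eq:Dg}, and the inductive step uses $D_\b(\sin^2(\b)\,g)=4\sin^2(\b)\cos^2(\b)\,g+\sin^2(\b)D_\b g$ together with the base case to see that every further application of $D_\b$ either leaves the $\al$ factor unchanged and rearranges trigonometric factors, or produces an additional (harmless) $\al$. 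With this in hand, for $1\le j\le 7$,
\[
\sin(2\b)^{-\al/5}\,\frac{|D_\b^j\G|}{\al/10+\sin(2\b)}\;\les\; \sin(2\b)^{-\al/5}\,\frac{\al\sin^2(\b)}{\al/10+\sin(2\b)}\;\les\;\sin(2\b)^{-\al/5}\,\min\!\Big(1,\tfrac{\al\sin^2(\b)}{\al/10}\Big)\;\les\;1,
\]
since $\sin^2(\b)\le \sin(2\b)$ and $\sin(2\b)^{-\al/5}$ is bounded uniformly in $\b\in[0,\pi/2]$ for small $\al$.

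For the estimates on $\bar\Om$ and $\bar\eta$, observe using \eqref{eq:profile} that
\[
\tfrac{(1+R)^2}{R}\bar\Om=\tfrac{3\al}{c}\G(\b),\qquad \tfrac{(1+R)^2}{R}\bar\eta=\tfrac{6\al}{c}\,\tfrac{\G(\b)}{1+R},
\]
so the first of these is purely an $\b$-function, and the second is $\G(\b)$ times the scalar function $(1+R)^{-1}$. Since $|D_R^k(1+R)^{-1}|\les (1+R)^{-1}\le 1$ uniformly for all $k\le 7$, and the operator $D_R$ does not interact with the angular weight appearing in \eqref{norm:W}, the Leibniz rule together with Lemma \ref{lem:one} (controlling $c^{-1}$) and Step~1 yields $\|\tfrac{(1+R)^2}{R}\bar\Om\|_{\cW^{7,\infty}}+\|\tfrac{(1+R)^2}{R}\bar\eta\|_{\cW^{7,\infty}}\les \al$. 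For the final pair of estimates, the identity $D_\b\G=-2\al\sin^2(\b)\G$ gives $D_\b\bar\Om=-2\al\sin^2(\b)\bar\Om$ and $D_\b\bar\eta=-2\al\sin^2(\b)\bar\eta$, which are of exactly the same form as $\bar\Om,\bar\eta$ up to the extra factor $\al\sin^2(\b)$. Repeating the above product-rule argument, where the extra $\sin^2(\b)$ is a bounded angular factor whose $D_\b$-derivatives obey the same type of $\al$-free trigonometric bound, produces the additional factor $\al$ in the estimates, giving $\|D_\b\bar\Om\|_{\cW^{7,\infty}}+\|D_\b\bar\eta\|_{\cW^{7,\infty}}\les \al^2$.

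\textbf{Main obstacle.} The delicate point is the inductive identity $|D_\b^j\G|\les\al\sin^2(\b)\,\G$ in Step 1: one must verify that the $D_\b$ operator never creates a second power of $\al^{-1}$ when divided by the weight $\al/10+\sin(2\b)$, and simultaneously that the leftover multiplication by $\sin(2\b)^{-\al/5}$ does not produce a singularity at $\b=0$ or $\b=\pi/2$. This amounts to showing that every term produced by the chain rule on $\cos(\b)^\al$ carries at least one factor of $\sin^2(\b)$ (so that the quotient with $\al/10+\sin(2\b)$ is bounded) and carries exactly one factor of $\al$ at leading order, the remaining factors of $\al$ being absorbed by the (bounded) $\G$. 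This is elementary but requires careful bookkeeping through the seven derivatives.
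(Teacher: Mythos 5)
Your overall strategy is the same as the paper's: the paper's proof is essentially the one-line observation that the result "follows directly from the calculation \eqref{eq:gamR} and $\sin(\b)\G(\b)\sin(2\b)^{-\al/5}\les 1$," and your reduction to $|D_\b^j\G|\les\al\sin^2(\b)\G$ plus Leibniz on the product structure of $\bar\Om,\bar\eta$ is exactly that. However, there is a genuine error at precisely the point you flag as the "main obstacle." You assert that "$\sin(2\b)^{-\al/5}$ is bounded uniformly in $\b\in[0,\pi/2]$ for small $\al$." This is false: $\sin(2\b)\to 0$ at both endpoints, so $\sin(2\b)^{-\al/5}\to+\infty$ as $\b\to 0^+$ and as $\b\to(\pi/2)^-$. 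Your chain of inequalities then breaks down near $\b=\pi/2$: there $\min\bigl(1,10\sin^2(\b)\bigr)=1$, so your upper bound is $\sin(2\b)^{-\al/5}\cdot O(1)$, which is unbounded.

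The fix is exactly the inequality the paper cites and which you discarded when you replaced $\al\sin^2(\b)\G(\b)$ by $\al\sin^2(\b)$: you must retain the factor $\G(\b)=\cos(\b)^{\al}$. Keeping it, and using $\f{\al}{\al/10+\sin(2\b)}\leq 10$, one gets
\[
\sin(2\b)^{-\al/5}\,\f{|D_\b^j\G|}{\al/10+\sin(2\b)}\les \sin^2(\b)\,\G(\b)\,\sin(2\b)^{-\al/5}\leq \sin(\b)\,\G(\b)\,\sin(2\b)^{-\al/5}\les 1,
\]
where the last step holds because $\sin(\b)\cos(\b)^{\al}\sin(2\b)^{-\al/5}=2^{-\al/5}\sin(\b)^{1-\al/5}\cos(\b)^{4\al/5}$, and both exponents are positive for small $\al$; near $\b=\pi/2$ it is the $\cos(\b)^{4\al/5}$ decay of $\G$ that dominates the divergence $(\pi/2-\b)^{-\al/5}$ of the weight. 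The same correction is needed in your treatment of $D_\b\bar\Om$ and $D_\b\bar\eta$ wherever a $D_\b$ lands on the $\G$ factor. With that repair your argument is complete and coincides with the paper's.
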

\begin{proof}
The proof follows directly from the calculation \ref{eq:gamR} and $\sin(\b) \G(\b) \sin(2\b)^{-\al/5} \les 1$.
\end{proof}

\subsubsection{Estimates of $\bar{\xi}$ }\label{sec:sbxi}
Recall that the approximate self-similar profile $\bar{\eta}$ \eqref{eq:profile} is given by 
\beq\label{eq:eta}
\bal
(\bar{\th}_x)(x, y) & = \bar{\eta}(R, \th) = \f{\al}{c} \f{6R}{(1+R)^3} \cos^{\al}(\b) = 
\f{ 6  \al}{c} \f{ x^{\al}}{(1+(x^2 +y^2)^{\al/2})^3} .
\eal
\eeq
We also use $\bar{\eta}(x, y)$ to denote the above expression.
Throughout this section, we use the following notation 
\beq\label{eq:nota_RS}
R = (x^2 + y^2)^{\al/2}, \quad \b = \arctan(y/x),  \quad S = (z^2  + y^2)^{\al/2} , \quad \tau = \arctan( y /z),
\eeq
where $z$ will be used in the integral.  $\bar{\th}(x, y)$, $\bar{\xi}(R, \th) = \bar{\th}_y(x, y)$ can be obtained from $\bar{\eta}(x, y)$ (or $\bar{\th}_x $) 
as follows
\beq\label{eq:th}
\bar{\th} = 1 + \int_0^x \bar{\eta}(z, y) dz ,  \quad \bar{\xi} = \bar{\th}_y  =  \int_0^x  \bar{\eta}_y(z, y) dz.
\eeq
We can choose $\bar \th(0, y) \equiv c$ for other constant $c>0$, and it does not change $\na \bar \th$. 
Observe that 
\beq\label{eq:barxi2}
\bal
 \bar{\eta}_y(z, y) & = - \f{ 6  \al}{c}  \cdot  \f{3\al y}{y^2 + z^2} 
 \f{  (z^2 +y^2)^{\al /2}  z^{\al }} {(1+(z^2 +y^2)^{\al/2})^4}  \\
 &= - \f{1}{z}   \f{  3 \al yz }{y^2 + z^2}   \f{  (z^2 +y^2)^{\al /2}}{  1+(z^2 +y^2)^{\al/2}  }\bar{\eta}(z, y)  
 = -\f{1}{z} \f{ 3\al \sin(2 \tau ) S}{ 2 (1+S)} \bar{\eta} ,
\eal
\eeq
where we have used the notation $S, \tau$ defined in \eqref{eq:nota_RS}.
Hence, we get 
\beq\label{eq:barxi}
\bar{\xi}= \int_0^x - \f{ 6  \al}{c}  \cdot  \f{3\al y}{y^2 + z^2} 
 \f{  (z^2 +y^2)^{\al /2}  z^{\al }} {(1+(z^2 +y^2)^{\al/2})^4} dz
=  \int_0^{x} \f{1}{z} \lt( - \f{ 3\al \sin(2 \tau ) S}{2 (1+S)} \bar{\eta} \rt) dz .
 \eeq
These integrals cannot be calculated explicitly for general $\al$. We have the following estimates for $\bar{\xi}$.


\begin{lem}\label{lem:xi}
Assume that $0\leq \al \leq \f{1}{1000}$. For $R \geq0, \b \in [0, \pi/2]$ and $0 \leq i + j \leq 5$,
we have
\begin{align}
    | D^i_R D^j_{\b} \bar{\xi} | \les  - \bar{\xi}, & \quad  | D^i_R D^j_{\b} (3\bar{\xi} - R\pa_R \bar{\xi}) | \les -\bar{\xi}, \label{eq:xi0} \\
 |\bar{\xi} |   \les    \f{ \al^2(x^2 + y^2)^{\al/2}}{ (1 + (x^2 + y^2)^{\al/2}) } \f{y^{\al}}{( 1 + y^{\al})^3}  \min \lt(  1 ,  \f{x^{1+\al}}{y^{1+\al}}  \rt)  
&\les  \f{\al^2R^2}{1+R} \lt( \one_{\b < \pi /4} \f{ \sin^{\al}(\b)}{ (1+R \sin^{\al}(\b) )^3} 
+ \one_{\b \geq \pi/4} \f{\cos^{\al+1}(\b)}{(1+R)^3}  \rt)
\label{eq:xi} , \\
  -\bar{\xi } \les \al^2 \cos(\b) , \quad &  || \bar{\xi} ||_{\cC^1} \les  || \f{1+R}{R} 
( 1 + ( R \sin( 2 \b)^{\al} )^{-\f{1}{40} } ) \bar{\xi} ||_{L^{\infty}} \les \al^2, \notag
\end{align}
where $||\cdot ||_{\cC^1}$ is defined in \eqref{norm:c1}. Let $\psi_1, \psi_2$ be the weights defined in \eqref{wg}. We have 
\beq\label{eq:xi_ux}
\int_0^{\pi /2} R^2 (D^i_R D^j_{\b} \bar{\xi} )^2 \psi_k d \b  
\les \al^4  
\eeq
uniformly in $R$, and 
\beq\label{eq:xi_cw}
\la  (D^i_R D^j_{\b} ( 3\bar{\xi} - R \pa_R \bar{\xi}  ) )^2 , \psi_k \ra  
\les \al^4, \quad  \la  (D^i_R D^j_{\b}  \bar{\xi})^2 , \psi_k \ra  
\les \la \bar{\xi}^2 , \psi_k \ra \les \al^4,
\eeq
where $ (D^i_R D^j_{\b},  \psi_k )$ represents 
$ ( D^i_R, \psi_1)$ for $0\leq i \leq 5$,  and $ (D^i_R D^j_{\b},  \psi_2 )$ for $i+j \leq 5, j \geq 1$.
\end{lem}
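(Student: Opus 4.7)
The proof of Lemma~\ref{lem:xi} breaks into three steps, working directly from the integral representation \eqref{eq:barxi} since no closed form is available for general $\alpha$.

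\textbf{Step 1: Pointwise bounds.} I would first observe that $-\bar{\xi} \geq 0$ on $\{x, y \geq 0\}$ and estimate the integral in \eqref{eq:barxi} by splitting $\int_0^x dz$ at $z = y$. In the region $z \leq y$ one has $(z^2+y^2)^{\alpha/2} \asymp y^\alpha$, so the integrand is comparable to $\alpha^2 y^{1+\alpha} z^\alpha / [y^2 (1+y^\alpha)^4]$; integrating up to $\min(x,y)$ yields a contribution of size $\alpha^2 \min(x,y)^{1+\alpha} y^{\alpha-1}/(1+y^\alpha)^4$. In the region $z > y$ (relevant only when $x > y$) one has $(z^2+y^2)^{\alpha/2} \asymp z^\alpha$, so the integrand is $\approx \alpha^2 y z^{2\alpha-2}/(1+z^\alpha)^4$, which integrates up to give a bound of order $\alpha^2 y^\alpha/(1+y^\alpha)^3$ when $x \geq y$. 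Converting to $(R,\beta)$ with $R = (x^2+y^2)^{\alpha/2}$ and tracking the two subcases $\beta < \pi/4$ and $\beta \geq \pi/4$ produces exactly \eqref{eq:xi}. The pointwise $-\bar{\xi} \lesssim \alpha^2 \cos\beta$ and the $\cC^1$ bound then follow after verifying the behavior near $\beta=0$ using the $\sin^\alpha\beta$ factor that comes from $y^\alpha$ in the first case of \eqref{eq:xi}.

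\textbf{Step 2: Derivative comparisons \eqref{eq:xi0}.} I would perform the change of variable $z = r t$ in \eqref{eq:barxi} so that the dependence on $r$ enters only through $R = r^\alpha$ and $\beta$ appears explicitly via $\sin\beta$ in the integrand and $\cos\beta$ in the upper limit of integration. Then $D_R = R\partial_R$ and $D_\beta = \sin(2\beta)\partial_\beta$ can be applied under the integral sign, and each derivative produces an integrand of the same structural type as the original, with factors pointwise controlled by the original integrand up to absolute constants. Mild logarithmic corrections from differentiating $\Gamma(\beta) = \cos^\alpha\beta$ and $R/(1+Rh)^4$ are absorbed using the elementary bound \eqref{eq:one} from Lemma~\ref{lem:one}. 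This gives $|D_R^i D_\beta^j \bar{\xi}| \lesssim -\bar{\xi}$. For the second inequality in \eqref{eq:xi0}, I would check that $3\bar{\xi} - R\partial_R \bar{\xi}$ admits an integral representation of the same form (mirroring the identity $\bar{\eta} - D_R\bar{\eta} = \tfrac{18\alpha R^2}{c(1+R)^4}\Gamma(\beta)$ of Lemma~\ref{lem:bar}) and applying the same derivative scheme.

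\textbf{Step 3: Weighted $L^2$ estimates \eqref{eq:xi_ux}, \eqref{eq:xi_cw}.} I would substitute the pointwise bound \eqref{eq:xi} and its derivative analogues from Step~2 into $\int_0^{\pi/2} R^2 (D_R^i D_\beta^j \bar{\xi})^2 \psi_k\, d\beta$ and into $\langle (D_R^i D_\beta^j(3\bar{\xi} - R\partial_R\bar{\xi}))^2, \psi_k\rangle$. Recalling from Definition~\ref{def:wg} that $\psi_1 = (1+R)^4 R^{-4}(\sin\beta\cos\beta)^{-\sigma}$ and $\psi_2 = (1+R)^4 R^{-4}\sin(\beta)^{-\sigma}\cos(\beta)^{-\gamma}$, the algebraic factor $R^2/(1+R)$ from \eqref{eq:xi}, when squared and multiplied by $R^2$ and $(1+R)^4 R^{-4}$, cancels to a bounded expression in $R$; the angular integrals then converge because $\cos(\beta)^{2(\alpha+1)}$ from the $\beta \geq \pi/4$ case in \eqref{eq:xi} beats $\cos(\beta)^{-\gamma}$ with $\gamma < 2$, and $\sin(\beta)^{2\alpha}$ from the $\beta < \pi/4$ case together with the weight $\sin(\beta)^{-\sigma}$ is integrable. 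Each evaluation comes out to $O(\alpha^4)$, as claimed.

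The principal obstacle is Step~2: verifying that every term produced by the differential operators on the integrand of \eqref{eq:barxi} remains controlled by the unperturbed integrand, without picking up a singular factor in $\alpha$ or in $R,\beta$. In particular, showing that the subtraction $3\bar{\xi} - R\partial_R\bar{\xi}$ vanishes at the correct order near $R=0$ (so that it can be paired with the singular weights $\psi_k \sim R^{-4}$ without a blow-up) is the most delicate part, and exploits the specific relationship between the scaling exponents in $\bar{\eta}$ and the choice $\bar{c}_\omega = -1$, $\bar{c}_l = 1/\alpha + 3$ of the profile.
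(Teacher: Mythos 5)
Your overall architecture matches the paper's: (i) a pointwise bound on $\bar{\xi}$ obtained by estimating the $z$-integral in \eqref{eq:barxi} directly, (ii) control of $D_R^iD_\beta^j\bar{\xi}$ by commuting the derivatives with the integral and checking that each differentiated integrand is pointwise dominated by the original one, and (iii) insertion of these bounds into the weighted integrals. Within that, two of your choices differ from the paper's. For the pointwise bound the paper first pulls one factor of $R/(1+R)$ out of the integrand by monotonicity and then splits the reduced integral at $x=1+y$; your split at $z=y$, with $(z^2+y^2)^{\alpha/2}\asymp y^\alpha$ (resp.\ $\asymp z^\alpha$) in the two regions, yields the same bound and is if anything more direct. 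For the derivatives the paper does not rescale $z$: it proves the commutator identities $D_R I(f)=I(D_Sf)$ and $D_\beta I(f)=I(D_\tau f)+O(\alpha)\,I(|D_Sf|)$ for the positive operator $I(f)=\int_0^x z^{-1}f\,dz$ (Lemma~\ref{lem:com}), combined with $|D_S^iD_\tau^jf|\lesssim f$. Your substitution $z=rt$ achieves the same end but generates a boundary term when $D_\beta$ hits the upper limit $t=\cos\beta$; that term equals $-2\sin^2\beta\,f(R,\beta)$ and must be absorbed using $f(R,\beta)\lesssim I(f)(R,\beta)=-\bar{\xi}$ (which holds because $f(z,y)\asymp f(x,y)$ for $z\in[x/2,x]$) --- a step you do not mention. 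Likewise, in your Step~3 the claim that the radial factors ``cancel to a bounded expression in $R$'' is only true after invoking $1+R\sin^\alpha\beta\ge(1+R)\sin^\alpha\beta$ in the $\beta<\pi/4$ regime; this is what produces the decay in $R$ and is also what forces the condition $\sigma+4\alpha<1$ on the angular weight.

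The one genuine misconception is your closing paragraph. No cancellation between $3\bar{\xi}$ and $R\partial_R\bar{\xi}$ is needed, and the choice $\bar{c}_l=1/\alpha+3$ plays no role here. By \eqref{eq:xi}, $\bar{\xi}$ itself already vanishes like $R^2$ near $R=0$ (the integrand of \eqref{eq:barxi} carries the two factors $z^\alpha$ and $(z^2+y^2)^{\alpha/2}$), so $\bar{\xi}$ and $D_R\bar{\xi}$ are each separately compatible with the singular weight $R^{-4}(1+R)^4$; the paper simply extends the first inequality in \eqref{eq:xi0} to $i+j\le 6$ and bounds $|D_R^iD_\beta^j(3\bar{\xi}-D_R\bar{\xi})|\lesssim|D_R^iD_\beta^j\bar{\xi}|+|D_R^{i+1}D_\beta^j\bar{\xi}|\lesssim-\bar{\xi}$ term by term. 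This is unlike $\bar{\Omega}-D_R\bar{\Omega}$, where the combination genuinely vanishes to higher order than each term; searching for an analogous cancellation here would be wasted effort.
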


\begin{remark}
Using \eqref{eq:barxi}, we have $ -\bar{\xi} \geq 0$ for $R \geq0, \b \in [0, \pi/2]$.
\end{remark}

We have several commutator estimates which enable us to exchange the derivative and integration in \eqref{eq:barxi} so that we can estimate $D_R^i D^j_{\b}\bar{\xi}$ easily.

Recall the relation between $\pa_x, \pa_y$ and $\pa_R, \pa_{\b}$ in \eqref{eq:simp1}. 
We have the following relation 
\beq\label{eq:deri2}
D_R = R \pa_R = \f{1}{\al} (x \pa_x  + y \pa_y), \quad D_{\b} = \sin(2\b) \pa_{\b} 
= 2 y \pa_y - 2 \al \sin^2 (\b) D_R .
\eeq
The first relation holds because $R = r^{\al}, R\pa_R = \f{1}{\al} r\pa_r$, and the second relation is obtained by multiplying $\pa_y = \f{\sin(\b)}{r} \al D_R + \f{\cos (\b)}{ r} \pa_{\b} $ by $y$ and then using $y/r = \sin(\b) , x / r = \cos(\b)$.

\begin{lem}\label{lem:com}
Suppose that $f(0, y) =0$ for any $y$. Denote 
\beq\label{eq:lemxi0}
I(f)(x ,y) =  \int_0^x \f{1}{z} f(z, y) dz.
\eeq
We have  
\begin{align}
D_R I(f)(x, y) & = I( D_S f)(x,y) , \label{eq:com1} \\
D_{\b} I(f)(x, y) - I( D_{\tau} f)(x,y) & = -2 \al \sin^2 (\b) \cdot I( D_S f) 
+  2 \al I( \sin^2(\tau)  D_S f) \label{eq:com2} ,
\end{align}
where $R, \b, S , \tau$ are defined in \eqref{eq:nota_RS}, provided that $f$ is sufficiently smooth.
\end{lem}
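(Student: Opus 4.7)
The plan is to reduce both identities to direct computations in Cartesian coordinates by exploiting the key formulas in \eqref{eq:deri2}, which express $D_R$ and $D_\b$ as combinations of $\pa_x,\pa_y$. Since $I(f)(x,y)=\int_0^x z^{-1}f(z,y)\,dz$, the Cartesian derivatives of $I(f)$ are trivial to compute: $\pa_x I(f)=f(x,y)/x$ and $\pa_y I(f)=\int_0^x z^{-1}f_y(z,y)\,dz$. Likewise, since $S$ and $\tau$ are defined via $(z,y)$ in exactly the same way that $R$ and $\b$ are defined via $(x,y)$, the same relations in \eqref{eq:deri2} give $D_S=\tfrac{1}{\al}(z\pa_z+y\pa_y)$ and $D_\tau=2y\pa_y-2\al\sin^2(\tau)\,D_S$ when acting on functions of $(z,y)$.

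For \eqref{eq:com1}, I would apply $D_R=\tfrac{1}{\al}(x\pa_x+y\pa_y)$ to $I(f)$:
\[
D_R I(f)=\tfrac{1}{\al}\Bigl(f(x,y)+y\int_0^x \tfrac{f_y(z,y)}{z}\,dz\Bigr).
\]
On the other side, using the Cartesian expression for $D_S$ and integrating,
\[
I(D_S f)=\tfrac{1}{\al}\int_0^x \pa_z f(z,y)\,dz+\tfrac{y}{\al}\int_0^x \tfrac{f_y(z,y)}{z}\,dz=\tfrac{f(x,y)-f(0,y)}{\al}+\tfrac{y}{\al}\int_0^x \tfrac{f_y(z,y)}{z}\,dz.
\]
The hypothesis $f(0,y)=0$ eliminates the boundary term, matching the two expressions.

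For \eqref{eq:com2}, the analog of \eqref{eq:deri2} gives $D_\b I(f)=2y\,\pa_y I(f)-2\al\sin^2(\b)\,D_R I(f)$ and $D_\tau f=2y\,\pa_y f-2\al\sin^2(\tau)\,D_S f$. Plugging the latter into $I(\cdot)$ and using $I(2y\,\pa_y f)=2y\int_0^x z^{-1}f_y(z,y)\,dz=2y\,\pa_y I(f)$ yields
\[
I(D_\tau f)=2y\,\pa_y I(f)-2\al\, I\bigl(\sin^2(\tau)\,D_S f\bigr).
\]
Subtracting and invoking \eqref{eq:com1} to identify $D_R I(f)=I(D_S f)$ then produces exactly the claimed identity.

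There is essentially no serious obstacle here; the only delicate point is verifying that differentiation under the integral sign is legitimate, which requires $f$ (and its derivatives appearing in $D_S f,D_\tau f$) to be sufficiently smooth and $f(z,y)/z$ to be locally integrable near $z=0$. The latter is precisely ensured by the assumption $f(0,y)=0$, so one may justify the exchange of $\pa_x,\pa_y$ with the integral via dominated convergence on any compact set in $x>0$ and a boundary term that vanishes at $z=0$. Thus the lemma reduces to elementary calculus once the Cartesian reformulation from \eqref{eq:deri2} is in hand.
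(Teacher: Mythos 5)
Your proof is correct and follows essentially the same route as the paper's: both reduce the identities via \eqref{eq:deri2} to the facts that $x\pa_x I(f)=f(x,y)=I(z\pa_z f)$ (using $f(0,y)=0$) and that $y\pa_y$ commutes with the $z$-integral, and then derive \eqref{eq:com2} by substituting the $D_\b$, $D_\tau$ formulas and invoking \eqref{eq:com1}. Your additional remark on justifying differentiation under the integral sign is a harmless refinement of the same argument.
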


\begin{proof}
Notice that $y \pa_y$ commutes with the $z$ integral. From \eqref{eq:deri2}, it suffices to prove 
\[
x \pa_x I(f)(x, y) = I( z\pa_z f ).
\]
A directly calculation yields 
\[
\bal
x \pa_x I(f)(x,y) &= x \pa_x (  \int_0^x  \f{1}{z} f(z, y) dz) =   f (x, y) , \quad 
I(z \pa_z f)(x,y) =  \int_0^x \f{1}{z} \cdot z \pa_z f(z, y) dz   = f(x, y).
\eal
\]
It follows \eqref{eq:com1}. Using the fact that both $y \pa_y$ and $R\pa_R$ commute with the $z$ integral and the formula of $D_{\b}$ \eqref{eq:deri2} twice, we derive
\[
\bal
&D_{\b} I(f)(x,y) = (2 y \pa_y  - 2\al \sin^2(\b) D_R)  I(f)  
= I( 2y\pa_y f)  - 2 \al \sin^2(\b) I(D_S f)  \\
= & I( D_{\tau} f  + 2\al \sin^2(\tau) D_S f) - 2 \al \sin^2(\b) I(D_S f) 
= I(D_{\tau} f) + 2\al I( \sin^2(\tau)  D_S f)- 2 \al \sin^2(\b) I(D_S f) .
\eal
\]
\eqref{eq:com2} follows by rearranging the above identity.
\end{proof}

Next, we prove Lemma \ref{lem:xi}. 

\begin{proof}[Proof of Lemma \ref{lem:xi}]

\textit{Step 1.} 
Recall $D_R = R\pa_R, D_{\b} = \sin(2\b) \pa_{\b}$. First, we show that 
\beq\label{eq:lemxi3}
| D_R^i D^j_{\b} \bar{\xi} | \les  \al  \int_0^{x} \f{1}{z} \sin(2\tau) \f{S}{1+S}\bar{\eta}(z, y) dz  \asymp -\bar{\xi} 
\eeq
for $0\leq i+ j \leq 5$. Using $\G(\b) =\cos(\b)^{\al}$,\eqref{eq:Dg} and a direct calculation yields
\beq\label{eq:lemxi31}
\B| D^i_R \f{R^2}{(1+R)^4} \B|  \les \f{R^2}{(1+R)^4} , \quad |D^i_{\b}\G(\b)| \les \al \sin(\b) \G(\b), \quad  |D^i_{\b}\sin(2\b)| \les \sin(2\b)
\eeq
for $ i \leq 5$.
Denote 
\beq\label{eq:lemxi33}
f(S, \tau) =  \f{3\al}{2} \sin(2\tau) \f{S}{1+S} \bar{\eta} =  \f{ 9 \al^2}{c} \sin(2\tau) \G(\tau) \f{S^2}{(1+S)^4}.
\eeq
We remark that $f = - z \bar{\eta}_y(z, y)$ according to \eqref{eq:barxi2}. Obviously, $f(S, \tau ) \geq 0 $. Using the above estimates, we get 
 \beq\label{eq:lemxi32}
 | D^i_S D^j_{\tau} f| \les f 
\eeq
for $i+j \leq 5$. Notice that \eqref{eq:barxi} implies $\bar{\xi} = - I(f)$ and that $I(\cdot)$ \eqref{eq:lemxi0} is a positive linear operator for $x \geq 0$. 
We further derive 
\beq\label{eq:lemxi1}
  | I( D^i_S D^j_{\tau}f|) | \leq I( |D^i_S D^j_{\tau}f |) \les  I(f )
\eeq
for $i + j \leq 5$. Using \eqref{eq:com1} and the above estimates, we yield 
\[
|D_R^i \bar{\xi} | = | D_R^i I(f) | = | I(D_S^i f) | \les I(f).
\]
For other derivatives $D_R^i D^j_{\b}$ with $j \geq 1, i+ j \leq 5$, we estimate $D^2_{\b} \bar{\xi}$, which is representative. Using \eqref{eq:com2}, we have
\[
\bal
D^2_{\b} \bar{\xi} =& D^2_{\b} I(f) 
 = D_{\b} \lt(  I(D_{\tau} f) -2\al \sin^2(\b) \cdot I(D_S f) + 2\al I( D_S f \sin^2(\tau) ) \rt) \\
  =& I(D^2_{\tau} f) 
 -2\al \sin^2(\b) \cdot I(D_S D_{\tau} (f)) + 2\al I( \sin^2(\tau) D_S D_{\tau} f  )  \\
 &+ D_{\b} \lt(  -2\al \sin^2(\b) \cdot I(D_S f) \rt)+ D_{\b}\lt( 2\al I( D_S f \sin^2(\tau) ) \rt)
= J_1 + J_2 + J_3 + J_4 + J_5.
\eal
\]
For $J_1, J_2, J_3$, we simply use $\sin^2(\b), \sin^2(\tau) \leq 1$ and \eqref{eq:lemxi1} to obtain
\beq\label{eq:lemxi2}
I_1, J_2, J_3 \les I( | D^i_R D^j_{\tau} f |) \les I(f)
\eeq
for $(i,j) = (0,2), (1,1), (1,1)$ respectively. For $J_4$, if $D_{\b}$ acts on $\sin^2(\b)$, we obtain 
$ \al D_{\b} (\sin^2(\b)) \cdot  I(D_S f)$, which can be bounded as before using \eqref{eq:lemxi1}. For the remaining parts in $J_4$ and $J_5$, $D_{\b}$ acts on 
$I(\cdot)$ and we can use \eqref{eq:com2} again to obtain several terms. Each term can be bounded using 
\eqref{eq:lemxi1} and an argument similar to \eqref{eq:lemxi2}. The estimates of other derivatives $D_R^i D^j_{\b}$ 
can be done similarly. We omit these estimates. Since the right hand side of \eqref{eq:lemxi3} is $\f{2}{3} I(f) = -\f{2}{3}\bar{\xi} \asymp -\bar{\xi}$, the above estimates imply \eqref{eq:lemxi3}. 

\vspace{0.1in}
\textit{Step 2.}
The estimate \eqref{eq:lemxi3} can be generalized to $i+j \leq 6$ easily. 
Hence, we get 
\[
| D_R^i D^j_{\b} (3 \bar{\xi} - R\pa_R \bar{\xi} ) | \les |D_R^i D^j_{\b} \bar{\xi}|
+ |D_R^{i+1} D^j_{\b} \bar{\xi}| \les -\bar{\xi},
\]
for any $i+j\leq 5$,  which proves \eqref{eq:xi0}.


\vspace{0.1in}
\textit{Step 3: Pointwise estimate.} In this step, we prove \eqref{eq:xi}. 
From \eqref{eq:barxi}, we know that the first inequality in \eqref{eq:xi} is equivalent to 
\[
 \int_0^{x}  \f{y}{y^2 + z^2} \f{  z^{\al} (y^2 + z^2)^{\al/2} }{(1 + (y^2 +z^2)^{\al/2})^4 } d z \les   \f{  (x^2 + y^2)^{\al/2}}{ (1 + (x^2 + y^2)^{\al/2}) } \f{y^{\al}}{( 1 + y^{\al})^3}  \min \lt(  1 ,  \f{x^{1+\al}}{y^{1+\al}}  \rt)  .
\]
For $z \in [0, x]$, we have $z^2 + y^2 \leq x^2 + y^2$. Since $\f{t}{1+t}$ is increasing with respect to $t \geq 0$, we yield  
\[
\f{   (y^2 + z^2)^{\al/2} }{ 1 + (y^2 +z^2)^{\al/2} }
\les \f{ (y^2 + x^2)^{\al/2} }{ 1 + (y^2 +x^2)^{\al/2} }.
\]
Therefore, it suffices to prove 
\beq\label{eq:lemxi5}
J(x, y) \teq \int_0^{x}  \f{y}{y^2 + z^2} \f{  z^{\al} }{(1 + (y^2 +z^2)^{\al/2})^3 } d z \les   \f{y^{\al}}{( 1 + y^{\al})^3}  \min \lt(  1 ,  \f{x^{1+\al}}{y^{1+\al}}  \rt)  .
\eeq

\paragraph{Case 1 : $x \leq 1 + y$} Observe that
\[
J \leq \f{1}{ (1+y^{\al})^3 } \int_0^x \f{y z^{\al}}{y^2 + z^2} dz
  =\f{y^{\al}}{ (1+y^{\al})^3 }  \int_0^{ \f{x}{y}} \f{ t^{\al} } {1+t^2} dt ,
 \]
 where we have used change of a variable $z = y t$ to derive the identity. 
 Since $\al \leq 1/10$, we get
 \[
 \int_0^{ \f{x}{y}} \f{ t^{\al} } {1+t^2} dt \leq \int_0^{\infty} \f{ t^{\al} } {1+t^2} dt \les 1, \quad
  \int_0^{ \f{x}{y}} \f{ t^{\al} } {1+t^2} dt \leq \int_0^{\f{x}{y}} t^{\al} dt \les \f{x^{1+\al}}{y^{1+\al}}.
 \]
Combining the above estimates, we prove \eqref{eq:lemxi5} for $x \leq 1 + y$.

\paragraph{Case 2 : $ x > 1 + y$} Firstly, we have
\[
J(x, y) = \int_0^{1 + y}  \f{y}{y^2 + z^2} \f{  z^{\al} }{(1 + (y^2 +z^2)^{\al/2})^3 } d z
+ \int_{1+y}^x \f{y}{y^2 + z^2} \f{  z^{\al} }{(1 + (y^2 +z^2)^{\al/2})^3 } d z \teq J_1 + J_2.
\]
We apply the result in Case 1 to estimate $J_1$
\[
J(1 + y, y) \les \f{y^{\al}}{ (1+y^{\al})^3} \min\lt( 1, \f{(1+y)^{1+\al}}{y^{1+\al}} \rt)
\les \f{y^{\al}}{ (1+y^{\al})^3} .
\]
For $J_2$, we have 
\[
\bal
J_2 &  \leq \int_{1+y}^x \f{y}{y^2 + z^2} \f{z^{\al}}{z^{3\al}} dz 
 = y^{- 2\al} \int_{ \f{1+y}{y}}^{\f{x}{y}}  \f{t^{-2\al}}{1+t^2} dt
 \les y^{-2\al} \int_{ \f{1+y}{y} }^{\infty} t^{-2\al -2 } dt  \\
 &\les y^{-2\al}  \lt( \f{ 1+y}{y} \rt)^{ - 1 - 2\al} 
 = \f{y}{(1+y)^{1+2\al}} 
=  \f{y^{\al}}{ (1+y)^{3\al}}  \f{ y^{1-\al}}{ (1+y)^{1-\al}}
\les \f{y^{\al}} { (1 +y^{\al})^3} ,
 \eal
\]
 where we have used change of a variable $z = y t$ to derive the first identity. Noting that $x \geq y$ in this case. We conclude 
 \[
J(x, y)  = J_1 + J_2  \les \f{y^{\al}}{ (1 + y^{\al})^3} 
\leq \f{y^{\al}}{ (1+y^{\al})^3} \min\lt( 1, \f{{x}^{1+\al}}{y^{1+\al}} \rt).
 \]
Combining the above two cases, we prove \eqref{eq:lemxi5}, which implies the first inequality in \eqref{eq:xi}. 

Finally, we prove the second inequality in \eqref{eq:xi}. Using the notation \eqref{eq:nota_RS}, we have 
\[
 R = (x^2 + y^2)^{\al/2}, \quad \f{(x^2 + y^2)^{\al/2}}{1 +(x^2 + y^2)^{\al/2} } = \f{R}{1+R},
 \quad  y^{\al} =  R\sin^{\al}(\b), \quad  \f{y^{\al}}{ (1+y^{\al})^3} 
 = \f{  R\sin^{\al}(\b)} { (1 + R\sin^{\al}(\b))^3}.
\]
For $x \leq y$, we have $\b \geq \pi / 4, \ 1 \les \sin(\b), \  x^2 + y^2 \les y^2$. Hence, 
\[
\f{y^{\al}}{ (1+y^{\al})^3} \f{x^{1+\al}}{y^{1+\al}} 
\les \f{y^{\al}}{  (1 + (x^2 + y^2)^{\al/2} )^3 }  \f{x^{1+\al}}{y^{1+\al}} 
= \f{R\sin^{\al}(\b) }{ (1 +R)^3}  \cdot \f{ \cos^{1+\al}(\b)}{ \sin^{1+\al}(\b) }
\les \f{R\cos^{1+\al}(\b) }{ (1 +R)^3} .
\]
Combining the above identity and the estimate, we prove the second inequality in \eqref{eq:xi}.
The last inequality in \eqref{eq:xi} follows directly from \eqref{eq:xi0} and the first two inequalities in \eqref{eq:xi}.


\vspace{0.1in}
\textit{Step 4: Estimates of the integral} Now, we are in a position to prove \eqref{eq:xi_ux} and \eqref{eq:xi_cw}. We are going to prove 
\beq\label{eq:lemxi6}
\int_0^{\pi/2}  \bar{\xi}^2(R, \b) \psi_k d \b \les \f{\al^4}{(1+R)^2}.
\eeq
Clearly, \eqref{eq:xi_ux} and \eqref{eq:xi_cw} follow from the above estimate and \eqref{eq:xi0}.

Notice that $\psi_i$ defined in \eqref{wg} satisfies 
\beq\label{eq:lemxi51}
\psi_1, \psi_2 \les \f{(1+R)^4}{R^4} \sin(\b)^{-\s} \cos(\b)^{-\g},
\eeq
where $\g = 1 + \f{\al}{10}, \s = \f{99}{100}$. Using \eqref{eq:xi}, $1 + R\sin^{\al}(\b) \geq  (1+R) \sin^{\al}(\b) $, we yield
\[
\bal
&(1+R)^2 \int_0^{\pi/2}  |  \bar{\xi } |^2 \psi_k  d \b
\les   (1+R)^2 \f{ \al^4 R^4}{(1+R)^2}\cdot \lt\{  \int_0^{\pi/4}  \f{ \sin^{ 2\al}(\b)}{ ( ( 1+ R) \sin^{\al}(\b) )^6}  \psi_k d\b  +  \int_{\pi/4}^{\pi/2} \f{ \cos^{2\al + 2}}{(1+R)^6} \psi_k d \b \rt\} \\
 \les & \f{\al^4 R^4}{(1+R)^6} \f{(1+R)^4}{R^4}
  \lt\{ \int_0^{\pi/4}  \sin(\b)^{-4\al} \sin(\b)^{-\s} \cos(\b)^{-\g}
 +\int_{\pi/4}^{\pi/2}  \cos(\b)^{2 + 2\al }  \sin(\b)^{-\s} \cos(\b)^{-\g} d \b 
\rt\}  \\
  \les&  \al^4 
  \lt(\int_0^{\pi/4}   \sin(\b)^{-\s - 4\al} d\b  
  +\int_{\pi/4}^{\pi/2} \cos(\b)^{2 + 2\al - \g} d \b \rt) \les \al^4,
 \eal 
\]
where we have used $ \al \leq \f{1}{1000}$, $4 \al + \s < \f{199}{200}$, $2 + 2 \al - \g \geq 1$, to derive the last inequality which does not depend on $\al$ for $\al  \leq \f{1}{1000}$. It follows \eqref{eq:lemxi6}.
\end{proof}

\subsection{Other Lemmas}\label{app:lemma}
We use the following Lemma to construct small perturbation. 
\begin{lem}\label{lem:small}
Let $\chi(\cdot) : [0, \infty) \to [0, 1]$ be a smooth cutoff function, such that $\chi(R) = 1$ for $R \leq 1$ and $\chi(R) = 0$ for $R \geq 2$. Denote 
\beq\label{eq:chilam}
\chi_{\lam}(R) = \chi(R / \lam) , \quad \bar{\Om}_{\lam} = \chi_{\lam} \bar{\Om}, \quad \bar{\eta}_{\lam} = \pa_x ( \chi_{\lam} \bar{\th}) ,
\quad \bar{\xi}_{\lam} =\pa_{y} (\chi_{\lam} \bar{\th} ), 
\eeq
where $\bar{\th}$ is obtained in \eqref{eq:th}. We have
\beq\label{eq:lim1}
\lim_{\lam \to +\infty}
|| \bar{\Om}_{\lam} - \bar{\Om}||_{\cH^3} +
 || (1+R) ( \bar{\eta}_{\lam} -\bar{\eta} )||_{\cH^3}  + || \bar{\xi}_{\lam} -\bar{\xi}||_{\cH^3(\psi)}
 = 0, \quad   
\overline{\lim}_{\lam \to +\infty}    || \bar{\xi}_{\lam} -\bar{\xi}  ||_{\cC^1}  \leq K_{10} \al^{ 2},
\eeq
where $K_{10} >0$ is some absolute constant. In particular, we also have 
\beq\label{eq:lim2}
\lim_{\lam \to +\infty} L^2_{12}(\bar{\Om}_{\lam} - \bar{\Om})(0) + \la  (\bar{\Om}_{\lam} - \bar{\Om})^2, \vp_0 \ra
+ \la  (\bar{\eta}_{\lam} - \bar{\eta} )^2, \psi_0 \ra = 0.
\eeq
\end{lem}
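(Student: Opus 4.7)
\textbf{Proof proposal for Lemma \ref{lem:small}.} The plan is to decompose each difference into a bulk truncation piece supported in $\{R \geq \lambda\}$ and a commutator piece supported in the annulus $\{\lambda \leq R \leq 2\lambda\}$, and then apply the pointwise decay estimates of Lemmas \ref{lem:bar} and \ref{lem:xi}. Using the formulas \eqref{eq:simp1} for $\partial_x,\partial_y$ in polar form together with $\bar{\theta}(0,y)=0$, one verifies (this is the computation \eqref{eq:profile22} referenced in Section \ref{subsec:non_appr}) that
\[
\bar{\eta}_\lambda - \bar{\eta} = (\chi_\lambda - 1)\bar{\eta} + \alpha \cos^2(\beta)\, D_R\chi_\lambda \cdot J(\bar{\eta}), \qquad
\bar{\xi}_\lambda - \bar{\xi} = (\chi_\lambda - 1)\bar{\xi} + \alpha \sin(\beta)\cos(\beta)\, D_R\chi_\lambda \cdot J(\bar{\eta}),
\]
where $J(\bar{\eta}) = \bar{\theta}/r$ (so that $J(\bar\eta)$ is $O(\alpha)$ and smooth in $R,\beta$), and similarly $\bar{\Omega}_\lambda - \bar{\Omega} = (\chi_\lambda - 1)\bar{\Omega}$. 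Since $D_R^k\chi_\lambda$ is uniformly bounded and supported in $[\lambda, 2\lambda]$, the commutator piece has the same qualitative behavior as the bulk piece.

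For the $\mathcal{H}^3$ limits, I first estimate $\|(\chi_\lambda-1)\bar\Omega\|_{\mathcal{H}^3}$ together with the annular commutator terms by expanding the Leibniz rule for $D_R^i D_\beta^j$, bounding each resulting derivative pointwise by $\bar\Omega$ (invoking \eqref{eq:bar}), and integrating against $\varphi_1,\varphi_2$. The crucial $R$-integral reduces to $\int_\lambda^\infty \alpha^2\,R^{-2}\,dR = O(\alpha^2/\lambda)$, which tends to $0$ as $\lambda\to\infty$; the angular integrals are $O(1)$ as in the proof of Lemma \ref{lem:bar}. The analogous argument for $\|(1+R)(\bar\eta_\lambda - \bar\eta)\|_{\mathcal{H}^3}$ is identical: the weight $(1+R)$ is compensated by the faster decay $\bar\eta \lesssim \alpha R/(1+R)^3$. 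For $\|\bar\xi_\lambda - \bar\xi\|_{\mathcal{H}^3(\psi)}$, I combine the decay bound \eqref{eq:xi} from Lemma \ref{lem:xi} with the $\psi_i$ integral calculation \eqref{eq:lemxi6} carried out only on the tail $R\ge \lambda$; the $\alpha^4$ in the bound then produces a quantity $\lesssim \alpha^4/(1+\lambda)^2\to 0$.

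The main obstacle is the second limit, the $\mathcal{C}^1$ bound for $\bar\xi_\lambda - \bar\xi$, where \emph{no} decay in $\lambda$ is available. The reason is the anisotropic non-decay of $\bar\xi$ recorded in \eqref{eq:xi}: along the line $x = 0$, $\bar\xi$ does not vanish and attains values of order $\alpha^2$, so $(\chi_\lambda-1)\bar\xi$ contributes an $O(\alpha^2)$ amount in $L^\infty$ that cannot be made small by pushing the cutoff out. I therefore use the last line of \eqref{eq:xi}, which gives $\|\bar\xi\|_{\mathcal{C}^1}\lesssim \alpha^2$ uniformly, together with an identical bound for the commutator piece (using $|\alpha D_R\chi_\lambda|\lesssim \alpha$ and $|J(\bar\eta)|\lesssim \alpha$ uniformly, plus a straightforward check that the weights $\phi_1 D_R, \phi_2 D_\beta$ acting on this product remain $O(\alpha^2)$). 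The resulting constant $K_{10}$ depends only on $\chi$ and on the uniform $\mathcal{C}^1$ bound for $\bar\xi$, independent of $\lambda$.

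Finally, \eqref{eq:lim2} follows from \eqref{eq:lim1} by continuous embedding: $|L_{12}(\bar\Omega_\lambda-\bar\Omega)(0)|\lesssim \|\bar\Omega_\lambda-\bar\Omega\|_{\mathcal{H}^3}$ by Lemma \ref{lem:l12}; $\langle(\bar\Omega_\lambda-\bar\Omega)^2,\varphi_0\rangle\lesssim \|\bar\Omega_\lambda-\bar\Omega\|_{\mathcal{H}^3}^2$ because $\varphi_0\lesssim \varphi_1$; and $\langle(\bar\eta_\lambda-\bar\eta)^2,\psi_0\rangle$ is controlled by $\|(1+R)(\bar\eta_\lambda-\bar\eta)\|_{\mathcal{H}^3}^2$ since the only obstruction in $\psi_0$ is the linear growth $R\Gamma(\beta)^{-1}$ at infinity, which is exactly absorbed by the $(1+R)$ prefactor in the norm.
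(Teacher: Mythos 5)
Your architecture coincides with the paper's own proof: the same splitting of $\bar{\eta}_{\lam}-\bar{\eta}$ and $\bar{\xi}_{\lam}-\bar{\xi}$ into a bulk piece $(\chi_{\lam}-1)(\cdot)$ plus an annular commutator $\al(\cdots)D_R\chi_{\lam}\cdot J(\bar{\eta})$ (this is exactly \eqref{eq:lemsm2}), the same use of the pointwise decay in Lemmas \ref{lem:bar} and \ref{lem:xi}, the same recognition that the $\cC^1$ bound cannot be made small in $\lam$ and must be closed with the uniform $O(\al^2)$ bound on $\bar{\xi}$, and the same embedding argument for \eqref{eq:lim2}. The only cosmetic difference is that you give explicit rates where the paper invokes dominated convergence. (Two harmless slips: $J(\bar{\eta})=\bar{\th}/x$, not $\bar{\th}/r$; and the tail of \eqref{eq:lemxi6} integrates to $\al^4/(1+\lam)$, not $\al^4/(1+\lam)^2$.)

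There is, however, one step you gloss over on which the $\cH^3$ part of \eqref{eq:lim1} genuinely depends, and which would fail with only the information you state. You justify the commutator pieces by saying $J(\bar{\eta})$ is ``$O(\al)$ and smooth in $R,\b$,'' and you invoke Lemma \ref{lem:bar} to bound ``each resulting derivative'' --- but Lemma \ref{lem:bar} covers only $\bar{\Om},\bar{\eta}$, not $J(\bar{\eta})$, and a uniform $O(\al)$ bound is not enough: on the annulus $\{\lam\leq R\leq 2\lam\}$ the weight $\vp_1$ is $\asymp \sin(2\b)^{-\s}$, so the estimate $|J(\bar{\eta})|\les\al$ alone gives $\int_{\lam}^{2\lam}\al^4\,dR\asymp\al^4\lam\to\infty$. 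What saves the commutator terms is that $J(\bar{\eta})=\f{1}{x}\int_0^x\bar{\eta}(z,y)\,dz$ and all its $D_R^iD_{\b}^j$ derivatives up to order $3$ are bounded pointwise by $\bar{\eta}$ itself, hence decay like $R^{-2}$; this is precisely the paper's Step 1 (estimate \eqref{eq:lemsm1}), and proving it requires the commutator identities of Lemma \ref{lem:com2} to exchange $D_R,D_{\b}$ with the averaging operator $J$, plus a two-case integral comparison of the type \eqref{eq:lemxi5} to get $\f{1}{x}\int_0^x\bar{\eta}(z,y)\,dz\les\bar{\eta}(x,y)$. Your $\cC^1$ treatment of the commutator does not need this decay (the uniform bound suffices there), but the $\cH^3$ and $\cH^3(\psi)$ limits do. Once \eqref{eq:lemsm1} is supplied, your argument closes.
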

We need a Lemma similar to Lemma \ref{lem:com}.
\begin{lem}\label{lem:com2}
Suppose that $f(0, y) =0$ for any $y$. Denote  $J(f)(x ,y) = \f{1}{z} \int_0^x f(z, y) dz.$
We have  
\[
\bal
D_R J(f)(x, y) & = J( D_S f)(x,y) ,  \\ 
D_{\b} J(f)(x, y) - J( D_{\tau} f)(x,y)  & = -2 \al \sin^2 (\b) \cdot J( D_S f) 
+  2 \al J( \sin^2(\tau)  D_S f) ,
\eal
\]
where $R, \b, S , \tau$ are defined in \eqref{eq:nota_RS}, provided that $f$ is sufficiently smooth.
\end{lem}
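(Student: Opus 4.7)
\medskip
\noindent\textbf{Proof proposal for Lemma \ref{lem:com2}.} The plan is to mimic directly the argument for Lemma \ref{lem:com}, exploiting the fact that the operator $J$ differs from $I$ only in how the kernel factor interacts with the $x$-derivative. (We read the definition of $J$ as $J(f)(x,y)=\tfrac{1}{x}\int_0^x f(z,y)\,dz$, the natural companion of $I$ whose derivations succeed.) Using the decomposition $D_R=\tfrac{1}{\al}(x\pa_x+y\pa_y)$ and $D_\b=2y\pa_y-2\al\sin^2(\b)D_R$ from \eqref{eq:deri2}, and the obvious fact that both $y\pa_y$ and the scalar operator $R\pa_R$ (expressed via $S$ in the integrand) commute with $z$-integration and with multiplication by $1/x$, the two identities reduce to the single ``radial'' identity
\[
x\pa_x J(f)(x,y)=J(z\pa_z f)(x,y).
\]

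\medskip
\noindent First I would compute the left-hand side directly by the quotient rule:
\[
x\pa_x J(f)=x\Bigl(-\tfrac{1}{x^2}\int_0^x f(z,y)\,dz+\tfrac{1}{x}f(x,y)\Bigr)=-J(f)(x,y)+f(x,y).
\]
Next I would compute the right-hand side by integration by parts in $z$, using the hypothesis $f(0,y)=0$ to kill the boundary contribution at $z=0$:
\[
J(z\pa_z f)=\tfrac{1}{x}\int_0^x z\,\pa_z f(z,y)\,dz=\tfrac{1}{x}\bigl(xf(x,y)-\int_0^x f(z,y)\,dz\bigr)=f(x,y)-J(f)(x,y).
\]
The two expressions coincide, which gives the key identity above. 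Multiplying by $1/\al$ and adding $\tfrac{1}{\al}y\pa_y J(f)=\tfrac{1}{\al}J(y\pa_y f)$ (again using the commutation of $y\pa_y$ with the $z$-integral and with $1/x$) yields $D_R J(f)=J(D_S f)$, which is the first identity.

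\medskip
\noindent For the second identity I would substitute the relation $D_\b=2y\pa_y-2\al\sin^2(\b)D_R$ of \eqref{eq:deri2} twice: once on the outside to convert $D_\b J(f)$ into $J(2y\pa_y f)-2\al\sin^2(\b)D_R J(f)$, and once inside the integrand (applied in the $\tau$-variable) to write $2y\pa_y f=D_\tau f+2\al\sin^2(\tau)D_S f$. Invoking the first identity to replace $D_R J(f)=J(D_S f)$ then yields
\[
D_\b J(f)=J(D_\tau f)+2\al J(\sin^2(\tau)D_S f)-2\al\sin^2(\b)J(D_S f),
\]
which is exactly the claimed identity. The only non-routine ingredient is the boundary vanishing $f(0,y)=0$, which is explicitly assumed; everything else is symbolic manipulation paralleling Lemma \ref{lem:com}, so I do not anticipate any essential obstacle.
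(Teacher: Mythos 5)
Your proof is correct and follows essentially the same route the paper indicates (it omits the details, saying only that the first identity is a direct calculation and the second mirrors Lemma \ref{lem:com}): you reduce everything to the radial identity $x\pa_x J(f)=J(z\pa_z f)$, verify it by the quotient rule versus integration by parts, and then apply the relation $D_\b=2y\pa_y-2\al\sin^2(\b)D_R$ inside and outside the integral exactly as in the proof of Lemma \ref{lem:com}. Your reading of the definition as $J(f)(x,y)=\tfrac1x\int_0^x f(z,y)\,dz$ is the correct interpretation of the paper's typo (consistent with $\bar\th/x=J(\bar\eta)$ in the proof of Lemma \ref{lem:small}); the only cosmetic quibble is that for $J$ the boundary term $z f(z,y)\big|_{z=0}$ vanishes automatically, so the hypothesis $f(0,y)=0$ is not actually needed at that step.
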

The first identity follows from a direct calculation and the proof of the second is similar to that in Lemma \ref{lem:com}. We omit the proof.

\begin{proof}[Proof of Lemma \ref{lem:small}]

\textit{Step 1: Estimate of $J(\bar  \eta)$.} 
Using \eqref{eq:th} and the operator $J$ in Lemma \ref{lem:com2}, we get $\bar \th  = 1 + x J(\bar{\eta})$. We have the following estimate for $J(\bar \eta)$
\beq\label{eq:lemsm1}
 | D^i_R  D^j_{\b}  J(\bar{\eta}) |  \les  J(\bar{\eta}) = \f{1}{x} \int_0^x \bar{\eta}(z, y) dz \les \bar{\eta},
\eeq
for $0 \leq i+ j \leq 5$. The proof of the first inequality follows from Lemma \ref{lem:com2} and the argument in the proof of \eqref{eq:lemxi3}. The proof of the second inequality is similar to that of \eqref{eq:lemxi5} by considering $x \leq 1+ y$ and $ x > 1+ y$. We omit the proof. 

\textit{Step 2: Estimate of $\bar{\eta}_{\lam} -\bar{\eta}, \bar{\xi}_{\lam}-\bar{\xi}$.} Recall $\bar{\eta}_{\lam} = \pa_ x(  \chi_{\lam}  \bar{\th})$, $\bar{\xi}_{\lam} = \pa_ y(  \chi_{\lam}  \bar{\th})$ and the formula of $\pa_x , \pa_y$ \eqref{eq:simp1}. A direct calculation yields 
\beq\label{eq:lemsm2}
\bal
\bar{\eta}_{\lam}(R, \b) -\bar{\eta} & 
 = \al \f{\cos(\b)}{r} D_R \chi_{\lam} \cdot \bar{\th}  + (\chi_{\lam} - 1) \bar{\eta}
 =   \al \cos^2(\b) D_R \chi_{\lam} \cdot J(\bar{\eta}) 
+ \al \f{\cos(\b)}{r} D_{R} \chi_{\lam} + (\chi_{\lam} - 1) \bar{\eta},  \\
\bar{\xi}_{\lam}(R, \b) -\bar{\xi}& 
 = \al\f{\sin(\b) }{r} D_R \chi_{\lam} \cdot \bar{\th}  + (\chi_{\lam} - 1) \bar{\xi}
  =  \al \sin(\b) \cos(\b) D_R \chi_{\lam} \cdot J(\bar{\eta}) 
+ \al \f{\sin(\b)}{r} D_{R} \chi_{\lam} + (\chi_{\lam} - 1) \bar{\xi}, 
 \eal
 \eeq
where we have used $\pa_x \bar{\th} = \bar{\eta}, \pa_y \bar{\th} = \bar{\xi} , 
\ \bar \th = 1 + x J(\bar \eta) 
= 1 + r \cos(\b) J(\bar \eta)$.
From \eqref{eq:chilam}, we have
\[
 D_{\b} \chi_{\lam} = 0,  \quad | D_R \chi_{\lam}  | =  (R / \lam)  | \chi^{\prime}( R /  \lam)  | \les 1.
\]
Similarly, we have 
\beq\label{eq:lemsm3}
| D^k_R \chi_{\lam}|  \les 1 ,
\eeq
for $ k =1,2,3,4$. 
For $G_{\lam}(R, \b) = \al \f{g(\b)}{r} D_R \chi_{\lam} $ with $g(\b) = \sin(\b)$ or $\cos(\b)$ and $i+j\leq 3$, since $R \geq \lam$ in $\supp( D_R^k \chi_{\lam})$ and $r = R^{-1/\al}$, we get 
\[
|D_{\b}^i D_R^j G_{\lam}| \les_{\al}    R^{-1/\al} \one_{R \geq \lam}, 
\quad |D_{\b}^{i+1} D_R^j G_{\lam}|  \les_{\al} \sin(2\b) R^{-1/\al} \one_{R \geq \lam}.
\]
Recall the $\cC^1$ norm \eqref{norm:c1} and $\cH^3$  \eqref{norm:H2}. Using the fast decay of $G_{\lam}$ in $R$ and the smoothness in $\b$, we get
\beq\label{eq:lemsm_add}
\lim_{\lam \to \infty} || (1 + R) G_{\lam} ||_{\cH^3} = 0, \quad \lim_{\lam \to \infty} || G_{\lam} ||_{\cC^1}  = 0.
\eeq
Notice that $\pa_R \chi_{\lam}, \ (\chi_{\lam} - 1) = 0$ for $R \leq \lam$. From the formula of $\bar{\eta}$ and \eqref{eq:xi_cw} in Lemma \ref{lem:xi}, we know $(\chi_1 -1)  (1+R)\bar{\eta}\in \cH^3$ ($\bar{\eta}$ decays $R^{-2}$ for large $R$) and $\bar{\xi} \in \cH^3(\psi)$. Using the estimates of $J(\bar{\eta})$ in \eqref{eq:lemsm1}, we also have $(\chi_1 - 1) J(\bar{\eta}) \in \cH^3 \subset \cH^3(\psi)$. Therefore, applying \eqref{eq:lemsm2}, \eqref{eq:lemsm3} to $\chi_{\lam}$, \eqref{eq:lemsm_add}, the fact that the $\cH^3$ norm is stronger than the $\cH^3(\psi)$ norm \eqref{norm:H22}, and the Dominated Convergence Theorem yields 
\[
 \lim_{\lam \to \infty }|| (1+R) (\bar{\eta}_{\lam} - \bar{\eta}) ||_{\cH^3} = 0 , \quad  \lim_{\lam \to \infty }|| \bar{\xi}_{\lam} - \bar{\xi} ||_{\cH^3(\psi)} = 0 .
\]

Similarly, we have 
\[
\lim_{\lam \to \infty} ||  \bar{\Om}_{\lam} - \bar{\Om} ||_{\cH^3} = 0.
\]

Using \eqref{eq:lemsm1}, \eqref{eq:lemsm3} and the fact that $\bar{\eta}$ decays for large $R$ (see \eqref{eq:profile}), we have 
\[
\overline{\lim }_{\lam \to \infty}  ||  \sin(\b) \cos(\b)  D_R \chi_{\lam} \cdot J(\bar{\eta}) ||_{\cC^1} = 0.
\]
Using \eqref{eq:xi0}-\eqref{eq:xi} in Lemma \ref{lem:xi}, and \eqref{eq:lemsm2}-\eqref{eq:lemsm_add}, we conclude 
\[
|| (\chi_{\lam} - 1)  \bar{\xi}||_{\cC^2} \les \al^2, \quad 
\overline{\lim}_{\lam \to +\infty}    || \bar{\xi}_{\lam} -\bar{\xi}  ||_{\cC^1} \les \al^2.
\]
We complete the proof of \eqref{eq:lim1}. 

Recall that the $\cH^3$ norm is stronger than $L^2(\vp_1)$. Using Lemma \ref{lem:l12} for $L_{12}(\Om)(0)$, the fact that $\vp_0 \les \vp_1,  \psi_0 \les (1+R) \vp_1$ (see Definition \ref{def:wg}, \ref{def:L2}) and the limit obtained in \eqref{eq:lim1}, we prove \eqref{eq:lim2}.
\end{proof}

Let $C^{ \f{\al}{40}}$ be the standard H\"older space. Recall the $\cC^1$ norm defined in \eqref{norm:c1}. We have the following embedding.
\begin{lem}\label{lem:h_embed}
Suppose that $f \in \cC^1(R, \b)$ and $f(R, \pi/2) = 0$ for $R \geq 0$. We have
\[
|| f||_{C^{  \f{\al}{ 40} }} \leq C_{\al} || f ||_{\cC^1}
\]
for some constant $C_{\al}$ depending on $\al$ only.
\end{lem}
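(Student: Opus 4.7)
The plan is to bound the physical H\"older seminorm by integrating weighted polar derivatives along radial and angular segments. Write two points $p, p' \in \R_+^2$ in polar coordinates as $p = (\rho_1, \b_1)$ and $p' = (\rho_2, \b_2)$, and set $d = |p - p'|$. If $d \geq 1$ the estimate is trivial from $||f||_\infty \leq ||f||_{\cC^1}$, so assume $d < 1$. Using the identities $\rho \pa_\rho = \al D_R$ and $\pa_\b = D_\b/\sin(2\b)$, the identity $R\sin(2\b)^\al = (\rho\sin(2\b))^\al$, and the elementary bound $1/(1+t^{-1/40}) \les \min(1, t^{1/40})$, the $\cC^1$ norm gives pointwise estimates
\[
|\pa_\rho f(\rho,\b)| \les \f{\al\,\rho^{\al-1}}{1+\rho^\al}\,||f||_{\cC^1}, \qquad |\pa_\b f(\rho,\b)| \les \f{\min(1,(\rho\sin(2\b))^{\al/40})}{\sin(2\b)}\,||f||_{\cC^1}.
\]

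I would then estimate $|f(p) - f(p')|$ through the intermediate point $p'' = (\rho_2, \b_1)$, noting that $|p - p''| = |\rho_1 - \rho_2| \leq d$ and $|p'' - p'| \asymp \rho_2|\b_1 - \b_2|$, hence $\rho_2|\b_1 - \b_2| \les d$. For the radial leg, the antiderivative of $\al\rho^{\al-1}/(1+\rho^\al)$ is $\ln(1+\rho^\al)$, giving
\[
|f(\rho_1,\b_1) - f(\rho_2,\b_1)| \les ||f||_{\cC^1}\,|\rho_1^\al - \rho_2^\al| \les ||f||_{\cC^1}\,|\rho_1-\rho_2|^\al \les ||f||_{\cC^1}\,d^{\al/40},
\]
via subadditivity $|a^\al - b^\al| \leq |a-b|^\al$ for $\al \in (0,1)$ combined with $d \leq 1$. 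For the angular leg I would split the $\b$-interval at the threshold $\b_* = \min(1/\rho_2, \pi/4)$, treating the region near $\b = \pi/2$ symmetrically via $\gamma = \pi/2 - \b$, where the boundary condition $f(\rho_2, \pi/2) = 0$ lets me integrate from that endpoint. In the near-axis subregion $\b \in [0,\b_*]$, the pointwise bound becomes $|\pa_\b f| \les \rho_2^{\al/40}\sin(2\b)^{\al/40-1}||f||_{\cC^1}$, an integrable singularity whose antiderivative is H\"older of exponent $\al/40$ in $\b$; this gives a contribution $\les C_\al\,||f||_{\cC^1}\,(\rho_2|\b_1-\b_2|)^{\al/40} \les C_\al\,||f||_{\cC^1}\,d^{\al/40}$.

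The main obstacle is the middle subregion $\b \in [\b_*, \pi/4]$, which is present when $\rho_2 \geq 1$ and in which the only available pointwise bound $|\pa_\b f| \les ||f||_{\cC^1}/\sin(2\b)$ produces a priori just the logarithmic modulus $\int d\b/\sin(2\b) \sim \ln(\b_2/\b_1)$, which is not H\"older in $\b$. The resolution relies on the precise coupling between the transition angle $\b_* \asymp 1/\rho_2$ and the physical arclength: because the integration is confined to $\b \geq \b_*$, one has $\ln(\b_2/\b_1) \leq (\b_2-\b_1)/\b_* \asymp \rho_2|\b_1-\b_2| \les d$, which is Lipschitz in the physical distance and, for $d \leq 1$, therefore dominated by $d^{\al/40}$. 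Assembling the radial and angular contributions by the triangle inequality $|f(p) - f(p')| \leq |f(p) - f(p'')| + |f(p'') - f(p')|$ then yields $|f(p) - f(p')| \les C_\al\,||f||_{\cC^1}\,d^{\al/40}$, establishing the claim.
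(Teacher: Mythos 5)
Your proof is correct and follows essentially the same route as the paper: bound $\pa_\rho f$ and $\pa_\b f$ pointwise from the weighted $\cC^1$ norm, integrate along a radial leg and an angular leg through an intermediate point, and compare the polar increments with the Euclidean distance (the paper works with $R=\rho^\al$ rather than $\rho$, which is cosmetic). The only real difference is that your "middle region" and its logarithm-versus-Lipschitz argument are an avoidable detour: using the full bound $1/\phi_2 \leq (R\sin(2\b)^{\al})^{1/40}$ instead of $\min(1,\cdot)$ gives $|\pa_\b f|\les \rho^{\al/40}\sin(2\b)^{\al/40-1}\|f\|_{\cC^1}$ uniformly, whose integral over any angular interval is directly $\les (\rho|\b_1-\b_2|)^{\al/40}\les d^{\al/40}$, as in the paper.
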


\begin{proof}
Recall the relation between the Cartesian coordinates $(x, y)$ and the polar coordinates $(r, \b), (R, \b)$. Since $f$ vanishes on the axis $\b = \f{\pi}{2}$. It suffices to prove that $f$ is H\"older in $\R^2_{++}$. Let $  (R_1, \b_1),  (R_2, \b_2 )$ be arbitrary two different points in $\R^2_{++}$, i.e. $R_1, R_2 \geq 0, \b_1, \b_2 \in [0, \pi /2]$, and $ r_1 = R_1^{ 1 /\al}, r_2 = R_2^{1/\al}$. Without loss of generality, we assume $R_1 \leq R_2$, $\b_1 \leq \b_2$ and $||f ||_{\cC^1} = 1$. 
From \eqref{norm:c1}, we have $ | f | \leq 1, |\pa_R f| \leq \f{1}{1+R} , \ 
| \pa_{\b} f  | \leq  R^{1/40} \sin(2\b)^{\al/40 -1}$. 
 Using 
\[
 \sin(2\b)^{\al / 40 -1} 
\les  ( \sin(\b)^{\al/40-1} + \cos(\b)^{\al/ 40 -1} )
\les  (  \b^{\al /40-1} + (\pi/ 2 -\b)^{\al/40 -1} )
\]
and the estimates of the derivatives, we obtain 
\[
\bal
| f(R_1, \b_1)  - f(R_1, \b_2) |
&\leq \int_{\b_1}^{\b_2}
| \pa_{\b} f(R_1, \b) | d \b 
\leq  C R_1^{ \f{1}{40}} \int_{\b_1}^{\b_2} \lt( \b^{ \f{\al}{40}-1} + ( \f{\pi}{2} -\b)^{ \f{\al}{40} -1} \rt) d \b \\
&\leq C_{\al} R_1^{ \f{1}{40}} 
( \b_2^{ \f{\al}{40}} - \b_1^{ \f{\al}{40} }
+ ( \f{\pi}{2} - \b_1)^{ \f{\al}{40}} - 
( \f{\pi}{2} - \b_2)^{ \f{\al}{40}} )
\leq C_{\al}  R_1^{ \f{1}{40}}  |\b_2 - \b_1|^{\f{\al}{40}} , \\
| f(R_1, \b_2)  - f(R_2, \b_2) |
&\leq \int_{R_1}^{R_2} |\pa_R f(R, \b_2) | d R
\leq \int_{R_1}^{R_2} \f{1}{1+R} d R = \log \f{1+R_2}{1+R_1}
\les (R_2 - R_1)^{1/40},
\eal
\]
where we have used $\log \f{1+R_2}{1+R_1}\leq \log(1 + R_2 - R_1)$ and $\log(1 + x) \les x^{1/40}$ for $x \geq 0$ in the last inequality. The distance $d$ between two points is
\[
\bal
d^2 & = (r_1 \cos(\b_1) - r_2 \cos(\b_2) )^2
+ (r_1 \sin(\b_1) - r_2 \sin(\b_2) )^2
= (r_1 - r_2)^2 + 2r_1 r_2 (1 - \cos(\b_1 - \b_2)) \\
& =  |R_1^{1/\al} - R_2^{1/\al}|^2 + 4 R_1^{1/\al} R^{1/\al}_2 \sin( \f{1}{2} (\b_1 - \b_2))^2
\geq C_{\al} (  | R_1  - R_2|^{2/\al}  +  R_1^{2/\al} |\b_1 -\b_2|^2 ),
\eal
\]
where we have used $R_1 \leq R_2$ in the last inequality. Using the triangle inequality and the above estimates, we conclude $| f(R_1, \b_1) - f(R_2, \b_2)|
\les C_{\al} d^{ \f{\al} {40}}$.
\end{proof}

\subsection{ Proof of Lemmas on the stream function}\label{app:stream}

\begin{proof}[Proof of Lemma \ref{lem:biot1}]
We simplify $\om^{\th}$ as $\om$ and denote by $\vth = \arctan(x_2  /x_1) $ the angular variable. Recall the cylinder $D_1 = \{ (r, z) : r \in [0,1], |z| \leq 1\}$. We extend $\om \one_{(r,z)\in D_1}$ to $\R^3$ as follows :
\beq\label{eq:w_ext}
\om_e( r, z) = \om(r, z)  \mathrm{ \ for\ } (r, z) \in D_1, \quad \om_e(r, z) = 0
 \mathrm{ \ for \ }  (r,z) \notin D_1.
\eeq
Note that $\om_e$ is only supported in $D_1$, which is different from $\om$. Denote 
\beq\label{eq:w_ext2}
\bal
&\om_{\pm} = \max( \pm \om_e, 0), \quad
 \cL =  -\pa_{rr} - \f{1}{r} \pa_r - \pa_{zz} + \f{1}{r^2},  \quad \D = \pa_{rr} + \f{1}{r} \pa_r +  \pa_{zz} + \f{1}{r^2} \pa_{\vth \vth} , \\
&\psi_{\pm}(r,z) = \f{1}{4\pi} 	\int_0^{\infty} \int_{-\infty}^{\infty} 
\int_0^{2\pi} \f{ \sin(\vth) \om_{\pm}(r_1, z_1)  } { \lt( (z-z_1)^2 + r^2 + r_1^2 - 2 \sin(\vth) r r_1 \rt)^{1/2} } r_1 d r_1 dz_1 d \vth,
\eal
\eeq
where $\D$ is the Laplace operator in $\R^3$ in cylindrical coordinates. Clearly, $\psi_{\pm}$ solve the Poisson equation in $\R^3$: $-\D (\sin(\vth) \psi_{\pm}(r, z) ) = \om_{\pm}(r,z) \sin(\vth)$, which can be verified easily using the Green function of $-\D$. Since $\om_{\pm} \geq 0$, using the above formula and 
$\f{ \sin(\vth)   } { \lt( (z-z_1)^2 + r^2 + r_1^2 - 2 \sin(\vth) r r_1 \rt)^{1/2} } 
- \f{ \sin(\vth)   } { \lt( (z-z_1)^2 + r^2 + r_1^2 + 2 \sin(\vth) r r_1 \rt)^{1/2} } \geq 0$ for $\vth \in [0, \pi ]$, we get $\psi_{\pm} \geq 0$.

Let $\td{\psi}$ be a solution of \eqref{eq:euler2}-\eqref{eq:euler21}. By definition of $\cL$, we have 
\[
-\D( \td{\psi} \sin(\vth)) = \sin(\vth) \cL \td{\psi} = \om \sin(\vth).
\]
Consider the domain $D_1^+ = \{ (r, z, \vth) : r \in [0, 1], |z|\leq 1 , \vth \in [0, \pi] \}$, which is a half of the cylinder $D_1$. Next, we compare $\td \psi \sin(\vth)$ and $\psi_+ \sin(\vth)$ in $D_1^+$ using the maximal principle for the Laplace operator $\D$.

Recall from \eqref{eq:w_ext} that $\om_e = \om$ in $D_1^+ \subset D_1$. For $(r, z , \vth) \in D_1^+$, we have $\sin(\vth) \geq 0$ and
\beq\label{eq:max}
-\D ( (\td{\psi} - \psi_+ ) \sin(\vth ) ) = (\om - \om_+) \sin(\vth )  \leq 0.
\eeq

On the boundary of $\pa D_1^+$, we have $\vth \in \{ 0, \pi\}$, $r=1$ or $z \in \{-1, 1\}$. The boundary related to $\vth \in \{0, \pi \}$ is $\{ (r,z, \vth) : r\in [0,1], |z|\leq 1, \vth=0, \pi\}$, or equivalently $\{ (x,y,z) : |x|\leq 1, y=0, |z|\leq 1\}$ in the Cartesian coordinates. It contains the symmetry axis $r=0$.
Recall that $\td \psi$ is odd  and 2-periodic in $z$. We obtain \eqref{eq:euler22} $\td \psi( r, \pm 1) =0$. Recall the boundary condition \eqref{eq:euler21} $\td{\psi}(1, z)=0$ and the fact that $\psi^+$ is nonnegative. We have
\[
 (\td{\psi} - \psi_+ ) \sin(\vth )  = 0   \mathrm{ \quad for \quad } \vth \in \{ 0, \pi\}, \qquad 
  (\td{\psi} - \psi_+ ) \sin(\vth )  \leq 0   \mathrm{ \quad for \quad }   r = 1 \textrm{ \ or \ } z \in \{ -1, 1\} ,
\]
where we have used $\sin(\vth) \geq 0$ in $D_1^+$. 
Applying the maximal principle to \eqref{eq:max} in the bounded domain $D_1^+$, we yield $(\td{\psi}(r,z) - \psi_+(r,z) ) \sin(\vth ) \leq 0 $ in $D_1^+$, which further implies $\td{\psi}(r,z) \leq \psi_+(r,z)$ for $r\leq 1, |z|\leq 1 $. Similarly, we have $\td{\psi} + \psi_- \geq 0$. Hence $ | \td{\psi}| \leq \psi_+ + \psi_-$.

Recall from \eqref{eq:w_ext},\eqref{eq:w_ext2} that $\supp( \om_{\pm} ) \subset \supp(\om) \cap D_1$ and the assumption $\supp(\om) \cap D_1 \subset \{ (r ,z) : (r-1)^2 + z^2 < 1/4  \}$ in Lemma \ref{lem:biot1}. Thus, for $r > \f{1}{4}$,  $(r_1, z_1)$ in the support of $\om_{\pm}$ and $|\vth| \leq \pi$, we have $r_1 > \f{1}{2}$ and 
\[
(z-z_1)^2 + r^2 + r_1^2 - 2 \cos(\vth) r r_1 
= (z-z_1)^2 + (r- r_1)^2 + 4 \sin^2(\vth / 2) r r_1 \asymp( ( (z-z_1)^2 + (r- r_1)^2)^{1/2} +  |\vth|)^2.
\]
We have similar estimate with $\cos(\vth)$ replaced by $\sin(\vth)$.
Using this estimate and integrating the $\vth$ variable in the integral about $\psi_{\pm}$ in \eqref{eq:w_ext2}, we complete the proof.
\end{proof}

\begin{remark}
The above proof can also be established in the Cartesian coordinates, which is essentially the same up to change of variables. 
\end{remark}

\begin{proof}[Proof of Lemma \ref{lem:far}]
Recall $r = 1 -C_l y = 1 -C_l \rho \sin(\b)$ in \eqref{eq:label_rs} and $\td{\om }(r, z) = \om^{\th}(r, z) / r$ in \eqref{eq:omth}. Since the support size satisfies $C_l(\tau) S(\tau) < 1/4$, within the support of $\om^{\th}(r, z)$, we have $r \geq 1/2$. 
Hence, $| \td{\om} | \asymp  | \om^{\th} | $ and $|r \om^{\th}(r,z)| \les | \td \om(r,z)| $. From \eqref{eq:euler_polar} and \eqref{eq:label_rs}, $R \leq (2C_l)^{-\al}$ implies $\rho \leq (2C_l)^{-1}$ and $r \geq 1-\f{1}{2} =\f{1}{2}$. Therefore, for $(x, y)$ with $R = (x^2 + y^2)^{\al/2} \leq (2C_l)^{-\al}$, we can apply Lemma \ref{lem:biot1} and \eqref{eq:rescal41} to get
\[
\bal
|\psi(x, y) | & \les C_{\om} C_l^{-2} \int   |\td{\om}(r_1, z_1)| \B( 1+ |\log( (r_1- (1 - C_l y) )^2 + (z_1 - C_l x)^2) | \B)  d r_1 d z_1  \\
&=  C_{\om}   \int  |\td{\om}(1 - C_l y_1, C_l x_1) | \B( 1 + | \log ( C_l^2 ( ( y_1 -y)^2 + (x_1- x)^2 ) | \B) d y_1 d x_1, 
\eal
\]
where we have used Lemma \ref{lem:biot1} and $r_1 \leq 1 $ in the first inequality, and used change of variables $r_1 = 1 - C_l y_1, z_1 = C_l x_1$ in the second identity. From \eqref{eq:rescal41}, $C_{\om} \td{\om}(1 - C_l y_1, C_l x_1) $ in the integrand becomes $\om(x_1, y_1)$. For $(x_1, y_1)$ within the support of $\om$, we have $x_1^2 + y_1^2 \leq ( S(\tau))^2$. Hence, for any $ x^2 + y^2> 4 ( S(\tau) )^2 $, or equivalently, $R> (2S(\tau))^{\al}$, we get
\[
(x_1 - x)^2 + (y_1 - y)^2 \asymp  x^2 + y^2 = \rho^2 = R^{2/\al}, \quad  
1 + |\log ( C_l^2 ( ( y_1 -y)^2 + (x_1- x)^2 ) | \les
|\log (  C_l^2  R^{2/\al} ) | + 1.
\]
Using $ \Psi(R, \b) = \f{1}{\rho^2} \psi(x, y) = R^{-2/\al} \psi(x, y)$ and the above estimates, we get
\[
|\Psi(R, \b)| \les R^{-2/\al} (1 + |\log(  C_l^2 R^{2/\al} ) | ) \int | \om(x_1, y_1)  | d x_1 d y_1.
\]
Passing to the $(R, \b)$ coordinates, we have $dx_1 dy_1 = \td{\rho} d \td{ \rho} d \td{\b} =\al^{-1} \td{R}^{2/\al - 1} d \td{R} d \td{\b}$. Using the Cauchy-Schwarz inequality, we get
\[
\bal
 \int | \om(x_1, y_1)  | d x_1 d y_1 &= 
 \al^{-1} \int_0^{\pi}\int_{\td{R} \leq ( S(\tau) )^{\al} } | \Om( \td{R}, \td{\b})  | \td{R}^{\f{2}{\al}- 1} d \td{R} d \td{\b}
 \les \al^{-1} || \Om||_{L^2}  \B( \int_0^{\pi}\int_0^{ ( S(\tau) )^{\al}} \td{R}^{ \f{4}{\al} - 2} d \td{R} d \b \B)^{1/2} \\
 & \les \al^{-1/2} || \Om||_{L^2}  ( S(\tau) )^{2 - \al/2}.
 \eal
\]
It follows that
\[
| \Psi(R, \b)| \les R^{-2/ \al} (1 + |\log(  C_l^2 R^{2/\al} ) |) \al^{-1/2} || \Om||_{L^2}  ( S(\tau) )^{2 - \al/2}.
\]

For $ M \leq R \leq (2C_l)^{-\al} $, we have $ C_l M^{1/\al}< C_l R^{1/\al}  <1$ and $|\log(  C_l^2 R^{2/\al} ) | \leq |\log(  C_l^2 M^{2/\al} ) |
\les |\log(  C_l M^{1/\al} ) |$. Integrating $|\Psi(R,\b)|^2$ over $(R, \b) \in [M, (2C_l)^{-\al}  ] \times [0, \pi / 2]$ yields 
\[
\bal
&\int_{0}^{\pi/2}\int_M^{(2C_l)^{-\al} }
 | \Psi(R, \b)|^2 d R d \b \les
 (1+|\log(  C_l M^{1/\al} ) |)^2 \al^{-1} S^{4 - \al}  || \Om||_{L^2}^2 \int_M^{\infty} R^{-4/\al} dR  \\
  \les &  (1+|\log(  C_l M^{1/\al} ) |)^2  S^{4 - \al} M^{-4/\al + 1} || \Om||_{L^2}^2
  \les (1+|\log(  C_l M^{1/\al} ) |)^2 (S / M^{1/\al})^2|| \Om||_{L^2}^2,
 \eal
\]
where we have used the assumption $M^{1/\al} > S$ and $S^{4 - \al} M^{-4/\al + 1} = ( S / M^{1/\al})^{4-\al} \leq ( S / M^{1/\al})^{2}$ to obtain the last inequality. We conclude the proof. 
\end{proof}

\subsection{ A toy model for 2D Boussinesq}\label{app:toy}

We consider the toy model introduced in \cite{elgindi2019finite}
\[
\bal
&\om_t - (x_1 \lam(t), -x_2 \lam(t)) \cdot \na \om = \pa_1 \th ,   \\
&  \th_t - (x_1 \lam(t), -x_2 \lam(t)) \cdot \na \th =  0 , \quad \lam(t) = \int_{\R^2} \f{y_1 y_2}{ |y|^4} \om(y, t) dy ,\\
\eal
\]
where $\pa_1 \th = \pa_{x_1 }\th$. This model can be derived from the 2D Boussinesq equations by approximating the velocity $(u,v)$ by $u_{x_1}(0,0,t) \cdot (x_1, -x_2)$ and rescale the solution by a constant. We assume that $\om$ is odd in $x_1$ and $x_2$, and $\th$ is even in $x_1$ and odd in $x_2$. We show that for initial data $\om_0 ,  \na \th_0\in C_c^{\al}(\R_2) $, the solution exists globally. We follow the argument in \cite{elgindi2019finite}. Without loss of generality, we assume $\supp( \pa_1 \th_0) \subset [-1,1]^2$. Using the derivation in \cite{elgindi2019finite}, we get 
\beq\label{eq:toy_solution}
\bal
&\om(x_1, x_2, t) = (\pa_1 \th_0) ( \mu(t) x_1, \f{x_2}{\mu(t)}) \int_0^t \mu(s) ds,  \quad \mu(t) \teq \exp( \int_0^t \lam(s) ds) , \\
&\f{ \dot \mu}{\mu} = 4 \int_0^t \mu(s) ds  J(t), 
\quad J(t) \teq  \int_0^{\infty} \int_0^{\infty} \f{y_1 y_2}{ |y|^4} ( \pa_1 \th_{0} ) ( \mu(t) y_1, \f{y_2}{\mu(t)} ) d y_1 dy_2.
\eal
\eeq



Next, we estimate $J(t)$. Denote $\td \th(x_1,x_2) = \th_0(x_1, x_2) - \th_0(0, x_2)$. Clearly, we have $ \pa_1 \td \th = \pa_1 \th$.
We simplify $\mu(t)$ as $\mu$. Since $ (\pa_1 \td \th_{0} )( \mu y_1, \f{y_2}{\mu }) =\mu^{-1} \pa_1( \td \th_0( \mu y_1, \f{y_2}{\mu}) )$, $\supp( \pa_1 \td \th_{0}) = \supp( \pa_1 \th_{0}) \subset [-1,1]^2$, using integration by parts and $\pa_1 \f{y_1y_2}{|y|^4} = \f{y_2(y_2^2 - 3y_1^2)}{|y|^6}$, we yield 
\[
\bal
J  &= \mu^{-1} \int_0^{ \mu^{-1}} \int_0^{\infty} \f{y_1 y_2}{ |y|^4}  \pa_1 \B( \td \th_{0}( \mu(t) y_1, \f{y_2}{\mu(t)} ) \B) d y_1 dy_2 \\
&=  \mu^{-1} \int_0^{\infty} \f{ \mu^{-1} y_2}{ (\mu^{-2} + y_2^2)^2}  \td \th_0( 1, \f{y_2}{\mu}) dy_2
- \mu^{-1} \int_0^{\mu^{-1}} \int_0^{\infty} \f{y_2(y_2^2 - 3y_1^2)}{|y|^6} \td \th_0( \mu y_1, \f{y_2}{\mu}) dy_1 dy_2  \teq J_1 + J_2.
\eal
\]

Since $\td \th_0 \in C^{1,\al}$, $\td \th_0(0, x_2) = 0$ and $\td \th_0(x_1, 0) = 0$, we have $| \td \th_0( x_1, x_2)| \les |x_1|^{\al} |x_2|$. It follows 
\[
\bal
|J_1| &\les \mu^{-1} \int_0^{\infty} \f{\mu^{-1} y_2}{ (\mu^{-2} + y_2^2)^2} \f{y_2}{\mu} dy_2
 = \mu^{-2}  \int_0^{\infty} \f{z^2}{ (1+z^2)^2} dz \les \mu^{-2} , \\
 |J_2| & \les \mu^{-2}\int_0^{\mu^{-1}}  (\mu y_1)^{\al} \int_0^{\infty} 
  \B|\f{y_2^2 ( y_2^2 - 3y_1^2 ) }{|y|^6}\B| dy_2 dy_1
  \les \mu^{-2}\int_0^{\mu^{-1}}  (\mu y_1)^{\al} y_1^{-1} dy_1 
  \les_{\al} \mu^{-2}. 
 \eal
\]

Plugging the above estimates in \eqref{eq:toy_solution}, we obtain 
\[
\B|\f{ \dot \mu}{\mu} \B| \les_{\al}  \mu^{-2} \int_0^t \mu(s) ds .
\]
Thus, $\mu$ remains bounded for all time. Formula \eqref{eq:toy_solution} implies that the solution exists globally.

\bibliographystyle{plain}
\bibliography{selfsimilar}

\begin{thebibliography}{10}

\bibitem{bt2007}
Claude Bardos and Edriss Titi.
\newblock Euler equations for incompressible ideal fluids.
\newblock {\em Russian Math. Surveys}, 62:409--451, 2007.

\bibitem{beale1984remarks}
JT~Beale, T~Kato, and A~Majda.
\newblock Remarks on the breakdown of smooth solutions for the 3-{D} {E}uler
  equations.
\newblock {\em Communications in Mathematical Physics}, 94(1):61--66, 1984.

\bibitem{chae_kim_nam_1999}
Dongho Chae, Sung-Ki Kim, and Hee-Seok Nam.
\newblock Local existence and blow-up criterion of {H}\"older continuous
  solutions of the {Boussinesq} equations.
\newblock {\em Nagoya Mathematical Journal}, 155:55–80, 1999.

\bibitem{chen2019singularity}
Jiajie {Chen}.
\newblock Singularity formation and global well-posedness for the generalized
  {Constantin-Lax-Majda} equation with dissipation.
\newblock {\em arXiv preprint arXiv:1908.09385}, 2019.

\bibitem{chen2020slightly}
Jiajie Chen.
\newblock On the slightly perturbed de gregorio model on $ {S}^1$.
\newblock {\em arXiv preprint arXiv:2010.12700}, 2020.

\bibitem{chen2019finite2}
Jiajie Chen and Thomas~Y. Hou.
\newblock Finite time blowup of 2{D} {B}oussinesq and 3{D} {E}uler equations
  with {$C^{1,\alpha}$} velocity and boundary.
\newblock {\em Comm. Math. Phys.}, 383(3):1559--1667, 2021.

\bibitem{chen2019finite}
Jiajie Chen, Thomas~Y Hou, and De~Huang.
\newblock On the finite time blowup of the {De Gregorio} model for the 3{D}
  {Euler} equation.
\newblock {\em arXiv preprint arXiv:1905.06387}, 2019.
\newblock To appear in Comm. Pure Appl. Math.

\bibitem{chklsy2017}
K~Choi, T~Hou, A~Kiselev, G~Luo, V~Sverak, and Y~Yao.
\newblock On the finite-time blowup of a 1{D} model for the 3{D} axisymmetric
  {Euler} equations.
\newblock {\em Comm. Pure Appl. Math.}, 70:2218--2243, 2017.

\bibitem{collot2018singularity}
Charles {Collot}, Tej-Eddine {Ghoul}, and Nader {Masmoudi}.
\newblock Singularity formation for {Burgers} equation with transverse
  viscosity.
\newblock {\em arXiv preprint arXiv:1803.07826}, 2018.

\bibitem{constantin2007euler}
P~Constantin.
\newblock On the {E}uler equations of incompressible fluids.
\newblock {\em Bulletin of the American Mathematical Society}, 44(4):603--621,
  2007.

\bibitem{elgindi2019finite}
Tarek~M Elgindi.
\newblock Finite-time singularity formation for ${C}^{1,\alpha}$ solutions to
  the incompressible {Euler} equations on $\mathbb{R}^3$.
\newblock {\em arXiv preprint arXiv:1904.04795}, 2019.

\bibitem{elgindi2019stability}
Tarek~M Elgindi, Tej-Eddine Ghoul, and Nader Masmoudi.
\newblock On the stability of self-similar blow-up for ${C}^{1,\alpha}$
  solutions to the incompressible {Euler} equations on $\mathbb{R}^3$.
\newblock {\em arXiv preprint arXiv:1910.14071}, 2019.

\bibitem{elgindi2018finite}
Tarek~M Elgindi and In-Jee Jeong.
\newblock Finite-time singularity formation for strong solutions to the
  axi-symmetric 3 d euler equations.
\newblock {\em Annals of PDE}, 5(2):1--51, 2019.

\bibitem{Elg17}
Tarek~M. Elgindi and In-Jee Jeong.
\newblock On the effects of advection and vortex stretching.
\newblock {\em Archive for Rational Mechanics and Analysis}, Oct 2019.

\bibitem{elgindi2017finite}
Tarek~M Elgindi and In-Jee Jeong.
\newblock Finite-time singularity formation for strong solutions to the
  boussinesq system.
\newblock {\em Annals of PDE}, 6:1--50, 2020.

\bibitem{gibbon2008three}
JD~Gibbon.
\newblock The three-dimensional {E}uler equations: Where do we stand?
\newblock {\em Physica D: Nonlinear Phenomena}, 237(14):1894--1904, 2008.

\bibitem{gilbarg2015elliptic}
David Gilbarg and Neil~S Trudinger.
\newblock {\em Elliptic partial differential equations of second order}.
\newblock Springer, 2015.

\bibitem{HOANG20187328}
V.~Hoang, B.~Orcan-Ekmekci, M.~Radosz, and H.~Yang.
\newblock Blowup with vorticity control for a {2D} model of the {Boussinesq}
  equations.
\newblock {\em Journal of Differential Equations}, 264(12):7328 -- 7356, 2018.

\bibitem{hou2009blow}
TY~Hou.
\newblock Blow-up or no blow-up? a unified computational and analytic approach
  to 3{D} incompressible {E}uler and {N}avier-{S}tokes equations.
\newblock {\em Acta Numerica}, 18(1):277--346, 2009.

\bibitem{lei2009stabilizing}
TY~Hou and Z~Lei.
\newblock On the stabilizing effect of convection in three-dimensional
  incompressible flows.
\newblock {\em Comm. Pure Appl. Math.}, 62(4):501--564, 2009.

\bibitem{hou2008dynamic}
TY~Hou and C~Li.
\newblock Dynamic stability of the three-dimensional axisymmetric
  {N}avier-{S}tokes equations with swirl.
\newblock {\em Comm. Pure Appl. Math.}, 61(5):661--697, 2008.

\bibitem{hou2006dynamic}
TY~Hou and R~Li.
\newblock Dynamic depletion of vortex stretching and non-blowup of the 3{D}
  incompressible {E}uler equations.
\newblock {\em Journal of Nonlinear Science}, 16(6):639--664, 2006.

\bibitem{kenig2006global}
Carlos~E Kenig and Frank Merle.
\newblock Global well-posedness, scattering and blow-up for the
  energy-critical, focusing, non-linear {S}chr{\"o}dinger equation in the
  radial case.
\newblock {\em Inventiones mathematicae}, 166(3):645--675, 2006.

\bibitem{kryz2016}
A.~Kiselev, L.~Ryzhik, Y.~Yao, and A.~Zlatos.
\newblock Finite time singularity for the modified {SQG} patch equation.
\newblock {\em Ann. Math.}, 184:909--948, 2016.

\bibitem{kiselev2013small}
A~Kiselev and V~Sverak.
\newblock Small scale creation for solutions of the incompressible two
  dimensional {E}uler equation.
\newblock {\em Annals of Mathematics}, 180:1205--1220, 2014.

\bibitem{kiselev2018}
Alexander Kiselev.
\newblock Special issue editorial: small scales and singularity formation in
  fluid dynamics.
\newblock {\em Journal of Nonlinear Science}, 28(6):2047--2050, 2018.

\bibitem{KISELEV201834}
Alexander Kiselev and Changhui Tan.
\newblock Finite time blow up in the hyperbolic {Boussinesq} system.
\newblock {\em Advances in Mathematics}, 325:34 -- 55, 2018.

\bibitem{landman1988rate}
MJ~Landman, GC~Papanicolaou, C~Sulem, and PL~Sulem.
\newblock Rate of blowup for solutions of the nonlinear {S}chr{\"o}dinger
  equation at critical dimension.
\newblock {\em Physical Review A}, 38(8):3837, 1988.

\bibitem{liu2017spatial}
Pengfei Liu.
\newblock {\em Spatial Profiles in the Singular Solutions of the 3D Euler
  Equations and Simplified Models}.
\newblock PhD thesis, California Institute of Technology, 2017.
\newblock \url{https://resolver.caltech.edu/CaltechTHESIS:09092016-000915850}.

\bibitem{luo2013potentially-1}
G~Luo and TY~Hou.
\newblock Potentially singular solutions of the 3{D} axisymmetric {E}uler
  equations.
\newblock {\em PNAS}, 111(36):12968--12973, 2014.

\bibitem{luo2013potentially-2}
G~Luo and TY~Hou.
\newblock Toward the finite-time blowup of the 3{D} incompressible {E}uler
  equations: a numerical investigation.
\newblock {\em SIAM Multiscale Modeling and Simulation}, 12(4):1722--1776,
  2014.

\bibitem{majda2002vorticity}
AJ~Majda and AL~Bertozzi.
\newblock {\em Vorticity and incompressible flow}, volume~27.
\newblock Cambridge University Press, 2002.

\bibitem{martel2014blow}
Yvan Martel, Frank Merle, and Pierre Rapha{\"e}l.
\newblock Blow up for the critical generalized {K}orteweg--de {V}ries equation.
  {I}: Dynamics near the soliton.
\newblock {\em Acta Mathematica}, 212(1):59--140, 2014.

\bibitem{mclaughlin1986focusing}
DW~McLaughlin, GC~Papanicolaou, C~Sulem, and PL~Sulem.
\newblock Focusing singularity of the cubic {S}chr{\"o}dinger equation.
\newblock {\em Physical Review A}, 34(2):1200, 1986.

\bibitem{merle2005blow}
Frank Merle and Pierre Raphael.
\newblock The blow-up dynamic and upper bound on the blow-up rate for critical
  nonlinear schr{\"o}dinger equation.
\newblock {\em Annals of mathematics}, pages 157--222, 2005.

\bibitem{merle1997stability}
Frank Merle and Hatem Zaag.
\newblock Stability of the blow-up profile for equations of the type $u_t=
  {\Delta} u + | u|^{ p- 1} u $.
\newblock {\em Duke Math. J}, 86(1):143--195, 1997.

\bibitem{merle2015stability}
Frank Merle and Hatem Zaag.
\newblock On the stability of the notion of non-characteristic point and
  blow-up profile for semilinear wave equations.
\newblock {\em Communications in Mathematical Physics}, 333(3):1529--1562,
  2015.

\bibitem{zlatovs2015exponential}
Andrej Zlato{\v{s}}.
\newblock Exponential growth of the vorticity gradient for the euler equation
  on the torus.
\newblock {\em Advances in Mathematics}, 268:396--403, 2015.

\end{thebibliography}

\end{document}